\numberwithin{equation}{section}
\selectfont\symbol{60}\fontencoding{\encodingdefault}}
\selectfont\symbol{62}\fontencoding{\encodingdefault}}
\newcommand{\assign}{:=}
\newcommand{\nin}{\not\in}
\newcommand{\nobracket}{}
\newcommand{\nosymbol}{}
\newcommand{\tmem}[1]{{\em #1\/}}
\newcommand{\tmmathbf}[1]{\ensuremath{\boldsymbol{#1}}}
\newcommand{\tmop}[1]{\ensuremath{\operatorname{#1}}}
\newcommand{\tmrsup}[1]{\textsuperscript{#1}}
\newcommand{\tmtextbf}[1]{{\bfseries{#1}}}
\newcommand{\tmtextit}[1]{{\itshape{#1}}}
\newcommand{\tmverbatim}[1]{{\ttfamily{#1}}}
\newenvironment{enumerateroman}{\begin{enumerate}[i.] }{\end{enumerate}}
\newenvironment{itemizeminus}{\begin{itemize} }{\end{itemize}}
\newenvironment{proof}{\noindent\textbf{Proof\ }}{\hspace*{\fill}$\Box$\medskip}
\newenvironment{tmindent}{\begin{tmparmod}{1.5em}{0pt}{0pt} }{\end{tmparmod}}
\newenvironment{tmparmod}[3]{\begin{list}{}{\setlength{\topsep}{0pt}\setlength{\leftmargin}{#1}\setlength{\rightmargin}{#2}\setlength{\parindent}{#3}\setlength{\listparindent}{\parindent}\setlength{\itemindent}{\parindent}\setlength{\parsep}{\parskip}} \item[]}{\end{list}}
\newtheorem{theorem}{Theorem}[section]
\newtheorem{corollary}[theorem]{Corollary}
\newtheorem{lemma}[theorem]{Lemma}
\newtheorem{proposition}[theorem]{Proposition}
\date{}
\author{David Chiron}
\author{Eliot Pacherie}
\affil{Universit{\'e} C\^{o}te d'Azur, CNRS, LJAD, France}
\begin{document}

\title{Coercivity for travelling waves in the Gross-Pitaevskii equation in
$\mathbbm{R}^2$ for small speed}

\maketitle

\begin{abstract}
  In the previous paper {\cite{CP1}}, we constructed a smooth branch of
  travelling waves for the 2 dimensional Gross-Pitaevskii equation. Here, we
  continue the study of this branch. We show some coercivity results, and we
  deduce from them the kernel of the linearized operator, a spectral stability
  result, as well as a uniqueness result in the energy space. In particular,
  our result proves the non degeneracy of these travelling waves, which is a
  key step in the classification of these waves and for the construction of
  multi-travelling waves.
\end{abstract}

\section{Introduction and statement of the results}

We consider the Gross-Pitaevskii equation
\[ 0 = (\tmop{GP}) (\mathfrak{u}) \assign i \partial_t \mathfrak{u}+ \Delta
   \mathfrak{u}- (| \mathfrak{u} |^2 - 1) \mathfrak{u} \]
in dimension 2 for $\mathfrak{u}: \mathbbm{R}_t \times \mathbbm{R}_x^2
\rightarrow \mathbbm{C}$. The Gross-Pitaevskii equation is a physical model
for Bose-Einstein condensate {\cite{Ginz-Pit_58a}}, {\cite{Neu_90}}, and is
associated with the Ginzburg-Landau energy
\[ E (v) \assign \frac{1}{2} \int_{\mathbbm{R}^2} | \nabla v |^2 + \frac{1}{4}
   \int_{\mathbbm{R}^2} (1 - | v |^2)^2 . \]
The condition at infinity for $(\tmop{GP})$ will be
\[ | \mathfrak{u} | \rightarrow 1 \quad \tmop{as} \quad | x | \rightarrow +
   \infty . \]
The equation $(\tmop{GP})$ has some well known stationary solutions of
infinite energy called vortices, which are solutions of $(\tmop{GP})$ of
degrees $n \in \mathbbm{Z}^{\ast}$ (see {\cite{CX}}):
\[ V_n (x) = \rho_n (r) e^{i n \theta}, \]
where $x = r e^{i \theta}$, solving
\[ \left\{ \begin{array}{l}
     \Delta V_n - (| V_n |^2 - 1) V_n = 0\\
     | V_n | \rightarrow 1 \tmop{as} | x | \rightarrow \infty .
   \end{array} \right. \]
Amongst other properties, $V_1 \tmop{and} V_{- 1}$ have exactly one zero
($\rho_n (r) = 0$ only if $r = 0$), and we call it the center of the vortex.
Since the equation is invariant by translation, we can define a vortex by its
degree and its center (the only point where its value is zero).

\

We are interested here in travelling wave solutions of $(\tmop{GP})$:
\[ \mathfrak{u} (t, x) = v (x_1, x_2 + c t), \]
where $x = (x_1, x_2)$ and $c > 0$ is the speed of the travelling wave, which
moves along the direction $- \overrightarrow{e_2}$. The equation on $v$ is
then
\[ 0 = (\tmop{TW}_c) (v) \assign - i c \partial_{x_2} v - \Delta v - (1 - | v
   |^2) v. \]
In this paper, we use all along the following notations. We denote, for
functions $f, g \in L^2_{\tmop{loc}} (\mathbbm{R}^2, \mathbbm{C})$ such that
$\mathfrak{R}\mathfrak{e} (f \bar{g}) \in L^1 (\mathbbm{R}^2, \mathbbm{C})$,
the quantity
\[ \langle f, g \rangle \assign \int_{\mathbbm{R}^2} \mathfrak{R}\mathfrak{e}
   (f \bar{g}), \]
even if $f, g \nin L^2 (\mathbbm{R}^2, \mathbbm{C})$. We also use the notation
$B (x, r)$ to define the closed ball in $\mathbbm{R}^2$ of center $x \in
\mathbbm{R}^2$ and radius $r > 0$ for the Euclidean norm. We define between
two vectors $X = (X_1, X_2), Y = (Y_1, Y_2) \in \mathbbm{C}^2$ the quantity
\[ X.Y \assign X_1 Y_1 + X_2 Y_2 . \]

\subsection{Branch of travelling waves at small speed}

In the previous paper {\cite{CP1}}, we constructed solutions of
$(\tmop{TW}_c)$ for small values of $c$ as a perturbation of two
well-separated vortices (the distance between their centers is large when $c$
is small). We have shown the following result.

\begin{theorem}[{\cite{CP1}}, Theorem 1.1]
  \label{th1}There exists $c_0 > 0$ a small constant such that for any $0 < c
  \leqslant c_0$, there exists a solution of $(\tmop{TW}_c)$ of the form
  \[ Q_c = V_1 (. - d_c \overrightarrow{e_1}) V_{- 1} (. + d_c
     \overrightarrow{e_1}) + \Gamma_c, \]
  where $d_c = \frac{1 + o_{c \rightarrow 0} (1)}{c}$ is a continuous function
  of c. This solution has finite energy $(E (Q_c) < + \infty)$ and $Q_c
  \rightarrow 1$ at infinity.
  
  Furthermore, for all $+ \infty \geqslant p > 2$, there exists $c_0 (p) > 0$
  such that if $c \leqslant c_0 (p)$, for the norm
  \[ \| h \|_p \assign \| h \|_{L^p (\mathbbm{R}^2)} + \| \nabla h \|_{L^{p -
     1} (\mathbbm{R}^2)} \]
  and the space $X_p \assign \{ f \in L^p (\mathbbm{R}^2), \nabla f \in L^{p -
  1} (\mathbbm{R}^2) \}$, one has
  \[ \| \Gamma_c \|_p = o_{c \rightarrow 0} (1) . \]
  In addition,
  \[ c \mapsto Q_c - 1 \in C^1 (] 0, c_0 (p) [, X_p), \]
  with the estimate
  \[ \left\| \partial_c Q_c + \left( \frac{1 + o_{c \rightarrow 0} (1)}{c^2}
     \right) \partial_d (V_1 (. - d \overrightarrow{e_1}) V_{- 1} (. + d
     \overrightarrow{e_1}))_{| d = d_c \nobracket} \right\|_p = o_{c
     \rightarrow 0} \left( \frac{1}{c^2} \right) . \]
\end{theorem}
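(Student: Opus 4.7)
The plan is to construct $Q_c$ by a Lyapunov--Schmidt reduction around the product ansatz $V(d) := V_1(\cdot - d\overrightarrow{e_1}) V_{-1}(\cdot + d\overrightarrow{e_1})$, treating $d$ as a large parameter and $c$ as a small one, to be linked at the end of the analysis. Substituting $Q = V(d) + \Gamma$ into $(\tmop{TW}_c)(Q) = 0$ yields
\[ L_{c,d} \Gamma + N_d(\Gamma) = - F_{c,d}, \]
where $L_{c,d}$ is the linearization of $(\tmop{TW}_c)$ at $V(d)$, $N_d$ collects the quadratic and cubic terms, and $F_{c,d} := (\tmop{TW}_c)(V(d))$ is the residual of the ansatz. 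Using the known asymptotics of the radial profiles $\rho_{\pm 1}$, one checks that $F_{c,d}$ decomposes into an interaction part of size $O(1/d)$ and a drift part $-ic\partial_{x_2} V(d)$ of size $O(c)$ in appropriate weighted norms, so the natural scaling to balance the two sources of error is $d \sim 1/c$.

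The second step is the linear theory. The linearization of the Gross--Pitaevskii operator around a single vortex $V_n$ has a two-dimensional (modulo phase) kernel spanned by $\partial_{x_1} V_n$ and $\partial_{x_2} V_n$. Transporting this to $V(d)$ produces four approximate kernel directions for $L_{c,d}$: two are exact symmetries (global translations of the pair) and can be discarded by fixing the center of mass, while the remaining \textit{relative} $x_2$-translation mode and the relative $x_1$-translation (distance) mode are lifted respectively by the drift and by the vortex interaction. The plan is to impose orthogonality of $\Gamma$ against the two relative translation directions, to establish a coercivity estimate for $L_{c,d}$ on the orthogonal complement uniformly as $c \to 0$ and $d \to \infty$, and then to close a Banach fixed-point argument for $\Gamma$ in a small ball of $X_p$, giving $\|\Gamma\|_p = o_{c\to 0}(1)$.

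The reduced bifurcation equation is obtained by projecting $(\tmop{TW}_c)(V(d) + \Gamma) = 0$ onto the relative $x_1$-translation direction. It takes the schematic form $\alpha/d - \beta c + \tmop{l.o.t.}(c, d) = 0$, with explicit positive constants $\alpha, \beta$ coming respectively from the logarithmic vortex--antivortex interaction and from the linear momentum contribution of the drift. For each small $c$ this admits a unique solution $d = d_c$ depending continuously on $c$ with $d_c = (1 + o_{c\to 0}(1))/c$. The $C^1$ dependence of $c \mapsto Q_c - 1$ in $X_p$ and the stated expansion of $\partial_c Q_c$ then follow from an implicit function theorem applied to the coupled system of orthogonality conditions and reduced equation, the prefactor $-1/c^2$ in front of $\partial_d V(d_c)$ arising by differentiating the relation $d_c \sim 1/c$ in $c$.

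The main obstacle I expect is the uniform invertibility of $L_{c,d}$ in the norm $\|\cdot\|_p$ \emph{jointly} in the two small parameters $c$ and $1/d$. The operator degenerates at two separate scales: near each vortex core, where $V(d)$ vanishes and a polar/hydrodynamic decomposition is needed to tame the zeros; and in the far field, where $|V(d)| \to 1$ so that $L_{c,d}$ loses its ellipticity on the modulus variable and one must rely on the drift $-ic\partial_{x_2}$ and the factor $(1 - |V(d)|^2)$ to recover control. My plan is to split $\mathbbm{R}^2$ into two core regions and a far-field region, to use the hydrodynamic decomposition $\Gamma = V(d)(\eta + i\psi)$ in the far field (where the system becomes elliptic on $\eta$ and, thanks to the drift, sufficiently non-degenerate on $\nabla\psi$), to glue this with classical single-vortex Fredholm estimates in each core, and to verify that the matching errors and the resulting coercivity constants do not blow up as $c \to 0$ and $d \to \infty$. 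This joint quantitative control is where the bulk of the technical work lies.
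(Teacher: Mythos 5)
Your proposal follows essentially the same route as the cited construction: Theorem \ref{th1} is not proved in this paper but imported from {\cite{CP1}}, whose proof is precisely a perturbative/Lyapunov--Schmidt argument around the product ansatz $V_1(\cdot-d\overrightarrow{e_1})V_{-1}(\cdot+d\overrightarrow{e_1})$, with $\Gamma_c$ obtained by a fixed point in the spaces $X_p$, the distance $d_c=(1+o_{c\to 0}(1))/c$ fixed by cancelling the Lagrange multiplier along the $\partial_d V$ direction (balancing the $O(1/d)$ vortex interaction against the $O(c)$ drift), and the $C^1$ dependence on $c$ via the implicit function theorem. Your outline, including the core/far-field splitting and the hydrodynamic decomposition to get uniform invertibility of the linearized operator, matches that scheme.
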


The main idea of the proof of Theorem \ref{th1} is to use perturbative methods
around a quasi-solution $V_1 (. - d_{} \overrightarrow{e_1}) V_{- 1} (. + d
\overrightarrow{e_1})$, get $\Gamma_c$ by a fixed point theorem and the value
of $d_c$ by the cancellation of a Lagrange multiplier. With an implicit
function theorem, we can show that this construction gives us a $C^1$ branch
with respect to the speed $c$. In {\cite{CP1}}, we showed additional and more
precise estimates on $Q_c$ and $\partial_c Q_c$ in some weighed $L^{\infty}$
norms that will be useful in the proof of the next results (they will be
recalled later on). Still in {\cite{CP1}}, we wrote the perturbation
$\Gamma_{c, d_c}$ to make the dependence on $c$ and $d_c$ clearer, but it is
no longer needed here, and we will only write $\Gamma_c$.

\

With this solution $Q_c$, we can construct travelling waves of any small
speed, i.e. solutions of
\[ (\tmop{TW}_{\vec{c}}) (v) \assign i \vec{c} . \nabla v - \Delta v - (1 - |
   v |^2) v \]
for any $\vec{c} \in \mathbbm{R}^2$ of small modulus. For $\vec{c} = | \vec{c}
| e^{i (\theta_{\vec{c}} - \pi / 2)} \in \mathbbm{R}^2$, $| \vec{c} |
\leqslant c_0$, we have that
\begin{equation}
  Q_{\vec{c}} \assign Q_{| \vec{c} |} \circ R_{- \theta_{\vec{c}}}
  \label{CP2rotQc}
\end{equation}
is a solution of $(\tmop{TW}_{\vec{c}})$, with $R_{\alpha}$ being the rotation
of angle $\alpha$ and $Q_{| \vec{c} |}$ defined in Theorem \ref{th1}.
Furthermore, the equation is invariant by translation and by changing the
phase. Thus, we have a family of solutions of (GP) depending on five real
parameters, $\vec{c} \in \mathbbm{R}^2$, $| \vec{c} | \leqslant c_0$, $X \in
\mathbbm{R}^2$ and $\gamma \in \mathbbm{R}$:
\[ Q_{\vec{c}} (. - X - \vec{c} t) e^{i \gamma} . \]
We remark that, for a vortex of degree $\pm 1$, the family of solutions has
three parameters (the two translations and the phase): $V_{\pm 1} (. - X) e^{i
\gamma}$ is solution of $(\tmop{GP})$ for $X \in \mathbbm{R}^2, \gamma \in
\mathbbm{R}$. In particular, between a travelling wave and the two vortices
that compose it, we lose a parameter (since the phase is global). This is one
of the difficulty that will appear when we study the stability of this branch.

\

First, we give additional results on this branch of travelling waves: we will
study the position of its zeros, its energy and momentum, as well as some
particular values appearing in the linearization. The (additive) linearized
operator around $Q_c$ is
\[ L_{Q_c} (\varphi) \assign - \Delta \varphi - i c \partial_{x_2} \varphi -
   (1 - | Q_c |^2) \varphi + 2\mathfrak{R}\mathfrak{e} (\overline{Q_c}
   \varphi) Q_c . \]
We want to define and use four particular directions for the linearized
operator around $Q_c$, which are
\[ \partial_{x_1} Q_c, \partial_{x_2} Q_c, \]
related to the translations (i.e. related to the parameter $X \in
\mathbbm{R}^2$ in the family of travelling waves), and
\[ \partial_c Q_c, \partial_{c^{\bot}} Q_c, \]
related to the variation of speed (i.e. related to the parameter $\vec{c} \in
\mathbbm{R}^2$), if we change respectively its modulus or its direction. The
functions $\partial_{x_1} Q_c, \partial_{x_2} Q_c$ and $\partial_c Q_c$ are
defined in Theorem \ref{th1}, and we will show that
\[ \partial_{c^{\bot}} Q_c (x) \assign \partial_{\alpha} (Q_c \circ R_{-
   \alpha})_{| \alpha = 0 \nobracket} = - x^{\bot} . \nabla Q_c (x), \]
with $x^{\bot} = (- x_2, x_1)$ (see Lemma \ref{CP2a}). We infer the following
properties.

\begin{proposition}
  \label{CP2prop5}There exists $c_0 > 0$ such that, for $0 < c \leqslant c_0$,
  the momentum $\vec{P} (Q_c) = (P_1 (Q_c), P_2 (Q_c))$ of $Q_c$ from Theorem
  \ref{th1}, defined by
  \[ P_1 (Q_c) \assign \frac{1}{2} \langle i \partial_{x_1} Q_c, Q_c - 1
     \rangle, \]
  \[ P_2 (Q_c) \assign \frac{1}{2} \langle i \partial_{x_2} Q_c, Q_c - 1
     \rangle, \]
  verifies $c \mapsto \vec{P} (Q_c) \in C^1 (] 0, c_0 [, \mathbbm{R}^2)$,
  \[ P_1 (Q_c) = \partial_c P_1 (Q_c) = 0, \]
  \[ P_2 (Q_c) = \frac{2 \pi + o_{c \rightarrow 0} (1)}{c} \]
  and
  \[ \partial_c P_2 (Q_c) = \frac{- 2 \pi + o_{c \rightarrow 0} (1)}{c^2} . \]
  Furthermore, the energy satisfies $c \mapsto E (Q_c) \in C^1 (] 0, c_0 [,
  \mathbbm{R})$, and
  \[ E (Q_c) = (2 \pi + o_{c \rightarrow 0} (1)) \ln \left( \frac{1}{c}
     \right) . \]
  Additionally, $\mathfrak{R}\mathfrak{e} (L_{Q_c} (A) \bar{A}) \in L^1
  (\mathbbm{R}^2, \mathbbm{R})$ for $A \in \{ \partial_{x_1} Q_c,
  \partial_{x_2} Q_c, \partial_c Q_c, \partial_{c^{\bot}} Q_c \}$, and
  \[ \langle L_{Q_c} (\partial_{x_1} Q_c), \partial_{x_1} Q_c \rangle =
     \langle L_{Q_c} (\partial_{x_2} Q_c), \partial_{x_2} Q_c \rangle = 0, \]
  \[ \langle L_{Q_c} (\partial_c Q_c), \partial_c Q_c \rangle = \partial_c P_2
     (Q_c) = \frac{- 2 \pi + o_{c \rightarrow 0} (1)}{c^2}, \]
  \[ \langle L_{Q_c} (\partial_{c^{\bot}} Q_c), \partial_{c^{\bot}} Q_c
     \rangle = c P_2 (Q_c) = 2 \pi + o_{c \rightarrow 0} (1) \]
  and
  \[ \partial_c E (Q_c) = c \partial_c P_2 (Q_c) = \frac{- 2 \pi + o_{c
     \rightarrow 0} (1)}{c} . \]
  Finally, the function $Q_c$ has exactly two zeros. Their positions are $\pm
  \widetilde{d_c} \overrightarrow{e_1}$, with
  \[ | d_c - \tilde{d}_c | = o_{c \rightarrow 0} (1), \]
  where $d_c$ is defined in Theorem \ref{th1}.
\end{proposition}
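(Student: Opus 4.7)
The plan is to handle the statements of Proposition~\ref{CP2prop5} in the following order: first, the symmetry argument yielding $P_1(Q_c)=0$ and $\partial_c P_1(Q_c)=0$; next, the leading-order asymptotics for $P_2(Q_c)$, $E(Q_c)$ and their $c$-derivatives; then the four bilinear identities for $L_{Q_c}$, obtained by differentiating $(\tmop{TW}_c)(Q_c)=0$ along each of the four symmetry directions, together with the Hamilton-type relation $\partial_c E(Q_c)=c\,\partial_c P_2(Q_c)$; and finally the existence and localization of the two zeros.

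The ansatz $V_1(\cdot-d_c\overrightarrow{e_1})V_{-1}(\cdot+d_c\overrightarrow{e_1})$ is invariant under $(x_1,x_2)\mapsto(-x_1,x_2)$, and this symmetry is compatible both with $(\tmop{TW}_c)$ and with the fixed-point scheme of \cite{CP1}, so it is inherited by $Q_c$. Hence the integrand of $P_1(Q_c)$ is odd in $x_1$ and vanishes, and the same reflection is preserved along the whole $C^1$ branch, yielding $\partial_c P_1(Q_c)=0$. For the leading orders of $P_2$ and $E$, I would write $Q_c=V_1(\cdot-d_c\overrightarrow{e_1})V_{-1}(\cdot+d_c\overrightarrow{e_1})+\Gamma_c$ and expand: the main term is the classical two-vortex computation at distance $2d_c\sim 2/c$, and the vortex asymptotics from \cite{CX} give $P_2=(2\pi+o(1))/c$ and $E=(2\pi+o(1))\ln(1/c)$, while cross-terms with $\Gamma_c$ are absorbed using $\|\Gamma_c\|_p=o(1)$ together with the weighted pointwise bounds recalled from \cite{CP1}. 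Differentiation in $c$ under the integral sign (justified by the $C^1$ dependence in $X_p$ in Theorem~\ref{th1}) then gives $\partial_c P_2(Q_c)=(-2\pi+o(1))/c^2$, the dominant contribution coming from the $-c^{-2}\partial_d$ term in the estimate for $\partial_c Q_c$.

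Differentiating $(\tmop{TW}_c)(Q_c)=0$ in $x_j$ gives $L_{Q_c}(\partial_{x_j}Q_c)=0$, so the first two bilinear pairings vanish once the $L^1$ integrability of $\mathfrak{R}\mathfrak{e}(L_{Q_c}(A)\bar A)$ is established. Differentiating in $c$ yields $L_{Q_c}(\partial_c Q_c)=i\partial_{x_2}Q_c$, while direct differentiation of $P_2(Q_c)$ (using the symmetry of $(h,g)\mapsto\langle i\partial_{x_2}h,g\rangle$ after an integration by parts) gives $\partial_c P_2(Q_c)=\langle i\partial_{x_2}Q_c,\partial_c Q_c\rangle$, matching $\langle L_{Q_c}(\partial_c Q_c),\partial_c Q_c\rangle$. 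For the rotation direction, differentiating the identity $(\tmop{TW}_{\vec{c}_\alpha})(Q_c\circ R_{-\alpha})=0$ at $\alpha=0$, with $\vec{c}_\alpha$ the correspondingly rotated velocity, gives $L_{Q_c}(\partial_{c^{\bot}}Q_c)=-ic\partial_{x_1}Q_c$; the remaining factor $\langle i\partial_{x_1}Q_c,\partial_{c^{\bot}}Q_c\rangle$ is then read off from the vector transformation law $\vec{P}(Q_c\circ R_{-\alpha})=R_{\alpha}\vec{P}(Q_c)=R_\alpha(0,P_2(Q_c))$, whose first component, differentiated at $\alpha=0$, gives $\langle i\partial_{x_1}Q_c,\partial_{c^{\bot}}Q_c\rangle=-P_2(Q_c)$; hence $\langle L_{Q_c}(\partial_{c^{\bot}}Q_c),\partial_{c^{\bot}}Q_c\rangle=cP_2(Q_c)$. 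For the Hamilton relation, I would differentiate $E(Q_c)$ in $c$ under the integral and substitute $-\Delta Q_c-(1-|Q_c|^2)Q_c=ic\partial_{x_2}Q_c$ from $(\tmop{TW}_c)(Q_c)=0$ to obtain $\partial_c E(Q_c)=c\langle i\partial_{x_2}Q_c,\partial_c Q_c\rangle=c\partial_c P_2(Q_c)$.

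For the zeros, the product $V_1(\cdot-d_c\overrightarrow{e_1})V_{-1}(\cdot+d_c\overrightarrow{e_1})$ has exactly two non-degenerate zeros at $\pm d_c\overrightarrow{e_1}$ of local degrees $\pm 1$; since Sobolev embedding (in 2D with $p$ large in Theorem~\ref{th1}) gives $\|\Gamma_c\|_{L^\infty}=o(1)$, a Rouch\'e / degree argument produces exactly two zeros of $Q_c$ at distance $o(1)$ from $\pm d_c\overrightarrow{e_1}$, and the reflection symmetry forces them to lie at $\pm\widetilde{d_c}\overrightarrow{e_1}$ with $|d_c-\widetilde{d_c}|=o(1)$. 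The main obstacle I anticipate is the repeated need to justify differentiation under the integral sign and integrations by parts in pairings like $\langle i\partial_{x_1}Q_c,\partial_{c^{\bot}}Q_c\rangle$, where $\partial_{c^{\bot}}Q_c=-x^{\bot}\cdot\nabla Q_c$ grows linearly at infinity while $Q_c-1$ decays only polynomially; these steps rely entirely on the weighted pointwise controls on $Q_c-1$ and its derivatives carried over from \cite{CP1}. The $L^1$ integrability of $\mathfrak{R}\mathfrak{e}(L_{Q_c}(A)\bar A)$ for the four directions $A$ then reduces, via the explicit formulas $L_{Q_c}(A)\in\{0,\,i\partial_{x_2}Q_c,\,-ic\partial_{x_1}Q_c\}$ derived above, to the same kind of weighted decay estimates.
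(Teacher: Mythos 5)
Your proposal is correct in substance and covers every claim of the proposition, but it organizes the asymptotic computations along a genuinely different route. The paper never evaluates $P_2(Q_c)$ or $E(Q_c)$ directly from the two-vortex ansatz: it first computes the two pairings $\langle L_{Q_c}(\partial_{c^\bot}Q_c),\partial_{c^\bot}Q_c\rangle=c\langle i\partial_{x_1}Q_c,x_1\partial_{x_2}Q_c\rangle$ and $\langle L_{Q_c}(\partial_c Q_c),\partial_c Q_c\rangle=\langle i\partial_{x_2}Q_c,\partial_c Q_c\rangle$ by expanding $Q_c=V+\Gamma_c$ and $\partial_c Q_c\simeq -c^{-2}\partial_d V$ in the weighted norms $\|\cdot\|_{\sigma,d_c}$ (Lemma \ref{CP2nend}, the key numerical input being $\langle i\partial_{x_2}V_1V_{-1},\partial_{x_1}V_1V_{-1}\rangle=-\pi+o_{c\to0}(1)$ from \cite{CP1}), and only then recovers $P_2$ and $\partial_cP_2$ from these pairings by integration by parts, and $E(Q_c)$ by integrating the relation $\partial_cE=c\,\partial_cP_2$ in $c$ from a fixed $c_0$ (Lemma \ref{CP2cor212N}). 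You instead compute $P_2$ and $E$ directly as vortex-pair quantities and deduce $\langle L_{Q_c}(\partial_{c^\bot}Q_c),\partial_{c^\bot}Q_c\rangle=cP_2(Q_c)$ from the transformation law $\vec P(Q_c\circ R_{-\alpha})=R_\alpha\vec P(Q_c)$ differentiated at $\alpha=0$; this is a slicker derivation of that identity (it replaces the paper's lengthy expansion of $\langle i\partial_{x_1}Q_c,x_1\partial_{x_2}Q_c\rangle$ by a symmetry argument, and yields the stated form $cP_2$ at once), at the price of having to carry out the direct convergence and asymptotics of the vortex-pair momentum and energy integrals, which is precisely the computation of \cite{MR1669387} type that the paper's ordering is designed to avoid. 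Your Rouch\'e/degree argument for the zeros is interchangeable with the paper's homotopy-plus-implicit-function-theorem argument, and the symmetry then localizes them on the $x_1$-axis in both versions. One caution: the integrations by parts and differentiations under the integral that you rightly flag (in particular $\langle i\partial_{x_1}\partial_{c^\bot}Q_c,Q_c-1\rangle=\langle i\partial_{x_1}Q_c,\partial_{c^\bot}Q_c\rangle$, where $\partial_{c^\bot}Q_c$ grows like $r$) go through only thanks to the algebraic decay of $Q_c-1$, $\nabla Q_c$ and $\mathfrak{R}\mathfrak{e}(\nabla Q_c\overline{Q_c})$ from Theorem \ref{CP2Qcbehav}, not from the $X_p$ bounds of Theorem \ref{th1} alone, so those estimates must be invoked explicitly.
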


The momentum has a generalized definition for finite energy functions (see
{\cite{MR3043579}} in $3 d$ and {\cite{MR3686002}}). For travelling waves
going to $1$ at infinity, it is equal to the quantity defined in Proposition
\ref{CP2prop5}. The proof of Proposition \ref{CP2prop5} is done in section
\ref{CP2sec2N}.

The equality $\langle L_{Q_c} (\partial_c Q_c), \partial_c Q_c \rangle =
\partial_c P_2 (Q_c)$ is a general property for Hamiltonian system, see
{\cite{MR901236}}. The equality $\partial_c E (Q_c) = c \partial_c P_2 (Q_c)$
has been conjectured and formally shown in {\cite{0305-4470-15-8-036}},
provided we have a smooth branch $c \mapsto Q_c$, which is precisely shown in
Theorem \ref{th1}. We remark that the energy $E (Q_c)$ is of same order as the
energy of the travelling waves constructed in {\cite{MR1669387}}, which also
exhibit two vortices at distance of order $\frac{1}{c}$. We believe that both
construction give the same branch, and that this branch minimises globally the
energy at fixed momentum. However, we were not able to show even a local
minimisation result of the energy for $Q_c$ defined in Theorem \ref{th1}.

In the limit $c \rightarrow 0$, the four directions ($\partial_{x_1} Q_c,
\partial_{x_2} Q_c, c^2 \partial_c Q_c, c \partial_{c^{\bot}} Q_c$) are going
to zeros of the quadratic form (while being of size of order one), and we see
here the splitting for small values of $c$. In particular, two directions give
zero ($\partial_{x_1} Q_c$ and $\partial_{x_2} Q_c$), one becomes positive
($\partial_{c^{\bot}} Q_c$) and one negative ($\partial_c Q_c$).

\subsection{Coercivity results}

One of the main ideas is to reduce the problem of the coercivity of a
travelling wave to the coercivity of vortices. We will first state such a
result for vortices (Proposition \ref{CP2P811}) before the results on the
travelling waves (see in particular Theorem \ref{CP2th2}).

\subsubsection{Coercivity in the case of one vortex}

A coercivity result for one vortex of degree $\pm 1$ is already known, see
{\cite{DP}}, and in particular equation (2.42) there. We consider both
vortices of degrees $+ 1$ and $- 1$ here at the same time, since $V_1 =
\overline{V_{- 1}}$. Here, we present a slight variation of the results in
{\cite{DP}} that will be useful for the coercivity of the travelling waves. We
recall from {\cite{DP}} the quadratic form around $V_1$:
\[ B_{V_1} (\varphi) = \int_{\mathbbm{R}^2} | \nabla \varphi |^2 - (1 - | V_1
   |^2) | \varphi |^2 + 2\mathfrak{R}\mathfrak{e}^2 (\overline{V_1} \varphi),
\]
for functions in the energy space
\[ H_{V_1} = \left\{ \varphi \in H^1_{\tmop{loc}} (\mathbbm{R}^2,
   \mathbbm{C}), \| \varphi \|_{H_{V_1}}^2 \assign \int_{\mathbbm{R}^2} |
   \nabla \varphi |^2 + (1 - | V_1 |^2) | \varphi |^2
   +\mathfrak{R}\mathfrak{e}^2 (\overline{V_1} \varphi) < + \infty \right\} .
\]
As the family of vortices has three parameters, we expect a coercivity result
under three orthogonality conditions. The three associated directions are
$\partial_{x_1} V_1, \partial_{x_2} V_1$ (for the translations) and $i V_1$
(for the phase).

\begin{proposition}
  \label{CP2P811}There exist $K > 0$, $R > 5$, such that, if the following
  three orthogonality conditions are satisfied for $\varphi = V_1 \psi \in
  C^{\infty}_c (\mathbbm{R}^2 \backslash \{ 0 \}, \mathbbm{C})$,
  \[ \int_{B (0, R)} \mathfrak{R}\mathfrak{e} (\partial_{x_1} V_1
     \overline{V_1 \psi}) = \int_{B (0, R)} \mathfrak{R}\mathfrak{e}
     (\partial_{x_2} V_1 \overline{V_1 \psi}) = \int_{B (0, R) \backslash B
     (0, R / 2)} \mathfrak{I}\mathfrak{m} (\psi) = 0, \]
  then,
  \[ B_{V_1} (\varphi) \geqslant K \left( \int_{B (0, 10)} | \nabla \varphi
     |^2 + | \varphi |^2 + \int_{\mathbbm{R}^2 \backslash B (0, 5)} | \nabla
     \psi |^2 | V_1 |^2 +\mathfrak{R}\mathfrak{e}^2 (\psi) | V_1 |^4 + \frac{|
     \psi |^2}{r^2 \ln^2 (r)} \right) . \]
\end{proposition}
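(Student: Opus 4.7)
The proposition is a refinement of the coercivity estimate (2.42) in \cite{DP}, so my plan is to import the structure of that argument and to add the exterior Hardy lower bound.

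The first step is to rewrite $B_{V_1}(\varphi)$ in terms of $\psi = \varphi/V_1$, which is smooth where $V_1 \neq 0$ and causes no integration-by-parts issues because $\varphi$ is compactly supported in $\mathbb{R}^2 \setminus \{0\}$. Using the vortex equation $\Delta V_1 + (1-|V_1|^2)V_1 = 0$ to eliminate the zeroth-order terms, the quadratic form reduces to a weighted Sobolev norm of $\psi$,
\[
B_{V_1}(V_1\psi) \;=\; \int_{\mathbb{R}^2} |V_1|^2 |\nabla \psi|^2 + 2|V_1|^4 \mathfrak{Re}^2(\psi),
\]
which already contains, outside $B(0,5)$, the two terms $|V_1|^2|\nabla\psi|^2$ and $|V_1|^4\mathfrak{Re}^2(\psi)$ appearing in the right-hand side.

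To recover the interior bound $\int_{B(0,10)}|\nabla\varphi|^2 + |\varphi|^2$, I would argue by contradiction and compactness. Assume that a normalized sequence $\varphi_n = V_1 \psi_n$ satisfying the three orthogonality conditions has $B_{V_1}(\varphi_n) \to 0$; weak compactness in the associated weighted space produces a limit $\varphi_\infty = V_1 \psi_\infty$ with $B_{V_1}(\varphi_\infty) = 0$. The known kernel analysis for $V_1$ then gives $\varphi_\infty = \alpha\,\partial_{x_1} V_1 + \beta\,\partial_{x_2} V_1 + \gamma\, i V_1$. The first two orthogonality conditions yield a $2\times 2$ linear system in $(\alpha,\beta)$ whose off-diagonal entries vanish by symmetry of $V_1$ under $x_1\mapsto -x_1$ and $x_2\mapsto -x_2$ and whose diagonal entries are strictly positive for $R$ large enough, forcing $\alpha=\beta=0$. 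For the third component, $\varphi_\infty = \gamma\, iV_1$ gives $\mathfrak{Im}(\psi_\infty) \equiv \gamma$, and the annular condition $\int_{B(0,R)\setminus B(0,R/2)} \mathfrak{Im}(\psi) = 0$ forces $\gamma = 0$. Strong local convergence then contradicts the normalization.

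The delicate piece, which I expect to be the main obstacle, is the Hardy-type term $\int_{\mathbb{R}^2 \setminus B(0,5)} \frac{|\psi|^2}{r^2 \ln^2 r}$. Only $|V_1||\nabla \psi|$ is directly controlled by $B_{V_1}$, and $\mathfrak{Im}(\psi)$ is not pinned at infinity. The remedy is a logarithmic Hardy inequality of the form
\[
\int_{\mathbb{R}^2 \setminus B(0,5)} \frac{(f - \bar f)^2}{r^2 \ln^2 r} \;\lesssim\; \int_{\mathbb{R}^2 \setminus B(0,5)} |\nabla f|^2,
\]
valid for real $f$, where $\bar f$ denotes the average of $f$ on the annulus $B(0,R) \setminus B(0,R/2)$. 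For $\mathfrak{Re}(\psi)$, the bound $|V_1|^2 \ge 1/2$ outside $B(0,5)$ and the pointwise term $|V_1|^4 \mathfrak{Re}^2(\psi)$ remove any need for an orthogonality correction. For $\mathfrak{Im}(\psi)$, the third orthogonality condition is exactly what makes $\bar f = 0$, so the Hardy inequality closes. Combining this Hardy bound with the weighted gradient control on the exterior and the local elliptic bound coming from the contradiction argument gives the stated coercivity.
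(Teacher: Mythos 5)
Your argument has a fatal flaw at the very first step. The identity
\[
B_{V_1} (V_1 \psi) = \int_{\mathbbm{R}^2} | V_1 |^2 | \nabla \psi |^2 + 2 | V_1 |^4 \mathfrak{R}\mathfrak{e}^2 (\psi)
\]
is not correct: expanding $|\nabla(V_1\psi)|^2$, integrating by parts and using the vortex equation leaves an additional cross term
\[
4\, \mathfrak{I}\mathfrak{m} (\nabla V_1 \overline{V_1}) \cdot \mathfrak{I}\mathfrak{m} (\nabla \psi)\, \mathfrak{R}\mathfrak{e} (\psi)
\]
(see Lemma \ref{CP2L412602} with $c = 0$; the divergence-free field $\mathfrak{I}\mathfrak{m}(\nabla V_1\overline{V_1})=|V_1|^2\nabla\theta$ lets you symmetrize $\mathfrak{I}\mathfrak{m}(\nabla\psi\bar\psi)$ into this term, but it does not vanish). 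This term is indefinite and is precisely the whole difficulty of the problem: since $|\mathfrak{I}\mathfrak{m}(\nabla V_1\overline{V_1})| = |V_1|^2/r$, absorbing it pointwise into $|V_1|^2|\nabla\psi|^2 + |V_1|^4\mathfrak{R}\mathfrak{e}^2(\psi)$ by Cauchy--Schwarz would require $r^{-2}\lesssim |V_1|^2\sim r^2$ near the origin, which fails. This is why the coercivity of \cite{DP} (their ``Hardy-type inequality'') is a genuine theorem requiring a decomposition in Fourier harmonics, and why orthogonality to $\partial_{x_1}V_1,\partial_{x_2}V_1$ is needed at all. With the missing term restored, your claim that the exterior terms are ``already contained'' in $B_{V_1}$ no longer follows by inspection: the absorption works for $r\geqslant R_0$ large, but only once the contribution of the core region is controlled, which is the hard part.

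The compactness step is also not closed. To conclude from $B_{V_1}(\varphi_\infty)=0$ that $\varphi_\infty\in\tmop{Span}(\partial_{x_1}V_1,\partial_{x_2}V_1,iV_1)$ you must already know that $B_{V_1}\geqslant 0$ on the relevant class and that its null space is exactly this span --- i.e.\ essentially the nondegeneracy/coercivity of \cite{DP} that you are trying to establish; as written the argument is circular. Moreover, weak lower semicontinuity of $B_{V_1}$ is unclear because of the indefinite cross term, the domain $\mathbbm{R}^2$ is non-compact so the usual ``norm minus compact perturbation'' structure is unavailable, and $iV_1\nin H_{V_1}$, so the limiting object need not live in the space you extract it from. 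The paper avoids all of this by taking the quantitative estimate of \cite{DP} (Lemma 3.1 and (2.42) there), valid under the first two orthogonality conditions, as a black box, and then upgrading it by three elementary inequalities: a Hardy inequality in dimension $4$ for the $0$-harmonic of $\psi$ near the origin, a Poincar\'e inequality on the annulus $B(0,R)\backslash B(0,R/2)$ --- which is where the third orthogonality condition enters --- and the logarithmic Hardy inequality at infinity. Only this last ingredient coincides with your plan; the rest needs to be rebuilt on the corrected expression for $B_{V_1}$.
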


The same result holds if we replace $V_1$ by $V_{- 1}$. We remark that the
coercivity norm is not $\| . \|_{H_{V_1}}$, but is weaker (the decay in
position is stronger), and this is due to the fact that $i V_1 \nin H_{V_1}$.
That is why this result is stated for compactly supported function. The fact
that the support of $\varphi$ avoids $0$ is technical and can be removed by
density (see Lemma \ref{CP2Ndensity}).

\

Proposition \ref{CP2P811} is shown in subsection \ref{CP2s24}. The proofs
there are mostly slight variations or improvements of proofs given in
{\cite{DP}}.

\subsubsection{Coercivity and kernel in the energy space}

The main part of this section consists of coercivity results for the family of
travelling waves constructed in Theorem \ref{th1}. We will show it on $Q_c$
defined in Theorem \ref{th1}, and with (\ref{CP2rotQc}), it extends to all
speed values $\vec{c}$ of small norm. We recall the linearized operator around
$Q_c$:
\[ L_{Q_c} (\varphi) = - \Delta \varphi - i c \partial_{x_2} \varphi - (1 - |
   Q_c |^2) \varphi + 2\mathfrak{R}\mathfrak{e} (\overline{Q_c} \varphi) Q_c .
\]
The natural associated energy space is
\[ H_{Q_c} \assign \left\{ \varphi \in H^1_{\tmop{loc}} (\mathbbm{R}^2), \|
   \varphi \|_{H_{Q_c}} < + \infty \right\}, \]
where
\[ \| \varphi \|_{H_{Q_c}}^2 \assign \int_{\mathbbm{R}^2} | \nabla \varphi |^2
   + | 1 - | Q_c^{\nosymbol} |^2 | | \varphi |^2 +\mathfrak{R}\mathfrak{e}^2
   (\overline{Q_c} \varphi) . \]
First, there are difficulties in the definition of the quadratic form for
$\varphi \in H_{Q_c}$, because of the transport term. A natural definition for
the associated quadratic form for $\varphi \in H_{Q_c}$ could be
\begin{equation}
  \int_{\mathbbm{R}^2} | \nabla \varphi |^2 - (1 - | Q_c |^2) | \varphi |^2 +
  2\mathfrak{R}\mathfrak{e}^2 (\overline{Q_c} \varphi)
  -\mathfrak{R}\mathfrak{e} (i c \partial_{x_2} \varphi \bar{\varphi})
  \label{CP2120403},
\end{equation}
unfortunately the last term is not well defined for $\varphi \in H_{Q_c}$,
because we lack a control on $\mathfrak{I}\mathfrak{m} (\overline{Q_c}
\varphi)$\tmverbatim{{\tmem{}}} in $L^2 (\mathbbm{R}^2)$ in $\| .
\|_{H_{Q_c}}$, see {\cite{MR3043579}}. We can resolve this issue by
decomposing this term and doing an integration by parts, but the proof of the
integration by parts can not be done if we only suppose $\varphi \in H_{Q_c}$
(see section \ref{CP2NSEC4} for more details). We therefore define the
quadratic form with the integration by parts already done. Take a smooth
cutoff function $\eta$ such that $\eta (x) = 0$ on $B (\pm \widetilde{d_c}
\overrightarrow{e_1}, 1)$, $\eta (x) = 1$ on $\mathbbm{R}^2 \backslash B (\pm
\widetilde{d_c} \overrightarrow{e_1}, 2)$, where $\pm \widetilde{d_c}
\overrightarrow{e_1}$ are the zeros of $Q_c$. We define, for $\varphi = Q_c
\psi \in H_{Q_c}$,
\begin{eqnarray}
  B_{Q_c} (\varphi) & \assign & \int_{\mathbbm{R}^2} | \nabla \varphi |^2 - (1
  - | Q_c |^2) | \varphi |^2 + 2\mathfrak{R}\mathfrak{e}^2 (\overline{Q_c}
  \varphi) \nonumber\\
  & - & c \int_{\mathbbm{R}^2} (1 - \eta) \mathfrak{R}\mathfrak{e} (i
  \partial_{x_2} \varphi \bar{\varphi}) - c \int_{\mathbbm{R}^2 \nosymbol}
  \eta \mathfrak{R}\mathfrak{e} (i \partial_{x_2} Q_c \overline{Q_c}) | \psi
  |^2 \nonumber\\
  & + & 2 c \int_{\mathbbm{R}^2} \eta \mathfrak{R}\mathfrak{e} \psi
  \mathfrak{I}\mathfrak{m} (\partial_{x_2} \psi) | Q_c |^2 + c
  \int_{\mathbbm{R}^2} \partial_{x_2} \eta \mathfrak{R}\mathfrak{e} \psi
  \mathfrak{I}\mathfrak{m} \psi | Q_c |^2 \nonumber\\
  & + & c \int_{\mathbbm{R}^2} \eta \mathfrak{R}\mathfrak{e} \psi
  \mathfrak{I}\mathfrak{m} \psi \partial_{x_2} (| Q_c |^2) . 
  \label{CP2truebqc}
\end{eqnarray}
See subsection \ref{CP22241103} for the details of the computation. For
functions $\varphi \in H^1 (\mathbbm{R}^2)$ for instance, both quadratic forms
(\ref{CP2120403}) and (\ref{CP2truebqc}) are well defined and are equal (see
Lemma \ref{CP2L3150403}). We will show that $B_{Q_c}$ is well defined for
$\varphi \in H_{Q_c}$ (see Lemma \ref{CP2finitebilinear}), and that for $A \in
\{ \partial_{x_1} Q_c, \partial_{x_2} Q_c, \partial_c Q_c, \partial_{c^{\bot}}
Q_c \}$, $B_{Q_c} (A) = \langle L_{Q_c} (A), A \rangle$.

\

From Proposition \ref{CP2prop5}, we know that $Q_c$ has only two zeros. We
will write the quadratic form $B_{Q_c}$ around the zeros of $Q_c$ (for a
function $\varphi = Q_c \psi \in H_{Q_c}$) as the quadratic form for one
vortex (computed in Proposition \ref{CP2P811}), up to some small error. As we
want to avoid to add an orthogonality on the phase, we change the coercivity
norm to a weaker semi-norm, that avoids $i Q_c$, the direction connected to
the shift of phase.

We will therefore infer a coercivity result under four orthogonality
conditions near the zeros of $Q_c$ (two for each zero). Then, we shall show
that far from the zeros of $Q_c$, the coercivity holds, without any additional
orthogonality conditions.

\begin{proposition}
  \label{CP205218}There exists $c_0, R > 0$ such that, for $0 < c \leqslant
  c_0$, if one defines $\tilde{V}_{\pm 1}$ to be the vortices centered around
  $\pm \widetilde{d_c} \overrightarrow{e_1}$ ($\widetilde{d_c}$ is defined in
  Proposition \ref{CP2prop5}), there exist $K > 0$ such that for $\varphi =
  Q_c \psi \in H_{Q_c}$, $0 < c < c_0$, if the four orthogonality conditions
  \[ \int_{B (\tilde{d}_c \overrightarrow{e_1}, R)} \mathfrak{R}\mathfrak{e}
     \left( \partial_{x_1} \tilde{V}_1 \overline{_{} \widetilde{V_1} \psi}
     \right) = \int_{B (\tilde{d}_c \overrightarrow{e_1}, R)}
     \mathfrak{R}\mathfrak{e} \left( \partial_{x_2} \widetilde{V_1}
     \overline{\widetilde{V_1} \psi} \right) = 0, \]
  \[ \int_{B (- \tilde{d}_c \overrightarrow{e_1}, R)} \mathfrak{R}\mathfrak{e}
     (\partial_{x_1} \tilde{V}_{- 1} \overline{\tilde{V}_{- 1} \psi}) =
     \int_{B (- \tilde{d}_c \overrightarrow{e_1}, R)} \mathfrak{R}\mathfrak{e}
     (\partial_{x_2} \tilde{V}_{- 1} \overline{\tilde{V}_{- 1} \psi}) = 0 \]
  are satisfied, then, for
  \[ \| \varphi \|_{\mathcal{C}}^2 \assign \int_{\mathbbm{R}^2} | \nabla \psi
     |^2 | Q_c |^4 +\mathfrak{R}\mathfrak{e}^2 (\psi) | Q_c |^4, \]
  the following coercivity result holds:
  \[ B_{Q_c} (\varphi) \geqslant K \| \varphi \|_{\mathcal{C}}^2 . \]
\end{proposition}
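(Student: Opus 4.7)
The plan is to reduce the coercivity of $B_{Q_c}$ to Proposition \ref{CP2P811} applied near each zero of $Q_c$, combined with a direct coercivity estimate far from both zeros. First I would introduce a smooth partition of unity $1 = \chi_+ + \chi_- + \chi_{\infty}$, where $\chi_\pm$ is supported in $B(\pm\widetilde{d_c}\overrightarrow{e_1}, R_0)$ for some fixed $R_0$ independent of $c$, and $\chi_{\infty}$ is supported outside $B(\pm\widetilde{d_c}\overrightarrow{e_1}, R_0/2)$. Since by Proposition \ref{CP2prop5} the only zeros of $Q_c$ are $\pm\widetilde{d_c}\overrightarrow{e_1}$ and $\widetilde{d_c}\sim 1/c$, the supports of $\chi_+$ and $\chi_-$ are disjoint for $c$ small, and on the support of $\chi_{\infty}$ the modulus $|Q_c|$ is uniformly bounded below. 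Writing $\varphi = Q_c\psi$ and expanding $B_{Q_c}(\varphi)$ via this partition produces three localized pieces plus cross terms supported in compact annuli, which can be handled separately.

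In the far region (support of $\chi_{\infty}$), using the pointwise bounds on $1-|Q_c|^2$ from \cite{CP1} recalled in Theorem \ref{th1}, the potentially negative term $-(1-|Q_c|^2)|\varphi|^2$ becomes a perturbation of the manifestly positive gradient and $\mathfrak{R}\mathfrak{e}^2(\overline{Q_c}\varphi)$ terms, giving directly a lower bound by $\int_{\operatorname{supp}\chi_{\infty}} |\nabla\psi|^2|Q_c|^4 + \mathfrak{R}\mathfrak{e}^2(\psi)|Q_c|^4$ with no orthogonality conditions needed. All transport terms in (\ref{CP2truebqc}) surviving away from the vortices carry an explicit factor $c$ and either have integrable structure controlled by $c\|\varphi\|_{\mathcal{C}}^2$, or come from $\partial_{x_2}(|Q_c|^2)$, which decays off the vortex regions; these can therefore be absorbed for $c$ sufficiently small.

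In each near-vortex region (support of $\chi_\pm$), I would compare $B_{Q_c}(\chi_\pm\varphi)$ to $B_{\widetilde V_{\pm 1}}(\chi_\pm\varphi)$ using the closeness of $Q_c$ to $\widetilde V_{\pm 1}$ on balls of fixed radius established in \cite{CP1}, with errors $o_{c\to 0}(1)$ in the appropriate bilinear-form norm. The two translation orthogonality assumptions of Proposition \ref{CP205218} are exactly the first two orthogonality conditions of Proposition \ref{CP2P811} relative to $\widetilde V_{\pm 1}$. To apply Proposition \ref{CP2P811} I still need its third (phase) orthogonality, which I would enforce by subtracting an appropriate multiple of $i\widetilde V_{\pm 1}$: writing $\chi_\pm\varphi = \widetilde\varphi_\pm + i\lambda_\pm\widetilde V_{\pm 1}$ with $\lambda_\pm\in\mathbbm{R}$ chosen so that $\widetilde\varphi_\pm/\widetilde V_{\pm 1}$ satisfies the annular condition on $\mathfrak{I}\mathfrak{m}$, then applying Proposition \ref{CP2P811} to $\widetilde\varphi_\pm$. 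The correction $i\lambda_\pm\widetilde V_{\pm 1}$ lies exactly in the phase direction that the weaker semi-norm $\|\cdot\|_{\mathcal{C}}$ ignores, so the decomposition does not produce uncontrolled terms on the right-hand side. The transport contributions vanish in the deepest part of this region because of the cutoff $\eta$ in (\ref{CP2truebqc}), which was introduced precisely to avoid the zeros.

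The main obstacle I expect is keeping all error terms quantitative and uniform as $c \to 0$. In particular, the last three lines of (\ref{CP2truebqc}) couple $\mathfrak{R}\mathfrak{e}\psi$ and $\mathfrak{I}\mathfrak{m}\psi$ with weights involving $|Q_c|^2$ or $\partial_{x_2}(|Q_c|^2)$, and are delicate because $\|\cdot\|_{\mathcal{C}}$ does not control $\mathfrak{I}\mathfrak{m}\psi$ globally. I expect to handle them by splitting into regions where $\partial_{x_2}(|Q_c|^2)$ is small away from the vortices (giving a factor $c$) and regions where $\mathfrak{I}\mathfrak{m}\psi$ is locally controlled by the $|\psi|^2/(r^2\ln^2 r)$ term of the vortex coercivity on a ball around each zero. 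Once these contributions are absorbed into the dominant positive terms from the three regions, and the $o_{c\to 0}(1)$ errors from the vortex comparison are swallowed, the required uniform constant $K>0$ will follow for all $c \in (0, c_0)$.
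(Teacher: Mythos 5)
Your overall architecture is the same as the paper's: split $\mathbbm{R}^2$ into two balls $B(\pm\widetilde{d_c}\overrightarrow{e_1},D)$ of fixed radius and the exterior, apply the one-vortex coercivity of Proposition \ref{CP2P811} (in its localized form) near each zero, remove the missing phase orthogonality by subtracting an element of the phase direction, and get unconditional coercivity outside. Two remarks on the near-vortex part: the paper subtracts $i\lambda Q_c$ rather than $i\lambda_\pm\widetilde V_{\pm1}$, because the localized quadratic form is \emph{exactly} invariant under $\varphi\mapsto\varphi-i\lambda Q_c$ (it only sees $\nabla\psi$ and $\mathfrak{R}\mathfrak{e}(\psi)$), and one must also check that this shift preserves the two translation orthogonality conditions — this works because $\mathfrak{R}\mathfrak{e}(i\,\partial_{x_j}\widetilde V_1\overline{\widetilde V_1})$ has no zero harmonic. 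Your approximate version with $i\widetilde V_{\pm1}$ can be made to work but generates $o_{c\to0}(1)$ errors you would need to track. You should also note that the whole argument is run on test functions vanishing near $\pm\widetilde{d_c}\overrightarrow{e_1}$ and concluded by density (Lemma \ref{CP2Ndensity}), since dividing by $Q_c$ and the integrations by parts are not licensed for general $\varphi\in H_{Q_c}$.

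The genuine gap is in your far-region argument. You claim that on $\operatorname{supp}\chi_\infty$ the term $-(1-|Q_c|^2)|\varphi|^2$ is a perturbation of $|\nabla\varphi|^2+2\mathfrak{R}\mathfrak{e}^2(\overline{Q_c}\varphi)$ and that this yields a lower bound by $\int|\nabla\psi|^2|Q_c|^4+\mathfrak{R}\mathfrak{e}^2(\psi)|Q_c|^4$ with no conditions. This fails: $|\varphi|^2=|Q_c|^2|\psi|^2$ contains $\mathfrak{I}\mathfrak{m}^2(\psi)$, which is controlled by \emph{neither} side of your inequality — the function $\varphi=iQ_c$ has $\|\varphi\|_{\mathcal{C}}=0$ while $\int|1-|Q_c|^2||iQ_c|^2=+\infty$, and since $1-|Q_c|^2$ decays only like $r^{-2}$ (and changes sign at infinity), no Hardy-type absorption into $\int|\nabla\psi|^2$ is available for the zero harmonic of $\mathfrak{I}\mathfrak{m}(\psi)$. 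The correct mechanism, which is the content of Lemma \ref{CP2L412602} and the reason for the specific shape of (\ref{CP2truebqc}), is that in $\{\eta=1\}$ the potential term is \emph{exactly cancelled} by rewriting the quadratic form multiplicatively using $(\tmop{TW}_c)(Q_c)=0$ and integrating by parts; what survives there is $\int\eta\bigl(|\nabla\psi|^2|Q_c|^2+2\mathfrak{R}\mathfrak{e}^2(\psi)|Q_c|^4+4\mathfrak{I}\mathfrak{m}(\nabla Q_c\overline{Q_c})\cdot\mathfrak{I}\mathfrak{m}(\nabla\psi)\mathfrak{R}\mathfrak{e}(\psi)+2c|Q_c|^2\mathfrak{I}\mathfrak{m}(\partial_{x_2}\psi)\mathfrak{R}\mathfrak{e}(\psi)\bigr)$, in which every off-diagonal term couples $\mathfrak{R}\mathfrak{e}(\psi)$ with $\nabla\psi$ only and carries a small factor ($c$, or $1/(1+D)$ from the decay $|\mathfrak{I}\mathfrak{m}(\nabla Q_c\overline{Q_c})|\leqslant K/(1+\tilde r)$), so Cauchy--Schwarz absorbs it into the diagonal for $D$ large and $c$ small. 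Without this algebraic cancellation your far-region estimate does not close, and no choice of constants repairs it.
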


We will check that $\| \varphi \|_{\mathcal{C}}$ is well defined for $\varphi
\in H_{Q_c}$ (see section \ref{CP2NSEC4}). Proposition \ref{CP205218} is
proven in subsection \ref{CP2concl}.

We point out that $\varphi = Q_c \psi \mapsto \| \varphi \|_{\mathcal{C}}$ is
not a norm but a seminorm since $\int_{\mathbbm{R}^2} | \nabla \psi |^2 | Q_c
|^4 +\mathfrak{R}\mathfrak{e}^2 (\psi) | Q_c |^4 = 0$ implies only that
$\varphi = \lambda i Q_c$ for some $\lambda \in \mathbbm{R}$, and $i Q_c$ is
the direction connected to the shift of phase.

\

Now, we want to change the orthogonality conditions in Proposition
\ref{CP205218} to quantities linked to the parameters $\vec{c}$ and $X$ of the
travelling waves, that is $\partial_{x_1} Q_c, \partial_{x_2} Q_c, \partial_c
Q_c$ and $\partial_{c^{\bot}} Q_c$. We can show that for $\varphi = Q_c \psi
\in H_{Q_c}$, for instance
\[ \left| \int_{B (\tilde{d}_c \overrightarrow{e_1}, R)}
   \mathfrak{R}\mathfrak{e} \left( \partial_{x_1} \tilde{V}_1
   \overline{\widetilde{V_1} \psi} \right) \right| \leqslant K \| \varphi
   \|_{\mathcal{C}}, \]
but such an estimate might not hold for $\mathfrak{R}\mathfrak{e} \int_{B
(\tilde{d}_c \overrightarrow{e_1}, R) \cup B (- \tilde{d}_c
\overrightarrow{e_1}, R)} \partial_{x_1} Q_c \overline{Q_c \psi}$ (because of
the lack of control on $\mathfrak{I}\mathfrak{m} (\psi)$ in $L^2
(\mathbbm{R}^2)$ in the coercivity norm $\| . \|_{\mathcal{C}}$). It is
therefore difficult to have a local orthogonality condition directly on
$\partial_{x_1} Q_c$ for instance.

To solve this issue, we shall use the harmonic decomposition around $\pm
\tilde{d}_c \overrightarrow{e_1}$. For the constructed travelling wave $Q_c$,
two distances play a particular role, they are $d_c$ (defined in Theorem
\ref{th1}) and $\widetilde{d_c}$ (defined in Proposition \ref{CP2prop5} and is
connected to the position of the zeros of $Q_c$). In particular, we define the
following polar coordinates for $x \in \mathbbm{R}^2$:
\[ r e^{i \theta} \assign x \in \mathbbm{R}^2, \]
\[ r_{\pm 1} e^{i \theta_{\pm 1}} \assign x - (\pm d_c) \overrightarrow{e_1}
   \in \mathbbm{R}^2, \]
\[ \tilde{r}_{\pm 1} e^{i \tilde{\theta}_{\pm 1}} \assign x - (\pm
   \widetilde{d_c}) \overrightarrow{e_1} \in \mathbbm{R}^2 . \]
We will also use $\tilde{r} \assign \min (r_1, r_{- 1})$ and $\check{r}
\assign \min (\widetilde{r_1}, \tilde{r}_{- 1})$. For a function $\psi$ such
that $Q_c \psi \in H^1_{\tmop{loc}} (\mathbbm{R}^2)$ and $j \in \mathbbm{Z}$,
we define its $j -$harmonic around $\pm \widetilde{d_c} \overrightarrow{e_1}$
by the radial function around $\pm \widetilde{d_c} \overrightarrow{e_1}$:
\[ \psi^{j, \pm 1} (\tilde{r}_{\pm 1}) \assign \frac{1}{2 \pi} \int_0^{2 \pi}
   \psi (\tilde{r}_{\pm 1} e^{i \tilde{\theta}_{\pm 1}}) e^{- i j
   \tilde{\theta}_{\pm 1}} d \tilde{\theta}_{\pm 1} . \]
Summing over the Fourier modes leads to
\[ \psi (x) = \sum_{j \in \mathbbm{Z}} \psi^{j, \pm 1} (\tilde{r}_{\pm 1})
   e^{i j \tilde{\theta}_{\pm 1}} . \]
and we define, to simplify the notations later on, the function $\psi^{\neq
0}$, by
\[ \psi^{\neq 0} (x) \assign \psi (x)^{^{}} - \psi^{0, 1} (\tilde{r}_1) \]
in the right half-plane, and
\[ \psi^{\neq 0} (x) \assign \psi (x)^{^{}} - \psi^{0, - 1} (\tilde{r}_{- 1})
\]
in the left half-plane. This notation will only be used far from the line $\{
x_1 = 0 \}$. We now state the main coercivity result.

\begin{theorem}
  \label{CP2th2}There exist $c_0, K, \beta_0 > 0$ such that, for $R > 0$
  defined in Proposition \ref{CP205218}, for any $0 < \beta < \beta_0$, there
  exists $c_0 (\beta), K (\beta) > 0$ such that, for $c < c_0 (\beta)$, if
  $\varphi = Q_c \psi \in H_{Q_c}$ satisfies the following three orthogonality
  conditions:
  \[ \mathfrak{R}\mathfrak{e} \int_{B (\tilde{d}_c \overrightarrow{e_1}, R)
     \cup B (- \tilde{d}_c \overrightarrow{e_1}, R)} \partial_{x_1} Q_c
     \overline{Q_c \psi^{\neq 0}} =\mathfrak{R}\mathfrak{e} \int_{B
     (\tilde{d}_c \overrightarrow{e_1}, R) \cup B (- \tilde{d}_c
     \overrightarrow{e_1}, R)} \partial_{x_2} Q_c \overline{Q_c \psi^{\neq 0}}
     = 0 \]
  and
  \[ \mathfrak{R}\mathfrak{e} \int_{B (\tilde{d}_c \overrightarrow{e_1}, R)
     \cup B (- \tilde{d}_c \overrightarrow{e_1}, R)} \partial_c Q_c
     \overline{Q_c \psi^{\neq 0}} = 0, \]
  then,
  \[ B_{Q_c} (\varphi) \geqslant K (\beta) c^{2 + \beta} \| \varphi
     \|_{\mathcal{C}}^2, \]
  with
  \[ \| \varphi \|_{\mathcal{C}}^2 = \int_{\mathbbm{R}^2} | \nabla \psi |^2 |
     Q_c |^4 +\mathfrak{R}\mathfrak{e}^2 (\psi) | Q_c |^4 . \]
  If $\varphi = Q_c \psi$ also satisfies the fourth orthogonality condition
  (with $0 < c < c_0$)
  \[ \mathfrak{R}\mathfrak{e} \int_{B (\tilde{d}_c \overrightarrow{e_1}, R)
     \cup B (- \tilde{d}_c \overrightarrow{e_1}, R)} \partial_{c^{\bot}} Q_c
     \overline{Q_c \psi^{\neq 0}} = 0, \]
  then
  \[ B_{Q_c} (\varphi) \geqslant K \| \varphi \|_{\mathcal{C}}^2 . \]
\end{theorem}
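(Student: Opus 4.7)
The plan is to deduce Theorem \ref{CP2th2} from Proposition \ref{CP205218} via a finite dimensional modification argument. The key algebraic observation is that, thanks to Theorem \ref{th1}, the identity $\partial_{c^{\bot}}Q_c(x)=-x^{\bot}\cdot\nabla Q_c(x)$ from Lemma \ref{CP2a}, and $|d_c-\widetilde{d_c}|=o_{c\to 0}(1)$ from Proposition \ref{CP2prop5}, inside each ball $B(\pm\widetilde{d_c}\overrightarrow{e_1},R)$ one has
\[
\partial_{x_1}Q_c\approx\partial_{x_1}\widetilde{V}_{\pm 1},\qquad\partial_{x_2}Q_c\approx\partial_{x_2}\widetilde{V}_{\pm 1},
\]
\[
c^2\partial_cQ_c\approx\pm\partial_{x_1}\widetilde{V}_{\pm 1},\qquad\partial_{c^{\bot}}Q_c\approx\mp\widetilde{d_c}\,\partial_{x_2}\widetilde{V}_{\pm 1},
\]
up to remainders negligible in the local pairings of interest. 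Consequently the four rescaled global directions $\{\partial_{x_1}Q_c,\partial_{x_2}Q_c,c^2\partial_cQ_c,c\,\partial_{c^{\bot}}Q_c\}$ form an $o(1)$ perturbation of an invertible symmetric/antisymmetric change of basis for $\{\chi_\pm\partial_{x_k}\widetilde{V}_{\pm 1}\}_{k=1,2}$. Moreover the $\psi^{\neq 0}$ truncation ensures, via a Poincar\'e-type estimate on the nonzero harmonics of $\psi$ around each vortex, that every pairing $|\mathfrak{R}\mathfrak{e}\int_{B(\pm\widetilde{d_c}\overrightarrow{e_1},R)}A\,\overline{Q_c\psi^{\neq 0}}|$ with $A$ of first harmonic type near the vortex is bounded by $C\|\varphi\|_{\mathcal{C}}$.

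For the four condition case, pick cutoffs $\chi_\pm$ supported in $B(\pm\widetilde{d_c}\overrightarrow{e_1},R)$ and determine scalars $\lambda^\pm_k$ ($k=1,2$) by requiring that
\[
\tilde{\varphi}\assign\varphi-\sum_{\pm}\sum_{k=1,2}\lambda^\pm_k\,\chi_{\pm}\,\partial_{x_k}\widetilde{V}_{\pm 1}
\]
satisfies the four local orthogonality conditions of Proposition \ref{CP205218}. The linear system for the $\lambda^\pm_k$ is invertible by the preceding paragraph, and the four global orthogonality conditions translate into $|\lambda^\pm_k|\leqslant C\|\varphi\|_{\mathcal{C}}$. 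Applying Proposition \ref{CP205218} to $\tilde{\varphi}$, expanding $B_{Q_c}(\varphi)$ bilinearly and using the values of $B_{Q_c}$ on $\partial_{x_k}Q_c,\partial_cQ_c,\partial_{c^{\bot}}Q_c$ from Proposition \ref{CP2prop5} to evaluate the diagonal $\lambda^\pm_k\lambda^\pm_l$-sums, the bound $B_{Q_c}(\varphi)\geqslant K\|\varphi\|_{\mathcal{C}}^2$ follows once the cross terms $\lambda^\pm_k B_{Q_c}(\tilde{\varphi},\chi_\pm\partial_{x_k}\widetilde{V}_{\pm 1})$ are absorbed by Cauchy--Schwarz.

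For the three condition case the direction associated with $\partial_{c^{\bot}}Q_c$, i.e.\ the antisymmetric $\partial_{x_2}$ combination of the local vortex derivatives, is not constrained by orthogonality, and one of the $\lambda$'s, call it $\mu$, is left as a free parameter. I dichotomise on whether $|\mu|\leqslant c^{1+\beta/2}\|\varphi\|_{\mathcal{C}}$. In the small regime the uncontrolled direction contributes negligibly and the four condition argument still delivers $B_{Q_c}(\varphi)\geqslant (K/2)\|\varphi\|_{\mathcal{C}}^2\gg K(\beta)c^{2+\beta}\|\varphi\|_{\mathcal{C}}^2$. In the large regime, the positive diagonal contribution
\[
\mu^2 B_{Q_c}(\partial_{c^{\bot}}Q_c)\geqslant \pi\mu^2\geqslant \pi\,c^{2+\beta}\|\varphi\|_{\mathcal{C}}^2,
\]
coming from $B_{Q_c}(\partial_{c^{\bot}}Q_c)=2\pi+o(1)$ in Proposition \ref{CP2prop5}, dominates the expansion provided that the cross terms involving $\mu$ can be absorbed.

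The main obstacle is precisely this last cross term $\mu B_{Q_c}(\tilde{\varphi},\partial_{c^{\bot}}Q_c)$ in the three condition large regime. A direct Cauchy--Schwarz fails since $\|\partial_{c^{\bot}}Q_c\|_{\mathcal{C}}^2$ is of order $c^{-2}$ (the vortices lie at distance $\sim 1/c$ and $\partial_{c^{\bot}}Q_c=-x^{\bot}\cdot\nabla Q_c$ is concentrated there). The remedy is to exploit $L_{Q_c}(\partial_{c^{\bot}}Q_c)=0$ and integrate by parts in $B_{Q_c}(\tilde{\varphi},\partial_{c^{\bot}}Q_c)$, reducing it to local pairings of $\tilde{\varphi}$ against $\partial_{x_2}\widetilde{V}_{\pm 1}$-type directions near the vortex centers, which are already controlled by the coercivity norm of $\tilde{\varphi}$. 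The small polynomial or logarithmic losses in $c$ arising in these integrations by parts are exactly what forces the extra $c^\beta$ slack in the exponent, explaining the range $0<\beta<\beta_0$ in the statement.
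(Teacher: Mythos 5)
Your overall strategy (restore the orthogonality conditions of Proposition \ref{CP205218} by subtracting a finite-dimensional correction, then expand $B_{Q_c}$ bilinearly and control the diagonal and cross terms) is the same as the paper's, and your dichotomy on the size of the free coefficient in the three-condition case is in the right spirit. However, there are two genuine gaps.

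First, you correct $\varphi$ by \emph{localized} functions $\chi_\pm\partial_{x_k}\widetilde V_{\pm1}$, but then evaluate the diagonal terms using the values of $B_{Q_c}$ on the \emph{global} directions from Proposition \ref{CP2prop5}. These are not interchangeable: $B_{Q_c}(\chi_\pm\partial_{x_k}\widetilde V_{\pm 1})$ is an $O(1)$ quantity of unknown sign (the cutoff and the replacement of $Q_c$ by $\widetilde V_{\pm1}$ destroy the kernel property), and in particular the crucial positivity $B_{Q_c}(\partial_{c^\bot}Q_c)=cP_2(Q_c)=2\pi+o_{c\to0}(1)$ is a global momentum identity that has no analogue for a truncated local combination. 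Moreover the cross terms $B_{Q_c}(\tilde\varphi,\chi_\pm\partial_{x_k}\widetilde V_{\pm1})=\langle L_{Q_c}(\chi_\pm\partial_{x_k}\widetilde V_{\pm1}),\tilde\varphi\rangle$ contain commutator terms supported where $\nabla\chi_\pm\neq0$ that pair against $\mathfrak{I}\mathfrak{m}(\psi)$ in $L^2$, which the seminorm $\|\cdot\|_{\mathcal C}$ does not control; ``absorbed by Cauchy--Schwarz'' hides exactly the obstruction the theorem is designed to circumvent. The paper instead corrects by the global directions $\partial_{x_1}Q_c,\partial_{x_2}Q_c,c^2\partial_cQ_c,c\,\partial_{c^\bot}Q_c$, whose images under $L_{Q_c}$ are explicitly $0$, $i\partial_{x_2}Q_c$ or $-ic\partial_{x_1}Q_c$ up to scalars, so every cross term reduces to $c^2\langle Q_c\psi,i\nabla Q_c\rangle$ and is bounded by $Kc^2\ln(1/c)\|\varphi\|_{\mathcal C}$ via the energy estimate $\int_{\mathbb{R}^2}|\mathfrak{I}\mathfrak{m}(\nabla Q_c\overline{Q_c})|^2/|Q_c|^2\leqslant K\ln(1/c)$.

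Second, your treatment of the decisive cross term rests on the claim $L_{Q_c}(\partial_{c^\bot}Q_c)=0$, which is false: Lemma \ref{CP2a} gives $L_{Q_c}(\partial_{c^\bot}Q_c)=-ic\,\partial_{x_1}Q_c$. The term is controlled not by an integration by parts reducing to local pairings but by the identity just quoted together with Lemma \ref{CP2L390911}. Relatedly, the exponent $c^{2+\beta}$ does not come from logarithmic losses in integrations by parts; it comes from $\|c\,\partial_{c^\bot}Q_c\|_{\mathcal C}=o_{c\to0}^{\beta}(c^{-\beta})$, which is the price of comparing $\|\varphi\|_{\mathcal C}$ with $\|\varphi^*\|_{\mathcal C}$ after removing the unconstrained component, against the positive diagonal contribution $\varepsilon_4^2c^2(2\pi+o_{c\to0}(1))$. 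Finally, in the four-condition case you record only $|\lambda_k^\pm|\leqslant C\|\varphi\|_{\mathcal C}$, whereas the argument needs the coefficients to be $o_{c\to0}(c^{\beta_0})\|\varphi\|_{\mathcal C}$ (which the orthogonality hypotheses do provide) precisely because of this unbounded $\mathcal C$-norm of the $\partial_{c^\bot}$ direction.
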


Theorem \ref{CP2th2} shows that under four orthogonality conditions, we have a
coercivity result in a weaker norm $\| . \|_{\mathcal{C}}$, instead of $\| .
\|_{H_{Q_c}}$ with a constant independent of $c$, and with only three
orthogonality conditions, we have the coercivity but the constant is a
$O^{\beta}_{c \rightarrow 0} (c^{2 + \beta})$. This is because, of the four
particular directions of the linearized operator, $\partial_{x_1} Q_c,
\partial_{x_2} Q_c$ are in its kernel, $\partial_c Q_c$ is a small negative
direction, and $\partial_{c^{\bot}} Q_c$ is a small positive direction (see
Proposition \ref{CP2prop5}). About the orthogonality conditions, we remark
that, for $\varphi = Q_c \psi \in H_{Q_c}$,
\[ \mathfrak{R}\mathfrak{e} \int_{B (\tilde{d}_c \overrightarrow{e_1}, R) \cup
   B (- \tilde{d}_c \overrightarrow{e_1}, R)} \partial_{x_1} Q_c \overline{Q_c
   \psi^{\neq 0}} \]
is close to
\[ \mathfrak{R}\mathfrak{e} \int_{B (\tilde{d}_c \overrightarrow{e_1}, R) \cup
   B (- \tilde{d}_c \overrightarrow{e_1}, R)} \partial_{x_1} Q_c \overline{Q_c
   \psi} \]
(we have $\mathfrak{R}\mathfrak{e} \int_{B (\tilde{d}_c \overrightarrow{e_1},
R)} \partial_{x_1} Q_c \overline{Q_c \psi^{0, 1}} = o_{c \rightarrow 0} (1) \|
\varphi \|_{H_{Q_c}}$ for instance), but the first quantity can be controlled
by $\| \varphi \|_{\mathcal{C}}$, and the second can not be.

Theorem \ref{CP2th2} is a consequence of Proposition \ref{CP205218}, and is
shown in section \ref{CP2s32}. From this result, we can also deduce the kernel
of the linearized operator in $H_{Q_c}$.

\begin{corollary}
  \label{CP2Cor41}There exists $c_0 > 0$ such that, for $0 < c < c_0$, $Q_c$
  defined in Theorem \ref{th1}, for $\varphi \in H_{Q_c}$, the following
  properties are equivalent:
  \begin{enumerateroman}
    \item $L_{Q_c} (\varphi) = 0$ in $H^{- 1} (\mathbbm{R}^2)$, that is,
    $\forall \varphi^{\ast} \in H^1 (\mathbbm{R}^2),$
    \[ \int_{\mathbbm{R}^2} \mathfrak{R}\mathfrak{e} (\nabla \varphi . \nabla
       \overline{\varphi^{\ast}}) - (1 - | Q_c |^2) \mathfrak{R}\mathfrak{e}
       (\varphi \overline{\varphi^{\ast}}) + 2\mathfrak{R}\mathfrak{e}
       (\overline{Q_c} \varphi) \mathfrak{R}\mathfrak{e} (\overline{Q_c}
       \varphi^{\ast}) -\mathfrak{R}\mathfrak{e} (i c \partial_{x_2} \varphi
       \overline{\varphi^{\ast}}) = 0. \]
    \item $\varphi \in \tmop{Span}_{\mathbbm{R}} (\partial_{x_1} Q_c,
    \partial_{x_2} Q_c)$.
  \end{enumerateroman}
\end{corollary}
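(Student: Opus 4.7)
The equivalence is proved by establishing each implication separately.

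For (ii) $\Rightarrow$ (i), since $(\tmop{TW}_c)$ is invariant under translations, differentiating $(\tmop{TW}_c)(Q_c) = 0$ with respect to $x_1$ or $x_2$ gives $L_{Q_c}(\partial_{x_j} Q_c) = 0$ pointwise. Combined with the smoothness of $Q_c$ and the decay of $\partial_{x_j} Q_c$, this passes to the weak $H^{- 1}$ formulation via integration by parts against any $\varphi^{\ast} \in H^1(\mathbbm{R}^2)$, the transport term being handled as explained after (\ref{CP2truebqc}).

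For (i) $\Rightarrow$ (ii), the strategy is to apply the weak form of Theorem \ref{CP2th2} with only three orthogonality conditions, exploiting the fact that the extra direction $\partial_{c^{\bot}} Q_c$ is not needed once we know $\partial_c Q_c$ is a \emph{negative} direction for $B_{Q_c}$ (Proposition \ref{CP2prop5}). Differentiating $(\tmop{TW}_c)(Q_c) = 0$ with respect to $c$ yields the key identity
\[ L_{Q_c}(\partial_c Q_c) = i \partial_{x_2} Q_c . \]
Given $\varphi \in H_{Q_c}$ with $L_{Q_c}(\varphi) = 0$, pick $\lambda_1, \lambda_2, \mu \in \mathbbm{R}$ so that
\[ \tilde{\varphi} := \varphi - \lambda_1 \partial_{x_1} Q_c - \lambda_2 \partial_{x_2} Q_c - \mu \partial_c Q_c \]
satisfies the three orthogonality conditions of Theorem \ref{CP2th2} (those involving $\partial_{x_1} Q_c, \partial_{x_2} Q_c, \partial_c Q_c$). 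Solvability of the resulting $3 \times 3$ linear system should follow from the dominance of the diagonal entries, the off-diagonal ones being small thanks to the $x_1$-reflection symmetry of $Q_c$.

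Since $\partial_{x_1} Q_c, \partial_{x_2} Q_c \in \ker L_{Q_c}$, one has $L_{Q_c}(\tilde{\varphi}) = - \mu \cdot i \partial_{x_2} Q_c$. Using the bilinear form associated with $B_{Q_c}$ and the symmetric relation $B_{Q_c}(\tilde{\varphi}, \partial_c Q_c) = \langle L_{Q_c}(\tilde{\varphi}), \partial_c Q_c \rangle = \langle L_{Q_c}(\partial_c Q_c), \tilde{\varphi} \rangle$, the identity $\langle i \partial_{x_2} Q_c, \partial_c Q_c \rangle = \partial_c P_2(Q_c)$ from Proposition \ref{CP2prop5} gives $\langle i \partial_{x_2} Q_c, \tilde{\varphi} \rangle = - \mu \partial_c P_2(Q_c)$, whence
\[ B_{Q_c}(\tilde{\varphi}) = \langle L_{Q_c}(\tilde{\varphi}), \tilde{\varphi} \rangle = - \mu \langle i \partial_{x_2} Q_c, \tilde{\varphi} \rangle = \mu^2 \partial_c P_2(Q_c) = - \frac{2 \pi + o_{c \rightarrow 0}(1)}{c^2} \mu^2 \leqslant 0 . \]
On the other hand, Theorem \ref{CP2th2} gives $B_{Q_c}(\tilde{\varphi}) \geqslant K(\beta) c^{2 + \beta} \| \tilde{\varphi} \|_{\mathcal{C}}^2 \geqslant 0$, which forces $\mu = 0$ and $\| \tilde{\varphi} \|_{\mathcal{C}} = 0$. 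The vanishing of the seminorm yields $\tilde{\varphi} = \lambda i Q_c$ for some $\lambda \in \mathbbm{R}$, but $i Q_c \nin H_{Q_c}$ since $1 - | Q_c |^2$ decays only like $1 / r^2$, making $\int | 1 - | Q_c |^2 | | Q_c |^2 = + \infty$; therefore $\lambda = 0$ and $\varphi = \lambda_1 \partial_{x_1} Q_c + \lambda_2 \partial_{x_2} Q_c$.

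The main technical obstacle is the rigorous justification of the various pairings and of the bilinear-form symmetry $\langle L_{Q_c}(\tilde{\varphi}), \partial_c Q_c \rangle = \langle L_{Q_c}(\partial_c Q_c), \tilde{\varphi} \rangle$: neither $\tilde{\varphi}$ (in $H_{Q_c}$) nor $\partial_c Q_c$ (only in $X_p$ by Theorem \ref{th1}) lies in $H^1(\mathbbm{R}^2)$, so the $H^{- 1}$-duality cannot be invoked directly. This should be handled through the explicit integration-by-parts form (\ref{CP2truebqc}) of $B_{Q_c}$, together with the already-announced identity $B_{Q_c}(A) = \langle L_{Q_c}(A), A \rangle$ for $A \in \{ \partial_{x_1} Q_c, \partial_{x_2} Q_c, \partial_c Q_c, \partial_{c^{\bot}} Q_c \}$ extended to the bilinear form by polarisation, and a cutoff/density approximation in $H_{Q_c}$ to legitimately test $L_{Q_c}(\varphi) = 0$ against $\partial_c Q_c$.
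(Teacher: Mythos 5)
Your proof is correct and follows essentially the same route as the paper: decompose $\varphi$ as a combination of $\partial_{x_1} Q_c, \partial_{x_2} Q_c, \partial_c Q_c$ plus a remainder satisfying the three orthogonality conditions, play the non-negativity from the three-condition coercivity against the sign of $\langle L_{Q_c} (\partial_c Q_c), \partial_c Q_c \rangle = \partial_c P_2 (Q_c) < 0$ to kill the $\partial_c Q_c$ component and the $\| \cdot \|_{\mathcal{C}}$ seminorm, and then exclude the residual direction $i Q_c$ because it does not belong to $H_{Q_c}$. The only cosmetic difference is that you evaluate $B_{Q_c} (\tilde{\varphi})$ directly through the self-adjointness of $L_{Q_c}$, whereas the paper first establishes $B_{Q_c} (\varphi + \varphi_0) = B_{Q_c} (\varphi_0)$ and applies it to $\varphi_0 = - A$; both rest on the same polarisation identity (Lemma \ref{CP2icicmieux}) and density argument that you correctly flag as the technical point to justify.
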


This corollary is proven in subsection \ref{CP2proofcor}. This nondegeneracy
result is, to our knowledge, the first one on this type of model. It is a
building block in the analysis of the dynamical stability of the travelling
wave and the construction of multi-travelling wave. Here, the travelling wave
is not radial, nor has a simple profile, which means that we can not use
classical technics for radial ground states for instance (see
{\cite{MR820338}}).

\subsubsection{Spectral stability in $H^1 (\mathbbm{R}^2)$}

In this subsection, we give some result on the spectrum of $L_{Q_c} : H^2
(\mathbbm{R}^2) \rightarrow L^2 (\mathbbm{R}^2)$. In particular, we are
interested in negative eigenvalues of the linearized operator. We can show
that $H^1 (\mathbbm{R}^2) \subset H_{Q_c}$ and prove the following corollary
of Theorem \ref{CP2th2}.

\begin{corollary}
  \label{CP2cor177}There exists $c_0 > 0$ such that, for $0 < c \leqslant
  c_0$, $Q_c$ defined in Theorem \ref{th1}, if $\varphi \in H^1
  (\mathbbm{R}^2)$ satisfies
  \[ \langle \varphi, i \partial_{x_2} Q_c \rangle = 0, \]
  then
  \[ B_{Q_c} (\varphi) \geqslant 0. \]
\end{corollary}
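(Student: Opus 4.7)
The plan is to reduce to the three-orthogonality-condition version of Theorem \ref{CP2th2}. Since $H^1 (\mathbbm{R}^2) \subset H_{Q_c}$, for $\varphi \in H^1(\mathbbm{R}^2)$ satisfying $\langle \varphi, i \partial_{x_2} Q_c \rangle = 0$, I would write
\[ \varphi = \varphi_{\perp} + \alpha_1 \partial_{x_1} Q_c + \alpha_2 \partial_{x_2} Q_c + \alpha_3 \partial_c Q_c, \]
choosing $\alpha_1, \alpha_2, \alpha_3 \in \mathbbm{R}$ so that $\varphi_{\perp} = Q_c \psi_{\perp}$ satisfies the three local orthogonality conditions of Theorem \ref{CP2th2}. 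The associated $3 \times 3$ linear system is, to leading order as $c \to 0$, essentially diagonal: on the balls $B(\pm \widetilde{d_c} \overrightarrow{e_1}, R)$ the functions $\partial_{x_1} Q_c, \partial_{x_2} Q_c, \partial_c Q_c$ reduce (up to $o_{c\to 0}(1)$ errors controlled by Theorem \ref{th1}) to the translation derivatives of $\widetilde{V}_{\pm 1}$ and a rescaled $1/c^2$ translation derivative, giving invertibility for $c$ small.

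Next I would expand the bilinear extension of $B_{Q_c}$ along this decomposition. Translation invariance of $(\tmop{TW}_c)$ gives $L_{Q_c}(\partial_{x_j} Q_c) = 0$, so every cross term involving $\partial_{x_1} Q_c$ or $\partial_{x_2} Q_c$ disappears; differentiating $(\tmop{TW}_c)(Q_c) = 0$ with respect to $c$ yields the identity $L_{Q_c}(\partial_c Q_c) = i \partial_{x_2} Q_c$, and Proposition \ref{CP2prop5} gives $\langle L_{Q_c}(\partial_c Q_c), \partial_c Q_c \rangle = \partial_c P_2(Q_c) < 0$. Collecting terms,
\[ B_{Q_c}(\varphi) = B_{Q_c}(\varphi_{\perp}) + 2 \alpha_3 \langle i \partial_{x_2} Q_c, \varphi_{\perp} \rangle + \alpha_3^2 \partial_c P_2(Q_c). \]

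Now I would exploit the orthogonality $\langle \varphi, i \partial_{x_2} Q_c \rangle = 0$. Since $\langle \partial_{x_2} Q_c, i \partial_{x_2} Q_c \rangle = 0$ trivially and $\langle \partial_c Q_c, i \partial_{x_2} Q_c \rangle = \partial_c P_2(Q_c)$ by the above, expanding along the decomposition yields
\[ \langle \varphi_{\perp}, i \partial_{x_2} Q_c \rangle + \alpha_1 \langle \partial_{x_1} Q_c, i \partial_{x_2} Q_c \rangle + \alpha_3 \partial_c P_2(Q_c) = 0. \]
The remaining cross term $\langle \partial_{x_1} Q_c, i \partial_{x_2} Q_c \rangle = \int_{\mathbbm{R}^2} \mathfrak{I}\mathfrak{m}(\partial_{x_1} Q_c \overline{\partial_{x_2} Q_c})$ vanishes by the reflection symmetry $Q_c(-x_1, x_2) = Q_c(x_1, x_2)$, which I would establish as a short lemma: the quasi-solution $V_1(\cdot - d_c \overrightarrow{e_1}) V_{-1}(\cdot + d_c \overrightarrow{e_1})$ is invariant under $x_1 \mapsto -x_1$, and the fixed-point construction of $\Gamma_c$ in {\cite{CP1}} preserves this symmetry. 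Consequently $\langle i \partial_{x_2} Q_c, \varphi_{\perp} \rangle = -\alpha_3 \partial_c P_2(Q_c)$, and substitution gives
\[ B_{Q_c}(\varphi) = B_{Q_c}(\varphi_{\perp}) - \alpha_3^2 \partial_c P_2(Q_c) \geqslant B_{Q_c}(\varphi_{\perp}) \geqslant 0, \]
using $\partial_c P_2(Q_c) < 0$ and the three-orthogonality conclusion of Theorem \ref{CP2th2}.

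The main obstacle is the uniform (in $c$) invertibility of the $3 \times 3$ decomposition matrix, which requires resolving the leading behaviour of the local test integrals near each zero of $Q_c$ in the vortex limit; once that is in place, the remaining ingredients (the identity $L_{Q_c}(\partial_c Q_c) = i\partial_{x_2} Q_c$, the reflection symmetry, and the sign $\partial_c P_2(Q_c) < 0$) are either in Proposition \ref{CP2prop5} or are one-line verifications.
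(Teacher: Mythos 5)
Your proposal is correct and follows essentially the same route as the paper: decompose $\varphi$ along $\partial_{x_1} Q_c, \partial_{x_2} Q_c, \partial_c Q_c$ so that the remainder satisfies the three local orthogonality conditions, apply the $c^{2+\beta}$-coercivity of Theorem \ref{CP2th2}, expand the quadratic form using $L_{Q_c}(\partial_{x_j}Q_c)=0$ and $L_{Q_c}(\partial_c Q_c)=i\partial_{x_2}Q_c$, and use the hypothesis $\langle\varphi,i\partial_{x_2}Q_c\rangle=0$ together with the parity cancellations to turn the negative direction into the nonnegative term $-\alpha_3^2\,\partial_c P_2(Q_c)$. The only cosmetic differences are the normalization of the $\partial_c Q_c$ coefficient (the paper uses $c^2\varepsilon_3$) and the fact that the reflection symmetry you propose to prove is already recorded as equation (\ref{CP2sym}).
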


We can show that $L_{Q_c} (\partial_c Q_c) = i \partial_{x_2} Q_c \in L^2
(\mathbbm{R}^2)$, and thus $\varphi i \overline{\partial_{x_2} Q_c} \in L^1
(\mathbbm{R}^2)$ for $\varphi \in H^1 (\mathbbm{R}^2)$. This result shows that
we expect only one negative direction for the linearized operator, and it
should also hold in $H_{Q_c}$. For $\varphi \in H^1 (\mathbbm{R}^2)$, we have
that $B_{Q_c} (\varphi)$ is equal to the expression (\ref{CP2120403}).

Now, we define $\mathfrak{G}$ to be the collection of subspaces $S \subset H^1
(\mathbbm{R}^2)$ such that $B_{Q_c} (\varphi) < 0$ for all $\varphi \neq 0,
\varphi \in S$, and we define
\[ n^- (L_{Q_c}) \assign \max \{ \dim S, S \in \mathfrak{G} \} . \]
\begin{proposition}
  \label{CP2p188}There exists $c_0 > 0$ such that, for $0 < c < c_0$, for
  $Q_c$ defined in Theorem \ref{th1},
  \[ n^- (L_{Q_c}) = 1. \]
  Furthermore, $L_{Q_c} : H^2 (\mathbbm{R}^2) \rightarrow L^2 (\mathbbm{R}^2)$
  has exactly one negative eigenvalue with eigenvector in $L^2
  (\mathbbm{R}^2)$.
\end{proposition}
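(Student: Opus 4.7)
The plan is to prove $n^-(L_{Q_c}) = 1$ first, then deduce the spectral statement by a variational argument together with a computation of the essential spectrum.

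\textbf{Upper bound on $n^-$.} The linear functional $\varphi \mapsto \langle \varphi, i\partial_{x_2}Q_c\rangle$ is well-defined on $H^1(\mathbbm{R}^2)$, as the paper points out that $i\partial_{x_2}Q_c = L_{Q_c}(\partial_c Q_c) \in L^2$. Its kernel has codimension one, and Corollary \ref{CP2cor177} gives $B_{Q_c} \geq 0$ on this kernel. Hence any $S \in \mathfrak{G}$ with $\dim S \geq 2$ would contain a nonzero $\varphi$ in the kernel, yielding the contradiction $B_{Q_c}(\varphi) < 0$ and $B_{Q_c}(\varphi) \geq 0$ simultaneously. Thus $n^-(L_{Q_c}) \leq 1$.

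\textbf{Lower bound on $n^-$.} Proposition \ref{CP2prop5} gives
\[ \langle L_{Q_c}(\partial_c Q_c), \partial_c Q_c\rangle = \partial_c P_2(Q_c) = \frac{-2\pi + o_{c\to 0}(1)}{c^2} < 0, \]
so $\partial_c Q_c$ is the candidate negative direction. Since it may not belong to $H^1(\mathbbm{R}^2)$, I truncate with a smooth $\chi_R$ equal to $1$ on $B(0,R)$ and supported in $B(0,2R)$. The function $\chi_R \partial_c Q_c$ is compactly supported and $H^1_{\tmop{loc}}$, hence in $H^1(\mathbbm{R}^2)$, and even $H^2$ by elliptic regularity on the equation $L_{Q_c}(\partial_c Q_c) = i\partial_{x_2}Q_c$. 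For such a function the quadratic form (\ref{CP2truebqc}) equals $\langle L_{Q_c}(\chi_R \partial_c Q_c), \chi_R\partial_c Q_c\rangle$; expanding the commutator $[L_{Q_c}, \chi_R]$ yields
\[ B_{Q_c}(\chi_R \partial_c Q_c) = \int_{\mathbbm{R}^2} \chi_R^2 \,\mathfrak{R}\mathfrak{e}(i\partial_{x_2}Q_c \,\overline{\partial_c Q_c}) + \varepsilon(R), \]
with $\varepsilon(R) = o_{R\to\infty}(1)$ by the weighted pointwise bounds on $\partial_c Q_c$ from \cite{CP1} together with $|\nabla\chi_R| = O(1/R)$. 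Dominated convergence, using the $L^1$ integrability stated in Proposition \ref{CP2prop5}, sends the main term to $\partial_c P_2(Q_c) < 0$. So $B_{Q_c}(\chi_R\partial_c Q_c) < 0$ for $R$ large, and $n^-(L_{Q_c}) \geq 1$.

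\textbf{Spectral consequence.} By Weyl's theorem and $Q_c \to 1$ at infinity, the essential spectrum of $L_{Q_c}$ on $L^2(\mathbbm{R}^2)$ coincides with that of the constant-coefficient operator obtained by replacing $Q_c$ by $1$; its matrix Fourier symbol (acting on real and imaginary parts) has eigenvalues $|\xi|^2 + 1 \pm \sqrt{1 + c^2\xi_2^2} \geq 0$ for $c$ small. Any negative spectral value is therefore an isolated eigenvalue of finite multiplicity. The variational infimum
\[ \mu_1 := \inf\{ B_{Q_c}(\varphi) : \varphi \in H^1(\mathbbm{R}^2),\; \|\varphi\|_{L^2} = 1\} \]
is finite from below, since integration by parts and Cauchy--Schwarz give the transport bound $|\int \mathfrak{R}\mathfrak{e}(ic\partial_{x_2}\varphi \bar\varphi)| \leq c(\|\nabla\varphi\|_{L^2}^2 + \|\varphi\|_{L^2}^2)$, whence $B_{Q_c}(\varphi) \geq (1-c)\|\nabla\varphi\|_{L^2}^2 - (C+c)\|\varphi\|_{L^2}^2$; it is strictly negative by the previous step. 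Being below the essential spectrum, $\mu_1$ is attained at some $\varphi_\ast \in H^1$ by a standard minimizing-sequence argument (weak convergence, plus absence of concentration at infinity thanks to the asymptotic nonnegativity of the form). Elliptic regularity places $\varphi_\ast$ in $H^2$, producing a negative eigenfunction. Two linearly independent negative eigenfunctions would span a two-dimensional subspace on which $B_{Q_c}$ is negative definite, violating $n^-(L_{Q_c}) \leq 1$. Hence there is exactly one negative eigenvalue, of multiplicity one, with eigenvector in $H^2 \subset L^2$.

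\textbf{Main difficulty.} The delicate step is the attainment of $\mu_1$: since $H^1(\mathbbm{R}^2)$ does not embed compactly in $L^2(\mathbbm{R}^2)$ and the essential spectrum of $L_{Q_c}$ touches $0$, one must rule out concentration of a minimizing sequence at infinity via a quantitative form of the asymptotic nonnegativity. The commutator estimate underlying the lower bound on $n^-(L_{Q_c})$ is more routine but hinges on the pointwise control of $\partial_c Q_c$ from \cite{CP1} and on the specific form (\ref{CP2truebqc}) of $B_{Q_c}$, which already incorporates the necessary integration by parts.
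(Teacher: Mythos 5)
Your argument is correct and reaches the same conclusion, but two of its three steps diverge from the paper's. For the upper bound $n^-(L_{Q_c}) \leqslant 1$ you argue exactly as the paper does, via Corollary \ref{CP2cor177} and the codimension-one constraint $\langle \varphi, i \partial_{x_2} Q_c \rangle = 0$. For the lower bound the paper does not truncate $\partial_c Q_c$: it observes that $B_{Q_c}$ is continuous for $\| . \|_{H_{Q_c}}$ and that test functions are dense in $H_{Q_c}$ (Lemma \ref{CP2Ndensity}), so $B_{Q_c} \geqslant 0$ on $C^{\infty}_c$ would force $B_{Q_c} (\partial_c Q_c) \geqslant 0$, contradicting Lemma \ref{CP2nend}; your explicit cutoff $\chi_R \partial_c Q_c$ achieves the same thing, and it does go through because $| \partial_c Q_c | \lesssim (1 + \tilde{r})^{- \sigma}$ and $| \nabla \partial_c Q_c | \lesssim (1 + \tilde{r})^{- 1 - \sigma}$ (Lemma \ref{CP2dcQcsigma}) make every commutator term $O (R^{- 2 \sigma})$ after one integration by parts, while $\mathfrak{R}\mathfrak{e} (i \partial_{x_2} Q_c \overline{\partial_c Q_c}) \in L^1$ (Lemma \ref{CP2nend}) justifies the dominated convergence of the main term. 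The more substantial difference is the final step: the paper never computes the essential spectrum; it proves by hand that $\alpha_c = \inf \{ B_{Q_c} (\varphi), \| \varphi \|_{L^2} = 1 \}$ is attained, extracting a weak limit from a minimizing sequence and controlling the loss of mass at infinity through the pointwise inequality $(| \nabla \varphi_n | + c \mathfrak{R}\mathfrak{e} (\varphi_n))^2 + (2 - c^2) \mathfrak{R}\mathfrak{e}^2 (\varphi_n) \geqslant 0$ together with the decay of $1 - | Q_c |^2$ and $\mathfrak{I}\mathfrak{m} (Q_c)$ from Theorem \ref{CP2Qcbehav}. Your route through Weyl's theorem (the coefficients of $L_{Q_c} - L_{\infty}$ decay, and the constant-coefficient symbol has eigenvalues $| \xi |^2 + 1 \pm \sqrt{1 + c^2 \xi_2^2} \geqslant 0$ for $c < \sqrt{2}$, so $\sigma_{\tmop{ess}} = [0, + \infty)$) makes the attainment automatic by standard spectral theory, since the bottom of the spectrum of a self-adjoint operator lying strictly below the essential spectrum is an isolated eigenvalue; the ``main difficulty'' you flag is therefore bypassed by your own essential-spectrum computation, and the concentration analysis you only sketch is not actually needed. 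The trade-off is that the paper's hands-on minimization stays entirely at the level of the quadratic form and avoids invoking self-adjointness and relative compactness machinery, at the cost of a longer argument.
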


With this result, Theorem \ref{th1} and Proposition \ref{CP2prop5}, we have
met all the conditions to show the spectral stability of the travelling wave:

\begin{theorem}[Theorem 11.8 $(i)$ of {\cite{LZ}}]
  \label{heyhey}For $0 < c_1 < c_2$ and $c \mapsto U_c$ a $C^1$ branch of
  solutions of $(\tmop{TW}_c) (U_c) = 0$ on $] c_1, c_2 [$ with finite energy,
  for $c_{\ast} \in] c_1, c_2 [$, under the following conditions:
  \begin{enumerateroman}
    \item for all $c \in] c_1, c_2 [$, $\mathfrak{R}\mathfrak{e} (U_c - 1) \in
    H^1 (\mathbbm{R}^2)$, $\mathfrak{I}\mathfrak{m} (\nabla U_c) \in L^2
    (\mathbbm{R}^2)$, $| U_c | \rightarrow 1$ at infinity and $\| U_c \|_{C^1
    (\mathbbm{R}^2)} < + \infty$
    
    \item $n^- \left( L_{Q_{c_{\ast}}} \right) \leqslant 1$
    
    \item $\partial_c P_2 (U_c)_{| c = c_{\ast} \nobracket} < 0$,
  \end{enumerateroman}
  then $U_{c_{\ast}}$ is spectrally stable. That is, it is not an
  exponentially unstable solution of the linearized equation in $\dot{H}^1
  (\mathbbm{R}^2, \mathbbm{C})$.
\end{theorem}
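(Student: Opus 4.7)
The plan is to apply the abstract spectral stability result of Lin--Zeng directly, which means the bulk of the work is not producing new estimates but verifying that the three hypotheses $(i)$--$(iii)$ stated in the theorem are actually the hypotheses needed by their Theorem 11.8. I would first recall the Hamiltonian/symplectic formulation of $(\tmop{GP})$: writing $\mathfrak{u} = u_1 + i u_2$, the equation takes the form $\partial_t u = J E'(u)$ with $J = \begin{pmatrix} 0 & 1 \\ -1 & 0 \end{pmatrix}$ and the conserved momentum $P_2$ generating the symmetry group $e^{t J P_2'} : u \mapsto u(\cdot - t \overrightarrow{e_2})$. In this framework $U_c$ is a critical point of $E - c P_2$, so the linearized operator $L_{U_c}$ coincides with the Hessian of $E - c P_2$ and the linearized flow around $U_c$ is $J L_{U_c}$. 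Spectral stability means $J L_{U_c}$ has no eigenvalue with strictly positive real part on $\dot{H}^1(\mathbbm{R}^2,\mathbbm{C})$.

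Next I would match the hypotheses of the Lin--Zeng framework. Hypothesis $(i)$ is the functional-analytic setting: it guarantees that $U_c - 1$ lies in the natural energy space, that $E$ and $P_2$ are $C^2$ along the branch, and that the generator $\partial_{x_2} U_c = - J E'(U_c)/c$ plus enough regularity makes the symplectic phase space well defined; it also ensures $L_{U_c}$ is a self-adjoint operator from $H^2$ to $L^2$ with continuous spectrum $[0, +\infty)$ and at most finitely many isolated negative eigenvalues of finite multiplicity. Hypothesis $(ii)$ is the Morse-index bound $n^-(L_{U_{c_\ast}}) \leq 1$, while $(iii)$ is the Vakhitov--Kolokolov/GSS slope condition $\partial_c P_2(U_c)|_{c_\ast} < 0$. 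The Lin--Zeng counting formula then says that the number $k_r$ of real positive eigenvalues of $J L_{U_{c_\ast}}$ equals $n^-(L_{U_{c_\ast}}) - n^-(D)$, where $D$ is the scalar $\partial_c P_2(U_c)|_{c_\ast}$ (the Hessian of $E - c P_2$ restricted to the tangent of the branch transverse to the symmetry group); similarly the number of complex quadruplets and purely imaginary eigenvalues with negative Krein signature is controlled by the same difference. With $n^-(L_{U_{c_\ast}}) \leq 1$ and $n^-(D) = 1$ (since $\partial_c P_2 < 0$), one gets $k_r = 0$ and no unstable complex quadruplets, hence spectral stability.

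For the present application, I would simply observe that Theorem \ref{th1}, Proposition \ref{CP2prop5} and Proposition \ref{CP2p188} verify $(i)$, $(iii)$ and $(ii)$ respectively for the branch $Q_c$ at any small $c$: the $C^1$ branch regularity and the decay/boundedness of $Q_c$ (built from two vortices plus an $X_p$ perturbation for all $p > 2$) give the framework assumption $(i)$; Proposition \ref{CP2prop5} gives $\partial_c P_2(Q_c) = -2\pi/c^2 + o(1/c^2) < 0$ which is $(iii)$; and Proposition \ref{CP2p188} provides $n^-(L_{Q_c}) = 1$ which is $(ii)$. Applying Theorem 11.8 $(i)$ of \cite{LZ} then yields the conclusion.

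The only real obstacle is to confirm that the functional setting of \cite{LZ}---which is written abstractly for Hamiltonian PDEs with a symmetry group and a finite-energy constraint---is compatible with the Gross--Pitaevskii energy space, where $Q_c$ does not belong to $L^2$ and the momentum must be defined via the renormalized quantity $\frac{1}{2}\langle i \partial_{x_2} Q_c, Q_c - 1 \rangle$. This is already addressed in \cite{MR3043579,MR3686002} and is precisely why hypothesis $(i)$ is stated in the form $\mathfrak{Re}(U_c - 1) \in H^1$, $\mathfrak{Im}(\nabla U_c) \in L^2$, $|U_c| \to 1$: these conditions guarantee the quadratic form $B_{Q_c}$ is well defined on $\dot{H}^1$ via the integration by parts of (\ref{CP2truebqc}), which is exactly what \cite{LZ} requires. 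Once this compatibility is checked the proof reduces to citing Theorem 11.8 $(i)$ of \cite{LZ}.
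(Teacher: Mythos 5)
This statement is an imported result (Theorem 11.8 $(i)$ of \cite{LZ}) for which the paper offers no independent proof beyond the citation, and your treatment --- invoke the abstract Lin--Zeng index count, check that the renormalized momentum and the quadratic form make sense in the Gross--Pitaevskii energy space, and verify hypotheses $(i)$--$(iii)$ via Theorem \ref{th1}, Proposition \ref{CP2p188} and Proposition \ref{CP2prop5} --- is exactly what the paper does when proving Corollary \ref{CP2th199}. The one small imprecision is that $\mathfrak{R}\mathfrak{e}(Q_c-1)\in H^1(\mathbbm{R}^2)$ and $\nabla Q_c\in L^2(\mathbbm{R}^2)$ are obtained in the paper from Gravejat's decay estimates (\cite{G1}, cf. Theorem \ref{CP2Qcbehav}) rather than from the $X_p$, $p>2$, bounds of Theorem \ref{th1}, which do not by themselves give the $p=2$ integrability.
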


\begin{corollary}
  \label{CP2th199}There exists $c_0 > 0$ such that, for any $0 < c < c_0$, the
  function $Q_c$ defined in Theorem \ref{th1} is spectrally stable in the
  sense of Theorem \ref{heyhey}.
\end{corollary}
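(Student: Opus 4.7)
The plan is simply to verify that the branch $c \mapsto Q_c$ satisfies all three hypotheses of Theorem \ref{heyhey} in a neighbourhood of any small speed $c_{\ast}$, and then to invoke that theorem directly. Since Theorem \ref{th1} gives a $C^1$ branch on $]0, c_0(p)[$ for any $p > 2$, there is no issue with the existence of the branch itself; the work is in checking the three bulleted conditions.

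First, for condition (i), I would take $c_1, c_2$ such that $c_\ast \in ]c_1, c_2[ \subset ]0, c_0(p)[$. Finite energy, plus $Q_c \to 1$ at infinity, plus the estimates from Theorem \ref{th1} together with the additional weighted $L^\infty$ bounds from \cite{CP1} mentioned in the text, yield $\mathfrak{R}\mathfrak{e}(Q_c - 1) \in H^1(\mathbbm{R}^2)$ and $\mathfrak{I}\mathfrak{m}(\nabla Q_c) \in L^2(\mathbbm{R}^2)$ (the potentially non-$L^2$ part of $Q_c - 1$ sits in the imaginary direction, while $\nabla Q_c \in L^2$ away from $L^2$ loss in the finite-energy framework). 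Boundedness of $\|Q_c\|_{C^1(\mathbbm{R}^2)}$ follows from elliptic regularity applied to the nonlinear equation $(\tmop{TW}_c)(Q_c) = 0$, using that $Q_c$ is bounded (since $Q_c \to 1$ and $Q_c$ has only two smooth zeros by Proposition \ref{CP2prop5}).

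Second, for condition (ii), I would just cite Proposition \ref{CP2p188}, which gives $n^-(L_{Q_c}) = 1 \leq 1$ uniformly for $c$ small enough.

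Third, for condition (iii), Proposition \ref{CP2prop5} gives
\[ \partial_c P_2(Q_c) = \frac{-2\pi + o_{c \to 0}(1)}{c^2}, \]
which is strictly negative for $c$ small enough, in particular at $c_\ast$.

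After shrinking $c_0$ so that all three conditions hold simultaneously, Theorem \ref{heyhey} applies to $U_c = Q_c$ at $c_\ast = c$, producing the spectral stability claim. There is no substantive obstacle here; the corollary is essentially a bookkeeping assembly of Theorem \ref{th1}, Proposition \ref{CP2prop5} and Proposition \ref{CP2p188}, and the only mild verification is the regularity/decay package in (i), which is already implicit in the finite-energy travelling-wave framework used throughout \cite{CP1}.
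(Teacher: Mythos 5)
Your proposal is correct and follows essentially the same route as the paper: the corollary is proved by checking the three hypotheses of Theorem \ref{heyhey}, with (ii) from Proposition \ref{CP2p188} and (iii) from Proposition \ref{CP2prop5}. The only cosmetic difference is in condition (i), where the paper simply invokes Theorem 7 of \cite{G1} (Gravejat's decay estimates, which in particular give $\mathfrak{R}\mathfrak{e}(Q_c)-1\in L^2(\mathbbm{R}^2)$ even though $Q_c-1$ itself need not be) together with the $C^1$ dependence on $c$ from Theorem \ref{th1}, rather than re-deriving the regularity and decay from the branch construction as you sketch.
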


The notion of spectral stability of {\cite{LZ}} is the following: for any $u_0
\in H^1 (\mathbbm{R}^2, \mathbbm{C})$, the solution to the problem
\[ \left\{ \begin{array}{l}
     i \partial_t u = L_{Q_c} (u)\\
     u (t = 0) = u_0
   \end{array} \right. \]
satisfies that, for all $\lambda > 0$,
\[ \left( \int_{\mathbbm{R}^2} | \nabla u |^2 (t) d x \right) e^{- \lambda t}
   \rightarrow 0 \]
when $t \rightarrow \infty$. The result of {\cite{LZ}} is a little stronger:
the norm that does not grow exponentially in time is better than the one on
$\dot{H}^1 (\mathbbm{R}^2, \mathbbm{C})$, but weaker than the one on $H^1
(\mathbbm{R}^2, \mathbbm{C})$, and is not explicit.

\subsection{Generalisation to a larger energy space and use of the phase}

There are two main difficulties with the phase. The first one, as previously
stated, is that we lose a parameter when passing from two vortices to a
travelling wave. The second one is that for the direction linked to the phase
shift, namely $i Q_c$, we have $i Q_c \nin H_{Q_c}$ (and even for one vortex,
$i V_1 \nin H_{V_1}$). This will be an obstacle when we modulate on the phase
for the local uniqueness result. Therefore, we define here a space larger than
$H_{Q_c}$.

\subsubsection{Definition and properties of the space
$H^{\exp}_{Q_c}$}\label{CP21310811}

We define the space $H^{\exp}_{Q_c}$, the expanded energy space, by
\[ H^{\exp}_{Q_c} \assign \left\{ \varphi \in H^1_{\tmop{loc}}
   (\mathbbm{R}^2), \| \varphi \|_{H^{\exp}_{Q_c}} < + \infty \right\}, \]
with the norm, for $\varphi = Q_c \psi \in H^1_{\tmop{loc}} (\mathbbm{R}^2)$,
\[ \| \varphi \|_{H^{\exp}_{Q_c}}^2 \assign \| \varphi \|_{H^1 (\{ \tilde{r}
   \leqslant 10 \})}^2 + \int_{\{ \tilde{r} \geqslant 5 \}} | \nabla \psi |^2
   +\mathfrak{R}\mathfrak{e}^2 (\psi) + \frac{| \psi |^2}{\tilde{r}^2 \ln^2
   (\tilde{r})}, \]
where $\tilde{r} = \min (\widetilde{r_1}, \tilde{r}_{- 1})$, the minimum of
the distance to the zeros of $Q_c$. It is easy to check that that there exists
$K > 0$ independent of $c$ such that, for $\varphi = Q_c \psi \in
H_{Q_c}^{\exp}$,
\[ \frac{1}{K} \| \varphi \|^2_{H^1 (\{ 5 \leqslant \tilde{r} \leqslant 10
   \})} \leqslant \int_{\{ 5 \leqslant \tilde{r} \leqslant 10 \}} | \nabla
   \psi |^2 +\mathfrak{R}\mathfrak{e}^2 (\psi) + \frac{| \psi |^2}{\tilde{r}^2
   \ln (\tilde{r})^2} \leqslant K \| \varphi \|^2_{H^1 (\{ 5 \leqslant
   \tilde{r} \leqslant 10 \})} . \]
We will show that $H_{Q_c} \subset H^{\exp}_{Q_c}$ and $i Q_c \in
H^{\exp}_{Q_c}$, whereas $i Q_c \nin H_{Q_c}$. This space will appear in the
proof of the local uniqueness (Theorem \ref{CP2th16} below). The main
difficulty is that $B_{Q_c} (\varphi)$ is not well defined for $\varphi \in
H^{\exp}_{Q_c}$ because for instance of the term $(1 - | Q_c |^2) | \varphi
|^2$ integrated at infinity. If we write the linearized operator
multiplicatively, for $\varphi = Q_c \psi$ (using $(\tmop{TW}_c) (Q_c) = 0$),
\[ Q_c L_{Q_c}' (\psi) \assign L_{Q_c} (\varphi) = Q_c \left( - i c
   \partial_{x_2} \psi - \Delta \psi - 2 \frac{\nabla Q_c}{Q_c} . \nabla \psi
   + 2\mathfrak{R}\mathfrak{e} (\psi) | Q_c |^2 \right), \]
then there will be no problem at infinity for $\varphi \in H^{\exp}_{Q_c}$ for
the associated quadratic form (in $\psi$), but there are instead some
integrability issues near the zeros of $Q_c$. We take as before a smooth
cutoff function $\eta$ such that $\eta (x) = 0$ on $B (\pm \widetilde{d_c}
\overrightarrow{e_1}, 1)$, $\eta (x) = 1$ on $\mathbbm{R}^2 \backslash B (\pm
\widetilde{d_c} \overrightarrow{e_1}, 2)$, where $\pm \widetilde{d_c}
\overrightarrow{e_1}$ are the zeros of $Q_c$. The natural linear operator for
which we want to consider the quadratic form is then
\[ L^{\exp}_{Q_c} (\varphi) \assign (1 - \eta) L_{Q_c} (\varphi) + \eta Q_c
   L_{Q_c}' (\psi), \]
and we therefore define, for $\varphi = Q_c \psi \in H^{\exp}_{Q_c}$,
\begin{eqnarray}
  B^{\exp}_{Q_c} (\varphi) & \assign & \int_{\mathbbm{R}^2} (1 - \eta) (|
  \nabla \varphi |^2 -\mathfrak{R}\mathfrak{e} (i c \partial_{x_2} \varphi
  \bar{\varphi}) - (1 - | Q_c |^2) | \varphi |^2 + 2\mathfrak{R}\mathfrak{e}^2
  (\overline{Q_c} \varphi)) \nonumber\\
  & - & \int_{\mathbbm{R}^2} \nabla \eta . (\mathfrak{R}\mathfrak{e} (\nabla
  Q_c \overline{Q_c}) | \psi |^2 - 2\mathfrak{I}\mathfrak{m} (\nabla Q_c
  \overline{Q_c}) \mathfrak{R}\mathfrak{e} (\psi) \mathfrak{I}\mathfrak{m}
  (\psi)) \nonumber\\
  & + & \int_{\mathbbm{R}^2} c \partial_{x_2} \eta \mathfrak{R}\mathfrak{e}
  (\psi) \mathfrak{I}\mathfrak{m} (\psi) | Q_c |^2 \nonumber\\
  & + & \int_{\mathbbm{R}^2} \eta (| \nabla \psi |^2 | Q_c |^2 +
  2\mathfrak{R}\mathfrak{e}^2 (\psi) | Q_c |^4) \nonumber\\
  & + & \int_{\mathbbm{R}^2} \eta (4\mathfrak{I}\mathfrak{m} (\nabla Q_c
  \overline{Q_c}) \mathfrak{I}\mathfrak{m} (\nabla \psi)
  \mathfrak{R}\mathfrak{e} (\psi) + 2 c | Q_c |^2 \mathfrak{I}\mathfrak{m}
  (\partial_{x_2} \psi) \mathfrak{R}\mathfrak{e} (\psi)) .  \label{CP2Btilda}
\end{eqnarray}
This quantity is independent of the choice of $\eta$.

We will show that $B^{\exp}_{Q_c} (\varphi)$ is well defined for $\varphi \in
H^{\exp}_{Q_c}$ and that, if $\varphi \in H_{Q_c} \subset H^{\exp}_{Q_c}$,
then $B^{\exp}_{Q_c} (\varphi) = B_{Q_c} (\varphi)$. Writing the quadratic
form $B^{\exp}_{Q_c}$ is a way to enlarge the space of possible perturbations
to add in particular the remaining zero of the linearized operator. We infer
the following result.

\begin{proposition}
  \label{CP2prop16}There exist $c_0, K, R, \beta_0 > 0$ such that, for any $0
  < \beta < \beta_0$, there exists $c_0 (\beta), K (\beta) > 0$ such that, for
  $0 < c < c_0 (\beta)$, if $\varphi = Q_c \psi \in H^{\exp}_{Q_c}$ satisfies
  the following three orthogonality conditions:
  \[ \mathfrak{R}\mathfrak{e} \int_{B (\tilde{d}_c \overrightarrow{e_1}, R)
     \cup B (- \tilde{d}_c \overrightarrow{e_1}, R)} \partial_{x_1} Q_c
     \overline{Q_c \psi^{\neq 0}} =\mathfrak{R}\mathfrak{e} \int_{B
     (\tilde{d}_c \overrightarrow{e_1}, R) \cup B (- \tilde{d}_c
     \overrightarrow{e_1}, R)} \partial_{x_2} Q_c \overline{Q_c \psi^{\neq 0}}
     = 0 \]
  and
  \[ \mathfrak{R}\mathfrak{e} \int_{B (\tilde{d}_c \overrightarrow{e_1}, R)
     \cup B (- \tilde{d}_c \overrightarrow{e_1}, R)} \partial_c Q_c
     \overline{Q_c \psi^{\neq 0}} = 0, \]
  then,
  \[ B^{\exp}_{Q_c} (\varphi) \geqslant K (\beta) c^{2 + \beta} \| \varphi
     \|_{\mathcal{C}}^2, \]
  with
  \[ \| \varphi \|_{\mathcal{C}}^2 = \int_{\mathbbm{R}^2} | \nabla \psi |^2 |
     Q_c |^4 +\mathfrak{R}\mathfrak{e}^2 (\psi) | Q_c |^4 . \]
  If $\varphi = Q_c \psi$ also satisfies the fourth orthogonality condition
  (with $0 < c < c_0$)
  \[ \mathfrak{R}\mathfrak{e} \int_{B (\tilde{d}_c \overrightarrow{e_1}, R)
     \cup B (- \tilde{d}_c \overrightarrow{e_1}, R)} \partial_{c^{\bot}} Q_c
     \overline{Q_c \psi^{\neq 0}} = 0, \]
  then
  \[ B^{\exp}_{Q_c} (\varphi) \geqslant K \| \varphi \|_{\mathcal{C}}^2 . \]
  Furthermore, for $\varphi \in H^{\exp}_{Q_c}$, the following properties are
  equivalent:
  \begin{enumerateroman}
    \item $L_{Q_c} (\varphi) = 0$ in $H^{- 1} (\mathbbm{R}^2)$, that is,
    $\forall \varphi^{\ast} \in H^1 (\mathbbm{R}^2),$
    \[ \int_{\mathbbm{R}^2} \mathfrak{R}\mathfrak{e} (\nabla \varphi . \nabla
       \overline{\varphi^{\ast}}) - (1 - | Q_c |^2) \mathfrak{R}\mathfrak{e}
       (\varphi \overline{\varphi^{\ast}}) + 2\mathfrak{R}\mathfrak{e}
       (\overline{Q_c} \varphi) \mathfrak{R}\mathfrak{e} (\overline{Q_c}
       \varphi^{\ast}) -\mathfrak{R}\mathfrak{e} (i c \partial_{x_2} \varphi
       \overline{\varphi^{\ast}}) = 0. \]
    \item $\varphi \in \tmop{Span}_{\mathbbm{R}} (i Q_c, \partial_{x_1} Q_c,
    \partial_{x_2} Q_c)$
  \end{enumerateroman}
\end{proposition}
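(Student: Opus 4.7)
The plan is to reduce Proposition~\ref{CP2prop16} to the results already obtained on the smaller space $H_{Q_c}$, namely Theorem~\ref{CP2th2} and Corollary~\ref{CP2Cor41}, by extracting the $iQ_c$-component from any $\varphi\in H^{\exp}_{Q_c}$. Given $\varphi=Q_c\psi\in H^{\exp}_{Q_c}$, I would first construct $\lambda\in\mathbbm{R}$ representing the asymptotic imaginary phase of $\psi$ (using the weighted control $|\psi|^2/(\tilde r^2\ln^2(\tilde r))\in L^1$ together with $\nabla\psi\in L^2(\{\tilde r\geqslant 5\})$ to extract this limit, e.g.\ via an averaging procedure on annuli of growing radius) and set $\varphi_0\assign\varphi-\lambda iQ_c=Q_c(\psi-\lambda i)$. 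The first technical task is to verify that $\varphi_0\in H_{Q_c}$, i.e.\ that subtracting the asymptotic phase restores enough decay for $(1-|Q_c|^2)|\varphi_0|^2$ to be integrable at infinity.

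A crucial observation is that the orthogonality conditions and the seminorm $\|\cdot\|_{\mathcal{C}}$ are both invariant under the replacement $\psi\mapsto\psi-\lambda i$: indeed the zeroth harmonic $\psi^{0,\pm1}$ absorbs the constant, so $(\psi-\lambda i)^{\neq 0}=\psi^{\neq 0}$, while $\mathfrak{R}\mathfrak{e}(\psi)$ and $\nabla\psi$ are unaltered. Hence $\varphi$ satisfies the orthogonality conditions (resp.\ has a given $\|\cdot\|_{\mathcal{C}}$) if and only if $\varphi_0$ does. It then remains to establish the expansion
\[
B^{\exp}_{Q_c}(\varphi)=B^{\exp}_{Q_c}(\varphi_0)+2\lambda\,\tilde B^{\exp}_{Q_c}(\varphi_0,iQ_c)+\lambda^2 B^{\exp}_{Q_c}(iQ_c),
\]
together with the identities $B^{\exp}_{Q_c}(iQ_c)=0$ and $\tilde B^{\exp}_{Q_c}(\varphi_0,iQ_c)=0$, plus the equality $B^{\exp}_{Q_c}(\varphi_0)=B_{Q_c}(\varphi_0)$ on $H_{Q_c}$ (which amounts to undoing, on $\{\eta=1\}$, the integration by parts used to define~(\ref{CP2Btilda})). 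Once these are in place, Theorem~\ref{CP2th2} applied to $\varphi_0$ yields both the weak coercivity $B^{\exp}_{Q_c}(\varphi)\geqslant K(\beta)c^{2+\beta}\|\varphi\|_{\mathcal{C}}^2$ under three orthogonality conditions and the uniform one $B^{\exp}_{Q_c}(\varphi)\geqslant K\|\varphi\|_{\mathcal{C}}^2$ under the fourth.

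The kernel characterization is then handled in parallel. For (ii)$\Rightarrow$(i), phase invariance of $(\tmop{TW}_c)$ gives $L_{Q_c}(iQ_c)=0$, translation invariance gives $L_{Q_c}(\partial_{x_j}Q_c)=0$, and the three functions are easily checked to lie in $H^{\exp}_{Q_c}$. For (i)$\Rightarrow$(ii), if $\varphi\in H^{\exp}_{Q_c}$ satisfies $L_{Q_c}(\varphi)=0$ in $H^{-1}(\mathbbm{R}^2)$, the same decomposition $\varphi=\varphi_0+\lambda iQ_c$ produces $\varphi_0\in H_{Q_c}$ with $L_{Q_c}(\varphi_0)=L_{Q_c}(\varphi)-\lambda L_{Q_c}(iQ_c)=0$; Corollary~\ref{CP2Cor41} then yields $\varphi_0\in\tmop{Span}_{\mathbbm{R}}(\partial_{x_1}Q_c,\partial_{x_2}Q_c)$, and hence $\varphi\in\tmop{Span}_{\mathbbm{R}}(iQ_c,\partial_{x_1}Q_c,\partial_{x_2}Q_c)$.

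The main technical obstacle is twofold. First, the construction of $\lambda$ and the proof that $\varphi_0\in H_{Q_c}$: this is a functional-analytic argument that must extract an asymptotic constant imaginary phase from a function controlled only in the very weak weighted seminorm of $H^{\exp}_{Q_c}$, and then show that subtracting it actually improves decay to the $H_{Q_c}$-level. Second, justifying the cancellation $\tilde B^{\exp}_{Q_c}(\varphi_0,iQ_c)=0$: morally this follows from $L_{Q_c}(iQ_c)=0$, but since $iQ_c\notin H_{Q_c}$ and $\mathfrak{I}\mathfrak{m}(\psi_0)$ is not a priori in $L^2(\mathbbm{R}^2)$, one cannot simply invoke an integration by parts on all of $\mathbbm{R}^2$. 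The proof should instead proceed by truncation with a cutoff at scale $R\to\infty$, controlling the remaining boundary contributions using the logarithmic weight in $\|\cdot\|_{H^{\exp}_{Q_c}}$ and the known $L^\infty$ decay of $\nabla Q_c$ and $1-|Q_c|^2$.
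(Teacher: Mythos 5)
There is a genuine gap: your whole argument rests on the claim that every $\varphi = Q_c\psi \in H^{\exp}_{Q_c}$ can be written as $\varphi_0 + \lambda\, i Q_c$ with $\varphi_0 \in H_{Q_c}$, i.e.\ that $H^{\exp}_{Q_c} = H_{Q_c} + \mathbbm{R}\, iQ_c$. This is false. Membership in $H_{Q_c}$ requires $\int |1-|Q_c|^2|\,|\varphi_0|^2 < +\infty$, which (since $1-|Q_c|^2$ decays like $r^{-2}$ in generic directions) essentially forces $\int_{\{\tilde r\geqslant 5\}} |\psi_0|^2/\tilde r^2 < +\infty$, whereas $H^{\exp}_{Q_c}$ only controls $\int |\psi|^2/(\tilde r^2\ln^2\tilde r)$. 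Take for instance $\psi = i\,\chi(r)\sqrt{\ln\ln r}$ with $\chi$ a cutoff vanishing for $r\leqslant 10$: then $\nabla\psi \in L^2$, $\mathfrak{R}\mathfrak{e}(\psi)=0$ and $|\psi|^2/(\tilde r^2\ln^2\tilde r)\in L^1$, so $Q_c\psi \in H^{\exp}_{Q_c}$, but $\mathfrak{I}\mathfrak{m}(\psi)$ is unbounded and $\psi - i\lambda$ never has the decay needed for $H_{Q_c}$, for any constant $\lambda$. The quotient $H^{\exp}_{Q_c}/(H_{Q_c}+\mathbbm{R}\,iQ_c)$ is nontrivial, so there is no ``asymptotic imaginary phase'' to extract, and both the coercivity part and the implication (i)$\Rightarrow$(ii) of your argument collapse at this first step. (Your direction (ii)$\Rightarrow$(i) and the invariance of $\|\cdot\|_{\mathcal C}$ and of the orthogonality conditions under $\psi\mapsto\psi-i\lambda$ are fine, but they are not where the difficulty lies.)

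The paper's route avoids any such decomposition. For the coercivity it works with test functions $\varphi\in C^{\infty}_c(\mathbbm{R}^2\setminus\{\pm\tilde d_c\vec e_1\},\mathbbm{C})$, for which $B^{\exp}_{Q_c}(\varphi)=B_{Q_c}(\varphi)=\langle L_{Q_c}(\varphi),\varphi\rangle$ (Lemmas \ref{CP2L412602} and \ref{CP2L3133105}) and Theorem \ref{CP2th2} already gives the desired bounds; it then passes to general $\varphi\in H^{\exp}_{Q_c}$ by the density of these test functions for the $\|\cdot\|_{H^{\exp}_{Q_c}}$ norm (Lemma \ref{CP2NNN}), after checking that $B^{\exp}_{Q_c}$, $\|\cdot\|_{\mathcal C}$ and the orthogonality functionals are all continuous for $\|\cdot\|_{H^{\exp}_{Q_c}}$. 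For the kernel, it reruns the proof of Corollary \ref{CP2Cor41} directly in $H^{\exp}_{Q_c}$: one subtracts a combination of $\partial_{x_1}Q_c,\partial_{x_2}Q_c,c^2\partial_cQ_c$ (all of which lie in $H_{Q_c}\subset H^{\exp}_{Q_c}$, so no decay problem arises) to enforce the three orthogonality conditions, uses $B^{\exp}_{Q_c}(\varphi+\varphi_0)=B^{\exp}_{Q_c}(\varphi_0)$ for kernel elements (Lemma \ref{CP2destrucs}), and concludes $\|\varphi^{\ast}\|_{\mathcal C}=0$, hence $\varphi^{\ast}=\lambda\, iQ_c$ --- and it is precisely because $\varphi^{\ast}$ is only known to lie in $H^{\exp}_{Q_c}$ that $\lambda$ can no longer be forced to vanish, which is how $iQ_c$ enters the kernel. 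If you want to keep your plan, you would have to replace the splitting-off of $iQ_c$ by this density mechanism.
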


Proposition \ref{CP2prop16} is proven in subsection \ref{CP2HQc911}. The
additional direction in the kernel comes from the invariance of phase
$(L_{Q_c} (i Q_c) = 0)$. The main difficulties, compared to Theorem
\ref{CP2th2}, is to show that the considered quantities are well defined with
only $\varphi \in H^{\exp}_{Q_c}$, and that we can conclude by density in this
bigger space.

\subsubsection{Coercivity results with an orthogonality on the phase}

The main problem with adding a local orthogonality condition on $i Q_c$ is to
choose where to put it. Indeed, we want this condition near both zeros of
$Q_c$, or else the coercivity constant will depend on the distance between the
vortices, which itself depends on $c$.

The first option is to let the coercivity constant depend on $c$. In that
case, we can also remove the orthogonality condition on $\partial_{c^{\bot}}
Q_c$, the small positive direction. We infer the following result.

\begin{proposition}
  \label{CP2prop17}There exist universal constants $K_1, c_0 > 0$ such that,
  with $R > 0$ defined in Proposition \ref{CP205218}, for $0 < c < c_0$, for
  the function $Q_c$ defined in Theorem \ref{th1}, there exists $K_2 (c) > 0$
  depending on $c$ such that, if $\varphi = Q_c \psi \in H^{\exp}_{Q_c}$
  satisfies the following four orthogonality conditions:
  \[ \mathfrak{R}\mathfrak{e} \int_{B (\tilde{d}_c \overrightarrow{e_1}, R)
     \cup B (- \tilde{d}_c \overrightarrow{e_1}, R)} \partial_{x_1} Q_c
     \overline{Q_c \psi^{\neq 0}} =\mathfrak{R}\mathfrak{e} \int_{B
     (\tilde{d}_c \overrightarrow{e_1}, R) \cup B (- \tilde{d}_c
     \overrightarrow{e_1}, R)} \partial_{x_2} Q_c \overline{Q_c \psi^{\neq 0}}
     = 0, \]
  \[ \mathfrak{R}\mathfrak{e} \int_{B (\tilde{d}_c \overrightarrow{e_1}, R)
     \cup B (- \tilde{d}_c \overrightarrow{e_1}, R)} \partial_c Q_c
     \overline{Q_c \psi^{\neq 0}} =\mathfrak{R}\mathfrak{e} \int_{B (0, R)} i
     \psi = 0, \]
  then
  \[ K_1 \| \varphi \|^2_{H^{\exp}_{Q_c}} \geqslant B^{\exp}_{Q_c} (\varphi)
     \geqslant K_2 (c) \| \varphi \|_{H^{\exp}_{Q_c}}^2 . \]
\end{proposition}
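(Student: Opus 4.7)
The upper bound $B^{\exp}_{Q_c}(\varphi) \leqslant K_1 \|\varphi\|^2_{H^{\exp}_{Q_c}}$ follows from termwise Cauchy-Schwarz in (\ref{CP2Btilda}). All terms containing the factor $1-\eta$ are supported on $\{\tilde r \leqslant 2\}$, where $|\nabla Q_c|$, $|1-|Q_c|^2|$ and $c$ are bounded, so they are dominated by $\|\varphi\|^2_{H^1(\{\tilde r \leqslant 10\})}$. The far-field integrands $\eta|\nabla\psi|^2|Q_c|^2$ and $\eta\mathfrak{R}\mathfrak{e}^2(\psi)|Q_c|^4$ are bounded above by $\int_{\tilde r\geqslant 5}|\nabla\psi|^2+\mathfrak{R}\mathfrak{e}^2(\psi)$ since $|Q_c|\leqslant 1$; the cross terms involving $\mathfrak{I}\mathfrak{m}(\nabla Q_c\overline{Q_c})\mathfrak{I}\mathfrak{m}(\nabla\psi)\mathfrak{R}\mathfrak{e}(\psi)$ and $c|Q_c|^2\mathfrak{I}\mathfrak{m}(\partial_{x_2}\psi)\mathfrak{R}\mathfrak{e}(\psi)$ are absorbed by $|ab|\leqslant\tfrac{1}{2}(a^2+b^2)$.

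For the lower bound, I would fix $c\in(0,c_0)$ and argue by contradiction and compactness. Suppose no such $K_2(c)>0$ exists; then there is a sequence $\varphi_n = Q_c\psi_n \in H^{\exp}_{Q_c}$ with $\|\varphi_n\|_{H^{\exp}_{Q_c}}=1$ and $B^{\exp}_{Q_c}(\varphi_n)\to 0$ satisfying the four orthogonality conditions. Extract a weakly convergent subsequence $\varphi_n\rightharpoonup\varphi_\infty=Q_c\psi_\infty$ in the Hilbert space $H^{\exp}_{Q_c}$, with strong $L^2$ convergence on compact sets by Rellich-Kondrachov. Each of the four orthogonality conditions is a bounded linear functional in $\|\cdot\|_{H^{\exp}_{Q_c}}$ (integrals on bounded domains against smooth fixed weights, with $\psi\mapsto\psi^{\neq 0}$ continuous on compact annuli), so all four pass to $\varphi_\infty$.

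Proposition \ref{CP2prop16} now applies to $\varphi_\infty$ using the first three of these, giving $B^{\exp}_{Q_c}(\varphi_\infty)\geqslant K(\beta)c^{2+\beta}\|\varphi_\infty\|^2_{\mathcal{C}}\geqslant 0$. On the other hand, the positive-definite part of $B^{\exp}_{Q_c}$ is weakly lower semi-continuous; the indefinite terms supported on $\{\tilde r\leqslant 2\}$ converge by Rellich; and the remaining far-field cross terms $\eta\mathfrak{I}\mathfrak{m}(\nabla Q_c\overline{Q_c})\mathfrak{I}\mathfrak{m}(\nabla\psi_n)\mathfrak{R}\mathfrak{e}(\psi_n)$ and $c\eta|Q_c|^2\mathfrak{I}\mathfrak{m}(\partial_{x_2}\psi_n)\mathfrak{R}\mathfrak{e}(\psi_n)$ can be passed to the limit using decay of the coefficients at infinity together with weak convergence of $\nabla\psi_n$ and strong $L^2$ convergence of $\mathfrak{R}\mathfrak{e}(\psi_n)$ on compacts. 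This gives $B^{\exp}_{Q_c}(\varphi_\infty)\leqslant\liminf B^{\exp}_{Q_c}(\varphi_n)=0$, hence $\|\varphi_\infty\|^2_{\mathcal{C}}=0$, which forces $\psi_\infty=i\lambda$ for some $\lambda\in\mathbbm{R}$; the fourth orthogonality then reads $-\lambda\pi R^2 = 0$, so $\varphi_\infty = 0$.

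The main obstacle is to promote this weak vanishing to the strong convergence $\|\varphi_n\|_{H^{\exp}_{Q_c}}\to 0$, contradicting $\|\varphi_n\|_{H^{\exp}_{Q_c}}=1$. Combining $B^{\exp}_{Q_c}(\varphi_n)\to 0$ with Proposition \ref{CP2prop16} first gives $\|\varphi_n\|^2_{\mathcal{C}}\to 0$. In the far field $\{\tilde r\geqslant 5\}$, since $|Q_c|^4\geqslant\tfrac{1}{2}$ there, this yields $\int_{\tilde r\geqslant 5}|\nabla\psi_n|^2+\mathfrak{R}\mathfrak{e}^2(\psi_n)\to 0$; the missing weighted term $\int_{\tilde r\geqslant 5}|\psi_n|^2/(\tilde r^2\ln^2\tilde r)$ is recovered from the Hardy-type inequality $\int|f|^2/(\tilde r^2\ln^2\tilde r)\leqslant K\int|\nabla f|^2$ applied to $\mathfrak{R}\mathfrak{e}(\psi_n)$ (using its $L^2$ smallness) and to $\mathfrak{I}\mathfrak{m}(\psi_n)$ anchored by the fourth orthogonality $\int_{B(0,R)}\mathfrak{I}\mathfrak{m}(\psi_n)=0$, which kills the additive-constant ambiguity. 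Near the zeros, strong $L^2$ convergence $\varphi_n\to 0$ from Rellich together with the control on $\int(1-\eta)|\nabla\varphi_n|^2$ extracted from $B^{\exp}_{Q_c}(\varphi_n)\to 0$ (after absorbing the lower-order indefinite terms via the $\mathcal{C}$-smallness just established) gives $\|\varphi_n\|_{H^1(\{\tilde r\leqslant 10\})}\to 0$. The quantitative dependence of these constants on $c$ (through $\tilde d_c$ and the geometry of $\eta$) is precisely what prevents $K_2(c)$ from being chosen independent of $c$.
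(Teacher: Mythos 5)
Your proposal is essentially correct, but it takes a genuinely different route from the paper. The paper's proof is constructive: it modulates $\varphi$ along the five directions $\partial_{x_1} Q_c, \partial_{x_2} Q_c, c^2 \partial_c Q_c, c \partial_{c^{\bot}} Q_c, i Q_c$ (as in Lemma \ref{CP2133L39}, with an extra parameter $\varepsilon_5$ for the phase), solves a nearly diagonal $5 \times 5$ system to place the corrected function $\varphi^{\ast}$ in the orthogonality class of Lemma \ref{CP2L661510}, invokes that lemma (which is itself proved by localizing the vortex coercivity of Proposition \ref{CP2P811} and using a Poincar\'e inequality to transport the phase condition from $B(0,R)$ to the vortex cores), and then transfers the bound back to $\varphi$ via the algebraic expansion of $B^{\exp}_{Q_c}$ and the smallness of the $\varepsilon_j$. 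You instead fix $c$, argue by contradiction and weak compactness, and use Proposition \ref{CP2prop16} as a black box: the three vortex-type orthogonality conditions give $\|\varphi_n\|_{\mathcal{C}}^2 \leqslant K(c)^{-1} B^{\exp}_{Q_c}(\varphi_n) \rightarrow 0$ directly, Rellich recovers the $H^1$ norm near the zeros, and the Hardy/Poincar\'e mechanism anchored by $\mathfrak{R}\mathfrak{e} \int_{B(0,R)} i \psi = 0$ recovers the weighted $L^2$ term of $\mathfrak{I}\mathfrak{m}(\psi)$ at infinity. This is a legitimate shortcut: it buys a much shorter argument at the cost of all quantitative information on $K_2(c)$, which the statement does not require. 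Note that both proofs ultimately rely on the same Poincar\'e inequality over a region of diameter $\sim 2 \tilde{d}_c \sim 2/c$ connecting $B(0,R)$ to the annuli $\{ 5 \leqslant \tilde{r} \leqslant 10 \}$; this is the common source of the $c$-dependence of $K_2(c)$. Your upper bound is the same termwise Cauchy--Schwarz as in the paper.

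Two caveats. First, your justification for passing $B^{\exp}_{Q_c}$ to the weak limit is flawed for the term $2 c \eta | Q_c |^2 \mathfrak{I}\mathfrak{m}(\partial_{x_2} \psi_n) \mathfrak{R}\mathfrak{e}(\psi_n)$: the coefficient $c | Q_c |^2$ does \emph{not} decay at infinity (it tends to $c$), so "decay of the coefficients" does not control the tail. This is harmless, because the detour through $B^{\exp}_{Q_c}(\varphi_{\infty})$ is unnecessary: $\| \varphi_{\infty} \|_{\mathcal{C}} = 0$ already follows from $\| \varphi_n \|_{\mathcal{C}} \rightarrow 0$ and weak lower semicontinuity of the (genuinely nonnegative, hence convex) seminorm $\| \cdot \|_{\mathcal{C}}$, after which the fourth orthogonality kills the residual $i \lambda Q_c$ and gives $\varphi_{\infty} = 0$ as you say; but you should excise or repair the lower semicontinuity claim for $B^{\exp}_{Q_c}$ itself. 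Second, you implicitly use that $H^{\exp}_{Q_c}$ is a Hilbert space (completeness and reflexivity) to extract the weakly convergent subsequence, and that the four orthogonality functionals and the restriction maps are bounded for $\| \cdot \|_{H^{\exp}_{Q_c}}$; these facts are true and essentially contained in Lemmas \ref{CP2L2111706} and \ref{CP2L3133105} together with (\ref{CP2jesaistoujourspasfairedesmaths}), but they should be stated, since the paper never records completeness of $H^{\exp}_{Q_c}$ explicitly.
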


Here, the orthogonality condition on $i Q_c$ is around $0$, between the two
vortices, but it can be chosen near one of the vortices for instance, and the
result still holds.

\

The second possibility is to work with symmetric perturbations, since the
orthogonality condition can then be at both the zeros of $Q_c$. We then study
the space
\[ H^{\exp, s}_{Q_c} \assign \{ \varphi \in H^{\exp}_{Q_c}, \forall x = (x_1,
   x_2) \in \mathbbm{R}^2, \varphi (x_1, x_2) = \varphi (- x_1, x_2) \} . \]
We show that, under three orthogonality conditions, the quadratic form is
equivalent to the norm on $H^{\exp}_{Q_c}$.

\begin{theorem}
  \label{CP2p41}There exist $R, K, c_0 > 0$ such that, for $0 < c \leqslant
  c_0$, $Q_c$ defined in Theorem \ref{th1}, if a function $\varphi \in
  H^{\exp, s}_{Q_c}$ satisfies the three orthogonality conditions:
  \[ \mathfrak{R}\mathfrak{e} \int_{B (\tilde{d}_c \overrightarrow{e_1}, R)
     \cup B (- \tilde{d}_c \overrightarrow{e_1}, R)} \partial_c Q_c
     \bar{\varphi} =\mathfrak{R}\mathfrak{e} \int_{B (\tilde{d}_c
     \overrightarrow{e_1}, R) \cup B (- \tilde{d}_c \overrightarrow{e_1}, R)}
     \partial_{x_2} Q_c \bar{\varphi} = 0, \]
  \[ \mathfrak{R}\mathfrak{e} \int_{B (\tilde{d}_c \overrightarrow{e_1}, R)
     \cup B (- \tilde{d}_c \overrightarrow{e_1}, R)} i Q_c \bar{\varphi} = 0,
  \]
  then
  \[ \frac{1}{K} \| \varphi \|^2_{H^{\exp}_{Q_c}} \geqslant B^{\exp}_{Q_c}
     (\varphi) \geqslant K \| \varphi \|^2_{H^{\exp}_{Q_c}} . \]
\end{theorem}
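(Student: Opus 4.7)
The key idea is that the $\sigma$-symmetry (reflection $\sigma(x_1,x_2)=(-x_1,x_2)$) supplies the two ``missing'' orthogonality conditions for free, reducing the symmetric problem to one where Proposition \ref{CP2prop16} applies with a constant independent of $c$. The additional orthogonality on $iQ_c$ then upgrades the coercivity from $\|\cdot\|_{\mathcal{C}}$ to the full $H^{\exp}_{Q_c}$ norm.

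\textbf{Step 1 (Free orthogonalities from symmetry).} I first verify that $Q_c\circ\sigma=Q_c$: the building block $V_1(\cdot-d_c\overrightarrow{e_1})V_{-1}(\cdot+d_c\overrightarrow{e_1})$ is $\sigma$-invariant since $V_{\pm 1}\circ\sigma=-V_{\mp 1}$ and the two minus signs cancel, and uniqueness of the fixed-point construction in Theorem \ref{th1} transfers the symmetry to $\Gamma_c$. Consequently $\partial_{x_2}Q_c$, $\partial_c Q_c$ and $iQ_c$ are $\sigma$-symmetric, while $\partial_{x_1}Q_c$ and $\partial_{c^\perp}Q_c=-x^\perp\cdot\nabla Q_c$ are $\sigma$-antisymmetric. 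For $\varphi=Q_c\psi\in H^{\exp,s}_{Q_c}$, the multiplier $\psi$ is also symmetric, and hence so is $\psi^{\neq 0}$ (the harmonic decompositions around $\pm\widetilde{d_c}\overrightarrow{e_1}$ are swapped by $\sigma$). Therefore $\mathfrak{R}\mathfrak{e}\int\partial_{x_1}Q_c\,\overline{Q_c\psi^{\neq 0}}$ and $\mathfrak{R}\mathfrak{e}\int\partial_{c^\perp}Q_c\,\overline{Q_c\psi^{\neq 0}}$ vanish automatically on the $\sigma$-symmetric set $B(\widetilde{d_c}\overrightarrow{e_1},R)\cup B(-\widetilde{d_c}\overrightarrow{e_1},R)$.

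\textbf{Step 2 (Reduction to Proposition \ref{CP2prop16}).} The hypotheses are stated with $\bar\varphi$, while Proposition \ref{CP2prop16} uses $\overline{Q_c\psi^{\neq 0}}$; the discrepancy is $\mathfrak{R}\mathfrak{e}\int A\,\overline{Q_c\psi^{0,\pm 1}}$ for $A\in\{\partial_{x_2}Q_c,\partial_c Q_c\}$. A direct angular integration (using that $\partial_c Q_c$ is to leading order proportional to $\partial_{x_1}V_{\pm 1}$ near each vortex by Theorem \ref{th1}, and that $\partial_{x_j}V_{\pm 1}\,\overline{V_{\pm 1}}$ has vanishing angular mean) shows these corrections are negligible and can be absorbed. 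Combined with Step 1, all four orthogonality conditions of Proposition \ref{CP2prop16} hold, yielding
\[ B^{\exp}_{Q_c}(\varphi)\geq K\|\varphi\|_{\mathcal{C}}^2 \]
with $K>0$ independent of $c$.

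\textbf{Step 3 (Upgrade to $H^{\exp}_{Q_c}$).} The seminorm $\|\varphi\|_{\mathcal{C}}$ controls $|\nabla\psi|^2+\mathfrak{R}\mathfrak{e}^2(\psi)$ where $|Q_c|$ is bounded below, but misses both the $H^1$ norm inside the cores $\{\tilde r\leq 10\}$ and the Hardy term $|\psi|^2/(\tilde r^2\ln^2\tilde r)$ at infinity (which involves $\mathfrak{I}\mathfrak{m}\psi$). Inside the cores, $\eta\equiv 0$ in (\ref{CP2Btilda}) so $B^{\exp}_{Q_c}$ reduces to the standard quadratic form, and the three single-vortex orthogonalities of Proposition \ref{CP2P811} are matched by: the symmetry-provided condition on $\partial_{x_1}Q_c$, the hypothesis on $\partial_{x_2}Q_c$, and the phase hypothesis on $iQ_c$. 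Applying Proposition \ref{CP2P811} around each zero gives the $H^1$ control on $\varphi$ in $\{\tilde r\leq 10\}$ together with the exterior logarithmic Hardy term $\int|\psi|^2/(\tilde r^2\ln^2\tilde r)$. The upper bound $B^{\exp}_{Q_c}(\varphi)\leq K^{-1}\|\varphi\|_{H^{\exp}_{Q_c}}^2$ is a routine term-by-term Cauchy--Schwarz estimate on (\ref{CP2Btilda}) using the pointwise bounds on $Q_c$ and $\nabla Q_c$.

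\textbf{Main obstacle.} The delicate point is Step 3: the single local phase orthogonality on $iQ_c$ over $B(\widetilde{d_c}\overrightarrow{e_1},R)\cup B(-\widetilde{d_c}\overrightarrow{e_1},R)$ must simultaneously neutralize both single-vortex phase zero modes when applying Proposition \ref{CP2P811} around each zero and then gluing to the exterior Hardy estimate. This is possible only because the $\sigma$-symmetry couples the two vortices — the value of $\mathfrak{I}\mathfrak{m}\psi$ near one is determined by its value near the other — so that one global orthogonality plays the role of two local ones. Without this symmetry, one would be forced into Proposition \ref{CP2prop17} and its $c$-dependent constant $K_2(c)$.
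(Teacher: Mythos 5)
Your overall architecture matches the paper's: the reflection symmetry disposes of the $\partial_{x_1} Q_c$ and $\partial_{c^{\bot}} Q_c$ conditions, a modulation by $i Q_c$, $\partial_{x_2} Q_c$, $c^2 \partial_c Q_c$ absorbs the discrepancy between $\bar{\varphi}$ and $\overline{Q_c \psi^{\neq 0}}$, and the phase orthogonality upgrades the coercivity norm (the paper does all of this by reducing to the symmetric case of Lemma \ref{CP2L661510}). However, Step~3 contains a genuine error. To apply Proposition \ref{CP2P811} (or its localized forms, Lemmas \ref{CP2L422602}--\ref{CP2L442702}) around the zero at $+ \widetilde{d_c} \overrightarrow{e_1}$, you need the \emph{local} orthogonality $\int_{B (\tilde{d}_c \overrightarrow{e_1}, R)} \mathfrak{R}\mathfrak{e} (\partial_{x_1} \tilde{V}_1 \overline{\tilde{V}_1 \psi}) = 0$ at that single vortex; without it, $\partial_{x_1} \tilde{V}_1$ is an uncontrolled zero mode of the local quadratic form and no $H^1$ bound in the core is possible. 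You attribute this local condition to the symmetry. But since $\partial_{x_1} Q_c$ is odd in $x_1$ and $\varphi$ is even, the symmetry only gives
\[ \int_{B (\tilde{d}_c \overrightarrow{e_1}, R)} \mathfrak{R}\mathfrak{e} (\partial_{x_1} \tilde{V}_1 \overline{\tilde{V}_1 \psi}) = - \int_{B (- \tilde{d}_c \overrightarrow{e_1}, R)} \mathfrak{R}\mathfrak{e} (\partial_{x_1} \tilde{V}_{- 1} \overline{\tilde{V}_{- 1} \psi}), \]
so the integral over the union vanishes identically and carries no information, while each individual integral is in general nonzero. The condition that actually supplies the local $\partial_{x_1}$-orthogonality is the hypothesis on $\partial_c Q_c$, which your Step~3 leaves unused: by (\ref{CP273223}), $c^2 \partial_c Q_c = \mp \partial_{x_1} \tilde{V}_{\pm 1} + o_{c \rightarrow 0} (c^{\beta_0})$ near the zeros and $\partial_c Q_c$ is \emph{even} in $x_1$, so the union integral in your hypothesis equals, to leading order, $- 2 c^{- 2}$ times the local integral at one vortex. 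The correct bookkeeping is therefore: symmetry halves the four per-vortex translation conditions to two independent ones, and the hypotheses on $\partial_{x_2} Q_c$ and $\partial_c Q_c$ (after modulation) supply exactly those two, plus the phase. This is fixable along the lines of the paper's modulation, but as written your Step~3 does not establish the orthogonality it relies on.

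A secondary point: the conversion of $\mathfrak{R}\mathfrak{e} \int_{B (\tilde{d}_c \overrightarrow{e_1}, R) \cup B (- \tilde{d}_c \overrightarrow{e_1}, R)} i Q_c \bar{\varphi} = 0$ into the annulus condition $\int_{B (0, R) \backslash B (0, R / 2)} \mathfrak{I}\mathfrak{m} (\psi) = 0$ required by Proposition \ref{CP2P811}, and the gluing of the two core estimates with the exterior Hardy bound, are only asserted in your sketch; this Poincar{\'e}-plus-gluing step is precisely the content of the symmetric case of Lemma \ref{CP2L661510} and is where the $c$-independence of the constant is actually won. The upper bound $B^{\exp}_{Q_c} (\varphi) \leqslant K^{- 1} \| \varphi \|^2_{H^{\exp}_{Q_c}}$ is handled as you say.
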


We remark that here, the orthogonality condition to $\partial_{x_1} Q_c$ and
$\partial_{c^{\bot}} Q_c$ are freely given by the symmetry. We also do not
need to remove the $0$-harmonic near the zeros of $Q_c$.

Propositions \ref{CP2prop17} and Theorem \ref{CP2p41} hold if we replace
$B^{\exp}_{Q_c}$ by $B_{Q_c}$ for $\varphi = Q_c \psi \in H_{Q_c}$ with the
symmetry, but the coercivity norm will still be $\| . \|_{H^{\exp}_{Q_c}}$.

\subsection{Local uniqueness result}

With Propositions \ref{CP2prop16} and \ref{CP2prop17}, we can modulate on the
five parameters $(\vec{c}, X, \gamma)$ of the travelling wave, and these
coercivity results will be enough to show the following theorem.

\begin{theorem}
  \label{CP2th16}There exist constants $K, c_0, \varepsilon_0, \mu_0 > 0$ such
  that, for $0 < c < c_0$, $Q_c$ defined in Theorem \ref{th1}, there exists
  $R_c > 0$ depending on $c$ such that, for any $\lambda > R_c$, if a function
  $Z \in C^2 (\mathbbm{R}^2, \mathbbm{C})$ satisfies, for some small constant
  $\varepsilon (c, \lambda) > 0$, depending on $c$ and $\lambda$,
  \begin{itemizeminus}
    \item $(\tmop{TW}_c) (Z) = 0$
    
    \item $E (Z) < + \infty$
    
    \item $\| Z - Q_c \|_{C^1 (\mathbbm{R}^2 \backslash B (0, \lambda))}
    \leqslant \mu_0$
    
    \item $\| Z - Q_c \|_{H^{\exp}_{Q_c}} \leqslant \varepsilon (c, \lambda)$,
  \end{itemizeminus}
  then, there exists $X \in \mathbbm{R}^2$ such that $| X | \leqslant K \| Z -
  Q_c \|_{H^{\exp}_{Q_c}}$, and
  \[ Z = Q_c (. - X) . \]
\end{theorem}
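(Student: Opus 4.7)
The plan is a modulation argument combined with the coercivity of Proposition \ref{CP2prop17}. I introduce four modulation parameters $(c',X,\gamma)$, with $c' \in \mathbbm{R}$ close to $c$, $X \in \mathbbm{R}^2$ small and $\gamma \in \mathbbm{R}$ small, set
\[
\tilde{Z}_{c',X,\gamma} \assign Q_{c'}(\cdot - X) e^{i\gamma}, \qquad \varphi \assign Z - \tilde{Z}_{c',X,\gamma},
\]
and aim to choose $(c',X,\gamma)$ uniquely so that $\varphi$ satisfies the four orthogonality conditions of Proposition \ref{CP2prop17}. Once this is done, the equation $(\tmop{TW}_c)(Z)=0$ will force $\varphi \equiv 0$, $c'=c$, $\gamma=0$, leaving only the translation $X$.

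First I would show by the implicit function theorem that there exist unique small $(c'-c, X, \gamma)$, of size $O(\|Z-Q_c\|_{H^{\exp}_{Q_c}})$, solving the four orthogonality conditions. The $4\times 4$ Jacobian at the base point $Z=Q_c$ is the matrix of pairings of the differentiated orthogonality conditions against $\partial_{c'}\tilde{Z}$, $\partial_{X_1}\tilde{Z}$, $\partial_{X_2}\tilde{Z}$, $\partial_{\gamma}\tilde{Z}$, which reduce respectively to $\partial_c Q_c$, $-\partial_{x_1}Q_c$, $-\partial_{x_2}Q_c$, $iQ_c$. The localisation of the integrals to the fixed-radius balls $B(\pm\tilde{d}_c\overrightarrow{e_1},R)$ and $B(0,R)$, combined with the dominant pointwise behaviour of these four directions, makes this matrix block-dominant and invertible, with a quantitative bound on the inverse depending on $c$.

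Next, since both $Z$ and $Q_{c'}$ solve the travelling wave equation at their respective speeds, the residual satisfies
\[
(\tmop{TW}_c)(\tilde{Z}_{c',X,\gamma}) = i(c'-c)\partial_{x_2}\tilde{Z}_{c',X,\gamma},
\]
so that expanding $(\tmop{TW}_c)(\tilde{Z}+\varphi)=0$ yields
\[
L_{\tilde{Z}}(\varphi) = -i(c'-c)\partial_{x_2}\tilde{Z} - N(\varphi),
\]
where $N(\varphi)$ collects quadratic and cubic terms, pointwise bounded by $|\varphi|^2(1+|\varphi|)$. Pairing this identity with a translate of $\partial_c Q_{c'}$ and using $\partial_c P_2(Q_c)\neq 0$ from Proposition \ref{CP2prop5} gives $|c-c'|\leqslant K(c)\|\varphi\|_{H^{\exp}_{Q_c}}^2$. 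Applying Proposition \ref{CP2prop17} to $\varphi$ (after translation and dephasing) then gives
\[
K_2(c)\,\|\varphi\|_{H^{\exp}_{Q_c}}^2 \leqslant B^{\exp}_{Q_{c'}}(\varphi) \leqslant |c-c'|\cdot|\langle i\partial_{x_2}\tilde{Z},\varphi\rangle| + |\langle N(\varphi),\varphi\rangle|,
\]
and the right-hand side is bounded by $C(c,\lambda)\|\varphi\|_{H^{\exp}_{Q_c}}^3$ once we use the $C^1$ control outside $B(0,\lambda)$ (to handle the region where the $H^{\exp}_{Q_c}$ weight is weak) and local Sobolev estimates near the vortices. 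Choosing $\varepsilon(c,\lambda)$ small enough compared to $K_2(c)/C(c,\lambda)$ closes the bootstrap and forces $\varphi=0$, then $c'=c$ and $\gamma=0$.

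The principal obstacle is the coupling between the $c$-dependent coercivity constant $K_2(c)$, which may degenerate as $c\to 0$, and the nonlinear estimate for $\langle N(\varphi),\varphi\rangle$: this estimate can only be closed thanks to the uniform pointwise bound $\|Z-Q_c\|_{C^1(\mathbbm{R}^2\setminus B(0,\lambda))}\leqslant \mu_0$ in the far field, where the weight $1/(\tilde{r}^2\ln^2 \tilde{r})$ of $\|\cdot\|_{H^{\exp}_{Q_c}}$ is too weak to yield an $L^\infty$ control by Sobolev embedding alone. Controlling the error terms uniformly requires careful treatment of the annular region $B(0,\lambda)\setminus\bigl(B(\tilde{d}_c\overrightarrow{e_1},R)\cup B(-\tilde{d}_c\overrightarrow{e_1},R)\bigr)$, which forces the threshold $R_c$ (ensuring that $\lambda\geqslant R_c$ is large compared to the vortex separation) to depend on $c$.
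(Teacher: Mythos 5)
Your overall strategy (modulation to impose the orthogonality conditions of Proposition \ref{CP2prop17}, then coercivity) is the right skeleton, but two steps as written would fail.

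First, the claim $| c - c' | \leqslant K (c) \| \varphi \|_{H^{\exp}_{Q_c}}^2$ is not correct. Pairing the equation with $\partial_c Q$ and using self-adjointness produces the term $\langle \varphi, L (\partial_c Q) \rangle = \langle \varphi, i \partial_{x_2} Q \rangle$, which is \emph{linear} in $\varphi$ and does not vanish: the orthogonality conditions are local (on balls around the vortices, with the $0$-harmonic removed) and do not kill this global pairing. So the best one gets is a linear bound $| c - c' | \lesssim c^2 \ln (1 / c) \| \varphi \|_{\mathcal{C}} + \ldots$ (this is exactly the paper's estimate (\ref{CP24182910}), obtained via Lemma \ref{CP2L390911}). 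Consequently the source term $\langle i (c - c') \partial_{x_2} \tilde{Z}, \varphi \rangle$ contributes at order $\| \varphi \|^2$ --- the \emph{same} order as the coercivity --- and your proposed closing mechanism (``right-hand side $\leqslant C (c, \lambda) \| \varphi \|^3$, take $\varepsilon$ small'') does not apply to it. The paper closes this term not by smallness of $\varepsilon$ but by smallness in $c$ of the coefficient ($c \ln^2 (1 / c) \| \varphi \|_{\mathcal{C}}^2$), which in turn requires a coercivity constant that is \emph{uniform in $c$} in the seminorm $\| \cdot \|_{\mathcal{C}}$; this is why the paper modulates over five parameters ($\vec{c}' \in \mathbbm{R}^2$, not a scalar $c'$) so as to also impose orthogonality to $\partial_{c^{\bot}} Q_c$ and invoke Proposition \ref{CP2prop16} in addition to Proposition \ref{CP2prop17}. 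With your four parameters you only get $K_2 (c) \| \varphi \|^2_{H^{\exp}_{Q_c}}$, against which neither the $o_{c \rightarrow 0} (1) \| \varphi \|^2_{\mathcal{C}}$ source term nor the $K \mu_0 \| \varphi \|^2_{\mathcal{C}}$ nonlinear term can be absorbed with a universal $\mu_0$.

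Second, the purely additive ansatz $\varphi = Z - \tilde{Z}$ leaves a gap in the nonlinear estimate at infinity. The cubic part of the additive nonlinearity paired with $\varphi$ contains $\int \mathfrak{R}\mathfrak{e} (\psi) \mathfrak{I}\mathfrak{m}^2 (\psi) | Q |^4$, and $\mathfrak{I}\mathfrak{m} (\psi)$ is controlled in $\| \cdot \|_{H^{\exp}_{Q_c}}$ only with the weight $1 / (\tilde{r}^2 \ln^2 \tilde{r})$, which is too weak for this integral even with the far-field $C^1$ bound. The paper's fix is structural, not just a matter of careful estimates: it writes the perturbation additively near the vortices and \emph{multiplicatively} ($Z = Q e^{\psi}$) far from them, so that the far-field nonlinearity becomes $Q \eta (- \nabla \psi . \nabla \psi + | Q |^2 S (\psi))$ with $S (\psi)$ depending only on $\mathfrak{R}\mathfrak{e} (\psi)$, i.e. only on quantities controlled by $\| \cdot \|_{\mathcal{C}}$ and the $L^{\infty}$ bound $\mu_0$. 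Your acknowledgment that the far-field $C^1$ bound is needed is correct, but without the multiplicative decomposition that bound alone does not salvage the estimate.
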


The conditions $E (Z) < + \infty$ and $\| Z - Q_c \|_{H^{\exp}_{Q_c}}
\leqslant \varepsilon (c, \lambda)$ imply that the travelling wave $Z
\rightarrow 1$ at infinity, and therefore $Z = Q_c e^{i \gamma}$ with $\gamma
\in \mathbbm{R}, \gamma \neq 0$ is excluded. The fact that $\varepsilon (c,
\lambda)$ depends on $c$ comes in part from the constant of coercivity in
Proposition \ref{CP2prop17}, which depends itself on $c$. The condition that
$\| Z - Q_c \|_{C^1 (\mathbbm{R}^2 \backslash B (0, \lambda))} \leqslant
\mu_0$ outside of $B (0, \lambda)$ is mainly technical. We believe that this
condition is automatically satisfied with the other ones (with $\lambda$
depending only on $c$), but we were not able to show it.

To the best of our knowledge, this is the first result of local uniqueness for
travelling waves in $(\tmop{GP})$. It does not suppose any symmetries on $Z$,
and therefore shows that we can not bifurcate from this branch, even to
nonsymmetric travelling waves.

We believe that, at least in the symmetric case, Theorem \ref{CP2th16} should
hold for $\| Z - Q_c \|_{H^{\exp}_{Q_c}} \leqslant \varepsilon$ with
$\varepsilon > 0$ independent of $c$ and $\lambda$. We also remark that the
condition $\| Z - Q_c \|_{H^{\exp}_{Q_c}} \leqslant \varepsilon (c, \lambda)$
is weaker than $\| Z - Q_c \|_{H_{Q_c}} \leqslant \varepsilon (c, \lambda)$,
and thus we can state a result in $H_{Q_c}$.

\subsection{Plan of the proofs}

Section \ref{CP2sec2N} is devoted to the proof of Proposition \ref{CP2prop5}.
We start by giving some estimates on the branch of travelling waves in
subsection \ref{CP2Ndecay}, we then show the equivalents when $c \rightarrow
0$ for the energy and momentum, as well as the relations between them and some
specific values of the quadratic form in subsection \ref{CP2s31}. Finally, in
subsection \ref{CP205s1}, we study the travelling wave near its zeros.

In section \ref{CP2NSEC4}, we infer some properties of the space $H_{Q_c}$.
First, we explain why we can not have a coercivity result in the energy norm
in subsection \ref{CP2CandHc}, and we show the well posedness of several
quantities in subsections \ref{CP2Nrouge} and \ref{CP22241103}. A density
argument is given in subsection \ref{CP2density}, that will be needed for the
proof of Proposition \ref{CP205218}.

Section \ref{CP2NSEC3} is devoted to the proofs of Propositions \ref{CP2P811}
and \ref{CP205218}. We start by writing the quadratic form for test functions
in a particular form (subsection \ref{CP21202CompQc}), and we then show
Proposition \ref{CP2P811} and \ref{CP205218} respectively in subsections
\ref{CP2s24} and \ref{CP2concl}. To show Proposition \ref{CP205218}, we use
Proposition \ref{CP2P811} and the fact that we know well the travelling wave
near its zeros from subsection \ref{CP205s1}.

The next part, section \ref{CP2s32}, is devoted to the proof of Theorem
\ref{CP2th2} and its corollaries. We show the coercivity under four
orthogonality conditions by showing that we can modify the initial function by
a small amount to have the four orthogonality conditions of Proposition
\ref{CP205218}, and that the error commited is small in the coercivity norm.
We then focus on the corollaries of Theorem \ref{CP2th2} in subsection
\ref{CP2proofcor}. We show there composition of the kernel of $L_{Q_c}$
(Corollary \ref{CP2Cor41}), and the results in $H^1 (\mathbbm{R}^2)$:
Corollary \ref{CP2cor177}, Proposition \ref{CP2p188} and Corollary
\ref{CP2th199}.

The penultimate section (\ref{CP2ss41}) is devoted to the proofs of
Proposition \ref{CP2prop16}, \ref{CP2prop17} and Theorem \ref{CP2p41}. In
subsection \ref{CP2HQc911}, we study the space $H_{Q_c}^{\exp}$, in particular
we give a density argument, that allows us to finish the proof of Proposition
\ref{CP2prop16}. Then, in subsection \ref{CP2ss62}, we compute how the
additional orthogonality condition improves the coercivity norm, both in the
symmetric and non symmetric case, and we can then show Proposition
\ref{CP2prop17} and Theorem \ref{CP2p41}.

Section \ref{CP2ss42} is devoted to the proof of Theorem \ref{CP2th16}. We use
here classical methods for the proof of local uniqueness, by modulating on the
five parameters of the family, and using a coercivity result. One of the main
point is to write the problem additively near the zeros of $Q_c$ and
multiplicatively far from them. The reason for that is that we do not know the
link between the speed and the position of the zeros of a travelling wave in
general, and we therefore cannot write a perturbation multiplicatively in the
whole space. Because of that, we require here an orthogonality on the phase,
and we cannot avoid it, as we did for instance the proof of Proposition
\ref{CP205218} by choosing correctly the position of the vortices.

\

We will use many cutoffs in the proofs. As a rule of thumb, a function
written as $\eta, \chi$ or $\tilde{\chi}$ will be smooth and have value $1$ at
infinity and $0$ in some compact domain. The function $\eta$ itself is
reserved for $B_{Q_c}$ and $B^{\exp}_{Q_c}$ (see equations (\ref{CP2truebqc})
and (\ref{CP2Btilda})).

{\noindent}\tmtextbf{Acknowledgments . }The authors would like to thank Pierre
Rapha{\"e}l for helpful discussions. E.P. is supported by the ERC-2014-CoG
646650 SingWave.{\hspace*{\fill}}{\medskip}

\section{Properties of the branch of travelling waves}\label{CP2sec2N}

This section is devoted to the proof of Proposition \ref{CP2prop5}. In
subsection \ref{CP2Ndecay}, we recall some estimates on $Q_c$ defined in
Theorem \ref{th1} from previous works ({\cite{CX}}, {\cite{CP1}},
{\cite{MR2086751}} and {\cite{HH}}). In subsection \ref{CP2s31}, we compute
some equalities and equivalents when $c \rightarrow 0$ on the energy, momentum
and the four particular directions ($\partial_{x_1} Q_c, \partial_{x_2} Q_c,
\partial_c Q_c$ and $\partial_{c^{\bot}} Q_c$). Finally, the properties of the
zeros of $Q_c$ are studied in subsection \ref{CP205s1}.

\subsection{Decay estimates}\label{CP2Ndecay}

\subsubsection{Estimates on vortices}\label{CP2Nvor}

We recall that vortices are stationary solutions of $(\tmop{GP})$ of degrees
$n \in \mathbbm{Z}^{\ast}$ (see {\cite{CX}}):
\[ V_n (x) = \rho_n (r) e^{i n \theta}, \]
where $x = r e^{i \theta}$, solving
\[ \left\{ \begin{array}{l}
     \Delta V_n - (| V_n |^2 - 1) V_n = 0\\
     | V_n | \rightarrow 1 \tmop{as} | x | \rightarrow \infty .
   \end{array} \right. \]
We regroup here estimates on quantities involving vortices. \ We start with
estimates on $V_{\pm 1}$.

\begin{lemma}[{\cite{CX}} and {\cite{HH}}]
  \label{lemme3new}A vortex centered around $0$, $V_1 (x) = \rho_1 (r) e^{i
  \theta}$, verifies $V_1 (0) = 0$, and there exist constants $K, \kappa > 0$
  such that
  \[ \forall r > 0, 0 < \rho_1 (r) < 1, \rho_1 (r) \sim_{r \rightarrow 0}
     \kappa r, \rho'_1 (r) \sim_{r \rightarrow 0} \kappa \]
  \[ \rho_1' (r) > 0 ; \rho_1' (r) = O_{r \rightarrow \infty} \left(
     \frac{1}{r^3} \right), | \rho'' (r) | + | \rho''' (r) | \leqslant K, \]
  \[ 1 - | V_1 (x) | = \frac{1}{2 r^2} + O_{r \rightarrow \infty} \left(
     \frac{1}{r^3} \right), \]
  \[ | \nabla V_1 | \leqslant \frac{K}{1 + r}, | \nabla^2 V_1 | \leqslant
     \frac{K}{(1 + r)^2} \]
  and
  \[ \nabla V_1 (x) = i V_1 (x) \frac{x^{\bot}}{r^2} + O_{r \rightarrow
     \infty} \left( \frac{1}{r^3} \right), \]
  where $x^{\perp} = (- x_2, x_1)$, $x = r e^{i \theta} \in \mathbbm{R}^2$.
  Furthermore, similar properties holds for $V_{- 1}$, since
  \[ V_{- 1} (x) = \overline{V_1 (x)_{}} . \]
\end{lemma}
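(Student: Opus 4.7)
The plan is to exploit the radial ansatz $V_1(x) = \rho_1(r) e^{i\theta}$ (which is the classical choice that makes $V_1$ of degree $1$ and smooth at the origin), reduce the PDE $\Delta V_1 - (|V_1|^2 - 1)V_1 = 0$ to a single ODE for $\rho_1$, and then read off each estimate from either a shooting argument near $r = 0$ or a linearized asymptotic expansion near $r = \infty$. Concretely, plugging the ansatz in gives
\[ \rho_1''(r) + \frac{1}{r}\rho_1'(r) - \frac{\rho_1(r)}{r^2} + (1 - \rho_1(r)^2)\rho_1(r) = 0, \]
with the boundary conditions $\rho_1(0) = 0$ and $\rho_1(r) \to 1$ as $r \to +\infty$. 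The existence and uniqueness of a positive increasing solution on $(0, +\infty)$, as well as the strict bound $0 < \rho_1 < 1$, is the content of \cite{CX}; combined with standard ODE continuation (Picard iteration near the singular point) one obtains smoothness on $(0, +\infty)$ and boundedness of $\rho_1'', \rho_1'''$ on compacts away from the origin, while the behaviour near $r = 0$ is forced by the singular term $-\rho_1/r^2$: writing $\rho_1(r) = \kappa r + o(r)$ in the ODE and solving order by order yields $\rho_1 \sim \kappa r$ and $\rho_1' \sim \kappa$, with $\kappa > 0$ fixed by a shooting/normalization condition (uniqueness of the vortex with $\rho_1 > 0$ and $\rho_1 \to 1$).

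For the asymptotic behaviour at infinity, I would set $\rho_1 = 1 - u$ with $u \to 0$ and linearize the ODE: keeping only the lowest-order contributions gives
\[ u''(r) + \frac{1}{r} u'(r) - 2 u(r) + \frac{1}{r^2} + O(u^2) + O(u/r^2) = 0, \]
and balancing the source term $1/r^2$ against the coefficient $-2$ yields $u(r) = \frac{1}{2r^2} + O(r^{-3})$, which is exactly the claim $1 - |V_1(x)| = \frac{1}{2r^2} + O_{r\to\infty}(r^{-3})$. Differentiating the Ansatz and the expansion gives $\rho_1'(r) = O(r^{-3})$ (the leading $1/r^2$ gets an extra power from differentiation), and $\rho_1''$, $\rho_1'''$ obey similar polynomial decay; the global bound $|\rho''| + |\rho'''| \leq K$ on all of $(0, +\infty)$ then follows by combining the near-$0$ behaviour (smooth extension), the compact-set smoothness, and the decay at infinity.

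The gradient and Hessian estimates follow from the polar decomposition
\[ \nabla V_1 = \rho_1'(r) e^{i\theta}\, \hat{e}_r + \frac{i \rho_1(r)}{r} e^{i\theta}\, \hat{e}_\theta, \]
where $\hat{e}_\theta = x^\perp/r$. Combining with $\rho_1 = O(r)$ near $0$ and $\rho_1 \leq 1$ globally, and $\rho_1' = O(1)$ near $0$ with $\rho_1' = O(r^{-3})$ at infinity, one obtains $|\nabla V_1| \leq K/(1+r)$; the $\hat{e}_\theta$-component is exactly $iV_1 \cdot x^\perp/r^2$, so the $O(r^{-3})$ remainder in the displayed formula is precisely $\rho_1'(r) e^{i\theta} \hat{e}_r$. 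The Hessian bound $|\nabla^2 V_1| \leq K/(1+r)^2$ comes from differentiating once more and using the boundedness of $\rho_1''$ together with the $1/r$ factors produced by differentiating in the angular direction.

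Finally, for $V_{-1}$, the symmetry $V_{-1}(x) = \overline{V_1(x)}$ is immediate: both $V_{-1}$ and $\overline{V_1}$ have the same modulus $\rho_1(r)$ and satisfy $(\operatorname{GP})$ with degree $-1$ (conjugation swaps $e^{i\theta}$ with $e^{-i\theta}$), so by uniqueness of the radial profile they coincide and all estimates transfer verbatim. The only genuinely delicate step is the linearization at infinity, as one must justify rigorously that the formal expansion $u \sim 1/(2r^2)$ is indeed the leading behaviour (and not polluted by a homogeneous solution of the linearized operator $u'' + u'/r - 2u = 0$, whose decaying solution is exponentially small in $r$ and hence negligible compared to $1/r^2$); this is the main obstacle, and the cleanest way is to use the refined matched-asymptotic analysis of \cite{HH}.
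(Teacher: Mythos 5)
This lemma is stated in the paper purely as a recollection of known results, attributed to \cite{CX} and \cite{HH}, and the paper offers no proof of it; your sketch is a faithful outline of the standard arguments in those references (reduction to the ODE $\rho_1''+\rho_1'/r-\rho_1/r^2+(1-\rho_1^2)\rho_1=0$, shooting near $r=0$, linearization $u''+u'/r-2u+1/r^2=0$ at infinity, and the polar decomposition of $\nabla V_1$), so there is nothing to compare against in the paper itself. The only step I would flag is the passage from $1-\rho_1=\frac{1}{2r^2}+O(r^{-3})$ to $\rho_1'=O(r^{-3})$: one cannot simply differentiate an asymptotic expansion, and the rigorous route is to extract the decay of $\rho_1'$ directly from the ODE (or from the refined analysis of \cite{HH}), which you essentially acknowledge by deferring that point to the matched asymptotics of \cite{HH}.
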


We also define, as in {\cite{CP1}},
\[ V (.) \assign V_1 (. - d_c \overrightarrow{e_1}) V_{- 1} (. + d_c
   \overrightarrow{e_1}) \]
and
\[ \partial_d V (.) \assign \partial_d (V_1 (. - d \overrightarrow{e_1}) V_{-
   1} (. + d \overrightarrow{e_1}))_{| d = d_c \nobracket} . \]
We will also estimate
\[ \partial_d^2 V \assign \partial_d^2 (V_1 (. - d \overrightarrow{e_1}) V_{-
   1} (. + d \overrightarrow{e_1}))_{| d = d_c \nobracket} . \]
The function $V (x) = V_1 (x - d_c \overrightarrow{e_1}) V_{- 1} (x + d_c
\overrightarrow{e_1})$ is close to $V_1 (x - d_c \overrightarrow{e_1})$ in $B
(d_c \overrightarrow{e_1}, 2 d_c^{1 / 2})$, since, from Lemma \ref{lemme3new}
and {\cite{CX}}, we have, uniformly in $B (d_c \overrightarrow{e_1}, 2 d_c^{1
/ 2})$,
\begin{equation}
  V_{- 1} (. + d_c \overrightarrow{e_1}) = 1 + O_{c \rightarrow 0} (c^{1 / 2})
  \label{CP2222405}
\end{equation}
and
\begin{equation}
  | \nabla V_{- 1} (. + d_c \overrightarrow{e_1}) | \leqslant \frac{o_{c
  \rightarrow 0} (c^{1 / 2})}{(1 + \widetilde{r_1})} \label{CP2V-1est} .
\end{equation}
We recall that $B (d_c \overrightarrow{e_1}, 2 d_c^{1 / 2})$ is near the
vortex of degree $+ 1$ of $Q_c$ and that $\tilde{r} = \min (r_1, r_{- 1})$,
with $r_{\pm 1} = | x \mp d_c \overrightarrow{e_1} |$.

\subsubsection{Estimates on $Q_c$ from {\cite{CP1}}}\label{CP2NQcest}

We recall, for the function $Q_c$ defined in Theorem \ref{th1}, that
\begin{equation}
  \forall (x_1, x_2) \in \mathbbm{R}^2, Q_c (x_1, x_2) = \overline{Q_c (x_1, -
  x_2)} = Q_c (- x_1, x_2) . \label{CP2sym}
\end{equation}
In particular, $\partial_c Q_c$ enjoys the same symmetries, since
(\ref{CP2sym}) holds for any $c > 0$ small enough. We recall that $Q_c \in
C^{\infty} (\mathbbm{R}^2, \mathbbm{C})$ by standard elliptic regularity
arguments.

Finally, we recall some estimates on $Q_c$ and its derivatives, coming from
Lemma 3.8 of {\cite{CP1}}. We denote $\tilde{r} = \min (r_1, r_{- 1})$, the
minimum of the distances to $d_c \vec{e}_1$ and $- d_c \vec{e}_1$, and we
recall that $V (x) = V_1 (x - d_c \overrightarrow{e_1}) V_{- 1} (x + d_c
\overrightarrow{e_1})$.

We write $Q_c = V + \Gamma_c$ or $Q_c = (1 - \eta) V \Psi_c + \eta V
e^{\Psi_c}$, where $\Gamma_c = (1 - \eta) V \Psi_c + \eta V (e^{\Psi_c} - 1)$
(see equation (3.8) of {\cite{CP1}}). There exists $K > 0$ and, for any $0 <
\sigma < 1$, there exists $K (\sigma) > 0$ such that
\begin{equation}
  | \Gamma_c | \leqslant \frac{K (\sigma) c^{1 - \sigma}}{(1 +
  \tilde{r})^{\sigma}} \label{CP2labasique}
\end{equation}
\begin{equation}
  | \nabla \Gamma_c | \leqslant \frac{K (\sigma) c^{1 - \sigma}}{(1 +
  \tilde{r})^{1 + \sigma}} . \label{CP2225}
\end{equation}
\begin{equation}
  | 1 - | Q_c | | \leqslant \frac{K (\sigma)}{(1 + \tilde{r})^{1 + \sigma}},
  \label{CP2217}
\end{equation}
\begin{equation}
  | Q_c - V | \leqslant \frac{K (\sigma) c^{1 - \sigma}}{(1 +
  \tilde{r})^{\sigma}}, \label{CP2218}
\end{equation}
\begin{equation}
  | | Q_c |^2 - | V |^2 | \leqslant \frac{K (\sigma) c^{1 - \sigma}}{(1 +
  \tilde{r})^{1 + \sigma}}, \label{CP2219}
\end{equation}
\begin{equation}
  | \mathfrak{R}\mathfrak{e} (\nabla Q_c \overline{Q_c}) | \leqslant \frac{K
  (\sigma)}{(1 + \tilde{r})^{2 + \sigma}}, \label{CP2220}
\end{equation}
\begin{equation}
  | \mathfrak{I}\mathfrak{m} (\nabla Q_c \overline{Q_c}) | \leqslant
  \frac{K}{1 + \tilde{r}} \label{CP2221},
\end{equation}
and for $0 < \sigma < \sigma' < 1$, there exists $K (\sigma, \sigma') > 0$
such that
\begin{equation}
  | D^2 \mathfrak{I}\mathfrak{m} (\Psi_c) | + | \nabla
  \mathfrak{R}\mathfrak{e} (\Psi_c) | + | \nabla^2 \mathfrak{R}\mathfrak{e}
  (\Psi_c) | \leqslant \frac{K (\sigma, \sigma') c^{1 - \sigma'}}{(1 +
  \tilde{r})^{2 + \sigma}} \label{CP2222} .
\end{equation}
From Lemmas \ref{lemme3new}, with Theorem \ref{th1}, we deduce in particular
that for $c$ small enough, there exist universal constants $K_1, K_2 > 0$ such
that on $\mathbbm{R}^2 \backslash B (\pm d_c \overrightarrow{e_1}, 1)$ we have
\begin{equation}
  K_1 \leqslant | Q_c | \leqslant K_2 . \label{CP2Qcpaszero}
\end{equation}
To these estimates, we add two additional lemmas. We write
\[ \begin{array}{lll}
     \| \psi \|_{\sigma, d_c} & \assign & \| V \psi \|_{C^1 (\{ \tilde{r}
     \leqslant 3 \})} + \| \tilde{r}^{1 + \sigma} \mathfrak{R}\mathfrak{e}
     (\psi) \|_{L^{\infty} (\{ \tilde{r} \geqslant 2 \})} + \| \tilde{r}^{2 +
     \sigma} \nabla \mathfrak{R}\mathfrak{e} (\psi) \|_{L^{\infty} (\{
     \tilde{r} \geqslant 2 \})}\\
     & + & \| \tilde{r}^{\sigma} \mathfrak{I}\mathfrak{m} (\psi)
     \|_{L^{\infty} (\{ \tilde{r} \geqslant 2 \})} + \| \tilde{r}^{1 + \sigma}
     \nabla \mathfrak{I}\mathfrak{m} (\psi) \|_{L^{\infty} (\{ \tilde{r}
     \geqslant 2 \})},
   \end{array} \]
where $\tilde{r} = \min (r_1, r_{- 1})$, with
\begin{equation}
  r_{\pm 1} = | x \mp d_c \overrightarrow{e_1} | \label{CP2223not},
\end{equation}
and with $d_c$ defined in Theorem \ref{th1}. The first lemma is about $Q_c$
and the second one about $\partial_c Q_c$.

\begin{lemma}
  \label{CP283L33}For any $0 < \sigma < 1$, there exist $c_0 (\sigma), K
  (\sigma) > 0$ such that, for $0 < c < c_0 (\sigma)$ and $Q_c$ defined in
  Theorem \ref{th1}, if
  \[ \Gamma_c = Q_c - V, \]
  then
  \[ \left\| \frac{\Gamma_c}{V} \right\|_{\sigma, d_c} \leqslant K (\sigma)
     c^{1 - \sigma} . \]
\end{lemma}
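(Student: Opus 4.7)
The plan is to exploit the mixed additive/multiplicative decomposition recalled just before the lemma, namely $\Gamma_c = (1-\eta) V \Psi_c + \eta V (e^{\Psi_c}-1)$, so that
\[ \frac{\Gamma_c}{V} = (1-\eta)\Psi_c + \eta(e^{\Psi_c}-1). \]
Since the norm $\|\cdot\|_{\sigma,d_c}$ is defined piecewise with respect to $\tilde{r}$, I would split the proof into a compact region $\{\tilde{r}\leq 3\}$, where the additive description is natural, and the exterior region $\{\tilde{r}\geq 2\}$, where the multiplicative description dominates and I may assume $\eta\equiv 1$.

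In the compact region the quantity to control is $\|V\cdot(\Gamma_c/V)\|_{C^1(\{\tilde{r}\leq 3\})}=\|\Gamma_c\|_{C^1(\{\tilde{r}\leq 3\})}$. Here $\tilde{r}\leq 3$ makes the weights $(1+\tilde{r})^{-\sigma}$ and $(1+\tilde{r})^{-(1+\sigma)}$ bounded, so estimates (\ref{CP2labasique}) and (\ref{CP2225}) from \cite{CP1} give $\|\Gamma_c\|_{L^\infty}+\|\nabla\Gamma_c\|_{L^\infty}\leq K(\sigma)c^{1-\sigma}$ directly. This part is routine.

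In the exterior region $\{\tilde{r}\geq 2\}$, the key identity is $\Gamma_c/V = e^{\Psi_c}-1$, and
\[ \mathfrak{Re}(e^{\Psi_c}-1) = e^{\mathfrak{Re}\Psi_c}\cos(\mathfrak{Im}\Psi_c)-1, \qquad \mathfrak{Im}(e^{\Psi_c}-1) = e^{\mathfrak{Re}\Psi_c}\sin(\mathfrak{Im}\Psi_c). \]
Provided $\Psi_c$ itself is small in $L^\infty$, these are $\mathfrak{Re}\Psi_c + O(|\Psi_c|^2)$ and $\mathfrak{Im}\Psi_c + O(|\Psi_c|^2)$ respectively, and their gradients are analogously controlled. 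The strategy is therefore to upgrade the second-derivative bound (\ref{CP2222}) to the pointwise bounds demanded by the norm: integrating $\nabla\mathfrak{Re}(\Psi_c)$ along a radial path from infinity (using $\Psi_c\to 0$ at infinity, which is built into the construction of \cite{CP1}) yields $|\mathfrak{Re}(\Psi_c)|\leq K(\sigma,\sigma')c^{1-\sigma'}/(1+\tilde{r})^{1+\sigma}$, which is the required weight with a slightly adjusted $\sigma$. The same integration gives the gradient weight $(1+\tilde{r})^{-(2+\sigma)}$ already present in (\ref{CP2222}). For the imaginary part, I would first integrate $D^2\mathfrak{Im}(\Psi_c)$ once from infinity to produce the bound on $\nabla\mathfrak{Im}(\Psi_c)$ with weight $(1+\tilde{r})^{-(1+\sigma)}$, then once more to obtain $|\mathfrak{Im}\Psi_c|\leq K(\sigma,\sigma')c^{1-\sigma'}/(1+\tilde{r})^{\sigma}$. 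The extra loss $\sigma'>\sigma$ in (\ref{CP2222}) is absorbed by slightly enlarging the exponent on the right-hand side.

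The main obstacle is the bound on $\mathfrak{Im}(\Psi_c)$: (\ref{CP2222}) only furnishes a bound on $D^2\mathfrak{Im}(\Psi_c)$, while the norm requires bounds on $\mathfrak{Im}(\Psi_c)$ and $\nabla\mathfrak{Im}(\Psi_c)$ themselves. The delicate point is that $\mathfrak{Im}(\Psi_c)$ is the phase correction, and a naive double integration of a second derivative does not converge at infinity unless one has correct boundary conditions along suitable curves. This is handled by using that $Q_c\to 1$ at infinity (so the phase correction to the far-field $V\sim 1$ tends to $0$ in every asymptotic direction outside the nodal line of $V$), and by combining with the a priori bounds (\ref{CP2220})–(\ref{CP2221}) on $\mathfrak{Re}(\nabla Q_c\overline{Q_c})$ and $\mathfrak{Im}(\nabla Q_c\overline{Q_c})$, which translate directly into weighted bounds on $\nabla\mathfrak{Re}(\Psi_c)$ and $\nabla\mathfrak{Im}(\Psi_c)$ via the identity $\nabla Q_c/Q_c = \nabla V/V + \nabla\Psi_c$ on $\{\tilde{r}\geq 2\}$, together with the known decay of $\nabla V/V$ from Lemma \ref{lemme3new}. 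Once both $\mathfrak{Re}(\Psi_c)$ and $\mathfrak{Im}(\Psi_c)$ are controlled pointwise, Taylor-expanding $e^{\Psi_c}-1$ and differentiating yields the remaining four weighted $L^\infty$ bounds in the definition of $\|\cdot\|_{\sigma,d_c}$, each with the overall prefactor $c^{1-\sigma}$ (after redefining $\sigma$), completing the proof.
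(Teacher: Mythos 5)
Your decomposition $\Gamma_c/V=(1-\eta)\Psi_c+\eta(e^{\Psi_c}-1)$ and the treatment of the compact region $\{\tilde{r}\leq 3\}$ via (\ref{CP2labasique})--(\ref{CP2225}) are exactly right, and they are the starting point of the paper's argument. But the paper does not attempt what you do in the exterior region: its proof is a one-line reduction to equation (3.14) of \cite{CP1}, which is precisely the weighted pointwise estimate $\|\Psi_c\|_{\sigma,d_c}\leq K(\sigma)c^{1-\sigma}$ (bounds on $\mathfrak{R}\mathfrak{e}(\Psi_c)$, $\mathfrak{I}\mathfrak{m}(\Psi_c)$ and their gradients with matching weight and matching power of $c$) obtained during the fixed-point construction of $Q_c$. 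Given that input, the lemma is immediate from $e^{\Psi_c}-1=\Psi_c+O(|\Psi_c|^2)$. Your attempt to re-derive those pointwise bounds from the coarser estimates recalled in the present paper contains two genuine gaps.

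First, the $\sigma<\sigma'$ structure of (\ref{CP2222}) makes your integration scheme unable to reach the stated conclusion. Integrating $|D^2\mathfrak{I}\mathfrak{m}(\Psi_c)|\leq K(\sigma,\sigma')c^{1-\sigma'}(1+\tilde{r})^{-(2+\sigma)}$ once or twice from infinity produces, at best, $|\nabla\mathfrak{I}\mathfrak{m}(\Psi_c)|\leq Kc^{1-\sigma'}(1+\tilde{r})^{-(1+\sigma)}$ and $|\mathfrak{I}\mathfrak{m}(\Psi_c)|\leq Kc^{1-\sigma'}(1+\tilde{r})^{-\sigma}$, i.e.\ always with the prefactor $c^{1-\sigma'}$, $\sigma'>\sigma$. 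The lemma demands the prefactor $c^{1-\sigma}$ together with the weights indexed by the \emph{same} $\sigma$; relabelling the parameters in (\ref{CP2222}) lets you fix one of the two but never both (choosing the second parameter equal to $\sigma$ degrades the spatial weight, choosing the first equal to $\sigma$ degrades the power of $c$). So "absorbing the loss by slightly enlarging the exponent" does not close the argument: you only obtain a strictly weaker statement. Second, your fallback for $\nabla\mathfrak{I}\mathfrak{m}(\Psi_c)$ via (\ref{CP2220})--(\ref{CP2221}) and $\nabla Q_c/Q_c=\nabla V/V+\nabla\Psi_c$ cannot work, because (\ref{CP2221}) carries no smallness in $c$ and is saturated by the vortex phase gradient $\mathfrak{I}\mathfrak{m}(\nabla V\overline{V})\sim\tilde{r}^{-1}$ itself; the difference of two quantities that are each $O(\tilde{r}^{-1})$ with $O(1)$ constants gives no factor $c^{1-\sigma}$. (There is also the minor point that $\tilde{r}=\min(r_1,r_{-1})$ is two-centered, so "integrating along a radial path from infinity" needs to be specified, but that is repairable.) The correct fix is simply to invoke the direct estimates on $\Psi_c$ from \cite{CP1} rather than trying to reconstruct them from their corollaries.
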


\begin{proof}
  This estimate is a consequence of
  \[ \Gamma_c = (1 - \eta) V \Psi_c + \eta V (e^{\Psi_c} - 1) \]
  and equation (3.14) of {\cite{CP1}}.
\end{proof}

\begin{lemma}[Lemma 4.6 of {\cite{CP1}}]
  \label{CP2dcQcsigma}There exists $1 > \beta_0 > 0$ such that, for all $0 <
  \sigma < \beta_0 < \sigma' < 1$,There exists $c_0 (\sigma, \sigma') > 0$
  such that for any $0 < c < c_0 (\sigma, \sigma')$, $Q_c$ defined in Theorem
  \ref{th1}, $c \mapsto Q_c$ is a $C^1$ function from $] 0, c_0 (\sigma,
  \sigma') [$ to $C^1 (\mathbbm{R}^2, \mathbbm{C})$, and
  \[ \left\| \frac{\partial_c Q_c}{V} + \left( \frac{1 + o^{\sigma,
     \sigma'}_{c \rightarrow 0} (c^{1 - \sigma'})}{c^2} \right)
     \frac{\partial_d V_{| d = d_c \nobracket}}{V} \right\|_{\sigma, d_c} =
     o^{\sigma, \sigma'}_{c \rightarrow 0} \left( \frac{c^{1 - \sigma'}}{c^2}
     \right) . \]
\end{lemma}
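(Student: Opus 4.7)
The plan is to build on the $C^1$ branch already given by Theorem~\ref{th1}, which furnishes an $L^p$-type estimate
\[ \partial_c Q_c = -\frac{1+o_{c\to 0}(1)}{c^2}\,\partial_d V_{|d=d_c} + o_{c\to 0}(c^{-2}) \quad \text{in } \|\cdot\|_p, \]
and then upgrade this $L^p$-type control to the weighted pointwise-$C^1$ norm $\|\cdot\|_{\sigma,d_c}$. The strategy is exactly the same as the one used in [CP1] to obtain the pointwise bounds on $\Gamma_c/V$ recalled in Lemma~\ref{CP283L33}: start from an equation for the remainder and invert it with weighted estimates for the linearized operator around $V$.

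First, I would differentiate the equation $(\mathrm{TW}_c)(Q_c)=0$ with respect to $c$. Using $\partial_c Q_c = \partial_c d_c\cdot\partial_d V_{|d=d_c} + \partial_c \Gamma_c$ and the fact that $d_c \sim 1/c$ (so $\partial_c d_c = -(1+o(1))/c^2$), one obtains the scalar equation
\[ L_{Q_c}(\partial_c Q_c) = i\partial_{x_2}Q_c. \]
The ansatz $\partial_c Q_c = -\frac{1}{c^2}\partial_d V_{|d=d_c} + R_c$ then turns this into a linear equation for the remainder $R_c$ whose source is small: the quasi-solution $V$ nearly solves $(\mathrm{TW}_c)(V) \approx$ vortex-interaction term of size $O(1/d_c)=O(c)$, so differentiating in $d$ (times $-1/c^2$) produces a source that, after using Lemma~\ref{lemme3new} and the interaction estimates (\ref{CP2222405})--(\ref{CP2V-1est}), decays faster than $\partial_d V/c^2$ in the weighted $\|\cdot\|_{\sigma,d_c}$ sense, with a gain of $c^{1-\sigma'}$.

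Next, I would invert the linearized operator in multiplicative form. Writing $R_c = V W_c$ and using that $Q_c/V = 1+\Gamma_c/V$ is close to $1$ in the $\|\cdot\|_{\sigma,d_c}$ norm by Lemma~\ref{CP283L33}, the equation on $W_c$ is a perturbation of the linearized operator around $V$, split additively near each vortex and multiplicatively in the exterior region through the cutoff $\eta$ (as in the construction of $\Gamma_c$ in [CP1]). The key inputs are: (i) the weighted coercivity/invertibility of this linearized operator on functions orthogonal to the kernel directions $\partial_{x_1}V,\partial_{x_2}V$, proved in [CP1] in the $\|\cdot\|_{\sigma,d_c}$ norm, and (ii) the fact that the Lagrange multiplier associated with these directions cancels because $d_c$ is chosen precisely so that $\partial_c d_c$ absorbs the component of the source on $\partial_d V$. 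Once the source is shown to be $o^{\sigma,\sigma'}_{c\to 0}(c^{1-\sigma'}/c^2)$ in the dual of $\|\cdot\|_{\sigma,d_c}$, the invertibility yields the same bound on $W_c = R_c/V$, which is precisely the claim.

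The main obstacle is, as usual in this setting, the weighted pointwise estimate rather than the $L^p$ estimate already given by Theorem~\ref{th1}. Two technical points make it delicate: the anisotropy of $\|\cdot\|_{\sigma,d_c}$, which treats $\mathfrak{R}\mathfrak{e}(\psi)$ and $\mathfrak{I}\mathfrak{m}(\psi)$ differently (reflecting the different decay rates $1/r^{1+\sigma}$ and $1/r^\sigma$ visible in (\ref{CP2220})--(\ref{CP2222})), and the need to control the interaction between the two vortices uniformly in their separation $2d_c \to \infty$. Both are handled by the same cutoff/harmonic decomposition around $\pm d_c\vec{e}_1$ used throughout [CP1], together with the bootstrap that promotes an integral bound on the remainder into the pointwise weighted bound via elliptic regularity applied to the equation for $R_c$.
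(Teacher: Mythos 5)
The paper does not actually prove this lemma: it is imported verbatim (up to a sharpened error rate) as Lemma 4.6 of \cite{CP1}, and the only ``proof'' offered here is the remark following the statement, namely that the published statement in \cite{CP1} has $o_{c\to 0}(1)$ and $o_{c\to 0}(1/c^2)$ in place of $o_{c\to 0}(c^{1-\sigma'})$ and $o_{c\to 0}(c^{1-\sigma'}/c^2)$, but that an inspection of the proof there yields the sharper rates provided $\sigma'$ is close enough to $1$. So there is no in-paper argument to compare yours against; what you have written is a reconstruction of the argument of \cite{CP1}.

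As such a reconstruction, your outline is consistent with the method the paper attributes to \cite{CP1} (perturbation of the two-vortex quasi-solution, additive/multiplicative splitting via a cutoff, weighted invertibility of the linearized operator modulo $\partial_{x_1}V,\partial_{x_2}V$, the value of $d_c$ fixed by cancellation of a Lagrange multiplier, and an implicit function theorem for the $C^1$ dependence on $c$); differentiating the multiplier relation in $c$ is indeed what produces the leading term $-\frac{1+o(1)}{c^2}\partial_d V_{|d=d_c}$, matching the $X_p$ estimate of Theorem \ref{th1}. But be aware that every quantitatively hard step in your sketch --- the uniform-in-$d_c$ weighted invertibility in $\|\cdot\|_{\sigma,d_c}$, the verification that the source term is $o^{\sigma,\sigma'}_{c\to 0}(c^{1-\sigma'}/c^2)$ in the appropriate dual norm, and the bootstrap from integral to pointwise weighted bounds --- is deferred to \cite{CP1}, which is exactly what the citation already provides; so your proposal does not establish anything beyond the reference. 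The one point your sketch glosses over, and which is the actual content of the sharpening claimed here, is \emph{where} the gain $c^{1-\sigma'}$ in the error comes from and why it forces the restriction $0<\sigma<\beta_0<\sigma'<1$ with $\sigma'$ close to $1$: the interaction/source estimates degrade as the decay exponent increases, which is why the rate is $c^{1-\sigma'}$ rather than $c^{1-\sigma}$ and why a two-parameter statement is needed. A complete write-up would have to track this exponent bookkeeping through the inversion, not merely assert a ``gain of $c^{1-\sigma'}$''.
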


These results are technical, but quite precise. They give both a decay in
position and the size in $c$ of the error term. The statement of Lemma 4.6 of
{\cite{CP1}} has $o_{c \rightarrow 0} (1)$ and $o_{c \rightarrow 0} \left(
\frac{1}{c^2} \right)$ instead of respectively $o_{c \rightarrow 0} (c^{1 -
\sigma'})$ and $o_{c \rightarrow 0} \left( \frac{c^{1 - \sigma'}}{c^2}
\right)$, but its proof gives this better estimate (given that $\sigma'$ is
close enough to $1$). We recall that $o^{\sigma, \sigma'}_{c \rightarrow 0}
(1)$ is a quantity going to $0$ when $c \rightarrow 0$ at fixed $\sigma,
\sigma'$. We recall that $\partial_c \nabla Q_c = \nabla \partial_c Q_c$. We
conclude this subsection with a link between the $\| . \|_{\sigma}$ norms and
$\| . \|_{H_{Q_c}}$. We recall
\[ \| \varphi \|_{H_{Q_c}}^2 = \int_{\mathbbm{R}^2} | \nabla \varphi |^2 + | 1
   - | Q_c^{\nosymbol} |^2 | | \varphi |^2 +\mathfrak{R}\mathfrak{e}^2
   (\overline{Q_c} \varphi) . \]
\begin{lemma}
  \label{CP2123L34}There exists a universal constant $K > 0$ (independent of
  $c$) such that, for $Q_c$ defined in Theorem \ref{th1},
  \[ \| h \|_{H_{Q_c}} \leqslant K \left\| \frac{h}{V} \right\|_{3 / 4, d_c} .
  \]
\end{lemma}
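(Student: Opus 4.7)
The plan is to write $\psi = h/V$ and decompose the integral defining $\|h\|_{H_{Q_c}}^2$ according to the partition $\mathbbm{R}^2 = \{\tilde{r} \leq 3\} \cup \{\tilde{r} > 3\}$ that matches the definition of $\|\cdot\|_{3/4, d_c}$. On $\{\tilde{r} \leq 3\}$, which is contained in the union of two discs of radius $3$ around $\pm d_c \overrightarrow{e_1}$ and therefore has area bounded independently of $c$, the norm $\|\psi\|_{3/4, d_c}$ controls $V\psi = h$ and $\nabla h$ pointwise. Combined with the uniform bounds $|Q_c|, |1-|Q_c|^2| \leq K$ that follow from Theorem \ref{th1} and Lemma \ref{lemme3new}, every integrand appearing in $\|h\|_{H_{Q_c}}^2$ is then pointwise bounded by $K\|\psi\|_{3/4, d_c}^2$ over a region of bounded area, so the contribution of this inner region is immediate.

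On $\{\tilde{r} > 3\}$, I would read off from $\|\psi\|_{3/4, d_c}$ the pointwise decay $|\mathfrak{Re}(\psi)| \leq K\|\psi\|_{3/4, d_c}\tilde r^{-7/4}$, $|\mathfrak{Im}(\psi)| \leq K\|\psi\|_{3/4, d_c}\tilde r^{-3/4}$, $|\nabla\mathfrak{Re}(\psi)| \leq K\|\psi\|_{3/4, d_c}\tilde r^{-11/4}$, and $|\nabla\mathfrak{Im}(\psi)| \leq K\|\psi\|_{3/4, d_c}\tilde r^{-7/4}$. Using $|V| \leq K$ and $|\nabla V| \leq K/\tilde r$ from Lemma \ref{lemme3new}, the identity $\nabla h = (\nabla V)\psi + V\nabla\psi$ gives $|\nabla h|^2 \leq K\|\psi\|_{3/4, d_c}^2\tilde r^{-7/2}$. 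From (\ref{CP2217}) with $\sigma = 3/4$ one has $|1-|Q_c|^2| \leq K\tilde r^{-7/4}$; combined with $|h|^2 \leq K|\psi|^2 \leq K\|\psi\|_{3/4, d_c}^2 \tilde r^{-3/2}$, this yields $|1-|Q_c|^2||h|^2 \leq K\|\psi\|_{3/4, d_c}^2\tilde r^{-13/4}$. Both decays are integrable at infinity in $\mathbbm{R}^2$.

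The main obstacle is the term $\mathfrak{R}\mathfrak{e}^2(\overline{Q_c} h)$: because $|\mathfrak{I}\mathfrak{m}(\psi)|$ decays only like $\tilde r^{-3/4}$, the naive bound $|\overline{Q_c} h|^2 \leq K|\psi|^2 \leq K\tilde r^{-3/2}$ is not integrable in dimension two. The key observation is that $|V|^2$ is real; writing $Q_c = V + \Gamma_c$ yields
\[ \mathfrak{R}\mathfrak{e}(\overline{Q_c} V \psi) = |V|^2 \mathfrak{R}\mathfrak{e}(\psi) + \mathfrak{R}\mathfrak{e}(\overline{\Gamma_c} V \psi), \]
so that the leading piece contributes $|V|^4 \mathfrak{R}\mathfrak{e}^2(\psi) \leq K\|\psi\|_{3/4, d_c}^2\tilde r^{-7/2}$, and for the remainder the bound $|\Gamma_c| \leq K c^{1/4}/\tilde r^{3/4}$ from (\ref{CP2labasique}) with $\sigma = 3/4$ gives $|\overline{\Gamma_c} V \psi|^2 \leq K c^{1/2}\|\psi\|_{3/4, d_c}^2\tilde r^{-3}$, still integrable in two dimensions. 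This is the only step in which the pointwise smallness of $\Gamma_c$ and the specific choice $\sigma = 3/4$ (so that $4\sigma > 2$) are essential. Summing the contributions from the two regions produces the claimed inequality with a universal constant $K$ independent of $c$.
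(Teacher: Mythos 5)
Your proof is correct and follows essentially the same route as the paper: the first two terms are handled by the pointwise decay encoded in $\| \cdot \|_{3/4, d_c}$, and the delicate term $\mathfrak{R}\mathfrak{e}^2(\overline{Q_c} h)$ is treated by the same key observation that $\overline{V}V$ is real, so the slowly decaying $\mathfrak{I}\mathfrak{m}(\psi)$ only couples to $\Gamma_c$ (the paper phrases this as $\mathfrak{I}\mathfrak{m}(V\overline{Q_c}) = \mathfrak{I}\mathfrak{m}(V\overline{\Gamma_c})$, which is your identity rearranged). The remaining differences — splitting at $\tilde{r}=3$ rather than $\tilde{r}=1$, and using $\sigma = 3/4$ where the paper takes $\sigma = 1/2$ in a couple of estimates — are purely cosmetic.
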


The value $\sigma = 3 / 4$ is arbitrary here, this estimate holds for other
values of $\sigma$.

\begin{proof}
  We compute, using Lemma \ref{lemme3new}, that
  \[ \int_{\mathbbm{R}^2} | \nabla h |^2 \leqslant K \left\| \frac{h}{V}
     \right\|^2_{3 / 4, d_c} + \int_{\{ \tilde{r} \geqslant 1 \}} \left|
     \nabla \left( \frac{h}{V} V \right) \right|^2 \leqslant K \left\|
     \frac{h}{V} \right\|^2_{3 / 4, d_c} + 2 \int_{\{ \tilde{r} \geqslant 1
     \}} \left| \nabla \left( \frac{h}{V} \right) \right|^2 + | \nabla V |^2
     \frac{| h |^2}{| V |^2} . \]
  From Lemma \ref{lemme3new} and the definition of $\| . \|_{3 / 4, d_c}$, we
  check that
  \[ 2 \int_{\{ \tilde{r} \geqslant 1 \}} \left| \nabla \left( \frac{h}{V}
     \right) \right|^2 + | \nabla V |^2 \frac{| h |^2}{| V |^2} \leqslant K
     \left\| \frac{h}{V} \right\|^2_{3 / 4, d_c} \int_{\{ \tilde{r} \geqslant
     1 \}} \frac{1}{(1 + \tilde{r})^{3 + 1 / 2}} \leqslant K \left\|
     \frac{h}{V} \right\|^2_{3 / 4, d_c} . \]
  Furthermore, from equation (\ref{CP2217}) with $\sigma = 1 / 2$, we have the
  estimate
  \[ \int_{\mathbbm{R}^2} | 1 - | Q_c |^2 | | h |^2 \leqslant K \left\|
     \frac{h}{V} \right\|^2_{3 / 4, d_c} \int_{\mathbbm{R}^2} \frac{1}{(1 +
     \tilde{r})^{9 / 4}} \leqslant K \left\| \frac{h}{V} \right\|^2_{3 / 4,
     d_c} . \]
  Finally, we compute
  \[ \int_{\mathbbm{R}^2} \mathfrak{R}\mathfrak{e}^2 (\overline{Q_c} h)
     \leqslant K \left\| \frac{h}{V} \right\|^2_{3 / 4, d_c} + \int_{\{
     \tilde{r} \geqslant 1 \}} \mathfrak{R}\mathfrak{e}^2 (\overline{Q_c} h),
  \]
  and
  \[ \int_{\{ \tilde{r} \geqslant 1 \}} \mathfrak{R}\mathfrak{e}^2
     (\overline{Q_c} h) = \int_{\{ \tilde{r} \geqslant 1 \}}
     \mathfrak{R}\mathfrak{e}^2 \left( V \overline{Q_c} \frac{h}{V} \right)
     \leqslant 2 \int_{\{ \tilde{r} \geqslant 1 \}} \mathfrak{R}\mathfrak{e}^2
     \left( \frac{h}{V} \right) \mathfrak{R}\mathfrak{e}^2 (V \overline{Q_c})
     +\mathfrak{I}\mathfrak{m}^2 \left( \frac{h}{V} \right)
     \mathfrak{I}\mathfrak{m}^2 (V \overline{Q_c}) . \]
  With the definition of $\| . \|_{3 / 4, d_c}$, Lemmas \ref{lemme3new} and
  \ref{CP283L33}, we check that
  \[ \int_{\{ \tilde{r} \geqslant 1 \}} \mathfrak{R}\mathfrak{e}^2 \left(
     \frac{h}{V} \right) \mathfrak{R}\mathfrak{e}^2 (V \overline{Q_c})
     \leqslant K \int_{\{ \tilde{r} \geqslant 1 \}} \mathfrak{R}\mathfrak{e}^2
     \left( \frac{h}{V} \right) \leqslant K \left\| \frac{h}{V} \right\|^2_{3
     / 4, d_c} \int_{\{ \tilde{r} \geqslant 1 \}} \frac{1}{(1 + \tilde{r})^{3
     + 1 / 2}} \leqslant K \left\| \frac{h}{V} \right\|^2_{3 / 4, d_c} . \]
  From Lemma \ref{CP283L33} with $\sigma = 1 / 2$, we check that, since
  $\mathfrak{I}\mathfrak{m}^2 (V \overline{Q_c}) =\mathfrak{I}\mathfrak{m}^2
  (V \overline{V + \Gamma_c}) =\mathfrak{I}\mathfrak{m}^2 (V \bar{\Gamma}_c)$,
  we have
  \[ \int_{\{ \tilde{r} \geqslant 1 \}} \mathfrak{I}\mathfrak{m}^2 \left(
     \frac{h}{V} \right) \mathfrak{I}\mathfrak{m}^2 (V \overline{Q_c})
     \leqslant K \left\| \frac{h}{V} \right\|^2_{3 / 4, d_c} \int_{\{
     \tilde{r} \geqslant 1 \}} \frac{1}{(1 + \tilde{r})^{2 + 1 / 2}} \leqslant
     K \left\| \frac{h}{V} \right\|^2_{3 / 4, d_c} . \]
  Combining, these estimates, we end the proof of this lemma.
\end{proof}

\subsubsection{Faraway estimates on $Q_c$}

Since $E (Q_c) < + \infty$ thanks to Theorem \ref{th1}, from Theorem 7 of
{\cite{MR2086751}}, we have the following result.

\begin{theorem}[{\cite{MR2086751}}, Theorem 7]
  \label{CP2Qcbehav}There exists a constant $C_{\nosymbol} (c) > 0$ (depending
  on $c$) such that, for $Q_c$ defined in Theorem \ref{th1},
  \[ | 1 - | Q_c |^2 | \leqslant \frac{C (c)}{(1 + r)^2}, \]
  \[ | 1 - Q_c | \leqslant \frac{C_{} (c)_{\nosymbol}}{1 + r}, \]
  \[ | \nabla Q_c | \leqslant \frac{C_{} (c)}{(1 + r)^2} \]
  and
  \[ | \nabla | Q_c | | \leqslant \frac{C (c)}{(1 + r)^3} . \]
  Furthermore, such estimates hold for any travelling waves with finite energy
  (but then the constant $C (c)$ also depends on the travelling wave, and not
  only on its speed).
\end{theorem}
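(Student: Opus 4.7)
The plan is to bootstrap from finite energy to pointwise polynomial decay, exploiting that the linearization of $(\tmop{TW}_c)$ around the value $1$ has a positive-mass zeroth-order term on the real part. Finite energy combined with the condition $|Q_c|\to 1$ at infinity gives, via elliptic regularity and standard Sobolev--Morrey embedding, that $Q_c$ is smooth with uniformly bounded derivatives, that $\eta := 1-|Q_c|^2 \in L^2(\mathbbm{R}^2)$, and that $\eta(x)$ and $Q_c(x)-1$ tend to zero as $|x|\to\infty$.

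The key technical step is to derive a scalar equation for $\eta$ alone. Expanding $\Delta(|Q_c|^2) = 2|\nabla Q_c|^2 + 2\mathfrak{R}\mathfrak{e}(\overline{Q_c}\,\Delta Q_c)$ and substituting $\Delta Q_c = -ic\,\partial_{x_2}Q_c - \eta Q_c$ from $(\tmop{TW}_c)(Q_c)=0$ yields
\[
(-\Delta + 2|Q_c|^2)\eta \;=\; 2|\nabla Q_c|^2 + 2c\,\mathfrak{I}\mathfrak{m}\bigl(\partial_{x_2}Q_c\,\overline{Q_c}\bigr).
\]
Since $|Q_c|^2\to 1$, this can be written as $(-\Delta + 2)\eta = F$ with $F$ collecting $|\nabla Q_c|^2$, a transport contribution, and a quadratic remainder in $\eta$. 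The operator $-\Delta + 2$ on $\mathbbm{R}^2$ admits an exponentially decaying Green's function $G_2$, hence $\eta = G_2 \ast F$. Starting from the $L^2$ bound on $F$ inherited from $E(Q_c)<+\infty$, I would run a bootstrap alternating this convolution representation with interior Schauder estimates applied to the equations satisfied by $\partial_{x_i}Q_c$ (obtained by differentiating $(\tmop{TW}_c)(Q_c)=0$) on growing annuli; this successively upgrades the pointwise decay of $\eta$ and $\nabla Q_c$ until the target rates $|\eta|\leq C(c)/(1+r)^2$ and $|\nabla Q_c|\leq C(c)/(1+r)^2$ are reached.

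For $1-Q_c$ itself I would decouple real and imaginary parts. Write $Q_c = 1 + a + ib$: then $\eta = -2a - a^2 - b^2$, so $a = -\eta/2 + O(|1-Q_c|^2)$ and the decay on $\eta$ transfers directly to the real part. The imaginary part $b$ satisfies a Poisson-type equation $-\Delta b = c\,\partial_{x_2}(1+a) + \text{quadratic remainder}$; convolution against the two-dimensional Newton kernel, using the pointwise decay of the right-hand side just obtained, produces $|b|\leq C(c)/(1+r)$, which combined with the bound on $a$ gives $|1-Q_c|\leq C(c)/(1+r)$. The refined modulus estimate $|\nabla|Q_c||\leq C(c)/(1+r)^3$ follows from the identity $2|Q_c|\,\nabla|Q_c| = -\nabla\eta$ by running the convolution bootstrap one step further on $\nabla\eta$, which inherits an additional power of decay from the exponentially decaying kernel $G_2$.

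The main obstacle is the anisotropy introduced by the transport term $-ic\,\partial_{x_2}$: the genuine asymptotics of $1-Q_c$ are not radially symmetric (there is a wake behind the moving vortex pair), and the two-dimensional Newton-kernel convolution used for $b$ must be handled carefully to extract \emph{uniform} radial upper bounds rather than only directional ones. The advantage of attacking $\eta$ first is precisely that the transport term enters its equation only through a lower-order quadratic remainder, so that $\eta$ obeys a genuinely elliptic equation with exponentially decaying Green's function and admits the clean $1/r^2$ radial decay; once this is in hand, the weaker $1/r$ decay of $b$ is exactly what the Newton potential of an $r^{-2}$-decaying source produces in $\mathbbm{R}^2$. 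Finally, the constant $C(c)$ must depend on $c$ because the bootstrap absorption of quadratic remainders requires the linear part of $(\tmop{TW}_c)$ to dominate at each scale, which forces a scale of domination that itself depends on $c$.
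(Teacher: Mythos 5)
First, note that the paper offers no proof of this statement at all: it is quoted verbatim as Theorem 7 of Gravejat's article \cite{MR2086751}, so there is no internal argument to compare yours against. Judged on its own merits, your sketch contains a genuine gap at its central step. You claim that the transport term enters the equation for $\eta = 1-|Q_c|^2$ ``only through a lower-order quadratic remainder,'' but the identity you derive, $(-\Delta + 2|Q_c|^2)\eta = 2|\nabla Q_c|^2 + 2c\,\mathfrak{I}\mathfrak{m}(\partial_{x_2}Q_c\overline{Q_c})$, has on its right-hand side the term $2c\,\mathfrak{I}\mathfrak{m}(\partial_{x_2}Q_c\overline{Q_c}) = 2c|Q_c|^2\partial_{x_2}\theta$, which is \emph{linear} in the phase gradient and decays no faster than $\eta$ itself. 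The phase in turn satisfies $\mathrm{div}(|Q_c|^2\nabla\theta) = -\tfrac{c}{2}\partial_{x_2}\eta$, so $\eta$ and $\theta$ form a genuinely coupled system; eliminating $\theta$ replaces $-\Delta+2$ by the operator with symbol $(|\xi|^4+2|\xi|^2-c^2\xi_2^2)/|\xi|^2$, whose inverse kernel is anisotropic and decays only algebraically, not exponentially. This coupling is precisely what caps the decay of $\eta$ at $1/r^2$ and produces the sign change of $1-|Q_c|^2$ at infinity recorded in the paper right after the theorem (via \cite{MR2191764}); treating $-\Delta+2$ as the principal part with an exponentially decaying Green's function both misrepresents the mechanism and leaves your bootstrap without a valid first step, since from finite energy alone you only have $L^2$ control of the source and no pointwise algebraic rate to iterate on.

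A second, smaller problem is the treatment of $b=\mathfrak{I}\mathfrak{m}(Q_c-1)$: convolving the two-dimensional Newton kernel (or its gradient, which decays like $1/r$) against a source decaying like $r^{-2}$ does not give $1/r$ but $\log r/r$ on the region $|y|\leqslant|x|/2$, so the stated $|1-Q_c|\leqslant C(c)/(1+r)$ does not follow without exploiting further structure of the source. Gravejat's actual proof works directly with the coupled convolution equations for $\eta$ and $\nabla\theta$, proves algebraic decay of the composed kernels by Fourier analysis, and runs an induction on the decay exponent in weighted $L^\infty$ spaces; the anisotropy you flag as an ``obstacle'' at the end is in fact the heart of the matter and cannot be relegated to a remainder.
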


This result is crucial to show that some terms are well defined, since it
gives better decay estimates in position than the estimates in subsection
\ref{CP2NQcest} (but with no smallness in $c$). Remark that $1 - | Q_c |^2$ is
not necessarily positive. In fact it is not at infinity (see
{\cite{MR2191764}}). In particular, the estimate
\[ | 1 - | Q_c |^2 | \geqslant \frac{C (c)}{1 + r^2} \]
does not hold because of the possibility of $| Q_c | = 1$. This happens, but
only for few directions and it can be catched up. We show the following
sufficient result, which is needed to show that some quantities we will use
are well defined. Furthermore, in these estimates, the constant depends on
$c$, and thus can not be used in error estimates (since the smallness of the
errors there will depend on $c$).

\begin{lemma}
  \label{CP2sensmanqu}There exists $c_0 > 0$ such that, for $0 < c < c_0$,
  there exists $C (c) > 0$ such that for $\varphi \in H_{Q_c}$ and the
  function $Q_c$ defined in Theorem \ref{th1},
  \[ \int_{\mathbbm{R}^2} \frac{| \varphi |^2}{(1 + | x |)^2} d x \leqslant C
     (c) \left( \int_{\mathbbm{R}^2} | \nabla \varphi |^2 + | 1 - | Q_c |^2 |
     | \varphi |^2 \right) . \]
\end{lemma}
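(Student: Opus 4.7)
The strategy is to split $\mathbbm{R}^2$ into a bounded region containing the zeros of $Q_c$ and its exterior, and treat each separately.

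For the bounded part, the upper bound $|1 - |Q_c|^2| \leq C(c)/(1+r)^2$ of Theorem \ref{CP2Qcbehav} implies that the set $\{|Q_c|^2 \leq 1/2\}$ is contained in some ball $B(0,R_c)$ (with $R_c$ depending on $c$), while it also contains a fixed small ball $B_0$ centered at one of the zeros $\pm \widetilde{d_c}\vec{e}_1$ of $Q_c$. On $B_0$ we have $|1 - |Q_c|^2| \geq 1/2$, so a standard Poincar\'e-type inequality with reference set $B_0$ yields
\[
\int_{B(0, 2R_c)} |\varphi|^2 \, dx \;\leq\; C(c) \Bigl( \int_{B(0, 2R_c)} |\nabla\varphi|^2 \, dx + \int_{B_0} |\varphi|^2 \, dx \Bigr) \;\leq\; C(c) \Bigl( \int_{\mathbbm{R}^2} |\nabla\varphi|^2 + |1 - |Q_c|^2| |\varphi|^2 \Bigr),
\]
and since $1/(1+|x|)^2$ is bounded on $B(0, 2R_c)$, this handles the bounded part.

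For the exterior $|x| > 2R_c$, I would argue by contradiction and compactness. Assuming the inequality fails, there exists $(\varphi_n) \subset H_{Q_c}$ with $\int |\varphi_n|^2/(1+|x|)^2 \, dx = 1$ and $I_n := \int |\nabla\varphi_n|^2 + |1 - |Q_c|^2||\varphi_n|^2 \to 0$. The bounded-region estimate forces $\varphi_n \to 0$ in $L^2_{\mathrm{loc}}$. Taking a cutoff $\chi$ vanishing on $B(0,R_c)$ and equal to $1$ on $\{|x| > 2R_c\}$, the function $u_n = \chi\varphi_n$ vanishes near $0$ and satisfies $\int |\nabla u_n|^2 \to 0$, so the two-dimensional exterior Leray--Hardy inequality gives
\[
\int_{\mathbbm{R}^2} \frac{|u_n|^2}{(1+|x|)^2 \ln^2(2+|x|)} \, dx \;\longrightarrow\; 0.
\]
Closing the logarithmic deficit to the weight $1/(1+|x|)^2$ uses the convergence $\int |1 - |Q_c|^2||\varphi_n|^2 \to 0$ together with the angular non-degeneracy of $1 - |Q_c|^2$ at infinity (inherited from $(\tmop{TW}_c)(Q_c)=0$ and the asymptotic expansion of $Q_c$ available through Theorem \ref{CP2Qcbehav}), ruling out escape of mass to infinity and yielding $\int |\varphi_n|^2/(1+|x|)^2 \to 0$, in contradiction with its being identically one.

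The main obstacle is the exterior estimate: in $\mathbbm{R}^2$ the Hardy inequality for the gradient alone produces only the weaker weight $1/(r^2 \ln^2 r)$, so bridging the logarithmic gap to $1/(1+|x|)^2$ genuinely requires the potential term $|1-|Q_c|^2||\varphi|^2$ and structural information on $Q_c$ at infinity. The fact that the constant $C(c)$ is allowed to depend on $c$ provides enough slack to carry out this compensation without needing any smallness in $c$; the same argument would fail if a uniform constant were asked for.
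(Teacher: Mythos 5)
Your treatment of the bounded region is fine, and is in fact a slightly different (and clean) alternative to the paper's: the paper anchors $\int_{B(0,R)}|\varphi|^2$ to the exterior annulus by a Rellich compactness argument, whereas you anchor it to a small ball around a zero of $Q_c$ where $|1-|Q_c|^2|\geqslant 1/2$; both are valid.

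The exterior step, however, has a genuine gap. You correctly identify that the two-dimensional Hardy inequality only yields the weight $1/(r^2\ln^2 r)$ and that the potential term must bridge the logarithmic deficit, but the mechanism you propose --- the exterior Leray--Hardy inequality followed by a compactness argument ``ruling out escape of mass to infinity'' --- does not close. There is no compactness at infinity, and the dangerous failure mode is exactly a sequence whose weighted mass $\int|\varphi_n|^2/(1+r)^2$ concentrates at radii $r_n\to\infty$ \emph{inside} the thin cones where $1-|Q_c|^2$ changes sign: there the potential gives no lower bound at all (the paper explicitly warns that $|1-|Q_c|^2|\geqslant C(c)/(1+r^2)$ fails because $|Q_c|=1$ occurs along curves going to infinity), and the logarithmic Hardy weight is genuinely too weak. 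Nothing in your outline controls that mass. Two further points. First, the ``angular non-degeneracy'' you invoke is not available from Theorem \ref{CP2Qcbehav}, which only provides upper bounds; one needs the two-sided asymptotics $r^2(1-|Q_c|^2)(r\sigma)\to c\alpha(c)f(\sigma)$ of Gravejat {\cite{MR2191764}}, which is what the paper actually uses to show the degenerate directions form only four thin cones $D(\gamma)$. Second, the correct way to fill in those cones is not compactness but a quantitative angular Poincar\'e estimate at each fixed radius: writing $\varphi(\theta)=\varphi(\theta+2\beta)-\int_\theta^{\theta+2\beta}\partial_\theta\varphi$, squaring, integrating in $\theta$ over the cone and then in $r dr/(1+r)^2$, one compares the mass inside the cone to the mass just outside it at the same radius at a cost of $\int|\partial_\theta\varphi|^2/r^2\leqslant\int|\nabla\varphi|^2$ --- no logarithm ever enters. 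This is the step your proposal is missing, and it cannot be replaced by the soft argument you describe.
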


\begin{proof}
  From Propositions 5 and 7 of {\cite{MR2191764}} (where $\eta = 1 - | Q_c
  |^2$), we have in our case, for $x = r \sigma \in \mathbbm{R}^2$ with $r \in
  \mathbbm{R}^+, | \sigma | = 1$, $\sigma = (\sigma_1, \sigma_2) \in
  \mathbbm{R}^2$, that
  \[ r^2 (1 - | Q_c |^2) (r \sigma) \rightarrow c \alpha (c) \left( \frac{1}{1
     - \frac{c^2}{2} + \frac{c^2 \sigma_2^2}{2}} - \frac{2 \sigma_2^2}{\left(
     1 - \frac{c^2}{2} + \frac{c^2 \sigma^2_2}{2} \right)^2} \right) \]
  uniformly in $\sigma \in S^1$ when $r \rightarrow + \infty$, where $\alpha
  (c) > 0$ depends on $c$ and $Q_c$. Remark that our travelling wave is
  axisymmetric around axis $x_2$ (and not $x_1$ for which the results of
  {\cite{MR2191764}} are given), hence the swap between $\sigma_1$ and
  $\sigma_2$ between the two papers. We have
  \[ \frac{1}{1 - \frac{c^2}{2} + \frac{c^2 \sigma_2^2}{2}} - \frac{2
     \sigma_2^2}{\left( 1 - \frac{c^2}{2} + \frac{c^2 \sigma^2_2}{2}
     \right)^2} = \frac{1 - \frac{c^2}{2} - \left( 2 - \frac{c^2}{2} \right)
     \sigma_2^2}{\left( 1 - \frac{c^2}{2} + \frac{c^2 \sigma_2^2}{2}
     \right)^2}, \]
  this shows in particular that $| Q_c | = 1$ when $r \gg \frac{1}{c}$ is
  possible only in cones around $\sin (\theta) = \sigma_2 = \pm \sqrt{\frac{1
  - c^2 / 2}{2 - c^2 / 2}}$. Therefore, for $c$ small enough, for some $\gamma
  > 0$ small and $R > 0$ large (that may depend on $c$), we have
  \[ \int_{\mathbbm{R}^2} | 1 - | Q_c |^2 | | \varphi |^2 \geqslant K (c,
     \beta, R) \int_{\mathbbm{R}^2 \backslash (B (0, R) \cup D (\gamma))}
     \frac{| \varphi |^2}{(1 + r)^2}, \]
  where $D (\gamma) = \left\{ r e^{i \theta} \in \mathbbm{R}^2, \left| \sin
  (\theta) \pm \sqrt{\frac{1 - c^2 / 2}{2 - c^2 / 2}} \right| \leqslant \gamma
  \right\}$. We want to show that for $\varphi \in H_{Q_c}$,
  \[ \int_{D (\gamma) \cup (\mathbbm{R}^2 \backslash B (0, R))} \frac{|
     \varphi |^2}{(1 + r)^2} \leqslant C (c, \gamma, R) \left(
     \int_{\mathbbm{R}^2} | \nabla \varphi |^2 + \int_{\mathbbm{R}^2
     \backslash (B (0, R) \cup D (\gamma))} \frac{| \varphi |^2}{(1 + r)^2}
     \right) . \]
  For $\theta_0$ any of the four angles such that $\sin (\theta) \pm
  \sqrt{\frac{1 - c^2 / 2}{2 - c^2 / 2}} = 0$, we fix $r > 0$ and look at
  $\varphi (\theta)$ as a function of the angle only. We compute, for $\theta
  \in [\theta_0 - 2 \beta, \theta_0 + 2 \beta]$ ($\beta > 0$ being a small
  constant depending on $\gamma$ such that $\{ x = r e^{i \theta} \in
  \mathbbm{R}^2, \theta \in [\theta_0 + 3 \beta, \theta_0 + \beta] \} \cap D
  (\gamma) = \emptyset$, and such that $D (\gamma)$ is included in the union
  of the $[\theta_0 - \beta, \theta_0 + \beta]$ for the four possible values
  of $\theta_0$),
  \[ \varphi (\theta) = \varphi (2 \beta + \theta) - \int_{\theta}^{2 \beta +
     \theta} \partial_{\theta} \varphi (\Theta) d \Theta, \]
  hence,
  \[ | \varphi (\theta) | \leqslant | \varphi (2 \beta + \theta) | +
     \int_{\theta_0 - \beta}^{\theta_0 + 3 \beta} | \partial_{\theta} \varphi
     (\Theta) | d \Theta . \]
  This implies that
  \[ | \varphi (\theta) |^2 \leqslant 2 | \varphi (2 \beta + \theta) |^2 + K
     \int_0^{2 \pi} | \partial_{\theta} \varphi (\Theta) |^2 d \Theta \]
  by Cauchy-Schwarz, and, integrating between $\theta_0 - \beta$ and $\theta_0
  + \beta$ yields
  \[ \int_{\theta_0 - \beta}^{\theta_0 + \beta} | \varphi (\theta) |^2 d
     \theta \leqslant 2 \int_{\theta_0 + \beta}^{\theta_0 + 3 \beta} | \varphi
     (\theta) |^2 d \theta + K \int_0^{2 \pi} | \partial_{\theta} \varphi
     (\theta) |^2 d \theta . \]
  Now multiplying by $\frac{r}{(1 + r)^2}$ and integrating in $r$ on $[R, +
  \infty [$, we infer
  \begin{eqnarray*}
    \int_{\theta - \theta_0 \in [- \beta, \beta]} \int_{r \in [R, + \infty [}
    \frac{| \varphi |^2}{(1 + r)^2} r d r d \theta & \leqslant & 2
    \int_{\theta - \theta_0 \in [\beta, 3 \beta]} \int_{r \in [R, + \infty [}
    \frac{| \varphi |^2}{(1 + r)^2} r d r d \theta\\
    & + & K (c, \beta, R) \int_{\mathbbm{R}^2} | \nabla \varphi |^2 d x\\
    & \leqslant & 2 \int_{\mathbbm{R}^2 \backslash (B (0, R) \cup D
    (\gamma))} \frac{| \varphi |^2 d x}{(1 + | x |)^2} + K (c, \beta, R)
    \int_{\mathbbm{R}^2} | \nabla \varphi |^2 d x,
  \end{eqnarray*}
  using
  \[ \frac{| \partial_{\theta} \varphi |^2}{(1 + r)^2} \leqslant \frac{|
     \partial_{\theta} \varphi |^2}{r^2} \leqslant | \nabla \varphi |^2 . \]
  Therefore,
  \[ \int_{D (\gamma) \cup (\mathbbm{R}^2 \backslash B (0, R))} \frac{|
     \varphi |^2}{(1 + r)^2} \leqslant K \int_{\mathbbm{R}^2 \backslash (B (0,
     R) \cup D (\gamma))} \frac{| \varphi |^2}{(1 + r)^2} d x + K (c, \beta,
     \gamma, R) \int_{\mathbbm{R}^2} | \nabla \varphi |^2 d x, \]
  and thus
  \[ \int_{\nobracket \mathbbm{R}^2 \backslash B (0, R \nobracket)} \frac{|
     \varphi |^2}{(1 + r)^2} \leqslant K (c, \beta, \gamma, R)
     \int_{\mathbbm{R}^2} | \nabla \varphi |^2 + | 1 - | Q_c |^2 | | \varphi
     |^2 . \]
  We are left with the proof of
  \begin{equation}
    \int_{\nobracket B (0, R \nobracket)} \frac{| \varphi |^2}{(1 + r)^2}
    \leqslant K (c, \beta, R) \left( \int_{\mathbbm{R}^2} | \nabla \varphi |^2
    + \int_{\nobracket \mathbbm{R}^2 \backslash B (0, R \nobracket)} \frac{|
    \varphi |^2}{(1 + r)^2} \right) . \label{CP22190110}
  \end{equation}
  We argue by contradiction. We suppose that there exists a sequence
  $\varphi_n \in H_{Q_c}$ such that $\int_{\nobracket B (0, R \nobracket)}
  \frac{| \varphi_n |^2}{(1 + r)^2} = 1$ and $\int_{\mathbbm{R}^2} | \nabla
  \varphi_n |^2 + \int_{\nobracket \mathbbm{R}^2 \backslash B (0, R
  \nobracket)} \frac{| \varphi_n |^2}{(1 + r)^2} \rightarrow 0$. Since
  $\varphi_n$ is bounded in $H^1 (B (0, R + 1))$, by Rellich's Theorem, up to
  a subsequence, we have the convergences $\varphi_n \rightarrow \varphi$
  strongly in $L^2$ and weakly in $H^1$ to some function $\varphi$ in $B (0, R
  + 1)$. In particular $\int_{B (0, R + 1)} | \nabla \varphi |^2 = 0$, hence
  $\varphi$ is constant on $B (0, R + 1)$, and with $\int_{\nobracket B (0, R
  + 1) \backslash B (0, R \nobracket)} \frac{| \varphi |^2}{(1 + r)^2} = 0$ we
  have $\varphi = 0$, which is in contradiction with \ $1 = \int_{\nobracket B
  (0, R \nobracket)} \frac{| \varphi_n |^2}{(1 + r)^2} \rightarrow
  \int_{\nobracket B (0, R \nobracket)} \frac{| \varphi |^2}{(1 + r)^2}$ by
  $L^2 (B (0, R + 1))$ strong convergence. This concludes the proof of this
  lemma.
\end{proof}

\subsection{Construction and properties of the four particular
directions}\label{CP2s31}

\subsubsection{Definitions}

The four directions we want to study here are $\partial_{x_1} Q_c,
\partial_{x_2} Q_c, \partial_c Q_c$ and $\partial_{c^{\bot}} Q_c$. The first
two are derivatives of $Q_c$ with respect to the position, the third one is
the derivative of $Q_c$ with respect of the speed, and we have its first order
term in Theorem \ref{th1}. The fourth direction is defined in Lemma \ref{CP2a}
below. The directions $\partial_{x_1} Q_c$ and $\partial_{x_2} Q_c$ correspond
to the translations of the travelling wave, $\partial_c Q_c$ and
$\partial_{c^{\bot}} Q_c$ to changes respectively in the modulus and direction
of its speed. These directions will also appear in the orthogonality
conditions for some of the coercivity results.

\begin{lemma}
  \label{CP2a}Take $\vec{c} \in \mathbbm{R}^2$ such that $| \vec{c} | < c_0$
  for $c_0$ defined in Theorem \ref{th1}. Define $\alpha$ such that $\vec{c} =
  | \vec{c} | R_{\alpha} (- \vec{e}_2)$, where $R_{\theta} : \mathbbm{R}^2
  \rightarrow \mathbbm{R}^2$ is the rotation of angle $\theta$. Then,
  $Q_{\vec{c}} \assign Q_{| \vec{c} |} \circ R_{- \alpha}$ solves
  \[ \left\{ \begin{array}{l}
       (\tmop{TW}_{\vec{c}}) (v) = i \vec{c} . \nabla v - \Delta v - (1 - | v
       |^2) v = 0\\
       | v | \rightarrow 1 \tmop{as} | x | \rightarrow + \infty,
     \end{array} \right. \]
  where $Q_{| \vec{c} |}$ is the solution of $(\tmop{TW}_{| \vec{c} |})$ in
  Theorem $\ref{th1}$. In particular, $Q_{\vec{c}}$ is a $C^1$ function of
  $\alpha$ and
  \[ \partial_{\alpha} Q_{\vec{c}} (x) = - R_{- \alpha} (x^{\bot}) . \nabla
     Q_{| \vec{c} |} (R_{- \alpha} (x)) . \]
  Furthermore, at $\alpha = 0$, the quantity
  \[ \partial_{c^{\bot}} Q_c \assign (\partial_{\alpha} Q_{\vec{c}})_{| \alpha
     = 0 \nobracket} \]
  satisfies
  \[ \partial_{c^{\bot}} Q_c (x) = - x^{\bot} . \nabla Q_c (x), \]
  is in $C^{\infty} (\mathbbm{R}^2, \mathbbm{C})$ and
  \[ L_{Q_c} (\partial_{c^{\bot}} Q_c) = - i c \partial_{x_1} Q_c . \]
\end{lemma}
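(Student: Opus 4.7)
The plan is to verify the four assertions sequentially. The first three — that $Q_{\vec{c}} := Q_{|\vec{c}|} \circ R_{-\alpha}$ solves $(\tmop{TW}_{\vec{c}})$, the chain-rule formula for $\partial_\alpha Q_{\vec{c}}$, and the explicit form and smoothness of $\partial_{c^\perp} Q_c$ at $\alpha = 0$ — are essentially exercises in the chain rule and rotation invariance. The key identity $L_{Q_c}(\partial_{c^\perp} Q_c) = -ic\partial_{x_1} Q_c$ in the last assertion will come from differentiating the relation $(\tmop{TW}_{\vec{c}})(Q_{\vec{c}}) = 0$ in $\alpha$.

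For the first point, the Laplacian and the nonlinearity $(1 - |v|^2)v$ are rotation invariant, so one only needs to track the transport term. Setting $y = R_{-\alpha}(x)$ and using the chain rule $\nabla Q_{\vec{c}}(x) = R_\alpha \nabla Q_{|\vec{c}|}(y)$, one gets $\vec{c} \cdot \nabla Q_{\vec{c}}(x) = (R_{-\alpha} \vec{c}) \cdot \nabla Q_{|\vec{c}|}(y) = -|\vec{c}|\partial_{x_2} Q_{|\vec{c}|}(y)$, using $\vec{c} = |\vec{c}| R_\alpha(-\vec{e}_2)$; this matches the transport term of $(\tmop{TW}_{|\vec{c}|})(Q_{|\vec{c}|})(y) = 0$, and the condition at infinity is preserved since $R_{-\alpha}$ is an isometry. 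For the second and third points, a direct computation gives $\partial_\alpha R_{-\alpha}(x) = -R_{-\alpha}(x^\perp)$, which combined with the chain rule and evaluation at $\alpha = 0$ yields $\partial_{c^\perp} Q_c(x) = -x^\perp \cdot \nabla Q_c(x)$. Smoothness of $\partial_{c^\perp} Q_c$ then follows from the $C^\infty$ regularity of $Q_c$ (standard elliptic regularity, recalled in subsection \ref{CP2NQcest}).

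For the operator identity, I would differentiate the relation $(\tmop{TW}_{\vec{c}})(Q_{\vec{c}}) = 0$ in $\alpha$ at $\alpha = 0$. The variation of $\vec{c}$ contributes $i (\partial_\alpha \vec{c})_{|\alpha = 0} \cdot \nabla Q_c = ic\,\vec{e}_1 \cdot \nabla Q_c = ic \partial_{x_1} Q_c$, using $\partial_\alpha \vec{c}_{|\alpha=0} = c R_{\pi/2}(-\vec{e}_2) = c \vec{e}_1$. The variation of $Q_{\vec{c}}$ contributes exactly $L_{Q_c}(\partial_{c^\perp} Q_c)$ by the definition of the linearized operator (differentiating $(1-|v|^2)v$ in $v$ produces precisely the expected $-(1-|Q_c|^2)\partial v + 2\mathfrak{R}\mathfrak{e}(\overline{Q_c} \partial v) Q_c$ structure). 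Setting the sum to zero yields the claim. There is no substantial obstacle here: the only delicate point is the sign and orientation bookkeeping — in particular $\partial_\alpha R_{-\alpha}|_{\alpha = 0} = -\cdot^\perp$ and the convention $\vec{c} = |\vec{c}| R_\alpha(-\vec{e}_2)$, which encodes that $\alpha = 0$ corresponds to the original speed $-c\vec{e}_2$ of $(\tmop{TW}_c)$.
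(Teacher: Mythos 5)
Your proposal is correct and follows essentially the same route as the paper: rotation invariance of $\Delta$ and of the nonlinearity together with the chain rule for the transport term, the identity $\partial_{\alpha} (R_{-\alpha}(x)) = -R_{-\alpha}(x^{\bot})$, and then differentiation of $(\tmop{TW}_{\vec{c}})(Q_{\vec{c}}) = 0$ in $\alpha$ at $\alpha = 0$ to obtain $L_{Q_c}(\partial_{c^{\bot}} Q_c) = -ic\partial_{x_1} Q_c$. Your sign bookkeeping ($\partial_{\alpha}\vec{c}_{|\alpha=0} = c\vec{e}_1$ with the counterclockwise convention and $x^{\bot} = (-x_2,x_1)$) is internally consistent and yields the same conclusion as the paper.
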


\begin{proof}
  Since the Laplacian operator is invariant by rotation, it is easy to check
  that $Q_{| \vec{c} |} \circ R_{- \alpha}$ solves $(\tmop{TW}_{\vec{c}})
  (Q_{| \vec{c} |} \circ R_{- \alpha}) = 0$. The function $\theta \mapsto
  R_{\theta}$ is $C^1$, hence $(\alpha, x) \mapsto Q_{\vec{c}} (x)$ is a $C^1$
  function, and we compute
  \[ (\partial_{\alpha} Q_{\vec{c}}) (x) = \partial_{\alpha} (Q_{| \vec{c} |}
     \circ R_{- \alpha}) (x) = \partial_{\alpha} (R_{- \alpha} (x)) . \nabla
     Q_{| \vec{c} |} (R_{- \alpha} (x)) . \]
  We remark that
  \[ \partial_{\alpha} (R_{- \alpha} (x)) = - R_{- \alpha} (x^{\bot}), \]
  where $x^{\bot} = (- x_2, x_1)$, hence
  \[ \partial_{\alpha} Q_{\vec{c}} (x) = - R_{- \alpha} (x^{\bot}) . \nabla
     Q_{| \vec{c} |} (R_{- \alpha} (x)) . \]
  In particular, for $\alpha = 0$,
  \[ \partial_{\alpha} Q_{\vec{c}} (x)_{| \alpha = 0 \nobracket} = - x^{\bot}
     . \nabla Q_c (x) . \]
  We recall that $Q_{\vec{c}}$ solves
  \[ i \vec{c} . \nabla Q_{\vec{c}} - \Delta Q_{\vec{c}} - (1 - | Q_{\vec{c}}
     |^2) Q_{\vec{c}} = 0, \]
  and when we differentiate this equation with respect to $\alpha$ (with $|
  \vec{c} | = c$), we have
  \[ - i \partial_{\alpha} \vec{c} . (\nabla Q_{\vec{c}}) + L_{Q_{\vec{c}}}
     (\partial_{\alpha} Q_{\vec{c}}) = 0. \]
  At $\alpha = 0$, $Q_{\vec{c}} = Q_c$, $\partial_{\alpha} \vec{c} = - c
  \vec{e}_1$ and $\partial_{\alpha} Q_{\vec{c} | \alpha = 0 \nobracket} =
  \partial_{c^{\bot}} Q_c$, therefore
  \[ L_{Q_c} (\partial_{c^{\bot}} Q_c) = - i c \partial_{x_1} Q_c . \]
\end{proof}

\subsubsection{Estimates on the four directions}

We shall now show that the functions $\partial_{x_1} Q_c, \partial_{x_2} Q_c,
\partial_c Q_c$ and $\partial_{c^{\bot}} Q_c$ are in the energy space and we
will also compute their values through the linearized operator around $Q_c$,
namely
\[ L_{Q_c} (\varphi) = - \Delta \varphi - i c \partial_{x_2} \varphi - (1 - |
   Q_c |^2) \varphi + 2\mathfrak{R}\mathfrak{e} (\overline{Q_c} \varphi) Q_c .
\]
\begin{lemma}
  \label{CP20703L222}There exists $c_0 > 0$ such that, for $0 < c < c_0$,
  $Q_c$ defined in Theorem \ref{th1}, we have
  \[ \partial_{x_1} Q_c, \partial_{x_2} Q_c, \partial_c Q_c,
     \partial_{c^{\bot}} Q_c \in H_{Q_c}, \]
  and
  \[ L_{Q_c} (\partial_{x_1} Q_c) = L_{Q_c} (\partial_{x_2} Q_c) = 0, \]
  \[ L_{Q_c} (\partial_c Q_c) = i \partial_{x_2} Q_c, \]
  \[ L_{Q_c} (\partial_{c^{\bot}} Q_c) = - i c \partial_{x_1} Q_c . \]
\end{lemma}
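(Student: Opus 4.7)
The proof divides into establishing the four operator identities and then verifying that each of the four directions lies in $H_{Q_c}$.

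For the identities, I would use that $Q_c \in C^{\infty}(\mathbbm{R}^2, \mathbbm{C})$ by standard elliptic regularity (recalled in subsection \ref{CP2NQcest}) and differentiate the travelling wave equation $(\tmop{TW}_c)(Q_c) = 0$ with respect to the appropriate parameter. Translation invariance yields $L_{Q_c}(\partial_{x_j} Q_c) = 0$. Differentiating in $c$ gives $L_{Q_c}(\partial_c Q_c) = i \partial_{x_2} Q_c$; the justification for interchanging derivatives is the $C^1$ dependence of $c \mapsto Q_c$ in the weighted norm $\|\cdot\|_{\sigma, d_c}$ from Lemma \ref{CP2dcQcsigma}, which yields validity of the identity pointwise (and hence in the sense of distributions). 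The identity $L_{Q_c}(\partial_{c^{\bot}} Q_c) = -ic \partial_{x_1} Q_c$ is already proven in Lemma \ref{CP2a}.

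For the memberships of $\partial_{x_j} Q_c$ and $\partial_c Q_c$ in $H_{Q_c}$, the strategy is to control the weighted norm $\|\cdot/V\|_{\sigma, d_c}$ and then invoke Lemma \ref{CP2123L34}. The $\partial_{x_j}$-case follows by decomposing $\partial_{x_j} Q_c = \partial_{x_j} V + \partial_{x_j} \Gamma_c$: the $V$-part is handled with Lemma \ref{lemme3new}, and the $\Gamma_c$-part with Lemma \ref{CP283L33} supplemented by interior Schauder estimates for the derivatives in the weighted norm. For $\partial_c Q_c$, Lemma \ref{CP2dcQcsigma} directly provides the required control on $\|\partial_c Q_c / V\|_{\sigma, d_c}$ through its leading-order term $-(1/c^2)\, \partial_d V_{| d = d_c \nobracket}/V$, and then Lemma \ref{CP2123L34} gives $\partial_c Q_c \in H_{Q_c}$.

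The main obstacle is $\partial_{c^{\bot}} Q_c = -x^{\bot} \cdot \nabla Q_c \in H_{Q_c}$, because the weight $|x^{\bot}|$ grows linearly at infinity and the $\|\cdot\|_{\sigma, d_c}$ framework does not apply directly. I would proceed by the decomposition $-x^{\bot} \cdot \nabla Q_c = -x^{\bot} \cdot \nabla V - x^{\bot} \cdot \nabla \Gamma_c$. For $-x^{\bot} \cdot \nabla V$, I would exploit the radial structure of each vortex factor: the identity $-y^{\bot} \cdot \nabla V_{\pm 1}(y) = \mp i V_{\pm 1}(y)$ (which follows from $V_{\pm 1}(y) = \rho(|y|) e^{\pm i \theta_y}$), applied after shifting to the centers $\pm d_c \overrightarrow{e_1}$, produces a cancellation of the $\pm iV$ phase contributions in the product $V = V_1(\cdot - d_c \overrightarrow{e_1}) V_{-1}(\cdot + d_c \overrightarrow{e_1})$ and leaves an expression whose pointwise size at infinity decays like $d_c/r^2$. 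The remainder $-x^{\bot} \cdot \nabla \Gamma_c$ is split into a near-vortex region (where $|x|$ is bounded by $d_c + O(1)$ and Lemma \ref{CP283L33} applies) and a far-field region, where I would invoke Theorem \ref{CP2Qcbehav} for the $c$-dependent pointwise bounds $|\nabla Q_c| \lesssim C(c)/(1+r)^2$ and $|\nabla |Q_c|| \lesssim C(c)/(1+r)^3$. Combined with $|1 - |Q_c|^2| \lesssim 1/(1+r)^2$ and interior Schauder estimates applied to $\Delta Q_c = -ic\partial_{x_2} Q_c - (1 - |Q_c|^2) Q_c$ to control $|D^2 Q_c|$, each of the three integrals defining $\|\partial_{c^{\bot}} Q_c\|_{H_{Q_c}}^2$ is finite (with $c$-dependent but finite constants, which suffices for $H_{Q_c}$ membership).
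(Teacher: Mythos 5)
Your four operator identities and your arguments for $\partial_{x_1} Q_c, \partial_{x_2} Q_c, \partial_c Q_c \in H_{Q_c}$ are essentially correct and close in spirit to the paper's proof (the paper bounds the three integrals in $\| \cdot \|_{H_{Q_c}}^2$ directly from Lemma \ref{lemme3new}, equation (\ref{CP2222}) and Theorem \ref{CP2Qcbehav} rather than passing through $\| \cdot \|_{\sigma, d_c}$; and for $\partial_c Q_c$ note that Lemma \ref{CP2dcQcsigma} controls only the \emph{difference} with the leading term, so you must still verify $\partial_d V_{| d = d_c \nobracket} \in H_{Q_c}$ separately, which the paper takes from Lemma 2.6 of \cite{CP1}).

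The genuine gap is in the term $\int_{\mathbbm{R}^2} | \nabla \partial_{c^{\bot}} Q_c |^2$. Since $\partial_{c^{\bot}} Q_c = - x^{\bot} . \nabla Q_c$, this requires $\int_{\mathbbm{R}^2} | x |^2 | \nabla^2 Q_c |^2 < + \infty$. The tools you invoke for the far field --- Theorem \ref{CP2Qcbehav} together with interior Schauder estimates on $\Delta Q_c = - i c \partial_{x_2} Q_c - (1 - | Q_c |^2) Q_c$ --- yield at best $| \nabla^2 Q_c | \leqslant C (c) (1 + r)^{- 2}$, and then $\int | x |^2 | \nabla^2 Q_c |^2 \lesssim \int C (c)^2 (1 + r)^{- 2} d x$ diverges logarithmically in dimension $2$. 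One needs strictly more decay, $| \nabla^2 Q_c | \lesssim c^{- \beta} (1 + \tilde{r})^{- (2 + \beta)}$ for some $\beta > 0$; the paper obtains this from the two-vortex structure, namely (\ref{CP2222}) for $\nabla^2 \Gamma_c$ and the interpolation of $| \nabla^2 V | \leqslant K (1 + \tilde{r})^{- 2}$ with $| \nabla^2 V | \leqslant K c^{- 1} (1 + \tilde{r})^{- 3}$ (see Step 3 of the proof of Lemma \ref{CP283L223}), combined with $c | x^{\bot} | \leqslant K (1 + \tilde{r})$. Within your own framework the fix is to differentiate your cancelled expression: $- x^{\bot} . \nabla V = d_c (V_1 \partial_{x_2} V_{- 1} - \partial_{x_2} V_1 V_{- 1})$, whose gradient is $O (d_c (1 + \tilde{r})^{- 2})$ and hence square-integrable, while $| x | | \nabla^2 \Gamma_c |$ is handled by (\ref{CP2222}). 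Two smaller inaccuracies: the cancellation leaves an expression of size $O (d_c | x_1 | / r^2) = O (d_c / r)$ at infinity, not $O (d_c / r^2)$ (this does not hurt the remaining two integrals, since $\mathfrak{R}\mathfrak{e} (\overline{Q_c} \partial_{c^{\bot}} Q_c) = - \frac{1}{2} x^{\bot} . \nabla (| Q_c |^2)$ is controlled by $| x | | \nabla | Q_c | | \leqslant C (c) (1 + r)^{- 2}$ from Theorem \ref{CP2Qcbehav}); and splitting $\mathfrak{R}\mathfrak{e} (\overline{Q_c} \, x^{\bot} . \nabla \Gamma_c)$ off from the $V$-part destroys the real-part cancellation, so that term should be estimated on the full $Q_c$ as the paper does via (\ref{CP2220}) or Theorem \ref{CP2Qcbehav}.
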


We could check that we also have $\partial_{x_1} Q_c, \partial_{x_2} Q_c \in
H^1 (\mathbbm{R}^2)$ (see {\cite{MR2191764}}), but we expect that $\partial_c
Q_c, \partial_{c^{\bot}} Q_c \nin L^2 (\mathbbm{R}^2)$. For
$\partial_{c^{\bot}} Q_c$, this can be shown with Lemma \ref{CP2a} and
{\cite{MR2191764}}.

\begin{proof}
  We have defined
  \[ \| \varphi \|_{H_{Q_c}}^2 = \int_{\mathbbm{R}^2} | \nabla \varphi |^2 + |
     1 - | Q_c |^2 | | \varphi |^2 +\mathfrak{R}\mathfrak{e}^2 (\overline{Q_c}
     \varphi) . \]
  For any of the four functions, since they are in $C^{\infty} (\mathbbm{R}^2,
  \mathbbm{C})$, the only possible problem for the integrability is at
  infinity.

  \begin{tmindent}
    Step 1.  We have $\partial_{x_1} Q_c, \partial_{x_2} Q_c \in H_{Q_c}$.
  \end{tmindent}

  From Lemma \ref{lemme3new} and equation (\ref{CP2222}) (for $1 > \sigma' >
  \sigma = 3 / 4$), we have
  \[ \int_{\mathbbm{R}^2} | \nabla \partial_{x_1} Q_c |^2 +
     \int_{\mathbbm{R}^2} | \nabla \partial_{x_2} Q_c |^2 \leqslant
     \int_{\mathbbm{R}^2} \frac{K (c, \sigma')}{(1 + r)^{7 / 2}} < + \infty .
  \]
  From Theorem \ref{CP2Qcbehav}, we have
  \[ \int_{\mathbbm{R}^2} | 1 - | Q_c |^2 | | \nabla Q_c |^2
     +\mathfrak{R}\mathfrak{e}^2 (\overline{Q_c} \nabla Q_c) \leqslant
     \int_{\mathbbm{R}^2} \frac{K (c)}{(1 + r)^4} < + \infty, \]
  hence $\partial_{x_1} Q_c, \partial_{x_2} Q_c \in H_{Q_c}$.

  \begin{tmindent}
    Step 2.  We have $\partial_c Q_c \in H_{Q_c}$.
  \end{tmindent}

  From Lemmas \ref{CP2dcQcsigma} and \ref{CP2123L34}, we have that for $\sigma
  > 0$ small enough
  \[ \partial_c Q_c + \frac{1 + o^{\sigma}_{c \rightarrow 0}
     (c^{\sigma})}{c^2} \partial_d V_{| d = d_c \nobracket} \in H_{Q_c}, \]
  therefore we just have to check that $\partial_d V_{| d = d_c \nobracket}
  \in H_{Q_c}$, which is a direct consequence of Lemma 2.6 of {\cite{CP1}}.

  \begin{tmindent}
    Step 3.  We have $\partial_{c^{\bot}} Q_c \in H_{Q_c}$.
  \end{tmindent}

  From Lemma \ref{CP2a}, we have $\partial_{c^{\bot}} Q_c = - x^{\bot} .
  \nabla Q_c$. With Theorem \ref{CP2Qcbehav}, Lemma \ref{lemme3new} and
  equation (\ref{CP2222}), we check that
  \[ \int_{\mathbbm{R}^2} | \nabla \partial_{c^{\bot}} Q_c |^2 + | (1 - | Q_c
     |^2) | | \partial_{c^{\bot}} Q_c |^2 < + \infty . \]
  Now, from Lemma \ref{lemme3new} and equation (\ref{CP2217}) (with $\sigma =
  1 / 2$), we have
  \[ \int_{\mathbbm{R}^2} \mathfrak{R}\mathfrak{e}^2 (\overline{Q_c}
     \partial_{c^{\bot}} Q_c) \leqslant K \int_{\mathbbm{R}^2} (1 + r^2)
     \mathfrak{R}\mathfrak{e}^2 (\nabla Q_c \overline{Q_c}) \leqslant K (c)
     \int_{\mathbbm{R}^2} \frac{1}{(1 + r)^3} < + \infty, \]
  thus $\partial_{c^{\bot}} Q_c \in H_{Q_c}$.

  \begin{tmindent}
    Step 4.  Computation of the linearized operator on $\partial_{x_1} Q_c,
    \partial_{x_2} Q_c, \partial_c Q_c, \partial_{c^{\bot}} Q_c$.
  \end{tmindent}

  For the values in the linearized operator, since
  \[ - i c \partial_{x_2} Q_c - \Delta Q_c - (1 - | Q_c |^2) Q_c =
     (\tmop{TW}_c) (Q_c) = 0, \]
  by differentiating it with respect to $x_1$ and $x_2$, we have
  \[ L_{Q_c} (\partial_{x_1} Q_c) = L_{Q_c} (\partial_{x_2} Q_c) = 0. \]
  By differentiating it with respect to $c$, we have (we recall that
  $\partial_c Q_c \in C^{\infty} (\mathbbm{R}^2, \mathbbm{C})$)
  \[ - i \partial_{x_2} Q_c + L_{Q_c} (\partial_c Q_c) = 0. \]
  Finally, the quantity $L_{Q_c} (\partial_{c^{\bot}} Q_c)$ is given by Lemma
  \ref{CP2a}.
\end{proof}

The next two lemmas are additional estimates on the four directions that will
be useful later on. They estimate in particular the dependence on $c$ of $\| .
\|_{\mathcal{C}}$ on these four directions.

\begin{lemma}
  \label{CP283L223}There exists $K > 0$ a universal constant, independent of
  $c$, such that, for $Q_c$ defined in Theorem \ref{th1},
  \[ \| \partial_{x_1} Q_c \|_{\mathcal{C}} + \| \partial_{x_2} Q_c
     \|_{\mathcal{C}} + \| c^2 \partial_c Q_c \|_{\mathcal{C}} \leqslant K. \]
  Furthermore, for any $1 > \beta > 0$,
  \[ \| c \partial_{c^{\bot}} Q_c \|_{\mathcal{C}} = o_{c \rightarrow
     0}^{\beta} (c^{- \beta}) . \]
\end{lemma}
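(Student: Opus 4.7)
The plan is to unpack the $\mathcal{C}$-seminorm. For $A = Q_c \psi$, one has the pointwise identities
\[
|Q_c|^4 |\nabla \psi|^2 = |Q_c \nabla A - A \nabla Q_c|^2, \qquad |Q_c|^4 \mathfrak{R}\mathfrak{e}^2(\psi) = \mathfrak{R}\mathfrak{e}^2(A \overline{Q_c}),
\]
so that
\[
\|A\|_{\mathcal{C}}^2 = \int_{\mathbbm{R}^2} |Q_c \nabla A - A \nabla Q_c|^2 + \mathfrak{R}\mathfrak{e}^2(A \overline{Q_c}).
\]
Each of the four directions is then estimated using the pointwise decay results of Section \ref{CP2Ndecay}.

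For $A \in \{\partial_{x_1} Q_c, \partial_{x_2} Q_c\}$, estimate (\ref{CP2220}) directly bounds $\mathfrak{R}\mathfrak{e}^2(\partial_{x_j} Q_c \overline{Q_c})$ by $K(\sigma)/(1+\tilde{r})^{4+2\sigma}$. Writing $Q_c = V + \Gamma_c$ and combining Lemma \ref{lemme3new} with Lemma \ref{CP283L33} (using also the control on $\nabla^2 \Gamma_c$ implicit in (\ref{CP2222})), the first integrand $|Q_c \nabla \partial_{x_j} Q_c - \partial_{x_j} Q_c \nabla Q_c|^2$ has the same summable decay at infinity and is bounded near each vortex by the $C^\infty$-regularity of $Q_c$. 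Since $\int_{\mathbbm{R}^2}(1+\tilde{r})^{-(4+2\sigma)}$ is controlled uniformly in $d_c$ (at worst twice the integral around one center), both contributions are bounded independently of $c$. For $A = c^2 \partial_c Q_c$, Lemma \ref{CP2dcQcsigma} reduces the question, up to an error small in the $\|\cdot\|_{\sigma, d_c}$-norm and hence in the relevant weighted integrals, to the $\mathcal{C}$-bound for $-\partial_d V_{|d=d_c}$. The latter is itself a sum of translation-type derivatives of the two vortices, and is handled exactly as $\partial_{x_1} Q_c$.

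The sensitive case is $A = c\, \partial_{c^\perp} Q_c = -c\, x^\perp \cdot \nabla Q_c$ (Lemma \ref{CP2a}), where the linear growth $|x^\perp| = |x|$ interacts with the slow decay of $\nabla Q_c$ at infinity. For the real part, $\mathfrak{R}\mathfrak{e}(A \overline{Q_c}) = -c\, x^\perp \cdot \mathfrak{R}\mathfrak{e}(\nabla Q_c \overline{Q_c})$ is controlled via (\ref{CP2220}) by $c K(\sigma) |x|/(1+\tilde{r})^{2+\sigma}$; splitting into $\{|x| \leq 2 d_c\}$ (where $c|x| \lesssim 1$) and $\{|x| \geq 2 d_c\}$ (where $\tilde{r} \sim |x|$, giving a contribution $O(c^{2+2\sigma})$) already yields an $O(1)$ bound. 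For the gradient integrand, expanding $\nabla \partial_{c^\perp} Q_c$ via the commutators of the rotation field $\partial_{c^\perp}$ with the $\partial_{x_j}$ allows one to reorganize the expression so that the worst contribution reduces to $c^2 |x|^2 |\mathfrak{I}\mathfrak{m}(\nabla Q_c \overline{Q_c})|^2$, bounded only by $c^2 |x|^2 K^2/(1+\tilde{r})^2$ via (\ref{CP2221}); on $\{|x| \leq 2 d_c\}$ this integrates to $O(\ln(1/c)) = o_{c\to 0}^\beta(c^{-\beta})$ for every $\beta > 0$, which yields the announced bound. The main obstacle is precisely this logarithmic loss: the slow $1/\tilde{r}$ decay of $\mathfrak{I}\mathfrak{m}(\nabla Q_c \overline{Q_c})$ combined with the linear factor $|x^\perp|$ is borderline divergent, and the prefactor $c$ is only just enough to make the integral finite without giving a $c$-uniform bound.
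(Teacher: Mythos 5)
Your treatment of $\partial_{x_1}Q_c$, $\partial_{x_2}Q_c$ and $c^2\partial_c Q_c$, and of the term $\mathfrak{R}\mathfrak{e}^2(\psi)|Q_c|^4$ for $c\,\partial_{c^\perp}Q_c$, follows essentially the same route as the paper: unpack the seminorm via the product rule, use (\ref{CP2220})--(\ref{CP2222}) and Lemma \ref{lemme3new}, and reduce $c^2\partial_c Q_c$ to $\partial_d V_{|d=d_c}$ through Lemma \ref{CP2dcQcsigma}. That part is fine.

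The gap is in the gradient term for $c\,\partial_{c^\perp}Q_c$, and specifically in the region $\{|x|\geqslant 2d_c\}$, which you do not address. Your claimed worst contribution $c^2|x|^2|\mathfrak{I}\mathfrak{m}(\nabla Q_c\overline{Q_c})|^2$ (and likewise the genuinely worst terms $c^2|x|^2|\nabla^2 Q_c|^2|Q_c|^2$ and $c^2|x|^2|\nabla Q_c|^4$ coming from differentiating $-c\,x^\perp\cdot\nabla Q_c$) is only bounded by $K\,c^2|x|^2/(1+\tilde r)^2$ via (\ref{CP2221}); on $\{|x|\geqslant 2d_c\}$ one has $\tilde r\sim|x|$ but $c|x|$ is unbounded, so the integrand is of order $c^2$ on an infinite-measure set and the integral diverges. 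Appealing to Theorem \ref{CP2Qcbehav} does not repair this, because its constant $C(c)$ is unquantified in $c$ and cannot produce the uniform $o^{\beta}_{c\to0}(c^{-\beta})$ bound. The missing ingredient is the improved far-field estimate encoding the dipole cancellation between the two vortices, namely $|\nabla^2 V|\leqslant K/(c(1+\tilde r)^3)$ alongside the crude $|\nabla^2 V|\leqslant K/(1+\tilde r)^2$; interpolating the two gives $|\nabla^2 V|\leqslant K c^{-\beta}(1+\tilde r)^{-2-\beta}$, which makes $\int (c|x^\perp|)^2|\nabla^2 Q_c|^2\leqslant K(\beta)c^{-2\beta}$ and closes the case (this is how the paper proceeds; the term $c^2\int|\nabla Q_c|^2$ is handled separately via $E(Q_c)=O(\ln(1/c))$). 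Your interior computation on $\{|x|\leqslant 2d_c\}$, giving the $O(\ln(1/c))$ loss, is correct but is not where the real difficulty lies.
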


\begin{proof}
  We defined, for $\varphi = Q_c \psi \in H_{Q_c}$,
  \[ \| \varphi \|_{\mathcal{C}}^2 = \int_{\mathbbm{R}^2} | \nabla \psi |^2 |
     Q_c |^4 +\mathfrak{R}\mathfrak{e}^2 (\psi) | Q_c |^4 . \]
  We recall that, since $\varphi = Q_c \psi$,
  \begin{equation}
    \int_{\mathbbm{R}^2} | \nabla \psi |^2 | Q_c |^4 = \int_{\mathbbm{R}^2} |
    \nabla \varphi - \nabla Q_c \psi |^2 | Q_c |^2 \leqslant K
    \int_{\mathbbm{R}^2} | \nabla \varphi |^2 | Q_c |^2 + | \nabla Q_c |^2 |
    \varphi |^2 \label{CP2llabel4}
  \end{equation}
  
  \begin{tmindent}
    Step 1.  We have $\| \partial_{x_1} Q_c \|_{\mathcal{C}} + \|
    \partial_{x_2} Q_c \|_{\mathcal{C}} \leqslant K$.
  \end{tmindent}

  From Lemmas \ref{lemme3new} and \ref{CP283L33} and equations (\ref{CP2220})
  to (\ref{CP2222}), we have that, for $\tilde{r} = \min (r_1, r_{- 1})$,
  \[ | \nabla Q_c | \leqslant \frac{K}{(1 + \tilde{r})} \quad \tmop{and} \quad
     | \nabla^2 Q_c | \leqslant \frac{K}{(1 + \tilde{r})^2} . \]
  Therefore,
  \[ \int_{\mathbbm{R}^2} | \nabla (\partial_{x_1} Q_c) |^2 | Q_c |^2 + |
     \nabla (\partial_{x_2} Q_c) |^2 | Q_c |^2 \leqslant K, \]
  and we also have
  \[ \int_{\mathbbm{R}^2} | \nabla Q_c |^2 | \nabla Q_c |^2 \leqslant K, \]
  thus, with equation (\ref{CP2llabel4}),
  \[ \int_{\mathbbm{R}^2} \left| \nabla \left( \frac{\partial_{x_1} Q_c}{Q_c}
     \right) \right|^2 | Q_c |^4 + \int_{\mathbbm{R}^2} \left| \nabla \left(
     \frac{\partial_{x_2} Q_c}{Q_c} \right) \right|^2 | Q_c |^4 \leqslant K.
  \]
  By equation (\ref{CP2220}) (for $\sigma = 1 / 4$), we have
  \[ \int_{\mathbbm{R}^2} \mathfrak{R}\mathfrak{e}^2 \left( \frac{\nabla
     Q_c}{Q_c} \right) | Q_c |^4 \leqslant K \int_{\mathbbm{R}^2}
     \mathfrak{R}\mathfrak{e}^2 (\nabla Q_c \overline{Q_c}) \leqslant K
     \int_{\mathbbm{R}^2} \frac{1}{(1 + \tilde{r})^{5 / 2}} \leqslant K. \]
  We conclude that $\| \partial_{x_1} Q_c \|_{\mathcal{C}} + \| \partial_{x_2}
  Q_c \|_{\mathcal{C}} \leqslant K$.

  \begin{tmindent}
    Step 2.  We have $\| c^2 \partial_c Q_c \|_{\mathcal{C}} \leqslant K$.
  \end{tmindent}

  From Lemma \ref{CP2dcQcsigma}, we have, writing $c^2 \partial_c Q_c = (1 +
  o_{c \rightarrow 0} (1)) \partial_d V_{| d = d_c \nobracket} + h$, that
  $\left\| \frac{h}{V} \right\|_{\sigma, d_c} = o_{c \rightarrow 0} (1)$. In
  particular if we show that $\| \partial_d V_{| d = d_c \nobracket}
  \|_{\mathcal{C}} \leqslant K$ and $\| h \|_{\mathcal{C}} \leqslant K$, then
  $\| c^2 \partial_c Q_c \|_{\mathcal{C}} \leqslant K$. From Lemma 2.6 of
  {\cite{CP1}}, we check directly that
  \[ \int_{\mathbbm{R}^2} | \nabla \partial_d V_{| d = d_c \nobracket} |^2 +
     \frac{| \partial_d V_{| d = d_c \nobracket} |^2}{(1 + \tilde{r})^{3 / 2}}
     +\mathfrak{R}\mathfrak{e}^2 (V \partial_d V_{| d = d_c \nobracket})
     \leqslant K. \]
  In particular, with (\ref{CP2llabel4}), it implies that
  \[ \int_{\mathbbm{R}^2} \left| \nabla \left( \frac{\partial_d V_{| d = d_c
     \nobracket}}{Q_c} \right) \right|^2 | Q_c |^4 \leqslant K \]
  and we estimate
  \[ \int_{\mathbbm{R}^2} \mathfrak{R}\mathfrak{e}^2 \left( \frac{\partial_d
     V_{| d = d_c \nobracket}}{Q_c} \right) | Q_c |^4 \leqslant K
     \int_{\mathbbm{R}^2} \mathfrak{R}\mathfrak{e}^2 (\bar{V} \partial_d V_{|
     d = d_c \nobracket}) + | V - Q_c |^2 | \partial_d V_{| d = d_c
     \nobracket} |^2 \leqslant K \]
  with the same arguments and equation (\ref{CP2218}). Similarly,
  \[ \int_{\mathbbm{R}^2} \left| \nabla \frac{\partial_d V_{| d = d_c
     \nobracket}}{Q_c} \right|^2 | Q_c |^4 \leqslant 2 \int_{\mathbbm{R}^2} |
     \nabla \partial_d V_{| d = d_c \nobracket} |^2 | Q_c |^2 + | \nabla Q_c
     \partial_d V_{| d = d_c \nobracket} |^2 \leqslant K, \]
  therefore $\| \partial_d V_{| d = d_c \nobracket} \|_{\mathcal{C}} \leqslant
  K$. We now have to estimate $\| h \|_{\mathcal{C}}$. The computations are
  similar, since we check easily that
  \[ \int_{\mathbbm{R}^2} | \nabla h |^2 + | \nabla Q_c |^2 | h |^2 \leqslant
     K \left\| \frac{h}{V} \right\|^2_{3 / 4, d_c} \]
  and
  \[ \int_{\mathbbm{R}^2} \mathfrak{R}\mathfrak{e}^2 (\bar{Q}_c h) \leqslant K
     \int_{\mathbbm{R}^2} \mathfrak{R}\mathfrak{e}^2 (\bar{V} h) + | V - Q_c
     |^2 | h |^2 \leqslant K \left\| \frac{h}{V} \right\|^2_{3 / 4, d_c} . \]
  
  \begin{tmindent}
    Step 3.  We have $\| c \partial_{c^{\bot}} Q_c \|_{\mathcal{C}} = o_{c
    \rightarrow 0}^{\beta} (c^{- \beta})$.
  \end{tmindent}

  By definition, $c \partial_{c^{\bot}} Q_c = - c x^{\bot} . \nabla Q_c (x)$,
  and we check by triangular inequality that $c | x^{\bot} | \leqslant K (1 +
  \tilde{r})$ since $\tilde{r} = \min (| x - \widetilde{d_c}
  \overrightarrow{e_1} |, | x + \widetilde{d_c} \overrightarrow{e_1} |)$ and
  $c \tilde{d}_c \rightarrow 1$. Therefore,
  \[ \int_{\mathbbm{R}^2} | \nabla (c \partial_{c^{\bot}} Q_c) |^2 \leqslant
     c^2 \int_{\mathbbm{R}^2} | \nabla Q_c |^2 + \int_{\mathbbm{R}^2} (c |
     x^{\bot} |)^2 | \nabla^2 Q_c |^2 \leqslant K \left( 1 +
     \int_{\mathbbm{R}^2} | \nabla^2 Q_c |^2 (1 + \tilde{r})^2 \right) . \]
  We have $| \nabla^2 Q_c | \leqslant | \nabla^2 V | + | \nabla^2 \Gamma_c |$,
  and with equation (\ref{CP2222}), we check that $\int_{\mathbbm{R}^2} |
  \nabla^2 \Gamma_c |^2 (1 + \tilde{r})^2 \leqslant K$. With computations
  similar to the ones of Lemmas 2.3 of {\cite{CP1}} and \ref{lemme3new}, we
  can show that
  \[ | \nabla^2 V | \leqslant \frac{K}{(1 + \tilde{r})^2} \quad \tmop{and}
     \quad | \nabla^2 V | \leqslant \frac{K}{c (1 + \tilde{r})^3}, \]
  therefore, for any $1 > \beta > 0$,
  \[ | \nabla^2 V | \leqslant \frac{K c^{- \beta}}{(1 + \tilde{r})^{2 +
     \beta}}, \]
  and thus, by (\ref{CP2llabel4}),
  \[ \int_{\mathbbm{R}^2} \left| \nabla \left( \frac{c \partial_{c^{\bot}}
     Q_c}{Q_c} \right) \right|^2 | Q_c |^4 \leqslant K \int_{\mathbbm{R}^2} |
     \nabla c \partial_{c^{\bot}} Q_c |^2 | Q_c |^2 + | \nabla Q_c |^2 | c
     \partial_{c^{\bot}} Q_c |^2 \leqslant K (\beta) c^{- 2 \beta} . \]
  Furthermore, by equations (\ref{CP2220}) (for $\sigma = 1 / 2$) and
  (\ref{CP2Qcpaszero}), we have
  \[ \int_{\mathbbm{R}^2} \mathfrak{R}\mathfrak{e}^2 \left( \frac{c x^{\bot} .
     \nabla Q_c (x)}{Q_c} \right) | Q_c |^4 \leqslant K \int_{\mathbbm{R}^2}
     (1 + \tilde{r})^2 \mathfrak{R}\mathfrak{e}^2 (\nabla Q_c \overline{Q_c})
     \leqslant K \int_{\mathbbm{R}^2} \frac{1}{(1 + \tilde{r})^3} \leqslant K.
  \]
  We conclude that $\| c \partial_{c^{\bot}} Q_c \|_{\mathcal{C}} = o_{c
  \rightarrow 0}^{\beta} (c^{- \beta})$.
\end{proof}

\subsubsection{Link with the energy and momentum and computations of
equivalents}\label{CP2s33}

In this subsection, we compute the value of the four previous particular
direction $\partial_{x_1} Q_c, \partial_{x_2} Q_c, \partial_c Q_c,
\partial_{c^{\bot}} Q_c$ on the quadratic form. In particular, we shall show
that one of them is negative.

\begin{lemma}
  \label{CP2nend}There exists $c_0 > 0$ such that for $0 < c < c_0$, and for
  $Q_c$ defined in Theorem \ref{th1}, for $A \in \{ \partial_{x_1} Q_c,
  \partial_{x_2} Q_c, \partial_c Q_c, \partial_{c^{\bot}} Q_c \}$,
  $\mathfrak{R}\mathfrak{e} (L_{Q_c} (A) \bar{A}) \in L^1 (\mathbbm{R}^2)$ and
  \[ \langle L_{Q_c} (\partial_{x_1} Q_c), \partial_{x_1} Q_c \rangle =
     \langle L_{Q_c} (\partial_{x_2} Q_c), \partial_{x_2} Q_c \rangle = 0, \]
  \[ \langle L_{Q_c} (\partial_c Q_c), \partial_c Q_c \rangle = \frac{- 2 \pi
     + o_{c \rightarrow 0} (1)}{c^2}, \]
  \[ \langle L_{Q_c} (\partial_{c^{\bot}} Q_c), \partial_{c^{\bot}} Q_c
     \rangle = 2 \pi + o_{c \rightarrow 0} (1) . \]
\end{lemma}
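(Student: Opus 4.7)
\textbf{Proof proposal for Lemma \ref{CP2nend}.}

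The two easy cases are immediate: since $L_{Q_c}(\partial_{x_1}Q_c)=L_{Q_c}(\partial_{x_2}Q_c)=0$ by Lemma \ref{CP20703L222}, the pairings $\langle L_{Q_c}(\partial_{x_j}Q_c),\partial_{x_j}Q_c\rangle$ are trivially integrable and equal to zero. For the two non-trivial directions I would exploit the identities (also from Lemma \ref{CP20703L222})
\[
L_{Q_c}(\partial_c Q_c)=i\partial_{x_2}Q_c,\qquad L_{Q_c}(\partial_{c^{\bot}}Q_c)=-ic\,\partial_{x_1}Q_c,
\]
together with the two formal symmetry identities
\[
\langle L_{Q_c}(f),g\rangle=\langle L_{Q_c}(g),f\rangle,\qquad \langle i\partial_{x_j}f,g\rangle=\langle i\partial_{x_j}g,f\rangle,
\]
which come from integration by parts once boundary terms are controlled.

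For $A=\partial_c Q_c$, the plan is to differentiate the identity $P_2(Q_c)=\frac12\langle i\partial_{x_2}Q_c,Q_c-1\rangle$ from Proposition \ref{CP2prop5} in $c$:
\[
\partial_c P_2(Q_c)=\tfrac12\langle i\partial_{x_2}\partial_c Q_c,Q_c-1\rangle+\tfrac12\langle i\partial_{x_2}Q_c,\partial_c Q_c\rangle.
\]
Using the skew-adjointness of $i\partial_{x_2}$ (applied with $f=\partial_c Q_c$, $g=Q_c-1$, noting $\partial_{x_2}1=0$) the two right-hand terms are equal, so
\[
\langle L_{Q_c}(\partial_c Q_c),\partial_c Q_c\rangle=\langle i\partial_{x_2}Q_c,\partial_c Q_c\rangle=\partial_c P_2(Q_c)=\frac{-2\pi+o_{c\to0}(1)}{c^2},
\]
the last equality being furnished again by Proposition \ref{CP2prop5}.

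For $A=\partial_{c^{\bot}}Q_c$, the idea is to use that the rotation $R_\alpha$ acts on the momentum vector as a genuine rotation. Since $Q_{\vec c}=Q_{|\vec c|}\circ R_{-\alpha}$ and $P_1(Q_c)=0$, $P_2(Q_c)=\tfrac{2\pi+o(1)}{c}$, one has $\vec P(Q_{\vec c})=R_\alpha(0,P_2(Q_c))$, hence $\partial_\alpha P_1(Q_{\vec c})_{|\alpha=0}=-P_2(Q_c)$. Differentiating the definition $P_1(Q_{\vec c})=\tfrac12\langle i\partial_{x_1}Q_{\vec c},Q_{\vec c}-1\rangle$ and using the same skew-adjoint trick yields
\[
-P_2(Q_c)=\partial_\alpha P_1(Q_{\vec c})_{|\alpha=0}=\langle i\partial_{x_1}Q_c,\partial_{c^{\bot}}Q_c\rangle,
\]
so that
\[
\langle L_{Q_c}(\partial_{c^{\bot}}Q_c),\partial_{c^{\bot}}Q_c\rangle=\langle -ic\partial_{x_1}Q_c,\partial_{c^{\bot}}Q_c\rangle=cP_2(Q_c)=2\pi+o_{c\to0}(1).
\]

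The main obstacle is purely technical: justifying both the integrability of $\mathfrak{Re}(L_{Q_c}(A)\bar A)$ and all of the integrations by parts used above, since the functions $\partial_c Q_c$ and $\partial_{c^{\bot}}Q_c$ are not in $L^2(\mathbbm R^2)$. I would handle this by combining the $c$-independent pointwise bounds of Theorem \ref{CP2Qcbehav} (polynomial decay of $|\nabla Q_c|$, $|1-|Q_c|^2|$ at rate $1/(1+r)^2$) with the weighted-norm estimates of Lemmas \ref{CP283L33} and \ref{CP2dcQcsigma} and the definition of $\|\cdot\|_{\sigma,d_c}$: pairing one factor with good polynomial decay against another with only mild growth suffices, and the commutation of $\partial_c$ with $\langle i\partial_{x_2}\cdot,\cdot\rangle$ is justified by the $C^1$-dependence stated in Theorem \ref{th1} and Lemma \ref{CP2dcQcsigma} together with a cutoff-and-limit argument on a sequence of balls $B(0,R_n)$ in which one verifies that the boundary contributions vanish as $R_n\to\infty$.
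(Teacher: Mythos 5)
Your treatment of $\partial_{x_1}Q_c$ and $\partial_{x_2}Q_c$ is fine, and the structural identities you invoke — $\langle L_{Q_c}(\partial_c Q_c),\partial_c Q_c\rangle=\partial_c P_2(Q_c)$ and $\langle L_{Q_c}(\partial_{c^{\bot}}Q_c),\partial_{c^{\bot}}Q_c\rangle=cP_2(Q_c)$ — are correct and do appear in the paper (they are the content of Lemma \ref{CP2cor212N}). The problem is where you get the numerical values from. You close both computations by citing $P_2(Q_c)=\frac{2\pi+o_{c\to0}(1)}{c}$ and $\partial_c P_2(Q_c)=\frac{-2\pi+o_{c\to0}(1)}{c^2}$ from Proposition \ref{CP2prop5}. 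But Proposition \ref{CP2prop5} is proved \emph{by combining} Lemmas \ref{CP2nend}, \ref{CP2cor212N} and \ref{CP2zerosofQc}: in the paper the logical flow is that one first computes $\langle L_{Q_c}(\partial_c Q_c),\partial_c Q_c\rangle$ and $\langle L_{Q_c}(\partial_{c^{\bot}}Q_c),\partial_{c^{\bot}}Q_c\rangle$ directly, and only then deduces the asymptotics of $P_2$ and $\partial_c P_2$ through the identities above. Your argument is therefore circular, and the actual analytic content of the lemma — producing the constants $-2\pi$ and $2\pi$ — is never carried out.

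What the paper does instead, and what your proof is missing, is a direct computation based on the two-vortex decomposition. Writing $Q_c=V+\Gamma_c$ with $V=V_1(\cdot-d_c\overrightarrow{e_1})V_{-1}(\cdot+d_c\overrightarrow{e_1})$ and $\partial_c Q_c=-\bigl(\frac{1+o(1)}{c^2}\bigr)\partial_d V_{|d=d_c}+h$ (Lemmas \ref{CP283L33} and \ref{CP2dcQcsigma}), one reduces $\langle i\partial_{x_2}Q_c,\partial_c Q_c\rangle$ to the explicit vortex interaction integral $\langle i\partial_{x_2}V_1V_{-1},\partial_{x_1}V_1V_{-1}\rangle=-\pi+o_{c\to0}(1)$ (equation (2.43) of \cite{CP1}), plus cross terms in $\Gamma_c$ and $h$ that are shown to be $o(1/c^2)$ using the weighted $\|\cdot\|_{\sigma,d_c}$ bounds. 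Similarly, for $\partial_{c^{\bot}}Q_c=-x^{\bot}\cdot\nabla Q_c$ one expands $c\langle i\partial_{x_1}Q_c,x_1\partial_{x_2}Q_c\rangle$, writes $x_1=d_c+y_1$ so that the leading term is $cd_c\langle i\partial_{x_1}V,\partial_{x_2}V\rangle=(1+o(1))(2\pi+o(1))$, and checks that the $y_1$-weighted and $\Gamma_c$ contributions are $o(1/c)$. None of this is replaceable by an appeal to Proposition \ref{CP2prop5}. Your remarks on integrability and on justifying the integrations by parts via Theorem \ref{CP2Qcbehav} and the $\sigma$-weighted norms are in the right spirit and match the paper's Step 0, but they do not repair the circularity at the heart of the argument.
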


\begin{proof}
  For $A \in \{ \partial_{x_1} Q_c, \partial_{x_2} Q_c, \partial_c Q_c,
  \partial_{c^{\bot}} Q_c \}$, we recall from Lemma \ref{CP20703L222} that $A
  \in H_{Q_c}$. To show that $\mathfrak{R}\mathfrak{e} (L_{Q_c} (A) \bar{A})
  \in L^1 (\mathbbm{R}^2)$, we need to show that
  \[ -\mathfrak{R}\mathfrak{e} (\Delta A \bar{A}) -\mathfrak{R}\mathfrak{e} (i
     c \partial_{x_2} A \bar{A}) - (1 - | Q_c |^2) | A |^2 +
     2\mathfrak{R}\mathfrak{e}^2 (\overline{Q_c} A) \in L^1 (\mathbbm{R}^2) .
  \]
  For that, we check that, for some $\sigma > 1 / 2$,
  \begin{eqnarray}
    &  & \| (1 + r)^{\sigma} A \|_{L^{\infty} (\mathbbm{R}^2)} + \| (1 +
    r)^{1 + \sigma} (| \nabla A | + | \mathfrak{R}\mathfrak{e} (A) |)
    \|_{L^{\infty} (\mathbbm{R}^2)} \nonumber\\
    & + & \| (1 + r)^{2 + \sigma} \mathfrak{I}\mathfrak{m} (\Delta A)
    \|_{L^{\infty} (\mathbbm{R}^2)} + \| (1 + r)^{1 + \sigma}
    \mathfrak{R}\mathfrak{e} (\Delta A) \|_{L^{\infty} (\mathbbm{R}^2)}
    \nonumber\\
    & < & + \infty .  \label{CP2N216}
  \end{eqnarray}
  For $\partial_{x_1} Q_c$ and $\partial_{x_2} Q_c$, this follows from Theorem
  \ref{CP2Qcbehav}, and, since $L_{Q_c} (\partial_{x_{1, 2}} Q_c) = 0$, from
  \[ \Delta (\partial_{x_{1, 2}} Q_c) = - i c \partial^2_{x_2 x_{1, 2}} Q_c -
     (1 - | Q_c |^2) \partial_{x_{1, 2}} Q_c + 2\mathfrak{R}\mathfrak{e}
     (\overline{Q_c} \partial_{x_{1, 2}} Q_c) Q_c, \]
  which allows to estimate $\Delta (\partial_{x_{1, 2}} Q_c)$ with Theorem
  \ref{CP2Qcbehav}, Lemma \ref{lemme3new} and equation (\ref{CP2222}) for any
  $\sigma > 1 / 2$.
  
  Now, for $\partial_c Q_c$, the estimates not on its Laplacian are a
  consequence of Lemma \ref{CP2dcQcsigma}, Theorem \ref{CP2Qcbehav} and Lemma
  2.6 of {\cite{CP1}}. Then, from Lemma \ref{CP20703L222}, we have $L_{Q_c}
  (\partial_c Q_c) = i \partial_{x_2} Q_c$, thus
  \[ \Delta (\partial_c Q_c) = - i \partial_{x_2} Q_c - i c \partial_{x_2}
     \partial_c Q_c - (1 - | Q_c |^2) \partial_c Q_c +
     2\mathfrak{R}\mathfrak{e} (\overline{Q_c} \partial_c Q_c) Q_c . \]
  By Theorem \ref{CP2Qcbehav} and Lemma \ref{CP2dcQcsigma}, we have, for any
  $\sigma > 1 / 2$,
  \[ | (1 - | Q_c |^2) \partial_c Q_c | + | 2\mathfrak{R}\mathfrak{e}
     (\overline{Q_c} \partial_c Q_c) Q_c | \leqslant \frac{K (c, \sigma)}{(1 +
     r)^{2 + \sigma}}, \]
  \[ | \partial_{x_2} Q_c | + | \partial_{x_2} \partial_c Q_c | \leqslant
     \frac{K (c, \sigma)}{(1 + r)^{1 + \sigma}} \]
  and
  \[ | \mathfrak{R}\mathfrak{e} (\partial_{x_2} Q_c) | + |
     \mathfrak{R}\mathfrak{e} (\partial_{x_2} \partial_c Q_c) | \leqslant
     \frac{K (c, \sigma)}{(1 + r)^{2 + \sigma}}, \]
  which is enough to show the estimates for $\partial_c Q_c$.
  
  Finally, from Lemma \ref{CP2a} we recall that
  \[ \partial_{c^{\bot}} Q_c = - x^{\bot} . \nabla Q_c (x) \]
  and
  \[ L_{Q_c} (\partial_{c^{\bot}} Q_c) = - i c \partial_{x_1} Q_c . \]
  Similarly, the estimates not on its Laplacian follow from Theorem
  \ref{CP2Qcbehav}, Lemmas \ref{lemme3new} and \ref{CP283L33} and equation
  (\ref{CP2222}). We also have
  \[ \Delta (\partial_{c^{\bot}} Q_c) = i c \partial_{x_1} Q_c - i c
     \partial_{x_2} \partial_{c^{\bot}} Q_c - (1 - | Q_c |^2)
     \partial_{c^{\bot}} Q_c + 2\mathfrak{R}\mathfrak{e} (\overline{Q_c}
     \partial_{c^{\bot}} Q_c) Q_c, \]
  and with the same previous estimates, we conclude that $\partial_{c^{\bot}}
  Q_c$ satisfies the required estimates. With the definition $\| .
  \|_{H_{Q_c}}$, we check that the last two terms are in $L^1
  (\mathbbm{R}^2)$, and for the first two, the integrands are in $L^1
  (\mathbbm{R}^2, \mathbbm{R})$ by estimates in subsections \ref{CP2Nvor} and
  (\ref{CP2N216}).

  \begin{tmindent}
    Step 1.  We have $\langle L_{Q_c} (\partial_{x_1} Q_c), \partial_{x_1} Q_c
    \rangle = \langle L_{Q_c} (\partial_{x_2} Q_c), \partial_{x_2} Q_c \rangle
    = 0$.
  \end{tmindent}

  From Lemma \ref{CP20703L222}, we have $L_{Q_c} (\partial_{x_1} Q_c) =
  L_{Q_c} (\partial_{x_2} Q_c) = 0$, hence
  \[ \langle L_{Q_c} (\partial_{x_1} Q_c), \partial_{x_1} Q_c \rangle =
     \langle L_{Q_c} (\partial_{x_2} Q_c), \partial_{x_2} Q_c \rangle = 0. \]
  
  \begin{tmindent}
    Step 2.  We have $\langle L_{Q_c} (\partial_c Q_c), \partial_c Q_c \rangle
    = \frac{- 2 \pi + o_{c \rightarrow 0} (1)}{c^2}$.
  \end{tmindent}

  From Lemma \ref{CP20703L222}, we have
  \[ L_{Q_c} (\partial_c Q_c) = i \partial_{x_2} Q_c, \]
  therefore
  \begin{equation}
    \langle L_{Q_c} (\partial_c Q_c), \partial_c Q_c \rangle = \langle i
    \partial_{x_2} Q_c, \partial_c Q_c \rangle . \label{CP21239312}
  \end{equation}
  From Lemma \ref{CP2dcQcsigma}, we can write $\partial_c Q_c = - \left(
  \frac{1 + o_{c \rightarrow 0} (1)}{c^2} \right) \partial_d V_{| d = d_c
  \nobracket} + h$ with $\left\| \frac{h}{V} \right\|_{\sigma, d_c} = o_{c
  \rightarrow 0} \left( \frac{1}{c^2} \right) .$ Similarly, from Lemma
  \ref{CP283L33}, we write $Q_c = V + \Gamma_c$ with $\left\|
  \frac{\Gamma_c}{V} \right\|_{\sigma, d_c} = o_{c \rightarrow 0} (1)$, and we
  compute
  \begin{eqnarray}
    \langle L_{Q_c} (\partial_c Q_c), \partial_c Q_c \rangle & = &
    \left\langle i \partial_{x_2} V, - \left( \frac{1 + o_{c \rightarrow 0}
    (1)}{c^2} \right) \partial_d V_{| d = d_c \nobracket} \right\rangle +
    \langle i \partial_{x_2} V, h \rangle \nonumber\\
    & + & \left\langle i \partial_{x_2} \Gamma_c, - \left( \frac{1 + o_{c
    \rightarrow 0} (1)}{c^2} \right) \partial_d V_{| d = d_c \nobracket}
    \right\rangle + \langle i \partial_{x_2} \Gamma_c, h \rangle . 
    \label{CP283232}
  \end{eqnarray}
  By symmetry in $x_1$ of $V$, we compute
  \[ \langle i \partial_{x_2} V, \partial_d V_{| d = d_c \nobracket} \rangle =
     - 2 \langle i \partial_{x_2} V_1 V_{- 1}, \partial_{x_1} V_1 V_{- 1}
     \rangle + 2 \langle i \partial_{x_2} V_1 V_{- 1}, \partial_{x_1} V_{- 1}
     V_1 \rangle . \]
  In equation (2.43) of {\cite{CP1}}, we computed
  \[ \langle i \partial_{x_2} V_1 V_{- 1}, \partial_{x_1} V_1 V_{- 1} \rangle
     = - \pi + o_{c \rightarrow 0} (1) . \]
  Furthermore,
  \[ | \langle i \partial_{x_2} V_1 V_{- 1}, \partial_{x_1} V_{- 1} V_1
     \rangle | = \left| \int_{\mathbbm{R}^2} \mathfrak{R}\mathfrak{e} \left( i
     \partial_{x_2} V_1 \overline{V_1}  \overline{\partial_{x_1} V_{- 1}} V_{-
     1} \right) \right| \leqslant \]
  \[ \left| \int_{\mathbbm{R}^2} \mathfrak{R}\mathfrak{e} (\partial_{x_2} V_1
     \overline{V_1} ) \mathfrak{I}\mathfrak{m} \left( \overline{\partial_{x_1}
     V_{- 1}} V_{- 1} \right) \right| + \left| \int_{\mathbbm{R}^2}
     \mathfrak{I}\mathfrak{m} (\partial_{x_2} V_1 \overline{V_1} )
     \mathfrak{R}\mathfrak{e} \left( \overline{\partial_{x_1} V_{- 1}} V_{- 1}
     \right) \right| . \]
  From Lemma \ref{lemme3new}, we have the estimates
  \[ | \mathfrak{R}\mathfrak{e} (\partial_{x_2} V_{- 1} \overline{V_{- 1}} ) |
     \leqslant \frac{K}{(1 + r_{- 1})^3} \quad \tmop{and} \quad \left|
     \mathfrak{R}\mathfrak{e} \left( \overline{\partial_{x_1} V_1} V_1 \right)
     \right| \leqslant \frac{K}{(1 + r_1)^3}, \]
  as well as
  \[ | \mathfrak{I}\mathfrak{m} (\partial_{x_2} V_{- 1} \overline{V_{- 1}} ) |
     \leqslant \frac{K}{1 + r_{- 1}^{}} \quad \tmop{and} \quad \left|
     \mathfrak{I}\mathfrak{m} \left( \overline{\partial_{x_1} V_1} V_1 \right)
     \right| \leqslant \frac{K}{1 + r_1^{}} . \]
  We deduce, in the right half-plane, where $r_{- 1} \geqslant d_c$, that $|
  \mathfrak{I}\mathfrak{m} (\nabla V_{- 1} \overline{V_{- 1}} ) | = o_{c
  \rightarrow 0} (1)$ and thus
  \[ \left| \int_{\{ x_1 \geqslant 0 \}} \mathfrak{R}\mathfrak{e}
     (\partial_{x_2} V_1 \overline{V_1} ) \mathfrak{I}\mathfrak{m} \left(
     \overline{\partial_{x_1} V_{- 1}} V_{- 1} \right) \right| \leqslant o_{c
     \rightarrow 0} (1) \int_{\{ x_1 \geqslant 0 \}} \frac{1}{(1 + r_1)^3} =
     o_{c \rightarrow 0} (1) . \]
  In the left half-plane, we have $\frac{1}{1 + r_1^{}} \leqslant \frac{K}{1 +
  r_{- 1}^{}}$ and $\frac{1}{1 + r_1^{}} = o_{c \rightarrow 0} (1)$, therefore
  \[ \left| \int_{\{ x_1 \leqslant 0 \}} \mathfrak{R}\mathfrak{e}
     (\partial_{x_2} V_1 \overline{V_1} ) \mathfrak{I}\mathfrak{m} \left(
     \overline{\partial_{x_1} V_{- 1}} V_{- 1} \right) \right| \leqslant o_{c
     \rightarrow 0} (1) \int_{\{ x_1 \leqslant 0 \}} \frac{1}{(1 + r_{- 1})^3}
     = o_{c \rightarrow 0} (1) . \]
  We therefore have
  \[ \left| \int_{\mathbbm{R}^2} \mathfrak{R}\mathfrak{e} (\partial_{x_2} V_1
     \overline{V_1} ) \mathfrak{I}\mathfrak{m} \left( \overline{\partial_{x_1}
     V_{- 1}} V_{- 1} \right) \right| = o_{c \rightarrow 0} (1), \]
  and by similar estimates,
  \[ \left| \int_{\mathbbm{R}^2} \mathfrak{I}\mathfrak{m} (\partial_{x_2} V_1
     \overline{V_1} ) \mathfrak{R}\mathfrak{e} \left( \overline{\partial_{x_1}
     V_{- 1}} V_{- 1} \right) \right| = o_{c \rightarrow 0} (1) . \]
  We can thus conclude that $\langle i \partial_{x_2} V_1 V_{- 1},
  \partial_{x_1} V_{- 1} V_1 \rangle = o_{c \rightarrow 0} (1) .$ Therefore,
  \begin{equation}
    \left( \frac{1 + o_{c \rightarrow 0} (1)}{c^2} \right) \langle i
    \partial_{x_2} V, - \partial_d V_{| d = d_c \nobracket} \rangle = \frac{-
    2 \pi}{c^2} + o \left( \frac{1}{c^2} \right) . \label{CP283233}
  \end{equation}
  Now, we estimate
  \begin{eqnarray*}
    | \langle i \partial_{x_2} V, h \rangle | & = & \left|
    \int_{\mathbbm{R}^2} \mathfrak{R}\mathfrak{e} (i \partial_{x_2} V \bar{h})
    \right|\\
    & \leqslant & o_{c \rightarrow 0} (1) + \left| \int_{\{ \tilde{r}
    \geqslant 1 \}} \mathfrak{R}\mathfrak{e} (i \partial_{x_2} V \bar{h})
    \right|\\
    & \leqslant & o_{c \rightarrow 0} (1) + \left| \int_{\{ \tilde{r}
    \geqslant 1 \}} \mathfrak{R}\mathfrak{e} \left( i \partial_{x_2} V \bar{V}
    \overline{\left( \frac{h}{V} \right)} \right) \right|
  \end{eqnarray*}
  because $\| h \|_{L^{\infty}} = o_{c \rightarrow 0} (1)$ and $|
  \partial_{x_2} V |$ is bounded near $\tilde{d}_c$ by a universal constant.
  Furthermore,
  \[ \left| \int_{\{ \tilde{r} \geqslant 1 \}} \mathfrak{R}\mathfrak{e} \left(
     i \partial_{x_2} V \bar{V} \overline{\left( \frac{h}{V} \right)} \right)
     \right| \leqslant \left| \int_{\{ \tilde{r} \geqslant 1 \}}
     \mathfrak{R}\mathfrak{e} (\partial_{x_2} V \bar{V})
     \mathfrak{I}\mathfrak{m} \left( \frac{h}{V} \right) \right| + \left|
     \int_{\{ \tilde{r} \geqslant 1 \}} \mathfrak{I}\mathfrak{m}
     (\partial_{x_2} V \bar{V}) \mathfrak{R}\mathfrak{e} \left( \frac{h}{V}
     \right) \right| . \]
  From Lemmas \ref{lemme3new} and \ref{CP2dcQcsigma} (taking $\sigma = 1 /
  2$), we have
  \[ \left| \int_{\{ \tilde{r} \geqslant 1 \}} \mathfrak{R}\mathfrak{e}
     (\partial_{x_2} V \bar{V}) \mathfrak{I}\mathfrak{m} \left( \frac{h}{V}
     \right) \right| \leqslant K \left\| \frac{h}{V} \right\|_{1 / 2, d_c}
     \int_{\{ \tilde{r} \geqslant 1 \}} \frac{1}{(1 + \tilde{r})^{3 + 1 / 2}}
     = o_{c \rightarrow 0} \left( \frac{1}{c^2} \right) \]
  and
  \[ \left| \int_{\{ \tilde{r} \geqslant 1 \}} \mathfrak{I}\mathfrak{m}
     (\partial_{x_2} V \bar{V}) \mathfrak{R}\mathfrak{e} \left( \frac{h}{V}
     \right) \right| \leqslant K \left\| \frac{h}{V} \right\|_{1 / 2, d_c}
     \int_{\{ \tilde{r} \geqslant 1 \}} \frac{1}{(1 + \tilde{r})^{2 + 1 / 2}}
     = o_{c \rightarrow 0} \left( \frac{1}{c^2} \right), \]
  therefore
  \begin{equation}
    | \langle i \partial_{x_2} V, h \rangle | = o_{c \rightarrow 0} \left(
    \frac{1}{c^2} \right) .
  \end{equation}
  Now, by Lemmas \ref{lemme3new} and \ref{CP283L33} (taking $\sigma = 1 / 2$),
  we have
  \begin{equation}
    \left( \frac{1 + o_{c \rightarrow 0} (1)}{c^2} \right) | \langle i
    \partial_{x_2} \Gamma_c, \partial_d V_{| d = d_c \nobracket} \rangle |
    \leqslant \frac{K}{c^2} \left\| \frac{\Gamma_c}{V} \right\|_{1 / 2, d_c}
    \int_{\mathbbm{R}^2} \frac{1}{(1 + \tilde{r})^{2 + 1 / 2}} = o_{c
    \rightarrow 0} \left( \frac{1}{c^2} \right) .
  \end{equation}
  Finally, by Lemmas \ref{CP283L33} and \ref{CP2dcQcsigma}, we check easily
  that
  \begin{equation}
    | \langle i \partial_{x_2} \Gamma_c, h \rangle | \leqslant K \left\|
    \frac{\Gamma_c}{V} \right\|_{3 / 4, d_c} \left\| \frac{h}{V} \right\|_{1 /
    2, d_c} \int_{\mathbbm{R}^2} \frac{1}{(1 + \tilde{r})^{2 + 1 / 4}} = o_{c
    \rightarrow 0} \left( \frac{1}{c^2} \right) . \label{CP283236}
  \end{equation}
  Combining (\ref{CP283233}) to (\ref{CP283236}) in (\ref{CP283232}), we
  conclude that
  \[ \langle L_{Q_c} (\partial_c Q_c), \partial_c Q_c \rangle = \frac{- 2 \pi
     + o_{c \rightarrow 0} (1)}{c^2} . \]
  
  \begin{tmindent}
    Step 3.  We have $\langle L_{Q_c} (\partial_{c^{\bot}} Q_c),
    \partial_{c^{\bot}} Q_c \rangle = 2 \pi + o_{c \rightarrow 0} (1)$.
  \end{tmindent}

  From Lemma \ref{CP20703L222}, we have $L_{Q_c} (\partial_{c^{\bot}} Q_c) = -
  i c \partial_{x_1} Q_c$ and from Lemma \ref{CP2a}, we have
  $\partial_{c^{\bot}} Q_c = - x^{\bot} . \nabla Q_c$. Therefore,
  \[ \langle L_{Q_c} (\partial_{c^{\bot}} Q_c), \partial_{c^{\bot}} Q_c
     \rangle = c \langle i \partial_{x_1} Q_c, x^{\bot} . \nabla Q_c \rangle .
  \]
  We have
  \[ \langle i \partial_{x_1} Q_c, - x_2 \partial_{x_1} Q_c \rangle = -
     \int_{\mathbbm{R}^2} \mathfrak{R}\mathfrak{e} (i x_2 | \partial_{x_1} Q_c
     |^2) = 0, \]
  hence
  \begin{equation}
    \langle L_{Q_c} (\partial_{c^{\bot}} Q_c), \partial_{c^{\bot}} Q_c \rangle
    = c \langle i \partial_{x_1} Q_c, x_1 \partial_{x_2} Q_c \rangle .
    \label{CP2123317}
  \end{equation}
  From Lemma \ref{CP283L33}, we write $Q_c = V + \Gamma_c$ with $\left\|
  \frac{\Gamma_c}{V} \right\|_{\sigma, d_c} \leqslant K (\sigma) c^{1 -
  \sigma}$ for any $0 < \sigma < 1$, and we compute
  \[ \langle i \partial_{x_1} Q_c, x_1 \partial_{x_2} Q_c \rangle = \langle i
     \partial_{x_1} V, x_1 \partial_{x_2} V \rangle + \langle i \partial_{x_1}
     V, x_1 \partial_{x_2} \Gamma_c \rangle + \langle i \partial_{x_1}
     \Gamma_c, x_1 \partial_{x_2} V \rangle + \langle i \partial_{x_1}
     \Gamma_c, x_1 \partial_{x_2} \Gamma_c \rangle . \]
  We write $x_1 = d_c + y_1$, therefore
  \[ \langle i \partial_{x_1} V, x_1 \partial_{x_2} V \rangle = d_c \langle i
     \partial_{x_1} V, \partial_{x_2} V \rangle + \langle i \partial_{x_1} V,
     y_1 \partial_{x_2} V \rangle . \]
  We have
  \begin{eqnarray*}
    \langle i \partial_{x_1} V, \partial_{x_2} V \rangle & = & \langle i
    \partial_{x_1} V_1 V_{- 1}, \partial_{x_2} V_1 V_{- 1} \rangle + \langle i
    \partial_{x_1} V_{- 1} V_1, \partial_{x_2} V_{- 1} V_1 \rangle\\
    & + & \langle i \partial_{x_1} V_1 V_{- 1}, \partial_{x_2} V_{- 1} V_1
    \rangle + \langle i \partial_{x_1} V_{- 1} V_1, \partial_{x_2} V_1 V_{- 1}
    \rangle,
  \end{eqnarray*}
  and, from the previous step and by symmetry, we have
  \[ \langle i \partial_{x_1} V_1 V_{- 1}, \partial_{x_2} V_1 V_{- 1} \rangle
     = \langle i \partial_{x_1} V_{- 1} V_1, \partial_{x_2} V_{- 1} V_1
     \rangle = \pi + o_{c \rightarrow 0} (1) \]
  and
  \[ | \langle i \partial_{x_1} V_1 V_{- 1}, \partial_{x_2} V_{- 1} V_1
     \rangle | + | \langle i \partial_{x_1} V_{- 1} V_1, \partial_{x_2} V_1
     V_{- 1} \rangle | = o_{c \rightarrow 0} (1), \]
  thus
  \[ \langle i \partial_{x_1} V, \partial_{x_2} V \rangle = 2 \pi + o_{c
     \rightarrow 0} (1) . \]
  With $V_{\pm 1}$ centered around $\pm d_c \overrightarrow{e_1}$, we write $V
  = V_1 V_{- 1}$ and we compute
  \begin{eqnarray*}
    \langle i \partial_{x_1} V, y_1 \partial_{x_2} V \rangle & = &
    \int_{\mathbbm{R}^2} \mathfrak{R}\mathfrak{e} \left( i y_1 \partial_{x_1}
    V_1 \overline{\partial_{x_2} V_1} | V_{- 1} |^2 + i y_1 \partial_{x_1}
    V_{- 1} \overline{\partial_{x_2} V_{- 1}} | V_1 |^2 \right)\\
    & + & \int_{\mathbbm{R}^2} \mathfrak{R}\mathfrak{e} \left( i y_1
    \partial_{x_1} V_1 \overline{V_1} \overline{\overline{V_{- 1}}
    \partial_{x_2} V_{- 1}} + i y_1 \partial_{x_1} V_{- 1} \overline{V_{- 1}}
    \overline{\overline{V_1} \partial_{x_2} V_1} \right) .
  \end{eqnarray*}
  By decomposition in polar coordinates, with the notation of
  (\ref{CP2223not}) and Lemma \ref{lemme3new}, we compute
  \[ \int_{\mathbbm{R}^2} \mathfrak{R}\mathfrak{e} \left( i y_1 \partial_{x_1}
     V_1 \overline{\partial_{x_2} V_1} | V_{- 1} |^2 \right) = \int_0^{+
     \infty} \int_0^{2 \pi} | V_{- 1} |^2 \rho_1 (r_1) \rho'_1 (r_1) \cos
     (\theta_1)_{} r_1 d r_1 d \theta_1 . \]
  By integration in polar coordinates, we check that
  \[ \int_0^{+ \infty} \int_0^{2 \pi} \rho_1 (r_1) \rho'_1 (r_1) \cos
     (\theta_1)_{} = 0, \]
  hence
  \[ \int_{\mathbbm{R}^2} \mathfrak{R}\mathfrak{e} \left( i y_1 \partial_{x_1}
     V_1 \overline{\partial_{x_2} V_1} | V_{- 1} |^2 \right) =
     \int_{\mathbbm{R}^2} (1 - | V_{- 1} |^2) \mathfrak{R}\mathfrak{e} \left(
     i y_1 \partial_{x_1} V_1 \overline{\partial_{x_2} V_1} \right) . \]
  In particular, since, from Lemma \ref{lemme3new}, we have
  \[ (1 - | V_{- 1} |^2) \leqslant \frac{K}{(1 + r_{- 1})^2} \]
  and
  \[ | \rho'_1 (r_1) | \leqslant \frac{K}{(1 + r_1)^3}, \]
  we can deduce that
  \[ \int_{\mathbbm{R}^2} \mathfrak{R}\mathfrak{e} \left( i y_1
     \partial_{x_1} V_1 \overline{\partial_{x_2} V_1} | V_{- 1} |^2 \right) =
     o_{c \rightarrow 0} (1) \]
  and, similarly,
  \[ \int_{\mathbbm{R}^2} \mathfrak{R}\mathfrak{e} \left( i y_1 \partial_{x_1}
     V_{- 1} \overline{\partial_{x_2} V_{- 1}} | V_1 |^2 \right) = o_{c
     \rightarrow 0} (1) . \]
  Therefore, we conclude that
  \[ \langle i \partial_{x_1} V, x_1 \partial_{x_2} V \rangle = (2 \pi + o_{c
     \rightarrow 0} (1)) \tilde{d}_c = \frac{2 \pi + o_{c \rightarrow 0}
     (1)}{c} . \]
  Now, we want to show that
  \[ | \langle i \partial_{x_1} V, x_1 \partial_{x_2} \Gamma_c \rangle | + |
     \langle i \partial_{x_1} \Gamma_c, x_1 \partial_{x_2} V \rangle | + |
     \langle i \partial_{x_1} \Gamma_c, x_1 \partial_{x_2} \Gamma_c \rangle |
     = o_{c \rightarrow 0} \left( \frac{1}{c} \right), \]
  which is enough to end the proof of this step.
  
  By triangular inequality, we have $| x_1 | \leqslant \frac{K (1 +
  \tilde{r})}{c}$, and with Lemmas \ref{lemme3new} and \ref{CP283L33} (for
  $\sigma = 1 / 2$), we estimate
  \begin{eqnarray*}
    | \langle i \partial_{x_1} V, x_1 \partial_{x_2} \Gamma_c \rangle | & = &
    \left| \int_{\mathbbm{R}^2} x_1 \mathfrak{R}\mathfrak{e} (\partial_{x_1} V
    \bar{V}) \mathfrak{I}\mathfrak{m} \left( \overline{\partial_{x_2} \Gamma_c
    \bar{V}} \right) \right| + \left| \int_{\mathbbm{R}^2} x_1
    \mathfrak{I}\mathfrak{m} (\partial_{x_1} V \bar{V})
    \mathfrak{R}\mathfrak{e} \left( \overline{\partial_{x_2} \Gamma_c \bar{V}}
    \right) \right|\\
    & \leqslant & \frac{K}{c} \left( \int_{\mathbbm{R}^2} \frac{(1 +
    \tilde{r})}{(1 + \tilde{r})^3} \times \frac{c^{1 / 2}}{(1 + \tilde{r})^{3
    / 2}} + \frac{(1 + \tilde{r})}{(1 + \tilde{r})} \times \frac{c^{1 / 2}}{(1
    + \tilde{r})^{5 / 2}} \right)\\
    & = & o_{c \rightarrow 0} \left( \frac{1}{c} \right) .
  \end{eqnarray*}
  Similarly, we check with the same computations that $| \langle i
  \partial_{x_1} \Gamma_c, x_1 \partial_{x_2} V \rangle | = o_{c \rightarrow
  0} \left( \frac{1}{c} \right)$.
  
  Finally, using Lemma \ref{CP283L33} (for $\sigma = 1 / 4$), we estimate
  \[ | \langle i \partial_{x_1} \Gamma_c, x_1 \partial_{x_2} \Gamma_c \rangle
     | \leqslant K c^{3 / 2} \| x_1 \|_{L^{\infty} (\{ \tilde{r} \leqslant 1
     \})} + K \left| \int_{\{ \tilde{r} \geqslant 1 \}}
     \mathfrak{R}\mathfrak{e} \left( i x_1 \frac{\partial_{x_1} \Gamma_c}{V}
     \overline{\frac{\partial_{x_2} \Gamma_c}{V}} \right) \right| . \]
  We have $\| x_1 \|_{L^{\infty} (\{ \tilde{r} \leqslant 1 \})} \leqslant
  \frac{K}{c}$. Moreover, we infer
  \begin{eqnarray*}
    \left| \int_{\{ \tilde{r} \geqslant 1 \}} \mathfrak{R}\mathfrak{e} \left(
    i x_1 \frac{\partial_{x_1} \Gamma_c}{V} \overline{\frac{\partial_{x_2}
    \Gamma_c}{V}} \right) \right| & \leqslant & \int_{\{ \tilde{r} \geqslant 1
    \}} | x_1 | \left| \mathfrak{R}\mathfrak{e} \left( \frac{\partial_{x_1}
    \Gamma_c}{V} \right) \mathfrak{I}\mathfrak{m} \left( \frac{\partial_{x_2}
    \Gamma_c}{V} \right) \right|\\
    & + & \int_{\{ \tilde{r} \geqslant 1 \}} | x_1 | \left|
    \mathfrak{I}\mathfrak{m} \left( \frac{\partial_{x_1} \Gamma_c}{V} \right)
    \mathfrak{R}\mathfrak{e} \left( \frac{\partial_{x_2} \Gamma_c}{V} \right)
    \right|,
  \end{eqnarray*}
  and, with Lemma \ref{CP283L33} (for $\sigma = 1 / 4$), we have
  \[ \left| \int_{\{ \tilde{r} \geqslant 1 \}} \mathfrak{R}\mathfrak{e}
     \left( i x_1 \frac{\partial_{x_1} \Gamma_c}{V}
     \overline{\frac{\partial_{x_2} \Gamma_c}{V}} \right) \right| \leqslant K
     \int_{\{ \tilde{r} \geqslant 1 \}} | x_1 | \frac{c^{3 / 2}}{(1 +
     \tilde{r})^{3 + 1 / 2}} = o_{c \rightarrow 0} (1), \]
  since $\frac{| x_1 | c}{(1 + \tilde{r})} \leqslant K$ by triangular
  inequality. We conclude that
  \[ \langle i \partial_{x_1} \Gamma_c, x_1 \partial_{x_2} \Gamma_c \rangle =
     o_{c \rightarrow 0} (1), \]
  which, together with the previous estimates, shows that
  \[ \langle L_{Q_c} (\partial_{c^{\bot}} Q_c), \partial_{c^{\bot}} Q_c
     \rangle = 2 \pi + o_{c \rightarrow 0} (1) . \]
\end{proof}

These quantities are connected to the energy and momentum. This is shown in
this next lemma.

\begin{lemma}
  \label{CP2cor212N}There exists $c_0 > 0$ such that for $0 < c < c_0$, $Q_c$
  defined in Theorem \ref{th1}, we have
  \[ P_1 (Q_c) = \partial_c P_1 (Q_c) = 0, \]
  \[ P_2 (Q_c) = \frac{1}{c} B_{Q_c} (\partial_{c^{\bot}} Q_c) = \frac{2 \pi +
     o_{c \rightarrow 0} (1)}{c} \]
  and
  \[ \partial_c P_2 (Q_c) = B_{Q_c} (\partial_c Q_c) = \frac{- 2 \pi + o_{c
     \rightarrow 0} (1)}{c^2} . \]
  Furthermore,
  \[ \partial_c E (Q_c) = c \partial_c P_2 (Q_c), \]
  and
  \[ E (Q_c) = (2 \pi + o_{c \rightarrow 0} (1)) \ln \left( \frac{1}{c}
     \right) . \]
\end{lemma}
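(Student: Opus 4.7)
The plan is to deduce the five claims from the linearization identities of Lemma \ref{CP20703L222}, the symmetry (\ref{CP2sym}) of $Q_c$ (inherited by $\partial_c Q_c$), a Pohozaev-type identity, and the standard Hamiltonian structure of $(\tmop{TW}_c)$; all the integrations by parts at infinity are justified by the $c$-dependent decay from Theorem \ref{CP2Qcbehav}. The first claim is immediate: since $Q_c(-x_1,x_2) = Q_c(x_1,x_2)$, the function $\partial_{x_1} Q_c$ is odd in $x_1$ while $Q_c - 1$ and $\partial_c Q_c$ are even in $x_1$, so the integrands defining $P_1 (Q_c)$ and $\partial_c P_1 (Q_c)$ are odd in $x_1$ and integrate to $0$ (the differentiation under the integral is legal by the $C^1$ regularity in $X_p$ from Theorem \ref{th1}).

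For $\partial_c P_2 (Q_c) = B_{Q_c}(\partial_c Q_c)$, I would differentiate $P_2(Q_c) = \frac{1}{2} \langle i \partial_{x_2} Q_c, Q_c - 1 \rangle$ in $c$ and then use an integration by parts in $x_2$ to see the two resulting terms as equal (i.e., $i \partial_{x_2}$ is symmetric on this class, thanks to Theorem \ref{CP2Qcbehav}). This yields $\partial_c P_2 (Q_c) = \langle i \partial_{x_2} Q_c, \partial_c Q_c \rangle$, and Lemma \ref{CP20703L222} turns the right-hand side into $\langle L_{Q_c}(\partial_c Q_c), \partial_c Q_c \rangle = B_{Q_c}(\partial_c Q_c)$; the explicit value $(-2\pi + o_{c \rightarrow 0}(1))/c^2$ is then read from Lemma \ref{CP2nend}.

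For $c P_2 (Q_c) = B_{Q_c}(\partial_{c^{\bot}} Q_c)$, I would use $\partial_{c^{\bot}} Q_c = x_2 \partial_{x_1} Q_c - x_1 \partial_{x_2} Q_c$ (Lemma \ref{CP2a}) and $L_{Q_c}(\partial_{c^{\bot}} Q_c) = - i c \partial_{x_1} Q_c$. Expanding, the $x_2$-piece drops because $\int x_2 \mathfrak{R}\mathfrak{e}(i | \partial_{x_1} Q_c |^2) = 0$, leaving $c \langle i \partial_{x_1} Q_c, x_1 \partial_{x_2} Q_c \rangle$. The identification with $c P_2 (Q_c)$ comes from the divergence identity
\[ 2 \mathfrak{I}\mathfrak{m}(\partial_{x_1} Q_c \overline{\partial_{x_2} Q_c}) = \partial_{x_1} \mathfrak{I}\mathfrak{m}((Q_c - 1) \overline{\partial_{x_2} Q_c}) - \partial_{x_2} \mathfrak{I}\mathfrak{m}((Q_c - 1) \overline{\partial_{x_1} Q_c}), \]
multiplied by $x_1$: the $\partial_{x_2}$-term integrates to zero in $x_2$, while the $\partial_{x_1}$-term, integrated by parts in $x_1$, produces $- \int \mathfrak{I}\mathfrak{m}((Q_c - 1) \overline{\partial_{x_2} Q_c}) = 2 P_2 (Q_c)$. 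Combined with Lemma \ref{CP2nend} this gives $c P_2 (Q_c) = 2 \pi + o_{c \rightarrow 0}(1)$.

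Finally, for the Hamiltonian identity $\partial_c E (Q_c) = c \partial_c P_2 (Q_c)$, I would differentiate $E(Q_c)$ in $c$ (allowed by Theorem \ref{th1}), integrate by parts, and replace $- \Delta Q_c - (1 - | Q_c |^2) Q_c$ by $i c \partial_{x_2} Q_c$ using $(\tmop{TW}_c)(Q_c) = 0$, obtaining $\partial_c E(Q_c) = c \langle i \partial_{x_2} Q_c, \partial_c Q_c \rangle = c \partial_c P_2 (Q_c)$. The equivalent $E(Q_c) = (2 \pi + o_{c \rightarrow 0}(1)) \ln(1/c)$ then follows by integrating $\partial_c E(Q_c) = (- 2 \pi + o_{c \rightarrow 0}(1))/c$ on $[c, c_0]$. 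The main technical obstacle throughout is to show that the boundary contributions on circles $\{ | x | = R \}$ vanish as $R \rightarrow \infty$, especially in the Pohozaev identity above where the weight $x_1$ grows; this is precisely the role of Theorem \ref{CP2Qcbehav}, which gives $| x_1 | | Q_c - 1 | | \nabla Q_c | = O(1/R^2)$ on such circles, making every boundary term harmless.
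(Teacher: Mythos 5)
Your proposal is correct and follows essentially the same route as the paper: symmetry in $x_1$ for $P_1$, differentiation under the integral plus an integration by parts in $x_2$ to get $\partial_c P_2(Q_c)=\langle i\partial_{x_2}Q_c,\partial_c Q_c\rangle=B_{Q_c}(\partial_c Q_c)$, the substitution $-\Delta Q_c-(1-|Q_c|^2)Q_c=ic\partial_{x_2}Q_c$ for the energy identity, and integration in $c$ for the equivalent of $E(Q_c)$; your divergence-form (Pohozaev-type) identity for $c\langle i\partial_{x_1}Q_c,x_1\partial_{x_2}Q_c\rangle$ is just the paper's two successive integrations by parts packaged into one step, with the boundary terms killed by Theorem \ref{CP2Qcbehav} exactly as you say. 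The only blemish is a sign label in that step (one finds $-\int_{\mathbbm{R}^2}\mathfrak{I}\mathfrak{m}((Q_c-1)\overline{\partial_{x_2}Q_c})=-2P_2(Q_c)$, not $+2P_2(Q_c)$, which combined with $\mathfrak{R}\mathfrak{e}(i\,\cdot)=-\mathfrak{I}\mathfrak{m}(\cdot)$ still yields the stated positive identity $cP_2(Q_c)=B_{Q_c}(\partial_{c^{\bot}}Q_c)$), so nothing in the argument actually breaks.
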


\begin{proof}
  We have
  \[ P_1 (Q_c) = \frac{1}{2} \langle i \partial_{x_1} Q_c, Q_c - 1 \rangle, \]
  by the symmetries (\ref{CP2sym}), $\partial_{x_1} Q_c$ is odd in $x_1$ and
  $Q_c - 1$ is even. Therefore,
  \[ P_1 (Q_c) = \partial_c P_1 (Q_c) = 0. \]
  We have
  \[ P_2 (Q_c) = \frac{1}{2} \langle i \partial_{x_2} Q_c, Q_c - 1 \rangle,
  \]
  and from Lemma \ref{CP2nend} and (\ref{CP2123317}), we have
  \[ 2 \pi + o_{c \rightarrow 0} (1) = B_{Q_c} (\partial_{c^{\bot}} Q_c) = c
     \langle i \partial_{x_1} Q_c, x_1 \partial_{x_2} Q_c \rangle . \]
  By integration by parts (which can be done thanks to Theorem
  \ref{CP2Qcbehav}, Lemma \ref{lemme3new} and equation (\ref{CP2222})), we
  compute
  \[ \langle i \partial_{x_1} Q_c, x_1 \partial_{x_2} Q_c \rangle = - \langle
     i (Q_c - 1), \partial_{x_2} Q_c \rangle - \langle i (Q_c - 1), x_1
     \partial_{x_1 x_2} Q_c \rangle, \]
  and
  \[ \langle i (Q_c - 1), x_1 \partial_{x_1 x_2} Q_c \rangle = - \langle i
     \partial_{x_2} Q_c, x_1 \partial_{x_1} Q_c \rangle = \langle i
     \partial_{x_1} Q_c, x_1 \partial_{x_2} Q_c \rangle . \]
  Therefore,
  \[ P_2 (Q_c) = \frac{1}{2} \langle i \partial_{x_1} Q_c, x_1 \partial_{x_2}
     Q_c \rangle = \frac{1}{c} B_{Q_c} (\partial_{c^{\bot}} Q_c) = \frac{2 \pi
     + o_{c \rightarrow 0} (1)}{c} . \]
  We have $P_2 (Q_c) = \frac{1}{2} \int_{\mathbbm{R}^2}
  \mathfrak{R}\mathfrak{e} (i \partial_{x_2} Q_c (\overline{Q_c} - 1))$, and
  we check that, with Lemmas \ref{CP283L33} and \ref{CP2dcQcsigma} that
  \[ | \partial_c \partial_{x_2} Q_c (\overline{Q_c} - 1) | + | \partial_{x_2}
     Q_c \partial_c \overline{Q_c} | \leqslant \frac{K}{(1 + \tilde{r})^{5 /
     2}}, \]
  and is therefore dominated by an integrable function independent of $c \in]
  c_1, c_2 [$ given that $c_1, c_2 > 0$ are small enough. We deduce that $c
  \mapsto P_2 (Q_c) \in C^1 (] 0, c_0 [, \mathbbm{R})$ for some small $c_0 >
  0$ and that, by integration by parts,
  \[ 2 \partial_c P_2 (Q_c) = \langle i \partial_{x_2} \partial_c Q_c, Q_c - 1
     \rangle + \langle i \partial_{x_2} Q_c, \partial_c Q_c \rangle = 2
     \langle i \partial_{x_2} Q_c, \partial_c Q_c \rangle, \]
  and, from Lemma \ref{CP2nend} and equation (\ref{CP21239312}), we have
  \[ \langle i \partial_{x_2} Q_c, \partial_c Q_c \rangle = B_{Q_c}
     (\partial_c Q_c) = \frac{- 2 \pi + o_{c \rightarrow 0} (1)}{c^2}, \]
  therefore
  \[ \partial_c P_2 (Q_c) = \frac{- 2 \pi + o_{c \rightarrow 0} (1)}{c^2} . \]

  We recall that
  \[ E (Q_c) = \frac{1}{2} \int_{\mathbbm{R}^2} | \nabla Q_c |^2 + \frac{1}{4}
     \int_{\mathbbm{R}^2} (1 - | Q_c |^2)^2 . \]
  We check with Lemmas \ref{CP283L33}, \ref{CP2dcQcsigma} that
  \[ | \partial_c \nabla Q_c . \overline{\nabla Q_c} | + | \partial_c (| Q_c
     |^2) (1 - | Q_c |^2) | \leqslant \frac{K}{(1 + \tilde{r})^{5 / 2}} \]
  and is therefore dominated by an integrable function independent of $c \in]
  c_1, c_2 [$ given that $c_1, c_2 > 0$ are small enough. We deduce that $c
  \mapsto E (Q_c) \in C^1 (] 0, c_0 [, \mathbbm{R})$ for some small $c_0 > 0$
  and that,
  \[ \partial_c \left( \frac{1}{2} \int_{\mathbbm{R}^2} | \nabla Q_c |^2
     \right) = \frac{1}{2} \int_{\mathbbm{R}^2} \mathfrak{R}\mathfrak{e}
     (\nabla Q_c \overline{\nabla \partial_c Q_c}) +\mathfrak{R}\mathfrak{e}
     (\nabla \partial_c Q_c \overline{\nabla Q_c}) . \]
  We check, with Theorem \ref{CP2Qcbehav} and $(\tmop{TW}_c) (Q_c) = 0$, that
  we can do the integration by parts, which yields
  \[ \partial_c \left( \frac{1}{2} \int_{\mathbbm{R}^2} | \nabla Q_c |^2
     \right) = \langle - \Delta Q_c, \partial_c Q_c \rangle . \]
  We check similarly that
  \[ \partial_c \left( \frac{1}{4} \int_{\mathbbm{R}^2} (1 - | Q_c |^2)^2
     \right) = - \int_{\mathbbm{R}^2} (1 - | Q_c |^2) \mathfrak{R}\mathfrak{e}
     (\partial_c Q_c \overline{Q_c}), \]
  hence
  \[ \partial_c \left( \frac{1}{4} \int_{\mathbbm{R}^2} (1 - | Q_c |^2)^2
     \right) = \langle - (1 - | Q_c |^2) Q_c, \partial_c Q_c \rangle . \]
  Now, since $- i c \partial_{x_2} Q_c - \Delta Q_c - (1 - | Q_c |^2) Q_c =
  0$, we have
  \[ \partial_c E (Q_c) = \langle - \Delta Q_c - (1 - | Q_c |^2) Q_c,
     \partial_c Q_c \rangle = c \langle - i \partial_{x_2} Q_c, \partial_c Q_c
     \rangle . \]
  Now, since $P_2 (Q_c) = \frac{1}{2} \langle i \partial_{x_2} Q_c, Q_c - 1
  \rangle$, we have
  \[ \partial_c P_2 (Q_c) = \frac{1}{2} (\langle i \partial_{x_2} \partial_c
     Q_c, Q_c - 1 \rangle + \langle i \partial_{x_2} Q_c, \partial_c Q_c
     \rangle) . \]
  By integrations by parts, we compute
  \[ \partial_c P_2 (Q_c) = \langle - i \partial_{x_2} Q_c, \partial_c Q_c
     \rangle . \]
  We deduce that $\partial_c E (Q_c) = c \partial_c P_2 (Q_c)$, and in
  particular, we deduce that
  \[ \partial_c E (Q_c) = \frac{- 2 \pi + o_{c \rightarrow 0} (1)}{c} . \]
  By integration (from some fixed $c_0 > c > 0$), we check that $E (Q_c) = (2
  \pi + o_{c \rightarrow 0} (1)) \ln \left( \frac{1}{c} \right)$.
\end{proof}

We conclude this subsection with an estimate on $Q_c$ connected to the energy
that will be useful later on.

\begin{lemma}
  \label{CP293L37}There exists $K > 0$, a universal constant independent of
  $c$, such that, if $c$ is small enough, for $Q_c$ defined in Theorem
  \ref{th1},
  \[ \int_{\mathbbm{R}^2} \frac{| \mathfrak{I}\mathfrak{m} (\nabla Q_c
     \overline{Q_c}) |^2}{| Q_c |^2} \leqslant K \ln \left( \frac{1}{c}
     \right) . \]
\end{lemma}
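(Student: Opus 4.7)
The plan is very short, since this bound reduces to a pointwise inequality followed by the energy estimate already obtained in Lemma \ref{CP2cor212N}. The main observation is that $|\mathfrak{I}\mathfrak{m}(\nabla Q_c \overline{Q_c})|^2/|Q_c|^2 \leqslant |\nabla Q_c|^2$ wherever $Q_c \neq 0$, and by Proposition \ref{CP2prop5} the set $\{Q_c = 0\}$ consists of only the two points $\pm \widetilde{d_c}\overrightarrow{e_1}$, hence has Lebesgue measure zero.

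More precisely, I would first invoke the elementary algebraic inequality: for any two complex numbers $z,w$ with $w \neq 0$, one has $|\mathfrak{I}\mathfrak{m}(z \bar{w})| \leqslant |z \bar{w}| = |z||w|$. Applied componentwise to $(\partial_{x_1} Q_c, \partial_{x_2} Q_c)$ paired with $Q_c$, this gives
\[
|\mathfrak{I}\mathfrak{m}(\nabla Q_c \overline{Q_c})|^2 = \sum_{j=1,2} |\mathfrak{I}\mathfrak{m}(\partial_{x_j} Q_c \overline{Q_c})|^2 \leqslant \sum_{j=1,2} |\partial_{x_j} Q_c|^2 |Q_c|^2 = |\nabla Q_c|^2 |Q_c|^2.
\]
Dividing by $|Q_c|^2$ on $\{Q_c \neq 0\}$ and integrating (the integrand has removable singularities at the two isolated simple zeros of $Q_c$, since there $|Q_c|$ vanishes linearly while $|\nabla Q_c|$ stays bounded by Lemma \ref{lemme3new} and Lemma \ref{CP283L33}), I obtain
\[
\int_{\mathbb{R}^2} \frac{|\mathfrak{I}\mathfrak{m}(\nabla Q_c \overline{Q_c})|^2}{|Q_c|^2} \leqslant \int_{\mathbb{R}^2} |\nabla Q_c|^2.
\]

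Finally, by the very definition of the Ginzburg--Landau energy,
\[
\int_{\mathbb{R}^2} |\nabla Q_c|^2 \leqslant 2E(Q_c),
\]
and by Lemma \ref{CP2cor212N} one has $E(Q_c) = (2\pi + o_{c \to 0}(1))\ln(1/c)$. Hence, for $c$ small enough, one can fix a universal constant $K > 0$ (for instance $K = 5\pi$) so that the desired inequality holds. There is no real difficulty here: the bound is a direct consequence of the Cauchy--Schwarz-type pointwise estimate together with the energy estimate already in hand.
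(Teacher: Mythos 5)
Your proof is correct and follows essentially the same route as the paper: the pointwise bound $|\mathfrak{I}\mathfrak{m}(\nabla Q_c \overline{Q_c})|^2 \leqslant |\nabla Q_c|^2 |Q_c|^2$ followed by the energy asymptotic $E(Q_c) = (2\pi + o_{c\to 0}(1))\ln(1/c)$ from Lemma \ref{CP2cor212N}. The only (cosmetic) difference is that the paper splits the integral into the region $\{\tilde r \leqslant 1\}$ near the zeros (where $\nabla Q_c$ is bounded) and its complement (where $|Q_c|$ is bounded below), whereas you apply the pointwise inequality globally, which slightly streamlines the argument.
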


\begin{proof}
  We recall that $r_{\pm 1} = | x \mp d_c \overrightarrow{e_1} |$. Since
  $\nabla Q_c$ is bounded near the zeros of $Q_c$ (by Lemmas \ref{lemme3new}
  and \ref{CP283L33}), and $| Q_c | \geqslant K$ on $\mathbbm{R}^2 \backslash
  B (\pm \widetilde{d_c} \overrightarrow{e_1}, 1)$ by (\ref{CP2Qcpaszero}), we
  have
  \[ \int_{\mathbbm{R}^2} \frac{| \mathfrak{I}\mathfrak{m} (\nabla Q_c
     \overline{Q_c}) |^2}{| Q_c |^2} \leqslant K \left( 1 + \int_{\{ \tilde{r}
     \geqslant 1 \}} | \mathfrak{I}\mathfrak{m} (\nabla Q_c \overline{Q_c})
     |^2 \right) . \]
  Now, by (\ref{CP2Qcpaszero}), Lemma \ref{CP2cor212N} and the definition of
  the energy,
  \[ \int_{\{ \tilde{r} \geqslant 1 \}} | \mathfrak{I}\mathfrak{m} (\nabla Q_c
     \overline{Q_c}) |^2 \leqslant \int_{\{ \tilde{r} \geqslant 1 \}} | \nabla
     Q_c |^2 | Q_c |^2 \leqslant K \int_{\mathbbm{R}^2} | \nabla Q_c |^2
     \leqslant K E (Q_c) \leqslant K \ln \left( \frac{1}{c} \right) . \]
\end{proof}

We could check that this estimate is optimal with respect to its growth in $c$
when $c \rightarrow 0$.

\subsection{Zeros of $Q_c$}\label{CP205s1}

In this subsection, we show that $Q_c$ has only two zeros and we compute
estimates on $Q_c$ around them. In a bounded domain, a general result about
the zeros of solutions to the Ginzburg-Landau problem is already known, see
{\cite{MR1831928}}.

\begin{lemma}
  \label{CP2zerosofQc}For $c > 0$ small enough, the function $Q_c$ defined in
  Theorem \ref{th1} has exactly two zeros. Their positions are $\pm
  \widetilde{d_c} \overrightarrow{e_1}$, and, for any $0 < \sigma < 1$,
  \[ | d_c - \widetilde{d_c} | = o^{\sigma}_{c \rightarrow 0} (c^{1 -
     \sigma}), \]
  where $d_c$ is defined in Theorem \ref{th1}.
\end{lemma}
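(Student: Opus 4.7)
The strategy is to localize zeros with the aid of (\ref{CP2Qcpaszero}) and then run a Banach fixed-point argument near each vortex centre $\pm d_c \overrightarrow{e_1}$.

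By (\ref{CP2Qcpaszero}), every zero of $Q_c$ lies in $B(d_c \overrightarrow{e_1}, 1) \cup B(-d_c \overrightarrow{e_1}, 1)$, so I focus on $B(d_c \overrightarrow{e_1}, 1)$; the other ball is handled by the $x_1 \to -x_1$ symmetry in (\ref{CP2sym}). Writing $Q_c = V + \Gamma_c$, equations (\ref{CP2222405})--(\ref{CP2V-1est}) give $V(y + d_c \overrightarrow{e_1}) = V_1(y)(1 + g_c(y))$ on $B(0,1)$ with $\|g_c\|_{L^\infty} + \|\nabla g_c\|_{L^\infty} = O(c^{1/2})$. Lemma \ref{lemme3new} yields $V_1(y) = \kappa y + O(|y|^3)$ near $0$ with $\kappa > 0$ and $|V_1(y)| \geq (\kappa/2)|y|$ on some fixed $B(0, r_0)$. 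Combining, the real-linear Jacobian $DV(d_c \overrightarrow{e_1})$ equals $\kappa I + O(c^{1/2})$, hence is invertible with inverse of norm $\leq 2/\kappa$ for $c$ small, and $|V(y + d_c \overrightarrow{e_1})| \geq (\kappa/3)|y|$ on $B(0, r_0)$.

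Fix $0 < \sigma < 1$ and set $r_c := C_\sigma c^{1-\sigma}$. On the annulus $\{r_c \leq |y| \leq r_0\}$ one has $|V(y + d_c \overrightarrow{e_1})| \geq (\kappa/3) r_c$ while $|\Gamma_c| \leq K(\sigma) c^{1-\sigma}$ by (\ref{CP2labasique}), so choosing $C_\sigma > 3 K(\sigma)/\kappa$ forces $|Q_c| > 0$ there; on $B(d_c \overrightarrow{e_1}, 1) \setminus B(d_c \overrightarrow{e_1}, r_0)$ the uniform bound $|V| \geq \kappa r_0/4$ against $|\Gamma_c| \to 0$ again gives $|Q_c| > 0$. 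Inside $B(0, r_c)$, set $F(y) := Q_c(y + d_c \overrightarrow{e_1})$ and apply a Banach fixed-point argument to $\Phi(y) := y - DV(d_c \overrightarrow{e_1})^{-1} F(y)$: from $\Phi(0) = - DV(d_c \overrightarrow{e_1})^{-1} \Gamma_c(d_c \overrightarrow{e_1}) = O(c^{1-\sigma})$, and, using (\ref{CP2225}) together with the smoothness of $V$, $\|D\Phi\|_{L^\infty(B(0, r_c))} = O(c^{1/2}) + O(c^{1-\sigma})$, I obtain a unique fixed point $y^* \in \overline{B(0, r_c)}$, i.e.\ the unique zero of $Q_c$ in $B(d_c \overrightarrow{e_1}, 1)$, with $|y^*| \leq K(\sigma) c^{1-\sigma}$.

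The first identity in (\ref{CP2sym}) says that if $(x_1^*, x_2^*)$ is a zero then so is $(x_1^*, -x_2^*)$, so uniqueness of the zero in $B(d_c \overrightarrow{e_1}, 1)$ forces $x_2^* = 0$; set $\tilde{d}_c := d_c + y_1^*$. The second identity in (\ref{CP2sym}) gives the mirror zero at $- \tilde{d}_c \overrightarrow{e_1}$ and excludes any others. Applying the above with an exponent $\sigma_0 \in (0, \sigma)$ yields $|d_c - \tilde{d}_c| \leq K(\sigma_0) c^{1-\sigma_0} = c^{1-\sigma} K(\sigma_0) c^{\sigma - \sigma_0} = o^\sigma_{c \to 0}(c^{1-\sigma})$. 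The main obstacle is the quantitative fixed-point step on the shrinking ball $B(0, r_c)$: both the $C^1$ smallness of $\Gamma_c$ from (\ref{CP2225}) and the $C^1$ proximity of $V_{-1}(\cdot + d_c \overrightarrow{e_1})$ to $1$ from (\ref{CP2222405})--(\ref{CP2V-1est}) are needed to keep $DV(d_c \overrightarrow{e_1})$ uniformly invertible and $\Phi$ contractive.
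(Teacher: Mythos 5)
Your proof is correct, and it reaches the same conclusion by the same underlying mechanism as the paper (non-degeneracy of the vortex zero plus $C^1$-smallness of $\Gamma_c$, then the two symmetries of (\ref{CP2sym}) to place the zeros on the $x_1$-axis and rule out extras), but the implementation differs in two places. For existence and uniqueness of the perturbed zero, the paper introduces the homotopy $F(\mu,z)=(V+\mu\Gamma_c)(z+d_c\vec{e}_1)$ and runs the implicit function theorem in $\mu$ from $0$ to $1$, with a separate continuation argument to show the branch $z(\mu)$ reaches $\mu=1$; you instead do a single quantitative Banach fixed point for $\Phi(y)=y-DV(d_c\overrightarrow{e_1})^{-1}Q_c(y+d_c\overrightarrow{e_1})$ on the shrinking ball $B(0,C_\sigma c^{1-\sigma})$, which is more direct and gives the $O(c^{1-\sigma})$ localization in one stroke (you should also record the self-mapping inequality $|\Phi(0)|+\|D\Phi\|_\infty r_c\leqslant r_c$, which forces $C_\sigma\geqslant 4K(\sigma)/\kappa$ rather than $3K(\sigma)/\kappa$, but this is only a choice of constant). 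For excluding other zeros near the vortex, the paper Taylor-expands $Q_c$ at its zero to get $|Q_c(\zeta+z+d_c\vec{e}_1)|\gtrsim\kappa|\zeta|$ on a fixed ball, whereas you compare $|V|\gtrsim|y|$ against $\|\Gamma_c\|_{L^\infty}\lesssim c^{1-\sigma}$ on the annulus $\{r_c\leqslant|y|\leqslant r_0\}$; both work, and your use of (\ref{CP2Qcpaszero}) to confine all zeros to $B(\pm d_c\overrightarrow{e_1},1)$ from the start is a legitimate shortcut past the paper's half-plane estimate. Your final trick of running the argument at an auxiliary exponent $\sigma_0<\sigma$ to upgrade $O(c^{1-\sigma_0})$ to $o^{\sigma}_{c\rightarrow 0}(c^{1-\sigma})$ is clean and matches what the paper obtains. (Incidentally, your Jacobian $\kappa I$ with determinant $+\kappa^2$ is the correct one; the paper's $-\kappa^2$ comes from a sign slip in $\partial_{x_2}V_1(0)$, immaterial since only non-vanishing is used.)
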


The notation $o^{\sigma}_{c \rightarrow 0} (1)$ denotes a quantity going to
$0$ when $c \rightarrow 0$ at fixed $\sigma$. Combining Lemmas \ref{CP2nend},
\ref{CP2cor212N} and \ref{CP2zerosofQc}, we end the proof of Proposition
\ref{CP2prop5}.

\begin{proof}
  From (\ref{CP2sym}), we know that $Q_c$ enjoys the symmetry $Q_c (x_1, x_2)
  = Q_c (- x_1, x_2)$ for $(x_1, x_2) \in \mathbbm{R}^2$, hence we look at
  zeros only in the right half-plane. From Theorem \ref{th1}, we have $Q_c =
  V_1 (. - d_c \overrightarrow{e_1}) V_{- 1} (. + d_c \overrightarrow{e_1}) +
  \Gamma_c$ with $\| \Gamma_c \|_{L^{\infty} (\mathbbm{R}^2)} + \| \nabla
  \Gamma_c \|_{L^{\infty} (\mathbbm{R}^2)} = o_{c \rightarrow 0} (1)$. In the
  right half-plane and outside of $B (d_c \overrightarrow{e_1}, \Lambda)$ for
  any $\Lambda > 0$, by Lemma \ref{lemme3new}, we estimate
  \[ | Q_c | \geqslant | V_1 (. - d_c \overrightarrow{e_1}) V_{- 1} (. + d_c
     \overrightarrow{e_1}) | - o_{c \rightarrow 0} (1) \geqslant K (\Lambda) >
     0 \]
  if $c$ is small enough (depending on $\Lambda$). Now, we consider the smooth
  function $F : \mathbbm{R} \times \mathbbm{R}^2 \rightarrow \mathbbm{C}$
  defined by
  \[ F (\mu, z) \assign (V_1 (. - d_c \overrightarrow{e_1}) V_{- 1} (. + d_c
     \overrightarrow{e_1}) + \mu \Gamma_c (.)) (z + d_c \vec{e}_1) . \]
  We have $F (0, 0) = V_1 (0) V_{- 1} (2 d_c \overrightarrow{e_1}) = 0$ by
  Lemma \ref{lemme3new} and $F (1, z) = Q_c (z + d_c \vec{e}_1)$. For $| \mu |
  \leqslant 1$ and $| z | \leqslant 1$, since $\| \nabla \Gamma_c
  \|_{L^{\infty} (\mathbbm{R}^2)} = o^{\sigma}_{c \rightarrow 0} (c^{1 -
  \sigma})$ by equation (\ref{CP2225}), with Lemma \ref{lemme3new} and
  equation (\ref{CP2222405}), we check that
  \begin{equation}
    | d_z F_{(\mu, z)} (\xi) - \nabla V_1 (z) . \xi | = o_{c \rightarrow 0}
    (1) | \xi | \label{CP2llabel1}
  \end{equation}
  uniformly in $\mu \in [0, 1]$.
  
  Now, from Lemma \ref{lemme3new}, we estimate (for $x = r e^{i \theta} \neq 0
  \in \mathbbm{R}^2$)
  \begin{eqnarray*}
    \partial_{x_1} V_1 (x) & = & \left( \cos (\theta) \rho' (r) - \frac{i}{r}
    \sin (\theta) \rho (r) \right) e^{i \theta}\\
    & = & \kappa (\cos (\theta) - i \sin (\theta)) e^{i \theta} + o_{r
    \rightarrow 0} (1)\\
    & = & \kappa + o_{r \rightarrow 0} (1),
  \end{eqnarray*}
  and thus, by continuity, $\partial_{x_1} V_1 (0) = \kappa > 0$. Similarly,
  we check that $\partial_{x_2} V_1 (0) = - i \kappa$, and therefore,
  \[ \nabla V_1 (z) = \kappa \left(\begin{array}{c}
       1\\
       - i
     \end{array}\right) + o_{| z | \rightarrow 0} (1) . \]
  Identifying $\mathbbm{C}$ with $\mathbbm{R}^2$ canonically, we deduce that
  the Jacobian determinant of $F$ in $z$, $J (F)$, satisfies
  \[ J (F_{}) (\mu, z) = J (V_1) (z) + o_{c \rightarrow 0} (1) = - \kappa^2 +
     o_{c \rightarrow 0} (1) + o_{| z | \rightarrow 0} (1) \neq 0, \]
  given that $c$ and $| z |$ are small enough. By the implicit function
  theorem, there exists $\mu_0 > 0$ such that, for $| \mu | \leqslant \mu_0$,
  there exists a unique value $z (\mu)$ in a vicinity of $0$ such that $F
  (\mu, z (\mu)) = 0$, and since $\partial_{\mu} F (\mu, z) = \Gamma_c (d_c
  \vec{e}_1 + z) = o^{\sigma}_{c \rightarrow 0} (c^{1 - \sigma})$ uniformly in
  $z$ (by (\ref{CP2labasique})), it satisfies additionally $z (\mu) =
  o^{\sigma}_{c \rightarrow 0} (c^{1 - \sigma})$.
  
  Now, let us show that we can take $\mu_0 = 1$. Indeed, if we define $\mu_0
  = \sup \{ \nu > 0, \mu \rightarrow z (\mu) \in C^1 ([0, \nu], \mathbbm{R}^2)
  \} > 0$ and we have $\mu_0 < 1$, since $\mu \rightarrow z (\mu) \in C^1 ([0,
  \mu_0], \mathbbm{R}^2)$ with $| d_{\mu} z | (\mu) = o^{\sigma}_{c
  \rightarrow 0} (c^{1 - \sigma})$ uniformly in $[0, \mu_0]$, it can be
  continuously extended to $\mu_0$ with $F (\mu_0, z (\mu_0)) = 0$ and $z
  (\mu_0) = o^{\sigma}_{c \rightarrow 0} (c^{1 - \sigma})$. Then, by the
  implicit function theorem at $(\mu_0, z (\mu_0))$ (since $\mu_0 < 1$ with
  equation (\ref{CP2llabel1})), it can be extended above $\mu_0$, which is in
  contradiction with the definition of $\mu_0$.
  
  Since $F (1, .) = Q_c (. + d_c \vec{e}_1)$, we have shown that there exists
  $z \in \mathbbm{R}^2$ with $| z | = o^{\sigma}_{c \rightarrow 0} (c^{1 -
  \sigma})$ such that $Q_c (z + d_c \vec{e}_1) = 0$. Now, for $c$ small enough
  and $| \xi | \leqslant 1$, we have
  \[ \nabla (Q_c (\xi + z + d_c \vec{e}_1)) = \nabla V_1 (z) + o_{c
     \rightarrow 0} (1) + o_{| \xi | \rightarrow 0} (1) = \kappa
     \left(\begin{array}{c}
       1\\
       - i
     \end{array}\right) + o_{c \rightarrow 0} (1) + o_{| \xi | \rightarrow 0}
     (1) . \]
  We deduce, with $Q_c (\zeta + z + d_c \vec{e}_1) = \int_0^{| \zeta |} \nabla
  Q_c \left( \xi \frac{\zeta}{| \zeta_{} |} + z + d_c \vec{e}_1 \right) .
  \frac{\zeta}{| \zeta_{} |} d \xi$, that
  \[ \left| Q_c (\zeta + z + d_c \vec{e}_1) - \zeta . \left(\begin{array}{c}
       1\\
       - i
     \end{array}\right) \kappa \right| = o_{| \zeta | \rightarrow 0} (| \zeta
     |) + o_{c \rightarrow 0} (1) | \zeta | . \]
  Therefore, $Q_c$ has no other zeros in $B (z + d_c \vec{e}_1, \Lambda)$ for
  some $\Lambda > 0$ independent of $c$. Therefore, since for $c$ small
  enough, $| Q_c | > K (\Lambda) > 0$ outside of $B (z + d_c \vec{e}_1,
  \Lambda)$ in the right half-plane, $Q_c$ has only one zero in the right
  half-plane.
  
  By the symmetry $Q_c (x_1, x_2) = \overline{Q_c (x_1, - x_2)}$ (see
  (\ref{CP2sym})), $z$ must be colinear to $\overrightarrow{e_1}$, therefore
  we define $\widetilde{d_c} \in \mathbbm{R}$ by $\widetilde{d_c}
  \overrightarrow{e_1} \assign z + d_c \vec{e}_1$, and we conclude that, since
  $| z | = o^{\sigma}_{c \rightarrow 0} (c^{1 - \sigma})$,
  \[ | d_c - \widetilde{d_c} | = o^{\sigma}_{c \rightarrow 0} (c^{1 - \sigma})
     . \]
\end{proof}

We define the vortices around the zeros of $Q_c$ by
\[ \tilde{V}_{\pm 1} (x) \assign V_{\pm 1} (x \mp \widetilde{d_c}
   \overrightarrow{e_1}), \]
and we will use the already defined polar coordinates around $\pm
\widetilde{d_c} \overrightarrow{e_1}$ of $x \in \mathbbm{R}^2$, namely
\[ \tilde{r}_{\pm 1} = | x \mp \widetilde{d_c} \overrightarrow{e_1} |,
   \tilde{\theta}_{\pm 1} = \arg (x \mp \widetilde{d_c} \overrightarrow{e_1})
   . \]
One of the idea of the proof is to understand how $Q_c$ is close,
multiplicatively, to vortices $\tilde{V}_{\pm 1}$ centered at its zeros, since
by construction it is close to a vortex centered around $\pm d_c
\overrightarrow{e_1}$, which is itself close to $\pm \widetilde{d_c}
\overrightarrow{e_1}$. In particular, Lemma \ref{CP2closecall} below will show
that the ratio $\left| \frac{Q_c}{\tilde{V}_1} \right|$ is bounded and close
to 1 near $\widetilde{d_c} \overrightarrow{e_1}$.

\

In Lemma \ref{CP20524} to follow, we compute the additive perturbation
between derivatives of $Q_c$ and a vortex $\tilde{V}_{\pm 1}$ centered around
one of its zeros. In Lemma \ref{CP2closecall}, we compute the multiplicative
perturbation. All along, we work in $B (\widetilde{d_c} \overrightarrow{e_1},
\tilde{d}^{1 / 2}_c)$, the size of the ball $\tilde{d}^{1 / 2}_c$ being
arbitrary (any quantity that both goes to infinity when $c \rightarrow 0$ and
is a $o_{c \rightarrow 0} (\widetilde{d_c})$ should work). We recall that
$\tilde{r}_{\pm 1} = | x \mp \widetilde{d_c} \overrightarrow{e_1} |$.

\begin{lemma}
  \label{CP20524}Uniformly in $B (\widetilde{d_c} \overrightarrow{e_1},
  \tilde{d}^{1 / 2}_c)$, for $Q_c$ defined in Theorem \ref{th1}, one has
  \[ | Q_c - \widetilde{V_1} | = o_{c \rightarrow 0} (1), \]
  \[ | \nabla Q_c - \nabla \widetilde{V_1} | \leqslant \frac{o_{c \rightarrow
     0} (1)}{1 + \tilde{r}_1} \]
  and
  \[ | \nabla^2 Q_c - \nabla^2 \widetilde{V_1} | \leqslant \frac{o_{c
     \rightarrow 0} (1)}{1 + \tilde{r}_1} . \]
\end{lemma}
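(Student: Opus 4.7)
The plan is to exploit the additive decomposition $Q_c = V + \Gamma_c$ from Theorem~\ref{th1}, with $V = V_1(\cdot - d_c \overrightarrow{e_1}) V_{-1}(\cdot + d_c \overrightarrow{e_1})$, combined with the estimate $|d_c - \widetilde{d_c}| = o^{\sigma}_{c\to 0}(c^{1-\sigma})$ from Lemma~\ref{CP2zerosofQc}, in order to reduce the comparison of $Q_c$ with $\widetilde{V}_1 = V_1(\cdot - \widetilde{d_c}\overrightarrow{e_1})$ on $B(\widetilde{d_c}\overrightarrow{e_1}, \widetilde{d_c}^{1/2})$ to three elementary contributions.

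Concretely, I would write
\[
Q_c - \widetilde{V}_1 = \bigl[V_1(\cdot - d_c \overrightarrow{e_1}) - V_1(\cdot - \widetilde{d_c}\overrightarrow{e_1})\bigr] V_{-1}(\cdot + d_c \overrightarrow{e_1}) + V_1(\cdot - \widetilde{d_c}\overrightarrow{e_1}) \bigl[V_{-1}(\cdot + d_c\overrightarrow{e_1}) - 1\bigr] + \Gamma_c.
\]
On $B(\widetilde{d_c}\overrightarrow{e_1}, \widetilde{d_c}^{1/2})$, the factor $V_{-1}(\cdot + d_c\overrightarrow{e_1})$ is evaluated at distance at least $2\widetilde{d_c} - \widetilde{d_c}^{1/2}$ from the origin of its argument, so (\ref{CP2222405}) gives that it differs from $1$ by $o_{c\to 0}(c^{1/2})$; the first bracket is estimated by the mean value theorem on $V_1$, using $|\nabla V_1| \leqslant K/(1+r)$ from Lemma~\ref{lemme3new}, which yields a contribution bounded by $|d_c - \widetilde{d_c}| \times O(1) = o^{\sigma}_{c\to 0}(c^{1-\sigma})$; and the remainder $\Gamma_c$ is handled by (\ref{CP2labasique}). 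Summing these proves the pointwise estimate for $Q_c - \widetilde{V}_1$.

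For $\nabla Q_c - \nabla \widetilde{V}_1$, I would differentiate the decomposition above and treat the resulting four terms in the same way, using: the mean value theorem on $\nabla V_1$ with $|\nabla^2 V_1| \leqslant K/(1+r)^2$ from Lemma~\ref{lemme3new}; the pointwise bound (\ref{CP2222405}) on $V_{-1}(\cdot + d_c\overrightarrow{e_1}) - 1$ coupled with $|\nabla V_1| \leqslant K/(1+\tilde{r}_1)$; the gradient estimate (\ref{CP2V-1est}) for $\nabla V_{-1}(\cdot + d_c\overrightarrow{e_1})$; and (\ref{CP2225}) for $\nabla \Gamma_c$. Each contribution carries a factor $o_{c\to 0}(1)$ and decays no worse than $1/(1+\tilde{r}_1)$, which is exactly what is claimed.

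For $\nabla^2 Q_c - \nabla^2 \widetilde{V}_1$, the strategy is identical one derivative deeper: the three vortex terms are controlled by the pointwise bound on $\nabla^2 V_1$ from Lemma~\ref{lemme3new} and by the mean value theorem applied to $\nabla^2 V_1$, noting that on $B(\widetilde{d_c}\overrightarrow{e_1}, \widetilde{d_c}^{1/2})$ we stay at distance $O(\widetilde{d_c}^{1/2})$ from the vortex core of $V_1(\cdot - d_c\overrightarrow{e_1})$ so that the third derivatives of $V_1$ (computed in polar coordinates from Lemma~\ref{lemme3new}) remain bounded by a universal constant on the region of interest. The main point to justify carefully, and the one I view as the principal technical step, is the control of $\nabla^2 \Gamma_c$: using $\Gamma_c = (1-\eta) V \Psi_c + \eta V (e^{\Psi_c} - 1)$, applying the Leibniz rule, and assembling the estimates of (\ref{CP2222}) on $\nabla^2 \Re(\Psi_c)$, $D^2 \Im(\Psi_c)$, and $\nabla \Re(\Psi_c)$ together with Lemma~\ref{lemme3new} and Lemma~\ref{CP283L33}, one obtains the needed $o_{c\to 0}(1)/(1+\tilde{r}_1)$ decay, which is weaker than the natural decay of each piece but suffices for the stated inequality.
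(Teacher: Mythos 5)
Your treatment of the zeroth- and first-order estimates is essentially the paper's own proof: the same splitting $Q_c-\widetilde V_1=(Q_c-V)+(V-\widetilde V_1)$, the mean value theorem driven by $|d_c-\widetilde d_c|=o^{\sigma}_{c\to 0}(c^{1-\sigma})$, and the bounds (\ref{CP2222405}), (\ref{CP2V-1est}), (\ref{CP2labasique}), (\ref{CP2225}) for the remaining pieces. No issues there.

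For the Hessian estimate you take a genuinely different route, and it is exactly at the step you yourself single out as ``the principal technical step'' that the argument has a gap. You propose to bound $\nabla^2\Gamma_c$ directly from $\Gamma_c=(1-\eta)V\Psi_c+\eta V(e^{\Psi_c}-1)$ via the Leibniz rule and (\ref{CP2222}). But expanding $\nabla^2(V\Psi_c)$ produces the terms $\nabla^2 V\,\Psi_c$ and $\nabla V\otimes\nabla\Psi_c$, and the cited estimates do not control $\Psi_c$ or $\nabla\mathfrak{I}\mathfrak{m}(\Psi_c)$ in the region $\{\tilde r\leqslant 2\}$ where $\eta\neq 1$: the norm $\|\cdot\|_{\sigma,d_c}$ of Lemma \ref{CP283L33} only measures $\mathfrak{I}\mathfrak{m}(\psi)$ and $\nabla\mathfrak{I}\mathfrak{m}(\psi)$ on $\{\tilde r\geqslant 2\}$ (near the core it only records $\|V\psi\|_{C^1}$), and (\ref{CP2222}) controls $D^2\mathfrak{I}\mathfrak{m}(\Psi_c)$ but not the lower-order quantities. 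One could plausibly patch this by integrating the global second-derivative bounds of (\ref{CP2222}) inward from $\{\tilde r=2\}$ to recover $C^1$ control of $\Psi_c$ at the core, but that argument is not in your proposal and is not a mere bookkeeping step. A second, smaller gap is the mean value theorem applied to $\nabla^2 V_1$: this needs pointwise bounds (with decay $K/(1+r)^3$) on the full third-derivative tensor $\nabla^3 V_1$, which Lemma \ref{lemme3new} does not state (it only gives $|\rho''|+|\rho'''|\leqslant K$), so you would have to derive them.

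The paper avoids both difficulties by a different mechanism for the second-order bound: setting $w=Q_c-\widetilde V_1$, it uses the equations $(\tmop{TW}_c)(Q_c)=0$ and $\Delta\widetilde V_1=(|\widetilde V_1|^2-1)\widetilde V_1$ to express $\Delta w$ purely in terms of quantities already controlled at order zero and one, obtains $|\Delta w|+\,(1+\tilde r_1)^{-1}$-weighted $|\nabla(\Delta w)|$ bounds of size $o_{c\to 0}(1)/(1+\tilde r_1)$, and then invokes interior Schauder estimates (Theorem 6.2 of Gilbarg--Trudinger on balls of radius $\sim\tilde r_1/2$) to convert $C^1$ smallness of $w$ and $C^1$ smallness of $\Delta w$ into the claimed bound on $\nabla^2 w$. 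This buys you the Hessian estimate without ever touching $\nabla^2\Gamma_c$ near the vortex core or third derivatives of $V_1$. I would recommend either adopting that elliptic-regularity argument or, if you want to keep the direct computation, supplying the missing $C^1$ control of $\Psi_c$ on $\{\tilde r\leqslant 2\}$ and the $\nabla^3 V_1$ bounds explicitly.
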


\begin{proof}
  From equations (\ref{CP2218}) and (\ref{CP2222405}), as well as Lemmas 2.6
  of {\cite{CP1}}, \ref{CP2zerosofQc} and the mean value theorem, in $B
  (\widetilde{d_c} \overrightarrow{e_1}, \tilde{d}^{1 / 2}_c)$,
  \begin{eqnarray}
    | Q_c - \widetilde{V_1} | & \leqslant & | Q_c - V | + | V -
    \widetilde{V_1} | \nonumber\\
    & \leqslant & o_{c \rightarrow 0} (1) + | V_1 (. - \widetilde{d_c}
    \overrightarrow{e_1}) - \widetilde{V_1} | \nonumber\\
    & \leqslant & o_{c \rightarrow 0} (1) + | d_c - \widetilde{d_c} |  \|
    \partial_{x_1} V \|_{L^{\infty} (\mathbbm{R}^2)} \nonumber\\
    & \leqslant & o_{c \rightarrow 0} (1),  \label{CP2llabel2}
  \end{eqnarray}
  which is the first statement.
  
  \
  
  For the second statement, we write $Q_c = V_1 (. - d_c
  \overrightarrow{e_1}) V_{- 1} (. - d_c \overrightarrow{e_1}) + \Gamma_c$,
  and from equation (\ref{CP2225}) (with some margin), we have
  \[ | \nabla \Gamma_c | \leqslant \frac{o_{c \rightarrow 0} (1)}{1 +
     \tilde{r}_1} . \]
  Furthermore, since $\tilde{V}_1 = V_1 (. - \widetilde{d_c}
  \overrightarrow{e_1})$,
  \[ \nabla (V_1 (. - d_c \overrightarrow{e_1}) V_{- 1} (. + d_c
     \overrightarrow{e_1})) - \nabla \widetilde{V_1} = \]
  \[ \nabla V_1 (. - d_c \overrightarrow{e_1}) V_{- 1} (. + d_c
     \overrightarrow{e_1}) - \nabla \widetilde{V_1} + V_1 (. - d_c
     \overrightarrow{e_1}) \nabla V_{- 1} (. + d_c \overrightarrow{e_1}), \]
  and from (\ref{CP2V-1est}), in $B (\widetilde{d_c} \overrightarrow{e_1},
  \tilde{d}^{1 / 2}_c)$, we have
  \[ | \nabla V_{- 1} (. + d_c \overrightarrow{e_1}) | \leqslant \frac{o_{c
     \rightarrow 0} (1)}{1 + \tilde{r}_1} . \]
  We compute
  \[ \nabla V_1 (. - d_c \overrightarrow{e_1}) V_{- 1} (. + d_c
     \overrightarrow{e_1}) - \nabla \widetilde{V_1} = \nabla V_1 (. - d_c
     \overrightarrow{e_1}) (V_{- 1} (. + d_c \overrightarrow{e_1}) - 1) -
     \nabla \widetilde{V_1} + \nabla V_1 (. - d_c \overrightarrow{e_1}) \]
  and, from (\ref{CP2222405}), in $B (\widetilde{d_c} \overrightarrow{e_1},
  \tilde{d}^{1 / 2}_c)$, we have $| V_{- 1} (. + d_c \overrightarrow{e_1}) - 1
  | = o_{c \rightarrow 0} (1)$. Finally, from Lemmas \ref{lemme3new} and
  \ref{CP2zerosofQc}, we estimate (with the mean value theorem)
  \[ | \nabla V_1 (. - d_c \overrightarrow{e_1}) - \nabla \widetilde{V_1} |
     \leqslant | d_c - \widetilde{d_c} | \sup_{d \in [d_c, \tilde{d}_c] \cup
     [\tilde{d}_c, d_c]} | \nabla^2 V_1 (x - d) | \leqslant K \frac{| d_c -
     \widetilde{d_c} |}{(1 + \tilde{r}_1)^2} = \frac{o_{c \rightarrow 0}
     (1)}{(1 + \tilde{r}_1)^2}, \]
  hence
  \begin{equation}
    | \nabla Q_c - \nabla \widetilde{V_1} | \leqslant \frac{o_{c \rightarrow
    0} (1)}{1 + \tilde{r}_1} . \label{CP2llabel3}
  \end{equation}
  Now, writing $w = Q_c - \widetilde{V_1}$, in $B (\widetilde{d_c}
  \overrightarrow{e_1}, 2 \tilde{d}^{1 / 2}_c)$, we estimate (since
  $\tmop{TW}_c (Q_c) = 0$ and $\Delta \widetilde{V_1} - (| \widetilde{V_1} |^2
  - 1) \widetilde{V_1} = 0$)
  \[ | \Delta w | = | - i c \partial_{x_2} Q_c - (1 - | Q_c |^2) Q_c + (1 - |
     \widetilde{V_1} |^2) \widetilde{V_1} | \leqslant \frac{o_{c \rightarrow
     0} (1)}{1 + \widetilde{r_1}} \]
  by equations (\ref{CP2217}) to (\ref{CP2221}) and (\ref{CP2222405}).
  Furthermore, by equations (\ref{CP2217}) to (\ref{CP2V-1est}), we have
  \[ | \nabla (\Delta w) | \leqslant \frac{o_{c \rightarrow 0} (1)}{(1 +
     \tilde{r}_1)} . \]
  We check, as the proof of (\ref{CP2llabel2}), that, in $B (\widetilde{d_c}
  \overrightarrow{e_1}, 2 \tilde{d}^{1 / 2}_c)$,
  \[ | w | = o_{c \rightarrow 0} (1), \]
  and, similarly, with equations (\ref{CP2V-1est}) and (\ref{CP2llabel3}),
  that
  \[ | \nabla w | = o_{c \rightarrow 0} (1) \]
  in $B (\widetilde{d_c} \overrightarrow{e_1}, 2 \tilde{d}^{1 / 2}_c)$. By
  Theorem 6.2 of {\cite{MR1814364}} (taking a domain $\Omega = B \left( x -
  \widetilde{d_c} \overrightarrow{e_1}, \frac{| x - \tilde{d}_c
  \overrightarrow{e_1} |}{2} \right)$, and $\alpha = 1 / 2$, but it also holds
  for any $0 < \alpha < 1$), we have, for $x \in B (\widetilde{d_c}
  \overrightarrow{e_1}, 2 \tilde{d}^{1 / 2}_c)$,
  \[ (1 + \tilde{r}_1)^2 | \nabla^2 w (x - \widetilde{d_c}
     \overrightarrow{e_1}) | \leqslant K (\| w \|_{C^1 (\Omega)} + (1 +
     \widetilde{r_1})^2 \| \Delta w \|_{C^1 (\Omega)}), \]
  and from the previous estimates, we have $\| w \|_{C^1 (\Omega)} = o_{c
  \rightarrow 0} (1)$ and $\| \Delta w \|_{C^1 (\Omega)} \leqslant \frac{o_{c
  \rightarrow 0} (1)}{(1 + \tilde{r}_1)}$, therefore
  \[ | \nabla^2 (Q_c - \widetilde{V_1}) | = | \nabla^2 w | \leqslant
     \frac{o_{c \rightarrow 0} (1)}{(1 + \widetilde{r_1})} . \]
\end{proof}

\begin{lemma}
  \label{CP2closecall}In $B (\widetilde{d_c} \overrightarrow{e_1},
  \tilde{d}^{1 / 2}_c)$, for $Q_c$ defined in Theorem \ref{th1}, we have
  \[ \left| \frac{Q_c}{\widetilde{V_1}} - 1 \right| = o_{c \rightarrow 0}
     (c^{1 / 10}) . \]
  In particular,
  \[ \left| \frac{Q_c}{\widetilde{V_1}} \right| = 1 + o_{c \rightarrow 0}
     (c^{1 / 10}) . \]
\end{lemma}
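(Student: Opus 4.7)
The plan is to write $w := Q_c - \widetilde{V_1}$ and to prove $|w/\widetilde{V_1}| = o_{c \to 0}(c^{1/10})$ uniformly in the ball. The crucial observation is that both $Q_c$ and $\widetilde{V_1}$ vanish at $\widetilde{d_c} \overrightarrow{e_1}$ (by Lemma \ref{CP2zerosofQc} and the very definition of $\widetilde{V_1}$), so $w(\widetilde{d_c} \overrightarrow{e_1}) = 0$, and we can exploit this vanishing together with a quantitative gradient bound on $w$.

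First I would strengthen the pointwise estimate on $\nabla w$ given by Lemma \ref{CP20524}, carefully tracking the $c$-smallness. Writing $Q_c = V_1(\cdot - d_c \overrightarrow{e_1}) V_{-1}(\cdot + d_c \overrightarrow{e_1}) + \Gamma_c$, the ingredients are: equation (\ref{CP2225}) with $\sigma = 1/2$, giving $|\nabla \Gamma_c| \leqslant K c^{1/2}/(1+\tilde{r})^{3/2}$; estimates (\ref{CP2222405}) and (\ref{CP2V-1est}), which provide $V_{-1}(\cdot + d_c \overrightarrow{e_1}) = 1 + O(c^{1/2})$ and $|\nabla V_{-1}(\cdot + d_c \overrightarrow{e_1})| \leqslant o(c^{1/2})/(1+\tilde{r}_1)$ throughout the ball; and Lemma \ref{CP2zerosofQc} combined with the $C^2$ bound on $V_1$ from Lemma \ref{lemme3new}, which control $|\nabla V_1(\cdot - d_c \overrightarrow{e_1}) - \nabla V_1(\cdot - \widetilde{d_c} \overrightarrow{e_1})|$ by $o(c^{1-\sigma})/(1+\tilde{r}_1)^2$. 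Summing these contributions yields $|\nabla w|(x) \leqslant o(c^{1/4})/(1+\tilde{r}_1)$ throughout $B(\widetilde{d_c} \overrightarrow{e_1}, \tilde{d}_c^{1/2})$, the $O(c^{1/2})$ terms stemming from $V - \widetilde{V_1}$ being dominant.

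Next I would integrate $\nabla w$ along the segment from $\widetilde{d_c} \overrightarrow{e_1}$ to $x$ (which remains in the ball by convexity) and use $w(\widetilde{d_c} \overrightarrow{e_1}) = 0$ to deduce $|w(x)| \leqslant o(c^{1/4}) \ln(1+\tilde{r}_1) \leqslant o(c^{1/5})$ globally in the ball, the logarithmic loss being at most $O(\ln(1/c))$ and absorbed into the smallness in $c$. For the refined regime $\tilde{r}_1 \leqslant 1$, the bound $|\nabla w| \leqslant o(c^{1/4})$ uniformly near the zero and integration give instead $|w(x)| \leqslant o(c^{1/4}) \tilde{r}_1$. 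I then split the analysis into two regions: when $\tilde{r}_1 \leqslant 1$, Lemma \ref{lemme3new} yields $|\widetilde{V_1}(x)| = \rho_1(\tilde{r}_1) \geqslant \kappa \tilde{r}_1 /2$, so $|w/\widetilde{V_1}| \leqslant o(c^{1/4})$; when $\tilde{r}_1 \geqslant 1$, one has $|\widetilde{V_1}| \geqslant \rho_1(1) > 0$ by the same lemma, so $|w/\widetilde{V_1}| \leqslant o(c^{1/5})$. In either case $|Q_c/\widetilde{V_1} - 1| = o(c^{1/10})$, which is the first claim; the second follows from the reverse triangle inequality.

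The main technical obstacle is the accurate bookkeeping of the various contributions of order $c^{1/2}$, $c^{1-\sigma}$ and $c^{1-\sigma'}$ — in particular, the $O(c^{1/2})$ discrepancy $V_{-1}(\cdot + d_c \overrightarrow{e_1}) - 1$ turns out to be the slowest-decaying in $c$, and dictates the final exponent (any exponent below $1/2$ is reachable; the choice $c^{1/10}$ in the statement provides comfortable slack for later use). The vanishing $w(\widetilde{d_c} \overrightarrow{e_1}) = 0$ is indispensable in the near-zero regime, since otherwise the division by $\widetilde{V_1}$ would blow up; it is this algebraic cancellation, and not merely a supremum bound on $w$, that powers the estimate.
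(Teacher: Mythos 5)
Your proof is correct, and it reaches the conclusion by a route that differs in its technical execution from the paper's. The paper Taylor-expands $Q_c$ and $\widetilde{V_1}$ to second order at the common zero $\widetilde{d_c} \overrightarrow{e_1}$, compares the two gradients \emph{at that single point} up to $o_{c \rightarrow 0}(c^{1/4})$, and then splits at the $c$-dependent radius $c^{1/4}$: inside, the ratio is controlled by $o(c^{1/4}) + O(|x|)$; outside, it uses $| \widetilde{V_1} | \geqslant K c^{1/4}$ together with $Q_c = V_1 + O(c^{1/2})$ and the mean value bound $| V_1 - \widetilde{V_1} | = O(| d_c - \widetilde{d_c} |)$. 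You instead upgrade the qualitative estimate of Lemma \ref{CP20524} to a $c$-quantified gradient bound $| \nabla (Q_c - \widetilde{V_1}) | \leqslant o (c^{1 / 4}) / (1 + \tilde{r}_1)$ valid on the whole ball --- which does follow from (\ref{CP2225}), (\ref{CP2222405}), (\ref{CP2V-1est}) and Lemma \ref{CP2zerosofQc} exactly as you indicate, the $O(c^{1/2})$ phase drift of $V_{-1}(\cdot + d_c \overrightarrow{e_1})$ being the dominant contribution --- and then integrate from the common zero, splitting at the fixed radius $1$ rather than at $c^{1/4}$. Both arguments hinge on the same cancellation $Q_c(\widetilde{d_c}\overrightarrow{e_1}) = \widetilde{V_1}(\widetilde{d_c}\overrightarrow{e_1}) = 0$ and on the linear vanishing $\rho_1(\tilde{r}_1) \geqslant K \tilde{r}_1$ from Lemma \ref{lemme3new}. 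What your version buys is a more uniform treatment (no $c$-dependent splitting radius, and the logarithmic loss from integrating $1/(1+s)$ is harmlessly absorbed) and a transparent identification of the bottleneck exponent; what it costs is the extra bookkeeping needed to make the gradient estimate quantitative in $c$ over the entire ball, whereas the paper only needs the gradient comparison at one point plus a crude $O(|x|^2)$ Taylor remainder.
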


The power $1 / 10$ is arbitrary, but enough here for the upcoming estimations.

\begin{proof}
  We recall that both $Q_c$ and $\tilde{V}_1$ are $C^{\infty}$ since they are
  solutions of elliptic equations. We have that $Q_c (\widetilde{d_c}
  \overrightarrow{e_1}) = 0$ by Lemma \ref{CP2zerosofQc}, thus, for $x \in
  \mathbbm{R}^2$, by Taylor expansion, for $| x | \leqslant 1$,
  \[ Q_c (x + \widetilde{d_c} \overrightarrow{e_1}) = x. \nabla Q_c
     (\widetilde{d_c} \overrightarrow{e_1}) + O_{| x | \rightarrow 0} (| x
     |^2) . \]
  From Theorem \ref{th1}, we have $Q_c = V_1 (. - d_c \overrightarrow{e_1})
  V_{- 1} (. + d_c \overrightarrow{e_1}) + \Gamma_c$, therefore, with $V_{\pm
  1}$ being centered around $\pm d_c \overrightarrow{e_1}$ for the rest of the
  proof,
  \[ \nabla Q_c (\widetilde{d_c} \overrightarrow{e_1}) = \nabla V_1
     (\widetilde{d_c} \overrightarrow{e_1}) V_{- 1} (\widetilde{d_c}
     \overrightarrow{e_1}) + V_1 (\widetilde{d_c} \overrightarrow{e_1}) \nabla
     V_{- 1} (\widetilde{d_c} \overrightarrow{e_1}) + \nabla \Gamma_c
     (\widetilde{d_c} \overrightarrow{e_1}) . \]
  We have $V_1 (\widetilde{d_c} \overrightarrow{e_1}) \nabla V_{- 1}
  (\widetilde{d_c} \overrightarrow{e_1}) + \nabla \Gamma_c (\widetilde{d_c}
  \overrightarrow{e_1}) = o_{c \rightarrow 0} (c^{1 / 2})$ by Theorem
  \ref{th1}, Lemma \ref{lemme3new} and (\ref{CP2V-1est}). Furthermore, by
  (\ref{CP2222405}), Lemmas \ref{lemme3new} and \ref{CP2zerosofQc},
  \begin{eqnarray*}
    \nabla V_1 (\widetilde{d_c} \overrightarrow{e_1}) V_{- 1} (\widetilde{d_c}
    \overrightarrow{e_1}) & = & \nabla V_1 (\widetilde{d_c}
    \overrightarrow{e_1}) + o_{c \rightarrow 0} (c^{1 / 4})
  \end{eqnarray*}
  We deduce that
  \begin{equation}
    Q_c (x + \widetilde{d_c} \overrightarrow{e_1}) = x. (\nabla V_1 (d_c
    \overrightarrow{e_1}) + o_{c \rightarrow 0} (c^{1 / 4})) + O_{x
    \rightarrow 0} (| x |^2) . \label{CP2eq222502}
  \end{equation}
  We also have $\widetilde{V_1} (x + \widetilde{d_c} \overrightarrow{e_1}) =
  x. \nabla \widetilde{V_1} (\widetilde{d_c} \overrightarrow{e_1}) + O_{x
  \rightarrow 0} (| x |^2)$ (since $\widetilde{V_1} (\widetilde{d_c}
  \overrightarrow{e_1}) = 0$) and $\nabla V_1 (d_c \overrightarrow{e_1}) =
  \nabla \widetilde{V_1} (\widetilde{d_c} \overrightarrow{e_1})$, hence
  \[ Q_c (x + \widetilde{d_c} \overrightarrow{e_1}) = \widetilde{V_1} (x +
     \widetilde{d_c} \overrightarrow{e_1}) + x.o_{c \rightarrow 0} (c^{1 / 4})
     + O_{| x | \rightarrow 0} (| x |^2) . \]
  Now, by Lemma \ref{lemme3new}, there exists $K > 0$ such that, in $B
  (\widetilde{d_c} \overrightarrow{e_1}, c^{1 / 4})$ for $c$ small enough, $|
  \widetilde{V_1} (x + \widetilde{d_c} \overrightarrow{e_1}) | \geqslant K | x
  |$. We deduce that
  \begin{eqnarray*}
    \left| \frac{Q_c}{\widetilde{V_1}} - 1 \right| & \leqslant & \frac{| x |
    o_{c \rightarrow 0} (c^{1 / 4})}{| \widetilde{V_1} (x + \widetilde{d_c}
    \overrightarrow{e_1}) |} + \frac{O_{| x | \rightarrow 0} (| x |^2)}{|
    \widetilde{V_1} (x + \widetilde{d_c} \overrightarrow{e_1}) |}\\
    & \leqslant & o_{c \rightarrow 0} (c^{1 / 4}) + O_{| x | \rightarrow 0}
    (| x |)\\
    & \leqslant & o_{c \rightarrow 0} (c^{1 / 5}) .
  \end{eqnarray*}
  Outside of $B (\widetilde{d_c} \overrightarrow{e_1}, c^{1 / 4})$ and in $B
  (\widetilde{d_c} \overrightarrow{e_1}, \widetilde{d_c}^{1 / 2})$, we have $|
  \widetilde{V_1} | \geqslant K c^{1 / 4}$ by Lemma \ref{lemme3new}, and
  \[ Q_c = V_1 + O_{c \rightarrow 0} (c^{1 / 2}) \]
  by Theorem \ref{th1}, equations (\ref{CP2218}) and (\ref{CP2222405}). We
  deduce
  \[ \left| \frac{Q_c}{\widetilde{V_1}} - 1 \right| (x) = \left| \frac{V_1 +
     O_{c \rightarrow 0} (c^{1 / 2})}{\widetilde{V_1}} - 1 \right| (x) =
     \left| \frac{V_1 (x)}{\widetilde{V_1} (x)} - 1 \right| + o_{c \rightarrow
     0} (c^{1 / 10}) . \]
  Furthermore, by Lemma \ref{CP2zerosofQc} (for $\sigma = 1 / 2$), we have
  \[ \left| \frac{V_1 (x)}{\widetilde{V_1} (x)} - 1 \right| = \left|
     \frac{\widetilde{V_1} (x) + O_{| d_c - \tilde{d}_c | \rightarrow 0} (|
     d_c - \widetilde{d_c} |)}{\widetilde{V_1} (x)} - 1 \right| = \frac{O_{|
     d_c - \tilde{d}_c | \rightarrow 0} (| d_c - \widetilde{d_c} |)}{c^{1 /
     4}} = o_{c \rightarrow 0} (c^{1 / 10}) . \]
  We conclude that $\left| \frac{Q_c}{\tilde{V}_1} - 1 \right| = o_{c
  \rightarrow 0} (c^{1 / 10})$ in $B (\widetilde{d_c} \overrightarrow{e_1},
  \widetilde{d_c}^{1 / 2})$.
\end{proof}

By the symmetries of $Q_c$ (see (\ref{CP2sym})), the result of Lemma
\ref{CP2closecall} holds if we change $\overrightarrow{e_1}$ by $-
\overrightarrow{e_1}$ and $\widetilde{V_1}$ by $\tilde{V}_{- 1}$.

\

We conclude this section with the proof that in $B (\pm \tilde{d}_c
\overrightarrow{e_1}, \tilde{d}_c^{1 / 2})$, we have, for $\psi \in
C^{\infty}_c (\mathbbm{R}^2 \backslash \{ \pm \tilde{d}_c \overrightarrow{e_1}
\}, \mathbbm{C})$,
\begin{equation}
  \int_0^{2 \pi} | \psi^{\neq 0} |^2 d \tilde{\theta}_{\pm 1} \leqslant
  \tilde{r}_{\pm 1}^2 \int_0^{2 \pi} | \nabla \psi |^2 d \tilde{\theta}_{\pm
  1} . \label{CP2jesaispasfairedesmaths}
\end{equation}
We recall that the function $\psi^{\neq 0}$ is defined by
\[ \psi^{\neq 0} (x) = \psi (x)^{^{}} - \psi^{0, 1} (\tilde{r}_1) \]
in the right half-plane, and
\[ \psi^{\neq 0} (x) = \psi (x)^{^{}} - \psi^{0, - 1} (\tilde{r}_{- 1}) \]
in the left half-plane.

To show (\ref{CP2jesaispasfairedesmaths}), it is enough to show that, for
$\psi \in C^{\infty}_c (\mathbbm{R}^2 \backslash \{ 0 \}, \mathbbm{C})$, we
have, with $x = r e^{i \theta}$,
\[ \int_0^{2 \pi} \left| \psi - \int_0^{2 \pi} \psi d \gamma \right|^2 d
   \theta \leqslant r^2 \int_0^{2 \pi} | \nabla \psi |^2 d \theta . \]
This is a Poincar{\'e} inequality. By decomposition in harmonics and
Parseval's equality, we have
\begin{eqnarray*}
  \int_0^{2 \pi} \left| \psi - \int_0^{2 \pi} \psi (\gamma) d \gamma \right|^2
  d \theta & = & \int_0^{2 \pi} \left| \sum_{n \in \mathbbm{Z}^{\ast}} \psi_n
  (r) e^{i n \theta} \right|^2 d \theta\\
  & = & \int_0^{2 \pi} \sum_{n \in \mathbbm{Z}^{\ast}} | \psi_n (r) |^2 d
  \theta,
\end{eqnarray*}
and
\begin{eqnarray*}
  \int_0^{2 \pi} | \nabla \psi |^2 d \theta & \geqslant & \int_0^{2 \pi}
  \frac{1}{r^2} | \partial_{\theta} \psi |^2 d \theta\\
  & \geqslant & \int_0^{2 \pi} \left| \sum_{n \in \mathbbm{Z}^{\ast}} i
  \frac{n \psi_n (r)}{r} e^{i n \theta} \right|^2 d \theta\\
  & \geqslant & \frac{1}{r^2} \int_0^{2 \pi} \sum_{n \in \mathbbm{Z}^{\ast}}
  n^2 | \psi_n (r) |^2 d \theta\\
  & \geqslant & \frac{1}{r^2} \int_0^{2 \pi} \sum_{n \in \mathbbm{Z}^{\ast}}
  | \psi_n (r) |^2 d \theta .
\end{eqnarray*}

This concludes the proof of (\ref{CP2jesaispasfairedesmaths}). With $| Q_c (x
\pm \tilde{d}_c \overrightarrow{e_1}) | = O_{\tilde{r}_{\pm 1} \rightarrow 0}
(\tilde{r}_{\pm 1})$ and (\ref{CP2jesaispasfairedesmaths}), we have, for
$\tilde{r}_{\pm 1} \leqslant R$,
\begin{eqnarray}
  \int_0^{2 \pi} | Q_c |^2 | \psi^{\neq 0} |^2 d \tilde{\theta}_{\pm 1} &
  \leqslant & K \int_0^{2 \pi} \tilde{r}_{\pm 1}^2 | \psi^{\neq 0} |^2 d
  \tilde{\theta}_{\pm 1} \nonumber\\
  & \leqslant & K \int_0^{2 \pi} \tilde{r}_{\pm 1}^4 | \nabla \psi |^2 d
  \tilde{\theta}_{\pm 1} \nonumber\\
  & \leqslant & K (R) \int_0^{2 \pi} | Q_c |^4 | \nabla \psi |^2 d
  \tilde{\theta}_{\pm 1} . \label{CP2jesaistoujourspasfairedesmaths} 
\end{eqnarray}
This result will be usefull to estimate the quantities in the orthogonality
conditions.

\section{Estimations in $H_{Q_c}$}\label{CP2NSEC4}

We give several estimates for functions in $H_{Q_c}$. They will in particular
allow us to use a density argument to show Proposition \ref{CP205218} once it
is shown for test function in section \ref{CP2NSEC3}. We will also explain why
a coercivity result with the energy norm $\| . \|_{H_{Q_c}}$ is impossible
with any number of local orthogonality conditions, and show that the quadratic
form and the coercivity norm are well defined for functions in $H_{Q_c}$.

\subsection{Comparaison of the energy and coercivity norms}\label{CP2CandHc}

In the introduction, we have defined the quadratic form by
\begin{eqnarray*}
  B_{Q_c} (\varphi) & = & \int_{\mathbbm{R}^2} | \nabla \varphi |^2 - (1 - |
  Q_c |^2) | \varphi |^2 + 2\mathfrak{R}\mathfrak{e}^2 (\overline{Q_c}
  \varphi)\\
  & - & c \int_{\mathbbm{R}^2} (1 - \eta) \mathfrak{R}\mathfrak{e} (i
  \partial_{x_2} \varphi \bar{\varphi}) - c \int_{\mathbbm{R}^2 \nosymbol}
  \eta \mathfrak{R}\mathfrak{e} (i \partial_{x_2} Q_c \overline{Q_c}) | \psi
  |^2\\
  & + & 2 c \int_{\mathbbm{R}^2} \eta \mathfrak{R}\mathfrak{e} \psi
  \mathfrak{I}\mathfrak{m} \partial_{x_2} \psi | Q_c |^2 + c
  \int_{\mathbbm{R}^2} \partial_{x_2} \eta \mathfrak{R}\mathfrak{e} \psi
  \mathfrak{I}\mathfrak{m} \psi | Q_c |^2\\
  & + & c \int_{\mathbbm{R}^2} \eta \mathfrak{R}\mathfrak{e} \psi
  \mathfrak{I}\mathfrak{m} \psi \partial_{x_2} (| Q_c |^2)
\end{eqnarray*}
(see (\ref{CP2truebqc})). We will show in Lemma \ref{CP2finitebilinear} below
that this quantity is well defined for $\varphi \in H_{Q_c}$. As we have seen,
the natural energy space $H_{Q_c}$ is given by the norm
\[ \| \varphi \|^2_{H_{Q_c}} = \int_{\mathbbm{R}^2} | \nabla \varphi |^2 + | 1
   - | Q_c |^2 | | \varphi |^2 +\mathfrak{R}\mathfrak{e}^2 (\overline{Q_c}
   \varphi) . \]
We could expect to remplace Theorem \ref{CP2th2} by a result of the form: up
to some local orthogonality conditions, for $\varphi \in H_{Q_c}$ we have
\[ B_{Q_c} (\varphi) \geqslant K (c) \| \varphi \|^2_{H_{Q_c}} . \]
However such a result can not hold. This is because of a formal zero of
$L_{Q_c}$ which is not in the space $H_{Q_c}$: $i Q_c$ (which comes from the
phase invariance of the equation). We have $L_{Q_c} (i Q_c) = 0$ and $i Q_c
\nin H_{Q_c}$ because
\[ (1 - | Q_c |^2) | i Q_c |^2 \]
is not integrable at infinity (see {\cite{MR2191764}}, where it is shown that
this quantity decays like $1 / r^2$). We can then create functions in
$H_{Q_c}$ getting close to $i Q_c$, for instance
\[ f_R = \eta_R i Q_c, \]
where $\eta_R$ is a $C^{\infty}$ real function with value $1$ if $R_0 < | x |
< R$ and value $0$ if $| x | < R_0 - 1 \tmop{or} | x | > 2 R$. In that case,
when $R \rightarrow + \infty$, $\| f_R \|_{H_{Q_c}} \rightarrow + \infty$ and
$B_{Q_c} (f_R) \rightarrow C$ a constant independent of $R$, making the
inequality $B_{Q_c} (\varphi) \geqslant K \| \varphi \|^2_{H_{Q_c}}$
impossible (and the local orthogonality conditions are verified for $R_0$
large enough since $f_R = 0$ on $B (0, R_0 - 1)$). That is why we get the
result in a weaker norm in Proposition \ref{CP2prop17}: we will only get for
$\varphi \in H_{Q_c}$, up to some local orthogonality conditions,
\[ B_{Q_c} (\varphi) \geqslant K (c) \| \varphi \|_{H^{\exp}_{Q_c}}^2, \]
where $\| . \|_{H^{\exp}_{Q_c}}$ is defined in subsection \ref{CP21310811}. In
particular, $\| . \|_{H^{\exp}_{Q_c}}$ is not equivalent to $\| .
\|_{H_{Q_c}}$.

\

\subsection{The coercivity norm and other quantities are well defined in
$H_{Q_c}$}\label{CP2Nrouge}

We have defined the energy space $H_{Q_c}$ by the norm
\[ \| \varphi \|^2_{H_{Q_c}} = \int_{\mathbbm{R}^2} | \nabla \varphi |^2 + | 1
   - | Q_c |^2 | | \varphi |^2 +\mathfrak{R}\mathfrak{e}^2 (\overline{Q_c}
   \varphi) . \]
By Lemma \ref{CP2sensmanqu}, we have that, for $\varphi \in H_{Q_c}$,
\begin{equation}
  \int_{\mathbbm{R}^2} \frac{| \varphi |^2}{(1 + | x |)^2} d x \leqslant C (c)
  \| \varphi \|^2_{H_{Q_c}} \label{CP22102est} .
\end{equation}
The goal of this subsection is to show that for $\varphi \in H_{Q_c}$, $\|
\varphi \|_{\mathcal{C}}$ and $B_{Q_c} (\varphi)$, as well as the quantities
in the orthogonality conditions of Proposition \ref{CP205218} and Theorem
\ref{CP2th2}, are well defined. This is done in Lemmas \ref{CP2farnormexist}
to \ref{CP2finitebilinear}.

\begin{lemma}
  \label{CP2farnormexist}There exists $c_{0 \nosymbol} > 0$ such that for $0 <
  c \leqslant c_0$, there exists $C (c) > 0$ such that, for $Q_c$ defined in
  Theorem \ref{th1} and for any $\varphi = Q_c \psi \in H_{Q_c}$,
  \[ \| \varphi \|_{\mathcal{C}}^2 = \int_{\mathbbm{R}^2} | \nabla \psi |^2 |
     Q_c |^4 +\mathfrak{R}\mathfrak{e}^2 (\psi) | Q_c |^4 \leqslant C (c) \|
     \varphi \|_{H_{Q_c}}^2 . \]
\end{lemma}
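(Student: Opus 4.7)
The plan is to exploit the multiplicative decomposition $\varphi = Q_c \psi$ to rewrite $\| \varphi \|_{\mathcal{C}}^2$ entirely in terms of $\varphi$ (not $\psi$), and then bound the result by $\| \varphi \|_{H_{Q_c}}^2$ using the decay estimates on $\nabla Q_c$ together with Lemma \ref{CP2sensmanqu}. Concretely, wherever $Q_c \neq 0$ we have $\psi = \varphi / Q_c$ and hence $\nabla \psi = (Q_c \nabla \varphi - \varphi \nabla Q_c)/Q_c^2$, so
\[ | \nabla \psi |^2 | Q_c |^4 = | Q_c \nabla \varphi - \varphi \nabla Q_c |^2 \]
almost everywhere on $\mathbbm{R}^2$ (the set $\{ Q_c = 0 \}$ has measure zero). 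Similarly, $\mathfrak{R}\mathfrak{e}(\psi) | Q_c |^2 = \mathfrak{R}\mathfrak{e}(\overline{Q_c} \varphi)$ a.e., so
\[ \int_{\mathbbm{R}^2} \mathfrak{R}\mathfrak{e}^2 (\psi) | Q_c |^4 = \int_{\mathbbm{R}^2} \mathfrak{R}\mathfrak{e}^2 (\overline{Q_c} \varphi) \leqslant \| \varphi \|_{H_{Q_c}}^2, \]
which already disposes of the real-part term.

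For the gradient term, I would expand by the triangle inequality
\[ | Q_c \nabla \varphi - \varphi \nabla Q_c |^2 \leqslant 2 | Q_c |^2 | \nabla \varphi |^2 + 2 | \varphi |^2 | \nabla Q_c |^2 \]
and treat the two pieces separately. For the first piece, $| Q_c |$ is bounded on $\mathbbm{R}^2$ by a universal constant (for instance by Theorem \ref{th1} and Lemma \ref{lemme3new}), so $\int | Q_c |^2 | \nabla \varphi |^2 \leqslant K \int | \nabla \varphi |^2 \leqslant K \| \varphi \|_{H_{Q_c}}^2$. For the second piece, Theorem \ref{CP2Qcbehav} gives $| \nabla Q_c |(x) \leqslant C(c)/(1+r)^2$, in particular $| \nabla Q_c |^2 \leqslant C(c)/(1+r)^2$, so
\[ \int_{\mathbbm{R}^2} | \varphi |^2 | \nabla Q_c |^2 \leqslant C(c) \int_{\mathbbm{R}^2} \frac{| \varphi |^2}{(1+|x|)^2} \, dx, \]
and Lemma \ref{CP2sensmanqu} bounds the right-hand side by $C(c) \| \varphi \|_{H_{Q_c}}^2$. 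Combining the three contributions yields the claimed inequality.

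The only slightly subtle point is the almost-everywhere rewriting near the two zeros $\pm \widetilde{d_c}\overrightarrow{e_1}$ of $Q_c$: the function $\psi = \varphi/Q_c$ itself may blow up there, but the quantities $| \nabla \psi |^2 | Q_c |^4$ and $\mathfrak{R}\mathfrak{e}^2(\psi) | Q_c |^4$ are defined by the expressions above (which involve no division by $Q_c$) and are integrable on a neighborhood of each zero since $\varphi \in H^1_{\tmop{loc}}$ and $Q_c$ is smooth. This is the main point where one should be careful, but once one agrees that $\| \varphi \|_{\mathcal{C}}$ is defined via the right-hand side of the identity above, the bound follows cleanly. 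No other step requires real work; the whole proof is essentially an application of Theorem \ref{CP2Qcbehav} and Lemma \ref{CP2sensmanqu}.
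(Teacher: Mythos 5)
Your proof is correct and follows essentially the same route as the paper: the identity $|\nabla\psi|^2|Q_c|^4 = |Q_c\nabla\varphi - \varphi\nabla Q_c|^2$, the bound $\int\mathfrak{R}\mathfrak{e}^2(\psi)|Q_c|^4 = \int\mathfrak{R}\mathfrak{e}^2(\overline{Q_c}\varphi)$, the decay $|\nabla Q_c|\leqslant C(c)/(1+r)^2$ from Theorem \ref{CP2Qcbehav}, and Lemma \ref{CP2sensmanqu} for the weighted $L^2$ term are exactly the ingredients the paper uses. The extra remark on the a.e. rewriting near the zeros of $Q_c$ is a reasonable point of care but does not change the argument.
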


\begin{proof}
  We estimate for $\varphi = Q_c \psi \in H_{Q_c}$, using equations
  (\ref{CP2Qcpaszero}), (\ref{CP22102est}) and $| \nabla Q_c | \leqslant
  \frac{C (c)}{(1 + r)^2}$ from Theorem \ref{CP2Qcbehav}, that
  \begin{eqnarray*}
    \int_{\mathbbm{R}^2} | \nabla \psi |^2 | Q_c |^4 & = & 
    \int_{\mathbbm{R}^2} | \nabla \varphi - \nabla Q_c \psi |^2 | Q_c |^2\\
    & \leqslant & K \int_{\mathbbm{R}^2} | \nabla \varphi |^2 | Q_c |^2 + |
    \nabla Q_c |^2 | Q_c \psi |^2\\
    & \leqslant & K (c) \int_{\mathbbm{R}^2} | \nabla \varphi |^2 + \frac{|
    \varphi |^2}{(1 + r)^4}\\
    & \leqslant & K (c) \| \varphi \|_{H_{Q_c}}^2 .
  \end{eqnarray*}
  Similarly, for $\varphi = Q_c \psi$,
  \[ \int_{\mathbbm{R}^2} \mathfrak{R}\mathfrak{e}^2 (\psi) | Q_c |^4 =
     \int_{\mathbbm{R}^2} \mathfrak{R}\mathfrak{e}^2 (\overline{Q_c} \varphi)
     \leqslant \| \varphi \|^2_{H_{Q_c}} . \]
  We conclude that
  \begin{equation}
    \int_{\mathbbm{R}^2} | \nabla \psi |^2 | Q_c |^4
    +\mathfrak{R}\mathfrak{e}^2 (\psi) | Q_c |^4 \leqslant C (c) \| \varphi
    \|^2_{H_{Q_c}} . \label{CP212372202}
  \end{equation}
\end{proof}

We conclude this subsection with the proof that the quantities in the
orthogonality conditions are well defined for $\varphi \in H_{Q_c}$.

\begin{lemma}
  \label{CP2L2111706}There exists $K > 0$ and, for $c$ small enough, there
  exists $K (c) > 0$ such that, for $Q_c$ defined in Theorem \ref{th1} and
  $\varphi = Q_c \psi \in H_{Q_c}$, $0 < R < \tilde{d}_c^{1 / 2}$, we have
  \[ \int_{B (\pm \tilde{d}_c \overrightarrow{e_1}, R)} |
     \mathfrak{R}\mathfrak{e} (\partial_{x_1} \tilde{V}_{\pm 1} \overline{_{}
     \tilde{V}_{\pm 1} \psi}) | + \int_{B (\pm \tilde{d}_c
     \overrightarrow{e_1}, R)} | \mathfrak{R}\mathfrak{e} (\partial_{x_2}
     \tilde{V}_{\pm 1} \overline{_{} \tilde{V}_{\pm 1} \psi}) | \leqslant K
     (c) \| \varphi \|_{H_{Q_c}}, \]
  \[ \int_{B (\tilde{d}_c \overrightarrow{e_1}, R) \cup B (- \tilde{d}_c
     \overrightarrow{e_1}, R)} | \mathfrak{R}\mathfrak{e} (\partial_{x_{1, 2}}
     Q_c \overline{Q_c \psi^{\neq 0}}) | \leqslant K (c) \| \varphi
     \|_{H_{Q_c}}, \]
  \[ \int_{B (\tilde{d}_c \overrightarrow{e_1}, R) \cup B (- \tilde{d}_c
     \overrightarrow{e_1}, R)} | \mathfrak{R}\mathfrak{e} (\partial_c Q_c
     \overline{Q_c \psi^{\neq 0}}) | \leqslant K (c) \| \varphi \|_{H_{Q_c}}
  \]
  and
  \[ \int_{B (\tilde{d}_c \overrightarrow{e_1}, R) \cup B (- \tilde{d}_c
     \overrightarrow{e_1}, R)} | \mathfrak{R}\mathfrak{e} (- x^{\bot} . \nabla
     Q_c \overline{Q_c \psi^{\neq 0}}) | \leqslant K (c) \| \varphi
     \|_{H_{Q_c}} . \]
\end{lemma}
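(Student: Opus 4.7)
The plan is to bound each integral by Cauchy--Schwarz, with the only subtlety being how to control the factor involving $\psi$ in $L^2(B)$ by $\|\varphi\|_{H_{Q_c}}$. The four bounds split into two families: the ones involving $\widetilde{V}_{\pm 1}\psi$ (no $\psi^{\neq 0}$ projection) and the ones involving $Q_c\psi^{\neq 0}$. The key tools already at our disposal are Lemma \ref{CP2sensmanqu} (controlling $\int|\varphi|^2/(1+|x|)^2$ by $\|\varphi\|_{H_{Q_c}}^2$), Lemma \ref{CP2closecall} (giving $|\widetilde{V}_{\pm 1}/Q_c|$ bounded on $B(\pm\widetilde{d_c}\vec{e}_1,\widetilde{d_c}^{1/2})$), the Poincar\'e-type bound \eqref{CP2jesaistoujourspasfairedesmaths}, and Lemma \ref{CP2farnormexist} (which yields $\|\varphi\|_{\mathcal{C}}\leq K(c)\|\varphi\|_{H_{Q_c}}$).

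For the first family, I would write $\widetilde{V}_{\pm 1}\psi=(\widetilde{V}_{\pm 1}/Q_c)\varphi$ and use Lemma \ref{CP2closecall} to see that the ratio is bounded uniformly on $B(\pm\widetilde{d_c}\vec{e}_1,R)$ since $R<\widetilde{d_c}^{1/2}$. Because $|\partial_{x_j}\widetilde{V}_{\pm 1}|$ is bounded by Lemma \ref{lemme3new}, Cauchy--Schwarz gives
\[
\int_{B(\pm\widetilde{d_c}\vec{e}_1,R)}\bigl|\mathfrak{Re}(\partial_{x_j}\widetilde{V}_{\pm 1}\overline{\widetilde{V}_{\pm 1}\psi})\bigr|\leq K\,\|\varphi\|_{L^2(B)}.
\]
Then I would bound $\|\varphi\|_{L^2(B)}^2\leq\sup_{B}(1+|x|)^2\cdot\int_B|\varphi|^2/(1+|x|)^2$, and since $(1+|x|)^2\leq K/c^2$ on $B(\pm\widetilde{d_c}\vec{e}_1,R)$, Lemma \ref{CP2sensmanqu} yields $\|\varphi\|_{L^2(B)}\leq K(c)\|\varphi\|_{H_{Q_c}}$.

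For the second family, the central observation is that \eqref{CP2jesaistoujourspasfairedesmaths}, integrated in $\widetilde{r}_{\pm 1}\in[0,R]$ against the radial measure, yields
\[
\int_{B(\pm\widetilde{d_c}\vec{e}_1,R)}|Q_c\psi^{\neq 0}|^2\leq K(R)\int_{B(\pm\widetilde{d_c}\vec{e}_1,R)}|Q_c|^4|\nabla\psi|^2\leq K(R)\|\varphi\|_{\mathcal{C}}^2\leq K(R,c)\|\varphi\|_{H_{Q_c}}^2
\]
by Lemma \ref{CP2farnormexist}. After that, Cauchy--Schwarz reduces each of the three remaining bounds to controlling $\|\partial_{x_{1,2}}Q_c\|_{L^2(B)}$, $\|\partial_c Q_c\|_{L^2(B)}$, and $\|x^{\bot}\cdot\nabla Q_c\|_{L^2(B)}$ by some $K(c)$. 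For $\partial_{x_{1,2}}Q_c$ this follows from the universal bound $|\nabla Q_c|\leq K$ coming from Lemmas \ref{lemme3new} and \ref{CP283L33} together with the bounded area of $B$. For $\partial_c Q_c$ this follows from Lemma \ref{CP2dcQcsigma}, which gives $|\partial_c Q_c|\leq K/c^2$ pointwise on $B$. For $x^{\bot}\cdot\nabla Q_c$, the triangle inequality gives $|x^{\bot}|\leq K/c$ on $B$, so $|x^{\bot}\cdot\nabla Q_c|\leq K/c$.

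There is no real obstacle here; the proof is essentially an assembly of previously established estimates. The main point requiring care is bookkeeping the $c$-dependence: all constants degrade as $c\to 0$ because the ball $B(\pm\widetilde{d_c}\vec{e}_1,R)$ sits at distance $\sim 1/c$ from the origin, which is exactly why the statement of the lemma only claims a $c$-dependent constant $K(c)$ rather than a universal one.
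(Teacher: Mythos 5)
Your proof is correct and follows essentially the same route as the paper: Lemma \ref{CP2closecall} plus Cauchy--Schwarz and Lemma \ref{CP2sensmanqu} for the $\widetilde{V}_{\pm 1}\psi$ integrals, and the Poincar\'e-type bound \eqref{CP2jesaistoujourspasfairedesmaths} combined with Lemma \ref{CP2farnormexist} and pointwise $L^\infty$ bounds on $\partial_{x_{1,2}}Q_c$, $\partial_c Q_c$ and $x^{\bot}\cdot\nabla Q_c$ for the $Q_c\psi^{\neq 0}$ integrals. The extra bookkeeping you do on the $c$-dependence of the constants is consistent with the paper's (more terse) argument.
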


We recall that $\psi^{\neq 0} (x) = \psi (x)^{^{}} - \psi^{0, 1}
(\tilde{r}_1)$ in the right half-plane and $\psi^{\neq 0} (x) = \psi (x)^{^{}}
- \psi^{0, - 1} (\tilde{r}_{- 1})$ in the left half-plane, with
$\tilde{r}_{\pm 1} = | x \mp \widetilde{d_c} \overrightarrow{e_1} |$ and
$\psi^{0, \pm 1} (\tilde{r}_{\pm 1})$ the $0$-harmonic of $\psi$ around $\pm
\widetilde{d_c} \overrightarrow{e_1}$.

\begin{proof}
  From Lemma \ref{CP2closecall}, we have, for $\varphi = Q_c \psi \in
  H_{Q_c}$,
  \[ | \tilde{V}_{\pm 1} \psi | = | \varphi | \times \left|
     \frac{\tilde{V}_{\pm 1}}{Q_c} \right| \leqslant 2 | \varphi | \]
  given that $c$ is small enough. We deduce by Cauchy-Schwarz, Lemmas
  \ref{lemme3new} and \ref{CP2sensmanqu} that
  \begin{eqnarray*}
    \int_{B (\pm \tilde{d}_c \overrightarrow{e_1}, R)} |
    \mathfrak{R}\mathfrak{e} (\partial_{x_1} \tilde{V}_{\pm 1} \overline{_{}
    \tilde{V}_{\pm 1} \psi}) | & \leqslant & 2 \int_{B (\pm \tilde{d}_c
    \overrightarrow{e_1}, R)} | \partial_{x_1} \tilde{V}_{\pm 1} | \times |
    \varphi | \leqslant \tmmathbf{} K (c) \| \varphi \|_{H^1 (B (\pm
    \tilde{d}_c \overrightarrow{e_1}, R))}\\
    & \leqslant & K (c) \| \varphi \|_{H_{Q_c}},
  \end{eqnarray*}
  and similarly $\int_{B (\pm \tilde{d}_c \overrightarrow{e_1}, R)} |
  \mathfrak{R}\mathfrak{e} (\partial_{x_2} \tilde{V}_{\pm 1} \overline{_{}
  \tilde{V}_{\pm 1} \psi}) | \leqslant K (c) \| \varphi \|_{H_{Q_c}}$.
  
  \
  
  \
  
  By Cauchy-Schwarz, equation (\ref{CP212372202}) and Theorem \ref{th1} (for
  $p = + \infty$), we conclude that
  \begin{eqnarray*}
    \int_{B (\tilde{d}_c \overrightarrow{e_1}, R) \cup B (- \tilde{d}_c
    \overrightarrow{e_1}, R)} | \mathfrak{R}\mathfrak{e} (\partial_c Q_c
    \overline{Q_c \psi^{\neq 0}}) | & \leqslant & K (c) \sqrt{\int_{B
    (\tilde{d}_c \overrightarrow{e_1}, R) \cup B (- \tilde{d}_c
    \overrightarrow{e_1}, R)} | \nabla \psi |^2 | Q_c |^4}\\
    & \leqslant & K (c) \| \varphi \|_{H_{Q_c}} .
  \end{eqnarray*}
  We can estimate the other terms similarly.
\end{proof}

\subsection{On the definition of $B_{Q_c}$}\label{CP22241103}

We start by explaining how to get $B_{Q_c} (\varphi)$ from the ``natural''
quadratic form
\[ \int_{\mathbbm{R}^2} | \nabla \varphi |^2 - (1 - | Q_c |^2) | \varphi |^2 +
   2\mathfrak{R}\mathfrak{e}^2 (\overline{Q_c} \varphi)
   -\mathfrak{R}\mathfrak{e} (i c \partial_{x_2} \varphi \bar{\varphi}) . \]
For the first three terms of this quantity, it is obvious that they are well
defined for $\varphi \in H_{Q_c}$, but the term $-\mathfrak{R}\mathfrak{e} (i
c \partial_{x_2} \varphi \bar{\varphi})$ is not clearly integrable.

Take a smooth cutoff function $\eta$ such that $\eta (x) = 0$ on $B (\pm
\widetilde{d_c} \overrightarrow{e_1}, 1)$, $\eta (x) = 1$ on $\mathbbm{R}^2
\backslash B (\pm \widetilde{d_c} \overrightarrow{e_1}, 2)$. Then, taking for
now $\varphi = Q_c \psi \in C^{\infty}_c (\mathbbm{R}^2)$,
\[ \mathfrak{R}\mathfrak{e} (i \partial_{x_2} \varphi \bar{\varphi}) = \eta
   \mathfrak{R}\mathfrak{e} (i \partial_{x_2} \varphi \bar{\varphi}) + (1 -
   \eta) \mathfrak{R}\mathfrak{e} (i \partial_{x_2} \varphi \bar{\varphi}), \]
and writing $\varphi = Q_c \psi$,
\begin{eqnarray*}
  \eta \mathfrak{R}\mathfrak{e} (i \partial_{x_2} \varphi \bar{\varphi}) & = &
  \eta \mathfrak{R}\mathfrak{e} (i \partial_{x_2} Q_c \overline{Q_c}) | \psi
  |^2 + \eta \mathfrak{R}\mathfrak{e} (i \partial_{x_2} \psi \bar{\psi}) | Q_c
  |^2\\
  & = & _{\nosymbol} \eta \mathfrak{R}\mathfrak{e} (i \partial_{x_2} Q_c
  \overline{Q_c}) | \psi |^2 - \eta \mathfrak{R}\mathfrak{e} \psi
  \mathfrak{I}\mathfrak{m} \partial_{x_2} \psi | Q_c |^2\\
  & + & \eta \mathfrak{R}\mathfrak{e} \partial_{x_2} \psi
  \mathfrak{I}\mathfrak{m} \psi | Q_c |^2 .
\end{eqnarray*}
Furthermore,
\begin{eqnarray*}
  \eta \mathfrak{R}\mathfrak{e} \partial_{x_2} \psi \mathfrak{I}\mathfrak{m}
  \psi | Q_c |^2 & = & \partial_{x_2} (\eta \mathfrak{R}\mathfrak{e} \psi
  \mathfrak{I}\mathfrak{m} \psi | Q_c |^2)\\
  & - & \partial_{x_2} \eta \mathfrak{R}\mathfrak{e} \psi
  \mathfrak{I}\mathfrak{m} \psi | Q_c |^2 - \eta \mathfrak{R}\mathfrak{e} \psi
  \mathfrak{I}\mathfrak{m} \partial_{x_2} \psi | Q_c |^2\\
  & - & \eta \mathfrak{R}\mathfrak{e} \psi \mathfrak{I}\mathfrak{m} \psi
  \partial_{x_2} (| Q_c |^2),
\end{eqnarray*}
thus we can write
\begin{eqnarray*}
  \int_{\mathbbm{R}^2} \mathfrak{R}\mathfrak{e} (i \partial_{x_2} \varphi
  \bar{\varphi}) & = & \int_{\mathbbm{R}^2} \partial_{x_2} (\eta
  \mathfrak{R}\mathfrak{e} \psi \mathfrak{I}\mathfrak{m} \psi | Q_c |^2)\\
  & + & \int_{\mathbbm{R}^2} (1 - \eta) \mathfrak{R}\mathfrak{e} (i
  \partial_{x_2} \varphi \bar{\varphi}) + \int_{\mathbbm{R}^2 \nosymbol} \eta
  \mathfrak{R}\mathfrak{e} (i \partial_{x_2} Q_c \overline{Q_c}) | \psi |^2\\
  & - & 2 \int_{\mathbbm{R}^2} \eta \mathfrak{R}\mathfrak{e} \psi
  \mathfrak{I}\mathfrak{m} \partial_{x_2} \psi | Q_c |^2 -
  \int_{\mathbbm{R}^2} \partial_{x_2} \eta \mathfrak{R}\mathfrak{e} \psi
  \mathfrak{I}\mathfrak{m} \psi | Q_c |^2\\
  & - & \int_{\mathbbm{R}^2} \eta \mathfrak{R}\mathfrak{e} \psi
  \mathfrak{I}\mathfrak{m} \psi \partial_{x_2} (| Q_c |^2) .
\end{eqnarray*}
The only difficulty here is that the first integral is not well defined for
$\varphi \in H_{Q_c}$, but it is the integral of a derivative. Therefore, this
is why we defined instead the quadratic form
\begin{eqnarray*}
  B_{Q_c} (\varphi) & = & \int_{\mathbbm{R}^2} | \nabla \varphi |^2 - (1 - |
  Q_c |^2) | \varphi |^2 + 2\mathfrak{R}\mathfrak{e}^2 (\overline{Q_c}
  \varphi)\\
  & - & c \int_{\mathbbm{R}^2} (1 - \eta) \mathfrak{R}\mathfrak{e} (i
  \partial_{x_2} \varphi \bar{\varphi}) - c \int_{\mathbbm{R}^2 \nosymbol}
  \eta \mathfrak{R}\mathfrak{e} (i \partial_{x_2} Q_c \overline{Q_c}) | \psi
  |^2\\
  & + & 2 c \int_{\mathbbm{R}^2} \eta \mathfrak{R}\mathfrak{e} \psi
  \mathfrak{I}\mathfrak{m} \partial_{x_2} \psi | Q_c |^2 + c
  \int_{\mathbbm{R}^2} \partial_{x_2} \eta \mathfrak{R}\mathfrak{e} \psi
  \mathfrak{I}\mathfrak{m} \psi | Q_c |^2\\
  & + & c \int_{\mathbbm{R}^2} \eta \mathfrak{R}\mathfrak{e} \psi
  \mathfrak{I}\mathfrak{m} \psi \partial_{x_2} (| Q_c |^2) .
\end{eqnarray*}
It is easy to check that this quantity is independent of the choice of $\eta$.
We will show in Lemma \ref{CP2finitebilinear} that this quantity is well
defined for $\varphi \in H_{Q_c}$. By adding some conditions on $\varphi$, for
instance if $\varphi \in H^1 (\mathbbm{R}^2)$, we can show that
$\int_{\mathbbm{R}^2} \partial_{x_2} (\eta \mathfrak{R}\mathfrak{e} \psi
\mathfrak{I}\mathfrak{m} \psi | Q_c |^2)$ is well defined and is $0$. In these
cases, we therefore have
\[ B_{Q_c} (\varphi) = \int_{\mathbbm{R}^2} | \nabla \varphi |^2 - (1 - | Q_c
   |^2) | \varphi |^2 + 2\mathfrak{R}\mathfrak{e}^2 (\overline{Q_c} \varphi)
   -\mathfrak{R}\mathfrak{e} (i c \partial_{x_2} \varphi \bar{\varphi}) . \]
This is a classical situation for Schr{\"o}dinger equations with nonzero limit
at infinity (see {\cite{MR3686002}} or {\cite{MR3043579}}): the quadratic form
is defined up to a term which is a derivative of some function in some $L^p$
space.

\begin{lemma}
  \label{CP2finitebilinear}There exists $c_{0 \nosymbol} > 0$ such that, for
  $0 < c \leqslant c_0$, $Q_c$ defined in Theorem \ref{th1}, \ there exists a
  constant $C_{\nosymbol} (c) > 0$ such that, for $\varphi = Q_c \psi \in
  H_{Q_c}$ and $\eta$ a smooth cutoff function such that $\eta (x) = 0$ on $B
  (\pm \widetilde{d_c} \overrightarrow{e_1}, 1)$, $\eta (x) = 1$ on
  $\mathbbm{R}^2 \backslash B (\pm \widetilde{d_c} \overrightarrow{e_1}, 2)$,
  we have
  \begin{eqnarray*}
    &  & \int_{\mathbbm{R}^2} | (1 - \eta) \mathfrak{R}\mathfrak{e} (i
    \partial_{x_2} \varphi \bar{\varphi}) | + \int_{\mathbbm{R}^2 \nosymbol} |
    \eta \mathfrak{R}\mathfrak{e} (i \partial_{x_2} Q_c \overline{Q_c}) | \psi
    |^2 |\\
    & + & \int_{\mathbbm{R}^2} | \eta \mathfrak{R}\mathfrak{e} \psi
    \mathfrak{I}\mathfrak{m} (\partial_{x_2} \psi) | Q_c |^2 | +
    \int_{\mathbbm{R}^2} | \partial_{x_2} \eta \mathfrak{R}\mathfrak{e} \psi
    \mathfrak{I}\mathfrak{m} \psi | Q_c |^2 |\\
    & + & \int_{\mathbbm{R}^2} | \eta \mathfrak{R}\mathfrak{e} \psi
    \mathfrak{I}\mathfrak{m} \psi \partial_{x_2} (| Q_c |^2) |\\
    & \leqslant & C (c) \| \varphi \|_{H_{Q_c}}^2 .
  \end{eqnarray*}
\end{lemma}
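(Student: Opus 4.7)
The plan is to bound each of the five integrals separately, organizing the argument according to whether the cutoff carried by each term is $1-\eta$ (bounded support) or $\eta$ (support where $|Q_c|\geq K_1>0$ by (\ref{CP2Qcpaszero})). Two tools will do almost all the work: Theorem \ref{CP2Qcbehav}, which gives $c$-dependent decay for $\nabla Q_c$ and $1-|Q_c|^2$, and the Hardy-type estimate of Lemma \ref{CP2sensmanqu}, $\int_{\mathbb{R}^2}|\varphi|^2/(1+|x|)^2 \leq C(c)\|\varphi\|_{H_{Q_c}}^2$, which absorbs those decaying weights into the energy norm.

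For the two integrals supported in a bounded region, the estimates are straightforward. For the $(1-\eta)$ term, Cauchy--Schwarz gives it is at most $\|\partial_{x_2}\varphi\|_{L^2}\,\|\varphi\|_{L^2(\{\tilde r\leq 2\})}$; the first factor is already in $\|\varphi\|_{H_{Q_c}}$, and on the compact set $\{\tilde r\leq 2\}$ the quantity $|1-|Q_c|^2|$ is bounded below by a positive constant (since $Q_c$ vanishes at the centers and is continuous), so $\int_{\{\tilde r\leq 2\}}|\varphi|^2 \leq K\int|1-|Q_c|^2|\,|\varphi|^2 \leq K\|\varphi\|_{H_{Q_c}}^2$. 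The $\partial_{x_2}\eta$ term is supported in a bounded annulus where $|\mathfrak{Re}\psi\,\mathfrak{Im}\psi|\,|Q_c|^2 \leq |\varphi|^2/2$, and $\int_{\text{bdd}}|\varphi|^2$ is controlled via Lemma \ref{CP2sensmanqu}.

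For the three integrals supported on $\eta$, I use the pointwise bounds $|\mathfrak{Im}(\partial_{x_2}Q_c\,\overline{Q_c})|\leq C(c)/(1+r)^2$ and $|\partial_{x_2}|Q_c|^2|\leq C(c)/(1+r)^3$ from Theorem \ref{CP2Qcbehav}. Since $|\psi|^2|Q_c|^2 = |\varphi|^2$ and $|Q_c|\geq K_1$ on $\operatorname{supp}(\eta)$, the second and fifth integrals are bounded by $C(c)\int|\varphi|^2/(1+r)^2$, which Lemma \ref{CP2sensmanqu} closes. For the remaining bilinear integral $\int\eta|\mathfrak{Re}\psi\,\mathfrak{Im}\partial_{x_2}\psi|\,|Q_c|^2$, I split it by Cauchy--Schwarz. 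The piece $\int\eta(\mathfrak{Re}\psi)^2|Q_c|^2$ equals $\int\eta\,\mathfrak{Re}^2(\overline{Q_c}\varphi)/|Q_c|^2 \leq K\|\varphi\|_{H_{Q_c}}^2$, and for $\int\eta(\mathfrak{Im}\partial_{x_2}\psi)^2|Q_c|^2$ I expand $\nabla\psi = \nabla\varphi/Q_c - \varphi\nabla Q_c/Q_c^2$ to get
\[
\int\eta|Q_c|^2|\nabla\psi|^2 \leq 2\int|\nabla\varphi|^2 + C(c)\int\frac{|\varphi|^2}{(1+r)^4},
\]
both controlled by $C(c)\|\varphi\|_{H_{Q_c}}^2$ via the definition of $\|\cdot\|_{H_{Q_c}}$ and Lemma \ref{CP2sensmanqu}.

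No single step constitutes a real obstacle: since the constant in the statement is allowed to depend on $c$, we only need polynomial decay at infinity, which Theorem \ref{CP2Qcbehav} supplies in abundance. The only place one must be slightly careful is the bookkeeping of where $|Q_c|$ appears in the denominator after writing $\psi = \varphi/Q_c$, but this is always on $\operatorname{supp}(\eta)$ where the uniform lower bound $|Q_c|\geq K_1$ applies.
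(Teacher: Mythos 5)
Your proposal is correct and follows essentially the same route as the paper: a term-by-term estimate using the lower bound $|Q_c|\geqslant K_1$ on $\tmop{supp}(\eta)$ from (\ref{CP2Qcpaszero}), the $c$-dependent decay of $\nabla Q_c$ and $\nabla |Q_c|$ from Theorem \ref{CP2Qcbehav}, and Lemma \ref{CP2sensmanqu} to absorb the weighted $L^2$ norms of $\varphi$; your expansion $\nabla\psi = \nabla\varphi/Q_c - \varphi\nabla Q_c/Q_c^2$ is exactly the content of Lemma \ref{CP2farnormexist}, which the paper cites instead of rederiving. The only point to tighten is your justification that $|1-|Q_c|^2|$ is bounded below on $\{\tilde r\leqslant 2\}$: continuity plus vanishing at the centers only gives this near the zeros, whereas the correct argument (the one the paper invokes) is that $\rho_1(r)<1$ for all $r$ by Lemma \ref{lemme3new} together with $\|\Gamma_c\|_{L^\infty}=o_{c\to 0}(1)$ from Theorem \ref{th1}, so that $|Q_c|\leqslant \rho_1(2)+o_{c\to 0}(1)<1$ on the whole ball of radius $2$.
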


\begin{proof}
  Since $| 1 - | Q_c |^2 | \geqslant K > 0$ on $B (\pm \widetilde{d_c}
  \overrightarrow{e_1}, 2)$ for $c$ small enough by Lemma \ref{lemme3new} and
  Theorem \ref{th1}, we estimate
  \[ \int_{\nosymbol \mathbbm{R}^2} | (1 - \eta) \mathfrak{R}\mathfrak{e} (i c
     \partial_{x_2} \varphi \bar{\varphi}) | \leqslant C (c) \int_{\nosymbol B
     (\tilde{d}_c \overrightarrow{e_1}, 2) \cup B (- \tilde{d}_c
     \overrightarrow{e_1}, 2)} | 1 - | Q_c |^2 | | \varphi | | \partial_{x_2}
     \varphi | \leqslant C (c) \| \varphi \|^2_{H_{Q_c}} . \]
  Furthermore, by (\ref{CP2Qcpaszero}) and Lemma \ref{CP2sensmanqu},
  \[ \int_{\mathbbm{R}^2} | \eta \mathfrak{R}\mathfrak{e} (i c \partial_{x_2}
     Q_c \overline{Q_c}) | \psi |^2 | \leqslant C (c) \int_{\nosymbol
     \mathbbm{R}^2} \eta | \nabla Q_c | | \psi |^2 \leqslant C (c)
     \int_{\mathbbm{R}^2 \nosymbol} \eta | \nabla Q_c | | \varphi |^2
     \leqslant C (c) \| \varphi \|^2_{H_{Q_c}} \]
  since $| \nabla Q_c | \leqslant \frac{C (c)}{(1 + r)^2}$ from Theorem
  \ref{CP2Qcbehav}. By Cauchy-Schwarz, equations (\ref{CP2Qcpaszero}) and
  Lemma \ref{CP2farnormexist},
  \begin{equation}
    \int_{\mathbbm{R}^2} | \eta \mathfrak{R}\mathfrak{e} \psi
    \mathfrak{I}\mathfrak{m} \partial_{x_2} \psi | Q_c |^2 | \leqslant K
    \sqrt{\int_{\mathbbm{R}^2} \eta \mathfrak{R}\mathfrak{e}^2 (\psi) 
    \int_{\mathbbm{R}^2} \eta | \nabla \psi |^2} \leqslant C (c) \| \varphi
    \|^2_{H_{Q_c}} \label{CP2raz1} .
  \end{equation}
  Now, still by equations (\ref{CP2Qcpaszero}) and Lemma
  \ref{CP2farnormexist}, since $\partial_{x_2} \eta$ is supported in $B (\pm
  \widetilde{d_c} \overrightarrow{e_1}, 2) \backslash B (\pm \widetilde{d_c}
  \overrightarrow{e_1}, 1)$,
  \[ \int_{\mathbbm{R}^2} | \partial_{x_2} \eta \mathfrak{R}\mathfrak{e} \psi
     \mathfrak{I}\mathfrak{m} \psi | Q_c |^2 | \leqslant K \| \varphi
     \|^2_{H_{Q_c}} . \]
  Finally, since $| \nabla Q_c | \leqslant \frac{C (c)}{(1 + r)^2}$ by Theorem
  \ref{CP2Qcbehav}, by Cauchy-Schwarz and Lemma \ref{CP2sensmanqu},
  \[ \int_{\mathbbm{R}^2} | \eta \mathfrak{R}\mathfrak{e} \psi
     \mathfrak{I}\mathfrak{m} \psi \partial_{x_2} (| Q_c |^2) | \leqslant C
     (c) \sqrt{\int_{\mathbbm{R}^2} \eta \mathfrak{R}\mathfrak{e}^2 (\psi) 
     \int_{\mathbbm{R}^2} \eta \frac{\mathfrak{I}\mathfrak{m}^2 \psi}{(1 +
     r)^4}} \leqslant C (c) \| \varphi \|^2_{H_{Q_c}} . \]
\end{proof}

\subsection{Density of test functions in $H_{Q_c}$}\label{CP2density}

We shall prove the coercivity with test functions, that are $0$ in a vicinity
of the zeros of $Q_c$. This will allow us to divide by $Q_c$ in several
computations. We give here a density result to show that it is not a problem
to remove a vicinity of the zeros of $Q_c$ for test functions.

\begin{lemma}
  \label{CP2Ndensity}$C^{\infty}_c (\mathbbm{R}^2 \backslash \{
  \widetilde{d_c} \overrightarrow{e_1}, - \widetilde{d_c} \overrightarrow{e_1}
  \}, \mathbbm{C})$ is dense in $H_{Q_c}$ for the norm $\| . \|_{H_{Q_c}}$.
\end{lemma}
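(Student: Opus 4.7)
The plan is to construct an approximating sequence by a three-step procedure: truncate at infinity, then mollify on the compactly supported approximation, then cut out neighborhoods of the two zeros $\pm\widetilde{d_c}\overrightarrow{e_1}$ with a logarithmic cutoff. At each step I verify convergence in the $H_{Q_c}$ norm.

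First, I introduce a smooth cutoff $\chi_R$ equal to $1$ on $B(0,R)$ and $0$ outside $B(0,2R)$ with $|\nabla\chi_R|\le K/R$, and show $\chi_R\varphi\to\varphi$ in $H_{Q_c}$ as $R\to\infty$. The terms $\int |1-|Q_c|^2| |(1-\chi_R)\varphi|^2$ and $\int\mathfrak{Re}^2(\overline{Q_c}(1-\chi_R)\varphi)$ tend to $0$ by dominated convergence since the integrands are dominated by $L^1$ functions and $(1-\chi_R)\to 0$ pointwise. For $\int|(1-\chi_R)\nabla\varphi|^2$ the same argument applies. The only subtle gradient term is
\[ \int|\nabla\chi_R|^2|\varphi|^2 \le \frac{K}{R^2}\int_{R\le|x|\le 2R}|\varphi|^2 \le 4K\int_{R\le|x|\le 2R}\frac{|\varphi|^2}{(1+|x|)^2}, \]
which vanishes as $R\to\infty$ because Lemma \ref{CP2sensmanqu} ensures $\frac{|\varphi|^2}{(1+|x|)^2}\in L^1(\mathbb{R}^2)$, so its tail integral tends to zero.

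Second, since $\chi_R\varphi\in H^1(\mathbb{R}^2)$ with compact support, the standard mollification $\rho_\delta\ast(\chi_R\varphi)\in C^\infty_c(\mathbb{R}^2)$ converges to $\chi_R\varphi$ in $H^1(\mathbb{R}^2)$ as $\delta\to 0$. On a fixed compact set the weights $|1-|Q_c|^2|$ and $|Q_c|^2$ are bounded, so this convergence upgrades automatically to convergence in $H_{Q_c}$.

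Third, given a smooth, compactly supported, and hence bounded approximation $f$, I remove neighborhoods of each zero using the standard logarithmic cutoff exploiting that points have zero $H^1$-capacity in $\mathbb{R}^2$. Concretely, for $\epsilon>0$ small, choose $\chi_\epsilon$ smooth with $\chi_\epsilon=0$ on $B(\pm\widetilde{d_c}\overrightarrow{e_1},\epsilon^2)$, $\chi_\epsilon=1$ outside $B(\pm\widetilde{d_c}\overrightarrow{e_1},\epsilon)$, and radial-logarithmic in between, yielding $\|\nabla\chi_\epsilon\|_{L^2}^2\le K/\log(1/\epsilon)\to 0$. Then $\chi_\epsilon f\in C^\infty_c(\mathbb{R}^2\setminus\{\pm\widetilde{d_c}\overrightarrow{e_1}\})$, and
\[ \int|\nabla\chi_\epsilon|^2|f|^2 \le \|f\|_{L^\infty}^2\|\nabla\chi_\epsilon\|_{L^2}^2 \to 0, \]
while all the other contributions are integrals over the shrinking sets $\{\chi_\epsilon\neq 1\}$ of integrands bounded uniformly in $\epsilon$, hence vanish by dominated convergence. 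A diagonal extraction from the three limits gives the desired approximation.

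The main obstacle is truly Step 1, the truncation at infinity: it is not \emph{a priori} clear that the simple cutoff controls $\int|\nabla\chi_R|^2|\varphi|^2$, because $\varphi$ is not assumed to be in $L^2(\mathbb{R}^2)$. It is precisely Lemma \ref{CP2sensmanqu}, which uses the direction-dependent sign structure of $1-|Q_c|^2$ at infinity from \cite{MR2191764}, that salvages the argument by providing the weighted Hardy-type bound $\int|\varphi|^2/(1+|x|)^2\le C(c)\|\varphi\|_{H_{Q_c}}^2$. Steps 2 and 3 are then routine.
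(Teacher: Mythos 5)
Your proof is correct and follows essentially the same route as the paper: first reduce to the density of $C^{\infty}_c(\mathbbm{R}^2,\mathbbm{C})$ using the local equivalence of $\|\cdot\|_{H_{Q_c}}$ with the $H^1$ norm, then excise the two zeros via a logarithmic cutoff exploiting that points have zero $H^1$-capacity in dimension $2$ (the paper carries out this last step by an integration by parts yielding the limit $-|\varphi|^2(0)\int|\nabla\eta_{\varepsilon}|^2$, while you use the $L^{\infty}$ bound on the compactly supported approximant, which is equally valid). The only notable difference is that you make explicit, via Lemma \ref{CP2sensmanqu}, the truncation at infinity that the paper subsumes under a ``standard density argument'', and you correctly identify that this is where the weighted Hardy-type bound is genuinely needed.
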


This result uses similar arguments as {\cite{DP}} for the density in
$H_{V_1}$. For the sake of completeness, we give a proof of it.

\begin{proof}
  We recall that
  \[ \| \varphi \|^2_{H_{Q_c}} = \int_{\mathbbm{R}^2} | \nabla \varphi |^2 + |
     1 - | Q_c^{\nosymbol} |^2 | | \varphi |^2 +\mathfrak{R}\mathfrak{e}^2
     (\overline{Q_c} \varphi), \]
  and since, for all $\lambda > 0$,
  \[ K_1 (\lambda) \int_{B (0, \lambda)} | \nabla \varphi |^2 + | \varphi |^2
     \leqslant \int_{B (0, \lambda)} | \nabla \varphi |^2 + | 1 - |
     Q_c^{\nosymbol} |^2 | | \varphi |^2 +\mathfrak{R}\mathfrak{e}^2
     (\overline{Q_c} \varphi) \leqslant K_2 (\lambda) \int_{B (0, \lambda)} |
     \nabla \varphi |^2 + | \varphi |^2, \]
  by standard density argument, we have that $C^{\infty}_c (\mathbbm{R}^2,
  \mathbbm{C})$ is dense in $H_{Q_c}$ for the norm $\| . \|_{H_{Q_c}}$.
  
  We are therefore left with the proof that $C^{\infty}_c (\mathbbm{R}^2
  \backslash \{ \widetilde{d_c} \overrightarrow{e_1}, - \widetilde{d_c}
  \overrightarrow{e_1} \}, \mathbbm{C})$ is dense in $C^{\infty}_c
  (\mathbbm{R}^2, \mathbbm{C})$ for the norm $\| . \|_{H_{Q_c}}$. For that, it
  is enough to check that $C^{\infty}_c (B (0, 2) \backslash \{ 0 \},
  \mathbbm{C})$ is dense in $C^{\infty}_c (B (0, 2), \mathbbm{C})$ for the
  norm $\| . \|_{H^1 (B (0, 2))}$. This result is a consequence of the fact
  that the capacity of a point in a ball in dimension 2 is 0. For the sake of
  completeness, we give here a proof of this result.
  
  We define $\eta_{\varepsilon} \in C^0 (B (0, 2), \mathbbm{R})$ the radial
  function with $\eta_{\varepsilon} (x) = 0$ if $| x | \leqslant \varepsilon$,
  $\eta_{\varepsilon} (x) = - \frac{\ln (| x |)}{\ln (\varepsilon)} + 1$ if $|
  x | \in [\varepsilon, 1]$ and $\eta_{\varepsilon} (x) = 1$ if $2 \geqslant |
  x | \geqslant 1$. Then, we define $\eta_{\varepsilon, \lambda} \in
  C^{\infty} (B (0, 2), \mathbbm{R})$ a radial regularisation of
  $\eta_{\varepsilon}$ with $\eta_{\varepsilon, \lambda} (x) = 0$ if $| x |
  \leqslant \varepsilon / 2$ such that $\eta_{\varepsilon, \lambda}
  \rightarrow \eta_{\varepsilon}$ in $H^1 (B (0, 2))$ when $\lambda
  \rightarrow 0$. Finally, we define $\eta_{\varepsilon, \lambda, \delta} =
  \eta_{\varepsilon, \lambda} \left( \frac{x}{\delta} \right)$ for a small
  $\delta > 0$.
  
  Now, given $\varphi \in C^{\infty}_c (B (0, 2), \mathbbm{C})$,
  $\eta_{\varepsilon, \lambda, \delta} \varphi \in C^{\infty}_c (B (0, 2)
  \backslash \{ 0 \}, \mathbbm{C})$ for all $\varepsilon > 0, \lambda > 0,
  \delta > 0$, and by dominated convergence, we check that
  \[ \int_{B (0, 2)} | \eta_{\varepsilon, \lambda, \delta} \varphi |^2
     \rightarrow \int_{B (0, 2)} | \varphi |^2 \]
  when $\delta \rightarrow 0$. Furthermore, we compute by integration by parts
  \begin{eqnarray*}
    \int_{B (0, 2)} | \nabla (\eta_{\varepsilon, \lambda, \delta} \varphi) |^2
    & = & \int_{B (0, 2)} \eta_{\varepsilon, \lambda, \delta}^2 | \nabla
    \varphi |^2 + 2 \int_{B (0, 2)} \nabla \eta_{\varepsilon, \lambda, \delta}
    \eta_{\varepsilon, \lambda, \delta} \mathfrak{R}\mathfrak{e} (\nabla
    \varphi \bar{\varphi})\\
    & + & \int_{B (0, 2)} | \nabla \eta_{\varepsilon, \lambda, \delta} |^2 |
    \varphi |^2\\
    & = & \int_{B (0, 2)} \eta_{\varepsilon, \lambda, \delta}^2 | \nabla
    \varphi |^2 - \int_{B (0, 2)} | \varphi |^2 \Delta \eta_{\varepsilon,
    \lambda, \delta} \eta_{\varepsilon, \lambda, \delta} .
  \end{eqnarray*}
  Now, extending $\varphi$ to $\mathbbm{R}^2$ by $\varphi = 0$ outside of $B
  (0, 2)$, we have by change of variables
  \[ \int_{B (0, 2)} | \varphi |^2 \Delta \eta_{\varepsilon, \lambda, \delta}
     \eta_{\varepsilon, \lambda, \delta} = \int_{\mathbbm{R}^2} | \varphi |^2
     \Delta \eta_{\varepsilon, \lambda, \delta} \eta_{\varepsilon, \lambda,
     \delta} = \int_{\mathbbm{R}^2} | \varphi |^2 (x \delta) \Delta
     \eta_{\varepsilon, \lambda} \eta_{\varepsilon, \lambda} . \]
  When $\delta \rightarrow 0$, we have by dominated convergence that $\int_{B
  (0, 2)} \eta_{\varepsilon, \lambda, \delta}^2 | \nabla \varphi |^2
  \rightarrow \int_{B (0, 2)} | \nabla \varphi |^2$ and
  \[ \int_{\mathbbm{R}^2} | \varphi |^2 (x \delta) \Delta \eta_{\varepsilon,
     \lambda} \eta_{\varepsilon, \lambda} \rightarrow | \varphi |^2 (0)
     \int_{\mathbbm{R}^2} \Delta \eta_{\varepsilon, \lambda}
     \eta_{\varepsilon, \lambda} = - | \varphi |^2 (0) \int_{\mathbbm{R}^2} |
     \nabla \eta_{\varepsilon, \lambda} |^2 . \]
  Now, taking $\lambda \rightarrow 0$, we deduce that
  \[ \lim_{\lambda \rightarrow 0} \lim_{\delta \rightarrow 0} \int_{B (0, 2)}
     | \nabla (\eta_{\varepsilon, \lambda, \delta} \varphi) |^2 = \int_{B (0,
     2)} | \nabla \varphi |^2 - | \varphi |^2 (0) \int_{\mathbbm{R}^2} |
     \nabla \eta_{\varepsilon} |^2 . \]
  From the definition of $\eta_{\varepsilon}$, we compute
  \begin{eqnarray*}
    \int_{\mathbbm{R}^2} | \nabla \eta_{\varepsilon} |^2 & = &
    \int_{\varepsilon}^1 \frac{1}{\ln (\varepsilon)^2 r^2} r d r\\
    & = & \frac{1}{\ln (\varepsilon)^2} \int_{\varepsilon}^1 \frac{1}{r} d
    r\\
    & = & \frac{- 1}{\ln (\varepsilon)} \rightarrow 0
  \end{eqnarray*}
  when $\varepsilon \rightarrow 0$. We deduce that
  \[ \lim_{\varepsilon \rightarrow 0} \lim_{\lambda \rightarrow 0}
     \lim_{\delta \rightarrow 0} \int_{B (0, 2)} | \nabla (\eta_{\varepsilon,
     \lambda, \delta} \varphi) |^2 = \int_{B (0, 2)} | \nabla \varphi |^2 . \]
  This concludes the proof of this lemma.
\end{proof}

\section{Coercivity results in $H_{Q_c}$}\label{CP2NSEC3}

This section is devoted to the proofs of Propositions \ref{CP2P811} and
\ref{CP205218}. Here, we will do most of the computations with test functions,
that is functions in $C^{\infty}_c (\mathbbm{R}^2 \backslash \{ \tilde{d}_c
\vec{e}_1, - \tilde{d}_c \vec{e}_1 \}, \mathbbm{C})$. This will allow to do
many computations, including dividing by $Q_c$ in some quantities.

\subsection{Expression of the quadratic forms\label{CP21202CompQc}}

We recall that $\eta$ if a smooth cutoff function such that $\eta (x) = 0$ on
$B (\pm \widetilde{d_c} \overrightarrow{e_1}, 1)$, $\eta (x) = 1$ on
$\mathbbm{R}^2 \backslash (B (\widetilde{d_c} \overrightarrow{e_1}, 2) \cup B
(- \widetilde{d_c} \overrightarrow{e_1}, 2))$, where $\pm \widetilde{d_c}
\overrightarrow{e_1}$ are the zeros of $Q_c$. Furthermore, from {\cite{DP}},
we recall the quadratic form around a vortex $V_1$:
\[ B_{V_1} (\varphi) = \int_{\mathbbm{R}^2} | \nabla \varphi |^2 - (1 - | V_1
   |^2) | \varphi |^2 + 2\mathfrak{R}\mathfrak{e}^2 (\overline{V_1} \varphi) .
\]
We want to write the quadratic form around $V_1$ and $Q_c$ in a special form.
For the one around $Q_c$, it will be of the form $B_{Q_c}^{\exp}$, defined in
(\ref{CP2Btilda}).

\begin{lemma}
  \label{CP2L412602}For $\varphi = Q_c \psi \in C^{\infty}_c (\mathbbm{R}^2
  \backslash \{ \widetilde{d_c} \overrightarrow{e_1}, - \widetilde{d_c}
  \overrightarrow{e_1} \}, \mathbbm{C})$, we have
  \[ \langle L_{Q_c} (\varphi), \varphi \rangle = B_{Q_c}^{\exp} (\varphi), \]
  where $B_{Q_c}^{\exp} (\varphi)$ is defined in (\ref{CP2Btilda}).
  Furthermore, for $\varphi = V_1 \psi \in C^{\infty}_c (\mathbbm{R}^2
  \backslash \{ 0 \}, \mathbbm{C})$, where $V_1$ is centered at $0$, and
  $\tilde{\eta}$ a smooth radial cutoff function with value $0$ in $B (0, 1)$,
  and value $1$ outside of $B (0, 2)$,
  \begin{eqnarray*}
    B_{V_1} (\varphi) & = & \int_{\mathbbm{R}^2} (1 - \tilde{\eta}) (| \nabla
    \varphi |^2 - (1 - | V_1 |^2) | \varphi |^2 + 2\mathfrak{R}\mathfrak{e}^2
    (\overline{V_1} \varphi))\\
    & - & \int_{\mathbbm{R}^2} \nabla \tilde{\eta} .
    (\mathfrak{R}\mathfrak{e} (\nabla V_1 \overline{V_1}) | \psi |^2 -
    2\mathfrak{I}\mathfrak{m} (\nabla V_1 \overline{V_1})
    \mathfrak{R}\mathfrak{e} (\psi) \mathfrak{I}\mathfrak{m} (\psi))\\
    & + & \int_{\mathbbm{R}^2} \tilde{\eta} (| \nabla \psi |^2 | V_1 |^2 +
    2\mathfrak{R}\mathfrak{e}^2 (\psi) | V_1 |^4 + 4\mathfrak{I}\mathfrak{m}
    (\nabla V_1 \overline{V_1}) \mathfrak{I}\mathfrak{m} (\nabla \psi)
    \mathfrak{R}\mathfrak{e} (\psi)) .
  \end{eqnarray*}
\end{lemma}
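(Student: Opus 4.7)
The plan is to prove both identities by an integration-by-parts argument that exploits compact support of $\varphi$ away from the zeros of $Q_c$ (respectively $V_1$), together with the equation satisfied by $Q_c$ (respectively $V_1$).

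\textbf{First identity.} I would start from the fact that, for $\varphi \in C^{\infty}_c(\mathbbm{R}^2 \setminus \{\pm \widetilde{d_c}\vec{e}_1\},\mathbbm{C})$, a direct integration by parts gives
\[
\langle L_{Q_c}(\varphi),\varphi\rangle = \int_{\mathbbm{R}^2} |\nabla\varphi|^2 - \mathfrak{Re}(ic\partial_{x_2}\varphi\,\bar\varphi) - (1-|Q_c|^2)|\varphi|^2 + 2\mathfrak{Re}^2(\overline{Q_c}\varphi).
\]
Split the integrand as $\eta(\ldots)+(1-\eta)(\ldots)$. The $(1-\eta)$ piece coincides with the first line of $B^{\exp}_{Q_c}(\varphi)$. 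On the support of $\eta$, $Q_c$ does not vanish, so $\psi=\varphi/Q_c$ is smooth; I would then expand $\nabla\varphi=\nabla Q_c\,\psi+Q_c\nabla\psi$ and use the identities $\mathfrak{Re}(a\bar b)=\mathfrak{Re}(a)\mathfrak{Re}(b)+\mathfrak{Im}(a)\mathfrak{Im}(b)$, $\mathfrak{Re}(\psi\overline{\nabla\psi})=\tfrac12\nabla(|\psi|^2)$, and $\mathfrak{Im}(\psi\overline{\nabla\psi})=\nabla(\mathfrak{Re}\psi\,\mathfrak{Im}\psi)-2\mathfrak{Re}(\psi)\mathfrak{Im}(\nabla\psi)$ to produce the cross-terms $\mathfrak{Re}(\nabla Q_c\overline{Q_c})\cdot\nabla(|\psi|^2)$ and $-2\mathfrak{Im}(\nabla Q_c\overline{Q_c})\cdot\mathfrak{Im}(\psi\overline{\nabla\psi})$.

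Next I would integrate these cross-terms by parts against $\eta$. The key step is to compute the resulting divergences by using the equation $(\mathrm{TW}_c)(Q_c)=0$: since $\Delta Q_c\,\overline{Q_c}=-ic\partial_{x_2}Q_c\,\overline{Q_c}-(1-|Q_c|^2)|Q_c|^2$, one obtains
\[
\operatorname{div}\bigl[\mathfrak{Re}(\nabla Q_c\overline{Q_c})\bigr]=c\,\mathfrak{Im}(\partial_{x_2}Q_c\overline{Q_c})-(1-|Q_c|^2)|Q_c|^2+|\nabla Q_c|^2,
\qquad \operatorname{div}\bigl[\mathfrak{Im}(\nabla Q_c\overline{Q_c})\bigr]=-\tfrac{c}{2}\partial_{x_2}(|Q_c|^2).
\]
Plugging these into the IBP formulas generates terms involving $\nabla\eta$, $|\nabla Q_c|^2|\psi|^2$, $(1-|Q_c|^2)|Q_c|^2|\psi|^2$, $c\,\mathfrak{Im}(\partial_{x_2}Q_c\overline{Q_c})|\psi|^2$, and $c\,\partial_{x_2}(|Q_c|^2)\mathfrak{Re}\psi\,\mathfrak{Im}\psi$. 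In parallel, I would rewrite the transport piece: expanding $\partial_{x_2}\varphi=\partial_{x_2}Q_c\,\psi+Q_c\partial_{x_2}\psi$ gives a $|\psi|^2$ contribution that cancels the transport term above, plus a term $-c\eta|Q_c|^2\mathfrak{Re}(i\partial_{x_2}\psi\,\bar\psi)$; the latter is rewritten using $\mathfrak{Re}(\partial_{x_2}\psi)\mathfrak{Im}(\psi)=\partial_{x_2}(\mathfrak{Re}\psi\,\mathfrak{Im}\psi)-\mathfrak{Re}(\psi)\mathfrak{Im}(\partial_{x_2}\psi)$ followed by IBP, producing the $2c\eta|Q_c|^2\mathfrak{Re}(\psi)\mathfrak{Im}(\partial_{x_2}\psi)$ interior term, the boundary term $c\,\partial_{x_2}\eta\,|Q_c|^2\mathfrak{Re}\psi\,\mathfrak{Im}\psi$, and an extra $c\eta\partial_{x_2}(|Q_c|^2)\mathfrak{Re}\psi\,\mathfrak{Im}\psi$ that exactly cancels its counterpart from the $|\nabla\varphi|^2$ computation. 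The remaining terms are precisely those listed in $B^{\exp}_{Q_c}(\varphi)$.

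\textbf{Second identity.} This is treated identically, but is simpler: there is no transport term, and since $V_1$ satisfies $\Delta V_1=(|V_1|^2-1)V_1$, one has $\Delta V_1\overline{V_1}\in\mathbbm{R}$, so $\operatorname{div}[\mathfrak{Im}(\nabla V_1\overline{V_1})]=0$ and $\operatorname{div}[\mathfrak{Re}(\nabla V_1\overline{V_1})]=-(1-|V_1|^2)|V_1|^2+|\nabla V_1|^2$. The same IBP manipulations against $\tilde\eta$ then yield the stated formula, with no $\partial_{x_2}\tilde\eta$ term appearing.

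\textbf{Main obstacle.} The conceptual content is minimal; the challenge is purely bookkeeping, namely tracking the eight or so terms produced at each IBP and checking that the ones generated by the TW equation ($|\nabla Q_c|^2|\psi|^2$, $(1-|Q_c|^2)|Q_c|^2|\psi|^2$, $c\,\mathfrak{Im}(\partial_{x_2}Q_c\overline{Q_c})|\psi|^2$, $c\partial_{x_2}(|Q_c|^2)\mathfrak{Re}\psi\,\mathfrak{Im}\psi$) cancel exactly against the corresponding contributions from the transport term and the algebraic expansion of $|\nabla\varphi|^2$. Writing the identities $\mathfrak{Im}(\psi\overline{\nabla\psi})=\nabla(\mathfrak{Re}\psi\,\mathfrak{Im}\psi)-2\mathfrak{Re}(\psi)\mathfrak{Im}(\nabla\psi)$ once and applying them mechanically avoids confusion; the restriction to test functions ensures all IBP are justified without boundary terms.
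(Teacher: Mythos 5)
Your proof is correct and follows essentially the same route as the paper: split with the cutoff $\eta$, expand multiplicatively in $\psi$ on the support of $\eta$, integrate by parts, and use $(\mathrm{TW}_c)(Q_c)=0$ (in the form $c\,\mathfrak{R}\mathfrak{e}(\partial_{x_2}Q_c\overline{Q_c})=\mathfrak{R}\mathfrak{e}(i\Delta Q_c\overline{Q_c})$) to generate the cancellations. The only difference is the order of operations --- the paper decomposes the operator as $(1-\eta)L_{Q_c}(\varphi)+\eta\, Q_c L'_{Q_c}(\psi)$ before pairing, whereas you first form the global quadratic form and then convert the $\eta$-region via the divergence identities --- but the underlying computations coincide.
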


\begin{proof}
  We recall that $L_{Q_c} (\varphi) = - i c \partial_{x_2} \varphi - \Delta
  \varphi - (1 - | Q_c |^2) \varphi + 2\mathfrak{R}\mathfrak{e}
  (\overline{Q_c} \varphi) Q_c$. Writing $\varphi = Q_c \psi \in C^{\infty}_c
  (\mathbbm{R}^2 \backslash \{ \widetilde{d_c} \overrightarrow{e_1}, -
  \widetilde{d_c} \overrightarrow{e_1} \}, \mathbbm{C})$, we decompose
  \[ L_{Q_c} (\varphi) = - i c \partial_{x_2} \psi Q_c - \Delta \psi Q_c - 2
     \nabla Q_c . \nabla \psi + 2\mathfrak{R}\mathfrak{e} (\psi) | Q_c |^2 Q_c
     + \tmop{TW}_c (Q_c) \psi . \]
  Since $\tmop{TW}_c (Q_c) = 0$,
  \begin{eqnarray*}
    &  & \langle L_{Q_c} (\varphi), \varphi \rangle\\
    & = & \langle (1 - \eta) L_{Q_c} (\varphi), \varphi \rangle + \langle
    \eta L_{Q_c} (\varphi), Q_c \psi \rangle\\
    & = & \int_{\mathbbm{R}^2} (1 - \eta) \mathfrak{R}\mathfrak{e} ((- i c
    \partial_{x_2} \varphi - \Delta \varphi - (1 - | Q_c |^2) \varphi +
    2\mathfrak{R}\mathfrak{e} (\overline{Q_c} \varphi) Q_c) \bar{\varphi})\\
    & + & \int_{\mathbbm{R}^2} \eta \mathfrak{R}\mathfrak{e} ((- i c
    \partial_{x_2} \psi Q_c - \Delta \psi Q_c - 2 \nabla Q_c . \nabla \psi +
    2\mathfrak{R}\mathfrak{e} (\psi) | Q_c |^2 Q_c) \overline{Q_c \psi}) .
  \end{eqnarray*}
  By integration by parts,
  \begin{eqnarray*}
    &  & \int_{\mathbbm{R}^2} (1 - \eta) \mathfrak{R}\mathfrak{e} ((- i c
    \partial_{x_2} \varphi - \Delta \varphi - (1 - | Q_c |^2) \varphi +
    2\mathfrak{R}\mathfrak{e} (\overline{Q_c} \varphi) Q_c) \bar{\varphi})\\
    & = & \int_{\mathbbm{R}^2} (1 - \eta) (| \nabla \varphi |^2
    -\mathfrak{R}\mathfrak{e} (i c \partial_{x_2} \varphi \bar{\varphi}) - (1
    - | Q_c |^2) | \varphi |^2 + 2\mathfrak{R}\mathfrak{e}^2 (\overline{Q_c}
    \varphi))\\
    & - & \int_{\mathbbm{R}^2} \nabla \eta .\mathfrak{R}\mathfrak{e} (\nabla
    \varphi \bar{\varphi}) .
  \end{eqnarray*}
  Similarly, we compute
  \begin{eqnarray*}
    &  & \int_{\mathbbm{R}^2} \eta \mathfrak{R}\mathfrak{e} ((- i c
    \partial_{x_2} \psi Q_c - \Delta \psi Q_c - 2 \nabla Q_c . \nabla \psi +
    2\mathfrak{R}\mathfrak{e} (\psi) | Q_c |^2 Q_c) \overline{Q_c \psi})\\
    & = & \int_{\mathbbm{R}^2} \eta (\mathfrak{R}\mathfrak{e} (- i c
    \partial_{x_2} \psi \bar{\psi} | Q_c |^2) -\mathfrak{R}\mathfrak{e}
    (\Delta \psi \bar{\psi}) | Q_c |^2 + 2\mathfrak{R}\mathfrak{e}^2 (\psi) |
    Q_c |^4 - 2\mathfrak{R}\mathfrak{e} (\nabla Q_c . \nabla \psi
    \overline{Q_c \psi}))\\
    & = & \int_{\mathbbm{R}^2} \eta (c | Q_c |^2 (\mathfrak{I}\mathfrak{m}
    (\partial_{x_2} \psi) \mathfrak{R}\mathfrak{e} (\psi)
    -\mathfrak{R}\mathfrak{e} (\partial_{x_2} \psi) \mathfrak{I}\mathfrak{m}
    (\psi)) + 2\mathfrak{R}\mathfrak{e}^2 (\psi) | Q_c |^4 -
    2\mathfrak{R}\mathfrak{e} (\nabla Q_c . \nabla \psi \overline{Q_c
    \psi}))\\
    & + & \int_{\mathbbm{R}^2} \eta | \nabla \psi |^2 | Q_c |^2 + 2
    \int_{\mathbbm{R}^2} \eta \mathfrak{R}\mathfrak{e} (\nabla Q_c
    \overline{Q_c}) .\mathfrak{R}\mathfrak{e} (\nabla \psi \bar{\psi}) +
    \int_{\mathbbm{R}^2} \nabla \eta .\mathfrak{R}\mathfrak{e} (\nabla \psi
    \bar{\psi}) | Q_c |^2 .
  \end{eqnarray*}
  We continue, we have
  \begin{eqnarray*}
    &  & - \int_{\mathbbm{R}^2} \eta | Q_c |^2 \mathfrak{R}\mathfrak{e}
    (\partial_{x_2} \psi) \mathfrak{I}\mathfrak{m} (\psi)\\
    & = & \int_{\mathbbm{R}^2} \eta | Q_c |^2 \mathfrak{R}\mathfrak{e} (\psi)
    \mathfrak{I}\mathfrak{m} (\partial_{x_2} \psi) + \int_{\mathbbm{R}^2}
    \partial_{x_2} \eta | Q_c |^2 \mathfrak{R}\mathfrak{e} (\psi)
    \mathfrak{I}\mathfrak{m} (\psi) + 2 \int_{\mathbbm{R}^2} \eta
    \mathfrak{R}\mathfrak{e} (\partial_{x_2} Q_c \overline{Q_c})
    \mathfrak{R}\mathfrak{e} (\psi) \mathfrak{I}\mathfrak{m} (\psi),
  \end{eqnarray*}
  as well as
  \[ \int_{\mathbbm{R}^2} \eta \mathfrak{R}\mathfrak{e} (\nabla Q_c . \nabla
     \psi \overline{Q_c \psi}) = \int_{\mathbbm{R}^2} \eta
     \mathfrak{R}\mathfrak{e} (\nabla Q_c \overline{Q_c})
     .\mathfrak{R}\mathfrak{e} (\nabla \psi \bar{\psi}) + \int_{\mathbbm{R}^2}
     \eta \mathfrak{I}\mathfrak{m} (\nabla Q_c \overline{Q_c})
     \mathfrak{I}\mathfrak{m} (\nabla \psi \bar{\psi}), \]
  therefore
  \begin{eqnarray*}
    &  & \int_{\mathbbm{R}^2} \eta \mathfrak{R}\mathfrak{e} ((- i c
    \partial_{x_2} \psi Q_c - \Delta \psi Q_c - 2 \nabla Q_c . \nabla \psi +
    2\mathfrak{R}\mathfrak{e} (\psi) | Q_c |^2 Q_c) \overline{Q_c \psi})\\
    & = & \int_{\mathbbm{R}^2} \eta (| \nabla \psi |^2 | Q_c |^2 +
    2\mathfrak{R}\mathfrak{e}^2 (\psi) | Q_c |^4 + 2 c\mathfrak{I}\mathfrak{m}
    (\partial_{x_2} \psi) \mathfrak{R}\mathfrak{e} (\psi))\\
    & + & \int_{\mathbbm{R}^2} \eta (2 c\mathfrak{R}\mathfrak{e}
    (\partial_{x_2} Q_c \overline{Q_c}) \mathfrak{R}\mathfrak{e} (\psi)
    \mathfrak{I}\mathfrak{m} (\psi) - 2\mathfrak{I}\mathfrak{m} (\nabla Q_c
    \overline{Q_c}) \mathfrak{I}\mathfrak{m} (\nabla \psi \bar{\psi}))\\
    & + & c \int_{\mathbbm{R}^2} \partial_{x_2} \eta \mathfrak{R}\mathfrak{e}
    (\psi) \mathfrak{I}\mathfrak{m} (\psi) | Q_c |^2 + \int_{\mathbbm{R}^2}
    \nabla \eta .\mathfrak{R}\mathfrak{e} (\nabla \psi \bar{\psi}) | Q_c |^2 .
  \end{eqnarray*}
  Since $i c \partial_{x_2} Q_c = \Delta Q_c + (1 - | Q_c |^2) Q_c$, we have
  $c\mathfrak{R}\mathfrak{e} (\partial_{x_2} Q_c \overline{Q_c})
  =\mathfrak{R}\mathfrak{e} (i \Delta Q_c \overline{Q_c})$. By integration by
  parts,
  \begin{eqnarray*}
    &  & 2 \int_{\mathbbm{R}^2} \eta \mathfrak{R}\mathfrak{e} (i \Delta Q_c
    \overline{Q_c}) \mathfrak{R}\mathfrak{e} (\psi) \mathfrak{I}\mathfrak{m}
    (\psi)\\
    & = & 2 \int_{\mathbbm{R}^2} \nabla \eta .\mathfrak{I}\mathfrak{m}
    (\nabla Q_c \overline{Q_c}) \mathfrak{R}\mathfrak{e} (\psi)
    \mathfrak{I}\mathfrak{m} (\psi)\\
    & - & 2 \int_{\mathbbm{R}^2} \eta \mathfrak{I}\mathfrak{m} (\nabla Q_c
    \overline{Q_c}) .\mathfrak{R}\mathfrak{e} (\nabla \psi)
    \mathfrak{I}\mathfrak{m} (\psi) - 2 \int_{\mathbbm{R}^2} \eta
    \mathfrak{I}\mathfrak{m} (\nabla Q_c \overline{Q_c})
    .\mathfrak{R}\mathfrak{e} (\psi) \mathfrak{I}\mathfrak{m} (\nabla \psi),
  \end{eqnarray*}
  and
  \begin{eqnarray*}
    &  & - 2 \int_{\mathbbm{R}^2} \eta \mathfrak{I}\mathfrak{m} (\nabla Q_c
    \overline{Q_c}) \mathfrak{I}\mathfrak{m} (\nabla \psi \bar{\psi})\\
    & = & - 2 \int_{\mathbbm{R}^2} \eta \mathfrak{I}\mathfrak{m} (\nabla Q_c
    \overline{Q_c}) (\mathfrak{I}\mathfrak{m} (\nabla \psi)
    \mathfrak{R}\mathfrak{e} (\psi) -\mathfrak{I}\mathfrak{m} (\psi)
    \mathfrak{R}\mathfrak{e} (\nabla \psi)) .
  \end{eqnarray*}
  Combining these estimates, with
  \[ \int_{\mathbbm{R}^2} \nabla \eta .\mathfrak{R}\mathfrak{e} (\nabla
     \varphi \bar{\varphi}) = \int_{\mathbbm{R}^2} \nabla \eta .
     (\mathfrak{R}\mathfrak{e} (\nabla Q_c \overline{Q_c}) | \psi |^2
     +\mathfrak{R}\mathfrak{e} (\nabla \psi \bar{\psi}) | Q_c |^2), \]
  we conclude the proof of
  \[ \langle L_{Q_c} (\varphi), \varphi \rangle = B_{Q_c}^{\exp} (\varphi) .
  \]
  Now, for the proof for $B_{V_1} (\varphi)$, the computations are identical,
  simply replacing $c$ by $0$, $\eta$ by $\tilde{\eta}$, and $Q_c$ by $V_1$.
\end{proof}

\subsection{A coercivity result for the quadratic form around one
vortex}\label{CP2s24}

This subsection is devoted to the proof of Proposition \ref{CP2P811}, and a
localized version of it (see Lemma \ref{CP2L422602}).

\subsubsection{Coercivity for test functions}

\begin{proof}[of Proposition \ref{CP2P811}]
  We recall the result from {\cite{DP}}, see Lemma 3.1 and equation (2.42)
  there. If $\varphi = V_1 \psi \in C^{\infty}_c (\mathbbm{R}^2 \backslash \{
  0 \}, \mathbbm{C})$ with the two orthogonality conditions
  \[ \int_{B (0, R)} \mathfrak{R}\mathfrak{e} (\partial_{x_1} V_1
     \bar{\varphi}) = \int_{B (0, R)} \mathfrak{R}\mathfrak{e} (\partial_{x_1}
     V_1 \bar{\varphi}) = 0, \]
  then, writing $\psi^0 (x) = \frac{1}{2 \pi} \int_0^{2 \pi} \psi (| x | \cos
  (\theta), | x | \sin (\theta) d \theta)$, the 0-harmonic around $0$ of
  $\psi$, and $\psi^{\neq 0} = \psi - \psi^0$, then
  \[ B_{V_1} (\varphi) \geqslant K \int_{\mathbbm{R}^2} | \nabla (V_1
     \psi^{\neq 0}) |^2 + | \nabla \psi^0 |^2 | V_1 |^2 + \frac{| V_1
     \psi^{\neq 0} |^2}{(1 + r)^2} +\mathfrak{R}\mathfrak{e}^2 (\psi) | V_1
     |^4 . \]
  We recall from Lemma \ref{lemme3new} that there exists $K_1 > 0$ such that
  $K_1 \leqslant \frac{| V_1 |}{r} \leqslant \frac{1}{K_1}$, and that $| V_1
  |$ is a radial function around $0$. Therefore, by Hardy inequality in
  dimension 4,
  \[ \int_{B (0, 1)} | \psi^0 |^2 \leqslant K \left( \int_{B (0, 2)} | \nabla
     \psi^0 |^2 | V_1 |^2 + \int_{B (0, 2) \backslash B (0, 1)} | \psi^0 |^2
     \right) . \]
  By Poincar{\'e} in{\'e}quality, using $\int_{B (0, R) \backslash B (0, R /
  2)} \mathfrak{I}\mathfrak{m} (\psi) = 0$ and $| V_1 |^2 \geqslant K$ outside
  of $B (0, 1)$, we have
  \[ \int_{B (0, 10) \backslash B (0, 1)} | \psi^0 |^2 \leqslant K \left(
     \int_{B (0, R)} | \nabla \psi^0 |^2 | V_1 |^2 +\mathfrak{R}\mathfrak{e}^2
     (\psi^0) | V_1 |^4 \right) . \]
  Here, the constant $K > 0$ depends on $R > 0$, but we consider $R$ as a
  universal constant. We deduce that
  \begin{eqnarray*}
    \int_{B (0, 10)} | \varphi |^2 & \leqslant & \int_{B (0, 10)} | V_1 \psi
    |^2\\
    & \leqslant & K \left( \int_{B (0, 10)} | V_1 \psi^0 |^2 + \int_{B (0,
    10)} | V_1 \psi^{\neq 0} |^2 \right)\\
    & \leqslant & K \left( \int_{\mathbbm{R}^2} | \nabla (V_1 \psi^{\neq 0})
    |^2 + | \nabla \psi^0 |^2 | V_1 |^2 + \frac{| V_1 \psi^{\neq 0} |^2}{(1 +
    r)^2} +\mathfrak{R}\mathfrak{e}^2 (\psi) | V_1 |^4 \right) .
  \end{eqnarray*}
  Similarly,
  \begin{eqnarray*}
    \int_{B (0, 10)} | \nabla \varphi |^2 & \leqslant & \int_{B (0, 10)} |
    \nabla (V_1 (\psi^0 + \psi^{\neq 0})) |^2\\
    & \leqslant & K \left( \int_{B (0, 10)} | \nabla (V_1 \psi^0) |^2 +
    \int_{B (0, 10)} | \nabla (V_1 \psi^{\neq 0}) |^2 \right)\\
    & \leqslant & K \left( \int_{B (0, 10)} | \nabla \psi^0 |^2 | V_1 |^2 + |
    \psi^0 |^2 | \nabla V_1 |^2 + \int_{B (0, 10)} | \nabla (V_1 \psi^{\neq
    0}) |^2 \right)\\
    & \leqslant & K \left( \int_{\mathbbm{R}^2} | \nabla (V_1 \psi^{\neq 0})
    |^2 + | \nabla \psi^0 |^2 | V_1 |^2 + \frac{| V_1 \psi^{\neq 0} |^2}{(1 +
    r)^2} +\mathfrak{R}\mathfrak{e}^2 (\psi) | V_1 |^4 \right) .
  \end{eqnarray*}
  Finally, outside of $B (0, 5)$, we have, by Lemma \ref{lemme3new}, that
  \[ \int_{\mathbbm{R}^2 \backslash B (0, 5)} | \nabla \psi |^2 \leqslant K
     \int_{\mathbbm{R}^2 \backslash B (0, 5)} | \nabla \psi |^2 | V_1 |^2 . \]
  Let us show that
  \[ \int_{\mathbbm{R}^2 \backslash B (0, 5)} \frac{| \psi |^2}{r^2 \ln^2 (r)}
     \leqslant K \left( \int_{\mathbbm{R}^2 \backslash B (0, 5)} | \nabla \psi
     |^2 + \int_{B (0, 10) \backslash B (0, 5)} | \psi |^2 \right) . \]
  This is a Hardy type inequality, and it would conclude the proof of this
  proposition. Remark that for the harmonics other than zeros, this is a
  direct consequence of
  \[ \int_{\mathbbm{R}^2 \backslash B (0, 5)} \frac{| \psi^{\neq 0} |^2}{r^2}
     \leqslant \int_{\mathbbm{R}^2 \backslash B (0, 5)} | \nabla \psi |^2 . \]
  We therefore suppose that $\psi$ is a radial compactly supported function.
  We define $\chi$ a smooth radial cutoff function with $\chi (r) = 0$ if $r
  \leqslant 4$ and $\chi (r) = 1$ if $r \geqslant 5$. Then, by Cauchy-Schwarz,
  \begin{eqnarray*}
    \left| \int_{\mathbbm{R}^2 \backslash B (0, 5)} \frac{\chi (r) | \psi
    |^2}{r^2 \ln^2 (r)} \right| & = & \left| - \int_5^{+ \infty} \chi (r) |
    \psi |^2 (r) \partial_r \left( \frac{1}{\ln (r)} \right) d r \right|\\
    & = & \left| \int_5^{+ \infty} \partial_r (\chi | \psi |^2) (r) \frac{d
    r}{\ln (r)} \right|\\
    & \leqslant & K \left( \int_{B (0, 10) \backslash B (0, 5)} | \psi |^2 +
    \int_5^{+ \infty} \chi (r) | \psi | (r) \partial_r | \psi | (r) \frac{d
    r}{\ln (r)} \right)\\
    & \leqslant & K \left( \int_{B (0, 10) \backslash B (0, 5)} | \psi |^2 +
    \sqrt{\int_{\mathbbm{R}^2 \backslash B (0, 5)} \frac{\chi (r) | \psi
    |^2}{r^2 \ln^2 (r)} \int_{\mathbbm{R}^2 \backslash B (0, 5)} | \nabla \psi
    |^2} \right) .
  \end{eqnarray*}
  The proof is complete.
\end{proof}

\subsubsection{Localisation of the coercivity for one vortex}

Now, we want to localize the coercivity result. We define, for $D > 10$,
$\varphi = V_1 \psi \in H_{V_1}$,
\[ \begin{array}{lll}
     B^{\tmop{loc}_D}_{V_1} (\varphi) & \assign & \int_{B (0, D)} (1 -
     \tilde{\eta}) (| \nabla \varphi |^2 - (1 - | V_1 |^2) | \varphi |^2 +
     2\mathfrak{R}\mathfrak{e}^2 (\overline{V_1} \varphi))\\
     & - & \int_{B (0, D)} \nabla \tilde{\eta} . (\mathfrak{R}\mathfrak{e}
     (\nabla V_1 \overline{V_1}) | \psi |^2 - 2\mathfrak{I}\mathfrak{m}
     (\nabla V_1 \overline{V_1}) \mathfrak{R}\mathfrak{e} (\psi)
     \mathfrak{I}\mathfrak{m} (\psi))\\
     & + & \int_{B (0, D)} \tilde{\eta} (| \nabla \psi |^2 | V_1 |^2 +
     2\mathfrak{R}\mathfrak{e}^2 (\psi) | V_1 |^4 + 4\mathfrak{I}\mathfrak{m}
     (\nabla V_1 \overline{V_1}) \mathfrak{I}\mathfrak{m} (\nabla \psi)
     \mathfrak{R}\mathfrak{e} (\psi)),
   \end{array} \]
where $\tilde{\eta}$ is a smooth radial cutoff function such that
$\tilde{\eta} (x) = 0$ on $B (0, 1)$, $\tilde{\eta} (x) = 1$ on $\mathbbm{R}^2
\backslash B (0, 2)$.

\begin{lemma}
  \label{CP2L422602}There exist $K, R, D_0 > 0$ with $D_0 > R$, such that, for
  $D > D_0$ and $\varphi = V_1 \psi \in C^{\infty}_c (\mathbbm{R}^2 \backslash
  \{ 0 \}, \mathbbm{C})$, if the following three orthogonality conditions
  \[ \int_{B (0, R)} \mathfrak{R}\mathfrak{e} (\partial_{x_1} V_1
     \bar{\varphi}) = \int_{B (0, R)} \mathfrak{R}\mathfrak{e} (\partial_{x_2}
     V_1 \bar{\varphi}) = \int_{B (0, R) \backslash B (0, R / 2)}
     \mathfrak{I}\mathfrak{m} (\psi) = 0 \]
  are satisfied, then
  \[ B^{\tmop{loc}_D}_{V_1} (\varphi) \geqslant K \left( \int_{B (0, 10)} |
     \nabla \varphi |^2 + | \varphi |^2 + \int_{B (0, D) \backslash B (0, 5)}
     | \nabla \psi |^2 | V_1 |^2 +\mathfrak{R}\mathfrak{e}^2 (\psi) | V_1 |^4
     + \frac{| \psi |^2}{r^2 \ln^2 (r)} \right) . \]
\end{lemma}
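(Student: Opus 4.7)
\textbf{Proof plan for Lemma \ref{CP2L422602}.} The strategy is to reduce to Proposition \ref{CP2P811} by truncating $\varphi$ with a well-chosen cutoff, and then handling the truncation errors and the annulus near $\partial B(0,D)$ separately. The difficulty is that a test function $\varphi \in C^\infty_c(\mathbb{R}^2\setminus\{0\})$ may have support strictly larger than $B(0,D)$, so $B^{\tmop{loc}_D}_{V_1}(\varphi)$ is not a priori equal to $B_{V_1}$ of any global object to which Proposition \ref{CP2P811} applies; nevertheless, constants in the lemma must be independent of $\varphi$.

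Given a small $a \in (0,1)$ to be fixed, I will construct a smooth radial cutoff $\chi_D$ with $\chi_D = 1$ on $B(0,D^a)$, $\chi_D = 0$ outside $B(0,D)$, and $|\nabla \chi_D(r)| \leq \frac{C(a)}{r\ln r}$ on the transition annulus. Such a $\chi_D$ exists because $\int_{D^a}^{D} \frac{dr}{r\ln r} = \ln(1/a)$, and $C(a)$ can be made arbitrarily small by taking $a$ small. For $D_0$ large enough, $\chi_D \equiv 1$ on $B(0,R)$, so the function $\tilde\varphi := V_1 \chi_D \psi$ satisfies the three orthogonality conditions. Since $\tilde\varphi$ is compactly supported, Proposition \ref{CP2P811} applies and yields
\[ B_{V_1}(\tilde\varphi) \geq K_0 \left( \int_{B(0,10)} |\nabla\tilde\varphi|^2 + |\tilde\varphi|^2 + \int_{\mathbb{R}^2\setminus B(0,5)} |\nabla(\chi_D\psi)|^2 |V_1|^2 + \mathfrak{Re}^2(\chi_D\psi)|V_1|^4 + \frac{|\chi_D\psi|^2}{r^2\ln^2 r} \right). \]
Since $\chi_D = 1$ on $B(0, D^a)$, the right-hand side agrees with the desired RHS restricted to $B(0, D^a)$; so it remains to compare the two quadratic forms and to recover coercivity on the transition annulus $A := B(0,D)\setminus B(0,D^a)$.

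Using the explicit expression of Lemma \ref{CP2L412602} and expanding $\nabla(\chi_D\psi) = \chi_D\nabla\psi + \psi\nabla\chi_D$, the difference $B_{V_1}(\tilde\varphi) - B^{\tmop{loc}_D}_{V_1}(\varphi)$ is a sum of terms supported in $A$ and involving $|\nabla\chi_D|^2|\psi|^2 |V_1|^2$, $\chi_D(\nabla\chi_D\cdot\mathfrak{Re}(\nabla\psi\bar\psi))|V_1|^2$, plus structurally analogous terms coming from $\mathfrak{Re}^2(\chi_D\psi)|V_1|^4$ and $4\mathfrak{Im}(\nabla V_1\overline{V_1})\mathfrak{Im}(\nabla\psi)\mathfrak{Re}(\psi)$. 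The bound $|\nabla\chi_D| \leq \frac{C(a)}{r\ln r}$ gives $|\nabla\chi_D|^2|\psi|^2 \leq \frac{C(a)^2}{r^2\ln^2 r}|\psi|^2$, and Young's inequality turns every cross term into a small multiple of $|\nabla\psi|^2|V_1|^2$ plus a small multiple of $\frac{|\psi|^2}{r^2\ln^2 r}$. Choosing $a$ small enough so that $C(a)$ is small compared with $K_0$, these errors are absorbed.

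On the annulus $A$, where $\tilde\eta = 1$ and $|V_1|^2 \geq 1/2$ (since $D^a \to \infty$), the third line of $B^{\tmop{loc}_D}_{V_1}(\varphi)$ is bounded below using Young's inequality on the cross term together with the bound $|\mathfrak{Im}(\nabla V_1\overline{V_1})|^2 \leq K/r^2$ from Lemma \ref{lemme3new}:
\[ \tilde\eta\bigl(|\nabla\psi|^2|V_1|^2 + 2\mathfrak{Re}^2(\psi)|V_1|^4 + 4\mathfrak{Im}(\nabla V_1\overline{V_1})\mathfrak{Im}(\nabla\psi)\mathfrak{Re}(\psi)\bigr) \geq \tfrac{1}{2}|\nabla\psi|^2|V_1|^2 + |V_1|^4\mathfrak{Re}^2(\psi), \]
which recovers the first two desired quantities on $A$. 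For the Hardy-type quantity on $A$, I split $\psi = \psi^0 + \psi^{\neq 0}$ around the origin: the non-radial part is handled by the angular Poincaré inequality $\int_0^{2\pi}|\psi^{\neq 0}|^2\,d\theta \leq \int_0^{2\pi}|\partial_\theta\psi|^2\,d\theta$, yielding $\int_A \frac{|\psi^{\neq 0}|^2}{r^2\ln^2 r} \leq \frac{K}{\ln^2(D^a)}\int_A |\nabla\psi|^2$, while the radial part is controlled by integrating by parts against $\partial_r(1/\ln r) = -1/(r\ln^2 r)$ as in the end of the proof of Proposition \ref{CP2P811}; the boundary term at $r = D^a$ is absorbed by the already-established coercivity inside $B(0,D^a)$, and the boundary term at $r = D$ has the good sign (it appears with coefficient $-|\psi^0|^2(D)/\ln D \leq 0$). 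Combining these estimates yields the claimed coercivity with universal constants $K$, $D_0$, and $R$.
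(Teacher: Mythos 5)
Your proof takes a genuinely different route from the paper's, and the strategy is sound. The paper first performs the harmonic decomposition of {\cite{DP}} (delegating the zero harmonic to the argument of Proposition \ref{CP2P811}, and working with $\psi^{\neq 0}$, for which the stronger weight $1/r^2$ is available), and then truncates with a \emph{sharply localized} cutoff $\chi_\lambda$ at a scale $\lambda$ chosen by a dyadic pigeonhole: since $\int \frac{|\psi^{\neq 0}|^2}{(1+r)^2}\leqslant K\int|\nabla\psi|^2|V_1|^2$, at least one of the $\approx\log_2(D_0/R_0)$ dyadic annuli in $[R_0,D_0]$ carries at most a fraction $\varepsilon$ of the gradient mass, and all cutoff errors live on that one annulus; this is precisely why $D_0$ must be large. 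You instead keep $\psi$ whole and use a single \emph{logarithmically spread} cutoff with $|\nabla\chi_D|\leqslant C(a)/(r\ln r)$, $C(a)=K/\ln(1/a)$, so that every cutoff error is pointwise dominated by a small multiple of the Hardy weight and of $|\nabla\psi|^2|V_1|^2$, and you absorb them via a weighted Hardy inequality on $B(0,D)\backslash B(0,5)$. Your version avoids both the harmonic decomposition and the pigeonhole, at the cost of having to prove the $\ln^2$-weighted Hardy inequality up to $r=D$ for a function that does not vanish there. One bookkeeping remark: the difference of the two quadratic forms on the transition annulus $A$ is exactly $(1-\chi_D^2)$ times the integrand plus the three $\nabla\chi_D$-terms; the $(1-\chi_D^2)$ part is not small but is nonnegative on $A$ by your Young-inequality step, so your accounting closes, but it is worth stating this identity explicitly rather than describing the difference as consisting only of cutoff-derivative terms.

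The one step to write out with care is the radial part of the Hardy inequality on $A$. If you try to absorb the inner boundary term $|\psi^0(D^a)|^2/\ln(D^a)$ into the already-established weighted $L^2$ control $\int_{B(0,D^a)\backslash B(0,5)}\frac{|\psi|^2}{r^2\ln^2 r}$, a trace estimate on an annulus of radius $\sim D^a$ costs a factor $\ln(D^a)$, which is not universal in $D$. The absorption does work, but through the gradient: writing $\psi^0(D^a)=\psi^0(\rho)+\int_\rho^{D^a}\partial_r\psi^0\,dr$ for $\rho\in[5,10]$ and applying Cauchy--Schwarz gives $|\psi^0(D^a)|^2\leqslant 2|\psi^0(\rho)|^2+\frac{\ln(D^a/\rho)}{\pi}\int_{B(0,D^a)\backslash B(0,5)}|\nabla\psi^0|^2$, and the factor $\ln(D^a/\rho)$ is cancelled by the $1/\ln(D^a)$ in the boundary term, after which averaging over $\rho$ controls $|\psi^0(\rho)|^2$ by $\|\varphi\|^2_{H^1(B(0,10))}$. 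Simpler still, perform the integration by parts against $\partial_r(1/\ln r)$ over all of $[5,D]$ with a cutoff vanishing at the inner end, exactly as at the end of the proof of Proposition \ref{CP2P811}: then the only boundary term is at $r=D$ and has the favourable sign, as you already observed.
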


\begin{proof}
  We decompose $\psi$ in harmonics $j \in \mathbbm{N}, l \in \{ 1, 2 \}$, with
  the same decomposition as (2.5) of {\cite{DP}}. This decomposition is
  adapted to the quadratic form $B^{\tmop{loc}_D}_{V_1}$, see equation (2.4)
  of {\cite{DP}}, that also holds if the integral is only on $B (0, D)$.
  
  For $j = 0$, the proof is identical. For $j \geqslant 2$, $l \in \{ 1, 2 \}$
  from equation (2.38) of {\cite{DP}} (that holds on $B (0, D)$ as the
  inequality is pointwise), the proof holds if it does for $j = 1$, \ $l \in
  \{ 1, 2 \}$.
  
  We therefore focus on the case $j = l = 1$. We write $\psi = \psi_1 (r) \cos
  (\theta) + i \psi_2 (r) \sin (\theta)$, with $\psi_1, \psi_2 \in
  C^{\infty}_c (\mathbbm{R}^{+ \ast}, \mathbbm{R})$. The other possibility ($l
  = 2$) is $\psi = \psi_1 (r) i \cos (\theta) + \psi_2 (r) \sin (\theta)$,
  which is done similarly. We will show a more general result, that is, for
  any $\varphi = V_1 \psi \in C^{\infty}_c (\mathbbm{R}^2 \backslash \{ 0 \},
  \mathbbm{C})$ satisfying the orthogonality conditions,
  \begin{eqnarray*}
    &  & B^{\tmop{loc}_D}_{V_1} (V_1 \psi^{\neq 0})\\
    & \geqslant & K \left( \int_{B (0, 10)} | \nabla (V_1 \psi^{\neq 0}) |^2
    + | V_1 \psi^{\neq 0} |^2 + \int_{B (0, D) \backslash B (0, 5)} | \nabla
    \psi^{\neq 0} |^2 | V_1 |^2 +\mathfrak{R}\mathfrak{e}^2 (\psi^{\neq 0}) |
    V_1 |^4 + \frac{| \psi^{\neq 0} |^2}{r^2} \right) .
  \end{eqnarray*}
  With the previous remark, it is enough to conlcude the proof of this lemma.
  In the rest of the proof, to simplify the notation, we write $\psi$ instead
  of $\psi^{\neq 0}$, but it still has no $0$-harmonic.
  
  We remark that, for $D > R_0 > 2$,
  \begin{eqnarray}
    &  & \int_{B (0, D) \backslash B (0, R_0)} | \nabla \psi |^2 | V_1 |^2 +
    2\mathfrak{R}\mathfrak{e}^2 (\psi) | V_1 |^4 + 4\mathfrak{I}\mathfrak{m}
    (\nabla V_1 \overline{V_1}) .\mathfrak{I}\mathfrak{m} (\nabla \psi)
    \mathfrak{R}\mathfrak{e} (\psi) \nonumber\\
    & \geqslant & \int_{B (0, D) \backslash B (0, R_0)} | \nabla \psi |^2 |
    V_1 |^2 + 2\mathfrak{R}\mathfrak{e}^2 (\psi) | V_1 |^4 - \frac{K | V_1
    |^2}{R_0} | \mathfrak{I}\mathfrak{m} (\nabla \psi)
    \mathfrak{R}\mathfrak{e} (\psi) | \nonumber\\
    & \geqslant & \frac{1}{2} \int_{B (0, D) \backslash B (0, R_0)} | \nabla
    \psi |^2 | V_1 |^2 + 2\mathfrak{R}\mathfrak{e}^2 (\psi) | V_1 |^4 
    \label{CP2jspfdm}
  \end{eqnarray}
  if $R_0$ is large enough. We therefore take $R_0 > R$ large enough such that
  (\ref{CP2jspfdm}) holds. For $\frac{D}{2} > \lambda > R_0$, we define
  $\chi_{\lambda}$ a smooth cutoff function such that $\chi_{\lambda} (r) = 1$
  if $r \leqslant \lambda$, $\chi_{\lambda} = 0$ if $r \geqslant 2 \lambda$,
  and $| \chi'_{\lambda} | \leqslant \frac{K}{\lambda}$. In particular, since
  $R_0 > 2$, we have $\tmop{Supp} (\chi_{\lambda}') \subset \tmop{Supp}
  (\tilde{\eta})$ and $\tmop{Supp} (1 - \tilde{\eta}) \subset \tmop{Supp}
  (\chi_{\lambda})$. This implies that
  \begin{eqnarray*}
    &  & \int_{B (0, D)} (1 - \tilde{\eta}) (| \nabla \varphi |^2 - (1 - |
    V_1 |^2) | \varphi |^2 + 2\mathfrak{R}\mathfrak{e}^2 (\overline{V_1}
    \varphi))\\
    & = & \int_{B (0, D)} (1 - \tilde{\eta}) (| \nabla (\chi_{\lambda}
    \varphi) |^2 - (1 - | V_1 |^2) | \chi_{\lambda} \varphi |^2 +
    2\mathfrak{R}\mathfrak{e}^2 (\overline{V_1} \chi_{\lambda} \varphi))
  \end{eqnarray*}
  and
  \begin{eqnarray*}
    &  & \int_{B (0, D)} \nabla \tilde{\eta} . (\mathfrak{R}\mathfrak{e}
    (\nabla V_1 \overline{V_1}) | \psi |^2 - 2\mathfrak{I}\mathfrak{m} (\nabla
    V_1 \overline{V_1}) \mathfrak{R}\mathfrak{e} (\psi)
    \mathfrak{I}\mathfrak{m} (\psi))\\
    & = & \int_{B (0, D)} \nabla \tilde{\eta} . (\mathfrak{R}\mathfrak{e}
    (\nabla V_1 \overline{V_1}) | \chi_{\lambda} \psi |^2 -
    2\mathfrak{I}\mathfrak{m} (\nabla V_1 \overline{V_1})
    \mathfrak{R}\mathfrak{e} (\chi_{\lambda} \psi) \mathfrak{I}\mathfrak{m}
    (\chi_{\lambda} \psi)) .
  \end{eqnarray*}
  Now, we decompose
  \begin{eqnarray*}
    &  & \int_{B (0, D)} \tilde{\eta} (| \nabla \psi |^2 | V_1 |^2 +
    2\mathfrak{R}\mathfrak{e}^2 (\psi) | V_1 |^4 + 4\mathfrak{I}\mathfrak{m}
    (\nabla V_1 \overline{V_1}) \mathfrak{I}\mathfrak{m} (\nabla \psi)
    \mathfrak{R}\mathfrak{e} (\psi))\\
    & = & \int_{B (0, D)} (1 - \chi_{\lambda}^2) \tilde{\eta} (| \nabla \psi
    |^2 | V_1 |^2 + 2\mathfrak{R}\mathfrak{e}^2 (\psi) | V_1 |^4 +
    4\mathfrak{I}\mathfrak{m} (\nabla V_1 \overline{V_1})
    \mathfrak{I}\mathfrak{m} (\nabla \psi) \mathfrak{R}\mathfrak{e} (\psi))\\
    & + & \int_{B (0, D)} \chi_{\lambda}^2 \tilde{\eta} (| \nabla \psi |^2 |
    V_1 |^2 + 2\mathfrak{R}\mathfrak{e}^2 (\psi) | V_1 |^4 +
    4\mathfrak{I}\mathfrak{m} (\nabla V_1 \overline{V_1})
    \mathfrak{I}\mathfrak{m} (\nabla \psi) \mathfrak{R}\mathfrak{e} (\psi)),
  \end{eqnarray*}
  and by equation (\ref{CP2jspfdm}),
  \begin{eqnarray*}
    &  & \int_{B (0, D)} (1 - \chi_{\lambda}^2) \tilde{\eta} (| \nabla \psi
    |^2 | V_1 |^2 + 2\mathfrak{R}\mathfrak{e}^2 (\psi) | V_1 |^4 +
    4\mathfrak{I}\mathfrak{m} (\nabla V_1 \overline{V_1})
    \mathfrak{I}\mathfrak{m} (\nabla \psi) \mathfrak{R}\mathfrak{e} (\psi))\\
    & \geqslant & K \int_{B (0, D)} (1 - \chi_{\lambda}^2) | \nabla \psi |^2
    | V_1 |^2 + 2\mathfrak{R}\mathfrak{e}^2 (\psi) | V_1 |^4 .
  \end{eqnarray*}
  Furthermore,
  \begin{eqnarray*}
    &  & \int_{B (0, D)} \chi_{\lambda}^2 \tilde{\eta} (| \nabla \psi |^2 |
    V_1 |^2 + 2\mathfrak{R}\mathfrak{e}^2 (\psi) | V_1 |^4 +
    4\mathfrak{I}\mathfrak{m} (\nabla V_1 \overline{V_1})
    \mathfrak{I}\mathfrak{m} (\nabla \psi) \mathfrak{R}\mathfrak{e} (\psi))\\
    & = & \int_{B (0, D)} \tilde{\eta} (| \nabla (\chi_{\lambda} \psi) |^2 |
    V_1 |^2 + 2\mathfrak{R}\mathfrak{e}^2 (\chi_{\lambda} \psi) | V_1 |^4 +
    4\mathfrak{I}\mathfrak{m} (\nabla V_1 \overline{V_1})
    \mathfrak{I}\mathfrak{m} (\nabla (\chi_{\lambda} \psi))
    \mathfrak{R}\mathfrak{e} (\chi_{\lambda} \psi))\\
    & - & \int_{B (0, D)} \tilde{\eta} ((| \nabla (\chi_{\lambda} \psi) -
    \nabla \chi_{\lambda} \psi |^2 - | \nabla (\chi_{\lambda} \psi) |^2) | V_1
    |^2 - 4\mathfrak{I}\mathfrak{m} (\nabla V_1 \overline{V_1}) . \nabla
    \chi_{\lambda} \mathfrak{I}\mathfrak{m} (\psi) \mathfrak{R}\mathfrak{e}
    (\chi_{\lambda} \psi)),
  \end{eqnarray*}
  and thus
  \begin{eqnarray*}
    &  & B^{\tmop{loc}_D}_{V_1} (V_1 \psi)\\
    & \geqslant & B^{\tmop{loc}_D}_{V_1} (V_1 \chi_{\lambda} \psi) + K
    \int_{B (0, D)} (1 - \chi_{\lambda}^2) | \nabla \psi |^2 | V_1 |^2 +
    2\mathfrak{R}\mathfrak{e}^2 (\psi) | V_1 |^4\\
    & - & \int_{B (0, D)} \tilde{\eta} ((| \nabla (\chi_{\lambda} \psi) -
    \nabla \chi_{\lambda} \psi |^2 - | \nabla (\chi_{\lambda} \psi) |^2) | V_1
    |^2 - 4\mathfrak{I}\mathfrak{m} (\nabla V_1 \overline{V_1}) . \nabla
    \chi_{\lambda} \mathfrak{I}\mathfrak{m} (\psi) \mathfrak{R}\mathfrak{e}
    (\chi_{\lambda} \psi)) .
  \end{eqnarray*}
  Since $V_1 \chi_{\lambda} \psi \in C^{\infty}_c (B (0, D))$, we have
  $B^{\tmop{loc}_D}_{V_1} (V_1 \chi_{\lambda} \psi) = B_{V_1} (V_1
  \chi_{\lambda} \psi)$, and since $\chi_{\lambda} = 1$ in $B (0, R)$ and $V_1
  \psi$ satisfied the orthogonality conditions, so does $V_1 \chi_{\lambda}
  \psi$. By Proposition \ref{CP2P811}, we deduce that
  \begin{eqnarray*}
    &  & B^{\tmop{loc}_D}_{V_1} (V_1 \chi_{\lambda} \psi)\\
    & \geqslant & K \int_{B (0, 10)} | \nabla (V_1 \chi_{\lambda} \psi) |^2 +
    | V_1 \chi_{\lambda} \psi |^2\\
    & + & K \int_{B (0, D) \backslash B (0, 5)} | \nabla (\chi_{\lambda}
    \psi) |^2 | V_1 |^2 +\mathfrak{R}\mathfrak{e}^2 (\chi_{\lambda} \psi) |
    V_1 |^4 + \frac{| \chi_{\lambda} \psi |^2}{r^2 \ln^2 (r)} .
  \end{eqnarray*}
  Now, remarking that
  \[ | \nabla (\chi_{\lambda} \psi) |^2 | V_1 |^2 \geqslant K_1 | \nabla \psi
     |^2 \chi^2_{\lambda} | V_1 |^2 - K_2 | \nabla \chi_{\lambda} |^2 | \psi
     |^2 | V_1 |^2, \]
  and since $\chi_{\lambda} = 1$ in $B (0, 10)$, we deduce that
  \begin{eqnarray}
    &  & B^{\tmop{loc}_D}_{V_1} (V_1 \psi) \nonumber\\
    & \geqslant & K \left( \int_{B (0, 10)} | \nabla \varphi |^2 + | \varphi
    |^2 + \int_{B (0, D) \backslash B (0, 5)} | \nabla \psi |^2 | V_1 |^2
    +\mathfrak{R}\mathfrak{e}^2 (\psi) | V_1 |^4 \right) \nonumber\\
    & - & K \int_{B (0, D)} \tilde{\eta} \left( | (| \nabla (\chi_{\lambda}
    \psi) - \nabla \chi_{\lambda} \psi |^2 - | \nabla (\chi_{\lambda} \psi)
    |^2) | | V_1 |^2 + | \mathfrak{I}\mathfrak{m} (\nabla V_1 \overline{V_1})
    . \nabla \chi_{\lambda} \mathfrak{I}\mathfrak{m} (\psi)
    \mathfrak{R}\mathfrak{e} (\chi_{\lambda} \psi) | \right) \nonumber\\
    & - & K \int_{B (0, D) \backslash B (0, 5)} | \nabla \chi_{\lambda} |^2 |
    \psi |^2 | V_1 |^2 .  \label{quar1}
  \end{eqnarray}
  Since $\nabla \chi_{\lambda}$ is supported in $B (0, 2 \lambda) \backslash B
  (0, \lambda)$ with $| \nabla \chi_{\lambda} | \leqslant \frac{K}{\lambda}$,
  we have
  \begin{equation}
    \int_{B (0, D) \backslash B (0, 5)} | \nabla \chi_{\lambda} |^2 | \psi |^2
    | V_1 |^2 \leqslant K \int_{B (0, 2 \lambda) \backslash B (0, \lambda)}
    \frac{| \psi |^2}{(1 + r)^2},
  \end{equation}
  and by Cauchy-Schwarz, we have that
  \[ \int_{B (0, D)} \tilde{\eta} | \mathfrak{I}\mathfrak{m} (\nabla V_1
     \overline{V_1}) . \nabla \chi_{\lambda} \mathfrak{I}\mathfrak{m} (\psi)
     \mathfrak{R}\mathfrak{e} (\chi_{\lambda} \psi) | \leqslant K
     \sqrt{\int_{B (0, 2 \lambda) \backslash B (0, \lambda)} \frac{| \psi
     |^2}{(1 + r)^2} \int_{B (0, D) \backslash B (0, 5)}
     \mathfrak{R}\mathfrak{e}^2 (\psi)} \]
  and
  \begin{eqnarray}
    &  & \int_{B (0, D)} \tilde{\eta} (| (| \nabla (\chi_{\lambda} \psi) -
    \nabla \chi_{\lambda} \psi |^2 - | \nabla (\chi_{\lambda} \psi) |^2) | |
    V_1 |^2) \nonumber\\
    & \leqslant & K \left( \sqrt{\int_{B (0, 2 \lambda) \backslash B (0,
    \lambda)} \frac{| \psi |^2}{(1 + r)^2} \int_{B (0, D) \backslash B (0, 5)}
    | \nabla \psi |^2 | V_1 |^2} + \int_{B (0, 2 \lambda) \backslash B (0,
    \lambda)} \frac{| \psi |^2}{(1 + r)^2} \right) . 
  \end{eqnarray}
  Since $\psi$ has no $0$ harmonics, we have that
  \[ \int_{B (0, D) \backslash B (0, 5)} \frac{| \psi |^2}{(1 + r)^2}
     \leqslant K \int_{B (0, D) \backslash B (0, 5)} | \nabla \psi |^2 | V_1
     |^2 . \]
  We infer that there exists $D_0 > R_0$ a large constant such that, for $D >
  D_0$, for all $\varphi = V_1 \psi \in C^{\infty}_c (\mathbbm{R}^2 \backslash
  \{ 0 \}, \mathbbm{C})$, there exists $\lambda \in \left[ R_0, \frac{D_0}{2}
  \right]$ such that
  \begin{equation}
    \int_{B (0, 2 \lambda) \backslash B (0, \lambda)} \frac{| \psi |^2}{(1 +
    r)^2} \leqslant \varepsilon \int_{B (0, D) \backslash B (0, 5)} | \nabla
    \psi |^2 | V_1 |^2 \label{quar2}
  \end{equation}
  for some small fixed constant $\varepsilon > 0$. Indeed, if this does not
  hold, then $\int_{B (0, D) \backslash B (0, 5)} | \nabla \psi |^2 | V_1 |^2
  \neq 0$ and
  \begin{eqnarray*}
    &  & \int_{B (0, D) \backslash B (0, 5)} \frac{| \psi |^2}{(1 + r)^2}\\
    & \geqslant & \int_{R_0}^{D_0} \frac{| \psi |^2}{(1 + r)^2} r d r\\
    & \geqslant & \sum_{n = 0}^{\left\lfloor \log_2 \left( \frac{D_0}{2 R_0}
    \right) \right\rfloor - 2} \int_{2^n R_0}^{2^{n + 1} R_0} \frac{| \psi
    |^2}{(1 + r)^2} r d r\\
    & \geqslant & \sum_{n = 0}^{\left\lfloor \log_2 \left( \frac{D_0}{2 R_0}
    \right) \right\rfloor - 2} \varepsilon \int_{B (0, D) \backslash B (0, 5)}
    | \nabla \psi |^2 | V_1 |^2\\
    & \geqslant & \varepsilon \left( \left\lfloor \log_2 \left( \frac{D_0}{2
    R_0} \right) \right\rfloor - 1 \right) \int_{B (0, D) \backslash B (0, 5)}
    | \nabla \psi |^2 | V_1 |^2\\
    & > & \frac{1}{K} \int_{B (0, D) \backslash B (0, 5)} | \nabla \psi |^2 |
    V_1 |^2
  \end{eqnarray*}
  for $D_0$ large enough. Taking $\varepsilon > 0$ small enough, with equation
  (\ref{quar1}) to (\ref{quar2}), we conclude the proof of this lemma.
\end{proof}

A consequence of Lemma \ref{CP2L422602} is that, for a function $\varphi = V_1
\psi \in C^{\infty}_c (\mathbbm{R}^2 \backslash \{ 0 \}, \mathbbm{C})$
satisfying the three orthogonality conditions in Lemma \ref{CP2L422602} and $D
> D_0$, then
\begin{equation}
  B_{V_1}^{\tmop{loc}_D \nobracket \nobracket} (\varphi) \geqslant K (D) \|
  \varphi \|^2_{H^1 (B (0, D))} . \label{CP2eq420}
\end{equation}

\subsection{Coercivity for a travelling wave near its zeros}

\quad We recall from Lemma \ref{CP2L412602} that, for $\varphi \in
C^{\infty}_c (\mathbbm{R}^2 \backslash \{ \tilde{d}_c \vec{e}_1, - \tilde{d}_c
\vec{e}_1 \}, \mathbbm{C})$, we have
\begin{eqnarray*}
  \langle L_{Q_c} (\varphi), \varphi \rangle & = & \int_{\mathbbm{R}^2} (1 -
  \eta) (| \nabla \varphi |^2 -\mathfrak{R}\mathfrak{e} (i c \partial_{x_2}
  \varphi \bar{\varphi}) - (1 - | Q_c |^2) | \varphi |^2 +
  2\mathfrak{R}\mathfrak{e}^2 (\overline{Q_c} \varphi))\\
  & - & \int_{\mathbbm{R}^2} \nabla \eta . (\mathfrak{R}\mathfrak{e} (\nabla
  Q_c \overline{Q_c}) | \psi |^2 - 2\mathfrak{I}\mathfrak{m} (\nabla Q_c
  \overline{Q_c}) \mathfrak{R}\mathfrak{e} (\psi) \mathfrak{I}\mathfrak{m}
  (\psi))\\
  & + & \int_{\mathbbm{R}^2} c \partial_{x_2} \eta \mathfrak{R}\mathfrak{e}
  (\psi) \mathfrak{I}\mathfrak{m} (\psi) | Q_c |^2\\
  & + & \int_{\mathbbm{R}^2} \eta (| \nabla \psi |^2 | Q_c |^2 +
  2\mathfrak{R}\mathfrak{e}^2 (\psi) | Q_c |^4)\\
  & + & \int_{\mathbbm{R}^2} \eta (4\mathfrak{I}\mathfrak{m} (\nabla Q_c
  \overline{Q_c}) \mathfrak{I}\mathfrak{m} (\nabla \psi)
  \mathfrak{R}\mathfrak{e} (\psi) + 2 c | Q_c |^2 \mathfrak{I}\mathfrak{m}
  (\partial_{x_2} \psi) \mathfrak{R}\mathfrak{e} (\psi)) .
\end{eqnarray*}
For $D > D_0$ ($D_0 > 0$ being defined in Lemma \ref{CP2L422602}), we define,
with $\varphi = Q_c \psi$,
\begin{eqnarray*}
  B_{Q_c}^{\tmop{loc}_{\pm 1, D}} (\varphi) & \assign & \int_{B (\pm
  \tilde{d}_c \overrightarrow{e_1}, D)} (1 - \eta) (| \nabla \varphi |^2
  -\mathfrak{R}\mathfrak{e} (i c \partial_{x_2} \varphi \bar{\varphi}) - (1 -
  | Q_c |^2) | \varphi |^2 + 2\mathfrak{R}\mathfrak{e}^2 (\overline{Q_c}
  \varphi))\\
  & - & \int_{B (\pm \tilde{d}_c \overrightarrow{e_1}, D)} \nabla \eta .
  (\mathfrak{R}\mathfrak{e} (\nabla Q_c \overline{Q_c}) | \psi |^2 -
  2\mathfrak{I}\mathfrak{m} (\nabla Q_c \overline{Q_c})
  \mathfrak{R}\mathfrak{e} (\psi) \mathfrak{I}\mathfrak{m} (\psi))\\
  & + & \int_{B (\pm \tilde{d}_c \overrightarrow{e_1}, D)} c \partial_{x_2}
  \eta \mathfrak{R}\mathfrak{e} (\psi) \mathfrak{I}\mathfrak{m} (\psi) | Q_c
  |^2\\
  & + & \int_{B (\pm \tilde{d}_c \overrightarrow{e_1}, D)} \eta (| \nabla
  \psi |^2 | Q_c |^2 + 2\mathfrak{R}\mathfrak{e}^2 (\psi) | Q_c |^4)\\
  & + & \int_{B (\pm \tilde{d}_c \overrightarrow{e_1}, D)} \eta
  (4\mathfrak{I}\mathfrak{m} (\nabla Q_c \overline{Q_c})
  \mathfrak{I}\mathfrak{m} (\nabla \psi) \mathfrak{R}\mathfrak{e} (\psi) + 2 c
  | Q_c |^2 \mathfrak{I}\mathfrak{m} (\partial_{x_2} \psi)
  \mathfrak{R}\mathfrak{e} (\psi)) .
\end{eqnarray*}
We infer that this quantity is close enough to
$B^{\tmop{loc}_D}_{\tilde{V}_{\pm 1}} (\varphi)$ for the coercivity to hold,
with $\tilde{V}_{\pm 1}$ being centered at $\pm \tilde{d}_c
\overrightarrow{e_1}$, the zero of $Q_c$ in the right half plane.

\begin{lemma}
  \label{CP2L4326}There exist $R, D_0 > 0$ with $D_0 > R$, such that, for $D >
  D_0$, $0 < c < c_0 (D)$ and $\varphi = Q_c \psi \in C^{\infty}_c
  (\mathbbm{R}^2 \backslash \{ \tilde{d}_c \overrightarrow{e_1} \},
  \mathbbm{C})$, if the following three orthogonality conditions
  \[ \int_{B (\tilde{d}_c \overrightarrow{e_1}, R)} \mathfrak{R}\mathfrak{e}
     (\partial_{x_1} \tilde{V}_1 \bar{\varphi}) = \int_{B (\tilde{d}_c
     \overrightarrow{e_1}, R)} \mathfrak{R}\mathfrak{e} (\partial_{x_2}
     \tilde{V}_1 \bar{\varphi}) = \int_{B (\tilde{d}_c \overrightarrow{e_1},
     R) \backslash B (\tilde{d}_c \overrightarrow{e_1}, R / 2)}
     \mathfrak{I}\mathfrak{m} (\psi) = 0 \]
  are satisfied, then
  \[ B_{Q_c}^{\tmop{loc}_{1, D}} (\varphi) \geqslant K (D) \| \varphi \|_{H^1
     (B (\tilde{d}_c \overrightarrow{e_1}, D))}^2 . \]
\end{lemma}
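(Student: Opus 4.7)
The plan is to compare $B_{Q_c}^{\tmop{loc}_{1,D}}(\varphi)$, for $\varphi = Q_c \psi \in C^\infty_c(\mathbb{R}^2 \setminus \{\tilde d_c \vec e_1\}, \mathbb{C})$, with the analogous localized vortex form $B^{\tmop{loc}_D}_{\tilde V_1}(\varphi)$ written for the vortex $\tilde V_1 = V_1(\cdot - \tilde d_c \vec e_1)$ centered at the zero $\tilde d_c \vec e_1$ of $Q_c$ (with the radial cutoff $\tilde\eta$ translated to $\tilde d_c \vec e_1$, which coincides with $\eta$ on $B(\tilde d_c \vec e_1, D)$ once $c$ is small enough so that $D < \tilde d_c - 2$ and the other zero $-\tilde d_c \vec e_1$ is outside the ball). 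Since the three orthogonality conditions here are exactly those of Lemma \ref{CP2L422602} (with $\tilde V_1$ in place of $V_1$), once we establish
\[
   B_{Q_c}^{\tmop{loc}_{1,D}}(\varphi) \;=\; B^{\tmop{loc}_D}_{\tilde V_1}(\varphi) + o^{D}_{c \to 0}(1)\,\|\varphi\|_{H^1(B(\tilde d_c \vec e_1, D))}^2,
\]
equation (\ref{CP2eq420}) gives the conclusion by taking $c_0(D)$ small enough to absorb the error into $\tfrac12 K(D) \|\varphi\|_{H^1(B(\tilde d_c \vec e_1, D))}^2$.

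I would split the difference $B_{Q_c}^{\tmop{loc}_{1,D}}(\varphi) - B^{\tmop{loc}_D}_{\tilde V_1}(\varphi)$ into two groups. The first group consists of the transport terms of $B_{Q_c}^{\tmop{loc}_{1,D}}$, namely
\[
   -c\!\int (1-\eta)\,\mathfrak{Re}(i\partial_{x_2}\varphi\bar\varphi),\quad c\!\int \partial_{x_2}\eta\,\mathfrak{Re}\psi\,\mathfrak{Im}\psi\,|Q_c|^2,\quad 2c\!\int \eta\,|Q_c|^2\,\mathfrak{Im}(\partial_{x_2}\psi)\,\mathfrak{Re}(\psi),
\]
which have no analogue in $B^{\tmop{loc}_D}_{\tilde V_1}$. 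Each carries an explicit factor $c$; using $|Q_c \nabla \psi|^2 = |\nabla\varphi - \psi \nabla Q_c|^2$ on $\{\eta \neq 0\}$ (where $|Q_c|$ is bounded above and below on the bounded ball by (\ref{CP2Qcpaszero})), Cauchy--Schwarz bounds all three terms by $K(D)\, c\, \|\varphi\|_{H^1(B(\tilde d_c \vec e_1, D))}^2$. The second group consists of the terms obtained by replacing $Q_c$, $\nabla Q_c$ and $|Q_c|^2$ by $\tilde V_1$, $\nabla \tilde V_1$ and $|\tilde V_1|^2$ in the remaining pieces. Lemma \ref{CP20524} provides the pointwise bounds
\[
   |Q_c - \tilde V_1| + (1+\tilde r_1)\,|\nabla Q_c - \nabla \tilde V_1| + \bigl||Q_c|^2 - |\tilde V_1|^2\bigr| + |\mathfrak{Re}(\nabla Q_c \bar Q_c) - \mathfrak{Re}(\nabla \tilde V_1 \bar{\tilde V}_1)| = o_{c\to 0}(1)
\]
on $B(\tilde d_c \vec e_1, D)$, so each such substitution produces an $L^1$ error of size $o_c^D(1) \|\varphi\|_{H^1(B(\tilde d_c \vec e_1, D))}^2$.

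The subtle piece is the term $\int \eta |\nabla\psi|^2 |Q_c|^2$ compared with $\int \eta |\nabla\tilde\psi|^2 |\tilde V_1|^2$, where $\tilde\psi := \varphi/\tilde V_1$: the two decompositions use different $\psi$'s, so one must compare $|\nabla\varphi - \psi \nabla Q_c|^2$ with $|\nabla\varphi - \tilde\psi \nabla \tilde V_1|^2$ on the region $\{\eta \neq 0\}$. Here Lemma \ref{CP2closecall} is crucial: it yields $|\psi - \tilde\psi| = |\varphi|\,|\tilde V_1 - Q_c|/|Q_c \tilde V_1| = o_c(c^{1/10})\,|\varphi|/|\tilde V_1|$ on $B(\tilde d_c \vec e_1, \tilde d_c^{1/2})$, which combined with $|\tilde V_1| \ge K > 0$ on $\{\eta \neq 0\}$ and the gradient bound from Lemma \ref{CP20524} gives an error of size $o_c^D(1) \|\varphi\|_{H^1(B(\tilde d_c \vec e_1, D))}^2$. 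This is the main obstacle, because one must verify that $o_c(1)$ (and not a quantity depending on $D$ that could blow up) survives after integrating over a bounded ball whose size $D$ is fixed before $c$ is taken small. Collecting the two groups and invoking (\ref{CP2eq420}) then concludes the proof for $c < c_0(D)$.
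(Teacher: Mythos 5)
Your overall strategy is the one the paper uses: compare $B_{Q_c}^{\tmop{loc}_{1,D}}$ with $B^{\tmop{loc}_D}_{\tilde V_1}$, show the difference is $o^D_{c\to 0}(1)\,\|\varphi\|^2_{H^1(B(\tilde d_c \overrightarrow{e_1},D))}$ using the smallness of $c$ in the transport terms and Lemmas \ref{CP20524} and \ref{CP2closecall} for the substitution $Q_c \rightsquigarrow \tilde V_1$ (including the change of multiplicative decomposition $\psi = \varphi/Q_c$ versus $\varphi/\tilde V_1$), and then absorb the error into the coercivity of (\ref{CP2eq420}) by taking $c < c_0(D)$. That part of the plan is sound and matches the paper term for term.

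There is, however, one concrete gap: your claim that the three orthogonality conditions ``are exactly those of Lemma \ref{CP2L422602} (with $\tilde V_1$ in place of $V_1$)'' is false for the third one. The hypothesis of the present lemma is $\int_{B(\tilde d_c \overrightarrow{e_1},R)\backslash B(\tilde d_c \overrightarrow{e_1},R/2)} \mathfrak{I}\mathfrak{m}(\psi)=0$ with $\psi = \varphi/Q_c$, whereas Lemma \ref{CP2L422602}, applied to the vortex $\tilde V_1$, requires the vanishing of $\int \mathfrak{I}\mathfrak{m}\bigl(\varphi/\tilde V_1\bigr) = \int \mathfrak{I}\mathfrak{m}\bigl(\psi\, Q_c/\tilde V_1\bigr)$. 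These are not equal, so the appeal to (\ref{CP2eq420}) is not licensed as written. The fix is exactly the tool you already deploy for the quadratic form itself: by Lemma \ref{CP2closecall}, $|Q_c/\tilde V_1 - 1| = o_{c\to 0}(c^{1/10})$ on the annulus, hence the two integrals differ by $o^D_{c\to 0}(1)\,\|\varphi\|_{H^1(B(\tilde d_c \overrightarrow{e_1},D))}$, and a standard perturbation-of-orthogonality argument (the coercivity of Lemma \ref{CP2L422602} is stable under an $o(1)$ defect in one orthogonality condition, at the cost of shrinking the constant) restores the conclusion for $c$ small depending on $D$. The paper devotes the opening of its proof to precisely this step; you should add it before invoking (\ref{CP2eq420}).
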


\begin{proof}
  First, remark that we write $\varphi = Q_c \psi$ and not $\varphi =
  \tilde{V}_1 \psi$, as we did in the proof of Proposition \ref{CP2P811}.
  Hence, to apply Lemma \ref{CP2L422602}, the third orthogonality condition
  becomes
  \[ \int_{B (\tilde{d}_c \overrightarrow{e_1}, R) \backslash B (\tilde{d}_c
     \overrightarrow{e_1}, R / 2)} \mathfrak{I}\mathfrak{m} \left( \psi
     \frac{Q_c}{\tilde{V}_1} \right) = 0. \]
  With Lemma \ref{CP2closecall}, we check that
  \begin{eqnarray*}
    &  & \left| \int_{B (\tilde{d}_c \overrightarrow{e_1}, R) \backslash B
    (\tilde{d}_c \overrightarrow{e_1}, R / 2)} \mathfrak{I}\mathfrak{m} \left(
    \psi \frac{Q_c}{\tilde{V}_1} \right) \right|\\
    & \leqslant & \left| \int_{B (\tilde{d}_c \overrightarrow{e_1}, R)
    \backslash B (\tilde{d}_c \overrightarrow{e_1}, R / 2)}
    \mathfrak{I}\mathfrak{m} (\psi) \right| + o_{c \rightarrow 0} (1) \| \psi
    \|_{L^2 (B (\tilde{d}_c \overrightarrow{e_1}, R) \backslash B (\tilde{d}_c
    \overrightarrow{e_1}, R / 2))}\\
    & \leqslant & \left| \int_{B (\tilde{d}_c \overrightarrow{e_1}, R)
    \backslash B (\tilde{d}_c \overrightarrow{e_1}, R / 2)}
    \mathfrak{I}\mathfrak{m} (\psi) \right| + o^D_{c \rightarrow 0} (1) \|
    \varphi \|_{H^1 (B (\tilde{d}_c \overrightarrow{e_1}, D))},
  \end{eqnarray*}
  therefore, by standard coercivity argument, we can change this orthogonality
  condition, given that $c$ is small enough (depending on $D$). With equation
  (\ref{CP2eq420}), it is therefore enough to show that
  \[ | B_{Q_c}^{\tmop{loc}_D} (\varphi) - B^{\tmop{loc}_D}_{\tilde{V}_1}
     (\varphi) | \leqslant o_{c \rightarrow 0}^D (1) \| \varphi \|_{H^1 (B
     (\tilde{d}_c \overrightarrow{e_1}, D))}^2 \]
  to complete the proof of this lemma. Thus, for $\varphi = Q_c \psi \in
  C^{\infty}_c (\mathbbm{R}^2 \backslash \{ \tilde{d}_c \overrightarrow{e_1}
  \}, \mathbbm{C})$, writing $\varphi = V_1 \left( \frac{Q_c}{V_1} \psi
  \right)$ in $B^{\tmop{loc}_D}_{\tilde{V}_1} (\varphi)$, we have
  \begin{eqnarray*}
    &  & B_{Q_c}^{\tmop{loc}_{1, D}} (\varphi) -
    B^{\tmop{loc}_D}_{\tilde{V}_1} (\varphi)\\
    & = & \int_{B (\tilde{d}_c \overrightarrow{e_1}, D)}
    -\mathfrak{R}\mathfrak{e} (i c \partial_{x_2} \varphi \bar{\varphi}) + (|
    Q_c |^2 - | \tilde{V}_1 |^2) | \varphi |^2 + 2 \left(
    \mathfrak{R}\mathfrak{e}^2 (\overline{Q_c} \varphi)
    -\mathfrak{R}\mathfrak{e}^2 \left( \overline{\widetilde{V_1}} \varphi
    \right) \right)\\
    & - & \int_{B (\tilde{d}_c \overrightarrow{e_1}, D)} \nabla \eta .
    (\mathfrak{R}\mathfrak{e} (\nabla Q_c \overline{Q_c}) | \psi |^2 -
    2\mathfrak{I}\mathfrak{m} (\nabla Q_c \overline{Q_c})
    \mathfrak{R}\mathfrak{e} (\psi) \mathfrak{I}\mathfrak{m} (\psi))\\
    & + & \int_{B (\tilde{d}_c \overrightarrow{e_1}, D)} \nabla \eta . \left(
    \mathfrak{R}\mathfrak{e} (\nabla \tilde{V}_1 \overline{\tilde{V}_1})
    \left| \frac{Q_c}{\tilde{V}_1} \psi \right|^2 - 2\mathfrak{I}\mathfrak{m}
    (\nabla \tilde{V}_1 \overline{\tilde{V}_1}) \mathfrak{R}\mathfrak{e}
    \left( \frac{Q_c}{\tilde{V}_1} \psi \right) \mathfrak{I}\mathfrak{m}
    \left( \frac{Q_c}{\tilde{V}_1} \psi \right) \right)\\
    & + & \int_{B (\tilde{d}_c \overrightarrow{e_1}, D)} c \partial_{x_2}
    \eta \mathfrak{R}\mathfrak{e} (\psi) \mathfrak{I}\mathfrak{m} (\psi) | Q_c
    |^2\\
    & + & \int_{B (\tilde{d}_c \overrightarrow{e_1}, D)} \eta (| \nabla \psi
    |^2 | Q_c |^2 + 2\mathfrak{R}\mathfrak{e}^2 (\psi) | Q_c |^4)\\
    & - & \int_{B (\tilde{d}_c \overrightarrow{e_1}, D)} \eta \left( \left|
    \nabla \left( \frac{Q_c}{\tilde{V}_1} \psi \right) \right|^2 | Q_c |^2 +
    2\mathfrak{R}\mathfrak{e}^2 \left( \frac{Q_c}{\tilde{V}_1} \psi \right) |
    Q_c |^4 \right)\\
    & + & \int_{B (\tilde{d}_c \overrightarrow{e_1}, D)} \eta
    (4\mathfrak{I}\mathfrak{m} (\nabla Q_c \overline{Q_c})
    \mathfrak{I}\mathfrak{m} (\nabla \psi) \mathfrak{R}\mathfrak{e} (\psi) + 2
    c | Q_c |^2 \mathfrak{I}\mathfrak{m} (\partial_{x_2} \psi)
    \mathfrak{R}\mathfrak{e} (\psi))\\
    & - & \int_{B (\tilde{d}_c \overrightarrow{e_1}, D)} \eta \left(
    4\mathfrak{I}\mathfrak{m} (\nabla Q_c \overline{Q_c})
    \mathfrak{I}\mathfrak{m} \left( \nabla \left( \frac{Q_c}{\tilde{V}_1} \psi
    \right) \right) \mathfrak{R}\mathfrak{e} \left( \frac{Q_c}{\tilde{V}_1}
    \psi \right) \right) .
  \end{eqnarray*}
  With Theorem \ref{th1} (for $p = + \infty$) and Cauchy-Schwarz, we check
  easily that
  \begin{eqnarray*}
    &  & \int_{B (\tilde{d}_c \overrightarrow{e_1}, D)} |
    \mathfrak{R}\mathfrak{e} (i c \partial_{x_2} \varphi \bar{\varphi}) | + |
    | Q_c |^2 - | \tilde{V}_1 |^2 | | \varphi |^2 + 2 \left|
    \mathfrak{R}\mathfrak{e}^2 (\overline{Q_c} \varphi)
    -\mathfrak{R}\mathfrak{e}^2 \left( \overline{\widetilde{V_1}} \varphi
    \right) \right|\\
    & \leqslant & o_{c \rightarrow 0}^D (1) \| \varphi \|_{H^1 (B
    (\tilde{d}_c \overrightarrow{e_1}, D))}^2 .
  \end{eqnarray*}
  Since $\nabla \eta$ is supported in $B (\tilde{d}_c \overrightarrow{e_1}, 2)
  \backslash B (\tilde{d}_c \overrightarrow{e_1}, 1)$, still with Theorem
  \ref{th1} (for $p = + \infty$), we check that
  \begin{eqnarray*}
    &  & \int_{B (\tilde{d}_c \overrightarrow{e_1}, D)} \left| \nabla \eta
    .\mathfrak{R}\mathfrak{e} (\nabla Q_c \overline{Q_c}) | \psi |^2 - \nabla
    \eta \mathfrak{R}\mathfrak{e} (\nabla \tilde{V}_1 \overline{\tilde{V}_1})
    \left| \frac{Q_c}{\tilde{V}_1} \psi \right|^2 \right|\\
    & \leqslant & K \int_{B (\tilde{d}_c \overrightarrow{e_1}, D)} \left|
    \nabla \eta .\mathfrak{R}\mathfrak{e} (\nabla Q_c \overline{Q_c}) |
    \varphi |^2 - \nabla \eta \mathfrak{R}\mathfrak{e} (\nabla \tilde{V}_1
    \overline{\tilde{V}_1}) \left| \frac{Q_c}{\tilde{V}_1} \varphi \right|^2
    \right|\\
    & \leqslant & \left\| \nabla \eta .\mathfrak{R}\mathfrak{e} (\nabla Q_c
    \overline{Q_c}) - \nabla \eta \mathfrak{R}\mathfrak{e} (\nabla \tilde{V}_1
    \overline{\tilde{V}_1}) \left| \frac{Q_c}{\tilde{V}_1} \right|^2
    \right\|_{L^{\infty} ((\tilde{d}_c \overrightarrow{e_1}, D))} \| \varphi
    \|_{H^1 (B (\tilde{d}_c \overrightarrow{e_1}, D))}\\
    & \leqslant & o_{c \rightarrow 0}^D (1) \| \varphi \|_{H^1 (B
    (\tilde{d}_c \overrightarrow{e_1}, D))}^2 .
  \end{eqnarray*}
  We check similarly that the same estimate hold for all the remaining error
  terms, using the fact that $\eta$ is supported in $\mathbbm{R}^2 \backslash
  B (\tilde{d}_c \overrightarrow{e_1}, 1)$.
\end{proof}

Remark that, by density argument (see the proof of Lemma \ref{CP2Ndensity}),
Lemma \ref{CP2L4326} holds for any $\varphi \in H^1 (B (0, D))$. Now, we want
to remove the orthogonality condition on the phase. For that, we have to
change the coercivity norm

\begin{lemma}
  \label{CP2L442702}There exist $R, D_0 > 0$ with $D_0 > R$, such that, for $D
  > D_0$, $0 < c < c_0 (D)$ and $\varphi = Q_c \psi \in C^{\infty}_c
  (\mathbbm{R}^2 \backslash \{ \tilde{d}_c \overrightarrow{e_1} \},
  \mathbbm{C})$, if the following two orthogonality conditions
  \[ \int_{B (\tilde{d}_c \overrightarrow{e_1}, R)} \mathfrak{R}\mathfrak{e}
     (\partial_{x_1} \tilde{V}_1 \overline{\tilde{V}_1 \psi}) = \int_{B
     (\tilde{d}_c \overrightarrow{e_1}, R)} \mathfrak{R}\mathfrak{e}
     (\partial_{x_2} \tilde{V}_1 \overline{\tilde{V}_1 \psi}) = 0 \]
  are satisfied, then
  \[ B_{Q_c}^{\tmop{loc}_{1, D}} (\varphi) \geqslant K (D) \int_{B
     (\tilde{d}_c \overrightarrow{e_1}, D)} | \nabla \psi |^2 | Q_c |^4
     +\mathfrak{R}\mathfrak{e}^2 (\psi) | Q_c |^4 . \]
\end{lemma}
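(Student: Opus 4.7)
The strategy is to reduce the two-orthogonality-condition statement to the three-orthogonality-condition Lemma \ref{CP2L4326} by imposing the missing phase condition through a constant imaginary shift of $\psi$. Given $\varphi = Q_c \psi$ satisfying the two stated orthogonality conditions, define
\[
\lambda \assign \frac{1}{| B (\tilde{d}_c \overrightarrow{e_1}, R) \backslash B (\tilde{d}_c \overrightarrow{e_1}, R/2) |} \int_{B (\tilde{d}_c \overrightarrow{e_1}, R) \backslash B (\tilde{d}_c \overrightarrow{e_1}, R/2)} \mathfrak{I}\mathfrak{m} (\psi)
\]
and set $\tilde{\psi} \assign \psi - i \lambda$, $\tilde{\varphi} \assign Q_c \tilde{\psi} = \varphi - i \lambda Q_c$, so that the phase condition $\int_{B(\tilde{d}_c \overrightarrow{e_1}, R) \backslash B(\tilde{d}_c \overrightarrow{e_1}, R/2)} \mathfrak{I}\mathfrak{m}(\tilde{\psi}) = 0$ now holds by construction. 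The first two orthogonality conditions are preserved: the extra contribution is $\lambda \int_{B (\tilde{d}_c \overrightarrow{e_1}, R)} \mathfrak{I}\mathfrak{m}(\partial_{x_j} \tilde{V}_1 \overline{\tilde{V}_1})$ for $j = 1, 2$, and writing $\tilde{V}_1 = \rho_1(\tilde{r}_1) e^{i \tilde{\theta}_1}$ in polar coordinates one computes $\mathfrak{I}\mathfrak{m}(\partial_{x_1} \tilde{V}_1 \overline{\tilde{V}_1}) = -\rho_1^2 \sin \tilde{\theta}_1 / \tilde{r}_1$ and $\mathfrak{I}\mathfrak{m}(\partial_{x_2} \tilde{V}_1 \overline{\tilde{V}_1}) = \rho_1^2 \cos \tilde{\theta}_1 / \tilde{r}_1$, both of which integrate to zero on the symmetric disk.

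The seminorm on the right-hand side is invariant under the shift since $\nabla \tilde{\psi} = \nabla \psi$ and $\mathfrak{R}\mathfrak{e}(\tilde{\psi}) = \mathfrak{R}\mathfrak{e}(\psi)$, so that
\[
\int_{B(\tilde{d}_c \overrightarrow{e_1}, D)} | \nabla \tilde{\psi} |^2 | Q_c |^4 + \mathfrak{R}\mathfrak{e}^2(\tilde{\psi}) | Q_c |^4 = \int_{B(\tilde{d}_c \overrightarrow{e_1}, D)} | \nabla \psi |^2 | Q_c |^4 + \mathfrak{R}\mathfrak{e}^2(\psi) | Q_c |^4.
\]
Expanding $Q_c \nabla \tilde{\psi} = \nabla \tilde{\varphi} - \nabla Q_c \, \tilde{\psi}$ and using that $| \nabla Q_c |$ and $| Q_c |$ are bounded on $B(\tilde{d}_c \overrightarrow{e_1}, D)$ then yields the comparison
\[
\int_{B(\tilde{d}_c \overrightarrow{e_1}, D)} | \nabla \tilde{\psi} |^2 | Q_c |^4 + \mathfrak{R}\mathfrak{e}^2(\tilde{\psi}) | Q_c |^4 \leqslant K(D) \| \tilde{\varphi} \|_{H^1(B(\tilde{d}_c \overrightarrow{e_1}, D))}^2.
\]
Since $\tilde{\varphi} \in H^1(B(\tilde{d}_c \overrightarrow{e_1}, D))$ satisfies the three orthogonality conditions of Lemma \ref{CP2L4326}, its $H^1$-density extension gives $B_{Q_c}^{\mathrm{loc}_{1,D}}(\tilde{\varphi}) \geqslant K(D) \| \tilde{\varphi} \|_{H^1}^2$, and together these provide the desired lower bound of the form $K(D) \int | \nabla \psi |^2 | Q_c |^4 + \mathfrak{R}\mathfrak{e}^2(\psi) | Q_c |^4$ on $B_{Q_c}^{\mathrm{loc}_{1,D}}(\tilde{\varphi})$.

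The remaining and most delicate point is to pass from $B_{Q_c}^{\mathrm{loc}_{1,D}}(\tilde{\varphi})$ to $B_{Q_c}^{\mathrm{loc}_{1,D}}(\varphi)$. Writing $\varphi = \tilde{\varphi} + i \lambda Q_c$ and expanding by bilinearity,
\[
B_{Q_c}^{\mathrm{loc}_{1,D}}(\varphi) = B_{Q_c}^{\mathrm{loc}_{1,D}}(\tilde{\varphi}) + 2 \lambda \tilde{B}(\tilde{\varphi}, i Q_c) + \lambda^2 B_{Q_c}^{\mathrm{loc}_{1,D}}(i Q_c),
\]
where $\tilde{B}$ denotes the polarization of $B_{Q_c}^{\mathrm{loc}_{1,D}}$. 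The crucial observation is that the $\eta$-part of $B_{Q_c}^{\mathrm{loc}_{1,D}}$, supported where $|Q_c|$ is bounded below, depends on $\psi$ only through $\nabla \psi$ and $\mathfrak{R}\mathfrak{e}(\psi)$ (both invariant under the shift), so the entire $\lambda$-correction reduces to integrals over the fixed bounded region $\{\eta < 1\} \cup \{\nabla \eta \neq 0\} \subset B(\tilde{d}_c \overrightarrow{e_1}, 2)$. The main obstacle is to absorb these corrections without requiring any bound on $\lambda$ in terms of $\| \varphi \|_{\mathcal{C}}$ (which is impossible, since $\lambda$ is controlled only by $\mathfrak{I}\mathfrak{m}(\psi)$, a quantity the seminorm ignores). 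This will be carried out by completing the square in $\lambda$ and using Cauchy--Schwarz to bound $| \tilde{B}(\tilde{\varphi}, i Q_c) |$ by a constant times $\| \tilde{\varphi} \|_{H^1(B(\tilde{d}_c \overrightarrow{e_1}, D))}$, so that the total correction can be absorbed into a small fraction of $B_{Q_c}^{\mathrm{loc}_{1,D}}(\tilde{\varphi}) \geqslant K(D) \| \tilde{\varphi} \|_{H^1}^2$, at the cost of shrinking the constant $K(D)$.
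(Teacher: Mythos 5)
Your overall skeleton --- shift $\psi$ by a constant $i \lambda$ to manufacture the missing phase orthogonality, check that the two stated conditions and the seminorm survive the shift, and then invoke the three-condition Lemma \ref{CP2L4326} --- is the same as the paper's. But your treatment of the step you yourself flag as the most delicate one does not work. Writing $\varphi = \tilde{\varphi} + i \lambda Q_c$ and expanding, the correction is $2 \lambda \tilde{B} (\tilde{\varphi}, i Q_c) + \lambda^2 B_{Q_c}^{\tmop{loc}_{1, D}} (i Q_c)$. As you correctly observe, $\lambda$ admits no bound in terms of $\| \varphi \|_{\mathcal{C}}$ or $\| \tilde{\varphi} \|_{H^1}$ (take $\varphi = i \lambda Q_c$: then $\tilde{\varphi} = 0$ while $\lambda$ is arbitrary). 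Consequently a bound of the form $| \tilde{B} (\tilde{\varphi}, i Q_c) | \leqslant C \| \tilde{\varphi} \|_{H^1}$ cannot be ``absorbed into a small fraction of $B_{Q_c}^{\tmop{loc}_{1, D}} (\tilde{\varphi})$'': the term $2 \lambda \tilde{B} (\tilde{\varphi}, i Q_c)$ is linear in the uncontrolled parameter $\lambda$, and there is no positive $\lambda^2$ term to complete the square against --- one checks that $B_{Q_c}^{\tmop{loc}_{1, D}} (i Q_c) = 0$. The only way the argument can close is if the cross term vanishes identically, and that is exactly what the paper proves: $B_{Q_c}^{\tmop{loc}_{1, D}} (\varphi - i \lambda Q_c) = B_{Q_c}^{\tmop{loc}_{1, D}} (\varphi)$ for every $\lambda$, because $L_{Q_c} (i Q_c) = 0$, the $\eta$-part of the form depends on $\psi$ only through $\nabla \psi$ and $\mathfrak{R}\mathfrak{e} (\psi)$, and the remaining terms are supported in $B (\tilde{d}_c \overrightarrow{e_1}, 2)$ so the integrations by parts produce no boundary contributions on $\partial B (\tilde{d}_c \overrightarrow{e_1}, D)$. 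You need to prove this exact invariance; Cauchy--Schwarz plus absorption is structurally insufficient.

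A second, smaller gap: the first two orthogonality conditions of Lemma \ref{CP2L4326} are $\int_{B (\tilde{d}_c \overrightarrow{e_1}, R)} \mathfrak{R}\mathfrak{e} (\partial_{x_j} \tilde{V}_1 \bar{\varphi}) = 0$, i.e. against $\overline{Q_c \psi}$, whereas your $\tilde{\varphi}$ satisfies them against $\overline{\tilde{V}_1 \tilde{\psi}}$. These differ by $o^D_{c \rightarrow 0} (1) \| \varphi \|_{H^1 (B (\tilde{d}_c \overrightarrow{e_1}, D))}$ (via Lemma \ref{CP2closecall}), so the lemma does not apply verbatim. The paper first perturbs by $\varepsilon_1 \partial_{x_1} Q_c + \varepsilon_2 \partial_{x_2} Q_c + \varepsilon_3 i Q_c$ with $| \varepsilon_1 | + | \varepsilon_2 | + | \varepsilon_3 | = o_{c \rightarrow 0} (1) \| \varphi \|_{H^1}$ to restore the exact conditions of Lemma \ref{CP2L4326} and then absorbs the resulting error for $c$ small enough depending on $D$; this step should be included.
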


\begin{proof}
  Take a function $\varphi \in H^1 (B (0, D))$ that satisfies the
  orthogonality conditions
  \[ \int_{B (\tilde{d}_c \overrightarrow{e_1}, R)} \mathfrak{R}\mathfrak{e}
     (\partial_{x_1} \tilde{V}_1 \overline{\tilde{V}_1 \psi}) = \int_{B
     (\tilde{d}_c \overrightarrow{e_1}, R)} \mathfrak{R}\mathfrak{e}
     (\partial_{x_2} \tilde{V}_1 \overline{\tilde{V}_1 \psi}) = \int_{B
     (\tilde{d}_c \overrightarrow{e_1}, R) \backslash B (\tilde{d}_c
     \overrightarrow{e_1}, R / 2)} \mathfrak{I}\mathfrak{m} (\psi) = 0, \]
  and let us show that $B_{Q_c}^{\tmop{loc}_{1, D}} (\varphi) \geqslant K \|
  \varphi \|_{H^1 (B (\tilde{d}_c \overrightarrow{e_1}, D))}^2$. Take
  $\varepsilon_1, \varepsilon_2, \varepsilon_3 \in \mathbbm{R}$ and we define
  \[ \tilde{\varphi} = \varphi - \varepsilon_1 \partial_{x_1} Q_c -
     \varepsilon_2 \partial_{x_2} Q_c - \varepsilon_3 i Q_c . \]
  We have, for $\varphi = Q_c \psi$, by Theorem \ref{th1} (for $p = + \infty$)
  and Lemma \ref{CP2closecall},
  \begin{eqnarray*}
    &  & \left| \int_{B (\tilde{d}_c \overrightarrow{e_1}, R)}
    \mathfrak{R}\mathfrak{e} (\partial_{x_1} \tilde{V}_1 \overline{\tilde{V}_1
    \psi}) - \int_{B (\tilde{d}_c \overrightarrow{e_1}, R)}
    \mathfrak{R}\mathfrak{e} (\partial_{x_1} Q_c \overline{Q_c \psi})
    \right|\\
    & \leqslant & \left| \int_{B (\tilde{d}_c \overrightarrow{e_1}, R)}
    \mathfrak{R}\mathfrak{e} \left( \partial_{x_1} \tilde{V}_1
    \frac{\tilde{V}_1}{Q_c} \bar{\varphi} - \partial_{x_1} Q_c \bar{\varphi}
    \right) \right|\\
    & \leqslant & K \left\| \partial_{x_1} \tilde{V}_1
    \frac{\tilde{V}_1}{Q_c} - \partial_{x_1} Q_c \right\|_{L^{\infty} (B
    (\tilde{d}_c \overrightarrow{e_1}, R))} \| \varphi \|_{H^1 (B (\tilde{d}_c
    \overrightarrow{e_1}, D))}\\
    & \leqslant & o^D_{c \rightarrow 0} (1) \| \varphi \|_{H^1 (B
    (\tilde{d}_c \overrightarrow{e_1}, D))} .
  \end{eqnarray*}
  Similar estimates hold for $\int_{B (\tilde{d}_c \overrightarrow{e_1}, R)}
  \mathfrak{R}\mathfrak{e} (\partial_{x_2} \tilde{V}_1 \overline{\tilde{V}_1
  \psi})$. By standard arguments, we check that there exists $\varepsilon_1,
  \varepsilon_2, \varepsilon_3 \in \mathbbm{R}$ with $| \varepsilon_1 | + |
  \varepsilon_2 | + | \varepsilon_2 | \leqslant o_{c \rightarrow 0} (1) \|
  \varphi \|_{H^1 (B (\tilde{d}_c \overrightarrow{e_1}, D))}$ such that
  $\tilde{\varphi}$ satisfies the three orthogonality conditions of Lemma
  \ref{CP2L4326}. We deduce that, since (by Theorem \ref{th1} for $p = +
  \infty$)
  \[ \| \partial_{x_1} Q_c \|_{H^1 (B (\tilde{d}_c \overrightarrow{e_1}, D))}
     + \| \partial_{x_2} Q_c \|_{H^1 (B (\tilde{d}_c \overrightarrow{e_1},
     D))} + \| i Q_c \|_{H^1 (B (\tilde{d}_c \overrightarrow{e_1}, D))}
     \leqslant K (D), \]
  \begin{eqnarray*}
    B_{Q_c}^{\tmop{loc}_{1, D}} (\varphi) & \geqslant &
    B_{Q_c}^{\tmop{loc}_{1, D}} (\tilde{\varphi}) - o^D_{c \rightarrow 0} (1)
    \| \varphi \|_{H^1 (B (\tilde{d}_c \overrightarrow{e_1}, D))}^2\\
    & \geqslant & K (D) \| \tilde{\varphi} \|^2_{H^1 (B (\tilde{d}_c
    \overrightarrow{e_1}, D))} - o^D_{c \rightarrow 0} (1) \| \varphi \|_{H^1
    (B (\tilde{d}_c \overrightarrow{e_1}, D))}^2\\
    & \geqslant & K (D) \| \varphi \|^2_{H^1 (B (\tilde{d}_c
    \overrightarrow{e_1}, D))} - o^D_{c \rightarrow 0} (1) \| \varphi \|_{H^1
    (B (\tilde{d}_c \overrightarrow{e_1}, D))}^2\\
    & \geqslant & K (D) \| \varphi \|^2_{H^1 (B (\tilde{d}_c
    \overrightarrow{e_1}, D))},
  \end{eqnarray*}
  given that $c$ is small enough (depending on $D$). For $\varphi = Q_c \psi$,
  we infer that
  \[ \int_{B (\tilde{d}_c \overrightarrow{e_1}, D)} | \nabla \psi |^2 | Q_c
     |^4 +\mathfrak{R}\mathfrak{e}^2 (\psi) | Q_c |^4 \leqslant K (D) \|
     \varphi \|^2_{H^1 (B (\tilde{d}_c \overrightarrow{e_1}, D))} . \]
  Indeed, we have
  \[ \int_{B (\tilde{d}_c \overrightarrow{e_1}, D)} \mathfrak{R}\mathfrak{e}^2
     (\psi) | Q_c |^4 \leqslant K \int_{B (\tilde{d}_c \overrightarrow{e_1},
     D)} \mathfrak{R}\mathfrak{e}^2 (\varphi) \leqslant K \| \varphi \|^2_{H^1
     (B (\tilde{d}_c \overrightarrow{e_1}, D))}, \]
  and
  \begin{eqnarray*}
    \int_{B (\tilde{d}_c \overrightarrow{e_1}, D)} | \nabla \psi |^2 | Q_c |^4
    & = & \int_{B (\tilde{d}_c \overrightarrow{e_1}, D)} | \nabla \varphi -
    \nabla Q_c \psi |^2 | Q_c |^2\\
    & \leqslant & K \left( \int_{B (\tilde{d}_c \overrightarrow{e_1}, D)} |
    \nabla \varphi |^2 + \int_{B (\tilde{d}_c \overrightarrow{e_1}, D)} |
    \nabla Q_c \psi |^2 | Q_c |^2 \right)\\
    & \leqslant & K \left( \int_{B (\tilde{d}_c \overrightarrow{e_1}, D)} |
    \nabla \varphi |^2 + \int_{B (\tilde{d}_c \overrightarrow{e_1}, D)} |
    \varphi |^2 \right) .
  \end{eqnarray*}
  We deduce that, under the three orthogonality conditions, for $\varphi = Q_c
  \psi$,
  \[ \int_{B (\tilde{d}_c \overrightarrow{e_1}, R)} \mathfrak{R}\mathfrak{e}
     (\partial_{x_1} \tilde{V}_1 \overline{\tilde{V}_1 \psi}) = \int_{B
     (\tilde{d}_c \overrightarrow{e_1}, R)} \mathfrak{R}\mathfrak{e}
     (\partial_{x_2} \tilde{V}_1 \overline{\tilde{V}_1 \psi}) = \int_{B
     (\tilde{d}_c \overrightarrow{e_1}, R) \backslash B (\tilde{d}_c
     \overrightarrow{e_1}, R / 2)} \mathfrak{I}\mathfrak{m} (\psi) = 0, \]
  then
  \[ B_{Q_c}^{\tmop{loc}_{1, D}} (\varphi) \geqslant K (D) \int_{B
     (\tilde{d}_c \overrightarrow{e_1}, D)} | \nabla \psi |^2 | Q_c |^4
     +\mathfrak{R}\mathfrak{e}^2 (\psi) | Q_c |^4 . \]

  Now, let us show that for any $\lambda \in \mathbbm{R}$, $\varphi \in H^1 (B
  (\tilde{d}_c \overrightarrow{e_1}, D))$,
  \[ B_{Q_c}^{\tmop{loc}_{1, D}} (\varphi - i \lambda Q_c) =
     B_{Q_c}^{\tmop{loc}_{1, D}} (\varphi) . \]
  For $\varphi \in C^{\infty}_c (\mathbbm{R}^2, \mathbbm{C})$, we have
  $L_{Q_c} (\varphi - i \lambda Q_c) = L_{Q_c} (\varphi) \in C^{\infty}_c
  (\mathbbm{R}^2, \mathbbm{C})$, thus $\langle L_{Q_c} (\varphi - i \lambda
  Q_c), \varphi - i \lambda Q_c \rangle$ is well defined, and
  \[ \langle L_{Q_c} (\varphi - i \lambda Q_c), \varphi - i \lambda Q_c
     \rangle = \langle L_{Q_c} (\varphi), \varphi - i \lambda Q_c \rangle =
     \langle \varphi, L_{Q_c} (\varphi - i \lambda Q_c) \rangle = \langle
     L_{Q_c} (\varphi), \varphi \rangle . \]
  With computations similar to the one of the proof of Lemma \ref{CP2L412602}
  and by density, using \ $\nabla (\psi - i \lambda) = \nabla \psi$ and
  $\mathfrak{R}\mathfrak{e} (\psi - i \lambda) =\mathfrak{R}\mathfrak{e}
  (\psi)$, we deduce that $B_{Q_c}^{\tmop{loc}_{1, D}} (\varphi - i \lambda
  Q_c) = B_{Q_c}^{\tmop{loc}_{1, D}} (\varphi)$.
  
  \
  
  Now, for $\lambda \in \mathbbm{R}$, $\tilde{\varphi} = \varphi - i \lambda
  Q_c$, $\tilde{\psi} = \psi - i \lambda$, $\tilde{\varphi} = Q_c
  \tilde{\psi}$, we have $B_{Q_c}^{\tmop{loc}_{1, D}} (\varphi) =
  B_{Q_c}^{\tmop{loc}_{1, D}} (\tilde{\varphi})$,
  \[ \int_{B (\tilde{d}_c \overrightarrow{e_1}, D)} | \nabla \psi |^2 | Q_c
     |^4 +\mathfrak{R}\mathfrak{e}^2 (\psi) | Q_c |^4 = \int_{B (\tilde{d}_c
     \overrightarrow{e_1}, D)} | \nabla \tilde{\psi} |^2 | Q_c |^4
     +\mathfrak{R}\mathfrak{e}^2 (\tilde{\psi}) | Q_c |^4 \]
  and
  \[ \int_{B (\tilde{d}_c \overrightarrow{e_1}, R)} \mathfrak{R}\mathfrak{e}
     (\nabla \tilde{V}_1 \overline{\tilde{V}_1 \psi}) = \int_{B (\tilde{d}_c
     \overrightarrow{e_1}, R)} \mathfrak{R}\mathfrak{e} (\nabla \tilde{V}_1
     \overline{\tilde{V}_1 \tilde{\psi}}) . \]
  For this last equality, it comes from the fact that $\int_{B (\tilde{d}_c
  \overrightarrow{e_1}, R)} \mathfrak{R}\mathfrak{e} (i \nabla \tilde{V}_1
  \overline{\tilde{V}_1}) = 0$, since $\mathfrak{R}\mathfrak{e} (i \nabla
  \tilde{V}_1 \overline{\tilde{V}_1})$ has no zero harmonic (see Lemma
  \ref{lemme3new}). We also check that
  \[ \int_{B (\tilde{d}_c \overrightarrow{e_1}, R) \backslash B (\tilde{d}_c
     \overrightarrow{e_1}, R / 2)} \mathfrak{I}\mathfrak{m} (\psi) = \int_{B
     (\tilde{d}_c \overrightarrow{e_1}, R) \backslash B (\tilde{d}_c
     \overrightarrow{e_1}, R / 2)} \mathfrak{I}\mathfrak{m} (\tilde{\psi}) + K
     \lambda \]
  for a universal constant $K > 0$. Therefore, choosing $\lambda \in
  \mathbbm{R}$ such that $\int_{B (\tilde{d}_c \overrightarrow{e_1}, R)
  \backslash B (\tilde{d}_c \overrightarrow{e_1}, R / 2)}
  \mathfrak{I}\mathfrak{m} (\tilde{\psi}) = 0$, we have, for a function
  $\varphi = Q_c \psi$ that satisfies
  \[ \int_{B (\tilde{d}_c \overrightarrow{e_1}, R)} \mathfrak{R}\mathfrak{e}
     (\partial_{x_1} \tilde{V}_1 \overline{\tilde{V}_1 \psi}) = \int_{B
     (\tilde{d}_c \overrightarrow{e_1}, R)} \mathfrak{R}\mathfrak{e}
     (\partial_{x_2} \tilde{V}_1 \overline{\tilde{V}_1 \psi}) = 0, \]
  that
  \begin{eqnarray*}
    B_{Q_c}^{\tmop{loc}_{1, D}} (\varphi) & = & B_{Q_c}^{\tmop{loc}_{1, D}}
    (\tilde{\varphi})\\
    & \geqslant & \int_{B (\tilde{d}_c \overrightarrow{e_1}, D)} | \nabla
    \tilde{\psi} |^2 | Q_c |^4 +\mathfrak{R}\mathfrak{e}^2 (\tilde{\psi}) |
    Q_c |^4\\
    & = & \int_{B (\tilde{d}_c \overrightarrow{e_1}, D)} | \nabla \psi |^2 |
    Q_c |^4 +\mathfrak{R}\mathfrak{e}^2 (\psi) | Q_c |^4 .
  \end{eqnarray*}
  This concludes the proof of this lemma.
\end{proof}

\subsection{Proof of Proposition \ref{CP205218}\label{CP2concl}}

\begin{proof}[of Proposition \ref{CP205218}]
  From Lemma \ref{CP2L412602}, we have, for $\varphi = Q_c \psi \in
  C^{\infty}_c (\mathbbm{R}^2 \backslash \{ \tilde{d}_c \overrightarrow{e_1},
  - \tilde{d}_c \overrightarrow{e_1} \}, \mathbbm{C})$ that
  \begin{eqnarray*}
    B_{Q_c} (\varphi) & = & \int_{\mathbbm{R}^2} (1 - \eta) (| \nabla \varphi
    |^2 -\mathfrak{R}\mathfrak{e} (i c \partial_{x_2} \varphi \bar{\varphi}) -
    (1 - | Q_c |^2) | \varphi |^2 + 2\mathfrak{R}\mathfrak{e}^2
    (\overline{Q_c} \varphi))\\
    & - & \int_{\mathbbm{R}^2} \nabla \eta . (\mathfrak{R}\mathfrak{e}
    (\nabla Q_c \overline{Q_c}) | \psi |^2 - 2\mathfrak{I}\mathfrak{m} (\nabla
    Q_c \overline{Q_c}) \mathfrak{R}\mathfrak{e} (\psi)
    \mathfrak{I}\mathfrak{m} (\psi))\\
    & + & \int_{\mathbbm{R}^2} c \partial_{x_2} \eta \mathfrak{R}\mathfrak{e}
    (\psi) \mathfrak{I}\mathfrak{m} (\psi) | Q_c |^2\\
    & + & \int_{\mathbbm{R}^2} \eta (| \nabla \psi |^2 | Q_c |^2 +
    2\mathfrak{R}\mathfrak{e}^2 (\psi) | Q_c |^4)\\
    & + & \int_{\mathbbm{R}^2} \eta (4\mathfrak{I}\mathfrak{m} (\nabla Q_c
    \overline{Q_c}) \mathfrak{I}\mathfrak{m} (\nabla \psi)
    \mathfrak{R}\mathfrak{e} (\psi) + 2 c | Q_c |^2 \mathfrak{I}\mathfrak{m}
    (\partial_{x_2} \psi) \mathfrak{R}\mathfrak{e} (\psi)) .
  \end{eqnarray*}
  We decompose the integral in three domains, $B (\pm \tilde{d}_c
  \overrightarrow{e_1}, D)$ (which yield $B_{Q_c}^{\tmop{loc}_{\pm 1, D}}
  (\varphi)$) and $\mathbbm{R}^2 \backslash (B (\tilde{d}_c
  \overrightarrow{e_1}, D) \cup B (- \tilde{d}_c \overrightarrow{e_1}, D))$
  for some $D > D_0 > 0$, where $D_0$ is defined in Lemma \ref{CP2L4326}.
  
  Then, with the four orthogonality conditions and Lemma \ref{CP2L4326}, we
  check that
  \[ B_{Q_c}^{\tmop{loc}_{1, D}} (\varphi) \geqslant K (D) \int_{B
     (\tilde{d}_c \overrightarrow{e_1}, D)} | \nabla \psi |^2 | Q_c |^4
     +\mathfrak{R}\mathfrak{e}^2 (\psi) | Q_c |^4, \]
  and, by symmetry of the problem around $B (\pm \tilde{d}_c
  \overrightarrow{e_1}, D)$, since $Q_c = - V_{- 1} (. + \tilde{d}_c
  \vec{e}_1) + o_{c \rightarrow 0} (1)$ in $L^{\infty} (B (- \tilde{d}_c
  \overrightarrow{e_1}, D))$, and checking that multiplying the vortex by $-
  1$ does not change the result, that
  \[ B_{Q_c}^{\tmop{loc}_{- 1, D}} (\varphi) \geqslant K (D) \int_{B (-
     \tilde{d}_c \overrightarrow{e_1}, D)} | \nabla \psi |^2 | Q_c |^4
     +\mathfrak{R}\mathfrak{e}^2 (\psi) | Q_c |^4 . \]
  Furthermore, there exist $K_1, K_2 > 0$, universal constants, such that,
  outside of $B (\widetilde{d_c} \overrightarrow{e_1}, 1) \cup B (-
  \widetilde{d_c} \overrightarrow{e_1}, 1)$ for $c$ small enough, we have
  \[ K_1 \geqslant | Q_c |^2 \geqslant K_2 \]
  by (\ref{CP2Qcpaszero}). We also have
  \[ | \mathfrak{I}\mathfrak{m} (\nabla Q_c \overline{Q_c}) | \leqslant K
     \left( \frac{1}{(1 + \tilde{r}_1)} + \frac{1}{(1 + \tilde{r}_{- 1})}
     \right) \]
  by (\ref{CP2221}). With these estimates and by Cauchy-Schwarz, for $D >
  D_0$,
  \begin{eqnarray*}
    &  & \int_{\mathbbm{R}^2 \backslash (B (\tilde{d}_c \overrightarrow{e_1},
    D) \cup B (- \tilde{d}_c \overrightarrow{e_1}, D))} 2 c | Q_c |^2
    \mathfrak{I}\mathfrak{m} (\partial_{x_2} \psi) \mathfrak{R}\mathfrak{e}
    (\psi)\\
    & \geqslant & - K c \int_{\mathbbm{R}^2 \backslash (B (\tilde{d}_c
    \overrightarrow{e_1}, D) \cup B (- \tilde{d}_c \overrightarrow{e_1}, D))}
    | \nabla \psi |^2 | Q_c |^4 +\mathfrak{R}\mathfrak{e}^2 (\psi) | Q_c |^4,
  \end{eqnarray*}
  and
  \begin{eqnarray*}
    &  & \int_{\mathbbm{R}^2 \backslash (B (\tilde{d}_c \overrightarrow{e_1},
    D) \cup B (- \tilde{d}_c \overrightarrow{e_1}, D))}
    4\mathfrak{I}\mathfrak{m} (\nabla Q_c \overline{Q_c})
    .\mathfrak{I}\mathfrak{m} (\nabla \psi) \mathfrak{R}\mathfrak{e} (\psi)\\
    & \geqslant & \frac{- K}{(1 + D)} \int_{\mathbbm{R}^2 \backslash (B
    (\tilde{d}_c \overrightarrow{e_1}, D) \cup B (- \tilde{d}_c
    \overrightarrow{e_1}, D))} | \nabla \psi |^2 | Q_c |^4
    +\mathfrak{R}\mathfrak{e}^2 (\psi) | Q_c |^4 .
  \end{eqnarray*}
  Therefore, taking $D > D_0$ large enough (independently of $c$ or $c_0$, $D
  \geqslant 10 K + 1$) and $c$ small enough ($c \leqslant \frac{10}{K}$), we
  have
  \begin{eqnarray*}
    &  & \int_{\mathbbm{R}^2 \backslash (B (\tilde{d}_c \overrightarrow{e_1},
    D) \cup B (- \tilde{d}_c \overrightarrow{e_1}, D))} | \nabla \psi |^2 |
    Q_c |^2 + 2\mathfrak{R}\mathfrak{e}^2 (\psi) | Q_c |^4\\
    & + & \int_{\mathbbm{R}^2 \backslash (B (\tilde{d}_c
    \overrightarrow{e_1}, D) \cup B (- \tilde{d}_c \overrightarrow{e_1}, D))}
    4\mathfrak{I}\mathfrak{m} (\nabla Q_c \overline{Q_c})
    .\mathfrak{I}\mathfrak{m} (\nabla \psi) \mathfrak{R}\mathfrak{e} (\psi) +
    2 c | Q_c |^2 \mathfrak{I}\mathfrak{m} (\partial_{x_2} \psi)
    \mathfrak{R}\mathfrak{e} (\psi)\\
    & \geqslant & K \int_{\mathbbm{R}^2 \backslash (B (\tilde{d}_c
    \overrightarrow{e_1}, D) \cup B (- \tilde{d}_c \overrightarrow{e_1}, D))}
    | \nabla \psi |^2 | Q_c |^4 +\mathfrak{R}\mathfrak{e}^2 (\psi) | Q_c |^4 .
  \end{eqnarray*}
  We deduce that, for $\varphi = Q_c \psi \in C^{\infty}_c (\mathbbm{R}^2
  \backslash \{ \widetilde{d_c} \overrightarrow{e_1}, - \widetilde{d_c}
  \overrightarrow{e_1} \}, \mathbbm{C})$,
  \[ B_{Q_c} (\varphi) \geqslant K \| \varphi \|^2_{\mathcal{C}} \]
  if
  \[ \int_{B (\tilde{d}_c \overrightarrow{e_1}, R)} \mathfrak{R}\mathfrak{e}
     \left( \partial_{x_1} \tilde{V}_1 \overline{_{} \widetilde{V_1} \psi}
     \right) = \int_{B (\tilde{d}_c \overrightarrow{e_1}, R)}
     \mathfrak{R}\mathfrak{e} \left( \partial_{x_2} \widetilde{V_1}
     \overline{\widetilde{V_1} \psi} \right) = 0, \]
  \[ \int_{B (- \tilde{d}_c \overrightarrow{e_1}, R)} \mathfrak{R}\mathfrak{e}
     (\partial_{x_1} \tilde{V}_{- 1} \overline{\tilde{V}_{- 1} \psi}) =
     \int_{B (- \tilde{d}_c \overrightarrow{e_1}, R)} \mathfrak{R}\mathfrak{e}
     (\partial_{x_2} \tilde{V}_{- 1} \overline{\tilde{V}_{- 1} \psi}) = 0. \]
  We argue by density to show this result in $H_{Q_c}$. From Lemma
  \ref{CP2farnormexist}, we know that $\| . \|_{\mathcal{C}}$ is continuous
  with respect to $\| . \|_{H_{Q_c}}$. Furthermore, we recall from Lemma
  \ref{CP2L2111706}, that
  \[ \int_{B (\tilde{d}_c \overrightarrow{e_1}, R)} \left|
     \mathfrak{R}\mathfrak{e} \left( \partial_{x_1} \widetilde{V_1}
     \overline{\widetilde{V_1} \psi} \right) \right| \leqslant K (c) \|
     \varphi \|_{H_{Q_c}}, \]
  and similar estimates hold for
  \[ \int_{B (\tilde{d}_c \overrightarrow{e_1}, R)} \mathfrak{R}\mathfrak{e}
     \left( \partial_{x_2} \widetilde{V_1} \overline{\widetilde{V_1} \psi}
     \right), \int_{B (- \widetilde{d_c} \overrightarrow{e_1}, R)}
     \mathfrak{R}\mathfrak{e} \left( \partial_{x_1} \widetilde{V_{}}_{- 1}
     \overline{\widetilde{V_{}}_{- 1} \psi} \right) \]
  and
  \begin{equation}
    \int_{B (- \tilde{d}_c \overrightarrow{e_1}, R)} \mathfrak{R}\mathfrak{e}
    (\partial_{x_2} \tilde{V}_{- 1} \overline{\tilde{V}_{- 1} \psi})
    \label{CP22262505} .
  \end{equation}
  In particular, we check that these quantities are continuous for the norm
  $\| . \|_{H_{Q_c}}$, and that we can pass to the limit by density in these
  quantities by Lemma \ref{CP2Ndensity}.
  
  We are left with the passage to the limit for the quadratic form. For
  $\varphi \in H_{Q_c}$, we recall from (\ref{CP2truebqc}) that
  \[ \begin{array}{lll}
       B_{Q_c} (\varphi) & = & \int_{\mathbbm{R}^2} | \nabla \varphi |^2 - (1
       - | Q_c |^2) | \varphi |^2 + 2\mathfrak{R}\mathfrak{e}^2
       (\overline{Q_c} \varphi)\\
       & + & c \int_{\mathbbm{R}^2} (1 - \eta) \mathfrak{R}\mathfrak{e} (i
       \partial_{x_2} \varphi \bar{\varphi}) + c \int_{\mathbbm{R}^2
       \nosymbol} \eta \mathfrak{R}\mathfrak{e} (i \partial_{x_2} Q_c
       \overline{Q_c}) | \psi |^2\\
       & - & 2 c \int_{\mathbbm{R}^2} \eta \mathfrak{R}\mathfrak{e} \psi
       \mathfrak{I}\mathfrak{m} \partial_{x_2} \psi | Q_c |^2 - c
       \int_{\mathbbm{R}^2} \partial_{x_2} \eta \mathfrak{R}\mathfrak{e} \psi
       \mathfrak{I}\mathfrak{m} \psi | Q_c |^2\\
       & - & c \int_{\mathbbm{R}^2} \eta \mathfrak{R}\mathfrak{e} \psi
       \mathfrak{I}\mathfrak{m} \psi \partial_{x_2} (| Q_c |^2) .
     \end{array} \]
  Following the proof of Lemma \ref{CP2finitebilinear}, we check easily that,
  for $\varphi_1 = Q_c \psi_1, \varphi_2 = Q_c \psi_2 \in H_{Q_c}$, we have
  \begin{eqnarray*}
    &  & \int_{\mathbbm{R}^2} | \nabla \varphi_1 \overline{\nabla \varphi_2}
    | + | (1 - | Q_c |^2) \varphi_1 \overline{\varphi_2} | + |
    \mathfrak{R}\mathfrak{e} (\overline{Q_c} \varphi_1)
    \mathfrak{R}\mathfrak{e} (\overline{Q_c} \varphi_2) |\\
    & + & \int_{\mathbbm{R}^2} (1 - \eta) | \mathfrak{R}\mathfrak{e} (i
    \partial_{x_2} \varphi_1 \overline{\varphi_2}) | + \int_{\mathbbm{R}^2
    \nosymbol} \eta | \mathfrak{R}\mathfrak{e} (i \partial_{x_2} Q_c
    \overline{Q_c}) | | \psi_1 \psi_2 |\\
    & + & \int_{\mathbbm{R}^2} \eta | \mathfrak{R}\mathfrak{e} \psi_1
    \mathfrak{I}\mathfrak{m} \partial_{x_2} \psi_2 | | Q_c |^2 +
    \int_{\mathbbm{R}^2} | \partial_{x_2} \eta \mathfrak{R}\mathfrak{e} \psi_1
    \mathfrak{I}\mathfrak{m} \psi_2 | | Q_c |^2\\
    & + & \int_{\mathbbm{R}^2} \eta | \mathfrak{R}\mathfrak{e} \psi_1
    \mathfrak{I}\mathfrak{m} \psi_2 \partial_{x_2} (| Q_c |^2) |\\
    & \leqslant & K (c) \| \varphi_1 \|_{H_{Q_c}} \| \varphi_2 \|_{H_{Q_c}},
  \end{eqnarray*}
  and thus we can pass at the limit in $B_{Q_c}$ by Lemma \ref{CP2Ndensity}.
  This concludes the proof of Proposition \ref{CP205218}.
\end{proof}

\section{Proof of Theorem \ref{CP2th2} and its corollaries}\label{CP2s32}

\subsection{Link between the sets of orthogononality conditions}

The first goal of this subsection is to show that the four particular
directions ($\partial_{x_1} Q_c, \partial_{x_2} Q_c, c^2 \partial_c Q_c, c
\partial_{c^{\bot}} Q_c$) are almost orthogonal between them near the zeros of
$Q_c$, and that they can replace the four orthogonality conditions of
Proposition \ref{CP205218}. This is computed in the following lemma.

\begin{lemma}
  \label{CP2133L35}For $R > 0$ given by Proposition \ref{CP205218}, there
  exist $K_1, K_2 > 0$, two constants independent of $c$, such that, for $Q_c$
  defined in Theorem \ref{th1},
  \[ K_1 \leqslant \int_{B (\pm \tilde{d}_c \overrightarrow{e_1}, R)} |
     \partial_{x_1} Q_c |^2 + \int_{B (\pm \tilde{d}_c \overrightarrow{e_1},
     R)} | \partial_{x_2} Q_c |^2 + \int_{B (\pm \tilde{d}_c
     \overrightarrow{e_1}, R)} | c^2 \partial_c Q_c |^2 + \int_{B (\pm
     \widetilde{d_c} \overrightarrow{e_1}, R)} | c \partial_{c^{\bot}} Q_c |^2
     \leqslant K_2 . \]
  Furthermore, for $A, B \in \{ \partial_{x_1} Q_c, \partial_{x_2} Q_c, c^2
  \partial_c Q_c, c \partial_{c^{\bot}} Q_c \}$, $A \neq B$, we have that, for
  $1 > \beta_0 > 0$ a small constant,
  \[ \int_{B (\tilde{d}_c \overrightarrow{e_1}, R) \cup B (- \tilde{d}_c
     \overrightarrow{e_1}, R)} \mathfrak{R}\mathfrak{e} (A \bar{B}) = o_{c
     \rightarrow 0} (c^{\beta_0}) . \]
\end{lemma}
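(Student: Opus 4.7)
The plan is to treat the three assertions separately, each time exploiting the fact that near a zero $\pm \tilde{d}_c \overrightarrow{e_1}$ all four directions collapse, up to small errors, to derivatives of a single vortex centered at that zero.

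For the two-sided bound on the sum of norms: the upper bound follows directly from Theorem \ref{th1}, Lemma \ref{CP283L33} and Lemma \ref{CP2dcQcsigma}, which give uniform $L^{\infty}$ control of $\partial_{x_1} Q_c$, $\partial_{x_2} Q_c$ and $c^2 \partial_c Q_c$ on the two balls. For the rotational direction I write $c\partial_{c^{\bot}} Q_c = -c x^{\bot} \cdot \nabla Q_c$; on these balls $c|x| \leqslant c\tilde{d}_c + cR = O(1)$ since $c\tilde{d}_c \rightarrow 1$, and $|\nabla Q_c|$ is bounded, so the upper bound follows. For the lower bound, Lemma \ref{CP20524} gives uniformly on $B(\tilde{d}_c \overrightarrow{e_1}, R)$ that $|\partial_{x_1} Q_c - \partial_{x_1} \tilde{V}_1| = o_{c \rightarrow 0}(1)$, hence $\int_{B(\tilde{d}_c \overrightarrow{e_1}, R)} |\partial_{x_1} Q_c|^2 \to \int_{B(0, R)} |\partial_{x_1} V_1|^2 > 0$, a strictly positive universal constant.

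The heart of the proof is the six cross terms. The key observation is the symmetry $Q_c(-x_1, x_2) = Q_c(x_1, x_2)$ from (\ref{CP2sym}): it forces $\partial_{x_1} Q_c$ and $\partial_{c^{\bot}} Q_c$ (with the latter computed via Lemma \ref{CP2a}) to be odd in $x_1$, while $\partial_{x_2} Q_c$ and $\partial_c Q_c$ are even in $x_1$. Since the domain $B(\tilde{d}_c \overrightarrow{e_1}, R) \cup B(-\tilde{d}_c \overrightarrow{e_1}, R)$ is invariant under $x_1 \mapsto -x_1$, the integral $\int \mathfrak{R}\mathfrak{e}(A\bar{B})$ vanishes identically whenever $A$ and $B$ have opposite $x_1$-parity. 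This disposes of four of the six pairs, leaving only $(\partial_{x_1} Q_c, c\partial_{c^{\bot}} Q_c)$ and $(\partial_{x_2} Q_c, c^2 \partial_c Q_c)$.

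For both remaining pairs I will reduce the integral, via approximations valid on $B(\tilde{d}_c \overrightarrow{e_1}, R)$, to $\int_{B(0,R)} \mathfrak{R}\mathfrak{e}(\partial_{x_1} V_1 \overline{\partial_{x_2} V_1})$, which vanishes identically: writing $V_1 = \rho(r) e^{i\theta}$, a direct computation gives $\mathfrak{R}\mathfrak{e}(\partial_{x_1} V_1 \overline{\partial_{x_2} V_1}) = \cos\theta \sin\theta (\rho'(r)^2 - \rho(r)^2/r^2)$, whose angular average is zero. The required approximations are: Lemma \ref{CP20524} (inspecting its proof to extract a rate from (\ref{CP2225}), (\ref{CP2222405}), (\ref{CP2V-1est}) and Lemma \ref{CP2zerosofQc}) gives $\partial_{x_j} Q_c = \partial_{x_j} \tilde{V}_1 + o_{c \rightarrow 0}(c^{\beta})$ for some $\beta > 0$; combining Lemma \ref{CP2dcQcsigma} with the identity $\partial_d V_{|d=d_c} = -\partial_{x_1} V_1(\cdot - d_c \overrightarrow{e_1}) V_{-1}(\cdot + d_c \overrightarrow{e_1}) + V_1(\cdot - d_c \overrightarrow{e_1}) \partial_{x_1} V_{-1}(\cdot + d_c \overrightarrow{e_1})$ together with (\ref{CP2222405})-(\ref{CP2V-1est}) and Lemma \ref{CP2zerosofQc} gives $c^2 \partial_c Q_c = \partial_{x_1} \tilde{V}_1 + o_{c \rightarrow 0}(c^{\beta})$; and writing $x = \tilde{d}_c \overrightarrow{e_1} + y$ with $|y| \leqslant R$ gives $c\partial_{c^{\bot}} Q_c = -c\tilde{d}_c \partial_{x_2} Q_c + O(c)$. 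A symmetric argument handles $B(-\tilde{d}_c \overrightarrow{e_1}, R)$ with $\tilde{V}_{-1}$ in place of $\tilde{V}_1$.

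The main obstacle is the bookkeeping: one must choose $\beta_0$ so that every error term is truly $o(c^{\beta_0})$. This requires combining the $o(c^{1-\sigma'})$ rate of Lemma \ref{CP2dcQcsigma} (with $\sigma'$ close enough to 1, as permitted by its threshold $\beta_0$), the $o(c^{1-\sigma})$ rate on $|d_c - \tilde{d}_c|$ from Lemma \ref{CP2zerosofQc}, and the $o(c^{1/2})$ rates from (\ref{CP2222405})-(\ref{CP2V-1est}). Taking $\beta_0$ sufficiently small (in particular $\beta_0 < 1 - \sigma'$ and $\beta_0 < 1/2$ for some admissible $\sigma < \sigma'$) ensures that every error is $o(c^{\beta_0})$, concluding the argument.
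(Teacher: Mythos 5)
Your proof is correct, but it handles the cross terms by a genuinely different mechanism than the paper for four of the six pairs. The paper never invokes the $x_1$-parity here: it reduces all four directions on each ball to $\pm\partial_{x_1}\tilde V_{\pm 1}$ or $\pm\partial_{x_2}\tilde V_{\pm 1}$ up to $o(c^{\beta_0})$ (its equations (\ref{CP273221})--(\ref{CP273224})), kills the mixed $\partial_{x_1}\tilde V$/$\partial_{x_2}\tilde V$ pairings by the same polar-coordinate identity you use, and for the two pairs that reduce to $\pm\int|\partial_{x_j}\tilde V_{\pm1}|^2$ on a single ball — namely $(\partial_{x_1}Q_c, c^2\partial_cQ_c)$ and $(\partial_{x_2}Q_c, c\partial_{c^\bot}Q_c)$ — it relies on the sign flipping between the two balls so that the contributions of $B(\tilde d_c\vec e_1,R)$ and $B(-\tilde d_c\vec e_1,R)$ cancel in the sum. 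Your parity argument disposes of exactly those two delicate pairs (plus two easy ones) identically, with no error term and no need for the two-ball cancellation, which is cleaner; what the paper's uniform vortex-reduction buys is that the relations (\ref{CP273221})--(\ref{CP273224}) are the workhorse reused verbatim in Lemma \ref{CP2133L39} and beyond, so deriving them all here is not wasted. Two minor remarks on your write-up: Lemma \ref{CP20524} as stated only gives $o_{c\to0}(1)$, and rather than re-inspecting its proof you can get the $o(c^{\beta_0})$ rate directly from Lemma \ref{CP283L33} with $\sigma=1-\beta_0$ together with Lemma \ref{CP2zerosofQc} and the mean value theorem, which is what the paper does; and the sign you obtain for $c^2\partial_cQ_c$ versus $\partial_{x_1}\tilde V_1$ differs from the paper's (\ref{CP273223}), but since you only ever pair it against $\partial_{x_2}\tilde V_1$, whose cross term vanishes regardless of sign, this is immaterial.
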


\begin{proof}
  From Lemma \ref{CP283L33}, we have, in $B (\pm \widetilde{d_c}
  \overrightarrow{e_1}, R)$, that (for $0 < \sigma = 1 - \beta_0 < 1$)
  \[ Q_c (x) = V_1 (x - d_c \overrightarrow{e_1}) V_{- 1} (x + d_c
     \overrightarrow{e_1}) + o_{c \rightarrow 0} (c^{\beta_0}) \]
  and
  \[ \nabla Q_c (x) = \nabla (V_1 (x - d_c \overrightarrow{e_1}) V_{- 1} (x +
     d_c \overrightarrow{e_1})) + o_{c \rightarrow 0} (c^{\beta_0}) . \]
  In this proof a $o_{c \rightarrow 0} (c^{\beta_0})$ may depend on $R$, but
  we consider $R$ as a universal constant. From Lemmas \ref{lemme3new} and
  \ref{CP2zerosofQc} and equation (\ref{CP2218}), we show that, by the mean
  value theorem, in $B (\pm \widetilde{d_c} \overrightarrow{e_1}, R)$,
  \begin{equation}
    Q_c = V_1 V_{- 1} + o_{c \rightarrow 0} (c^{\beta_0}) = V_{\pm 1} + o_{c
    \rightarrow 0} (c^{\beta_0}) = \tilde{V}_{\pm 1} + o_{c \rightarrow 0}
    (c^{\beta_0}) \label{CP2073221}
  \end{equation}
  and, similarly,
  \begin{equation}
    \nabla Q_c = \nabla \tilde{V}_{\pm 1} + o_{c \rightarrow 0} (c^{\beta_0})
    . \label{CP2073222}
  \end{equation}
  Thus, in $B (\pm \widetilde{d_c} \overrightarrow{e_1}, R)$, we have
  \begin{equation}
    \partial_{x_1} Q_c = \partial_{x_1} \tilde{V}_{\pm 1} + o_{c \rightarrow
    0} (c^{\beta_0}) \label{CP273221}
  \end{equation}
  and
  \begin{equation}
    \partial_{x_2} Q_c = \partial_{x_2} \tilde{V}_{\pm 1} + o_{c \rightarrow
    0} (c^{\beta_0}) . \label{CP273222}
  \end{equation}
  Furthermore, by Lemma \ref{CP2dcQcsigma}, we have in particular that in $B
  (\pm \widetilde{d_c} \overrightarrow{e_1}, R)$,
  \[ c^2 \partial_c Q_c = (1 + o_{c \rightarrow 0} (c^{\beta_0})) \partial_d
     (V_1 (x - d \overrightarrow{e_1}) V_{- 1} (x + d
     \overrightarrow{e_1}))_{| d = d_c \nobracket} + o_{c \rightarrow 0}
     (c^{\beta_0}) . \]
  Thus, in $B (\pm \widetilde{d_c} \overrightarrow{e_1}, R)$, with Lemmas
  \ref{lemme3new} and \ref{CP2zerosofQc}, we estimate
  \begin{equation}
    c^2 \partial_c Q_c = \mp \partial_{x_1} \tilde{V}_{\pm 1} + o_{c
    \rightarrow 0} (c^{\beta_0}) . \label{CP273223}
  \end{equation}
  Finally, from Lemma \ref{CP2a}, we have
  \[ c \partial_{c^{\bot}} Q_c = - c x^{\bot} . \nabla Q_c \]
  with $x^{\bot} = (- x_2, x_1)$. In $B (\pm \widetilde{d_c}
  \overrightarrow{e_1}, R)$, we have, since $c \widetilde{d_c} = 1 + o_{c
  \rightarrow 0} (c^{\beta_0})$ and Lemma \ref{CP2zerosofQc},
  \[ c x^{\bot} = \mp \overrightarrow{e_2} + o_{c \rightarrow 0} (c^{\beta_0})
     . \]
  Therefore, in $B (\pm \widetilde{d_c} \overrightarrow{e_1}, R)$, we have
  \begin{equation}
    c \partial_{c^{\bot}} Q_c = \pm \partial_{x_2} \tilde{V}_{\pm 1} + o_{c
    \rightarrow 0} (c^{\beta_0}) . \label{CP273224}
  \end{equation}
  Now, from Lemma \ref{lemme3new}, we have
  \begin{equation}
    K_1 \leqslant \int_{B (\pm \tilde{d}_c \overrightarrow{e_1}, R)} |
    \partial_{x_1} \tilde{V}_{\pm 1} |^2 + \int_{B (\pm \tilde{d}_c
    \overrightarrow{e_1}, R)} | \partial_{x_2} \tilde{V}_{\pm 1} |^2 \leqslant
    K_2 \label{CP273225}
  \end{equation}
  for universal constant $K_1, K_2 > 0$ (depending only on $R$). By a change
  of variable, we have, writing $\tilde{V}_{\pm 1} = \rho (\tilde{r}_{\pm 1})
  e^{i \tilde{\theta}_{\pm 1}}$ (with the notations of Lemma \ref{lemme3new}),
  \begin{equation}
    \partial_{x_1} \tilde{V}_{\pm 1} = \left( \cos (\tilde{\theta}_{\pm 1})
    \frac{\rho' (\tilde{r}_{\pm 1})}{\rho (\tilde{r}_{\pm 1})} - \frac{\pm
    i}{\tilde{r}_{\pm 1}} \sin (\tilde{\theta}_{\pm 1}) \right) \tilde{V}_{\pm
    1} \label{CP2073228}
  \end{equation}
  and
  \begin{equation}
    \partial_{x_2} \tilde{V}_{\pm 1} = \left( \sin (\tilde{\theta}_{\pm 1})
    \frac{\rho' (\tilde{r}_{\pm 1})}{\rho (\tilde{r}_{\pm 1})} + \frac{\pm
    i}{\tilde{r}_{\pm 1}} \cos (\tilde{\theta}_{\pm 1}) \right) \tilde{V}_{\pm
    1} .
  \end{equation}
  Since
  \[ \mathfrak{R}\mathfrak{e} \left( \partial_{x_1} \tilde{V}_{\pm 1}
     \overline{\partial_{x_2} \tilde{V}_{\pm 1}} \right) = 2 \cos
     (\tilde{\theta}_{\pm 1}) \sin (\tilde{\theta}_{\pm 1}) \frac{\rho'
     (\tilde{r}_{\pm 1})}{\tilde{r}_{\pm 1} \rho (\tilde{r}_{\pm 1})} |
     \tilde{V}_{\pm 1} |^2, \]
  by integration in polar coordinates, we have
  \begin{equation}
    \int_{B (\pm \tilde{d}_c \overrightarrow{e_1}, R)}
    \mathfrak{R}\mathfrak{e} \left( \partial_{x_1} \tilde{V}_{\pm 1}
    \overline{\partial_{x_2} \tilde{V}_{\pm 1}} \right) = 0. \label{CP273226}
  \end{equation}
  Combining (\ref{CP273221}) to (\ref{CP273224}) with (\ref{CP273225}) and
  (\ref{CP273226}), we can do every estimate stated in the lemma.
\end{proof}

With (\ref{CP273221}) to (\ref{CP273224}), we check that these four directions
are close to the ones in the orthogonality conditions of Proposition
\ref{CP205218}. This will appear in the proof of Lemma \ref{CP2133L39}. Now,
we give a way to develop the quadratic form for some particular functions.

\

\begin{lemma}
  \label{CP2icicmieux}For $\varphi \in C^{\infty}_c (\mathbbm{R}^2 \backslash
  \{ \widetilde{d_c} \overrightarrow{e_1}, - \widetilde{d_c}
  \overrightarrow{e_1} \}, \mathbbm{C})$ and $A \in \tmop{Span} \{
  \partial_{x_1} Q_c, \partial_{x_2} Q_c, \partial_c Q_c, \partial_{c^{\bot}}
  Q_c \}$, we have
  \[ \langle L_{Q_c} (\varphi + A), \varphi + A \rangle = \langle L_{Q_c}
     (\varphi), \varphi \rangle + \langle 2 L_{Q_c} (A), \varphi \rangle +
     \langle L_{Q_c} (A), A \rangle . \]
  Furthermore, $\langle L_{Q_c} (\varphi + A), \varphi + A \rangle = B_{Q_c}
  (\varphi + A)$ and $\langle L_{Q_c} (A), A \rangle = B_{Q_c} (A)$.
\end{lemma}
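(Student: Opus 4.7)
The plan proceeds in three steps: expand $\langle L_{Q_c}(\varphi+A),\varphi+A\rangle$ by bilinearity, collapse the cross terms via the symmetry of $L_{Q_c}$, and then identify the diagonal pairings with $B_{Q_c}$. For the first two steps, linearity of $L_{Q_c}$ and bilinearity of $\langle\cdot,\cdot\rangle$ formally yield
\[
\langle L_{Q_c}(\varphi+A),\varphi+A\rangle = \langle L_{Q_c}(\varphi),\varphi\rangle + \langle L_{Q_c}(\varphi),A\rangle + \langle L_{Q_c}(A),\varphi\rangle + \langle L_{Q_c}(A),A\rangle,
\]
and each summand is well defined: Lemmas \ref{CP20703L222} and \ref{CP2a} identify $L_{Q_c}(A)$ as a smooth function (namely $0$, $i\partial_{x_2}Q_c$, or $-ic\partial_{x_1}Q_c$ on each basis vector), the compact support of $\varphi$ disposes of the first three pairings, and Lemma \ref{CP2nend} disposes of the last. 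The symmetry $\langle L_{Q_c}(\varphi),A\rangle = \langle L_{Q_c}(A),\varphi\rangle$ is then proved term by term: the multiplication operators $-(1-|Q_c|^2)\,\cdot$ and $2\mathfrak{R}\mathfrak{e}(\overline{Q_c}\,\cdot)Q_c$ are manifestly symmetric; two integrations by parts handle the Laplacian; and the identity $\mathfrak{R}\mathfrak{e}(iz\bar w) = -\mathfrak{R}\mathfrak{e}(iw\bar z)$ combined with one integration by parts in $x_2$ handles the transport $-ic\partial_{x_2}$. All boundary contributions vanish because $\varphi\in C^\infty_c$. Substituting the symmetry into the expansion yields the first identity of the lemma.

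For the remaining identifications, $A\in H_{Q_c}$ by Lemma \ref{CP20703L222}, so $B_{Q_c}(A)$ and $B_{Q_c}(\varphi+A)$ are well defined by Lemma \ref{CP2finitebilinear}. To show $\langle L_{Q_c}(A),A\rangle = B_{Q_c}(A)$, I would reproduce, with $A$ in place of a smooth test function, the sequence of manipulations of subsection \ref{CP22241103} that rewrite the ``natural'' quadratic form $\int|\nabla A|^2 - (1-|Q_c|^2)|A|^2 + 2\mathfrak{R}\mathfrak{e}^2(\overline{Q_c}A) - \mathfrak{R}\mathfrak{e}(ic\partial_{x_2}A\bar A)$ in the form (\ref{CP2truebqc}). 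The key point is to perform each integration by parts on an exhausting family of balls $B(0,R)$ and verify that the boundary terms on $\partial B(0,R)$ tend to $0$ as $R\to\infty$; for this one combines the decay of $\nabla Q_c$, $1-|Q_c|^2$, $\mathfrak{R}\mathfrak{e}(\nabla Q_c\overline{Q_c})$ and $\mathfrak{I}\mathfrak{m}(\nabla Q_c\overline{Q_c})$ from Theorem \ref{CP2Qcbehav} and equations (\ref{CP2217})--(\ref{CP2222}) with the pointwise $L^\infty$ bounds on $A$ and $\nabla A$ collected in (\ref{CP2N216}) and already used in the proof of Lemma \ref{CP2nend}. The identity $\langle L_{Q_c}(\varphi+A),\varphi+A\rangle = B_{Q_c}(\varphi+A)$ then follows by polarizing $B_{Q_c}$ on $H_{Q_c}$ and observing that the induced bilinear form $B_{Q_c}(\varphi,A)$ equals $\langle L_{Q_c}(A),\varphi\rangle$---again by integration by parts with no boundary terms, since $\varphi$ is compactly supported. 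The main obstacle is precisely this limiting argument: controlling the $\partial B(0,R)$ surface contributions for the non-compactly supported $A$, a task that relies on combining all the decay estimates from subsections \ref{CP2Ndecay} and \ref{CP2s31}. The other pieces of the lemma are essentially formal, depending only on the compact support of $\varphi$ and the self-adjoint algebraic structure of $L_{Q_c}$.
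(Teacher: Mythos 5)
Your proposal is correct and follows essentially the same route as the paper: the cross terms are handled by the compact support of $\varphi$ together with the symmetry of $L_{Q_c}$, the diagonal term $\langle L_{Q_c}(A),A\rangle$ is controlled via the explicit formula $L_{Q_c}(A)=\mu_3 i\partial_{x_2}Q_c-\mu_4 ic\partial_{x_1}Q_c$ and the decay estimates (\ref{CP2N216}), (\ref{CP2220}), (\ref{CP2221}), and the identification with $B_{Q_c}$ reduces to checking that the single boundary term $\int_{\mathbbm{R}^2}\partial_{x_2}(\eta\,\mathfrak{R}\mathfrak{e}\Psi\,\mathfrak{I}\mathfrak{m}\Psi\,|Q_c|^2)$ produced by the manipulations of subsection \ref{CP22241103} vanishes, which is exactly the limiting argument on exhausting balls that you describe. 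The paper states this more tersely but the content is identical.
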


\begin{proof}
  Since $\varphi \in C^{\infty}_c (\mathbbm{R}^2 \backslash \{ \widetilde{d_c}
  \overrightarrow{e_1}, - \widetilde{d_c} \overrightarrow{e_1} \},
  \mathbbm{C})$, it is enough to check that $\mathfrak{R}\mathfrak{e} (L_{Q_c}
  (A) \bar{A}) \in L^1 (\mathbbm{R}^2, \mathbbm{R})$ for $A \in \tmop{Span} \{
  \partial_{x_1} Q_c, \partial_{x_2} Q_c, \partial_c Q_c, \partial_{c^{\bot}}
  Q_c \}$ to show that
  \[ \langle L_{Q_c} (\varphi + A), \varphi + A \rangle = \langle L_{Q_c}
     (\varphi), \varphi \rangle + \langle 2 L_{Q_c} (A), \varphi \rangle +
     \langle L_{Q_c} (A), A \rangle . \]
  From Lemma \ref{CP20703L222}, we have, for $A = \mu_1 \partial_{x_1} Q_c +
  \mu_2 \partial_{x_2} Q_c + \mu_3 \partial_c Q_c + \mu_4 \partial_{c^{\bot}}
  Q_c$, that
  \[ L_{Q_c} (A) = \mu_3 i \partial_{x_2} Q_c - \mu_4 i \partial_{x_1} Q_c .
  \]
  Now, with (\ref{CP2N216}) (that holds also for $A$ by linearity) and
  (\ref{CP2220}), (\ref{CP2221}), we check easily that
  $\mathfrak{R}\mathfrak{e} (L_{Q_c} (A) \bar{A}) \in L^1 (\mathbbm{R}^2,
  \mathbbm{R})$.
  
  Now, from subsection \ref{CP22241103}, to show that for $\Phi = Q_c \Psi \in
  H_{Q_c} \cap C^2 (\mathbbm{R}^2, \mathbbm{C})$, we have \ $\langle L_{Q_c}
  (\Phi), \Phi \rangle = B_{Q_c} (\Phi)$, it is enough to show that
  $\int_{\mathbbm{R}^2} \partial_{x_2} (\eta \mathfrak{R}\mathfrak{e} \Psi
  \mathfrak{I}\mathfrak{m} \Psi | Q_c |^2)$ is well defined and is $0$. For
  $\Phi = A$ or $\Phi = \varphi + A$, this is a consequence of
  (\ref{CP2N216}), Lemma \ref{CP2N216} and $\varphi \in C^{\infty}_c
  (\mathbbm{R}^2 \backslash \{ \widetilde{d_c} \overrightarrow{e_1}, -
  \widetilde{d_c} \overrightarrow{e_1} \}, \mathbbm{C})$.
\end{proof}

\subsection{Some useful elliptic estimates}

We want to improve slightly the coercivity norm near the zeros of $Q_c$. This
is done in the following lemma. The improvement is in the exponent of the
weight in front of $f^2$.

\begin{lemma}
  \label{CP20503}There exists a universal constant $K > 0$ such that, for any
  $D > 2$, for $V_1$ centered at $0$ and any function $f \in C^{\infty}_c
  (\mathbbm{R}^2 \backslash \{ \tilde{d}_c \vec{e}_1, - \tilde{d}_c \vec{e}_1
  \}, \mathbbm{R})$, we have
  \[ \int_{B (\pm \tilde{d}_c \vec{e}_1, D)} f^2 | V_1 |^3 d x \leqslant K
     \int_{B (\pm \tilde{d}_c \vec{e}_1, D)} | \nabla f |^2 | V_1 |^4 + f^2 |
     V_1 |^4 d x. \]
  In particular, this implies that, for $\psi \in C^{\infty}_c (\mathbbm{R}^2
  \backslash \{ 0 \}, \mathbbm{C})$,
  \[ \int_{B (0, D)} \mathfrak{R}\mathfrak{e}^2 (\psi) | V_1 |^3 d x \leqslant
     K \int_{B (0, D)} | \nabla \psi |^2 | V_1 |^4 +\mathfrak{R}\mathfrak{e}^2
     (\psi) | V_1 |^4 d x. \]
\end{lemma}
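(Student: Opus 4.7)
The estimate is a weighted Hardy-type inequality near the zero of $V_1$. Since $|V_1|\leq 1$ by Lemma~\ref{lemme3new}, $|V_1|^4\leq|V_1|^3$ pointwise, so the inequality can hold only thanks to the gradient term compensating the degeneracy $|V_1|\sim\kappa r$ near the vortex center. Whenever the vortex center lies outside the integration ball (which is the case for the first displayed inequality once $c$ is small enough that $\tilde{d}_c>D+1$, by the bound $|V_1|\geq 1/2$ for $r$ large from Lemma~\ref{lemme3new}), the inequality is immediate since $|V_1|^3\leq K|V_1|^4$ pointwise there. The nontrivial case is the ``In particular'' one, where the vortex center lies inside $B(0,D)$.

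My plan for this case is to fix universal constants $R_0, K_0, K_1>0$ (from Lemma~\ref{lemme3new}) with $|V_1(x)|\geq K_0$ for $|x|\geq R_0/2$ and $|V_1(x)|\leq K_1|x|$ for $|x|\leq R_0$, and split $B(0,D)=B(0,R_0)\cup(B(0,D)\setminus B(0,R_0))$. On the outer piece one has $|V_1|^3\leq K_0^{-1}|V_1|^4$ pointwise, handling that part directly. The core is the ball $B(0,R_0)$.

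On $B(0,R_0)$, setting $f=\mathfrak{R}\mathfrak{e}(\psi)$, I pass to polar coordinates to write $\int_{B(0,R_0)} f^2 r^3\,dx=\int_0^{R_0}\!\int_0^{2\pi} f^2 r^4\,d\theta\,dr$, then integrate by parts in $r$ using $r^4=\frac{1}{5}(r^5)'$:
\begin{equation*}
\int_0^{R_0} f^2 r^4\,dr=\frac{f^2(R_0,\theta)\,R_0^5}{5}-\frac{2}{5}\int_0^{R_0} f\,\partial_r f\,r^5\,dr.
\end{equation*}
The boundary term at $r=0$ vanishes by the support hypothesis on $\psi$. Cauchy--Schwarz combined with $r^5\leq R_0 r^4$ on $[0,R_0]$ and Young's inequality let me absorb a small multiple of $\int_0^{R_0} f^2 r^4\,dr$ into the left-hand side, leaving a remainder bounded by $C\int_0^{R_0}(\partial_r f)^2 r^5\,dr$, which after integrating in $\theta$ is controlled by $C\int_{B(0,R_0)}|\nabla\psi|^2|V_1|^4\,dx$ using $|\nabla f|\leq|\nabla\psi|$ and $|V_1|\sim r$. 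The $r=R_0$ boundary contribution is handled by a standard trace inequality on the annulus $B(0,R_0+1)\setminus B(0,R_0-1)$, where $|V_1|\geq K_0$, giving a bound by $\int(\mathfrak{R}\mathfrak{e}^2(\psi)+|\nabla\psi|^2)|V_1|^4\,dx$ on the outer region.

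The only delicate point is arranging constants so that the absorption step actually closes; since $R_0$ is a fixed universal constant independent of $c$ and $D$, this is routine and yields the universal constant $K$ in the statement.
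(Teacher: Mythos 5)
Your argument is correct and follows essentially the same route as the paper's proof: away from the vortex center the bound is trivial since $|V_1|$ is bounded below, and near the center both proofs pass to polar coordinates, integrate $f^2 r^4$ by parts against $\frac{1}{5}\partial_r(r^5)$, apply Cauchy--Schwarz and absorb, using $|V_1|\sim r$; the paper merely replaces your sharp cutoff at $r=R_0$ and trace term by a smooth cutoff $\chi$ whose derivative term plays the same role. Two small remarks: the first display is not vacuous --- the vortex there is meant to be centered at $\pm \tilde{d}_c \vec{e}_1$ (the $\tilde{V}_{\pm 1}$ of the paper), so it is exactly your centered case after translation; and since $D$ may be only slightly larger than $2$, you should take $R_0 = 2$ and place the trace annulus inside $B(0,D)$ (e.g. $B(0,R_0)\setminus B(0,R_0/2)$) rather than using $B(0,R_0+1)\setminus B(0,R_0-1)$, which need not be contained in the domain of integration.
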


This lemma, with Lemmas \ref{CP2closecall} and \ref{CP2Ndensity}, implies
that, for $\varphi = Q_c \psi \in H_{Q_c}$,
\begin{equation}
  \int_{\mathbbm{R}^2} \mathfrak{R}\mathfrak{e}^2 (\psi) | Q_c |^3 \leqslant K
  \| \varphi \|_{\mathcal{C}}^2 . \label{CP22142405}
\end{equation}
\begin{proof}
  Since $| V_1 | \geqslant K > 0$ outside of $B (0, 1)$, we take $\chi$ a
  radial smooth non negative cutoff with value $0$ in $B (0, 1)$ and value $1$
  outside $B (0, 3 / 2)$. We have
  \[ \int_{B (0, D)} \chi f^2 | V_1 |^3 d x \leqslant K \int_{B (0, D)} \chi
     f^2 | V_1 |^4 d x \leqslant K \int_{B (0, D)} f^2 | V_1 |^4 d x. \]
  In $B (0, 2)$, from Lemma \ref{lemme3new}, there exists $K_1, K_2 > 0$ such
  that $K_1 \geqslant \frac{| V_1 |}{r} \geqslant K_2$, and thus
  \[ \int_{B (0, D)} (1 - \chi) f^2 | V_1 |^3 d x \leqslant K \left( \int_0^{2
     \pi} \int_0^2 (1 - \chi (r)) f^2 (x) r^4 d r \right) d \theta . \]
  For $g \in C^{\infty}_c (\mathbbm{R} \backslash \{ 0 \}, \mathbbm{R})$, we
  have
  \begin{eqnarray*}
    \int_0^2 (1 - \chi (r)) g^2 (r) r^4 d r & = & \frac{- 1}{5} \int_0^2
    \partial_r ((1 - \chi) g^2) r^5 d r\\
    & = & \frac{- 2}{5} \int_0^2 (1 - \chi (r)) \partial_r g (r) g (r) r^5 d
    r + \frac{1}{4} \int_0^2 \chi' (r) g^2 (r) r^5 d r,
  \end{eqnarray*}
  and since $\chi' (r) \neq 0$ only for $r \in [1, 2]$, we have
  \[ \int_0^2 | \chi' (r) | g^2 (r) r^5 d r \leqslant K \int_0^2 g^2 (r) r^4 d
     r, \]
  and, by Cauchy-Schwarz,
  \[ \int_0^2 (1 - \chi (r)) | \partial_r g (r) g (r) | r^5 d r \leqslant
     \sqrt{\int_0^2 (\partial_r g)^2 r^5 d r \int_0^2 g^2 (r) r^5 d r} . \]
  We deduce that
  \[ \int_0^2 (1 - \chi (r)) g^2 (r) r^4 d r \leqslant K \left( \int_0^2
     (\partial_r g)^2 r^5 d r + \int_0^2 g^2 (r) r^5 d r \right), \]
  and taking, for any $\theta \in [0, 2 \pi]$, $g (r) = f (r \cos (\theta), r
  \sin (\theta))$, and since $r \leqslant K | V_1 |$ in $B (0, 2)$ (by Lemma
  \ref{lemme3new}), by integration with respect to $\theta$, we conclude that
  \[ \int_{B (0, D)} (1 - \chi) f^2 | V_1 |^3 d x \leqslant K \int_{B (0, D)}
     | \nabla f |^2 | V_1 |^4 + f^2 | V_1 |^4 d x, \]
  which ends the proof of this lemma.
\end{proof}

We estimate here some quantities with the coercivity norm. These computations
will be useful later on.

\begin{lemma}
  \label{CP2L390911}There exists $K > 0$, a universal constant independent of
  $c$, such that, if $c$ is small enough, for $Q_c$ defined in Theorem
  \ref{th1}, for $\varphi = Q_c \psi \in C^{\infty}_c (\mathbbm{R}^2
  \backslash \{ \widetilde{d_c} \overrightarrow{e_1}, - \widetilde{d_c}
  \overrightarrow{e_1} \}, \mathbbm{C})$, we have

  \[ \left| \int_{\mathbbm{R}^2} \mathfrak{R}\mathfrak{e} (\psi)
     \mathfrak{I}\mathfrak{m} (\nabla Q_c \overline{Q_c}) \right| \leqslant K
     \ln \left( \frac{1}{c} \right) \| \varphi \|_{\mathcal{C}} \]
  and
  \[ \left| \int_{\mathbbm{R}^2} \mathfrak{I}\mathfrak{m} (\psi)
     \mathfrak{R}\mathfrak{e} (\nabla Q_c \overline{Q_c}) \right| \leqslant K
     \| \varphi \|_{\mathcal{C}} . \]
\end{lemma}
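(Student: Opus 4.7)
Both estimates will be proven by Cauchy--Schwarz, after splitting the integration domain into a neighborhood of the zeros $\pm \tilde d_c \vec e_1$ of $Q_c$ and the complement, using a smooth cutoff $\chi$ with $\chi \equiv 0$ on $B(\pm \tilde d_c \vec e_1, 1)$ and $\chi \equiv 1$ outside $B(\pm \tilde d_c \vec e_1, 2)$. The key pointwise inputs are the identities $\mathfrak{R}\mathfrak{e}(\nabla Q_c \overline{Q_c}) = \tfrac12 \nabla |Q_c|^2$ and $\mathfrak{I}\mathfrak{m}(\nabla Q_c \overline{Q_c}) = |Q_c|^2 \nabla \arg Q_c$, combined with the observation that at a simple zero of $Q_c$ the phase gradient $|\nabla \arg Q_c|$ blows up exactly like $1/|Q_c|$; consequently, both $|\mathfrak{R}\mathfrak{e}(\nabla Q_c \overline{Q_c})|$ and $|\mathfrak{I}\mathfrak{m}(\nabla Q_c \overline{Q_c})|$ are bounded by $K|Q_c|$ near a zero.

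For the first estimate, on $\{\chi = 1\}$ where $|Q_c| \geq K_1 > 0$, Cauchy--Schwarz with the weight $|Q_c|^2$ paired with Lemma~\ref{CP293L37} (which gives $\int |\mathfrak{I}\mathfrak{m}(\nabla Q_c \overline{Q_c})|^2/|Q_c|^2 \leq K\ln(1/c)$) yields a contribution bounded by $K\sqrt{\ln(1/c)}\,\|\varphi\|_{\mathcal C}$, since $\int_{\{\chi=1\}}\mathfrak{R}\mathfrak{e}^2(\psi)|Q_c|^2 \leq K \int \mathfrak{R}\mathfrak{e}^2(\psi)|Q_c|^4 \leq K\|\varphi\|_{\mathcal C}^2$ on this region. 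On $\{\chi < 1\}$, the bound $|\mathfrak{I}\mathfrak{m}(\nabla Q_c \overline{Q_c})| \leq K|Q_c|$ combined with Cauchy--Schwarz with weight $|Q_c|^3$ gives
\[
\Bigl| \int_{\{\chi<1\}} \mathfrak{R}\mathfrak{e}(\psi)\,\mathfrak{I}\mathfrak{m}(\nabla Q_c \overline{Q_c}) \Bigr| \leq K \Bigl(\int \mathfrak{R}\mathfrak{e}^2(\psi) |Q_c|^3 \Bigr)^{1/2} \Bigl( \int_{\{\chi<1\}} \frac{1}{|Q_c|} \Bigr)^{1/2},
\]
where the first factor is $\leq K \|\varphi\|_{\mathcal C}$ by~\eqref{CP22142405} and the second is uniformly bounded since $|Q_c|$ vanishes like the distance to the zero, so $1/|Q_c|$ is locally integrable in two dimensions. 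Summing both contributions and absorbing $\sqrt{\ln(1/c)}+K$ into $K\ln(1/c)$ for small $c$ gives the first bound.

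For the second estimate, the absence of the $\ln(1/c)$ factor forces a different strategy on the far region: we exploit $\mathfrak{R}\mathfrak{e}(\nabla Q_c \overline{Q_c}) = \tfrac12 \nabla(|Q_c|^2 - 1)$ and integrate by parts, which is valid because $\psi \in C^\infty_c(\mathbb{R}^2 \setminus \{\pm \tilde d_c \vec e_1\})$, obtaining
\[
\int \mathfrak{I}\mathfrak{m}(\psi)\mathfrak{R}\mathfrak{e}(\nabla Q_c \overline{Q_c}) = -\tfrac12 \int (|Q_c|^2 - 1)\mathfrak{I}\mathfrak{m}(\nabla \psi).
\]
On $\{\chi = 1\}$, Cauchy--Schwarz with weight $|Q_c|^4$ gives a bound $\leq K\|\varphi\|_{\mathcal C}$, because $\int_{\{\chi=1\}}(|Q_c|^2-1)^2/|Q_c|^4$ is uniformly bounded in $c$: the numerator decays like $(1+\tilde r)^{-2-2\sigma}$ by~\eqref{CP2217}, integrable in two dimensions, while $|Q_c|$ is bounded below on this region. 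On the near region $\{\chi<1\}$ the IBP approach breaks down since $(|Q_c|^2-1)^2/|Q_c|^4 \sim 1/\tilde r^4$ fails to be locally integrable at the zero; we therefore return to the un-integrated expression and reuse the key inequality $|\mathfrak{R}\mathfrak{e}(\nabla Q_c \overline{Q_c})| \leq K|Q_c|$, combined with a Cauchy--Schwarz of exactly the same form as in the near-zeros analysis of the first estimate --- but now without invoking Lemma~\ref{CP293L37}, so no $\ln(1/c)$ factor appears.

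The main obstacle is the near-zeros part of the second estimate. Since $\|\cdot\|_{\mathcal C}$ is only a semi-norm --- it vanishes on $iQ_c$ --- it cannot control the imaginary part $\mathfrak{I}\mathfrak{m}(\psi)$ directly, nor quantities like $\int_{\text{near}}|\varphi|^2$. The resolution is geometric: the factor $|\mathfrak{R}\mathfrak{e}(\nabla Q_c \overline{Q_c})| \leq K|Q_c|$ reflects that $|Q_c|^2$ vanishes quadratically at a zero, and this extra power of $|Q_c|$ exactly compensates for the blow-up of $1/|Q_c|$ when combined via Cauchy--Schwarz with weight $|Q_c|^3$, paired with~\eqref{CP22142405} on the real part and handled by the local integrability of $1/|Q_c|$ in dimension two. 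Care must also be taken with the cross-terms arising from IBP when splitting with a cutoff, which are supported in an annulus around each zero where $|Q_c|$ is bounded above and below, and can be handled by standard Cauchy--Schwarz against the locally equivalent $H^1$ norm.
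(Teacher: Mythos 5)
Your treatment of the first estimate is sound and is essentially the paper's argument reorganized: the paper does a single global Cauchy--Schwarz with the weight split $|Q_c|^{3/2}\cdot|Q_c|^{-3/2}$, pairing the extension of Lemma \ref{CP293L37} (with $|Q_c|^3$ in the denominator, harmless near the zeros since $|\mathfrak{I}\mathfrak{m}(\nabla Q_c\overline{Q_c})|\leqslant K|Q_c|$ there) against $\int\mathfrak{R}\mathfrak{e}^2(\psi)|Q_c|^3\leqslant K\|\varphi\|_{\mathcal C}^2$ from \eqref{CP22142405}; your near/far split amounts to the same computation.

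The second estimate, however, has a genuine gap, and it sits exactly where you locate the ``main obstacle.'' Your proposed resolution does not resolve it. On the near region you return to the un-integrated expression and apply Cauchy--Schwarz to $\int_{\{\chi<1\}}\mathfrak{I}\mathfrak{m}(\psi)\,\mathfrak{R}\mathfrak{e}(\nabla Q_c\overline{Q_c})$, ``paired with \eqref{CP22142405} on the real part'' --- but \eqref{CP22142405} controls $\int\mathfrak{R}\mathfrak{e}^2(\psi)|Q_c|^3$, and the factor sitting next to $\mathfrak{R}\mathfrak{e}(\nabla Q_c\overline{Q_c})$ here is $\mathfrak{I}\mathfrak{m}(\psi)$, not $\mathfrak{R}\mathfrak{e}(\psi)$. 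The semi-norm $\|\cdot\|_{\mathcal C}$ gives \emph{no} weighted $L^2$ control on $\mathfrak{I}\mathfrak{m}(\psi)$ anywhere (it vanishes on $\psi=i\lambda$), so no Cauchy--Schwarz with any weight can bound $\int_{\{\chi<1\}}\mathfrak{I}\mathfrak{m}(\psi)\,\mathfrak{R}\mathfrak{e}(\nabla Q_c\overline{Q_c})$ by $K\|\varphi\|_{\mathcal C}$: the term is not small when $\psi$ is close to a constant imaginary on the near region, while $\|\varphi\|_{\mathcal C}$ is. The same objection applies to your cutoff cross-term $\int\nabla\chi\,\mathfrak{I}\mathfrak{m}(\psi)(|Q_c|^2-1)$, which you propose to handle ``against the locally equivalent $H^1$ norm'' --- that norm is precisely what $\|\cdot\|_{\mathcal C}$ fails to control, for the same reason.

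The missing idea is that every occurrence of $\mathfrak{I}\mathfrak{m}(\psi)$ must be converted either into $\mathfrak{I}\mathfrak{m}(\nabla\psi)$ or into $\mathfrak{I}\mathfrak{m}(\psi^{\neq 0})$ before estimating. The paper achieves this by decomposing $|Q_c|^2=\chi(|Q_c|^2-1)+(1-\chi)|Q_c|^2+\chi$ \emph{before} taking the gradient in $\mathfrak{R}\mathfrak{e}(\nabla Q_c\overline{Q_c})=\tfrac12\nabla(|Q_c|^2)$, and choosing $\chi$ \emph{radial} around each zero on its transition annulus. Integrating by parts then produces only $\mathfrak{I}\mathfrak{m}(\nabla\psi)$ against $\chi(|Q_c|^2-1)$ and $(1-\chi)|Q_c|^2$ (both handled by Cauchy--Schwarz against $\int|\nabla\psi|^2|Q_c|^4$), plus the residue $\tfrac12\int\nabla\chi\,\mathfrak{I}\mathfrak{m}(\psi)$; for this last term the radial symmetry of $\chi$ kills the contribution of the $0$-harmonic, so $\int\nabla\chi\,\mathfrak{I}\mathfrak{m}(\psi)=\int\nabla\chi\,\mathfrak{I}\mathfrak{m}(\psi^{\neq 0})$, and $\psi^{\neq 0}$ on the annulus is controlled by $\int|\nabla\psi|^2|Q_c|^4$ through the Poincar\'e inequality \eqref{CP2jesaispasfairedesmaths}. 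Without this radiality/zero-harmonic mechanism, the near-zero part of the second estimate cannot close.
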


\begin{proof}
  By Cauchy-Schwarz, Lemmas \ref{CP293L37} (with a slight modification near
  the zeros of $Q_c$ that does not change the result) and \ref{CP20503},
  \begin{eqnarray*}
    \left| \int_{\mathbbm{R}^2} \mathfrak{R}\mathfrak{e} (\psi)
    \mathfrak{I}\mathfrak{m} (\nabla Q_c \overline{Q_c}) \right| & \leqslant &
    \sqrt{\int_{\mathbbm{R}^2} \mathfrak{R}\mathfrak{e}^2 (\psi) | Q_c |^3
    \int_{\mathbbm{R}^2} \frac{| \mathfrak{I}\mathfrak{m} (\nabla Q_c
    \overline{Q_c}) |^2}{| Q_c |^3}}\\
    & \leqslant & K \ln \left( \frac{1}{c} \right) \sqrt{\int_{\mathbbm{R}^2}
    \mathfrak{R}\mathfrak{e}^2 (\psi) | Q_c |^3}\\
    & \leqslant & K \ln \left( \frac{1}{c} \right) \| \varphi
    \|_{\mathcal{C}} .
  \end{eqnarray*}
  We now focus on the second estimate. We take $\chi$ a smooth function with
  value $1$ outside of $\{ \tilde{r} \geqslant 2 \}$ and $0$ inside $\{
  \tilde{r} \leqslant 1 \}$, and that is radial around $\pm \tilde{d}_c
  \overrightarrow{e_1}$ in $B (\pm \tilde{d}_c \overrightarrow{e_1}, 2)$. We
  remark that
  \[ \mathfrak{R}\mathfrak{e} (\nabla Q_c \overline{Q_c}) = \frac{1}{2} \nabla
     (| Q_c |^2) = \frac{1}{2} \nabla (\chi (| Q_c |^2 - 1) + (1 - \chi) | Q_c
     |^2) + \frac{1}{2} \nabla \chi, \]
  thus, by integration by parts, we have
  \begin{eqnarray*}
    \int_{\mathbbm{R}^2} \mathfrak{I}\mathfrak{m} (\psi)
    \mathfrak{R}\mathfrak{e} (\nabla Q_c \overline{Q_c}) & = & \frac{1}{2}
    \int_{\mathbbm{R}^2} \mathfrak{I}\mathfrak{m} (\psi) \nabla (\chi (| Q_c
    |^2 - 1) + (1 - \chi) | Q_c |^2) + \frac{1}{2} \int_{\mathbbm{R}^2} \nabla
    \chi \mathfrak{I}\mathfrak{m} (\psi)\\
    & = & \frac{- 1}{2} \int_{\mathbbm{R}^2} \mathfrak{I}\mathfrak{m} (\nabla
    \psi) \chi (| Q_c |^2 - 1) - \frac{1}{2} \int_{\mathbbm{R}^2}
    \mathfrak{I}\mathfrak{m} (\nabla \psi) (1 - \chi) | Q_c |^2\\
    & + & \frac{1}{2} \int_{\mathbbm{R}^2} \nabla \chi
    \mathfrak{I}\mathfrak{m} (\psi) .
  \end{eqnarray*}
  and, since $\chi$ is radial around $\pm \tilde{d}_c \overrightarrow{e_1}$ in
  $B (\pm \tilde{d}_c \overrightarrow{e_1}, 2)$,
  \[ \int_{\mathbbm{R}^2} \mathfrak{I}\mathfrak{m} (\psi) \nabla \chi =
     \int_{B (\tilde{d}_c \overrightarrow{e_1}, 2) \cup B (- \tilde{d}_c
     \overrightarrow{e_1}, 2)} \mathfrak{I}\mathfrak{m} (\psi^{\neq 0}) \nabla
     \chi . \]
  Since $\nabla \chi$ is supported in $(B (\tilde{d}_c \overrightarrow{e_1},
  2) \cup B (- \tilde{d}_c \overrightarrow{e_1}, 2)) \backslash (B
  (\tilde{d}_c \overrightarrow{e_1}, 1) \cup B (- \tilde{d}_c
  \overrightarrow{e_1}, 1))$, by equations (\ref{CP2Qcpaszero}),
  (\ref{CP2jesaispasfairedesmaths}) and Cauchy-Schwarz,
  \[ \left| \int_{B (\tilde{d}_c \overrightarrow{e_1}, 2) \cup B (-
     \tilde{d}_c \overrightarrow{e_1}, 2)} \mathfrak{I}\mathfrak{m}
     (\psi^{\neq 0}) \nabla \chi \right| \leqslant K
     \sqrt{\int_{\mathbbm{R}^2} | \nabla \psi |^2 | Q_c |^4} . \]
  Now, by Cauchy-Schwarz, we check that
  \[ \left| \int_{\mathbbm{R}^2} \mathfrak{I}\mathfrak{m} (\nabla \psi) (1 -
     \chi) | Q_c |^2 \right| \leqslant K \sqrt{\int_{\mathbbm{R}^2} | \nabla
     \psi |^2 | Q_c |^4 \int_{\mathbbm{R}^2} (1 - \chi)^2} \leqslant K
     \sqrt{\int_{\mathbbm{R}^2} | \nabla \psi |^2 | Q_c |^4} . \]
  Furthermore, we check that ($\chi$ being supported in $\{ \tilde{r}
  \geqslant 1 \}$)
  \begin{eqnarray*}
    \left| \int_{\mathbbm{R}^2} \mathfrak{I}\mathfrak{m} (\nabla \psi)
    \nobracket \chi (| Q_c |^2 - 1 \nobracket) \right| & \leqslant &
    \sqrt{\int_{\mathbbm{R}^2} | \nabla \psi |^2 \chi \int_{\mathbbm{R}^2} (|
    Q_c |^2 - 1)^2}\\
    & \leqslant & K \sqrt{\int_{\mathbbm{R}^2} | \nabla \psi |^2 | Q_c |^4} .
  \end{eqnarray*}
  Indeed, we have, from equation (\ref{CP2217}) (for $\sigma = 1 / 2$), that
  \[ | | Q_c |^2 - 1 | \leqslant \frac{K}{(1 + \tilde{r})^{3 / 2}}, \]
  which is enough to show that
  \[ \int_{\mathbbm{R}^2} (| Q_c |^2 - 1)^2 \leqslant K. \]
  Combining these estimates, we conclude the proof of
  \[ \left| \int_{\mathbbm{R}^2} \mathfrak{I}\mathfrak{m} (\psi)
     \mathfrak{R}\mathfrak{e} (\nabla Q_c \overline{Q_c}) \right| \leqslant K
     \sqrt{\int_{\mathbbm{R}^2} | \nabla \psi |^2 | Q_c |^4} \leqslant K \|
     \varphi \|_{\mathcal{C}} . \]
\end{proof}

\subsection{Coercivity result under four othogonality conditions}

The next result is the first part of Theorem \ref{CP2th2}, the second part
(for the coercivity under three orthogonalities) is done in Lemma
\ref{CP2L73224} below. We recall that, in $B (\pm \tilde{d}_c
\overrightarrow{e_1}, R)$, we have $\psi^{\neq 0} (x) = \psi (x) - \psi^{0,
\pm 1} (\tilde{r}_{\pm 1})$ with $\psi^{0, \pm 1} (\tilde{r}_{\pm 1})$ the
$0$-harmonic centered around $\pm \tilde{d}_c \overrightarrow{e_1}$ of $\psi$.

\begin{lemma}
  \label{CP2133L39}There exist $R, K, c_0 > 0$ such that, for $0 < c \leqslant
  c_0$ and $\varphi = Q_c \psi \in H_{Q_c}$, $Q_c$ defined in Theorem
  \ref{th1}, if
  \[ \mathfrak{R}\mathfrak{e} \int_{B (\tilde{d}_c \overrightarrow{e_1}, R)
     \cup B (- \tilde{d}_c \overrightarrow{e_1}, R)} \partial_{x_1} Q_c
     \overline{Q_c \psi^{\neq 0}} =\mathfrak{R}\mathfrak{e} \int_{B
     (\tilde{d}_c \overrightarrow{e_1}, R) \cup B (- \tilde{d}_c
     \overrightarrow{e_1}, R)} \partial_{x_2} Q_c \overline{Q_c \psi^{\neq 0}}
     = 0, \]
  \[ \mathfrak{R}\mathfrak{e} \int_{B (\tilde{d}_c \overrightarrow{e_1}, R)
     \cup B (- \tilde{d}_c \overrightarrow{e_1}, R)} \partial_c Q_c
     \overline{Q_c \psi^{\neq 0}} =\mathfrak{R}\mathfrak{e} \int_{B
     (\tilde{d}_c \overrightarrow{e_1}, R) \cup B (- \tilde{d}_c
     \overrightarrow{e_1}, R)} \partial_{c^{\bot}} Q_c \overline{Q_c
     \psi^{\neq 0}} = 0, \]
  then
  \[ B_{Q_c} (\varphi) \geqslant K \| \varphi \|^2_{\mathcal{C}} . \]
\end{lemma}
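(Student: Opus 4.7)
The plan is to reduce to Proposition \ref{CP205218} via a small linear correction. Set
\begin{equation*}
A_1 := \partial_{x_1}Q_c,\quad A_2 := \partial_{x_2}Q_c,\quad A_3 := c^2\partial_c Q_c,\quad A_4 := c\partial_{c^{\bot}}Q_c.
\end{equation*}
By Lemma \ref{CP20703L222} one has $L_{Q_c}(A_1)=L_{Q_c}(A_2)=0$, $L_{Q_c}(A_3)=ic^2\partial_{x_2}Q_c$ and $L_{Q_c}(A_4)=-ic^2\partial_{x_1}Q_c$. Lemma \ref{CP283L223} gives $\|A_i\|_{\mathcal C}\leq K$ for $i\in\{1,2,3\}$ and $\|A_4\|_{\mathcal C}=o^{\beta}_{c\rightarrow 0}(c^{-\beta})$ for any $\beta>0$; Lemma \ref{CP2nend} gives $\langle L_{Q_c}(A_i),A_i\rangle=O(c^2)$ for $i\in\{3,4\}$ (while it vanishes for $i\in\{1,2\}$). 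By Lemma \ref{CP2133L35}, on each ball $B(\pm\widetilde{d_c}\overrightarrow{e_1},R)$ we have the pointwise approximations
\begin{equation*}
A_1 = \partial_{x_1}\widetilde V_{\pm 1}+o_{c\rightarrow 0}(c^{\beta_0}),\quad A_3 = \mp\partial_{x_1}\widetilde V_{\pm 1}+o_{c\rightarrow 0}(c^{\beta_0}),
\end{equation*}
\begin{equation*}
A_2 = \partial_{x_2}\widetilde V_{\pm 1}+o_{c\rightarrow 0}(c^{\beta_0}),\quad A_4 = \pm\partial_{x_2}\widetilde V_{\pm 1}+o_{c\rightarrow 0}(c^{\beta_0}).
\end{equation*}

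Given $\varphi=Q_c\psi\in H_{Q_c}$ satisfying the four orthogonality conditions of the statement, I look for $\mu=(\mu_1,\dots,\mu_4)\in\mathbb R^4$ such that $\widetilde\varphi := \varphi-\sum_{i=1}^4\mu_i A_i=Q_c\widetilde\psi$ satisfies the four localized conditions of Proposition \ref{CP205218},
\begin{equation*}
S_j^{\pm}(\widetilde\varphi):=\int_{B(\pm\widetilde{d_c}\overrightarrow{e_1},R)}\mathfrak{R}\mathfrak{e}\Bigl(\partial_{x_j}\widetilde V_{\pm 1}\overline{\widetilde V_{\pm 1}\widetilde\psi}\Bigr)=0,\quad j\in\{1,2\}.
\end{equation*}
The linear system $M\mu=(S_j^{\pm}(\varphi))$ has matrix $M_{(j,\epsilon),i}=S_j^{\epsilon}(A_i)$, which by the expansions above together with the vanishing of the cross integrals (\ref{CP273226}) is, modulo $o_{c\rightarrow 0}(c^{\beta_0})$ entries, block-diagonal with $2\times 2$ diagonal blocks $K_{\pm}\begin{pmatrix}1 & \mp 1\\ 1 & \pm 1\end{pmatrix}$, where $K_{\pm}=\int_{B(\pm\widetilde{d_c}\overrightarrow{e_1},R)}|\partial_{x_1}\widetilde V_{\pm 1}|^2$ is bounded away from $0$ uniformly in $c$; hence $M$ is invertible for $c$ small. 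Using the same approximations to rewrite the hypotheses of the lemma, together with Lemma \ref{CP2closecall} to swap $Q_c$ for $\widetilde V_{\pm 1}$ inside the weights, Cauchy--Schwarz and (\ref{CP2jesaistoujourspasfairedesmaths}) to bound $\|Q_c\psi^{\neq 0}\|_{L^2(B_{\pm}(R))}\leq K\|\varphi\|_{\mathcal C}$, the four conditions of the statement translate into $|S_j^{\pm}(\varphi)|\leq o_{c\rightarrow 0}(c^{\beta_0})\|\varphi\|_{\mathcal C}$. Consequently $|\mu_i|\leq o_{c\rightarrow 0}(c^{\beta_0})\|\varphi\|_{\mathcal C}$ for all $i$; fixing $\beta_0$ first and then $\beta<\beta_0$ yields $\sum_i|\mu_i|\|A_i\|_{\mathcal C}=o_{c\rightarrow 0}(1)\|\varphi\|_{\mathcal C}$, so $\|\widetilde\varphi\|_{\mathcal C}^2\geq\tfrac12\|\varphi\|_{\mathcal C}^2$.

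Proposition \ref{CP205218} applied to $\widetilde\varphi$ gives $B_{Q_c}(\widetilde\varphi)\geq K\|\widetilde\varphi\|_{\mathcal C}^2$. Expanding via Lemma \ref{CP2icicmieux} on test functions and passing to the limit by Lemma \ref{CP2Ndensity} together with the continuity estimates of Lemmas \ref{CP2farnormexist}, \ref{CP2L2111706} and \ref{CP2finitebilinear}, one obtains
\begin{equation*}
B_{Q_c}(\varphi)=B_{Q_c}(\widetilde\varphi)+2\sum_{i=1}^4\mu_i\langle L_{Q_c}(A_i),\widetilde\varphi\rangle+\sum_{i,j=1}^4\mu_i\mu_j\langle L_{Q_c}(A_i),A_j\rangle.
\end{equation*}
Linear cross terms with $i\in\{1,2\}$ vanish; for $i\in\{3,4\}$, $|\langle L_{Q_c}(A_i),\widetilde\varphi\rangle|\leq c^2|\langle i\partial_{x_k}Q_c,\widetilde\varphi\rangle|\leq Kc^2\ln(1/c)\|\widetilde\varphi\|_{\mathcal C}$ by Lemma \ref{CP2L390911}, yielding a contribution $o_{c\rightarrow 0}(c^{\beta_0+2}\ln(1/c))\|\varphi\|_{\mathcal C}^2$. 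The quadratic terms are $O(c^2|\mu_i\mu_j|)=o_{c\rightarrow 0}(c^{2+2\beta_0})\|\varphi\|_{\mathcal C}^2$ using Lemma \ref{CP2nend} on the diagonal and Cauchy--Schwarz together with the bounds on the $A_i$ on the off-diagonal. Combining, $B_{Q_c}(\varphi)\geq K\|\widetilde\varphi\|_{\mathcal C}^2-o_{c\rightarrow 0}(1)\|\varphi\|_{\mathcal C}^2\geq K'\|\varphi\|_{\mathcal C}^2$ for $c$ small.

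The main obstacle is the combinatorial bookkeeping of the linear system and the sharp control $|\mu_i|=o_{c\rightarrow 0}(c^{\beta_0})\|\varphi\|_{\mathcal C}$: the rescaling $A_3=c^2\partial_c Q_c$ (rather than $\partial_c Q_c$ itself) is essential so that the negative direction contributes only $\langle L_{Q_c}(A_3),A_3\rangle=-2\pi c^2+o(c^2)$ in the quadratic correction, instead of the divergent $-2\pi/c^2+o(1/c^2)$ of Lemma \ref{CP2nend}; the analogous rescaling $A_4=c\partial_{c^{\bot}}Q_c$ tames the positive direction. The density extension from $C^{\infty}_c(\mathbb R^2\setminus\{\pm\widetilde{d_c}\overrightarrow{e_1}\},\mathbb C)$ to $H_{Q_c}$ proceeds exactly as in the final paragraph of the proof of Proposition \ref{CP205218}.
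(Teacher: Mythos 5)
Your proposal is correct and follows essentially the same route as the paper's proof: correct $\varphi$ by a small linear combination of the four rescaled directions $\partial_{x_1}Q_c,\partial_{x_2}Q_c,c^2\partial_cQ_c,c\partial_{c^{\bot}}Q_c$, solve the nearly block-diagonal $4\times4$ system to get $|\mu_i|=o_{c\rightarrow 0}(c^{\beta_0})\|\varphi\|_{\mathcal C}$, apply Proposition \ref{CP205218}, and absorb the cross and quadratic error terms via Lemmas \ref{CP2nend}, \ref{CP283L223} and \ref{CP2L390911} before concluding by density. The only cosmetic differences are that the paper adds rather than subtracts the correction, kills the off-diagonal quadratic terms by the symmetry (\ref{CP2sym}) where you invoke Cauchy--Schwarz, and makes explicit that $\partial_{x_j}\tilde V_{\pm 1}\overline{\tilde V_{\pm 1}}$ has no $0$-harmonic so that the conditions on $\psi$ and on $\psi^{\neq 0}$ coincide — a point your argument uses implicitly.
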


\begin{proof}
  For $\varphi = Q_c \psi \in C^{\infty}_c (\mathbbm{R}^2 \backslash \{
  \widetilde{d_c} \overrightarrow{e_1}, - \widetilde{d_c} \overrightarrow{e_1}
  \}, \mathbbm{C})$, we take $\varepsilon_1, \varepsilon_2, \varepsilon_3,
  \varepsilon_4$ four real parameters and we define
  \[ \psi^{\ast} \assign \psi + \varepsilon_1 \frac{\partial_{x_1} Q_c}{Q_c} +
     \varepsilon_2 \frac{c^2 \partial_c Q_c}{Q_c} + \varepsilon_3
     \frac{\partial_{x_2} Q_c}{Q_c} + \varepsilon_4 \frac{c
     \partial_{c^{\bot}} Q_c}{Q_c} . \]
  Since, by Lemma \ref{CP20703L222}, $\partial_{x_1} Q_c, \partial_{x_2} Q_c,
  \partial_c Q_c, \partial_{c^{\bot}} Q_c \in H_{Q_c}$, we deduce that $Q_c
  \psi^{\ast} \in H_{Q_c}$. Furthermore, we have
  \begin{eqnarray*}
    \int_{B (\tilde{d}_c \overrightarrow{e_1}, R)} \mathfrak{R}\mathfrak{e}
    \left( \partial_{x_1} \widetilde{V_1} \overline{\widetilde{V_1}
    \psi^{\ast}} \right) & = & \int_{B (\tilde{d}_c \overrightarrow{e_1}, R)}
    \mathfrak{R}\mathfrak{e} \left( \partial_{x_1} \widetilde{V_1}
    \overline{\widetilde{V_1} \psi} \right)\\
    & + & \varepsilon_1 \int_{B (\tilde{d}_c \overrightarrow{e_1}, R)}
    \mathfrak{R}\mathfrak{e} \left( \partial_{x_1} \widetilde{V_1}
    \overline{\partial_{x_1} Q_c \frac{\widetilde{V_1}}{Q_c}} \right)\\
    & + & \varepsilon_2 \int_{B (\tilde{d}_c \overrightarrow{e_1}, R)}
    \mathfrak{R}\mathfrak{e} \left( \partial_{x_1} \widetilde{V_1} c^2
    \overline{\partial_c Q_c \frac{\widetilde{V_1}}{Q_c}} \right)\\
    & + & \varepsilon_3 \int_{B (\tilde{d}_c \overrightarrow{e_1}, R)}
    \mathfrak{R}\mathfrak{e} \left( \partial_{x_1} \widetilde{V_1}
    \overline{\partial_{x_2} Q_c \frac{\widetilde{V_1}}{Q_c}} \right)\\
    & + & \varepsilon_4 \int_{B (\tilde{d}_c \overrightarrow{e_1}, R)}
    \mathfrak{R}\mathfrak{e} \left( \partial_{x_1} \widetilde{V_1} c
    \overline{\partial_{c^{\bot}} Q_c \frac{\widetilde{V_1}}{Q_c}} \right) .
  \end{eqnarray*}
  From (\ref{CP2073228}), we compute
  \[ \partial_{x_1} \widetilde{V_1} \overline{\widetilde{V_1}} = \left( \cos
     (\tilde{\theta}_1) \frac{\rho' (\tilde{r}_1)}{\rho (\tilde{r}_1)} -
     \frac{i}{\tilde{r}_1} \sin (\tilde{\theta}_1) \right) | \widetilde{V_1}
     |^2, \]
  and in particular, it has no $0$-harmonic (since $| \widetilde{V_1} |^2$ is
  radial). Therefore,
  \[ \int_{B (\tilde{d}_c \overrightarrow{e_1}, R)} \mathfrak{R}\mathfrak{e}
     \left( \partial_{x_1} \widetilde{V_1} \overline{\widetilde{V_1} \psi}
     \right) = \int_{B (\tilde{d}_c \overrightarrow{e_1}, R)}
     \mathfrak{R}\mathfrak{e} \left( \partial_{x_1} \widetilde{V_1}
     \overline{\widetilde{V_1} \psi^{\neq 0}} \right) = \]
  \[ \int_{B (\tilde{d}_c \overrightarrow{e_1}, R)} \mathfrak{R}\mathfrak{e}
     (\partial_{x_1} Q_c \overline{Q_c \psi^{\neq 0}}) + \int_{B (\tilde{d}_c
     \overrightarrow{e_1}, R)} \mathfrak{R}\mathfrak{e} \left( \left(
     \partial_{x_1} \widetilde{V_1} \overline{\widetilde{V_1}} -
     \partial_{x_1} Q_c \overline{Q_c} \right) \psi^{\neq 0} \right) . \]
  By Cauchy-Schwarz and equation (\ref{CP2jesaistoujourspasfairedesmaths}),
  \begin{equation}
    \int_{B (\tilde{d}_c \overrightarrow{e_1}, R) \cup B (- \tilde{d}_c
    \overrightarrow{e_1}, R)} | Q_c \psi^{\neq 0} |^2 \leqslant K \int_{B
    (\tilde{d}_c \overrightarrow{e_1}, R) \cup B (- \tilde{d}_c
    \overrightarrow{e_1}, R)} | Q_c |^4 | \nabla \psi |^2 \leqslant K \|
    \varphi \|^2_{\mathcal{C}} . \label{CP23120403}
  \end{equation}
  Here, $K$ depends on $R$, but we consider $R$ as a universal constant. We
  remark, by equations (\ref{CP273221}), (\ref{CP273223}) and
  (\ref{CP23120403}) that
  \begin{eqnarray*}
    &  & \frac{1}{2} \mathfrak{R}\mathfrak{e} \int_{B (\tilde{d}_c
    \overrightarrow{e_1}, R) \cup B (- \tilde{d}_c \overrightarrow{e_1}, R)}
    (\partial_{x_1} Q_c - c^2 \partial_c Q_c) \overline{Q_c \psi^{\neq 0}}\\
    & = & \int_{B (\tilde{d}_c \overrightarrow{e_1}, R)}
    \mathfrak{R}\mathfrak{e} (\partial_{x_1} Q_c \overline{Q_c \psi^{\neq 0}})
    + o_{c \rightarrow 0} (c^{\beta_0}) K \| \varphi \|^2_{\mathcal{C}},
  \end{eqnarray*}
  where $\beta_0 > 0$ is a small constant. We supposed that
  \[ \mathfrak{R}\mathfrak{e} \int_{B (\tilde{d}_c \overrightarrow{e_1}, R)
     \cup B (- \tilde{d}_c \overrightarrow{e_1}, R)} \partial_{x_1} Q_c
     \overline{Q_c \psi^{\neq 0}} =\mathfrak{R}\mathfrak{e} \int_{B
     (\tilde{d}_c \overrightarrow{e_1}, R) \cup B (- \tilde{d}_c
     \overrightarrow{e_1}, R)} \partial_c Q_c \overline{Q_c \psi^{\neq 0}} =
     0, \]
  therefore
  \[ \int_{B (\tilde{d}_c \overrightarrow{e_1}, R)} \mathfrak{R}\mathfrak{e}
     (\partial_{x_1} Q_c \overline{Q_c \psi^{\neq 0}}) = o_{c \rightarrow 0}
     (c^{\beta_0}) K \| \varphi \|^2_{\mathcal{C}} . \]
  Furthermore, by equations (\ref{CP2218}),
  (\ref{CP2jesaistoujourspasfairedesmaths}), (\ref{CP273221}), Lemma
  \ref{CP2closecall} and Cauchy-Schwarz,
  \begin{eqnarray*}
    \left| \int_{B (\tilde{d}_c \overrightarrow{e_1}, R)}
    \mathfrak{R}\mathfrak{e} \left( \left( \partial_{x_1} \widetilde{V_1}
    \overline{\widetilde{V_1}} - \partial_{x_1} Q_c \overline{Q_c} \right)
    \psi^{\neq 0} \right) \right| & \leqslant & o_{c \rightarrow 0}
    (c^{\beta_0}) \sqrt{\int_{B (\tilde{d}_c \overrightarrow{e_1}, R)} |
    \psi^{\neq 0} |^2 | Q_c |^2}\\
    & \leqslant & o_{c \rightarrow 0} (c^{\beta_0}) K \| \varphi
    \|_{\mathcal{C}}
  \end{eqnarray*}
  Now, from Lemma \ref{CP2closecall} and equation (\ref{CP273221}), we
  estimate
  \[ \int_{B (\tilde{d}_c \overrightarrow{e_1}, R)} \mathfrak{R}\mathfrak{e}
     \left( \partial_{x_1} \widetilde{V_1} \overline{\partial_{x_1} Q_c
     \frac{\widetilde{V_1}}{Q_c}} \right) = \int_{B (\tilde{d}_c
     \overrightarrow{e_1}, R)} | \partial_{x_1} \widetilde{V_1} |^2 + o_{c
     \rightarrow 0} (1) . \]
  With (\ref{CP273222}), we check
  \[ \int_{B (\tilde{d}_c \overrightarrow{e_1}, R)} \mathfrak{R}\mathfrak{e}
     \left( \partial_{x_1} \widetilde{V_1} \overline{\partial_{x_2} Q_c
     \frac{\widetilde{V_1}}{Q_c}} \right) = o_{c \rightarrow 0} (1) . \]
  Similarly, by (\ref{CP273223}) and Lemma \ref{CP2closecall}, we have
  \[ \int_{B (\tilde{d}_c \overrightarrow{e_1}, R)} \mathfrak{R}\mathfrak{e}
     \left( \partial_{x_1} \widetilde{V_1} \overline{c^2 \partial_c Q_c
     \frac{\widetilde{V_1}}{Q_c}} \right) = - \int_{B (\tilde{d}_c
     \overrightarrow{e_1}, R)} | \partial_{x_1} \widetilde{V_1} |^2 + o_{c
     \rightarrow 0} (1) \]
  and by (\ref{CP273224}), we have
  \[ \int_{B (\tilde{d}_c \overrightarrow{e_1}, R)} \mathfrak{R}\mathfrak{e}
     \left( \partial_{x_1} \widetilde{V_1} \overline{c \partial_{c^{\bot}} Q_c
     \frac{\widetilde{V_1}}{Q_c}} \right) = o_{c \rightarrow 0} (1) . \]
  Thus, with (\ref{CP273225}) we deduce that, writing
  \[ K (R) = \int_{B (0, R)} | \partial_{x_1} V_1 (x) |^2 d x, \]
  since
  \[ K (R) = \int_{B (\tilde{d}_c \overrightarrow{e_1}, R)} | \partial_{x_1}
     \widetilde{V_1} |^2 = \int_{B (- \tilde{d}_c \overrightarrow{e_1}, R)} |
     \partial_{x_1} \tilde{V}_{- 1} |^2 = \int_{B (\tilde{d}_c
     \overrightarrow{e_1}, R)} | \partial_{x_2} \widetilde{V_1} |^2 = \int_{B
     (- \tilde{d}_c \overrightarrow{e_1}, R)} | \partial_{x_2} \tilde{V}_{- 1}
     |^2, \]
  we have
  \begin{eqnarray*}
    &  & \int_{B (\tilde{d}_c \overrightarrow{e_1}, R)}
    \mathfrak{R}\mathfrak{e} \left( \partial_{x_1} \widetilde{V_1}
    \overline{\widetilde{V_1} \psi^{\ast}} \right)\\
    & = & (\varepsilon_1 - \varepsilon_2) K (R) + o^{}_{c \rightarrow 0} (1)
    (\varepsilon_1 + \varepsilon_2 + \varepsilon_3 + \varepsilon_4) + o_{c
    \rightarrow 0} (c^{\beta_0}) K \| \varphi \|_{\mathcal{C}}
  \end{eqnarray*}
  Similarly we can do the same computation for every orthogonalities, and we
  have the system
  \begin{eqnarray*}
    \left(\begin{array}{c}
      \int_{B (\tilde{d}_c \overrightarrow{e_1}, R)} \mathfrak{R}\mathfrak{e}
      \left( \partial_{x_1} \widetilde{V_1} \overline{\widetilde{V_1}
      \psi^{\ast}} \right)\\
      \int_{B (- \tilde{d}_c \overrightarrow{e_1}, R)}
      \mathfrak{R}\mathfrak{e} (\partial_{x_1} \tilde{V}_{- 1}
      \overline{\tilde{V}_{- 1} \psi^{\ast}})\\
      \int_{B (\tilde{d}_c \overrightarrow{e_1}, R)} \mathfrak{R}\mathfrak{e}
      \left( \partial_{x_2} \widetilde{V_1} \overline{\widetilde{V_1}
      \psi^{\ast}} \right)\\
      \int_{B (- \tilde{d}_c \overrightarrow{e_1}, R)}
      \mathfrak{R}\mathfrak{e} (\partial_{x_2} \tilde{V}_{- 1}
      \overline{\tilde{V}_{- 1} \psi^{\ast}})
    \end{array}\right) & = & \left( K (R) \left(\begin{array}{cccc}
      1 & - 1 & 0 & 0\\
      1 & 1 & 0 & 0\\
      0 & 0 & 1 & - 1\\
      0 & 0 & 1 & 1
    \end{array}\right) + o_{c \rightarrow 0} (1) \right)
    \left(\begin{array}{c}
      \varepsilon_1\\
      \varepsilon_2\\
      \varepsilon_3\\
      \varepsilon_4
    \end{array}\right)\\
    & + & o_{c \rightarrow 0} (c^{\beta_0}) K \| \varphi \|_{\mathcal{C}} .
  \end{eqnarray*}
  Therefore, since the matrix is invertible and $K (R) > 0$, for $c$ small
  enough, we can find $\varepsilon_1, \varepsilon_2, \varepsilon_3,
  \varepsilon_4 \in \mathbbm{R}$ such that
  \begin{equation}
    | \varepsilon_1 | + | \varepsilon_2 | + | \varepsilon_3 | + |
    \varepsilon_4 | \leqslant o_{c \rightarrow 0} (c^{\beta_0}) K \| \varphi
    \|_{\mathcal{C}} \label{CP273231}
  \end{equation}
  and
  \[ \int_{B (\tilde{d}_c \overrightarrow{e_1}, R)} \mathfrak{R}\mathfrak{e}
     \left( \partial_{x_1} \widetilde{V_1} \overline{\widetilde{V_1}
     \psi^{\ast}} \right) = \int_{B (\tilde{d}_c \overrightarrow{e_1}, R)}
     \mathfrak{R}\mathfrak{e} \left( \partial_{x_2} \widetilde{V_1}
     \overline{\widetilde{V_1} \psi^{\ast}} \right) = 0, \]
  \[ \int_{B (- \tilde{d}_c \overrightarrow{e_1}, R)} \mathfrak{R}\mathfrak{e}
     (\partial_{x_1} \tilde{V}_{- 1} \overline{\tilde{V}_{- 1} \psi^{\ast}}) =
     \int_{B (- \tilde{d}_c \overrightarrow{e_1}, R)} \mathfrak{R}\mathfrak{e}
     (\partial_{x_2} \tilde{V}_{- 1} \overline{\tilde{V}_{- 1} \psi^{\ast}}) =
     0. \]
  Therefore, by Proposition \ref{CP205218}, since $Q_c \psi^{\ast} \in
  H_{Q_c}$, we have
  \[ B_{Q_c} (Q_c \psi^{\ast}) \geqslant K \| Q_c \psi^{\ast}
     \|^2_{\mathcal{C}} . \]
  From Lemma \ref{CP283L223}, we have,
  \[ \| \partial_{x_1} Q_c \|_{\mathcal{C}} + \| \partial_{x_2} Q_c
     \|_{\mathcal{C}} + \| c^2 \partial_c Q_c \|_{\mathcal{C}} + c^{\beta_0 /
     2} \| c \partial_{c^{\bot}} Q_c \|_{\mathcal{C}} \leqslant K (\beta_0) \]
  hence, since $Q_c (\psi^{\ast} - \psi) = \varepsilon_1 \partial_{x_1} Q_c +
  \varepsilon_2 c^2 \partial_c Q_c + \varepsilon_3 \partial_{x_2} Q_c +
  \varepsilon_4 c \partial_{c^{\bot}} Q_c$,
  \begin{eqnarray*}
    &  & \| Q_c \psi \|^2_{\mathcal{C}}\\
    & \leqslant & \| Q_c \psi^{\ast} \|^2_{\mathcal{C}} + \| Q_c (\psi -
    \psi^{\ast}) \|^2_{\mathcal{C}}\\
    & \leqslant & \| Q_c \psi^{\ast} \|^2_{\mathcal{C}} + K (\beta_0)  (|
    \varepsilon_1 | + | \varepsilon_2 | + | \varepsilon_3 | + c^{- \beta_0 /
    2} | \varepsilon_4 |)^2,
  \end{eqnarray*}
  therefore, for $c$ small enough, by (\ref{CP273231}), we have
  \[ \| Q_c \psi^{\ast} \|^2_{\mathcal{C}} \geqslant K \| Q_c \psi
     \|^2_{\mathcal{C}} \]
  and
  \[ B_{Q_c} (Q_c \psi^{\ast}) \geqslant K \| Q_c \psi \|^2_{\mathcal{C}} \]
  Finally, we compute, since $Q_c (\psi - \psi^{\ast}) = \varepsilon_1
  \partial_{x_1} Q_c + \varepsilon_2 c^2 \partial_c Q_c + \varepsilon_3
  \partial_{x_2} Q_c + \varepsilon_4 c \partial_{c^{\bot}} Q_c$, by Lemma
  \ref{CP2icicmieux}, that
  \[ B_{Q_c} (\varphi) = B_{Q_c} (Q_c \psi^{\ast}) + B_{Q_c} (Q_c (\psi -
     \psi^{\ast})) + 2 \langle Q_c \psi^{\ast}, L_{Q_c} (Q_c (\psi -
     \psi^{\ast})) \rangle . \]
  Furthermore, we compute, still by Lemma \ref{CP2icicmieux},
  \[ \langle Q_c \psi^{\ast}, L_{Q_c} (Q_c (\psi - \psi^{\ast})) \rangle = -
     B_{Q_c} (Q_c (\psi - \psi^{\ast})) + \langle Q_c \psi, L_{Q_c} (Q_c (\psi
     - \psi^{\ast})) \rangle, \]
  therefore
  \begin{eqnarray*}
    B_{Q_c} (\varphi) & = & B_{Q_c} (Q_c \psi^{\ast}) - B_{Q_c} (Q_c (\psi -
    \psi^{\ast})) + 2 \langle Q_c \psi, L_{Q_c} (Q_c (\psi - \psi^{\ast}))
    \rangle\\
    & \geqslant & K \| Q_c \psi \|^2_{\mathcal{C}} - B_{Q_c} (Q_c (\psi -
    \psi^{\ast})) + 2 \langle Q_c \psi, L_{Q_c} (Q_c (\psi - \psi^{\ast}))
    \rangle .
  \end{eqnarray*}
  We have
  \[ Q_c (\psi - \psi^{\ast}) = - (\varepsilon_1 \partial_{x_1} Q_c +
     \varepsilon_2 c^2 \partial_c Q_c + \varepsilon_3 \partial_{x_2} Q_c +
     \varepsilon_4 c \partial_{c^{\bot}} Q_c), \]
  and from Lemma \ref{CP20703L222}, we have
  \[ L_{Q_c} (Q_c (\psi - \psi^{\ast})) = - c^2 \varepsilon_2 i \partial_{x_2}
     Q_c + c^2 \varepsilon_4 i \partial_{x_1} Q_c . \]
  We compute
  \begin{eqnarray*}
    &  & B_{Q_c} (Q_c (\psi - \psi^{\ast}))\\
    & = & \langle - (\varepsilon_1 \partial_{x_1} Q_c + \varepsilon_2 c^2
    \partial_c Q_c + \varepsilon_3 \partial_{x_2} Q_c + \varepsilon_4 c
    \partial_{c^{\bot}} Q_c), - c^2 \varepsilon_2 i \partial_{x_2} Q_c + c^2
    \varepsilon_4 i \partial_{x_1} Q_c \rangle,
  \end{eqnarray*}
  and with (\ref{CP2sym}), we check that
  \[ B_{Q_c} (Q_c (\psi - \psi^{\ast})) = \varepsilon_2^2 c^4 \langle L_{Q_c}
     (\partial_c Q_c), \partial_c Q_c \rangle - \varepsilon_4^2 c^2 \langle
     L_{Q_c} (\partial_{c^{\bot}} Q_c), \partial_{c^{\bot}} Q_c \rangle . \]
  With Lemma \ref{CP2nend} and equation (\ref{CP273231}), we estimate
  \[ | B_{Q_c} (Q_c (\psi - \psi^{\ast})) | \leqslant K c^2 (\varepsilon_2^2 +
     \varepsilon_4^2) \leqslant o_{c \rightarrow 0} (1) \| Q_c \psi
     \|^2_{\mathcal{C}} . \]
  Finally, we have
  \[ \langle Q_c \psi, L_{Q_c} (Q_c (\psi - \psi^{\ast})) \rangle = \langle
     Q_c \psi, - c^2 \varepsilon_2 i \partial_{x_2} Q_c + c^2 \varepsilon_4 i
     \partial_{x_1} Q_c \rangle . \]
  We compute
  \[ c^2 \langle Q_c \psi, i \nabla Q_c \rangle = c^2 \int_{\mathbbm{R}^2}
     \mathfrak{I}\mathfrak{m} (\psi) \mathfrak{R}\mathfrak{e} (\nabla Q_c
     \overline{Q_c}) - c^2 \int_{\mathbbm{R}^2} \mathfrak{R}\mathfrak{e}
     (\psi) \mathfrak{I}\mathfrak{m} (\nabla Q_c \overline{Q_c}), \]
  and to finish the proof, we use
  \begin{equation}
    | c \langle Q_c \psi, i \nabla Q_c \rangle | \leqslant K c \ln \left(
    \frac{1}{c} \right) \| Q_c \psi \|_{\mathcal{C}} \label{CP2133312}
  \end{equation}
  for a constant $K > 0$ independent of $c$ by Lemma \ref{CP2L390911}, which
  is enough to show that
  \begin{eqnarray*}
    &  & | \langle Q_c \psi, L_{Q_c} (Q_c (\psi - \psi^{\ast})) \rangle |\\
    & \leqslant & o_{c \rightarrow 0} (1) (| \varepsilon_2 | + |
    \varepsilon_4 |) \| Q_c \psi \|_{\mathcal{C}}\\
    & \leqslant & o_{c \rightarrow 0} (1) \| Q_c \psi \|^2_{\mathcal{C}},
  \end{eqnarray*}
  since $c \ln \left( \frac{1}{c} \right) = o_{c \rightarrow 0} (1)$. We have
  shown that, for $\varphi \in C^{\infty}_c (\mathbbm{R}^2 \backslash \{
  \widetilde{d_c} \overrightarrow{e_1}, - \widetilde{d_c} \overrightarrow{e_1}
  \}, \mathbbm{C})$
  \begin{eqnarray*}
    B_{Q_c} (\varphi) & \geqslant & K \| Q_c \psi \|^2_{\mathcal{C}} - B_{Q_c}
    (Q_c (\psi - \psi^{\ast})) + 2 \langle Q_c \psi, L_{Q_c} (Q_c (\psi -
    \psi^{\ast})) \rangle\\
    & \geqslant & (K - o_{c \rightarrow 0} (1)) \| Q_c \psi
    \|^2_{\mathcal{C}}\\
    & \geqslant & \frac{K}{2} \| Q_c \psi \|^2_{\mathcal{C}}
  \end{eqnarray*}
  for $c$ small enough. Now, by Lemma \ref{CP2Ndensity}, we conclude by
  density as in the proof of Proposition \ref{CP205218}.
\end{proof}

\subsection{Coercivity under three orthogonality conditions}\label{CP2s34}

\begin{lemma}
  \label{CP2L73224}There exists $R, K > 0$ such that, for $0 < \beta <
  \beta_0$, $\beta_0$ a small constant, there exists $c_0 (\beta), K (\beta) >
  0$ with, for $0 < c < c_0 (\beta)$, $Q_c$ defined in Theorem \ref{th1},
  $\varphi = Q_c \psi \in H_{Q_c}$, if
  \[ \mathfrak{R}\mathfrak{e} \int_{B (\tilde{d}_c \overrightarrow{e_1}, R)
     \cup B (- \tilde{d}_c \overrightarrow{e_1}, R)} \partial_{x_1} Q_c
     \overline{Q_c \psi^{\neq 0}} =\mathfrak{R}\mathfrak{e} \int_{B
     (\tilde{d}_c \overrightarrow{e_1}, R) \cup B (- \tilde{d}_c
     \overrightarrow{e_1}, R)} \partial_{x_2} Q_c \overline{Q_c \psi^{\neq 0}}
     = 0, \]
  \[ \mathfrak{R}\mathfrak{e} \int_{B (\tilde{d}_c \overrightarrow{e_1}, R)
     \cup B (- \tilde{d}_c \overrightarrow{e_1}, R)} \partial_c Q_c
     \overline{Q_c \psi^{\neq 0}} = 0, \]
  then
  \[ B_{Q_c} (\varphi) \geqslant K (\beta) c^{2 + \beta} \| \varphi
     \|^2_{\mathcal{C}} . \]
\end{lemma}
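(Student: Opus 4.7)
The plan is to reduce Lemma \ref{CP2L73224} to Lemma \ref{CP2133L39} by adding a correction in the span of the four particular directions, exploiting the fact that the missing orthogonality is precisely against $\partial_{c^\perp} Q_c$, the unique direction among the four for which the quadratic form is strictly positive (Lemma \ref{CP2nend}).

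Concretely, for $\varphi = Q_c \psi \in H_{Q_c}$ satisfying the three orthogonality conditions, I would seek real parameters $\varepsilon_1, \varepsilon_2, \varepsilon_3, \varepsilon_4$ such that
$$\psi^\ast := \psi + \varepsilon_1 \frac{\partial_{x_1} Q_c}{Q_c} + \varepsilon_2 \frac{c^2 \partial_c Q_c}{Q_c} + \varepsilon_3 \frac{\partial_{x_2} Q_c}{Q_c} + \varepsilon_4 \frac{c \partial_{c^\perp} Q_c}{Q_c}$$
makes $Q_c\psi^\ast$ satisfy the four orthogonality conditions of Proposition \ref{CP205218}. The associated $4\times 4$ linear system is the same as in the proof of Lemma \ref{CP2133L39}, and is still invertible thanks to the near-diagonal structure provided by Lemma \ref{CP2133L35}. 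The three given orthogonalities still yield $|\varepsilon_1|, |\varepsilon_2|, |\varepsilon_3| = o_{c\to 0}(c^{\beta_0}) \|\varphi\|_{\mathcal{C}}$, but the component $\varepsilon_4$, which in Lemma \ref{CP2133L39} used the missing condition on $\partial_{c^\perp} Q_c$, can now only be controlled by Cauchy--Schwarz together with the Poincar\'e-type inequality (\ref{CP2jesaistoujourspasfairedesmaths}), giving merely $|\varepsilon_4| \leq K \|\varphi\|_{\mathcal{C}}$.

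Applying Lemma \ref{CP2133L39} to $Q_c\psi^\ast$ and expanding via Lemma \ref{CP2icicmieux} gives
$$B_{Q_c}(\varphi) = B_{Q_c}(Q_c\psi^\ast) + B_{Q_c}\Bigl(\sum_i \varepsilon_i A_i\Bigr) - 2 \sum_i \varepsilon_i \langle Q_c\psi^\ast, L_{Q_c}(A_i)\rangle.$$
The key observation is that, by Lemma \ref{CP2nend} and the vanishing of cross terms by symmetry (as in the proof of Lemma \ref{CP2133L39}), the middle term equals $-(2\pi+o(1)) c^2 \varepsilon_2^2 + (2\pi+o(1)) c^2 \varepsilon_4^2$, producing a \emph{positive} reserve of size $c^2 \varepsilon_4^2$. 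The cross correction terms involve only $i = 2, 4$ since $L_{Q_c}(\partial_{x_1} Q_c) = L_{Q_c}(\partial_{x_2} Q_c) = 0$, and Lemma \ref{CP2L390911} bounds them by $K(|\varepsilon_2|+|\varepsilon_4|) c^2 \ln(1/c) \|Q_c\psi^\ast\|_{\mathcal{C}}$. Splitting by AM--GM, and distributing the $\varepsilon_4$-contribution between $\|Q_c\psi^\ast\|_{\mathcal{C}}^2$ (absorbed into the $K_0\|Q_c\psi^\ast\|_{\mathcal{C}}^2$ from Lemma \ref{CP2133L39}) and the reserve $c^2\varepsilon_4^2$, one obtains
$$B_{Q_c}(\varphi) \geq \tfrac{K_0}{2} \|Q_c\psi^\ast\|_{\mathcal{C}}^2 + \pi c^2 \varepsilon_4^2 - o_{c\to 0}(c^{2+\beta_0}) \|\varphi\|_{\mathcal{C}}^2.$$

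To conclude, I would use the triangle inequality together with Lemma \ref{CP283L223}, which gives $\|c\partial_{c^\perp} Q_c\|_{\mathcal{C}}^2 = o_{c\to 0}^{\beta}(c^{-\beta/2})$ for any $\beta > 0$, so that
$$\|\varphi\|_{\mathcal{C}}^2 \leq C_1 \|Q_c\psi^\ast\|_{\mathcal{C}}^2 + C_2 \varepsilon_4^2 \cdot o_{c\to 0}^{\beta}(c^{-\beta/2}).$$
Multiplying by $c^{2+\beta}$ produces $K(\beta) c^{2+\beta}\|\varphi\|_{\mathcal{C}}^2 \leq K(\beta) C_1 c^{2+\beta}\|Q_c\psi^\ast\|_{\mathcal{C}}^2 + K(\beta) C_2\, o(c^{2+\beta/2}) \varepsilon_4^2$, both of which can be absorbed into $\tfrac{K_0}{2}\|Q_c\psi^\ast\|_{\mathcal{C}}^2$ and $\pi c^2\varepsilon_4^2$ respectively for $c$ small enough once $K(\beta)$ is fixed (for instance $K(\beta) = K_0/(4C_1)$); the residual term $o(c^{2+\beta_0})\|\varphi\|_{\mathcal{C}}^2$ expands into $o(c^{2+\beta_0 - \beta/2})\varepsilon_4^2 = o(c^2)\varepsilon_4^2$, which requires $\beta < 2\beta_0$, plus a negligible multiple of $\|Q_c\psi^\ast\|_{\mathcal{C}}^2$. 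Passage from $C^\infty_c$ test functions to $\varphi \in H_{Q_c}$ is handled by density, exactly as at the end of the proof of Lemma \ref{CP2133L39}. The main obstacle is precisely this final balance: the mismatch between the size $c^2$ of the positive quadratic contribution from $\partial_{c^\perp} Q_c$ and the size $c^{-\beta}$ of its $\mathcal{C}$-seminorm forces the coercivity constant to degenerate like $c^{2+\beta}$ for every $\beta>0$, and fixes the new threshold $\beta_0$ (essentially half of the one in Lemma \ref{CP283L223}).
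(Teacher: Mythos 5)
Your proposal is correct and follows essentially the same route as the paper: the same correction $\psi^{\ast}$ with four parameters, the same use of Lemmas \ref{CP2133L35}, \ref{CP2133L39}, \ref{CP2icicmieux}, \ref{CP2nend} and \ref{CP2L390911} to isolate the positive reserve $(2\pi+o(1))c^2\varepsilon_4^2$, and the same final balance against $\| c\partial_{c^{\bot}}Q_c\|_{\mathcal{C}}=o^{\beta}_{c\to 0}(c^{-\beta})$ from Lemma \ref{CP283L223}, concluding by density. The only (immaterial) difference is the bookkeeping in the last absorption step, which yields the constraint $\beta<2\beta_0$ instead of the paper's $\beta<\beta_0/2$; both amount to the same statement after relabelling the small constant $\beta_0$.
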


\begin{proof}
  As for the proof of Lemma \ref{CP2133L39}, we show the result for $\varphi =
  Q_c \psi \in C^{\infty}_c (\mathbbm{R}^2 \backslash \{ \widetilde{d_c}
  \overrightarrow{e_1}, - \widetilde{d_c} \overrightarrow{e_1} \},
  \mathbbm{C})$, and we conclude by density for $\varphi \in H_{Q_c}$.
  
  For $\varphi = Q_c \psi \in C^{\infty}_c (\mathbbm{R}^2 \backslash \{
  \widetilde{d_c} \overrightarrow{e_1}, - \widetilde{d_c} \overrightarrow{e_1}
  \}, \mathbbm{C})$, we take $\varepsilon_1, \varepsilon_2, \varepsilon_3,
  \varepsilon_4$ four real parameters and we define
  \[ \psi^{\ast} \assign \psi + \varepsilon_1 \frac{\partial_{x_1} Q_c}{Q_c} +
     \varepsilon_2 \frac{c^2 \partial_c Q_c}{Q_c} + \varepsilon_3
     \frac{\partial_{x_2} Q_c}{Q_c} + \varepsilon_4 \frac{c
     \partial_{c^{\bot}} Q_c}{Q_c} . \]
  With the same computation as in the proof of Lemma \ref{CP2133L39}, we check
  that $Q_c \psi^{\ast} \in H_{Q_c}$, and using similarly the estimates of
  Lemma \ref{CP2133L35}, we can take $\varepsilon_1, \varepsilon_2,
  \varepsilon_3, \varepsilon_4 \in \mathbbm{R}$ such that
  \[ | \varepsilon_1 | + | \varepsilon_2 | + | \varepsilon_3 | = o_{c
     \rightarrow 0} (c^{\beta_0}) \| \varphi \|_{\mathcal{C}}, \]
  $| \varepsilon_4 | \leqslant K \| \varphi \|_{\mathcal{C}}$ and such that
  $\psi^{\ast}$ satisfies the four orthogonality conditions of Lemma
  \ref{CP2133L39}. Therefore,
  \begin{equation}
    B_{Q_c} (Q_c \psi^{\ast}) \geqslant K \| Q_c \psi^{\ast}
    \|_{\mathcal{C}}^2 . \label{CP2133320}
  \end{equation}
  We write
  \[ T = \varepsilon_1 \partial_{x_1} Q_c + \varepsilon_2 c^2 \partial_c Q_c +
     \varepsilon_3 \partial_{x_2} Q_c, \]
  and we develop, by Lemma \ref{CP2icicmieux},
  \begin{eqnarray*}
    &  & B_{Q_c} (Q_c \psi)\\
    & = & B_{Q_c} (Q_c \psi^{\ast}) + c^2 \varepsilon_4^2 B_{Q_c}
    (\partial_{c^{\bot}} Q_c) + B_{Q_c} (T)\\
    & - & 2 \langle Q_c \psi^{\ast}, c \varepsilon_4 L_{Q_c}
    (\partial_{c^{\bot}} Q_c) \rangle - 2 \langle Q_c \psi^{\ast}, L_{Q_c} (T)
    \rangle + 2 c \varepsilon_4 \langle L_{Q_c} (\partial_{c^{\bot}} Q_c), T
    \rangle .
  \end{eqnarray*}
  Using Lemmas \ref{CP20703L222} and \ref{CP2nend}, we compute
  \begin{eqnarray}
    | B_{Q_c} (T) | & = & | \langle L_{Q_c} (T), T \rangle | = | \langle
    L_{Q_c} (\varepsilon_2 c^2 \partial_c Q_c), \varepsilon_2 c^2 \partial_c
    Q_c \rangle | \nonumber\\
    & = & \varepsilon^2_2 c^4 | \langle L_{Q_c} (\partial_c Q_c), \partial_c
    Q_c \rangle | \nonumber\\
    & \leqslant & K \varepsilon_2^2 c^2 = o_{c \rightarrow 0} (c^{2 + 2
    \beta_0}) \| \varphi \|_{\mathcal{C}}^2 
  \end{eqnarray}
  Now, we compute, by Lemma \ref{CP20703L222}, that
  \[ \langle Q_c \psi^{\ast}, c \varepsilon_4 L_{Q_c} (\partial_{c^{\bot}}
     Q_c) \rangle = \varepsilon_4 c^2 \langle Q_c \psi^{\ast}, i
     \partial_{x_1} Q_c \rangle . \]
  From Lemma \ref{CP2L390911}, we have
  \[ | c \langle Q_c \psi^{\ast}, i \partial_{x_1} Q_c \rangle | \leqslant
     o_{c \rightarrow 0} (c^{1 - \beta_0 / 2}) \| \varphi^{\ast}
     \|_{\mathcal{C}}, \]
  therefore
  \begin{equation}
    | \langle Q_c \psi^{\ast}, c \varepsilon_4 L_{Q_c} (\partial_{c^{\bot}}
    Q_c) \rangle | \leqslant o_{c \rightarrow 0} (c^{1 + \beta_0 / 2}) \|
    \varphi^{\ast} \|_{\mathcal{C}} \| \varphi \|_{\mathcal{C}} .
  \end{equation}
  Similarly, we compute
  \[ \langle Q_c \psi^{\ast}, L_{Q_c} (T) \rangle = \langle Q_c \psi^{\ast},
     \varepsilon_2 c^2 L_{Q_c} (\partial_c Q_c) \rangle = \varepsilon_2 c^2
     \langle Q_c \psi^{\ast}, i \partial_{x_2} Q_c \rangle . \]
  Still from Lemma \ref{CP2L390911}, we have
  \[ | c \langle Q_c \psi^{\ast}, i \partial_{x_2} Q_c \rangle | \leqslant K c
     \ln \left( \frac{1}{c} \right) \| \varphi^{\ast} \|_{\mathcal{C}}, \]
  therefore
  \begin{equation}
    | \langle Q_c \psi^{\ast}, L_{Q_c} (T) \rangle | \leqslant K |
    \varepsilon_2 | c^2 \ln \left( \frac{1}{c} \right) \| \varphi^{\ast}
    \|_{\mathcal{C}} \leqslant o_{c \rightarrow 0} (c^{1 + \beta_0}) \|
    \varphi^{\ast} \|_{\mathcal{C}} \| \varphi \|_{\mathcal{C}} .
  \end{equation}
  Finally, we compute similarly that
  \[ c | \varepsilon_4 \langle L_{Q_c} (\partial_{c^{\bot}} Q_c), T \rangle |
     = c | \varepsilon_4 \langle i c \partial_{x_1} Q_c, T \rangle | = c^2 |
     \varepsilon_4 \langle i \partial_{x_1} Q_c, \varepsilon_2 c^2 \partial_c
     Q_c + \varepsilon_3 \partial_{x_2} Q_c \rangle | . \]
  Using Lemma \ref{CP2L390911} for $\varphi = c^2 \partial_c Q_c$ (with Lemma
  \ref{CP2Ndensity}), we infer
  \[ | \langle i \partial_{x_1} Q_c, c^2 \partial_c Q_c \rangle | \leqslant K
     \| c^2 \partial_c Q_c \|_{\mathcal{C}}, \]
  and $\| c^2 \partial_c Q_c \|_{\mathcal{C}} \leqslant K$ by Lemma
  \ref{CP283L223}. Furthermore, since $Q_c (- x_1, x_2) = Q_c (x_1, x_2)$, we
  have
  \[ \langle i \partial_{x_1} Q_c, \partial_{x_2} Q_c \rangle = 0. \]
  We conclude that
  \begin{equation}
    | c \varepsilon_4 \langle L_{Q_c} (\partial_{c^{\bot}} Q_c), T \rangle |
    \leqslant K c^2  | \varepsilon_4 | (| \varepsilon_2 | + | \varepsilon_3 |)
    = o_{c \rightarrow 0} (c^{2 + \beta_0 / 2}) \| \varphi \|_{\mathcal{C}}^2
    . \label{CP2133324}
  \end{equation}
  Now, combining (\ref{CP2133320}) to (\ref{CP2133324}), and with $B_{Q_c}
  (\partial_{c^{\bot}} Q_c) = 2 \pi + o_{c \rightarrow 0} (1)$ from Lemma
  \ref{CP2nend}, we have
  \[ B_{Q_c} (\varphi) \geqslant K \| \varphi^{\ast} \|^2_{\mathcal{C}} + K
     \varepsilon_4^2 c^2 - o_{c \rightarrow 0} (c^{2 + \beta_0 / 2}) \|
     \varphi \|^2_{\mathcal{C}} - o_{c \rightarrow 0} (c^{1 + \beta_0 / 2}) \|
     \varphi^{\ast} \|_{\mathcal{C}} \| \varphi \|_{\mathcal{C}} . \]
  Similarly as in the proof of Lemma \ref{CP2133L39}, we have from Lemma
  \ref{CP283L223} that, for any $\beta_0 / 2 > \beta > 0$,
  \[ \| \varphi \|_{\mathcal{C}}^2 \leqslant K \| \varphi^{\ast}
     \|^2_{\mathcal{C}} + K (\beta) \varepsilon^2_4 c^{- \beta}, \]
  hence
  \[ \varepsilon_4^2 c^2 \geqslant K (\beta) c^{2 + \beta} (\| \varphi
     \|_{\mathcal{C}}^2 - \| \varphi^{\ast} \|^2_{\mathcal{C}}), \]
  therefore
  \begin{eqnarray*}
    B_{Q_c} (\varphi) & \geqslant & K_1 (\beta) (\| \varphi^{\ast}
    \|^2_{\mathcal{C}} + c^{2 + \beta} \| \varphi \|_{\mathcal{C}}^2) - K_2
    (\beta) c^{2 + \beta} \| \varphi^{\ast} \|^2_{\mathcal{C}} - o_{c
    \rightarrow 0} (c^{2 + \beta_0 / 2}) \| \varphi \|_{\mathcal{C}}^2\\
    & - & o_{c \rightarrow 0} (c^{1 + \beta_0}) \| \varphi^{\ast}
    \|_{\mathcal{C}} \| \varphi \|_{\mathcal{C}}\\
    & \geqslant & K (\beta) c^{2 + \beta} \| \varphi \|_{\mathcal{C}}^2
  \end{eqnarray*}
  for $c$ small enough (depending on $\beta$).
\end{proof}

Lemmas \ref{CP2zerosofQc}, \ref{CP2133L39} and \ref{CP2L73224} together end
the proof of Theorem \ref{CP2th2}. Remark that in both Lemmas \ref{CP2133L39}
and \ref{CP2L73224}, we could replace the orthogonality condition
$\mathfrak{R}\mathfrak{e} \int_{B (\tilde{d}_c \overrightarrow{e_1}, R) \cup B
(- \tilde{d}_c \overrightarrow{e_1}, R)} \partial_c Q_c \overline{Q_c
\psi^{\neq 0}} = 0$ by
\begin{equation}
  \mathfrak{R}\mathfrak{e} \int_{B (\tilde{d}_c \overrightarrow{e_1}, R) \cup
  B (- \tilde{d}_c \overrightarrow{e_1}, R)} \partial_d (V_1 (x - d \vec{e}_1)
  V_{- 1} (x + d \vec{e}_1))_{| d = d_c \nobracket} \overline{Q_c \psi^{\neq
  0}} (x) d x = 0, \label{CP2corto}
\end{equation}
since, by Theorem \ref{th1} (for $p = + \infty$),
\[ \| c^2 \partial_c Q_c - \partial_d (V_1 (x - d \vec{e}_1) V_{- 1} (x + d
   \vec{e}_1))_{| d = d_c \nobracket} \|_{C^1 (B (\tilde{d}_c
   \overrightarrow{e_1}, R) \cup B (- \tilde{d}_c \overrightarrow{e_1}, R))} =
   o_{c \rightarrow 0} (1), \]
and thus this replacement creates an error term that can be estimate as the
other ones in the proof of Lemma \ref{CP2133L39}.

\subsection{Proof of the corollaries of Theorem
\ref{CP2th2}}\label{CP2proofcor}

\subsubsection{Proof of Corollary \ref{CP2Cor41}}

\begin{proof}
  We start with the proof that $(i)$ implies $(i i)$. We start by showing
  that, for $\varphi_0 \in C^{\infty}_c (\mathbbm{R}^2, \mathbbm{C})$,
  \[ B_{Q_c} (\varphi + \varphi_0) = B_{Q_c} (\varphi_0) . \]
  We take $\varphi_0 = Q_c \psi_0 \in C^{\infty}_c (\mathbbm{R}^2,
  \mathbbm{C})$ and, by integration by parts, from ($i$), we check that
  \[ \langle L_{Q_c} (\varphi_0), \varphi \rangle = 0. \]
  Furthermore, we check (for $\varphi \in C^{\infty}_c (\mathbbm{R}^2
  \backslash \{ \widetilde{d_c} \overrightarrow{e_1}, - \widetilde{d_c}
  \overrightarrow{e_1} \}, \mathbbm{C})$ and then by density for $\varphi \in
  H_{Q_c}$) that for $\varphi_0 \in C^{\infty}_c (\mathbbm{R}^2,
  \mathbbm{C})$,
  \[ B_{Q_c} (\varphi + \varphi_0) = B_{Q_c} (\varphi) + B_{Q_c} (\varphi_0)
     + 2 \langle \varphi, L_{Q_c} (\varphi_0) \rangle, \]
  hence
  \begin{equation}
    B_{Q_c} (\varphi + \varphi_0) = B_{Q_c} (\varphi) + B_{Q_c} (\varphi_0) .
    \label{CP2tototo}
  \end{equation}
  Similarly as in the proof of Proposition \ref{CP205218}, we argue by density
  that this result holds for $\varphi_0 \in H_{Q_c}$. Now, taking $\varphi_0 =
  - \varphi$, we infer from (\ref{CP2tototo}) that $B_{Q_c} (\varphi) = 0$,
  thus, for $\varphi \in H_{Q_c}$,
  \begin{equation}
    B_{Q_c} (\varphi + \varphi_0) = B_{Q_c} (\varphi_0) . \label{CP2llabel5}
  \end{equation}
  Now, similarly as the proof of Lemma \ref{CP2133L39}, we decompose $\varphi
  = Q_c \psi \in H_{Q_c}$ in
  \[ \varphi = \varphi^{\ast} + \varepsilon_1 \partial_{x_1} Q_c +
     \varepsilon_2 \partial_{x_2} Q_c + \varepsilon_3 c^2 \partial_c Q_c \]
  with
  \[ | \varepsilon_1 | + | \varepsilon_2 | + | \varepsilon_3 | \leqslant K \|
     \varphi \|_{\mathcal{C}}, \]
  such that $\varphi^{\ast}$ verifies the three orthogonality conditions of
  Lemma \ref{CP2L73224}. We write
  \[ A = \varepsilon_1 \partial_{x_1} Q_c + \varepsilon_2 \partial_{x_2} Q_c +
     \varepsilon_3 c^2 \partial_c Q_c \in H_{Q_c} \]
  by Lemma \ref{CP20703L222}, and using (\ref{CP2llabel5}), we have
  \[ B_{Q_c} (\varphi^{\ast}) = B_{Q_c} (\varphi - A) = B_{Q_c} (A) . \]
  From Lemma \ref{CP2L73224}, we have $B_{Q_c} (\varphi^{\ast}) \geqslant K
  c^{2 + \beta_0 / 2} \| \varphi^{\ast} \|_{\mathcal{C}}^2$. Furthermore, from
  Lemmas \ref{CP20703L222} and \ref{CP2nend},
  \[ B_{Q_c} (A) = \varepsilon_3^2 c^2 B_{Q_c} (\partial_c Q_c) = (- 2 \pi +
     o_{c \rightarrow 0} (1)) \varepsilon_3^2 \leqslant 0. \]
  We deduce that $\varepsilon_3 = 0$ and $\| \varphi^{\ast} \|_{\mathcal{C}} =
  0$, hence $\varphi^{\ast} = i \mu Q_c$ for some $\mu \in \mathbbm{R}$. Since
  $\varphi^{\ast} = \varphi - R \in H_{Q_c}$, we deduce that $\mu = 0$ (or
  else $\| \varphi^{\ast} \|^2_{H_{Q_c}} \geqslant \int_{\mathbbm{R}^2}
  \frac{| \varphi^{\ast} |^2}{(1 + \tilde{r})^2} = + \infty$). Therefore,
  \[ \varphi = \varepsilon_1 \partial_{x_1} Q_c + \varepsilon_2 \partial_{x_2}
     Q_c \in \tmop{Span}_{\mathbbm{R}} (\partial_{x_1} Q_c, \partial_{x_2}
     Q_c) . \]

  Finally, the fact that $(i i)$ implies $(i)$ is a consequence of Lemma
  \ref{CP20703L222}. This concludes the proof of this lemma.
\end{proof}

\subsubsection{Spectral stability}

We have $H^1 (\mathbbm{R}^2) \subset H_{Q_c}$, therefore $B_{Q_c} (\varphi)$
is well defined for $\varphi \in H^1 (\mathbbm{R}^2)$. Furthermore, the fact
that $i \partial_{x_2} Q_c \in L^2 (\mathbbm{R}^2)$ is a consequence of
Theorem \ref{CP2Qcbehav}, and in particular this justifies that $\langle
\varphi, i \partial_{x_2} Q_c \rangle$ is well defined for $\varphi \in H^1
(\mathbbm{R}^2)$. For $\varphi \in H^1 (\mathbbm{R}^2)$, there are no issue in
the definition of the quadratic form, as shown in the following lemma.

\begin{lemma}
  \label{CP2L3150403}There exists $c_0 > 0$ such that, for $0 < c < c_0$,
  $Q_c$ defined in Theorem \ref{th1}, if $\varphi \in H^1 (\mathbbm{R}^2)$,
  then
  \[ B_{Q_c} (\varphi) = \int_{\mathbbm{R}^2} | \nabla \varphi |^2
     -\mathfrak{R}\mathfrak{e} (i c \partial_{x_2} \varphi \bar{\varphi}) - (1
     - | Q_c |^2) | \varphi |^2 + 2\mathfrak{R}\mathfrak{e}^2 (\overline{Q_c}
     \varphi) . \]
\end{lemma}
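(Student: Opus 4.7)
The plan is to compare the defining expression (\ref{CP2truebqc}) of $B_{Q_c}(\varphi)$ with the natural quadratic form
\[ N(\varphi) \assign \int_{\mathbb{R}^2} | \nabla \varphi |^2 -\mathfrak{R}\mathfrak{e} (i c \partial_{x_2} \varphi \bar{\varphi}) - (1 - | Q_c |^2) | \varphi |^2 + 2\mathfrak{R}\mathfrak{e}^2 (\overline{Q_c} \varphi), \]
and show their difference reduces to the integral of a total derivative which vanishes.

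First, I would verify that both $B_{Q_c}(\varphi)$ and $N(\varphi)$ are well-defined for $\varphi \in H^1(\mathbb{R}^2)$. For $N(\varphi)$ this is straightforward: $(1-|Q_c|^2) \in L^\infty$ by Theorem \ref{th1}, and the transport term is bounded via Cauchy--Schwarz by $c \|\partial_{x_2}\varphi\|_{L^2}\|\varphi\|_{L^2}$. For the $\eta$-terms in (\ref{CP2truebqc}), I use that $\eta$ is supported in $\{ |Q_c| \geq K > 0 \}$, so that $\psi = \varphi/Q_c$ is smooth on this set with $|\psi| \leq K^{-1}|\varphi|$ and $|\nabla\psi| \leq K(|\nabla\varphi| + |\nabla Q_c| |\varphi|)$. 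Combined with the $L^\infty$ bounds on $Q_c$ and $\nabla Q_c$ (Theorems \ref{th1} and \ref{CP2Qcbehav}), each term in (\ref{CP2truebqc}) is controlled by a constant times $\|\varphi\|_{H^1}^2$ using Cauchy--Schwarz.

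Second, exactly the algebraic manipulations of subsection \ref{CP22241103} (product rule and rewriting of $\mathfrak{R}\mathfrak{e}(i\partial_{x_2}\varphi\bar\varphi)$ using $\varphi = Q_c \psi$ on the support of $\eta$) are pointwise identities, valid a.e. on $\{|Q_c| \geq K\}$, hence give the integrated identity
\[ N(\varphi) - B_{Q_c}(\varphi) = -c\int_{\mathbb{R}^2} \partial_{x_2}\bigl(\eta\, \mathfrak{R}\mathfrak{e}(\psi)\, \mathfrak{I}\mathfrak{m}(\psi)\, |Q_c|^2 \bigr), \]
where every term on both sides is individually in $L^1(\mathbb{R}^2)$ by the estimates above. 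The lemma thus reduces to showing the right-hand side is zero.

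For this, set $f \assign \eta\, \mathfrak{R}\mathfrak{e}(\psi)\, \mathfrak{I}\mathfrak{m}(\psi)\, |Q_c|^2$, which on the support of $\eta$ equals $\eta\, |Q_c|^{-2}\, \mathfrak{R}\mathfrak{e}(\varphi\overline{Q_c})\, \mathfrak{I}\mathfrak{m}(\varphi\overline{Q_c})$. Using $|Q_c| \geq K$ on $\mathrm{supp}(\eta)$ and the $L^\infty$ bounds on $Q_c, \nabla Q_c$, one checks
\[ \|f\|_{L^1} \leq K\|\varphi\|_{L^2}^2, \qquad \|\partial_{x_2} f\|_{L^1} \leq K\|\varphi\|_{L^2}\bigl(\|\varphi\|_{L^2} + \|\partial_{x_2}\varphi\|_{L^2}\bigr). \]
Then I introduce a radial cutoff $\chi_R(x) = \chi(x/R)$ with $\chi_R = 1$ on $B(0,R)$, $\chi_R = 0$ outside $B(0,2R)$, and $|\nabla\chi_R| \leq K/R$. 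Since $\chi_R f$ has compact support, $\int \partial_{x_2}(\chi_R f) = 0$, i.e.
\[ \int_{\mathbb{R}^2} \chi_R\, \partial_{x_2} f = -\int_{\mathbb{R}^2} f\, \partial_{x_2}\chi_R. \]
The left-hand side converges to $\int \partial_{x_2} f$ as $R \to \infty$ by dominated convergence (since $\partial_{x_2} f \in L^1$), while the right-hand side satisfies
\[ \left|\int_{\mathbb{R}^2} f\, \partial_{x_2}\chi_R\right| \leq \frac{K}{R}\int_{R \leq |x| \leq 2R} |\varphi|^2\, dx \xrightarrow[R \to \infty]{} 0, \]
the tail of the convergent integral $\int |\varphi|^2$ going to zero because $\varphi \in L^2$. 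Hence $\int \partial_{x_2} f = 0$, which gives $B_{Q_c}(\varphi) = N(\varphi)$.

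The main (though modest) obstacle is the bookkeeping of step one: ensuring each of the six $\eta$-localized summands in (\ref{CP2truebqc}) is genuinely in $L^1(\mathbb{R}^2)$ for $\varphi \in H^1$, together with the algebraic verification that the product-rule rewriting in subsection \ref{CP22241103} produces exactly the claimed total derivative, the rest of the argument being a routine truncation.
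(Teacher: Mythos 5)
Your proposal is correct and follows essentially the same route as the paper: both reduce the claim to the computation of subsection \ref{CP22241103} and to the vanishing of $\int_{\mathbbm{R}^2} \partial_{x_2} (\eta\, \mathfrak{R}\mathfrak{e}(\psi) \mathfrak{I}\mathfrak{m}(\psi) |Q_c|^2)$ for $\varphi = Q_c\psi \in H^1(\mathbbm{R}^2)$, which the paper handles by asserting the integration by parts is licit and you justify in detail via the cutoff $\chi_R$ truncation. Your version simply spells out the justification the paper leaves implicit.
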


\begin{proof}
  We recall that $H^1 (\mathbbm{R}^2) \subset H_{Q_c}$ and, for $\varphi = Q_c
  \psi \in H^1 (\mathbbm{R}^2)$,
  \begin{eqnarray*}
    B_{Q_c} (\varphi) & = & \int_{\mathbbm{R}^2} | \nabla \varphi |^2 - (1 - |
    Q_c |^2) | \varphi |^2 + 2\mathfrak{R}\mathfrak{e}^2 (\overline{Q_c}
    \varphi)\\
    & - & c \int_{\mathbbm{R}^2} (1 - \eta) \mathfrak{R}\mathfrak{e} (i
    \partial_{x_2} \varphi \bar{\varphi}) - c \int_{\mathbbm{R}^2 \nosymbol}
    \eta \mathfrak{R}\mathfrak{e}i \partial_{x_2} Q_c \overline{Q_c} | \psi
    |^2\\
    & + & 2 c \int_{\mathbbm{R}^2} \eta \mathfrak{R}\mathfrak{e} \psi
    \mathfrak{I}\mathfrak{m} \partial_{x_2} \psi | Q_c |^2 + c
    \int_{\mathbbm{R}^2} \partial_{x_2} \eta \mathfrak{R}\mathfrak{e} \psi
    \mathfrak{I}\mathfrak{m} \psi | Q_c |^2\\
    & + & c \int_{\mathbbm{R}^2} \eta \mathfrak{R}\mathfrak{e} \psi
    \mathfrak{I}\mathfrak{m} \psi \partial_{x_2} (| Q_c |^2) .
  \end{eqnarray*}
  Since $\varphi \in H^1 (\mathbbm{R}^2)$, the integral $\int_{\mathbbm{R}^2}
  \mathfrak{R}\mathfrak{e} (i c \partial_{x_2} \varphi \bar{\varphi})$ is well
  defined as the scalar product of two $L^2 (\mathbbm{R}^2)$ functions. Now,
  still because $\varphi = Q_c \psi \in H^1 (\mathbbm{R}^2)$, we can integrate
  by parts, and we check that
  \begin{eqnarray*}
    \int_{\mathbbm{R}^2} \eta \mathfrak{R}\mathfrak{e} \psi
    \mathfrak{I}\mathfrak{m} \partial_{x_2} \psi | Q_c |^2 & = & -
    \int_{\mathbbm{R}^2} \eta \mathfrak{R}\mathfrak{e} \partial_{x_2} \psi
    \mathfrak{I}\mathfrak{m} \psi | Q_c |^2\\
    & - & \int_{\mathbbm{R}^2} \partial_{x_2} \eta \mathfrak{R}\mathfrak{e}
    \psi \mathfrak{I}\mathfrak{m} \psi | Q_c |^2 - \int_{\mathbbm{R}^2} \eta
    \mathfrak{R}\mathfrak{e} \psi \mathfrak{I}\mathfrak{m} \psi \partial_{x_2}
    (| Q_c |^2) .
  \end{eqnarray*}
  We conclude by expanding
  \begin{eqnarray*}
    \int_{\mathbbm{R}^2} \eta \mathfrak{R}\mathfrak{e} (i \partial_{x_2}
    \varphi \bar{\varphi}) & = & \int_{\mathbbm{R}^2} \eta
    \mathfrak{R}\mathfrak{e} (i \partial_{x_2} Q_c \overline{Q_c}) | \psi |^2
    + \int_{\mathbbm{R}^2} \eta \mathfrak{R}\mathfrak{e} (i \partial_{x_2}
    \psi \bar{\psi}) | Q_c |^2\\
    & = & \int_{\mathbbm{R}^2} \eta \mathfrak{R}\mathfrak{e} (i
    \partial_{x_2} Q_c \overline{Q_c}) | \psi |^2 + \int_{\mathbbm{R}^2} \eta
    \mathfrak{R}\mathfrak{e} (\partial_{x_2} \psi) \mathfrak{I}\mathfrak{m}
    \psi | Q_c |^2\\
    & + & \int_{\mathbbm{R}^2} \eta \mathfrak{R}\mathfrak{e} (\psi)
    \mathfrak{I}\mathfrak{m} \partial_{x_2} \psi | Q_c |^2 .
  \end{eqnarray*}
\end{proof}

The rest of this subsection is devoted to the proofs of Corollary
\ref{CP2cor177}, Proposition \ref{CP2p188} and Corollary \ref{CP2th199}.

\begin{proof}[of Corollary \ref{CP2cor177}]
  For $\varphi \in H^1 (\mathbbm{R}^2)$ such that $\langle \varphi, i
  \partial_{x_2} Q_c \rangle = 0$, we decompose it in
  \[ \varphi = \varphi^{\ast} + \varepsilon_1 \partial_{x_1} Q_c +
     \varepsilon_2 \partial_{x_2} Q_c + c^2 \varepsilon_3 \partial_c Q_c . \]
  Similarly as in the proof of Lemma \ref{CP2133L39}, we can find
  $\varepsilon_1, \varepsilon_2, \varepsilon_3 \in \mathbbm{R}$ such that
  $\varphi^{\ast}$ satisfies the three orthogonality conditions of Lemma
  \ref{CP2L73224}, and thus (since $\varphi \in H^1 (\mathbbm{R}^2) \subset
  H_{Q_c}$, for $\beta = \beta_0 / 2$)
  \[ B_{Q_c} (\varphi^{\ast}) \geqslant K c^{2 + \beta_0 / 2} \|
     \varphi^{\ast} \|_{\mathcal{C}}^2 . \]
  Now, we compute, by Lemma \ref{CP2icicmieux} and with a density argument,
  that
  \[ B_{Q_c} (\varphi) = B_{Q_c} (\varphi^{\ast}) + 2 \langle \varphi^{\ast},
     L_{Q_c} (\varepsilon_1 \partial_{x_1} Q_c + \varepsilon_2 \partial_{x_2}
     Q_c + c^2 \varepsilon_3 \partial_c Q_c) \rangle + \varepsilon_3^2 c^4
     B_{Q_c} (\partial_c Q_c) . \]
  We have from Lemma \ref{CP20703L222} that $L_{Q_c} (\varepsilon_1
  \partial_{x_1} Q_c + \varepsilon_2 \partial_{x_2} Q_c + c^2 \varepsilon_3
  \partial_c Q_c) = c^2 \varepsilon_3 i \partial_{x_2} Q_c$, therefore
  \[ B_{Q_c} (\varphi) \geqslant K c^{2 + \beta_0 / 2} \| \varphi^{\ast}
     \|^2_{\mathcal{C}} + 2 c^2 \varepsilon_3 \langle \varphi^{\ast}, i
     \partial_{x_2} Q_c \rangle + \varepsilon_3^2 c^4 B_{Q_c} (\partial_c Q_c)
     . \]
  Since $\langle \varphi, i \partial_{x_2} Q_c \rangle = 0$ and $\varphi =
  \varphi^{\ast} + \varepsilon_1 \partial_{x_1} Q_c + \varepsilon_2
  \partial_{x_2} Q_c + c^2 \varepsilon_3 \partial_c Q_c$, we have
  \[ \langle \varphi^{\ast}, i \partial_{x_2} Q_c \rangle = - \langle
     \varepsilon_1 \partial_{x_1} Q_c + \varepsilon_2 \partial_{x_2} Q_c + c^2
     \varepsilon_3 \partial_c Q_c, i \partial_{x_2} Q_c \rangle . \]
  Since $\partial_{x_1} Q_c$ is odd in $x_1$ and $i \partial_{x_2} Q_c$ is
  even in $x_1$, we have $\langle \varepsilon_1 \partial_{x_1} Q_c, i
  \partial_{x_2} Q_c \rangle = 0$. Furthermore,
  \[ \langle \varepsilon_2 \partial_{x_2} Q_c, i \partial_{x_2} Q_c \rangle =
     \varepsilon_2 \int_{\mathbbm{R}^2} \mathfrak{R}\mathfrak{e} (i |
     \partial_{x_2} Q_c |^2) = 0, \]
  and, from Lemma \ref{CP2nend}, we have
  \[ B_{Q_c} (\partial_c Q_c) = \langle \partial_c Q_c, i \partial_{x_2} Q_c
     \rangle = \frac{- 2 \pi + o_{c \rightarrow 0} (1)}{c^2}, \]
  thus
  \[ \langle \varphi^{\ast}, L_{Q_c} (\varepsilon_1 \partial_{x_1} Q_c +
     \varepsilon_2 \partial_{x_2} Q_c + c^2 \varepsilon_3 \partial_c Q_c)
     \rangle = (2 \pi + o_{c \rightarrow 0} (1)) \varepsilon_3 B_{Q_c}
     (\partial_c Q_c), \]
  and
  \[ B_{Q_c} (\varphi) \geqslant K c^{2 + \beta_0 / 2} \| \varphi^{\ast}
     \|^2_{\mathcal{C}} - \varepsilon_3^2 c^4 B_{Q_c} (\partial_c Q_c)
     \geqslant K c^{2 + \beta_0 / 2} \| \varphi^{\ast} \|^2_{\mathcal{C}} + 2
     \pi \varepsilon_3^2 c^2 (1 + o_{c \rightarrow 0} (1)) \geqslant 0 \]
  for $c$ small enough. This also shows that if $\varphi \in H^1
  (\mathbbm{R}^2)$, $B_{Q_c} (\varphi) = 0$ and $\langle \varphi, i
  \partial_{x_2} Q_c \rangle = 0$, then $\varphi \in \tmop{Span}_{\mathbbm{R}}
  \{ \partial_{x_1} Q_c, \partial_{x_2} Q_c \}$.
\end{proof}

We can now finish the proof of Proposition \ref{CP2p188}.

\begin{proof}[of Proposition \ref{CP2p188}]
  First, we have from Theorem \ref{CP2Qcbehav} that $i \partial_{x_2} Q_c \in
  L^2 (\mathbbm{R}^2)$. Now, with Corollary \ref{CP2cor177}, it is easy to
  check that $n^- (L_{Q_c}) \leqslant 1$. Indeed, if it is false, we can find
  $u, v \in H^1 (\mathbbm{R}^2)$ such that for all $\lambda, \mu \in
  \mathbbm{R}$ with $(\lambda, \mu) \neq (0, 0)$, $\lambda u + \mu v \neq 0$
  and $B_{Q_c} (\lambda u + \mu v) < 0$. Then, we can take $(\lambda, \mu)
  \neq (0, 0)$ such that
  \[ \langle \lambda u + \mu v, i \partial_{x_2} Q_c \rangle = 0, \]
  which implies $B_{Q_c} (\lambda u + \mu v) \geqslant 0$ and therefore a
  contradiction.
  
  Let us show that $L_{Q_c}$ has at least one negative eigenvalue (with
  eigenvector in $H^1 (\mathbbm{R}^2)$), which implies that $n^- (L_{Q_c}) =
  1$ and that it is the only negative eigenvalue. We consider
  \[ \alpha_c \assign \inf_{\varphi \in H^1 (\mathbbm{R}^2), \| \varphi
     \|_{L^2 (\mathbbm{R}^2)} = 1} B_{Q_c} (\varphi) . \]
  We recall, from Lemma \ref{CP2L3150403}, that (since $\varphi \in H^1
  (\mathbbm{R}^2)$)
  \[ B_{Q_c} (\varphi) = \int_{\mathbbm{R}^2} | \nabla \varphi |^2
     -\mathfrak{R}\mathfrak{e} (i c \partial_{x_2} \varphi \bar{\varphi}) - (1
     - | Q_c |^2) | \varphi |^2 + 2\mathfrak{R}\mathfrak{e}^2 (\overline{Q_c}
     \varphi), \]
  and if $\varphi \in H^1 (\mathbbm{R}^2)$ with $\| \varphi \|_{L^2
  (\mathbbm{R}^2)} = 1$, we have, by Cauchy-Schwarz,
  \[ B_{Q_c} (\varphi) \geqslant \int_{\mathbbm{R}^2} | \nabla \varphi |^2 - K
     c \| \partial_{x_2} \varphi \|_{L^2 (\mathbbm{R}^2)} - K \geqslant - K
     (c) . \]
  In particular, this implies that $\alpha_c \neq - \infty$.
  
  Now, assume that there exists no $\varphi \in C^{\infty}_c (\mathbbm{R}^2,
  \mathbbm{C})$ such that $B_{Q_c} (\varphi) < 0$. Then, for any $\varphi \in
  C^{\infty}_c (\mathbbm{R}^2, \mathbbm{C})$, we have $B_{Q_c} (\varphi)
  \geqslant 0$. Following the density argument at the end of the proof of
  Proposition \ref{CP205218}, we have $B_{Q_c} (\varphi) \geqslant 0$ for all
  $\varphi \in H_{Q_c}$, and in particular $B_{Q_c} (\partial_c Q_c) \geqslant
  0$ (we recall that $\partial_c Q_c \in H_{Q_c}$ but is not a priori in $H^1
  (\mathbbm{R}^2)$), which is in contradiction with Lemma \ref{CP2nend}.
  Therefore, there exists $\varphi \in C^{\infty}_c (\mathbbm{R}^2,
  \mathbbm{C}) \subset H^1 (\mathbbm{R}^2)$ such that $B_{Q_c} (\varphi) < 0$,
  and in particular $B_{Q_c} \left( \frac{\varphi}{\| \varphi \|_{L^2
  (\mathbbm{R}^2)}} \right) < 0$ and $\left\| \frac{\varphi}{\| \varphi
  \|_{L^2 (\mathbbm{R}^2)}} \right\|_{L^2 (\mathbbm{R}^2)} = 1$, hence
  $\alpha_c < 0$.
  
  Remark that we did not show that $\partial_c Q_c \in L^2 (\mathbbm{R}^2)$,
  and we believe this to be false. This estimation on $\alpha_c$ is the only
  time we need to work specifically with $Q_c$ from Theorem \ref{th1}. From
  now on, we can suppose that $Q_c$ is a travelling wave with finite energy
  such that $\alpha_c < 0$.
  
  To show that there exists at least one negative eigenvalue, it is enough to
  show that $\alpha_c$ is achieved for a function $\varphi \in H^1
  (\mathbbm{R}^2)$. Let us take a minimizing sequence $\varphi_n \in H^1
  (\mathbbm{R}^2)$ such that $\| \varphi_n \|_{L^2 (\mathbbm{R}^2)} = 1$ and
  $B_{Q_c} (\varphi_n) \rightarrow \alpha_c$. We have
  \[ \int_{\mathbbm{R}^2} | \nabla \varphi_n |^2 = B_{Q_c} (\varphi_n) +
     \int_{\mathbbm{R}^2} \mathfrak{R}\mathfrak{e} (i c \partial_{x_2}
     \varphi_n \overline{\varphi_n}) + (1 - | Q_c |^2) | \varphi_n |^2 -
     2\mathfrak{R}\mathfrak{e}^2 (\overline{Q_c} \varphi_n), \]
  therefore, by Cauchy-Schwarz,
  \[ \int_{\mathbbm{R}^2} | \nabla \varphi_n |^2 \leqslant | \alpha_c | + K c
     \| \nabla \varphi_n \|_{L^2 (\mathbbm{R}^2)} + K. \]
  We deduce that, for $c$ small enough,
  \[ \| \nabla \varphi_n \|^2_{L^2 (\mathbbm{R}^2)} - K c \| \nabla \varphi_n
     \|_{L^2 (\mathbbm{R}^2)} \leqslant K (c), \]
  hence $\| \nabla \varphi_n \|^2_{L^2 (\mathbbm{R}^2)}$ is bounded uniformly
  in $n$ given that $c < c_0$ for some constant $c_0$ small enough. We deduce
  that $\varphi_n$ is bounded in $H^1 (\mathbbm{R}^2)$, therefore, up to a
  subsequence, $\varphi_n \rightarrow \varphi$ weakly in $H^1
  (\mathbbm{R}^2)$.
  
  Now, we remark that for any $\varphi \in H^1 (\mathbbm{R}^2)$, by
  integration by parts (see Lemma \ref{CP2L3150403}),
  \begin{eqnarray*}
    \int_{\mathbbm{R}^2} -\mathfrak{R}\mathfrak{e} (i c \partial_{x_2} \varphi
    \bar{\varphi}) & = & - c \int_{\mathbbm{R}^2} \mathfrak{R}\mathfrak{e}
    (\partial_{x_2} \varphi) \mathfrak{I}\mathfrak{m} (\varphi) + c
    \int_{\mathbbm{R}^2} \mathfrak{R}\mathfrak{e} (\varphi)
    \mathfrak{I}\mathfrak{m} (\partial_{x_2} \varphi)\\
    & = & 2 c \int_{\mathbbm{R}^2} \mathfrak{R}\mathfrak{e} (\varphi)
    \mathfrak{I}\mathfrak{m} (\partial_{x_2} \varphi) .
  \end{eqnarray*}
  For $R > 0$, since $\varphi_n \rightarrow \varphi$ weakly in $H^1
  (\mathbbm{R}^2)$, this implies that $\varphi_n \rightarrow \varphi$ strongly
  in $L^2 (B (0, R))$ by Rellich-Kondrakov theorem. In particular, we have
  \[ \int_{B (0, R)} \mathfrak{R}\mathfrak{e} (\varphi_n)
     \mathfrak{I}\mathfrak{m} (\partial_{x_2} \varphi_n) \rightarrow \int_{B
     (0, R)} \mathfrak{R}\mathfrak{e} (\varphi) \mathfrak{I}\mathfrak{m}
     (\partial_{x_2} \varphi) . \]
  since $\varphi_n \rightarrow \varphi$ strongly in $L^2 (B (0, R))$ and
  $\partial_{x_2} \varphi_n \rightarrow \partial_{x_2} \varphi$ weakly in $L^2
  (B (0, R))$. We deduce that, up to a subsequence,
  \begin{eqnarray*}
    &  & \int_{B (0, R)} | \nabla \varphi |^2 + 2 c\mathfrak{R}\mathfrak{e}
    (\varphi) \mathfrak{I}\mathfrak{m} (\partial_{x_2} \varphi) - (1 - | Q_c
    |^2) | \varphi |^2 + 2\mathfrak{R}\mathfrak{e}^2 (\overline{Q_c}
    \varphi)\\
    & \leqslant & \liminf_{n \rightarrow \infty} \int_{B (0, R)} | \nabla
    \varphi_n |^2 + 2 c\mathfrak{R}\mathfrak{e} (\varphi_n)
    \mathfrak{I}\mathfrak{m} (\partial_{x_2} \varphi_n) - (1 - | Q_c |^2) |
    \varphi_n |^2 + 2\mathfrak{R}\mathfrak{e}^2 (\overline{Q_c} \varphi_n) +
    o^R_{n \rightarrow \infty} (1) .
  \end{eqnarray*}
  Furthermore, we have, by weak convergence
  \[ \| \varphi \|_{H^1 (\mathbbm{R}^2)} \leqslant \liminf_{n \rightarrow
     \infty} \| \varphi_n \|_{H^1 (\mathbbm{R}^2)} \leqslant K (c) \]
  therefore, we estimate
  \begin{eqnarray*}
    &  & \int_{\mathbbm{R}^2 \backslash B (0, R)} | \nabla \varphi |^2 + 2
    c\mathfrak{R}\mathfrak{e} (\varphi) \mathfrak{I}\mathfrak{m}
    (\partial_{x_2} \varphi) - (1 - | Q_c |^2) | \varphi |^2 +
    2\mathfrak{R}\mathfrak{e}^2 (\overline{Q_c} \varphi)\\
    & \leqslant & K \| \varphi \|^2_{H^1 (\mathbbm{R}^2 \backslash B (0, R))}
    = o_{R \rightarrow \infty} (1) .
  \end{eqnarray*}
  We deduce that
  \begin{eqnarray*}
    B_{Q_c} (\varphi) & \leqslant & \liminf_{n \rightarrow \infty} \int_{B (0,
    R)} | \nabla \varphi_n |^2 + 2 c\mathfrak{R}\mathfrak{e} (\varphi_n)
    \mathfrak{I}\mathfrak{m} (\partial_{x_2} \varphi_n) - (1 - | Q_c |^2) |
    \varphi_n |^2 + 2\mathfrak{R}\mathfrak{e}^2 (\overline{Q_c} \varphi_n)\\
    & + & o^R_{n \rightarrow \infty} (1) + o_{R \rightarrow \infty} (1) .
  \end{eqnarray*}
  Now, we have
  \begin{eqnarray*}
    &  & \liminf_{n \rightarrow \infty} \int_{B (0, R)} | \nabla \varphi_n
    |^2 + 2 c\mathfrak{R}\mathfrak{e} (\varphi_n) \mathfrak{I}\mathfrak{m}
    (\partial_{x_2} \varphi_n) - (1 - | Q_c |^2) | \varphi_n |^2 +
    2\mathfrak{R}\mathfrak{e}^2 (\overline{Q_c} \varphi_n)\\
    & = & \liminf_{n \rightarrow \infty} B_{Q_c} (\varphi_n),\\
    & - & \liminf_{n \rightarrow \infty} \int_{\mathbbm{R}^2 \backslash B (0,
    R)} | \nabla \varphi_n |^2 + 2 c\mathfrak{R}\mathfrak{e} (\varphi_n)
    \mathfrak{I}\mathfrak{m} (\partial_{x_2} \varphi_n) - (1 - | Q_c |^2) |
    \varphi_n |^2 + 2\mathfrak{R}\mathfrak{e}^2 (\overline{Q_c} \varphi_n)
  \end{eqnarray*}
  and $B_{Q_c} (\varphi_n) \rightarrow \alpha_c$, therefore
  \begin{eqnarray*}
    B_{Q_c} (\varphi) & \leqslant & \alpha_c + o^R_{n \rightarrow \infty} (1)
    + o_{R \rightarrow \infty} (1)\\
    & - & \liminf_{n \rightarrow \infty} \int_{\mathbbm{R}^2 \backslash B (0,
    R)} | \nabla \varphi_n |^2 + 2 c\mathfrak{R}\mathfrak{e} (\varphi_n)
    \mathfrak{I}\mathfrak{m} (\partial_{x_2} \varphi_n) - (1 - | Q_c |^2) |
    \varphi_n |^2 + 2\mathfrak{R}\mathfrak{e}^2 (\overline{Q_c} \varphi_n) .
  \end{eqnarray*}
  From Theorem \ref{CP2Qcbehav}, we have $(1 - | Q_c |^2) (x) \rightarrow 0$
  when $| x | \rightarrow \infty$, therefore, since $\| \varphi_n \|_{L^2
  (\mathbbm{R}^2)} = 1$, we have by dominated convergence that
  \[ \int_{\mathbbm{R}^2 \backslash B (0, R)} (1 - | Q_c |^2) | \varphi_n |^2
     \leqslant \sqrt{\int_{\mathbbm{R}^2 \backslash B (0, R)} (1 - | Q_c
     |^2)^2 \int_{\mathbbm{R}^2} | \varphi_n |^2} \leqslant o_{R \rightarrow
     \infty} (1) . \]
  Furthermore, we check easily that (since $(A + B)^2 \geqslant \frac{1}{2}
  A^2 - B^2$)
  \[ \int_{\mathbbm{R}^2 \backslash B (0, R)} \mathfrak{R}\mathfrak{e}^2
     (\overline{Q_c} \varphi_n) \geqslant \frac{1}{2} \int_{\mathbbm{R}^2
     \backslash B (0, R)} \mathfrak{R}\mathfrak{e}^2 (Q_c)
     \mathfrak{R}\mathfrak{e}^2 (\varphi_n) - \int_{\mathbbm{R}^2 \backslash B
     (0, R)} \mathfrak{I}\mathfrak{m}^2 (Q_c) \mathfrak{I}\mathfrak{m}^2
     (\varphi_n), \]
  and from Theorem \ref{CP2Qcbehav}, $\mathfrak{I}\mathfrak{m} (Q_c) (x)
  \rightarrow 0$ and $\mathfrak{R}\mathfrak{e} (Q_c) (x) \rightarrow 1$ when
  $| x | \rightarrow \infty$, therefore, since $\| \varphi_n \|_{L^2
  (\mathbbm{R}^2)} = 1$, by dominated convergence,
  \[ \int_{\mathbbm{R}^2 \backslash B (0, R)} 2\mathfrak{R}\mathfrak{e}^2
     (\overline{Q_c} \varphi_n) \geqslant \int_{\mathbbm{R}^2 \backslash B (0,
     R)} \mathfrak{R}\mathfrak{e}^2 (\varphi_n) - o_{R \rightarrow \infty} (1)
     . \]
  We deduce that, since $c < \sqrt{2}$,
  \begin{eqnarray*}
    B_{Q_c} (\varphi) & \leqslant & \alpha_c + o^R_{n \rightarrow \infty} (1)
    + o_{R \rightarrow \infty} (1)\\
    & - & \liminf_{n \rightarrow \infty} \left( \int_{\mathbbm{R}^2
    \backslash B (0, R)} | \nabla \varphi_n |^2 + 2 c\mathfrak{R}\mathfrak{e}
    (\varphi_n) \mathfrak{I}\mathfrak{m} (\partial_{x_2} \varphi_n)
    +\mathfrak{R}\mathfrak{e}^2 (\varphi_n) \right)\\
    & \leqslant & \alpha_c + o^R_{n \rightarrow \infty} (1) + o_{R
    \rightarrow \infty} (1)\\
    & - & \liminf_{n \rightarrow \infty} \left( \int_{\mathbbm{R}^2
    \backslash B (0, R)} (| \nabla \varphi_n | + c\mathfrak{R}\mathfrak{e}
    (\varphi_n))^2 + (2 - c^2) \mathfrak{R}\mathfrak{e}^2 (\varphi_n)
    \right)\\
    & \leqslant & \alpha_c + o^R_{n \rightarrow \infty} (1) + o_{R
    \rightarrow \infty} (1) .
  \end{eqnarray*}
  Thus, by letting $n \rightarrow \infty$ and then $R \rightarrow \infty$,
  \[ B_{Q_c} (\varphi) \leqslant \alpha_c . \]
  In particular, this implies that $\| \varphi \|_{L^2 (\mathbbm{R}^2)} \neq
  0$, or else $B_{Q_c} (\varphi) = 0 \leqslant \alpha_c$ and we know that
  $\alpha_c < 0$. Furthermore, by weak convergence, we have $\| \varphi
  \|_{L^2 (\mathbbm{R}^2)} \leqslant 1$, and if it is not $1$, then, since
  $\alpha_c < 0$,
  \[ B_{Q_c} \left( \frac{\varphi}{\| \varphi \|_{L^2 (\mathbbm{R}^2)}}
     \right) \leqslant \frac{\alpha_c}{\| \varphi \|^2_{L^2 (\mathbbm{R}^2)}}
     < \alpha_c \]
  which is in contradiction with the definition of $\alpha_c$. Therefore $\|
  \varphi \|_{L^2 (\mathbbm{R}^2)} = 1$ and $B_{Q_c} (\varphi) = \alpha_c$.
  This concludes the proof of Proposition \ref{CP2p188}.
\end{proof}

\begin{proof}[of Corollary \ref{CP2th199}]
  The hypothesis to have the spectral stability from Theorem 11.8 of
  {\cite{LZ}} are:
  
  - The curve of travelling waves is $C^1$ from $] 0, c_0 [$ to $C^1
  (\mathbbm{R}^2, \mathbbm{C})$ with respect to the speed. This is a
  consequence of Theorem \ref{th1}. This is enough to legitimate the
  computations done in the proof of Theorem 11.8 of {\cite{LZ}}.
  
  - $\mathfrak{R}\mathfrak{e} (Q_c) - 1 \in H^1 (\mathbbm{R}^2)$, $\nabla Q_c
  \in L^2 (\mathbbm{R}^2)$, $| Q_c | \rightarrow 1$ at infinity and $\| Q_c
  \|_{C^1 (\mathbbm{R}^2)} \leqslant K$. These are consequences of Theorem 7
  of {\cite{G1}}.
  
  - $n^- (L_{Q_c}) \leqslant 1$. This is a consequence of Proposition
  \ref{CP2p188}.
  
  - $\partial_c P_2 (Q_c) < 0$. This is a consequence of Proposition
  \ref{CP2prop5}.
\end{proof}

\section{Coercivity results with an orthogonality on the phase}\label{CP2ss41}

This section is devoted to the proofs of Proposition \ref{CP2prop16},
\ref{CP2prop17} and Theorem \ref{CP2p41}.

\subsection{Properties of the space $H_{Q_c}^{\exp}$}\label{CP2HQc911}

In this subsection, we look at the space $H^{\exp}_{Q_c}$. We recall the norm
\[ \| \varphi \|_{H^{\exp}_{Q_c}}^2 = \| \varphi \|_{H^1 (\{ \check{r}
   \leqslant 10 \})}^2 + \int_{\{ \tilde{r} \geqslant 5 \}} | \nabla \psi |^2
   +\mathfrak{R}\mathfrak{e}^2 (\psi) + \frac{| \psi |^2}{\tilde{r}^2 \ln
   (\tilde{r})^2} . \]
The quadratic form we look at is
\begin{eqnarray*}
  B^{\exp}_{Q_c} (\varphi) & = & \int_{\mathbbm{R}^2} \eta (| \nabla \varphi
  |^2 -\mathfrak{R}\mathfrak{e} (i c \partial_{x_2} \varphi \bar{\varphi}) -
  (1 - | Q_c |^2) | \varphi |^2 + 2\mathfrak{R}\mathfrak{e}^2 (\overline{Q_c}
  \varphi))\\
  & - & \int_{\mathbbm{R}^2} \nabla \eta . (\mathfrak{R}\mathfrak{e} (\nabla
  Q_c \overline{Q_c}) | \psi |^2 - 2\mathfrak{I}\mathfrak{m} (\nabla Q_c
  \overline{Q_c}) \mathfrak{R}\mathfrak{e} (\psi) \mathfrak{I}\mathfrak{m}
  (\psi))\\
  & + & \int_{\mathbbm{R}^2} c \partial_{x_2} \eta | Q_c |^2
  \mathfrak{R}\mathfrak{e} (\psi) \mathfrak{I}\mathfrak{m} (\psi)\\
  & + & \int_{\mathbbm{R}^2} (1 - \eta) (| \nabla \psi |^2 | Q_c |^2 +
  2\mathfrak{R}\mathfrak{e}^2 (\psi) | Q_c |^4)\\
  & + & \int_{\mathbbm{R}^2} (1 - \eta) (4\mathfrak{I}\mathfrak{m} (\nabla
  Q_c \overline{Q_c}) \mathfrak{I}\mathfrak{m} (\nabla \psi)
  \mathfrak{R}\mathfrak{e} (\psi) + 2 c | Q_c |^2 \mathfrak{I}\mathfrak{m}
  (\partial_{x_2} \psi) \mathfrak{R}\mathfrak{e} (\psi))
\end{eqnarray*}
We will show in Lemma \ref{CP2L3133105} that $B^{\exp}_{Q_c} (\varphi)$ is
well defined for $\varphi \in H^{\exp}_{Q_c}$. The main difference between
$B_{Q_c}$ and $B^{\exp}_{Q_c}$ is the space on which they are defined. In
particular, we can check easily for instance that, for $\varphi \in
C^{\infty}_c (\mathbbm{R}^2)$ with support far from the zeros of $Q_c$, we
have $B^{\exp}_{Q_c} (\varphi) = B_{Q_c} (\varphi)$, as the terms with the
gradient of the cutoff are exactly the ones coming from the integrations by
parts. We start with a lemma about the space $H^{\exp}_{Q_c}$.

\begin{lemma}
  \label{CP2L3133105}The following properties of $H^{\exp}_{Q_c}$ hold:
  \[ H_{Q_c} \subset H^{\exp}_{Q_c}, \]
  \[ i Q_c \in H^{\exp}_{Q_c} . \]
  Furthermore, there exists $K (c) > 0$ such that, for $\varphi \in
  H^{\exp}_{Q_c}$,
  \begin{equation}
    \| \varphi \|_{\mathcal{C}} \leqslant K \| \varphi \|_{H^{\exp}_{Q_c}},
  \end{equation}
  \begin{equation}
    \| \varphi \|_{H^{\exp}_{Q_c}} \leqslant K (c) \| \varphi \|_{H_{Q_c}} .
    \label{CP2410606}
  \end{equation}
  and the integrands of $B^{\exp}_{Q_c} (\varphi)$, defined in
  (\ref{CP2Btilda}), are in $L^1 (\mathbbm{R}^2)$ for $\varphi \in
  H^{\exp}_{Q_c}$, and $B_{Q_c}^{\exp}$ does not depend on the choice of
  $\eta$. Finally, if $\varphi \in H_{Q_c} \subset H_{Q_c}^{\exp}$,
  \[ B_{Q_c} (\varphi) = B_{Q_c}^{\exp} (\varphi) . \]
\end{lemma}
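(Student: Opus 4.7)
The proof splits into five tasks — the inclusions $H_{Q_c}\subset H^{\exp}_{Q_c}$ and $iQ_c\in H^{\exp}_{Q_c}$, the two norm inequalities, the $L^1$-integrability of each term in $B^{\exp}_{Q_c}$, the $\eta$-independence, and the equality $B_{Q_c} = B^{\exp}_{Q_c}$ on $H_{Q_c}$ — and the plan is to treat them all by splitting $\mathbb{R}^2$ into $\{\check{r}\leq 10\}$ and $\{\tilde{r}\geq 5\}$, where the two ambient norms have genuinely different structure, and by combining the decay estimates of Subsection \ref{CP2NQcest} with Cauchy--Schwarz.

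For the inclusions and the $c$-dependent bound $\|\varphi\|_{H^{\exp}_{Q_c}}\leqslant K(c)\|\varphi\|_{H_{Q_c}}$, write $\varphi=Q_c\psi$. On $\{\check{r}\leq 10\}$, the gradient part of $\|\varphi\|_{H^1}$ lies directly inside $\|\varphi\|_{H_{Q_c}}$, and $\int_{\{\check{r}\leq 10\}}|\varphi|^2$ is controlled by Lemma \ref{CP2sensmanqu} because $1/(1+|x|)^2\geqslant K(c)$ on this bounded set. On $\{\tilde{r}\geq 5\}$, $|Q_c|\geqslant K$ by \eqref{CP2Qcpaszero}, so $\int|\nabla\psi|^2+\mathfrak{Re}^2(\psi)\leqslant K\int|\nabla\psi|^2|Q_c|^2+\mathfrak{Re}^2(\overline{Q_c}\varphi)$, the first term being bounded exactly as in Lemma \ref{CP2farnormexist}; and for the log-weighted term one uses $|\psi|^2\leqslant K|\varphi|^2$ together with $1/(\tilde{r}^2\ln^2\tilde{r})\leqslant K/(1+|x|)^2$ at large $\tilde{r}$ and Lemma \ref{CP2sensmanqu}. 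For $iQ_c$, one has $\psi=i$, so only the log-weighted term matters, and $\int_{\tilde{r}\geq 5}1/(\tilde{r}^2\ln^2\tilde{r})<\infty$ precisely because of the $\ln^2$ factor — without it the integral would diverge logarithmically in two dimensions. The universal bound $\|\varphi\|_{\mathcal{C}}\leqslant K\|\varphi\|_{H^{\exp}_{Q_c}}$ is proven by the same splitting: near the zeros the algebraic identity $|\nabla\psi|^2|Q_c|^4=|(\nabla\varphi)Q_c-(\nabla Q_c)\varphi|^2$ combined with the universal bounds $|Q_c|,|\nabla Q_c|\leqslant K$ gives a control by $\int(|\nabla\varphi|^2+|\varphi|^2)$; away from the zeros $|Q_c|^4\leqslant K$ is again universal.

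For the $L^1$-integrability of each term in $B^{\exp}_{Q_c}$, the $(1-\eta)$-block and the $\nabla\eta$- and $\partial_{x_2}\eta$-blocks are supported in the bounded region $\{\check{r}\leq 2\}$ and are dominated by $K(c)(|\nabla\varphi|^2+|\varphi|^2)$ using Theorem \ref{CP2Qcbehav} and Cauchy--Schwarz, hence controlled by $\|\varphi\|_{H^1(\{\check{r}\leq 10\})}^2$. The $\eta$-block principal terms $|\nabla\psi|^2|Q_c|^2$ and $\mathfrak{Re}^2(\psi)|Q_c|^4$ are bounded by $K\int_{\{\tilde{r}\geq 1\}}(|\nabla\psi|^2+\mathfrak{Re}^2(\psi))$, which is finite on $H^{\exp}_{Q_c}$. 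The delicate cross-terms $4\eta\,\mathfrak{Im}(\nabla Q_c\overline{Q_c})\mathfrak{Im}(\nabla\psi)\mathfrak{Re}(\psi)$ and $2c\eta|Q_c|^2\mathfrak{Im}(\partial_{x_2}\psi)\mathfrak{Re}(\psi)$ are handled by Young's inequality and the decay estimate $|\mathfrak{Im}(\nabla Q_c\overline{Q_c})|\leqslant K/(1+\tilde{r})$ from \eqref{CP2221}, giving a bound by $K\int_{\{\tilde{r}\geq 1\}}(|\nabla\psi|^2+\mathfrak{Re}^2(\psi)/\tilde{r}^2)$, with the second integral finite because $1/\tilde{r}^2$ is bounded on the support of $\eta$.

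For the $\eta$-independence, given two admissible cutoffs $\eta_1,\eta_2$, the difference $\rho=\eta_1-\eta_2$ is smooth and compactly supported in the annular region $\{1\leqslant\check{r}\leqslant 2\}$; substituting into $B^{\exp}_{Q_c}$ and performing the integrations by parts (legal since everything in sight is smooth on a compact set avoiding the zeros) shows that the contribution of $\rho$ vanishes. Finally, for the equality $B_{Q_c}=B^{\exp}_{Q_c}$ on $H_{Q_c}$: on $C^{\infty}_c(\mathbb{R}^2\setminus\{\pm\tilde{d}_c\vec{e}_1\},\mathbb{C})$ both quadratic forms equal $\langle L_{Q_c}(\varphi),\varphi\rangle$, by Lemma \ref{CP2L412602} for $B^{\exp}_{Q_c}$ and by the computation of Subsection \ref{CP22241103} for $B_{Q_c}$; both are continuous on $H_{Q_c}$ for $\|\cdot\|_{H_{Q_c}}$ (Lemma \ref{CP2finitebilinear} for $B_{Q_c}$; the paragraph above combined with \eqref{CP2410606} for $B^{\exp}_{Q_c}$), and so the equality extends by the density Lemma \ref{CP2Ndensity}. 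The main obstacle will be the cross-term estimate: the $O(1/\tilde{r})$ decay of $\mathfrak{Im}(\nabla Q_c\overline{Q_c})$ is only marginally integrable against $|\nabla\psi||\mathfrak{Re}(\psi)|$, and it is exactly this borderline behavior that forces the precise combination of weights in $\|\cdot\|_{H^{\exp}_{Q_c}}$ — strong enough to absorb the $1/\tilde{r}$ through Young's inequality, yet weak enough (thanks to the $\ln^2\tilde{r}$ denominator) to accommodate $iQ_c$.
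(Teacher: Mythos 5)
Your proposal is correct and follows essentially the same route as the paper's proof: the same splitting into $\{\check r\leqslant 10\}$ and $\{\tilde r\geqslant 5\}$, the same use of Lemma \ref{CP2sensmanqu}, \eqref{CP2Qcpaszero} and Theorem \ref{CP2Qcbehav} for the norm comparisons, the same Cauchy--Schwarz/Young treatment of the cross-terms in $B^{\exp}_{Q_c}$ using \eqref{CP2221}, and the same test-function-plus-density argument (Lemmas \ref{CP2L412602} and \ref{CP2Ndensity}) for the $\eta$-independence and the identity $B_{Q_c}=B^{\exp}_{Q_c}$ on $H_{Q_c}$. No gaps; if anything you spell out the "easy" steps (the bound $\|\varphi\|_{\mathcal C}\leqslant K\|\varphi\|_{H^{\exp}_{Q_c}}$ and the role of the $\ln^2\tilde r$ weight for $iQ_c$) in more detail than the paper does.
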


\begin{proof}
  First, let us show (\ref{CP2410606}). We have
  \[ \| \varphi \|_{H^1 (\{ \tilde{r} \leqslant 10 \})} \leqslant K \| \varphi
     \|_{H_{Q_c}}, \]
  and, by equation (\ref{CP2Qcpaszero}) and Lemma \ref{CP2sensmanqu}, we check
  that
  \[ \int_{\{ \tilde{r} \geqslant 5 \}} \mathfrak{R}\mathfrak{e}^2 (\psi)
     \leqslant K \| \varphi \|_{H_{Q_c}}^2, \]
  and also that
  \[ \int_{\{ \tilde{r} \geqslant 5 \}} \frac{| \psi |^2}{\tilde{r}^2 \ln
     (\tilde{r})^2} \leqslant K \int_{\{ \tilde{r} \geqslant 5 \}} \frac{|
     \varphi |^2}{(1 + \tilde{r})^2} \leqslant K (c) \| \varphi \|_{H_{Q_c}}^2
     . \]
  Furthermore, we compute, by equations (\ref{CP2Qcpaszero}),
  (\ref{CP22102est}) and Proposition \ref{CP2Qcbehav},
  \[ \int_{\{ \tilde{r} \geqslant 5 \}} | \nabla \psi |^2 \leqslant K \int_{\{
     \tilde{r} \geqslant 5 \}} | \nabla \psi |^2 | Q_c |^4 \leqslant K \left(
     \int_{\{ \tilde{r} \geqslant 5 \}} | \nabla \varphi |^2 + \int_{\{
     \tilde{r} \geqslant 5 \}} | \nabla Q_c |^2 | \varphi |^2 \right)
     \leqslant K (c) \| \varphi \|_{H_{Q_c}}^2 . \]
  We deduce that (\ref{CP2410606}) holds, and therefore $H_{Q_c} \subset
  H^{\exp}_{Q_c}$. Now, we check that
  \begin{equation}
    \| i Q_c \|^2_{H^{\exp}_{Q_c}} \leqslant \| i Q_c \|^2_{H^1 (\{ \tilde{r}
    \leqslant 10 \})} + K \int_{\{ \tilde{r} \geqslant 5 \}} \frac{| i
    |^2}{\tilde{r}^2 \ln (\tilde{r})^2} + \int_{\{ \tilde{r} \geqslant 5 \}} |
    \nabla i |^2 < + \infty . \label{CP2IQC}
  \end{equation}
  With regards to the definition of $\| . \|_{\mathcal{C}}$, we check easily
  that
  \[ \| \varphi \|_{\mathcal{C}} \leqslant \| \varphi \|_{H^{\exp}_{Q_c}} . \]
  Finally, we recall the definition of $B^{\exp}_{Q_c} (\varphi) $from
  equation (\ref{CP2Btilda}),
  \begin{eqnarray*}
    B^{\exp}_{Q_c} (\varphi) & = & \int_{\mathbbm{R}^2} (1 - \eta) (| \nabla
    \varphi |^2 -\mathfrak{R}\mathfrak{e} (i c \partial_{x_2} \varphi
    \bar{\varphi}) - (1 - | Q_c |^2) | \varphi |^2 +
    2\mathfrak{R}\mathfrak{e}^2 (\overline{Q_c} \varphi))\\
    & - & \int_{\mathbbm{R}^2} \nabla \eta . (\mathfrak{R}\mathfrak{e}
    (\nabla Q_c \overline{Q_c}) | \psi |^2 - 2\mathfrak{I}\mathfrak{m} (\nabla
    Q_c \overline{Q_c}) \mathfrak{R}\mathfrak{e} (\psi)
    \mathfrak{I}\mathfrak{m} (\psi))\\
    & + & \int_{\mathbbm{R}^2} c \partial_{x_2} \eta | Q_c |^2
    \mathfrak{R}\mathfrak{e} (\psi) \mathfrak{I}\mathfrak{m} (\psi)\\
    & + & \int_{\mathbbm{R}^2} \eta (| \nabla \psi |^2 | Q_c |^2 +
    2\mathfrak{R}\mathfrak{e}^2 (\psi) | Q_c |^4)\\
    & + & \int_{\mathbbm{R}^2} \eta (4\mathfrak{I}\mathfrak{m} (\nabla Q_c
    \overline{Q_c}) \mathfrak{I}\mathfrak{m} (\nabla \psi)
    \mathfrak{R}\mathfrak{e} (\psi) + 2 c | Q_c |^2 \mathfrak{I}\mathfrak{m}
    (\partial_{x_2} \psi) \mathfrak{R}\mathfrak{e} (\psi)) .
  \end{eqnarray*}
  For $\lambda > 0$, we have $\| \varphi \|_{H^1 (B (0, \lambda))} \leqslant K
  (c, \lambda) \| \varphi \|_{H^{\exp}_{Q_c}}$, therefore (since $1 - \eta$ is
  compactly supported) we only have to check that the integrands in the last
  two lines are in $L^1 (\mathbbm{R}^2)$, and this is a consequence of
  Cauchy-Schwarz, since
  \[ \int_{\mathbbm{R}^2} \eta \left( | \nabla \psi |^2 | Q_c |^2 +
     2\mathfrak{R}\mathfrak{e}^2 (\psi) | Q_c |^4 + 4 \left|
     \mathfrak{I}\mathfrak{m} (\nabla Q_c \overline{Q_c})
     \mathfrak{I}\mathfrak{m} (\nabla \psi) \mathfrak{R}\mathfrak{e} (\psi)
     \right| + 2 c | Q_c |^2 | \mathfrak{I}\mathfrak{m} (\partial_{x_2} \psi)
     \mathfrak{R}\mathfrak{e} (\psi) | \right) \]
  \[ \leqslant K \int_{\mathbbm{R}^2} \eta (| \nabla \psi |^2
     +\mathfrak{R}\mathfrak{e}^2 (\psi)) \leqslant K \| \varphi
     \|^2_{H^{\exp}_{Q_c}} . \]
  Furthermore, for two cutoffs $\eta, \eta'$ such that they are both $0$ near
  the zeros of $Q_c$ and $1$ at infinity, we have
  \begin{eqnarray*}
    &  & B^{\exp}_{Q_c, \eta} (\varphi) - B^{\exp}_{Q_c, \eta'} (\varphi)\\
    & = & \int_{\mathbbm{R}^2} (\eta' - \eta) (| \nabla \varphi |^2
    -\mathfrak{R}\mathfrak{e} (i c \partial_{x_2} \varphi \bar{\varphi}) - (1
    - | Q_c |^2) | \varphi |^2 + 2\mathfrak{R}\mathfrak{e}^2 (\overline{Q_c}
    \varphi))\\
    & + & \int_{\mathbbm{R}^2} \nabla (\eta - \eta') .
    (\mathfrak{R}\mathfrak{e} (\nabla Q_c \overline{Q_c}) | \psi |^2 -
    2\mathfrak{I}\mathfrak{m} (\nabla Q_c \overline{Q_c})
    \mathfrak{R}\mathfrak{e} (\psi) \mathfrak{I}\mathfrak{m} (\psi)) - c
    \partial_{x_2} (\eta - \eta') | Q_c |^2 \mathfrak{R}\mathfrak{e} (\psi)
    \mathfrak{I}\mathfrak{m} (\psi)\\
    & + & \int_{\mathbbm{R}^2} (\eta' - \eta) (| \nabla \psi |^2 | Q_c |^2 +
    2\mathfrak{R}\mathfrak{e}^2 (\psi) | Q_c |^4)\\
    & + & \int_{\mathbbm{R}^2} (\eta' - \eta) (4\mathfrak{I}\mathfrak{m}
    (\nabla Q_c \overline{Q_c}) \mathfrak{I}\mathfrak{m} (\nabla \psi)
    \mathfrak{R}\mathfrak{e} (\psi) + 2 c | Q_c |^2 \mathfrak{I}\mathfrak{m}
    (\partial_{x_2} \psi) \mathfrak{R}\mathfrak{e} (\psi))
  \end{eqnarray*}
  and, developping $\varphi = Q_c \psi$ (see the proof of Lemma
  \ref{CP2L412602}) and by integration by parts using that $\eta - \eta' \neq
  0$ only in a compact domain far from the zeros of $Q_c$, we check that it is
  $0$.
  
  Finally, for $\varphi \in H_{Q_c}$, $B_{Q_c} (\varphi)$ and $B^{\exp}_{Q_c}
  (\varphi)$ are both well defined. We recall
  \[ \begin{array}{lll}
       B_{Q_c} (\varphi) & = & \int_{\mathbbm{R}^2} | \nabla \varphi |^2 - (1
       - | Q_c |^2) | \varphi |^2 + 2\mathfrak{R}\mathfrak{e}^2
       (\overline{Q_c} \varphi)\\
       & - & c \int_{\mathbbm{R}^2} (1 - \eta) \mathfrak{R}\mathfrak{e} (i
       \partial_{x_2} \varphi \bar{\varphi}) - c \int_{\mathbbm{R}^2
       \nosymbol} \eta \mathfrak{R}\mathfrak{e}i \partial_{x_2} Q_c
       \overline{Q_c} | \psi |^2\\
       & + & 2 c \int_{\mathbbm{R}^2} \eta \mathfrak{R}\mathfrak{e} \psi
       \mathfrak{I}\mathfrak{m} \partial_{x_2} \psi | Q_c |^2 + c
       \int_{\mathbbm{R}^2} \partial_{x_2} \eta \mathfrak{R}\mathfrak{e} \psi
       \mathfrak{I}\mathfrak{m} \psi | Q_c |^2\\
       & + & c \int_{\mathbbm{R}^2} \eta \mathfrak{R}\mathfrak{e} \psi
       \mathfrak{I}\mathfrak{m} \psi \partial_{x_2} (| Q_c |^2) .
     \end{array} \]
  With the same computation as in the proof of Lemma \ref{CP2L412602}, we
  check that for $\varphi \in C^{\infty}_c (\mathbbm{R}^2 \backslash \{
  \tilde{d}_c \vec{e}_1, - \tilde{d}_c \vec{e}_1 \}, \mathbbm{C})$, we have
  \[ B_{Q_c} (\varphi) = B^{\exp}_{Q_c} (\varphi) . \]
  With the same arguments as in the density proof at the end of the proof of
  Proposition \ref{CP205218}, we check that this equality holds for $\varphi
  \in H_{Q_c}$.
\end{proof}

Now, we state some lemmas that where shown previously in $H_{Q_c}$, that we
have to extend to $H_{Q_c}^{\exp}$ to replace some arguments that were used in
the proof of Propositions \ref{CP205218} for the proofs of Propositions
\ref{CP2prop16}, \ref{CP2prop17} and Theorem \ref{CP2p41}. We start with the
density argument.

\begin{lemma}
  \label{CP2NNN}$C^{\infty}_c (\mathbbm{R}^2 \backslash \{ \tilde{d}_c
  \vec{e}_1, - \tilde{d}_c \vec{e}_1 \}, \mathbbm{C})$ is dense in
  $H_{Q_c}^{\exp}$ for $\| . \|_{H_{Q_c}^{\exp}}$.
\end{lemma}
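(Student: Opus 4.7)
The plan is to adapt the density argument of Lemma \ref{CP2Ndensity}, combining a far-field truncation and a near-zero truncation, both via logarithmic cutoffs, followed by a standard mollification. The genuine new issue compared to Lemma \ref{CP2Ndensity} is that the norm $\|\cdot\|_{H^{\exp}_{Q_c}}$ involves only the weight $\frac{1}{\tilde{r}^2 \ln^2 \tilde{r}}$ on $|\psi|^2$ at infinity (this is precisely why $iQ_c \in H^{\exp}_{Q_c}$ whereas $iQ_c \nin H_{Q_c}$), so a linear cutoff $\chi_n$ supported in $B(0,2n)\setminus B(0,n)$ would generate an error $\int |\nabla \chi_n|^2 |\psi|^2 \lesssim n^{-2} \int_{n \leqslant r \leqslant 2n} |\psi|^2$ which is not dominated by a tail of $\int \frac{|\psi|^2}{\tilde{r}^2 \ln^2 \tilde{r}}$; a logarithmic cutoff is required.

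First, I would truncate at infinity. For $R$ larger than $\tilde d_c + 20$, define
\[ \eta_R (x) \assign \begin{cases} 1 & |x| \leqslant R, \\ 2 - \frac{\ln |x|}{\ln R} & R \leqslant |x| \leqslant R^2, \\ 0 & |x| \geqslant R^2, \end{cases} \]
and set $\varphi_R \assign \eta_R \varphi$, so that $\varphi - \varphi_R = Q_c (1-\eta_R)\psi$ in the region where $Q_c$ does not vanish. Since $\eta_R \equiv 1$ on $\{\tilde r \leqslant 10\}$, only the far-field piece of $\|\varphi - \varphi_R\|_{H^{\exp}_{Q_c}}$ contributes; the terms $(1-\eta_R)^2 |\nabla \psi|^2$, $(1-\eta_R)^2 \mathfrak{R}\mathfrak{e}^2(\psi)$ and $(1-\eta_R)^2 |\psi|^2 / (\tilde r^2 \ln^2 \tilde r)$ vanish in the limit by dominated convergence, while for the crossed gradient term one uses $|\nabla \eta_R|^2 \leqslant \frac{1}{r^2 \ln^2 R} \leqslant \frac{4}{r^2 \ln^2 r}$ on $\{R \leqslant r \leqslant R^2\}$, giving
\[ \int_{\mathbbm{R}^2} |\nabla \eta_R|^2 |\psi|^2 \leqslant 4 \int_{R \leqslant r \leqslant R^2} \frac{|\psi|^2}{r^2 \ln^2 r} \xrightarrow[R \to \infty]{} 0. \]
Hence $\varphi_R \to \varphi$ in $H^{\exp}_{Q_c}$ and $\varphi_R$ has compact support.

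Next, I would truncate near the zeros $\pm \tilde d_c \overrightarrow{e_1}$ by the same logarithmic cutoff $\eta_{\varepsilon, \lambda, \delta}$ employed in the proof of Lemma \ref{CP2Ndensity}, recentered at $\pm \tilde d_c \overrightarrow{e_1}$. Since this cutoff is supported inside $\{\tilde r \leqslant 2\}$, only the $H^1(\{\tilde r \leqslant 10\})$-part of $\|\cdot\|_{H^{\exp}_{Q_c}}$ is affected, and the vanishing-capacity-of-a-point argument of Lemma \ref{CP2Ndensity} transfers verbatim. After both truncations, $\varphi$ is compactly supported in $\mathbbm{R}^2 \setminus \{\pm \tilde d_c \overrightarrow{e_1}\}$; on any such compact set $|Q_c|$ is bounded above and below and $|\nabla Q_c|$ is bounded, so $\|\cdot\|_{H^{\exp}_{Q_c}}$ is equivalent to the usual $H^1$ norm and a standard mollification (with mollifier of support small enough to preserve the exclusion of the zeros) produces the required approximation in $C^{\infty}_c(\mathbbm{R}^2 \setminus \{\pm \tilde d_c \overrightarrow{e_1}\}, \mathbbm{C})$.

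The main obstacle is the choice of cutoff at infinity: the weight $\frac{1}{\tilde r^2 \ln^2 \tilde r}$ is critically weak (matching the logarithmic decay of the phase obstruction $iQ_c$), so the logarithmic cutoff is exactly what makes $\int |\nabla \eta_R|^2 |\psi|^2$ a tail of the norm rather than merely bounded by it. Everything else reduces to the one-vortex density argument already carried out in Lemma \ref{CP2Ndensity}.
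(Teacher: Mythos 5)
Your proposal is correct and follows the same route as the paper: the paper's own proof consists only of the two local norm equivalences (between $\| \cdot \|_{H^{\exp}_{Q_c}}$ restricted to $B (0, \lambda)$ and $\| \cdot \|_{H^1 (B (0, \lambda))}$) together with the assertion that the rest is ``identical to the proof of Lemma \ref{CP2Ndensity}'', i.e.\ truncation, mollification and the zero-capacity argument at the two vortex centers. What you add, and what the paper leaves implicit under ``identical'', is the correct treatment of the truncation at infinity: you rightly observe that a linear cutoff supported in $B (0, 2 n) \backslash B (0, n)$ produces an error $n^{- 2} \int_{n \leqslant r \leqslant 2 n} | \psi |^2$, which is only controlled by $\ln^2 (n)$ times a tail of $\int \frac{| \psi |^2}{\tilde{r}^2 \ln^2 (\tilde{r})}$ and hence need not vanish, whereas your logarithmic cutoff $\eta_R$ satisfies $| \nabla \eta_R |^2 \leqslant 4 / (r^2 \ln^2 (r))$ on its support, so the error is a genuine tail of the norm. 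This is exactly the point that distinguishes $H^{\exp}_{Q_c}$ from $H_{Q_c}$, where Lemma \ref{CP2sensmanqu} makes the linear cutoff sufficient, and your argument is the right one. One small ordering caveat: the capacity computation in the proof of Lemma \ref{CP2Ndensity} uses the continuity of the function at the excised point (to pass to the limit $\int | \varphi |^2 (x \delta) \Delta \eta_{\varepsilon, \lambda} \eta_{\varepsilon, \lambda} \rightarrow | \varphi |^2 (0) \int \Delta \eta_{\varepsilon, \lambda} \eta_{\varepsilon, \lambda}$), so it does not transfer ``verbatim'' to the merely $H^1$ function $\eta_R \varphi$; you should mollify first (legitimate since, after the far-field truncation, $\| \cdot \|_{H^{\exp}_{Q_c}}$ is equivalent to the $H^1$ norm on a fixed ball containing the support, by the paper's displayed inequalities) and only then remove the two points from the smooth approximant, which is precisely the order followed in Lemma \ref{CP2Ndensity}. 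With that reordering your proof is complete.
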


\begin{proof}
  The proof is identical to the one of Lemma \ref{CP2Ndensity}, as we check
  easily that, for $\lambda > \frac{10}{c}$ large enough,
  \[ \| \varphi \|_{H^1 (\{ \check{r} \leqslant 10 \})}^2 + \int_{\{ \tilde{r}
     \geqslant 5 \} \cap B (0, \lambda)} | \nabla \psi |^2
     +\mathfrak{R}\mathfrak{e}^2 (\psi) + \frac{| \psi |^2}{\tilde{r}^2 \ln
     (\tilde{r})^2} \leqslant K_1 (\lambda, c) \| \varphi \|^2_{H^1 (B (0,
     \lambda))} \]
  and
  \[ \| \varphi \|_{H^1 (\{ \check{r} \leqslant 10 \})}^2 + \int_{\{ \tilde{r}
     \geqslant 5 \} \cap B (0, \lambda)} | \nabla \psi |^2
     +\mathfrak{R}\mathfrak{e}^2 (\psi) + \frac{| \psi |^2}{\tilde{r}^2 \ln
     (\tilde{r})^2} \geqslant K_2 (\lambda, c) \| \varphi \|^2_{H^1 (B (0,
     \lambda))} \]
\end{proof}

We also want to decompose the quadratic form, but with a fifth possible
direction: $i Q_c$.

\begin{lemma}
  \label{CP2destrucs}For $\varphi \in C^{\infty}_c (\mathbbm{R}^2 \backslash
  \{ \widetilde{d_c} \overrightarrow{e_1}, - \widetilde{d_c}
  \overrightarrow{e_1} \}, \mathbbm{C})$ and $A \in \tmop{Span} \{
  \partial_{x_1} Q_c, \partial_{x_2} Q_c, \partial_c Q_c, \partial_{c^{\bot}}
  Q_c, i Q_c \}$, we have
  \[ \langle L_{Q_c} (\varphi + A), \varphi + A \rangle = \langle L_{Q_c}
     (\varphi), \varphi \rangle + \langle 2 L_{Q_c} (A), \varphi \rangle +
     \langle L_{Q_c} (A), A \rangle . \]
  Furthermore, $\langle L_{Q_c} (\varphi + A), \varphi + A \rangle =
  B^{\exp}_{Q_c} (\varphi + A)$, $L_{Q_c} (i Q_c) = 0$ and
  \[ \| \partial_{x_1} Q_c \|_{H_{Q_c}^{\exp}} + \| \partial_{x_2} Q_c
     \|_{H_{Q_c}^{\exp}} + \| c^2 \partial_c Q_c \|_{H_{Q_c}^{\exp}} +
     c^{\beta_0 / 2} \| c \partial_{c^{\bot}} Q_c \|_{H_{Q_c}^{\exp}} + \| i
     Q_c \|_{H_{Q_c}^{\exp}} \leqslant K (\beta_0) . \]
\end{lemma}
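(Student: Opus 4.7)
The proof splits naturally into four parts corresponding to the four claims: the phase identity $L_{Q_c}(iQ_c)=0$, the bilinear expansion, the agreement with $B_{Q_c}^{\exp}$, and the norm bound. The strategy is to extend Lemma \ref{CP2icicmieux} by adjoining $iQ_c$ as a fifth admissible direction and by passing from $B_{Q_c}$ to $B_{Q_c}^{\exp}$, relying on Lemma \ref{CP2L3133105} to transfer all previously established $B_{Q_c}$-identities.

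First I would dispatch $L_{Q_c}(iQ_c)=0$ by direct computation: since $\mathfrak{Re}(\overline{Q_c}\cdot iQ_c)=\mathfrak{Re}(i|Q_c|^2)=0$, one has $L_{Q_c}(iQ_c)=i\bigl(-ic\partial_{x_2}Q_c-\Delta Q_c-(1-|Q_c|^2)Q_c\bigr)=i\,\mathrm{TW}_c(Q_c)=0$. For the bilinear identity, I would expand $\langle L_{Q_c}(\varphi+A),\varphi+A\rangle$ using linearity of $L_{Q_c}$; the cross terms $\langle L_{Q_c}\varphi,A\rangle$ and $\langle L_{Q_c}A,\varphi\rangle$ coincide by symmetry of $L_{Q_c}$ (integration by parts, with no boundary contribution since $\varphi\in C_c^\infty(\mathbb{R}^2\setminus\{\pm\tilde d_c\vec e_1\})$). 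Integrability of the cross terms and of $\mathfrak{Re}(L_{Q_c}(A)\bar{A})$ is already established for $A\in\mathrm{Span}\{\partial_{x_1}Q_c,\partial_{x_2}Q_c,\partial_c Q_c,\partial_{c^\perp}Q_c\}$ in the proof of Lemma \ref{CP2icicmieux} (via estimate (\ref{CP2N216})), while the terms involving $iQ_c$ are trivial since $L_{Q_c}(iQ_c)=0$.

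For the identity $\langle L_{Q_c}(\varphi+A),\varphi+A\rangle=B_{Q_c}^{\exp}(\varphi+A)$, I would repeat the computation of Lemma \ref{CP2L412602} verbatim, writing $\varphi+A=Q_c\Psi$ with $\Psi=\psi+\Psi_A$ on the support of $\eta$ (where $Q_c\neq 0$), and keeping the additive expression on $\{1-\eta>0\}$. The contribution of $A\in\mathrm{Span}\{\partial_{x_1}Q_c,\partial_{x_2}Q_c,\partial_c Q_c,\partial_{c^\perp}Q_c\}$ is handled exactly as in Lemma \ref{CP2icicmieux} after replacing $B_{Q_c}$ by $B_{Q_c}^{\exp}$ via Lemma \ref{CP2L3133105}. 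For $A=iQ_c$ the multiplicative factor is $\Psi_{iQ_c}=i$, so $\nabla\Psi_{iQ_c}=0$ and $\mathfrak{Re}\Psi_{iQ_c}=0$; this makes several integrands in $B_{Q_c}^{\exp}$ collapse, and a short direct verification (using $\mathrm{TW}_c(Q_c)=0$ and one integration by parts on $\mathfrak{Re}(\nabla Q_c\overline{Q_c})$, which decays like $\tilde r^{-2-\sigma}$ by (\ref{CP2220})) shows that $B_{Q_c}^{\exp}(iQ_c)=0=\langle L_{Q_c}(iQ_c),iQ_c\rangle$, consistent with the bilinear expansion.

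For the norm bound I would estimate each direction separately. The $H^1(\{\tilde r\leq 10\})$ part is uniformly bounded in $c$ by Lemma \ref{lemme3new} combined with Lemmas \ref{CP283L33} and \ref{CP2dcQcsigma} for $\partial_{x_{1,2}}Q_c$, $c^2\partial_c Q_c$, and trivially so for $iQ_c$ and $c\partial_{c^\perp}Q_c$ (using $c|x^\perp|\leq K(1+\tilde r)$ on a compact set). The tail terms $\int_{\tilde r\geq 5}|\nabla\Psi_A|^2+\mathfrak{Re}^2(\Psi_A)$ are dominated by the $\mathcal{C}$-norm estimates of Lemma \ref{CP283L223}, since $|Q_c|\geq K_1>0$ on $\{\tilde r\geq 5\}$; this yields a uniform bound for $\partial_{x_{1,2}}Q_c$, $c^2\partial_c Q_c$, and an $o_{c\to 0}^\beta(c^{-\beta})$ bound for $c\partial_{c^\perp}Q_c$. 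The new logarithmic-weighted term $\int_{\tilde r\geq 5}|\Psi_A|^2/(\tilde r^2\ln^2\tilde r)$ is controlled by pointwise estimates $|\Psi_A|\leq K/(1+\tilde r)$ (for the gradient directions, via (\ref{CP2220})--(\ref{CP2221}) and $|Q_c|\geq K$) together with $\int 1/(\tilde r^2\ln^2\tilde r)<\infty$; for $iQ_c$ it reduces immediately to $\int 1/(\tilde r^2\ln^2\tilde r)$. Finally choosing $\beta=\beta_0/2$ absorbs the blowup in the $\partial_{c^\perp}$ direction.

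The main obstacle will be the $iQ_c$ case of the identity $\langle L_{Q_c}(\varphi+A),\varphi+A\rangle=B_{Q_c}^{\exp}(\varphi+A)$: because $iQ_c\notin H_{Q_c}$ one cannot invoke Lemma \ref{CP2L3133105}'s equality $B_{Q_c}=B_{Q_c}^{\exp}$ indirectly, and the decay of $\mathfrak{Re}(\nabla Q_c\overline{Q_c})$ must be extracted from the refined estimates in Theorem \ref{CP2Qcbehav} and (\ref{CP2220}) to justify the integration by parts that shows the residual terms in $B_{Q_c}^{\exp}(iQ_c)$ cancel exactly.
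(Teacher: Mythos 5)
Your proposal is correct and follows the same overall strategy as the paper: reduce to the four gradient/speed directions already treated in Lemma \ref{CP2icicmieux}, adjoin $iQ_c$ using $L_{Q_c}(iQ_c)=0$ together with the fact that $\mathfrak{R}\mathfrak{e}(\psi+i)=\mathfrak{R}\mathfrak{e}(\psi)$ and $\nabla(\psi+i)=\nabla\psi$, and bound the $H^{\exp}_{Q_c}$ norms direction by direction from Lemma \ref{CP283L223} plus the pointwise decay estimates. The one place where your route differs is the identity $\langle L_{Q_c}(\varphi+A),\varphi+A\rangle=B^{\exp}_{Q_c}(\varphi+A)$: you propose to rerun the computation of Lemma \ref{CP2L412602} with the combined $\Psi=\psi+\Psi_A$, which forces you to re-justify every integration by parts at infinity for the non-compactly-supported $\varphi+A$ — exactly the obstacle you flag at the end. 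The paper instead writes $A=T+\varepsilon iQ_c$ and chains $B^{\exp}_{Q_c}(\varphi+A)=B^{\exp}_{Q_c}(\varphi+T)=B_{Q_c}(\varphi+T)=\langle L_{Q_c}(\varphi+T),\varphi+T\rangle$, using the invariance $B^{\exp}_{Q_c}(\Phi+\varepsilon iQ_c)=B^{\exp}_{Q_c}(\Phi)$ (checked on test functions and extended by density) together with Lemma \ref{CP2L3133105} and Lemma \ref{CP2icicmieux}; the remaining discrepancy $\langle L_{Q_c}(\varphi+A),\varepsilon iQ_c\rangle$ then vanishes because $L_{Q_c}(T)\in\tmop{Span}_{\mathbbm{R}}(i\partial_{x_1}Q_c,i\partial_{x_2}Q_c)$ and $\int_{\mathbbm{R}^2}\mathfrak{R}\mathfrak{e}(\nabla Q_c\overline{Q_c})=\frac12\int_{\mathbbm{R}^2}\nabla(|Q_c|^2-1)=0$. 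This reduction buys you all the delicate integrability for free from the already-proven four-direction case; the only new analytic input is the $L^1$ decay of $\mathfrak{R}\mathfrak{e}(\nabla Q_c\overline{Q_c})$, which you do invoke but should also use when asserting that $\mathfrak{R}\mathfrak{e}(L_{Q_c}(A)\bar A)\in L^1$ — the cross term $\mathfrak{R}\mathfrak{e}(L_{Q_c}(T)\overline{iQ_c})$ is where that decay is actually needed, and it is not disposed of merely by $L_{Q_c}(iQ_c)=0$.
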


\begin{proof}
  As for the proof of Lemma \ref{CP2icicmieux}, we only have to show that
  $\mathfrak{R}\mathfrak{e} (L_{Q_c} (A) \bar{A}) \in L^1 (\mathbbm{R}^2)$ to
  show the first equality.
  
  By simple computation (or by invariance of the phase), we check that
  $L_{Q_c} (i Q_c) = 0$. Writing $A = T + \varepsilon i Q_c$ for $\varepsilon
  \in \mathbbm{R}, T \in \tmop{Span} \{ \partial_{x_1} Q_c, \partial_{x_2}
  Q_c, \partial_c Q_c, \partial_{c^{\bot}} Q_c \}$, we compute from Lemma
  \ref{CP20703L222} that
  \[ L_{Q_c} (A) = L_{Q_c} (T) \in \tmop{Span}_{\mathbbm{R}} (i \partial_{x_1}
     Q_c, i \partial_{x_2} Q_c), \]
  thus
  \[ \mathfrak{R}\mathfrak{e} (L_{Q_c} (A) \bar{A}) =\mathfrak{R}\mathfrak{e}
     (L_{Q_c} (T) \overline{T + \varepsilon i Q_c}) =\mathfrak{R}\mathfrak{e}
     (L_{Q_c} (T) \bar{T}) + \varepsilon \mathfrak{R}\mathfrak{e} (L_{Q_c} (T)
     \overline{i Q_c}) . \]
  From the proof of Lemma \ref{CP2icicmieux}, we have
  $\mathfrak{R}\mathfrak{e} (L_{Q_c} (T) \bar{T}) \in L^1 (\mathbbm{R}^2)$,
  and since $L_{Q_c} (T) \in \tmop{Span}_{\mathbbm{R}} (i \partial_{x_1} Q_c,
  i \partial_{x_2} Q_c)$, with Theorem \ref{CP2Qcbehav}, we have
  \[ | \mathfrak{R}\mathfrak{e} (L_{Q_c} (T) \overline{i Q_c}) | \leqslant
     \frac{K (c)}{(1 + r)^3} \in L^1 (\mathbbm{R}^2) . \]
  Let us check that, for $\varphi \in H_{Q_c}^{\exp}$, $B_{Q_c}^{\exp}
  (\varphi + \varepsilon i Q_c) = B_{Q_c}^{\exp} (\varphi)$ for $\varepsilon
  \in \mathbbm{R}$.
  
  We check, from (\ref{CP2Btilda}), that, for $\varphi \in C^{\infty}_c
  (\mathbbm{R}^2 \backslash \{ \widetilde{d_c} \overrightarrow{e_1}, -
  \widetilde{d_c} \overrightarrow{e_1} \}, \mathbbm{C})$, this equality holds
  by integration by parts and because $\mathfrak{R}\mathfrak{e} (\psi + i)
  =\mathfrak{R}\mathfrak{e} (\psi)$, $\mathfrak{I}\mathfrak{m} (\nabla (\psi +
  i)) =\mathfrak{I}\mathfrak{m} (\nabla \psi)$. We then argue by density, as
  in the proof of Proposition \ref{CP205218}.
  
  We deduce, from Lemmas \ref{CP20703L222} and \ref{CP2icicmieux}, that for
  $\varphi \in C^{\infty}_c (\mathbbm{R}^2 \backslash \{ \widetilde{d_c}
  \overrightarrow{e_1}, - \widetilde{d_c} \overrightarrow{e_1} \},
  \mathbbm{C})$,
  \begin{eqnarray*}
    B^{\exp}_{Q_c} (\varphi + A) & = & B^{\exp}_{Q_c} (\varphi + T) = B_{Q_c}
    (\varphi + T)\\
    & = & \langle L_{Q_c} (\varphi + T), \varphi + T \rangle = \langle
    L_{Q_c} (\varphi + A), \varphi + T \rangle\\
    & = & \langle L_{Q_c} (\varphi + A), \varphi + A \rangle - \langle
    L_{Q_c} (\varphi + A), \varepsilon i Q_c \rangle,
  \end{eqnarray*}
  and we check, with Lemma \ref{CP20703L222}, that for some $v \in
  \mathbbm{R}^2$ depending on $A$,
  \begin{eqnarray*}
    \langle L_{Q_c} (\varphi + A), \varepsilon i Q_c \rangle & = & \langle
    L_{Q_c} (\varphi), \varepsilon i Q_c \rangle + \langle L_{Q_c} (P),
    \varepsilon i Q_c \rangle\\
    & = & \varepsilon \langle \varphi, L_{Q_c} (i Q_c) \rangle + \varepsilon
    v. \int_{\mathbbm{R}^2} \mathfrak{R}\mathfrak{e} (\nabla Q_c
    \overline{Q_c})\\
    & = & 0.
  \end{eqnarray*}
  From Lemma \ref{CP283L223}, we have,
  \[ \| \partial_{x_1} Q_c \|_{\mathcal{C}} + \| \partial_{x_2} Q_c
     \|_{\mathcal{C}} + \| c^2 \partial_c Q_c \|_{\mathcal{C}} + c^{\beta_0 /
     2} \| c \partial_{c^{\bot}} Q_c \|_{\mathcal{C}} \leqslant K (\beta_0),
  \]
  and with Lemmas \ref{lemme3new}, \ref{CP2dcQcsigma} and equations
  (\ref{CP2220}), (\ref{CP2221}), (\ref{CP2222}), we check with the definition
  of $\| . \|_{H_{Q_c}^{\exp}}$ and $\| . \|_{\mathcal{C}}$ that, for $A \in
  \{ \partial_{x_1} Q_c, \partial_{x_2} Q_c, c^2 \partial_c Q_c, c^{1 +
  \beta_0 / 2} \partial_{c^{\bot}} Q_c \}$,
  \[ \| A \|^2_{H_{Q_c}^{\exp}} \leqslant K \| A \|^2_{H^1 (\{ \check{r}
     \leqslant 10 \})} + \| A \|^2_{\mathcal{C}} \leqslant K (\beta_0) . \]
  Finally, we check that
  \[ \| i Q_c \|_{H_{Q_c}^{\exp}}^2 = \| i Q_c \|_{H^1 (\{ \check{r} \leqslant
     10 \})}^2 + \int_{\{ \tilde{r} \geqslant 5 \}} | \nabla i |^2
     +\mathfrak{R}\mathfrak{e}^2 (i) + \frac{| i |^2}{\tilde{r}^2 \ln
     (\tilde{r})^2} \leqslant K. \]
\end{proof}

We can now end the proof of Proposition \ref{CP2prop16}.

\begin{proof}[of Proposition \ref{CP2prop16}]
  From Theorem \ref{CP2th2}, for $\varphi \in C^{\infty}_c (\mathbbm{R}^2
  \backslash \{ \widetilde{d_c} \overrightarrow{e_1}, - \widetilde{d_c}
  \overrightarrow{e_1} \}, \mathbbm{C})$, under the four orthogonality
  conditions of Proposition \ref{CP2prop16}, we have, by lemma
  \ref{CP2L3133105},
  \[ B_{Q_c}^{\exp} (\varphi) = B_{Q_c} (\varphi) = \langle L_{Q_c} (\varphi),
     \varphi \rangle \geqslant K \| \varphi \|_{\mathcal{C}}^2 . \]
  We then conclude by density, as in the proof of Proposition \ref{CP205218},
  using Lemma \ref{CP2NNN}. The proof for the density in $B_{Q_c}^{\exp}$ is
  similar to the one for $B_{Q_c}$ in the proof of Proposition \ref{CP205218}.
  The coercivity under three orthogonality conditions can be shown similarly.
  
  \
  
  Then, for the computation of the kernel, the proof is identical to the one
  of Corollary \ref{CP2Cor41}. With Lemma \ref{CP2L3133105}, we check easily
  that we can do the same computation simply by replacing $B_{Q_c} (\varphi)$
  by $B_{Q_c}^{\exp} (\varphi)$. The only difference is at the end, when we
  have $\| \varphi^{\ast} \|_{\mathcal{C}} = 0$, it implies that
  $\varphi^{\ast} = \lambda i Q_c$ for some $\lambda \in \mathbbm{R}$, and we
  can not conclude that $\lambda = 0$, since we only have $\varphi^{\ast} \in
  H_{Q_c}^{\exp}$ instead of $\varphi^{\ast} \in H_{Q_c}$. This implies that
  \[ \varphi \in \tmop{Span}_{\mathbbm{R}} (\partial_{x_1} Q_c, \partial_{x_2}
     Q_c, i Q_c) . \]
  Using Lemma \ref{CP20703L222} and \ref{CP2destrucs}, we check easily the
  implication from $(i i)$ to $(i)$.
\end{proof}

\subsection{Change of the coercivity norm with an orthogonality on the
phase}\label{CP2ss62}

We now focus on the proofs of Proposition \ref{CP2prop17} and Theorem
\ref{CP2p41}. In these results, we add an orthogonality condition on the
phase. We start with a lemma giving the coercivity result but with the
original orthogonality conditions on the vortices, adding the one on the
phase.

\begin{lemma}
  \label{CP2L661510}For $\varphi = Q_c \psi \in H_{Q_c}^{\exp}$, if the
  following four orthogonality conditions are satisfied:
  \[ \int_{B (\tilde{d}_c \overrightarrow{e_1}, R)} \mathfrak{R}\mathfrak{e}
     \left( \partial_{x_1} \tilde{V}_1 \overline{_{} \widetilde{V_1} \psi}
     \right) = \int_{B (\tilde{d}_c \overrightarrow{e_1}, R)}
     \mathfrak{R}\mathfrak{e} \left( \partial_{x_2} \widetilde{V_1}
     \overline{\widetilde{V_1} \psi} \right) = 0, \]
  \[ \int_{B (- \tilde{d}_c \overrightarrow{e_1}, R)} \mathfrak{R}\mathfrak{e}
     (\partial_{x_1} \tilde{V}_{- 1} \overline{\tilde{V}_{- 1} \psi}) =
     \int_{B (- \tilde{d}_c \overrightarrow{e_1}, R)} \mathfrak{R}\mathfrak{e}
     (\partial_{x_2} \tilde{V}_{- 1} \overline{\tilde{V}_{- 1} \psi}) = 0, \]
  then, if $\mathfrak{R}\mathfrak{e} \int_{B (0, R)} i \psi = 0,$ we have
  (with $K (c) \leqslant 1$)
  \[ B_{Q_c}^{\exp} (\varphi) \geqslant K (c) \| \varphi \|_{H_{Q_c}^{\exp}}^2
     + K \| \varphi \|_{\mathcal{C}}^2, \]
  or if $\forall x \in \mathbbm{R}, \varphi (x_1, x_2) = \varphi (- x_1, x_2)$
  and $\mathfrak{R}\mathfrak{e} \int_{B (\tilde{d}_c \overrightarrow{e_1}, R)
  \cup B (- \tilde{d}_c \overrightarrow{e_1}, R)} i Q_c \bar{\varphi} = 0$,
  then
  \[ B_{Q_c}^{\exp} (\varphi) \geqslant K \| \varphi \|_{H_{Q_c}^{\exp}}^2 .
  \]
\end{lemma}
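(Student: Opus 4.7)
The plan is to prove Lemma \ref{CP2L661510} in two stages. The upper bound $B^{\exp}_{Q_c}(\varphi) \leq K_1 \|\varphi\|^2_{H^{\exp}_{Q_c}}$ follows directly from expanding each term in (\ref{CP2Btilda}) and applying Cauchy-Schwarz together with the decay estimates (\ref{CP2217})--(\ref{CP2221}) and Theorem \ref{CP2Qcbehav}, exactly as in the proof of Lemma \ref{CP2L3133105}; the factor $|Q_c|^2 \to 1$ at infinity ensures the tail terms are controlled by the tail of $\|\varphi\|^2_{H^{\exp}_{Q_c}}$, while the cutoff terms are compactly supported. The nontrivial content is the lower bound. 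For it I would first re-establish, by adapting the density argument of Lemma \ref{CP2NNN} and the invariance of $B^{\exp}_{Q_c}$ under addition of $i\lambda Q_c$ (proved as in Lemma \ref{CP2destrucs}), a ``$\mathcal{C}$-coercivity'' statement for $\varphi \in H^{\exp}_{Q_c}$: under the four vortex orthogonalities alone, $B^{\exp}_{Q_c}(\varphi) \geq K\|\varphi\|^2_{\mathcal{C}}$. This is essentially Proposition \ref{CP2prop16} restated under the local-vortex orthogonalities of Proposition \ref{CP205218}, and the universal constant $K$ from that proof carries over.

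The second stage is to upgrade the seminorm $\|\cdot\|_{\mathcal{C}}$ to the full $H^{\exp}_{Q_c}$-norm using the extra phase orthogonality. The kernel of $\|\cdot\|_{\mathcal{C}}$ is precisely $\mathbb{R} \cdot iQ_c$ (since $\|\varphi\|_{\mathcal{C}} = 0$ forces $\nabla\psi = 0$ and $\mathfrak{Re}(\psi) = 0$, hence $\psi$ is a purely imaginary constant), and this is exactly the direction that $iQ_c$ adds to $H^{\exp}_{Q_c}$ beyond $H_{Q_c}$. I would argue by contradiction: assume a sequence $\varphi_n = Q_c\psi_n \in H^{\exp}_{Q_c}$ with $\|\varphi_n\|_{H^{\exp}_{Q_c}} = 1$, satisfying all orthogonality conditions, and with $B^{\exp}_{Q_c}(\varphi_n) \to 0$. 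The $\mathcal{C}$-coercivity gives $\|\varphi_n\|_{\mathcal{C}} \to 0$, so up to a subsequence $\varphi_n \rightharpoonup i\mu Q_c$ weakly in $H^{\exp}_{Q_c}$ and strongly on compact sets. The phase orthogonality passes to the limit: in the non-symmetric case $\mathfrak{Re}\int_{B(0,R)} i(i\mu) = -\mu|B(0,R)| = 0$, so $\mu = 0$; in the symmetric case the analogous computation $\mathfrak{Re}\int_{B(\tilde d_c\vec e_1,R)\cup B(-\tilde d_c\vec e_1,R)} iQ_c\overline{i\mu Q_c} = -\mu \int |Q_c|^2 = 0$ again forces $\mu = 0$. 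It then remains to derive a contradiction with $\|\varphi_n\|_{H^{\exp}_{Q_c}} = 1$.

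The contradiction requires showing that no $H^{\exp}_{Q_c}$-mass can escape to infinity, which is the main obstacle since $\|\cdot\|_{\mathcal{C}}$ does not control the tail term $\int_{\{\tilde r \geq 5\}} |\psi|^2/(\tilde r^2 \ln^2 \tilde r)$. The remedy is a Hardy-type inequality on the tail (analogous to the one at the end of the proof of Proposition \ref{CP2P811}): for the nonzero harmonics of $\psi_n$ around each vortex, $\int |\psi_n^{\neq 0}|^2/\tilde r^2 \leq K\int|\nabla\psi_n|^2 \leq K\|\varphi_n\|^2_{\mathcal{C}}/K_0 \to 0$; for the zero harmonic, the Hardy inequality with the weight $1/(\tilde r \ln \tilde r)^2$ relates the tail to $\int|\nabla \psi_n|^2$ plus a local $L^2$-term in an annulus, which vanishes by compact embedding and $\varphi_* = 0$. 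This closes the argument in the non-symmetric case, but the constant $K(c)$ produced is $c$-dependent because the phase orthogonality is centered at $B(0,R)$, which is far from both vortices for small $c$. In the symmetric case, the same strategy yields a $c$-independent constant: the symmetry automatically enforces orthogonality to $\partial_{x_1}Q_c$ and $\partial_{c^\perp}Q_c$, and the orthogonality to $iQ_c$ near \emph{both} zeros removes the $iQ_c$ mode with uniform-in-$c$ lower bounds, avoiding the degeneracy.
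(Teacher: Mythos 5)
Your strategy is genuinely different from the paper's: you run a soft compactness/contradiction argument on top of the $\mathcal{C}$-coercivity, whereas the paper stays entirely quantitative. It splits $B^{\exp}_{Q_c}$ into the localized forms $B^{\tmop{loc}_{\pm 1,D}}_{Q_c}$ plus a far-field part, uses Lemma \ref{CP2L442702} and Lemma \ref{CP2L4326} to get $B^{\tmop{loc}_{1,D}}_{Q_c}(\varphi)\geqslant K_1(D)\|\varphi\|^2_{H^1(B(\tilde d_c\overrightarrow{e_1},D))}-K_2(D)\bigl(\int_{B(\tilde d_c\overrightarrow{e_1},R)\backslash B(\tilde d_c\overrightarrow{e_1},R/2)}\mathfrak{I}\mathfrak{m}(\psi)\bigr)^2$, and then kills the penalization term by a Poincar\'e inequality fed by the phase orthogonality: the path from $B(0,R)$ to the vortex annulus has length $\sim 1/c$, which is exactly where $K(c)$ comes from, while in the symmetric case the phase condition sits on a fixed annulus at each vortex and the Poincar\'e constant is universal. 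Your first stage (the $\mathcal{C}$-coercivity under the four vortex orthogonalities, extended to $H^{\exp}_{Q_c}$ by the density of Lemma \ref{CP2NNN} and the $i\lambda Q_c$-invariance) is sound and matches what the paper does implicitly.

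There are, however, two genuine gaps in the second stage. First, in the contradiction step you only treat the tail $\int_{\{\tilde r\geqslant 5\}}|\psi_n|^2/(\tilde r^2\ln^2\tilde r)$, but the component of $\|\varphi_n\|^2_{H^{\exp}_{Q_c}}$ that is \emph{not} controlled by $\|\varphi_n\|_{\mathcal{C}}\to 0$ plus Rellich is $\int_{\{\tilde r\leqslant 2\}}|\nabla\varphi_n|^2$: this is precisely where the weight $|Q_c|^4$ in $\|\cdot\|_{\mathcal{C}}$ degenerates, and weak $H^1_{\tmop{loc}}$ convergence gives nothing for gradients. You must extract $\int(1-\eta)|\nabla\varphi_n|^2\to 0$ from $B^{\exp}_{Q_c}(\varphi_n)\to 0$ itself, after showing that all the other contributions to the quadratic form (the $(1-|Q_c|^2)|\varphi_n|^2$ and transport terms locally, the $\nabla\eta$ terms, and the $\eta$-part, which is bounded below by $-K\|\varphi_n\|^2_{\mathcal{C}}$) are $o(1)$; this is fillable but it is the heart of the matter, not the escape of mass to infinity. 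Second, and more seriously, a compactness argument run at fixed $c$ can only ever produce a constant $K(c)$; it cannot deliver the $c$-independent constant claimed in the symmetric case. The paper gets uniformity there because every constant in its chain (the local coercivity on balls of radius $D$, the Poincar\'e inequality on the fixed annulus $B(\tilde d_c\overrightarrow{e_1},R)\backslash B(\tilde d_c\overrightarrow{e_1},R/2)$) is independent of $c$; your assertion that "the same strategy yields a $c$-independent constant" because the symmetry supplies the extra orthogonalities does not follow from the contradiction machinery and would require a separate uniformity argument as $c\to 0$, during which the geometry (the distance $2\tilde d_c\sim 2/c$ between the vortices) degenerates.
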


\begin{proof}
  Let us show these results for $\varphi = Q_c \psi \in C^{\infty}_c
  (\mathbbm{R}^2 \backslash \{ \tilde{d}_c \vec{e}_1, - \tilde{d}_c \vec{e}_1
  \}, \mathbbm{C})$. We then conclude by density. We start with the
  nonsymmetric case.
  
  By Lemma \ref{CP2L442702}, for $\varphi = Q_c \psi \in C^{\infty}_c
  (\mathbbm{R}^2 \backslash \{ \tilde{d}_c \vec{e}_1, - \tilde{d}_c \vec{e}_1
  \}, \mathbbm{C})$ such that
  \[ \int_{B (\tilde{d}_c \overrightarrow{e_1}, R)} \mathfrak{R}\mathfrak{e}
     \left( \partial_{x_1} \tilde{V}_1 \overline{_{} \widetilde{V_1} \psi}
     \right) = \int_{B (\tilde{d}_c \overrightarrow{e_1}, R)}
     \mathfrak{R}\mathfrak{e} \left( \partial_{x_2} \widetilde{V_1}
     \overline{\widetilde{V_1} \psi} \right) = 0, \]
  we have
  \[ B_{Q_c}^{\tmop{loc}_{1, D}} (\varphi) \geqslant K (D) \int_{B
     (\tilde{d}_c \overrightarrow{e_1}, D)} | \nabla \psi |^2 | Q_c |^4
     +\mathfrak{R}\mathfrak{e}^2 (\psi) | Q_c |^4 . \]
  By Lemma \ref{CP2L4326}, we infer, by a standard proof by contradiction
  (with the first two orthogonality conditions),
  \[ B_{Q_c}^{\tmop{loc}_{1, D}} (\varphi) \geqslant K_1 (D) \| \varphi
     \|_{H^1 (B (\tilde{d}_c \overrightarrow{e_1}, D))}^2 - K_2 (D) \left(
     \int_{B (\tilde{d}_c \overrightarrow{e_1}, R) \backslash B (\tilde{d}_c
     \overrightarrow{e_1}, R / 2)} \mathfrak{I}\mathfrak{m} (\psi) \right)^2 .
  \]
  We deduce, with Lemma \ref{CP2L4326}, that for any small $\varepsilon > 0$
  \begin{eqnarray*}
    B_{Q_c}^{\tmop{loc}_{1, D}} (\varphi) & \geqslant & K (D) (1 -
    \varepsilon) \int_{B (\tilde{d}_c \overrightarrow{e_1}, D)} | \nabla \psi
    |^2 | Q_c |^4 +\mathfrak{R}\mathfrak{e}^2 (\psi) | Q_c |^4\\
    & + & K_1 (D) \varepsilon \| \varphi \|^2_{H^1 (B (\tilde{d}_c
    \overrightarrow{e_1}, D))} - K_2 (D) \varepsilon \left( \int_{B
    (\tilde{d}_c \overrightarrow{e_1}, R) \backslash B (\tilde{d}_c
    \overrightarrow{e_1}, R / 2)} \mathfrak{I}\mathfrak{m} (\psi) \right)^2 .
  \end{eqnarray*}
  By Poincar{\'e} in{\'e}quality, if $\mathfrak{R}\mathfrak{e} \int_{B (0, R)}
  i \psi = 0$, then
  \begin{eqnarray*}
    \int_{B (\tilde{d}_c \overrightarrow{e_1}, R) \backslash B (\tilde{d}_c
    \overrightarrow{e_1}, R / 2)} \mathfrak{I}\mathfrak{m} (\psi) & \leqslant
    & K (c) \sqrt{\int_{\mathbbm{R}^2 \backslash (B (\tilde{d}_c
    \overrightarrow{e_1}, R / 2) \cup B (- \tilde{d}_c \overrightarrow{e_1}, R
    / 2))} | \nabla \psi |^2}\\
    & \leqslant & K (c) \sqrt{\int_{\mathbbm{R}^2} | \nabla \psi |^2 | Q_c
    |^4} .
  \end{eqnarray*}
  Therefore, for any small $\mu > 0$, taking $\varepsilon > 0$ small enough
  (depending on $c, D$ and $\mu$),
  \begin{eqnarray*}
    B_{Q_c}^{\tmop{loc}_{1, D}} (\varphi) & \geqslant & K (D) \int_{B
    (\tilde{d}_c \overrightarrow{e_1}, D)} | \nabla \psi |^2 | Q_c |^4
    +\mathfrak{R}\mathfrak{e}^2 (\psi) | Q_c |^4\\
    & + & K_1 (D, c, \mu) \| \varphi \|^2_{H^1 (B (\tilde{d}_c
    \overrightarrow{e_1}, D))} - \mu \int_{\mathbbm{R}^2} | \nabla \psi |^2 |
    Q_c |^4 .
  \end{eqnarray*}
  With similar arguments, we have a similar result for $B_{Q_c}^{\tmop{loc}_{-
  1, D}} (\varphi)$. Now, as in the proof of Proposition \ref{CP205218}, we
  have, taking $\mu > 0$ small enough and $D > 0$ large enough,
  \begin{eqnarray*}
    B_{Q_c} (\varphi) & \geqslant & B_{Q_c}^{\tmop{loc}_{1, D}} (\varphi) +
    B_{Q_c}^{\tmop{loc}_{- 1, D}} (\varphi)\\
    & + & K \left( \int_{\mathbbm{R}^2 \backslash (B (\tilde{d}_c
    \overrightarrow{e_1}, D) \cup B (- \tilde{d}_c \overrightarrow{e_1}, D))}
    | \nabla \psi |^2 | Q_c |^4 +\mathfrak{R}\mathfrak{e}^2 (\psi) | Q_c |^4
    \right)\\
    & \geqslant & K \int_{\mathbbm{R}^2} | \nabla \psi |^2 | Q_c |^4
    +\mathfrak{R}\mathfrak{e}^2 (\psi) | Q_c |^4 + K_1 (c, \mu) \| \varphi
    \|^2_{H^1 (B (\tilde{d}_c \overrightarrow{e_1}, 10))}\\
    & - & \mu \int_{\mathbbm{R}^2} | \nabla \psi |^2 | Q_c |^4\\
    & \geqslant & K \| \varphi \|_{\mathcal{C}}^2 + K (c) \| \varphi
    \|^2_{H^1 (B (\tilde{d}_c \overrightarrow{e_1}, 10))} .
  \end{eqnarray*}
  Then, by the same Hardy type inequality as in the proof of Proposition
  \ref{CP205218}, we show that
  \[ \int_{\mathbbm{R}^2} \frac{| \varphi |^2}{(1 + \tilde{r})^2 \ln^2 (2 +
     \tilde{r})} \leqslant K \left( \| \varphi \|^2_{H^1 (B (\tilde{d}_c
     \overrightarrow{e_1}, 10))} + \int_{\mathbbm{R}^2} | \nabla \psi |^2 |
     Q_c |^4 \right), \]
  therefore
  \[ B_{Q_c} (\varphi) \geqslant K \| \varphi \|_{\mathcal{C}}^2 + K (c) \|
     \varphi \|_{H_{Q_c}^{\exp}}^2 . \]

  In the symmetric case, the proof is identical, exept that, by symmetry,
  \[ \mathfrak{R}\mathfrak{e} \int_{B (\tilde{d}_c \overrightarrow{e_1}, R)} i
     Q_c \bar{\varphi} = 0, \]
  and we check by Poincar{\'e} inequality that for a function $\varphi$
  satisfying this orthogonality condition, $\varphi = Q_c \psi$,
  \[ \left| \int_{B (\tilde{d}_c \overrightarrow{e_1}, R) \backslash B
     (\tilde{d}_c \overrightarrow{e_1}, R / 2)} \mathfrak{I}\mathfrak{m}
     (\psi) \right| \leqslant K \| \varphi \|_{H^1 (B (\tilde{d}_c
     \overrightarrow{e_1}, R))}, \]
  for a universal constant $K > 0$. By a similar computation as previously, we
  conclude the proof of this lemma.
\end{proof}

We now have all the elements necessary to conclude the proof of Proposition
\ref{CP2prop17}.

\begin{proof}[of Proposition \ref{CP2prop17}]
  This proof follows the proof of Lemma \ref{CP2133L39}. For $\varphi \in
  C^{\infty}_c (\mathbbm{R}^2 \backslash \{ \tilde{d}_c \vec{e}_1, -
  \tilde{d}_c \vec{e}_1 \}, \mathbbm{C})$ and five real-valued parameters
  $\varepsilon_1, \varepsilon_2, \varepsilon_3, \varepsilon_4, \varepsilon_5$
  we define $\varphi^{\ast} = Q_c \psi^{\ast}$ by
  \[ \psi^{\ast} = \psi + \varepsilon_1 \frac{\partial_{x_1} Q_c}{Q_c} +
     \varepsilon_2 \frac{c^2 \partial_c Q_c}{Q_c} + \varepsilon_3
     \frac{\partial_{x_2} Q_c}{Q_c} + \varepsilon_4 \frac{c
     \partial_{c^{\bot}} Q_c}{Q_c} + \varepsilon_5 i. \]
  From Lemma \ref{CP2destrucs}, we check that $\varphi^{\ast} \in
  H_{Q_c}^{\exp}$. Now, similarly as the proof of Lemma \ref{CP2133L39}, we
  check that
  \[ \begin{array}{lll}
       \int_{B (\tilde{d}_c \overrightarrow{e_1}, R)} \mathfrak{R}\mathfrak{e}
       \left( \partial_{x_1} \widetilde{V_1} \overline{\widetilde{V_1}
       \psi^{\ast}} \right) & = & \int_{B (\tilde{d}_c \overrightarrow{e_1},
       R)} \mathfrak{R}\mathfrak{e} \left( \partial_{x_1} \widetilde{V_1}
       \overline{\widetilde{V_1} \psi} \right)\\
       & + & \varepsilon_1 \int_{B (\tilde{d}_c \overrightarrow{e_1}, R)}
       \mathfrak{R}\mathfrak{e} \left( \partial_{x_1} \widetilde{V_1}
       \overline{\partial_{x_1} Q_c \frac{\widetilde{V_1}}{Q_c}} \right)\\
       & + & \varepsilon_2 \int_{B (\tilde{d}_c \overrightarrow{e_1}, R)}
       \mathfrak{R}\mathfrak{e} \left( \partial_{x_1} \widetilde{V_1} c^2
       \overline{\partial_c Q_c \frac{\widetilde{V_1}}{Q_c}} \right)\\
       & + & \varepsilon_3 \int_{B (\tilde{d}_c \overrightarrow{e_1}, R)}
       \mathfrak{R}\mathfrak{e} \left( \partial_{x_1} \widetilde{V_1}
       \overline{\partial_{x_2} Q_c \frac{\widetilde{V_1}}{Q_c}} \right)\\
       & + & \varepsilon_4 \int_{B (\tilde{d}_c \overrightarrow{e_1}, R)}
       \mathfrak{R}\mathfrak{e} \left( \partial_{x_1} \widetilde{V_1} c
       \overline{\partial_{c^{\bot}} Q_c \frac{\widetilde{V_1}}{Q_c}}
       \right)\\
       & + & \varepsilon_5 \int_{B (\tilde{d}_c \overrightarrow{e_1}, R)}
       \mathfrak{R}\mathfrak{e} \left( \partial_{x_1} \widetilde{V_1}
       \overline{i \widetilde{V_1}} \right) .
     \end{array} \]
  Furthermore, with Lemma \ref{lemme3new}, we check that
  \[ \int_{B (\tilde{d}_c \overrightarrow{e_1}, R)} \mathfrak{R}\mathfrak{e}
     \left( \partial_{x_1} V_1 \overline{i \widetilde{V_1}} \right) = 0, \]
  and the other terms are estimated as in the proof of Lemma \ref{CP2133L39}.
  Similarly,
  \[ \int_{B (\tilde{d}_c \overrightarrow{e_1}, R)} \mathfrak{R}\mathfrak{e}
     \left( \partial_{x_2} V_1 \overline{i \widetilde{V_1}} \right) = \int_{B
     (- \tilde{d}_c \overrightarrow{e_1}, R)} \mathfrak{R}\mathfrak{e} \left(
     \partial_{x_1} V_{- 1} \overline{i \widetilde{V_{- 1}}} \right) = \int_{B
     (- \tilde{d}_c \overrightarrow{e_1}, R)} \mathfrak{R}\mathfrak{e} \left(
     \partial_{x_2} V_{- 1} \overline{i \widetilde{V_{- 1}}} \right) = 0. \]
  We also check that, from (\ref{CP2220}), (\ref{CP2221}), Lemmas
  \ref{CP2dcQcsigma} and \ref{CP2a} that
  \begin{eqnarray*}
    &  & \left| \int_{B (0, R)} \mathfrak{R}\mathfrak{e} \left( i
    \frac{\partial_{x_1} Q_c}{Q_c} \right) \right| + \left| \int_{B (0, R)}
    \mathfrak{R}\mathfrak{e} \left( i \frac{\partial_{x_2} Q_c}{Q_c} \right)
    \right|\\
    & + & \left| \int_{B (0, R)} \mathfrak{R}\mathfrak{e} \left( i c^2
    \frac{\partial_c Q_c}{Q_c} \right) \right| + \left| \int_{B (0, R)}
    \mathfrak{R}\mathfrak{e} \left( c i \frac{\partial_{c^{\bot}} Q_c}{Q_c}
    \right) \right|\\
    & = & o_{c \rightarrow 0} (1),
  \end{eqnarray*}
  and
  \[ \int_{B (0, R)} \mathfrak{R}\mathfrak{e} (i \times i) = - \pi R^2 < 0. \]
  We deduce, as in the proof of Lemma \ref{CP2133L39}, that
  \begin{eqnarray*}
    &  & \left(\begin{array}{c}
      \int_{B (\tilde{d}_c \overrightarrow{e_1}, R)} \mathfrak{R}\mathfrak{e}
      \left( \partial_{x_1} \widetilde{V_1} \overline{\widetilde{V_1}
      \psi^{\ast}} \right)\\
      \int_{B (- \tilde{d}_c \overrightarrow{e_1}, R)}
      \mathfrak{R}\mathfrak{e} (\partial_{x_1} \tilde{V}_{- 1}
      \overline{\tilde{V}_{- 1} \psi^{\ast}})\\
      \int_{B (\tilde{d}_c \overrightarrow{e_1}, R)} \mathfrak{R}\mathfrak{e}
      \left( \partial_{x_2} \widetilde{V_1} \overline{\widetilde{V_1}
      \psi^{\ast}} \right)\\
      \int_{B (- \tilde{d}_c \overrightarrow{e_1}, R)}
      \mathfrak{R}\mathfrak{e} (\partial_{x_2} \tilde{V}_{- 1}
      \overline{\tilde{V}_{- 1} \psi^{\ast}})\\
      \mathfrak{R}\mathfrak{e} \int_{B (0, R)} i \psi = 0
    \end{array}\right)\\
    & = & \left( \left(\begin{array}{ccccc}
      K (R) & - K (R) & 0 & 0 & 0\\
      K (R) & K (R) & 0 & 0 & 0\\
      0 & 0 & K (R) & - K (R) & 0\\
      0 & 0 & K (R) & K (R) & 0\\
      0 & 0 & 0 & 0 & - \pi R^2
    \end{array}\right) + o_{c \rightarrow 0} (1) \right)
    \left(\begin{array}{c}
      \varepsilon_1\\
      \varepsilon_2\\
      \varepsilon_3\\
      \varepsilon_4\\
      \varepsilon_5
    \end{array}\right)\\
    & + & o_{c \rightarrow 0} (c^{\beta_0}) K \| \varphi \|_{\mathcal{C}} .
  \end{eqnarray*}
  Therefore, we can find $\varepsilon_1, \varepsilon_2, \varepsilon_3,
  \varepsilon_4, \varepsilon_5 \in \mathbbm{R}$ such that
  \[ | \varepsilon_1 | + | \varepsilon_2 | + | \varepsilon_3 | + |
     \varepsilon_4 | + | \varepsilon_5 | \leqslant o_{c \rightarrow 0}
     (c^{\beta_0}) \| \varphi \|_{\mathcal{C}} \]
  and $\varphi^{\ast}$ satisfies the five orthogonality conditions of Lemma
  \ref{CP2L661510}. Therefore,
  \[ B_{Q_c}^{\exp} (\varphi^{\ast}) \geqslant K (c) \| \varphi^{\ast}
     \|_{H_{Q_c}^{\exp}}^2 + K \| \varphi^{\ast} \|_{\mathcal{C}}^2 . \]
  We continue as in the proof of Lemma \ref{CP2133L39}, and with the same
  arguments, we have
  \[ B_{Q_c}^{\exp} (\varphi) \geqslant K (c) \| \varphi^{\ast}
     \|_{H_{Q_c}^{\exp}}^2 + K \| \varphi \|_{\mathcal{C}}^2 . \]
  Now, by Lemma \ref{CP2destrucs}, we have
  \begin{eqnarray*}
    \| \varphi^{\ast} \|_{H_{Q_c}^{\exp}} & \geqslant & \| \varphi
    \|_{H_{Q_c}^{\exp}} - \| \varepsilon_1 \partial_{x_1} Q_c + \varepsilon_2
    c^2 \partial_c Q_c + \varepsilon_3 \partial_{x_2} Q_c + \varepsilon_4 c
    \partial_{c^{\bot}} Q_c + \varepsilon_5 i \|_{H_{Q_c}^{\exp}}\\
    & \geqslant & \| \varphi \|_{H_{Q_c}^{\exp}} - o_{c \rightarrow 0}
    (c^{\beta_0 / 2}) \| \varphi \|_{\mathcal{C}},
  \end{eqnarray*}
  thus, since we can take $K (c) \leqslant 1$, we have
  \[ B_{Q_c}^{\exp} (\varphi) \geqslant K (c) \| \varphi
     \|_{H_{Q_c}^{\exp}}^2 . \]
  We conclude by density as in the proof of Proposition \ref{CP205218}, thanks
  to Lemma \ref{CP2NNN}. We are left with the proof of $B_{Q_c}^{\exp}
  (\varphi) \leqslant K \| \varphi \|^2_{H^{\exp}_{Q_c}}$. With regards to
  (\ref{CP2Btilda}), the local terms can be estimated by $K \| \varphi \|_{H^1
  (\{ \tilde{r} \leqslant 10 \})}^2 \leqslant K \| \varphi
  \|^2_{H^{\exp}_{Q_c}}$ and the terms at infinity, by Cauchy Schwarz, can be
  estimated by $K \int_{\{ \tilde{r} \geqslant 5 \}} | \nabla \psi |^2
  +\mathfrak{R}\mathfrak{e}^2 (\psi) + \frac{| \psi |^2}{\tilde{r}^2 \ln^2
  (\tilde{r})} \leqslant K \| \varphi \|^2_{H^{\exp}_{Q_c}}$.
\end{proof}

As for the remark above equation (\ref{CP2corto}), we can replace the
orthogonality condition $\mathfrak{R}\mathfrak{e} \int_{B (\tilde{d}_c
\overrightarrow{e_1}, R) \cup B (- \tilde{d}_c \overrightarrow{e_1}, R)}
\partial_c Q_c \overline{Q_c \psi^{\neq 0}} = 0$ by
\begin{equation}
  \mathfrak{R}\mathfrak{e} \int_{B (\tilde{d}_c \overrightarrow{e_1}, R) \cup
  B (- \tilde{d}_c \overrightarrow{e_1}, R)} \partial_d (V_1 (x - d \vec{e}_1)
  V_{- 1} (x + d \vec{e}_1))_{| d = d_c \nobracket} \overline{Q_c \psi^{\neq
  0}} (x) d x = 0 \label{CP2corto2}
\end{equation}
in Propositions \ref{CP2prop16} and \ref{CP2prop17}.

\begin{proof}[of Theorem \ref{CP2p41}]
  This proof follows closely the proof of Proposition \ref{CP2prop17},
  
  First, From Lemma \ref{CP2sym} and the definition of $\partial_{c^{\bot}}
  Q_c$ in Lemma \ref{CP2a}, we check that $\partial_{x_1} Q_c$ and
  $\partial_{c^{\bot}} Q_c$ are odd in $x_1$, and for $\varphi = Q_c \psi \in
  C^{\infty}_c (\mathbbm{R}^2 \backslash \{ \tilde{d}_c \vec{e}_1, -
  \tilde{d}_c \vec{e}_1 \}, \mathbbm{C})$ with $\forall (x_1, x_2) \in
  \mathbbm{R}^2, \varphi (x_1, x_2) = \varphi (- x_1, x_2)$, we check that in
  $B (\tilde{d}_c \overrightarrow{e_1}, R) \cup B (- \tilde{d}_c
  \overrightarrow{e_1}, R)$, $Q_c \psi^{\neq 0}$ is even in $x_1$. Therefore,
  these two orthogonality conditions are freely given.
  
  We decompose as previously for, $\varepsilon_1, \varepsilon_2,
  \varepsilon_3$ three real-valued parameters,
  \[ \varphi = \varphi^{\ast} + \varepsilon_1 i Q_c + \varepsilon_2
     \partial_{x_2} Q_c + \varepsilon_3 c^2 \partial_c Q_c . \]
  We suppose that
  \[ \mathfrak{R}\mathfrak{e} \int_{B (\tilde{d}_c \overrightarrow{e_1}, R)
     \cup B (- \tilde{d}_c \overrightarrow{e_1}, R)} \partial_c Q_c
     \bar{\varphi} =\mathfrak{R}\mathfrak{e} \int_{B (\tilde{d}_c
     \overrightarrow{e_1}, R) \cup B (- \tilde{d}_c \overrightarrow{e_1}, R)}
     \partial_{x_2} Q_c \bar{\varphi} = 0, \]
  \[ \mathfrak{R}\mathfrak{e} \int_{B (\tilde{d}_c \overrightarrow{e_1}, R)
     \cup B (- \tilde{d}_c \overrightarrow{e_1}, R)} i Q_c \bar{\varphi} = 0,
  \]
  and we show, as in the proof of Lemma \ref{CP2133L39}, that we can find
  $\varepsilon_1, \varepsilon_2, \varepsilon_3 \in \mathbbm{R}$ such that
  \[ | \varepsilon_1 | + | \varepsilon_2 | + | \varepsilon_3 | \leqslant o_{c
     \rightarrow 0} (c^{\beta_0}) \| \varphi \|_{H_{Q_c}^{\exp}}, \]
  and $\varphi^{\ast}$ satisfies the five orthogononality conditions of Lemma
  \ref{CP2L661510} (we recall that two of them are given by symmetry). Here,
  since we did not remove the $0$-harmonics, the error is only controlled by
  $\| \varphi \|_{H_{Q_c}^{\exp}}$ instead of $\| \varphi \|_{\mathcal{C}}$.
  For instance, we have
  \[ \int_{B (\tilde{d}_c \overrightarrow{e_1}, R)} \left|
     \mathfrak{R}\mathfrak{e} \left( \left( \partial_{x_2} \widetilde{V_1}
     \overline{\widetilde{V_1}} - \partial_{x_2} Q_c \overline{Q_c} \right)
     \psi \right) \right| \leqslant o_{c \rightarrow 0} (1) \| Q_c \psi
     \|_{L^2 (B (\tilde{d}_c \overrightarrow{e_1}, R))} = o_{c \rightarrow 0}
     (1) \| \varphi \|_{H_{Q_c}^{\exp}} . \]
  Now, from Lemma \ref{CP2L661510}, since $\varphi^{\ast} \in H_{Q_c}^{\exp}$,
  we have
  \[ B^{\exp}_{Q_c} (\varphi^{\ast}) \geqslant K \| \varphi^{\ast}
     \|_{H_{Q_c}^{\exp}}^2 . \]
  We continue, as in the proof of Lemma \ref{CP2133L39}, with $| \varepsilon_1
  | + | \varepsilon_2 | + | \varepsilon_3 | = o_{c \rightarrow 0} (1) \|
  \varphi \|_{H_{Q_c}^{\exp}}$ and Lemma \ref{CP2destrucs}. We show that
  \[ B^{\exp}_{Q_c} (\varphi) \geqslant K \| \varphi \|_{H_{Q_c}^{\exp}}^2 .
  \]
  We conclude the proof of Theorem \ref{CP2p41} by density.
\end{proof}

\section{Local uniqueness result}\label{CP2ss42}

This section is devoted to the proof of Theorem \ref{CP2th16}. This proof will
follow classical schemes for local uniqueness using the coercivity. Here, we
will use Propositions \ref{CP2prop16} and \ref{CP2prop17}, with the remark
(\ref{CP2corto2}).

\subsection{Construction of a perturbation}

For a given $\vec{c}' \in \mathbbm{R}^2$, $0 < | \vec{c}' | \leqslant c_0$
($c_0$ defined in Theorem \ref{th1}), $X \in \mathbbm{R}^2$ and $\gamma \in
\mathbbm{R}$, we define, thanks to (\ref{CP2rotQc}), the travelling wave
\begin{equation}
  Q \assign Q_{\vec{c}'} (. - X) e^{i \gamma} . \label{CP2280542}
\end{equation}
We define a smooth cutoff function $\eta$, with value $0$ in $B (\pm
\tilde{d}_c \overrightarrow{e_1}, R + 1)$ ($R > 10$ is defined in Theorem
\ref{CP2th2}), and $1$ outside of $B (\tilde{d}_c \overrightarrow{e_1}, R + 2)
\cup B (- \tilde{d}_c \overrightarrow{e_1}, R + 2)$. The first step is to
define a function $\psi$ such that
\begin{equation}
  (1 - \eta) Q \psi + \eta Q (e^{\psi} - 1) = Z - Q, \label{CP2422805}
\end{equation}
with $Q \psi$ satisfying the orthogonality conditions of Propositions
\ref{CP2prop16} and \ref{CP2prop17}. We start by showing that there exists a
function $\psi$ solution of (\ref{CP2422805}). We denote $\delta^{| . |} (c
\overrightarrow{e_2}, \vec{c}') \assign | c \overrightarrow{e_2} . \nobracket
\frac{\vec{c}'}{| \vec{c}' |} - \nobracket \vec{c}' |$ and $\delta^{\bot} (c
\overrightarrow{e_2}, \vec{c}') \assign | c \overrightarrow{e_2} . \nobracket
\frac{\vec{c}^{\prime \bot}}{| \vec{c}' |} - \nobracket \vec{c}' |$. At fixed
$c$, these two quantities characterize $\vec{c}'$. We will use them as
variables instead of $\vec{c}'$, this decomposition being well adapted to the
problem.

Since both $Z$ and $| Q |$ go to $1$ at infinity, we have that such a function
$\psi$ is bounded at infinity.

The perturbation here is chosen additively close to the zeros of the
travelling wave, and multiplicatively at infinity. This seems to be a fit form
for the perturbation, and we have already used it in the construction of
$Q_c$.

\begin{lemma}
  \label{CP2L422905}There exits $c_0 > 0$ such that, for $0 < c < c_0$ and any
  $\Lambda > \frac{10}{c}$, with $Z$ a function satisfying the hypothesis of
  Theorem \ref{CP2th16} and $Q$ defined by (\ref{CP2280542}) with $\frac{c}{2}
  \leqslant | \vec{c}' | \leqslant 2 c$, there exist $K, K (\Lambda) > 0$ such
  that
  \[ \| Z - Q \|_{C^1 (B (0, \Lambda))} \leqslant K (\Lambda) \| Z - Q_c
     \|_{H^{\exp}_{Q_c}} + K \left( | X | + \frac{\delta^{| . |} (c
     \overrightarrow{e_2}, \vec{c}')}{c^2} + \frac{\delta^{\bot} (\nobracket c
     \overrightarrow{e_2}, \vec{c}') \nobracket}{c} + | \gamma | \right) . \]
\end{lemma}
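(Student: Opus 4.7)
The strategy is to split $Z - Q = (Z - Q_c) + (Q_c - Q)$ and bound the two pieces separately: the first using the smallness of $\|Z - Q_c\|_{H^{\exp}_{Q_c}}$ together with interior elliptic regularity, the second via a Taylor-type expansion along the smooth five-parameter family of travelling waves built from Theorem \ref{th1} and Lemma \ref{CP2a}. This naturally produces the two distinct types of constants ($K(\Lambda)$ on the $H^{\exp}_{Q_c}$ term, $K$ on the modulation terms).

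For $Z - Q_c$, the first step is to extract an $H^1(B(0,\Lambda))$-bound from $\|Z-Q_c\|_{H^{\exp}_{Q_c}}$: writing $Z - Q_c = Q_c\psi$ away from the zeros, the portion $\{\check r \leq 10\}$ is controlled directly by the definition of $\|\cdot\|_{H^{\exp}_{Q_c}}$, while on $B(0,\Lambda)\cap \{\tilde r\geq 5\}$ one uses $|Q_c|\geq K_2>0$ from (\ref{CP2Qcpaszero}) and the fact that $1/(\tilde r^2\ln^2 \tilde r)$ is bounded below on this set by a constant depending only on $\Lambda$. Since $(\tmop{TW}_c)(Z) = (\tmop{TW}_c)(Q_c) = 0$, the difference $w := Z - Q_c$ satisfies the elliptic equation $-\Delta w - ic\partial_{x_2}w = (1-|Z|^2)Z - (1-|Q_c|^2)Q_c$, whose right-hand side is polynomial in $Z,Q_c$ and hence bounded in $C^1(B(0,\Lambda))$ by a constant depending on $\Lambda$ (using $\|Q_c\|_{C^1}\leq K$ from Theorem \ref{CP2Qcbehav} and $\|Z-Q_c\|_{C^1(\mathbb{R}^2\setminus B(0,\lambda))}\leq \mu_0$ from the hypothesis, together with elliptic regularity inside $B(0,\lambda)$). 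Bootstrapping with interior Schauder estimates in the spirit of Theorem 6.2 of \cite{MR1814364} (as in the proof of Lemma \ref{CP20524}) then yields $\|Z-Q_c\|_{C^1(B(0,\Lambda))}\leq K(\Lambda)\|Z-Q_c\|_{H^{\exp}_{Q_c}}$.

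For $Q_c - Q$ with $Q = Q_{\vec c'}(\cdot - X)e^{i\gamma}$, I would decompose
\[
Q_c - Q = (1-e^{i\gamma})Q_c + e^{i\gamma}\bigl(Q_c - Q_c(\cdot - X)\bigr) + e^{i\gamma}\bigl(Q_c(\cdot-X) - Q_{\vec c'}(\cdot - X)\bigr).
\]
The first term contributes $K|\gamma|$ by $\|Q_c\|_{C^1}\leq K$ and the second $K|X|$ by the mean value theorem together with $\|Q_c\|_{C^2}\leq K$ (both from Theorem \ref{CP2Qcbehav} and Lemma \ref{CP283L33}). For the third, translation invariance reduces to estimating $\|Q_c - Q_{\vec c'}\|_{C^1}$ on a comparable ball; using (\ref{CP2rotQc}) I write $Q_{\vec c'} = Q_{|\vec c'|}\circ R_{-\alpha}$ with $\alpha$ the angular deviation, and split
\[
Q_c - Q_{\vec c'} = \bigl(Q_c - Q_{|\vec c'|}\bigr) + \bigl(Q_{|\vec c'|} - Q_{|\vec c'|}\circ R_{-\alpha}\bigr).
\]
The first summand is handled by integrating $\partial_{c'}Q_{c'}$ along the interval $[c,|\vec c'|]$, using Lemma \ref{CP2dcQcsigma} to get $\|\partial_c Q_c\|_{C^1}\lesssim 1/c^2$ (with the weighted bounds from subsection \ref{CP2NQcest} providing $C^1$-control near the zeros), which combined with $||\vec c'|-c|\lesssim \delta^{|\cdot|}$ produces the $K\delta^{|\cdot|}/c^2$ term. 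The second summand is handled by integrating $\partial_\alpha (Q_{|\vec c'|}\circ R_{-\alpha})$, which by Lemma \ref{CP2a} is the $\partial_{c^{\perp}}$-type derivative, together with $|\alpha|\lesssim \delta^{\perp}/|\vec c'|\sim \delta^{\perp}/c$; this yields the $K\delta^{\perp}/c$ term.

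The principal difficulty is the elliptic bootstrap in the first step: the norm $\|\cdot\|_{H^{\exp}_{Q_c}}$ is weaker than $\|\cdot\|_{H_{Q_c}}$ (it accommodates $iQ_c$, see (\ref{CP2IQC})), so one must use the local $H^1$-control near the zeros combined with the weighted control at infinity to obtain a genuine $H^1(B(0,\Lambda))$-bound (with a $\Lambda$-dependent constant), and then invoke interior regularity for the nonlinear elliptic system $(\tmop{TW}_c)(Z) = (\tmop{TW}_c)(Q_c) = 0$ with coefficients frozen by the uniform $C^1$-bounds on $Z$ and $Q_c$. A secondary subtlety, shared with the proof of Lemma \ref{CP2closecall}, is that the scaling in $c$ of $\partial_c Q_c$ and $\partial_{c^{\perp}}Q_c$ must be tracked exactly through Lemma \ref{CP2dcQcsigma} and Lemma \ref{CP2a} to obtain the claimed $1/c^2$ and $1/c$ factors in front of $\delta^{|\cdot|}$ and $\delta^{\perp}$ respectively.
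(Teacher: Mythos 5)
Your overall architecture — split $Z-Q=(Z-Q_c)+(Q_c-Q)$, treat the first piece by elliptic regularity for the equation satisfied by $w:=Z-Q_c$, and the second by telescoping along the five-parameter family with the scalings of Lemma \ref{CP2dcQcsigma} and Lemma \ref{CP2a} — is exactly the paper's, and the modulation part of your argument is essentially correct as written. However, there is a genuine gap in the elliptic step. You bound the right-hand side $(1-|Z|^2)Z-(1-|Q_c|^2)Q_c$ of the equation for $w$ ``by a constant depending on $\Lambda$'' and then claim that bootstrapping yields $\|w\|_{C^1(B(0,\Lambda))}\leqslant K(\Lambda)\|w\|_{H^{\exp}_{Q_c}}$. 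This does not follow: if the source term is only bounded by an absolute constant $C$, the interior estimates give $\|w\|_{W^{2,2}(\Omega)}\leqslant K(\Lambda)(\|w\|_{H^1(2\Omega)}+C)$, and the additive constant survives every iteration of the bootstrap, so you cannot conclude an estimate \emph{proportional} to $\|Z-Q_c\|_{H^{\exp}_{Q_c}}$ (which is the whole point, since this quantity is small).

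The missing observation is that the right-hand side vanishes when $w=0$ and is in fact pointwise controlled by $|w|$: writing
\[ (1-|Q_c|^2)Q_c-(1-|Z|^2)Z=(Q_c-Z)(1-|Q_c|^2)+Z(|Q_c|-|Z|)(|Q_c|+|Z|), \]
and using that any finite-energy travelling wave is uniformly bounded in $L^\infty$ (this is where \cite{MR2001707} enters, rather than the hypothesis $\|Z-Q_c\|_{C^1(\mathbbm{R}^2\backslash B(0,\lambda))}\leqslant\mu_0$), one gets $|{\rm RHS}|\leqslant K|w|$, hence $\|{\rm RHS}\|_{L^2(2\Omega)}\leqslant K\|w\|_{L^2(2\Omega)}\leqslant K(\Lambda)\|w\|_{H^{\exp}_{Q_c}}$. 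With this the $W^{2,2}$ estimate of Theorem 8.8 of \cite{MR1814364} closes, and the $W^{k,2}$ bootstrap plus Sobolev embedding give the desired $C^1$ bound with the correct right-hand side. Your reduction of the $H^1(B(0,\Lambda))$ norm to the $H^{\exp}_{Q_c}$ norm on a bounded set is fine; it is only the treatment of the nonlinearity that needs to be replaced by the factorization above.
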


We will mainly use this result for $\Lambda = \lambda + 1$, $\lambda > 0$
defined in Theorem \ref{CP2th16}.

\begin{proof}
  We recall that such a function $Z$ is in $C^{\infty} (\mathbbm{R}^2,
  \mathbbm{C})$ by elliptic regularity.
  
  We start with the estimation of $w \assign Q_c - Z$ in $B (0, \Lambda)$.
  Since both $Z$ and $Q_c$ solve $(\tmop{TW}_c)$, we have
  \[ - \Delta w = (1 - | Q_c |^2) Q_c - (1 - | Z |^2) Z + i c \partial_{x_2}
     w. \]
  From Theorem 8.8 of {\cite{MR1814364}}, we have that for $x \in
  \mathbbm{R}^2$, $\Omega \assign B (0, \Lambda)$, $2 \Omega = B (0, 2
  \Lambda)$,
  \[ \| w \|_{W^{2, 2} (\Omega)} \leqslant K (\Lambda) (\| w \|_{H^1 (\Omega)}
     + \| i c \partial_{x_2} w + (1 - | Q_c |^2) Q_c - (1 - | Z |^2) Z \|_{L^2
     (2 \Omega)}) . \]
  We compute that
  \[ (1 - | Q_c |^2) Q_c - (1 - | Z |^2) Z = (Q_c - Z) (1 - | Q_c |^2) + Z (|
     Q_c | - | Z |) (| Q_c | + | Z |) . \]
  From {\cite{MR2001707}}, we have that any travelling wave of finite energy
  is bounded in $L^{\infty} (\mathbbm{R}^2)$ by a universal constant, i.e.
  \begin{equation}
    | Q_c | + | Z | \leqslant K, \label{CP2591806}
  \end{equation}
  therefore
  \[ | 1 - | Q_c |^2 | + | Z |  (| Q_c | + | Z |) \leqslant K \]
  for a universal constant $K$. Thus,
  \[ \| (1 - | Q_c |^2) Q_c - (1 - | Z |^2) Z \|_{L^2 (2 \Omega)} \leqslant K
     \| w \|_{L^2 (2 \Omega)}, \]
  and we deduce, from Lemma \ref{CP2sensmanqu}, that
  \[ \| w \|_{W^{2, 2} (\Omega)} \leqslant K (\Lambda) (\| w \|_{H^1 (2
     \Omega)} + \| i c \partial_{x_2} w \|_{L^2 (2 \Omega)} + \| w \|_{L^2 (2
     \Omega)}) \leqslant K (\Lambda) \| w \|_{H^{\exp}_{Q_c}} . \]
  By standard elliptic arguments, we have that for every $k \geqslant 2$,
  \[ \| w \|_{W^{k, 2} (\Omega)} \leqslant K (\Lambda, k) \| w
     \|_{H^{\exp}_{Q_c}} . \]
  By Sobolev embeddings, we estimate
  \begin{equation}
    \| w \|_{C^1 (\Omega)} \leqslant K (\Lambda) \| w \|_{W^{4, 2} (\Omega)}
    \leqslant K (\Lambda) \| w \|_{H^{\exp}_{Q_c}} . \label{CP25918062}
  \end{equation}
  From (\ref{CP25918062}), we have
  \[ \| Z - Q \|_{L^{\infty} (\Omega)} \leqslant \| Q - Q_c \|_{L^{\infty}
     (\Omega)} + \| w \|_{L^{\infty} (\Omega)} \leqslant \| Q - Q_c
     \|_{L^{\infty} (\mathbbm{R}^2)} + K (\Lambda) \| w \|_{H^{\exp}_{Q_c}} .
  \]
  We estimate
  \begin{eqnarray*}
    \| Q - Q_c \|_{L^{\infty} (\mathbbm{R}^2)} & = & \| Q_{\vec{c}'} (. - X)
    e^{i \gamma} - Q_c \|_{L^{\infty} (\mathbbm{R}^2)}\\
    & \leqslant & \| Q_{\vec{c}'} (. - X) e^{i \gamma} - Q_{\vec{c}'} (. - X)
    \|_{L^{\infty} (\mathbbm{R}^2)} + \| Q_{\vec{c}'} (. - X) - Q_{\vec{c}'}
    \|_{L^{\infty} (\mathbbm{R}^2)}\\
    & + & \| Q_{\vec{c}'} - Q_{| \vec{c}' | \vec{e}_2} \|_{L^{\infty}
    (\mathbbm{R}^2)} + \| Q_{| \vec{c}' | \vec{e}_2} - Q_c \|_{L^{\infty}
    (\mathbbm{R}^2)} .
  \end{eqnarray*}
  We check, with Theorem \ref{th1} and Lemma \ref{CP2a} that $\| \nabla Q
  \|_{L^{\infty} (\mathbbm{R}^2)} + c^2 \| \partial_c Q \|_{L^{\infty}
  (\mathbbm{R}^2)} + c \| \partial_{c^{\bot}} Q \|_{L^{\infty}
  (\mathbbm{R}^2)} + \| i Q \|_{L^{\infty} (\mathbbm{R}^2)} \leqslant K$, and
  that it also holds for any travelling wave of the form $Q_{\vec{\varsigma}}
  (. - Y) e^{i \beta}$ if $2 c \geqslant | \vec{\varsigma} | \geqslant c / 2,
  Y \in \mathbbm{R}^2$ and $\beta \in \mathbbm{R}$.
  
  We check that $\| Q_{\vec{c}'} (. - X) e^{i \gamma} - Q_{\vec{c}'} (. - X)
  \|_{L^{\infty} (\mathbbm{R}^2)} \leqslant | e^{i \gamma} - 1 | \|
  Q_{\vec{c}'} (. - X) \|_{L^{\infty} (\mathbbm{R}^2)} \leqslant K | \gamma
  |,$ and we estimate (by the mean value theorem)
  \[ \| Q_{\vec{c}'} (. - X) - Q_{\vec{c}'} \|_{L^{\infty} (\mathbbm{R}^2)}
     \leqslant K | X | \| \nabla Q_{\vec{c}'} \|_{L^{\infty} (\mathbbm{R}^2)}
     \leqslant K | X | . \]
  Similarly, we have
  \[ \| Q_{\vec{c}'} - Q_{| \vec{c}' | \vec{e}_2} \|_{L^{\infty}
     (\mathbbm{R}^2)} \leqslant K \frac{\delta^{\bot} (\nobracket c
     \overrightarrow{e_2}, \vec{c}') + \delta^{| . |} (c \overrightarrow{e_2},
     \vec{c}') \nobracket}{c} \]
  and $\| Q_{| \vec{c}' | \vec{e}_2} - Q_c \|_{L^{\infty} (\mathbbm{R}^2)}
  \leqslant K \frac{\delta^{| . |} (c \overrightarrow{e_2}, \vec{c}')}{c^2}$.
  We deduce that (since $c \leqslant 1$)
  \begin{equation}
    \| Q - Q_c \|_{L^{\infty} (\mathbbm{R}^2)} \leqslant K \left( | X | +
    \frac{\delta^{| . |} (c \overrightarrow{e_2}, \vec{c}')}{c^2} +
    \frac{\delta^{\bot} (\nobracket c \overrightarrow{e_2}, \vec{c}')
    \nobracket}{c} + | \gamma | \right), \label{CP2471510}
  \end{equation}
  and thus
  \begin{equation}
    \| Z - Q \|_{L^{\infty} (B (0, \Lambda))} \leqslant K (\Lambda) \| Z - Q_c
    \|_{H^{\exp}_{Q_c}} + K \left( | X | + \frac{\delta^{| . |} (c
    \overrightarrow{e_2}, \vec{c}')}{c^2} + \frac{\delta^{\bot} (\nobracket c
    \overrightarrow{e_2}, \vec{c}') \nobracket}{c} + | \gamma | \right) .
    \label{CP24102905}
  \end{equation}
  Finally, from Lemmas \ref{lemme3new}, \ref{CP283L33} and \ref{CP2dcQcsigma},
  $\partial_{c^{\bot}} Q_c = - x^{\bot} . \nabla Q_c$ and equation
  (\ref{CP2222}), we have
  \[ \| \nabla \partial_{x_2} Q \|_{L^{\infty} (\mathbbm{R}^2)} + c^2 \|
     \nabla \partial_c Q \|_{L^{\infty} (\mathbbm{R}^2)} + c \| \nabla
     \partial_{c^{\bot}} Q \|_{L^{\infty} (\mathbbm{R}^2)} + \| i \nabla Q_c
     \|_{L^{\infty} (\mathbbm{R}^2)} \leqslant K. \]
  We deduce that
  \[ \| \nabla (Q - Q_c) \|_{L^{\infty} (\mathbbm{R}^2)} \leqslant K \left( |
     X | + \frac{\delta^{| . |} (c \overrightarrow{e_2}, \vec{c}')}{c^2} +
     \frac{\delta^{\bot} (\nobracket c \overrightarrow{e_2}, \vec{c}')
     \nobracket}{c} + | \gamma | \right), \]
  and, by (\ref{CP25918062}),
  \[ \| \nabla (Z - Q) \|_{L^{\infty} (B (0, \Lambda))} \leqslant K (\Lambda)
     \| Z - Q_c \|_{H^{\exp}_{Q_c}} + K \left( | X | + \frac{\delta^{| . |} (c
     \overrightarrow{e_2}, \vec{c}')}{c^2} + \frac{\delta^{\bot} (\nobracket c
     \overrightarrow{e_2}, \vec{c}') \nobracket}{c} + | \gamma | \right) . \]
  
\end{proof}

\begin{lemma}
  \label{CP2L432905}There exists $\varepsilon_0 (c) > 0$ small such that, for
  $Z$ a function satisfying the hyptothesis of Theorem \ref{CP2th16} with
  \[ | X | + \frac{\delta^{| . |} (c \overrightarrow{e_2}, \vec{c}')}{c^2} +
     \frac{\delta^{\bot} (c \overrightarrow{e_2}, \vec{c}')}{c} + | \gamma |
     \leqslant \varepsilon_0 (c), \]
  there exists a function $Q \psi \in C^1 (\mathbbm{R}^2, \mathbbm{C})$ such
  that (\ref{CP2422805}) holds. Furthermore, for any $\Lambda > \frac{10}{c}$,
  there exists $K, K (\Lambda) > 0$ such that
  \[ \| Q \psi \|_{C^1 (B (0, \Lambda))} \leqslant K (\Lambda) \| Z - Q_c
     \|_{H^{\exp}_{Q_c}} + K \left( | X | + \frac{\delta^{| . |} (c
     \overrightarrow{e_2}, \vec{c}')}{c^2} + \frac{\delta^{\bot} (c
     \overrightarrow{e_2}, \vec{c}')}{c} + | \gamma | \right) . \]
\end{lemma}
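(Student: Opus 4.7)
The plan is to solve (\ref{CP2422805}) pointwise, exploiting that the ansatz is additive ($Q\psi = Z - Q$) where $\eta = 0$ and multiplicative ($e^\psi = Z/Q$) where $\eta = 1$, with a smooth transition. First, I verify that, for $\varepsilon_0(c)$ small enough, $Q$ does not vanish on $\mathrm{supp}(\eta)$. Since $\mathrm{supp}(\eta) \subset \mathbb{R}^2 \setminus B(\pm\tilde{d}_c\vec{e}_1, R+1)$ and $R > 10$, estimate (\ref{CP2Qcpaszero}) gives $|Q_c| \geq K_1 > 0$ there, and (\ref{CP2471510}) combined with the assumed smallness of $|X|, |\gamma|$ and $|\vec{c}' - c\vec{e}_2|$ yields $\|Q - Q_c\|_{L^\infty(\mathbb{R}^2)} \leq K\varepsilon_0(c)$, whence $|Q| \geq K_1/2$ on $\mathrm{supp}(\eta)$. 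Setting $h := Z - Q$, I next claim $\|h\|_{L^\infty(\mathbb{R}^2)}$ can be made arbitrarily small: Lemma \ref{CP2L422905} applied with $\Lambda = \lambda + 1$ controls $\|h\|_{C^1(B(0,\lambda+1))}$, while on $\mathbb{R}^2 \setminus B(0,\lambda)$ the hypothesis $\|Z - Q_c\|_{C^1(\mathbb{R}^2 \setminus B(0,\lambda))} \leq \mu_0$ together with the $C^1$ smallness of $Q - Q_c$ (obtained as in the proof of Lemma \ref{CP2L422905}) gives a uniform $C^1$ bound on $h$ outside $B(0,\lambda)$. Taking $\mu_0$, $\varepsilon_0(c)$, and $\varepsilon(c,\lambda)$ small enough renders $\|h\|_{L^\infty(\mathbb{R}^2)}$ as small as needed.

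Next, I construct $Q\psi$ in two pieces. On $\{\eta = 0\}$, which is a neighborhood of the zeros of $Q$, I set $(Q\psi)(x) := Z(x) - Q(x)$, a $C^1$ function. On $\{\eta > 0\}$, where $Q \neq 0$, write $g := h/Q$ and define $\psi(x)$ as the unique small solution of
\[
F(\psi; \eta(x)) := (1 - \eta(x))\psi + \eta(x)(e^\psi - 1) = g(x).
\]
Viewing $F$ as a smooth map $\mathbb{C} \times [0,1] \to \mathbb{C}$, one has $F(0;\eta) = 0$ and $\partial_\psi F(0;\eta) = 1$ uniformly in $\eta \in [0,1]$, so the implicit function theorem yields a smooth $\Phi : [0,1] \times U \to \mathbb{C}$ (with $U$ a neighborhood of $0$ in $\mathbb{C}$) satisfying $F(\Phi(\eta, g);\eta) = g$, $\Phi(\eta, 0) = 0$, and $\partial_g \Phi(\eta, 0) = 1$. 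Pointwise smallness of $g$ from the preliminaries makes $\psi(x) := \Phi(\eta(x), g(x))$ well-defined and $C^1$ in $x$, with $|\psi| \leq K|g|$ and $|\nabla \psi| \leq K(|\nabla g| + |g|\,|\nabla \eta|)$.

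It remains to verify the $C^1$ matching and to derive the bound. At points where $\eta = 0$, the implicit equation reduces to $\psi = g$, so the two definitions of $Q\psi$ agree: $Q\psi = Qg = h = Z - Q$. Differentiating $F(\Phi(\eta,g);\eta) = g$ in $\eta$ gives $\partial_\eta \Phi = -\partial_\eta F(\Phi;\eta)/\partial_\psi F(\Phi;\eta)$, and since $\partial_\eta F(\psi;\eta) = e^\psi - 1 - \psi = O(\psi^2)$, one finds $\partial_\eta \Phi(\eta, 0) = 0$; hence $\nabla \psi = \nabla g + O(|g|^2)\nabla \eta$ as $\eta \to 0^+$, matching $\nabla g$ on $\{\eta = 0\}$, so $Q\psi \in C^1(\mathbb{R}^2, \mathbb{C})$. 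For the estimate on $B(0, \Lambda)$: on $\{\eta = 0\} \cap B(0,\Lambda)$ we have $\|Q\psi\|_{C^1} = \|h\|_{C^1}$; on $\{\eta > 0\} \cap B(0,\Lambda)$, uniform $C^1$ bounds on $Q$ and $1/Q$ together with $\|\psi\|_{C^1} \leq K\|g\|_{C^1} \leq K\|h\|_{C^1}$ give $\|Q\psi\|_{C^1} \leq K\|h\|_{C^1(B(0,\Lambda))}$. Lemma \ref{CP2L422905} then yields the claimed estimate. The main obstacle is securing pointwise smallness of $g = h/Q$ \emph{uniformly} on the unbounded set $\{\eta > 0\}$: the energy-type smallness of $\|Z - Q_c\|_{H^{\exp}_{Q_c}}$ controls $h$ only inside $B(0, \lambda)$ via elliptic regularity, and the complementary hypothesis $\|Z - Q_c\|_{C^1(\mathbb{R}^2 \setminus B(0,\lambda))} \leq \mu_0$ in Theorem \ref{CP2th16} is precisely what supplies the needed uniform $C^0$ smallness of $h$ outside.
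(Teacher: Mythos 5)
Your proof is correct and follows essentially the same route as the paper: additive ansatz near the zeros of $Q$, the transition equation $(1-\eta)\psi+\eta(e^{\psi}-1)=(Z-Q)/Q$ solved by a small-perturbation argument using the lower bound (\ref{CP2Qcpaszero}) on $|Q|$, the $C^1$ control of $Z-Q$ on $B(0,\lambda+1)$ from Lemma \ref{CP2L422905}, and the hypothesis $\|Z-Q_c\|_{C^1(\mathbbm{R}^2\backslash B(0,\lambda))}\leqslant\mu_0$ to handle the unbounded region where $\eta=1$. The only cosmetic difference is that you invoke a pointwise implicit function theorem with a uniform neighborhood, where the paper runs a fixed point in $C^1$ on a bounded transition region and then takes a logarithm of $Z/Q$ outside $B(0,\lambda)$; both yield the same function and the same final estimate.
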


\begin{proof}
  First, taking $\varepsilon_0 (c)$ small enough (depending on $c$), we check
  that $\frac{c}{2} \leqslant | \vec{c}' | \leqslant 2 c$.
  
  We recall equation (\ref{CP2422805}):
  \[ (1 - \eta) Q \psi + \eta Q (e^{\psi} - 1) = Z - Q. \]
  We write it in the form
  \[ \psi + \eta (e^{\psi} - 1 - \psi) = \frac{Z - Q}{Q}, \]
  and in $\{ \eta = 0 \}$, we therefore define
  \begin{equation}
    \psi = \frac{Z - Q}{Q} \label{CP24112905} .
  \end{equation}
  Now, we define the set $\Omega \assign B (0, \lambda + 1) \backslash (B (d_c
  \vec{e}_1, R - 1) \cup B (- d_c \vec{e}_1, R - 1))$. In this set, we have
  that
  \[ \left\| \frac{Z - Q}{Q} \right\|_{C^1 (\Omega)} \leqslant K \varepsilon_0
     (c) + K (\lambda) \| Z - Q_c \|_{H^{\exp}_{Q_c}} \]
  by Lemma \ref{CP2L422905} and (\ref{CP2Qcpaszero}). Therefore, since
  $e^{\psi} - 1 - \psi$ is at least quadratic in $\psi \in C^1 (\Omega,
  \mathbbm{C})$, by a fixed point argument (on $H (\psi) \assign \frac{Z -
  Q}{Q} - \eta (e^{\psi} - 1 - \psi)$, which is a contraction on $\| \psi
  \|_{L^{\infty} (\{ \eta \neq 0 \})} < \mu$ for $\mu > 0$ small enough), we
  deduce that on $\Omega$, given that $\varepsilon_0$ and $\| Z - Q_c
  \|_{H^{\exp}_{Q_c}}$ are small enough (depending on $\lambda$ for $\| Z -
  Q_c \|_{H^{\exp}_{Q_c}}$), there exists a unique function $\psi \in C^1
  (\Omega, \mathbbm{C})$ such that $\psi + \eta (e^{\psi} - 1 - \psi) =
  \frac{Z - Q}{Q}$ in $\Omega$. By unicity, since we have a solution of the
  same problem on $\{ \eta = 0 \}$ which intersect $\Omega$, we can construct
  $Q \psi \in C^1 (B (0, \lambda + 1), \mathbbm{C})$ such that $\eta Q \psi +
  (1 - \eta) Q (e^{\psi} - 1) = Z - Q$ in $B (0, \lambda + 1)$.
  
  Furthermore, we use here the hypothesis that, outside of $B (0, \lambda)$, \
  $| Z - Q_c | \leqslant \mu_0$. We deduce that (taking $\mu_0 < \frac{1}{4}$)
  there exists $\delta > 0$ such that $| Z | > \delta$ outside of $B (0,
  \lambda)$. In particular, since $\lambda$ can be taken large, we have that
  outside of $B (0, \lambda)$, $\eta = 1$. The equation on $\psi$ then becomes
  \[ e^{\psi} = \frac{Z}{Q}, \]
  and by equation (\ref{CP2Qcpaszero}) and $| Z | > \delta$, we deduce that
  there exists a unique solution to this problem in $C^1 (\mathbbm{R}^2
  \backslash B (0, \lambda), \mathbbm{C})$ that is equal on $B (0, \lambda +
  1) \backslash B (0, \lambda)$ to the previously constructed function $\psi$.
  
  Therefore, there exists $Q \psi \in C^1 (\mathbbm{R}^2, \mathbbm{C})$ such
  that $(1 - \eta) Q \psi + \eta Q (e^{\psi} - 1) = Z - Q$ in $\mathbbm{R}^2$.
  Furthermore, we check that (by the fixed point argument), since $\{ \eta
  \neq 1 \} \subset B (0, \lambda)$,
  \begin{eqnarray*}
    \| \psi \|_{C^1 (\{ \eta \neq 1 \})} & \leqslant & K_{} \left\| \frac{Z -
    Q}{Q} \right\|_{C^1 (\{ \eta \neq 1 \})}\\
    & \leqslant & K (\lambda) \| Z - Q_c \|_{H^{\exp}_{Q_c}} + K \left( | X |
    + \frac{\delta^{| . |} (c \overrightarrow{e_2}, \vec{c}')}{c^2} +
    \frac{\delta^{\bot} (c \overrightarrow{e_2}, \vec{c}')}{c} + | \gamma |
    \right) .
  \end{eqnarray*}
  From equation (\ref{CP2Qcpaszero}) and Lemma \ref{CP2L422905}, we have
  \begin{eqnarray*}
    \| Q \psi \|_{C^1 (B (0, \Lambda))} & \leqslant & \| Z - Q \|_{C^1 (B (0,
    \Lambda))} + K \| \psi \|_{C^1 (\{ \eta \neq 1 \})} + K (\Lambda) \| Z -
    Q_c \|_{H^{\exp}_{Q_c}}\\
    & \leqslant & K (\Lambda) \| Z - Q_c \|_{H^{\exp}_{Q_c}} + K \left( | X |
    + \frac{\delta^{| . |} (c \overrightarrow{e_2}, \vec{c}')}{c^2} +
    \frac{\delta^{\bot} (c \overrightarrow{e_2}, \vec{c}')}{c} + | \gamma |
    \right) .
  \end{eqnarray*}
  This concludes the proof of the lemma.
\end{proof}

\begin{lemma}
  \label{CP2L531806}The functions Q and $\psi$, defined respectively in
  (\ref{CP2280542}) and Lemma \ref{CP2L432905}, satisfy
  \[ \varphi \assign Q \psi \in H^{\exp}_Q . \]
  Furthermore, $\varphi \in C^2 (\mathbbm{R}^2, \mathbbm{C})$ and there exists
  $K \left( \lambda, c, \| Z - Q_c \|_{H^{\exp}_{Q_c}}, \varepsilon_0, Z
  \right) > 0$ such that, in $\{ \eta = 1 \}$ (i.e. far from the vortices),
  \[ | \nabla \psi | + | \mathfrak{R}\mathfrak{e} (\psi) | + | \Delta \psi |
     \leqslant \frac{K \left( \lambda, c, \| Z - Q_c \|_{H^{\exp}_{Q_c}},
     \varepsilon_0, Z \right)}{(1 + r)^2}, \]
  \[ | \nabla \mathfrak{R}\mathfrak{e} (\psi) | \leqslant \frac{K \left(
     \lambda, c, \| Z - Q_c \|_{H^{\exp}_{Q_c}}, \varepsilon_0, Z \right)}{(1
     + r)^3} \]
  and
  \[ | \mathfrak{I}\mathfrak{m} (\psi + i \gamma) | \leqslant \frac{K \left(
     \lambda, c, \| Z - Q_c \|_{H^{\exp}_{Q_c}}, \varepsilon_0, Z \right)}{(1
     + r)} . \]
\end{lemma}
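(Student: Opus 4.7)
The plan is to exploit the two different descriptions of $\varphi = Q\psi$ provided by the defining relation $(1-\eta)Q\psi + \eta Q(e^\psi-1) = Z-Q$: in the region $\{\eta=1\}$ (which contains a neighbourhood of infinity and is disjoint from the zeros of $Q$) the relation reduces to $Qe^\psi = Z$, so we have the explicit formula $\psi = \log(Z/Q)$ for the appropriate branch of the logarithm, while in the region $\{\eta=0\}$ we have the additive formula $Q\psi = Z - Q$. All decay estimates therefore come from applying Theorem \ref{CP2Qcbehav} to both travelling waves $Z$ and $Q$, noting that $Q = Q_{\vec{c}'}(\cdot - X)e^{i\gamma}$ is a travelling wave for $(\tmop{TW}_{\vec{c}'})$ of finite energy, tending to $e^{i\gamma}$ at infinity.

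For the regularity claim, far from the zeros of $Q$ we have $\psi = \log(Z/Q)$ with $Z,Q \in C^\infty$ and $Q$ bounded away from $0$ by (\ref{CP2Qcpaszero}), so $\psi \in C^\infty$ there and $\varphi = Q\psi \in C^\infty$. In the region $\{\eta = 0\}$, $\varphi = Z - Q \in C^\infty$. In the transition region $\{0 < \eta < 1\}$, the implicit relation $(1-\eta)\psi + \eta(e^\psi-1) = (Z-Q)/Q$ with derivative $(1-\eta) + \eta e^\psi$ bounded away from $0$ (since $\varepsilon_0(c)$ is small enough) yields $\psi \in C^\infty$ by the implicit function theorem. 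Hence $\varphi \in C^2(\mathbb{R}^2,\mathbb{C})$.

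For the pointwise decay in $\{\eta = 1\}$, since $Q = Q_{\vec{c}'}(\cdot - X)e^{i\gamma}$, the relation $Qe^\psi = Z$ reads $e^{\psi+i\gamma} = Z/Q_{\vec{c}'}(\cdot - X)$. Both $Z$ and $Q_{\vec{c}'}(\cdot - X)$ are travelling waves of finite energy tending to $1$ at infinity, so by Theorem \ref{CP2Qcbehav} each satisfies $|1-W| \leq C/(1+r)$, $|\nabla W| \leq C/(1+r)^2$, $|1-|W|^2| \leq C/(1+r)^2$ and $|\nabla|W|| \leq C/(1+r)^3$ (with constants depending on $c$, $\|Z-Q_c\|_{H^{\exp}_{Q_c}}$, $\varepsilon_0$, $Z$ through the dependence of the constant in Theorem \ref{CP2Qcbehav}). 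From these I will deduce:
\begin{itemize}
\item $\mathfrak{Im}(\psi + i\gamma) = \arg(Z) - \arg(Q_{\vec{c}'}(\cdot - X))$ (on the principal branch, which is well-defined at infinity since both arguments are small there), giving the $O(1/(1+r))$ bound directly from $|1-W| \leq C/(1+r)$;
\item $\mathfrak{Re}(\psi) = \tfrac{1}{2}\log(|Z|^2/|Q|^2) = \tfrac{1}{2}\log\bigl(1 + O(1/(1+r)^2)\bigr) = O(1/(1+r)^2)$ using the sharper estimate on $1-|W|^2$;
\item $\nabla \psi = \nabla Z/Z - \nabla Q/Q = O(1/(1+r)^2)$;
\item $\nabla \mathfrak{Re}(\psi) = \tfrac{1}{2}\nabla\log|Z|^2 - \tfrac{1}{2}\nabla\log|Q|^2 = O(1/(1+r)^3)$ using $|\nabla|W|| \leq C/(1+r)^3$;
\item for $\Delta \psi$, combining $\Delta \log W = \Delta W/W - (\nabla W/W)^2$ with $\Delta W = -i c\, \partial_{x_2}W - (1-|W|^2)W$ from $(\tmop{TW}_c)(W)=0$ applied to both $Z$ and $Q$ yields the $O(1/(1+r)^2)$ bound.
\end{itemize}
Once these pointwise bounds are established, membership $\varphi \in H^{\exp}_Q$ follows by integration: on $\{\tilde{r}\leq 10\}$ the norm is controlled by $\|\varphi\|_{C^1(B(0,\lambda+1))}$ via Lemma \ref{CP2L432905}, while on $\{\tilde{r}\geq 5\}$ the integrands $|\nabla\psi|^2$, $\mathfrak{Re}^2(\psi)$ and $|\psi|^2/(\tilde{r}^2\ln^2 \tilde{r})$ decay respectively like $r^{-4}$, $r^{-4}$ and $r^{-4}\ln^{-2}r$, all integrable on $\mathbb{R}^2$.

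The main technical obstacle is the $O(1/(1+r))$ estimate for $\mathfrak{Im}(\psi+i\gamma)$, because $\arg(W)$ is only defined modulo $2\pi$ and one needs to check that the branch of the logarithm consistent with $Q\psi$ being $C^2$ on $\mathbb{R}^2$ (and with the already small values of $\psi$ inherited from Lemma \ref{CP2L432905}) is indeed the principal branch at infinity. This requires tracking that the travelling waves $Z$ and $Q_{\vec{c}'}(\cdot-X)$ are close enough to $1$ that their arguments remain in $(-\pi/2,\pi/2)$ for $r$ large, which is guaranteed by Theorem \ref{CP2Qcbehav} provided $\varepsilon_0$ and $\|Z-Q_c\|_{H^{\exp}_{Q_c}}$ are small enough, up to eventually enlarging the neighbourhood of infinity on which the asymptotic analysis takes place (the resulting constant then depends on $Z$, which is permitted by the statement).
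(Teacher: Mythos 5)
Your proposal is correct and follows essentially the same route as the paper: in $\{\eta=1\}$ the relation $Qe^{\psi}=Z$ is inverted and every decay estimate comes from applying Theorem \ref{CP2Qcbehav} to the two finite-energy travelling waves $Z$ and $Q$ (the paper merely phrases the real/imaginary-part bounds through $e^{\psi+i\gamma}-1$ and products such as $\nabla Z\bar{Z}$ rather than through $\log$, $\arg$ and $\log|\cdot|$, and handles the branch issue via the uniform bound on $\|\psi\|_{C^0(\{\eta=1\})}$ exactly as you indicate). One small correction: since $\mathfrak{I}\mathfrak{m}(\psi)\rightarrow-\gamma\neq 0$ at infinity, the integrand $|\psi|^2/(\tilde{r}^2\ln^2(\tilde{r}))$ only decays like $r^{-2}\ln^{-2}(r)$, not $r^{-4}\ln^{-2}(r)$ — still integrable, and precisely the reason the space $H^{\exp}_Q$ rather than $H_Q$ is needed here.
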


\begin{proof}
  From Lemma \ref{CP2L432905}, for any $\Lambda > \frac{10}{c}$,
  \begin{equation}
    \| Q \psi \|_{C^1 (B (0, \Lambda))} \leqslant K (\Lambda) \| Z - Q_c
    \|_{H^{\exp}_{Q_c}} + K \left( | X | + \frac{\delta^{| . |} (c
    \overrightarrow{e_2}, \vec{c}')}{c^2} + \frac{\delta^{\bot} (c
    \overrightarrow{e_2}, \vec{c}')}{c} + | \gamma | \right),
    \label{CP25121806}
  \end{equation}
  therefore, we only have to check the integrability at infinity of $Q \psi$
  to show that $\varphi = Q \psi \in H^{\exp}_Q .$ In $\{ \eta = 1 \}$, we
  have
  \[ e^{\psi} = \frac{Z}{Q} . \]
  We have shown in the proof of Lemma \ref{CP2L432905} that $K > \left|
  \frac{Z}{Q} \right| > \delta / 2$ outside of $B (0, \lambda)$ for some
  $\delta > 0$, and together with (\ref{CP25121806}), we check that
  \begin{equation}
    \| \psi \|_{C^0 (\{ \eta = 1 \})} \leqslant K \left( \lambda, \| Z - Q_c
    \|_{H^{\exp}_{Q_c}}, \varepsilon_0 \right) . \label{CP2psiinfini}
  \end{equation}
  This implies that
  \[ \int_{\{ \eta = 1 \}} \frac{| Q \psi |^2}{\tilde{r}^2 \nobracket \ln
     (\tilde{r} \nobracket)^2} < + \infty . \]
  Similarly, we check that, in $\{ \eta = 1 \}$, since $e^{\psi} =
  \frac{Z}{Q}$,
  \[ \nabla \psi = \frac{e^{- \psi}}{Q} \nabla (Z - Q) - \frac{\nabla Q}{Q} (1
     - e^{- \psi}), \]
  therefore
  \begin{equation}
    | \nabla \psi | \leqslant K \left( \lambda, \| Z - Q_c
    \|_{H^{\exp}_{Q_c}}, \varepsilon_0 \right) (| \nabla (Z - Q) | + | \nabla
    Q |) \label{CP2491110} .
  \end{equation}
  From Theorem \ref{CP2Qcbehav}, we have
  \[ | \nabla Z | + | \nabla Q | \leqslant \frac{K (c, Z)}{(1 + r)^2}, \]
  therefore,
  \[ \int_{\{ \eta = 1 \}} | \nabla Q |^2 | \psi |^2 < + \infty \]
  and
  \begin{eqnarray*}
    \int_{\{ \eta = 1 \}} | \nabla (Z - Q) |^2 & \leqslant & \int_{\{ \eta = 1
    \}} \frac{K (c, Z)}{(1 + r)^4} < + \infty .
  \end{eqnarray*}
  We deduce that $\int_{\{ \eta = 1 \}} | \nabla \psi |^2 < + \infty$, and,
  furthermore, equation (\ref{CP2491110}) shows that
  \[ | \nabla \psi | \leqslant \frac{K \left( \lambda, c, \| Z - Q_c
     \|_{H^{\exp}_{Q_c}}, \varepsilon_0, Z \right)}{(1 + r)^2} \]
  in $\{ \eta = 1 \}$.
  
  Now, still in $\{ \eta = 1 \}$, we have
  \[ Q e^{\psi} = Z, \]
  we deduce that $Q e^{- i \gamma} (e^{\psi + i \gamma} - 1) = Z - Q e^{- i
  \gamma}$. Now, we recall that $\| \psi \|_{C^0 (\{ \eta = 1 \})} \leqslant K
  \left( \lambda, \| Z - Q_c \|_{H^{\exp}_{Q_c}}, \varepsilon_0 \right)$, thus
  $| \mathfrak{R}\mathfrak{e} (e^{\psi + i \gamma} - 1 - (\psi + i \gamma)) |
  \leqslant K \left( \lambda, \| Z - Q_c \|_{H^{\exp}_{Q_c}}, \varepsilon_0
  \right) | \mathfrak{R}\mathfrak{e} (e^{\psi + i \gamma} - 1) |$. We deduce
  from this, with (\ref{CP25121806}) that, in $\{ \eta = 1 \}$, with
  $\frac{1}{4} \| \psi + i \gamma \|_{L^{\infty} (\mathbbm{R}^2)} \leqslant |
  \mathfrak{R}\mathfrak{e} (e^{\psi + i \gamma} - 1) | \leqslant K \| \psi + i
  \gamma \|_{L^{\infty} (\mathbbm{R}^2)}$,
  \begin{eqnarray*}
    | \mathfrak{R}\mathfrak{e} (\psi) | & = & | \mathfrak{R}\mathfrak{e} (\psi
    + i \gamma) |\\
    & \leqslant & | \mathfrak{R}\mathfrak{e} (e^{\psi + i \gamma} - 1) | + |
    \mathfrak{R}\mathfrak{e} (e^{\psi + i \gamma} - 1 - (\psi + i \gamma)) |\\
    & \leqslant & K \left( \lambda, \| Z - Q_c \|_{H^{\exp}_{Q_c}},
    \varepsilon_0 \right) | \mathfrak{R}\mathfrak{e} (e^{\psi + i \gamma} - 1)
    |\\
    & \leqslant & K \left( \lambda, \| Z - Q_c \|_{H^{\exp}_{Q_c}},
    \varepsilon_0 \right) \left| \mathfrak{R}\mathfrak{e} \left( \frac{(Z - Q
    e^{- i \gamma} ) \overline{Q e^{i \gamma}}}{| Q |^2} \right) \right|\\
    & \leqslant & K \left( \lambda, \| Z - Q_c \|_{H^{\exp}_{Q_c}},
    \varepsilon_0 \right) (| \mathfrak{R}\mathfrak{e} (Z - Q e^{- i \gamma} )
    | + | \mathfrak{I}\mathfrak{m} (Z - Q e^{- i \gamma})
    \mathfrak{I}\mathfrak{m} (Q e^{i \gamma} - 1) |) .
  \end{eqnarray*}
  From Theorem \ref{CP2Qcbehav},
  \[ | \mathfrak{R}\mathfrak{e} (Z - Q e^{- i \gamma} ) | \leqslant |
     \mathfrak{R}\mathfrak{e} (\nobracket Z - 1) \nobracket | + |
     \mathfrak{R}\mathfrak{e} (\nobracket 1 - Q e^{- i \gamma} ) \nobracket |
     \leqslant \frac{K (c, Z)}{(1 + r)^2} \]
  and
  \[ | \mathfrak{I}\mathfrak{m} (Z - Q e^{- i \gamma})
     \mathfrak{I}\mathfrak{m} (Q e^{i \gamma} - 1) | \leqslant \frac{K (c,
     Z)}{(1 + r)^2} . \]
  We conclude that, in $\{ \eta = 1 \}$, we have $| \mathfrak{R}\mathfrak{e}
  (\psi) | \leqslant \frac{K \left( \lambda, c, \| Z - Q_c
  \|_{H^{\exp}_{Q_c}}, \varepsilon_0, Z \right)}{(1 + r)^2}$ hence
  \[ \int_{\{ \eta = 1 \}} \mathfrak{R}\mathfrak{e}^2 (\psi) < + \infty . \]
  This conclude the proof of $\varphi = Q \psi \in H^{\exp}_{Q_c}$. We are
  left with the proof of the following estimates, $| \Delta \psi | \leqslant
  \frac{K \left( \lambda, c, \| Z - Q_c \|_{H^{\exp}_{Q_c}}, \varepsilon_0, Z
  \right)}{(1 + r)^2}$, $| \mathfrak{I}\mathfrak{m} (\psi + i \gamma) |
  \leqslant \frac{K \left( \lambda, c, \| Z - Q_c \|_{H^{\exp}_{Q_c}},
  \varepsilon_0, Z \right)}{(1 + r)}$ and $| \mathfrak{R}\mathfrak{e} (\nabla
  \psi) | \leqslant \frac{K \left( \lambda, c, \| Z - Q_c \|_{H^{\exp}_{Q_c}},
  \varepsilon_0, Z \right)}{(1 + r)^3}$ in $\{ \eta = 1 \}$.
  
  We recall that, in $\{ \eta = 1 \}$, $\nabla \psi = \frac{e^{- \psi}}{Q}
  \nabla (Z - Q) - \frac{\nabla Q}{Q} (1 - e^{- \psi})$, from which we
  compute, by differentiating a second time,
  \begin{eqnarray*}
    \Delta \psi & = & - \frac{\nabla \psi . \nabla (Z - Q)}{Q} e^{- \psi} -
    \frac{\nabla Q}{Q} e^{- \psi} . \nabla (Z - Q) + \frac{e^{- \psi}}{Q}
    \Delta (Z - Q)\\
    & - & \frac{\Delta Q}{Q} (1 - e^{- \psi}) + \frac{\nabla Q. \nabla
    Q}{Q^2} (1 - e^{- \psi}) - \frac{\nabla Q}{Q} . \nabla \psi e^{- \psi} .
  \end{eqnarray*}
  Using Theorem \ref{CP2Qcbehav}, $\Delta Q = - i \vec{c}' . \nabla Q - (1 - |
  Q |^2) Q$, $Z = - i c \partial_{x_2} Z - (1 - | Z |^2) Z$ and previous
  estimates on $\psi$, we check that, in $\{ \eta = 1 \}$,
  \[ | \Delta \psi | \leqslant \frac{K \left( \lambda, c, \| Z - Q_c
     \|_{H^{\exp}_{Q_c}}, \varepsilon_0, Z \right)}{(1 + r)^2} . \]
  We have $Q e^{- i \gamma} (e^{\psi + i \gamma} - 1) = Z - Q e^{- i \gamma}$
  in $\{ \eta = 1 \}$, therefore
  \[ e^{\psi + i \gamma} - 1 = \frac{Z}{Q e^{- i \gamma}} - 1 \]
  We check, since $\| \psi \|_{C^0 (\{ \eta = 1 \})} \leqslant K \left(
  \lambda, \| Z - Q_c \|_{H^{\exp}_{Q_c}}, \varepsilon_0 \right)$, that we
  have by Theorem \ref{CP2Qcbehav}
  \begin{eqnarray*}
    | \mathfrak{I}\mathfrak{m} (\psi + i \gamma) | & \leqslant & K \left(
    \lambda, \| Z - Q_c \|_{H^{\exp}_{Q_c}}, \varepsilon_0 \right) |
    \mathfrak{I}\mathfrak{m} (e^{\psi + i \gamma} - 1) |\\
    & \leqslant & K \left( \lambda, \| Z - Q_c \|_{H^{\exp}_{Q_c}},
    \varepsilon_0 \right) \left| \frac{Z}{\tmop{Qe}^{- i \gamma}} - 1
    \right|\\
    & \leqslant & \frac{K \left( \lambda, c, \| Z - Q_c \|_{H^{\exp}_{Q_c}},
    \varepsilon_0, Z \right)}{(1 + r)} .
  \end{eqnarray*}
  Finally, since $\nabla \psi = \frac{e^{- \psi}}{Q} \nabla (Z - Q) -
  \frac{\nabla Q}{Q} (1 - e^{- \psi}) = \frac{\nabla Z}{Q} e^{- \psi} -
  \frac{\nabla Q}{Q}$, we check with Theorem \ref{CP2Qcbehav} that, in $\{
  \eta = 1 \}$,
  \begin{eqnarray*}
    | \nabla \mathfrak{R}\mathfrak{e} (\psi) | & \leqslant & \left|
    \mathfrak{R}\mathfrak{e} \left( \frac{\nabla Z}{Q} e^{- \psi} \right)
    \right| + \left| \mathfrak{R}\mathfrak{e} \left( \frac{\nabla Q}{Q}
    \right) \right|\\
    & \leqslant & \left| \mathfrak{R}\mathfrak{e} \left( \nabla Z \bar{Z}
    \frac{e^{- \psi}}{Q \bar{Z}} \right) \right| + \frac{|
    \mathfrak{R}\mathfrak{e} (\nabla Q \bar{Q}) |}{| Q |^2}\\
    & \leqslant & \left| \mathfrak{I}\mathfrak{m} (\nabla Z \bar{Z})
    \mathfrak{I}\mathfrak{m} \left( \frac{e^{- \psi}}{Q \bar{Z}} \right)
    \right| + | \mathfrak{R}\mathfrak{e} (\nabla Z \bar{Z}) | \left|
    \mathfrak{R}\mathfrak{e} \left( \frac{e^{- \psi}}{Q \bar{Z}} \right)
    \right| + \frac{| \nobracket \nabla (| Q |^2 \nobracket) |}{2 | Q |^2}\\
    & \leqslant & \frac{K \left( \lambda, c, \| Z - Q_c \|_{H^{\exp}_{Q_c}},
    \varepsilon_0, Z \right)}{(1 + r)^2} \left| \mathfrak{I}\mathfrak{m}
    \left( \frac{e^{- \psi}}{Q \bar{Z}} \right) \right| + \frac{K \left(
    \lambda, c, \| Z - Q_c \|_{H^{\exp}_{Q_c}}, \varepsilon_0, Z \right)}{(1 +
    r)^3} .
  \end{eqnarray*}
  We compute in $\{ \eta = 1 \}$, still using Theorem \ref{CP2Qcbehav},
  \begin{eqnarray*}
    \left| \mathfrak{I}\mathfrak{m} \left( \frac{e^{- \psi}}{Q \bar{Z}}
    \right) \right| & = & \frac{1}{| Q Z |^2} | \mathfrak{I}\mathfrak{m} (e^{-
    \psi - i \gamma} \bar{Q} Z e^{i \gamma}) |\\
    & \leqslant & K (| \mathfrak{I}\mathfrak{m} (e^{- \psi - i \gamma} - 1)
    \mathfrak{R}\mathfrak{e} (\bar{Q} Z e^{i \gamma}) | + |
    \mathfrak{R}\mathfrak{e} (e^{- \psi - i \gamma}) \mathfrak{I}\mathfrak{m}
    (\bar{Q} Z e^{i \gamma}) |)\\
    & \leqslant & \frac{K \left( \lambda, c, \| Z - Q_c \|_{H^{\exp}_{Q_c}},
    \varepsilon_0, Z \right)}{1 + r} + K \left( \lambda, c, \| Z - Q_c
    \|_{H^{\exp}_{Q_c}}, \varepsilon_0, Z \right) | \mathfrak{I}\mathfrak{m}
    (\bar{Q} Z e^{i \gamma}) |\\
    & \leqslant & \frac{K \left( \lambda, c, \| Z - Q_c \|_{H^{\exp}_{Q_c}},
    \varepsilon_0, Z \right)}{(1 + r)} + K \left( \lambda, c, \| Z - Q_c
    \|_{H^{\exp}_{Q_c}}, \varepsilon_0, Z \right) (| Q e^{- i \gamma} - 1 | +
    | Z - 1 |)\\
    & \leqslant & \frac{K \left( \lambda, c, \| Z - Q_c \|_{H^{\exp}_{Q_c}},
    \varepsilon_0, Z \right)}{(1 + r)} .
  \end{eqnarray*}
  This concludes the proof of this lemma.
\end{proof}

We remark that here, since $\psi \nrightarrow 0$ at infinity (if $\gamma \neq
0$), we do not have $Q \psi \in H_Q$. This is one of the main reasons to
introduce the space $H^{\exp}_Q$.

\

\begin{lemma}
  \label{CP2L3162011}The functions Q and $\psi$, defined respectively in
  (\ref{CP2280542}) and Lemma \ref{CP2L432905}, satisfy, with $\varphi = Q
  \psi$,
  \[ \langle L^{\exp}_Q (\varphi), (\varphi + i \gamma Q) \rangle = B^{\exp}_Q
     (\varphi), \]
  where $L^{\exp}_Q (\varphi) = (1 - \eta) L_Q (\varphi) + \eta Q L'_Q
  (\psi)$, with
  \[ L'_Q (\psi) = - \Delta \psi - 2 \frac{\nabla Q}{Q} . \nabla \psi + i
     \vec{c} . \nabla \psi + 2\mathfrak{R}\mathfrak{e} (\psi) | Q |^2 . \]
  Furthermore,
  \[ L_Q (\varphi) = Q L'_Q (\psi) . \]
\end{lemma}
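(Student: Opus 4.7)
The strategy is to shift $\psi$ by the phase at infinity: set $\tilde\psi \assign \psi + i\gamma$ and $\tilde\varphi \assign \varphi + i\gamma Q = Q\tilde\psi$. By Lemma \ref{CP2L531806}, on $\{\eta=1\}$ one has $|\mathfrak{I}\mathfrak{m}(\tilde\psi)| = O(1/r)$, $|\mathfrak{R}\mathfrak{e}(\tilde\psi)| + |\nabla\tilde\psi| + |\Delta\tilde\psi| = O(1/r^2)$ and $|\nabla\mathfrak{R}\mathfrak{e}(\tilde\psi)| = O(1/r^3)$. These decay rates (together with Theorem \ref{CP2Qcbehav}) are enough to ensure that $\tilde\varphi \in H^{\exp}_Q \cap C^2$ and, more importantly, to run every integration by parts below with no boundary contribution at infinity.

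The identity $L_Q(\varphi) = Q L'_Q(\psi)$ will follow from a direct Leibniz-rule computation: expanding $L_Q(Q\psi)$ separates the terms carrying $\psi$ as a factor into $\tmop{TW}_{\vec c'}(Q)\cdot\psi$, which vanishes since $Q$ solves $\tmop{TW}_{\vec c'}(Q)=0$, and leaves $Q L'_Q(\psi)$. Consequently $L^{\exp}_Q(\varphi) = (1-\eta)L_Q(\varphi) + \eta Q L'_Q(\psi) = L_Q(\varphi)$ as pointwise smooth functions. A similar expansion shows $L_Q(iQ) = i\,\tmop{TW}_{\vec c'}(Q) = 0$ (the nonlinearity contributes $2\mathfrak{R}\mathfrak{e}(\bar Q \cdot iQ)Q = 0$) and $L'_Q(i) = 0$. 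By $\mathbbm{R}$-linearity one gets $L_Q(\tilde\varphi) = L_Q(\varphi)$ and $L'_Q(\tilde\psi) = L'_Q(\psi)$, so $L^{\exp}_Q(\tilde\varphi) = L^{\exp}_Q(\varphi)$, and in particular $\langle L^{\exp}_Q(\varphi), \tilde\varphi\rangle = \langle L^{\exp}_Q(\tilde\varphi), \tilde\varphi\rangle$.

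The main step will be to establish $\langle L^{\exp}_Q(\tilde\varphi), \tilde\varphi\rangle = B^{\exp}_Q(\tilde\varphi)$ by exactly the algebraic manipulation carried out in the proof of Lemma \ref{CP2L412602}: split the inner product through the cutoff $\eta$, integrate by parts the Laplacian and the transport term $i\vec c\cdot\nabla$ in each region, reorganize the products $\mathfrak{R}\mathfrak{e}(\tilde\psi)\mathfrak{I}\mathfrak{m}(\tilde\psi)$ via a further integration by parts producing the weights $\partial_{x_2}\eta$ and $\partial_{x_2}(|Q|^2)$, and collect to match the formula (\ref{CP2Btilda}). The only novelty relative to Lemma \ref{CP2L412602} is that $\tilde\varphi$ is no longer compactly supported, so every intermediate integrand must be checked absolutely integrable and every boundary sphere $\{|x|=R\}$ must vanish in the limit $R\to\infty$. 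The decay estimates from Lemma \ref{CP2L531806}, combined with $|\nabla Q| \leqslant K(c)/(1+r)^2$ from Theorem \ref{CP2Qcbehav}, imply that each such boundary integrand contains at least one factor among $\mathfrak{R}\mathfrak{e}(\tilde\psi)$, $\nabla\tilde\psi$ or $\nabla Q$, and is therefore $O(R^{-2})$; multiplied by the perimeter this contributes $O(R^{-1})\to 0$.

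I will then conclude with the phase-invariance identity $B^{\exp}_Q(\tilde\varphi) = B^{\exp}_Q(\varphi + i\gamma Q) = B^{\exp}_Q(\varphi)$, which is precisely the identity $B^{\exp}_{Q_c}(\varphi + \varepsilon i Q_c) = B^{\exp}_{Q_c}(\varphi)$ established inside the proof of Lemma \ref{CP2destrucs} (applied to the travelling wave $Q$ with $\varepsilon=\gamma$, whose proof extends verbatim by the density argument of Lemma \ref{CP2NNN}). The hardest part of the plan is the bookkeeping inside the main step: because $\mathfrak{I}\mathfrak{m}(\tilde\psi)$ only decays like $1/r$, every term in which $\mathfrak{I}\mathfrak{m}(\tilde\psi)$ appears by itself must be paired with a quantity decaying strictly faster than $1/r$ (a factor of $1-|Q|^2$, of $\nabla Q$, of $\mathfrak{R}\mathfrak{e}(\tilde\psi)$, or of a cutoff derivative); identifying these pairings and confirming that the residual cross-terms either integrate absolutely or appear as total derivatives with integrable tail is the delicate part of the argument.
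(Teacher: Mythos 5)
Your proposal is correct, and it reorganizes the argument along a genuinely different (cleaner) route than the paper's. The paper establishes $L_Q(\varphi)=QL'_Q(\psi)$ exactly as you do, but then attacks $\langle L^{\exp}_Q(\varphi),\varphi+i\gamma Q\rangle$ head-on: it carries the extra $i\gamma Q$ through every integration by parts, producing a collection of $\gamma$-weighted terms (involving $\nabla\eta$, $\mathfrak{I}\mathfrak{m}(\nabla Q\bar Q)$, $\mathfrak{R}\mathfrak{e}(\vec c\cdot\nabla Q\bar Q)$, etc.) which are then shown to cancel using $-\vec c\cdot\nabla\bar Q+i(1-|Q|^2)\bar Q+i\Delta\bar Q=0$ and $\mathfrak{I}\mathfrak{m}(\Delta Q\bar Q)=\mathfrak{R}\mathfrak{e}(\vec c\cdot\nabla Q\bar Q)$. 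You instead absorb the phase into $\tilde\psi=\psi+i\gamma$ at the outset, reduce to the diagonal identity $\langle L^{\exp}_Q(\tilde\varphi),\tilde\varphi\rangle=B^{\exp}_Q(\tilde\varphi)$, and delegate the $\gamma$-cancellation to the two invariance facts $L_Q(iQ)=0$, $L'_Q(i)=0$ and $B^{\exp}_Q(\cdot+i\gamma Q)=B^{\exp}_Q(\cdot)$; the latter is indeed available from the proof of Lemma \ref{CP2destrucs}, and applies here since $Q$ belongs to the same family of travelling waves as $Q_c$ and $\varphi\in H^{\exp}_Q$ by Lemma \ref{CP2L531806}. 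What this buys is a transparent conceptual structure with no $\gamma$-bookkeeping inside the long computation; what it does not buy is any saving on the computational core, since your middle step is not literally Lemma \ref{CP2L412602} (stated for compactly supported test functions) but its extension to the non-compactly-supported $\tilde\varphi$, and that extension requires precisely the term-by-term integrability checks and the vanishing of boundary terms at infinity that occupy most of the paper's proof. Your accounting of the decay rates is the right one — $\mathfrak{I}\mathfrak{m}(\tilde\psi)=O(1/r)$ must always appear paired with a factor decaying strictly faster than $1/r$, and the refined bound $|\nabla\mathfrak{R}\mathfrak{e}(\psi)|=O(1/r^{3})$ from Lemma \ref{CP2L531806} is what handles the transport term — so the plan goes through as stated.
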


The equality $\langle L^{\exp}_Q (\varphi), (\varphi + i \gamma Q) \rangle =
B^{\exp}_Q (\varphi)$ is not obvious for functions $\varphi \in C^2
(\mathbbm{R}^2, \mathbbm{C}) \cap H_{Q_c}^{\exp}$ (because of some integration
by parts to justify) and we need to check that, for the particular function
$\varphi$ we have contructed, this result holds. We will use mainly the decay
estimates of Lemma \ref{CP2L531806}.

Morally, we are showing that, since $L_Q (i \gamma Q) = 0$, that we can do the
following computation: $\langle L_Q (\varphi), \varphi + i \gamma Q \rangle =
\langle \varphi, L_Q (\varphi + i \gamma Q) \rangle = \langle \varphi, L_Q
(\varphi) \rangle = B_Q (\varphi)$. The goal of this lemma is simply to check
that, with the estimates of Lemma \ref{CP2L531806}, the integrands are
integrable and the integration by parts can be done to have $\langle
L^{\exp}_Q (\varphi), (\varphi + i \gamma Q) \rangle = B^{\exp}_Q (\varphi)$.

\begin{proof}
  First, let us show that $L_{Q_c} (\Phi) = Q_c L'_{Q_c} (\Psi)$ if $\Phi =
  Q_c \Psi \in C^2 (\mathbbm{R}^2, \mathbbm{C})$. With equation
  (\ref{CP2280542}), it implies that $L_Q (\varphi) = Q L'_Q (\psi) .$ We
  recall that
  \[ L_{Q_c} (\Phi) = - \Delta \Phi - i c \partial_{x_2} \Phi - (1 - | Q_c
     |^2) \Phi + 2\mathfrak{R}\mathfrak{e} (\overline{Q_c} \Phi) Q_c, \]
  and we develop with $\Phi = Q_c \Psi$,
  \[ L_{Q_c} (\Phi) = \tmop{TW}_c (Q_c) \Psi - Q_c \Delta \Psi - 2 \nabla Q_c
     . \nabla \Psi - i c Q_c \partial_{x_2} \Psi + 2\mathfrak{R}\mathfrak{e}
     (\Psi) | Q_c |^2 Q_c, \]
  thus, since $(\tmop{TW}_c) (Q_c) = 0$, we have $L_{Q_c} (\Phi) = Q_c
  L'_{Q_c} (\Psi)$.
  
  \
  
  Now, for $\varphi = Q \psi$, we have
  \begin{eqnarray*}
    &  & \langle (1 - \eta) L_Q (\varphi) + \eta Q L'_Q (\psi), (\varphi + i
    \gamma Q) \rangle\\
    & = & \int_{\mathbbm{R}^2} \mathfrak{R}\mathfrak{e} ((1 - \eta) L_Q
    (\varphi) \overline{(\varphi + i \gamma Q)})\\
    & + & \int_{\mathbbm{R}^2} \eta | Q |^2 \mathfrak{R}\mathfrak{e} \left(
    \left( - \Delta \psi - 2 \frac{\nabla Q}{Q} . \nabla \psi + i
    \overrightarrow{c} . \nabla \psi \right) (\overline{\psi + i \gamma})
    \right) + \eta | Q |^4 \mathfrak{R}\mathfrak{e}^2 (\psi) .
  \end{eqnarray*}
  With Lemma \ref{CP2L531806}, we check that all the terms are integrable
  independently (in particular since $\varphi + i \gamma Q = Q (\psi + i
  \gamma)$ and $\| (\psi + i \gamma) (1 + r) \|_{L^{\infty} (\{ \eta = 1 \})}
  < + \infty$ by Lemma \ref{CP2L531806}). We recall that $L_Q (\varphi) = -
  \Delta \varphi + i \vec{c} . \nabla \varphi - (1 - | Q |^2) \varphi +
  2\mathfrak{R}\mathfrak{e} (\bar{Q} \varphi) Q$, and thus
  \begin{eqnarray*}
    \int_{\mathbbm{R}^2} \mathfrak{R}\mathfrak{e} ((1 - \eta) L_Q (\varphi)
    \overline{(\varphi + i \gamma Q)}) & = & \int_{\mathbbm{R}^2} (1 - \eta)
    (\mathfrak{R}\mathfrak{e} (i \vec{c} . \nabla \varphi \bar{\varphi}) - (1
    - | Q |^2) | \varphi |^2 + 2\mathfrak{R}\mathfrak{e}^2 (\bar{Q}
    \varphi))\\
    & + & \int_{\mathbbm{R}^2} (1 - \eta) \mathfrak{R}\mathfrak{e} (- \Delta
    \varphi \bar{\varphi}) + \gamma \int_{\mathbbm{R}^2} (1 - \eta)
    \mathfrak{R}\mathfrak{e} (L_Q (\varphi) \overline{i Q}) .
  \end{eqnarray*}
  We recall that $1 - \eta$ is compactly supported and that $\varphi \in C^2
  (\mathbbm{R}^2, \mathbbm{C})$. By integration by parts,
  \[ \int_{\mathbbm{R}^2} (1 - \eta) \mathfrak{R}\mathfrak{e} (- \Delta
     \varphi \bar{\varphi}) = \int_{\mathbbm{R}^2} (1 - \eta) | \nabla \varphi
     |^2 - \int_{\mathbbm{R}^2} \nabla \eta .\mathfrak{R}\mathfrak{e} (\nabla
     \varphi \bar{\varphi}), \]
  and we decompose
  \begin{eqnarray*}
    \int_{\mathbbm{R}^2} (1 - \eta) \mathfrak{R}\mathfrak{e} (\eta L_Q
    (\varphi) \overline{i Q}) & = & \int_{\mathbbm{R}^2} (1 - \eta)
    \mathfrak{R}\mathfrak{e} (- \Delta \varphi \overline{i Q} + \vec{c} .
    \nabla \varphi \overline{Q})\\
    & - & \int_{\mathbbm{R}^2} (1 - \eta) \mathfrak{R}\mathfrak{e} ((1 - | Q
    |^2) \varphi \overline{i Q}) .
  \end{eqnarray*}
  By integration by parts, that we have
  \[ \int_{\mathbbm{R}^2} (1 - \eta) \mathfrak{R}\mathfrak{e} (\vec{c} .
     \nabla \varphi \overline{Q}) = - \vec{c} . \int_{\mathbbm{R}^2} - \nabla
     \eta \mathfrak{R}\mathfrak{e} (\varphi \bar{Q}) + (1 - \eta)
     \mathfrak{R}\mathfrak{e} (\varphi \nabla \bar{Q}) \]
  and
  \begin{eqnarray*}
    \int_{\mathbbm{R}^2} (1 - \eta) \mathfrak{R}\mathfrak{e} (- \Delta \varphi
    \overline{i Q}) & = & \int_{\mathbbm{R}^2} - \nabla \eta .
    (\mathfrak{R}\mathfrak{e} (i \varphi \nabla \bar{Q})
    -\mathfrak{R}\mathfrak{e} (i \nabla \varphi \bar{Q})) +
    \int_{\mathbbm{R}^2} (1 - \eta) \mathfrak{R}\mathfrak{e} (i \varphi
    \bar{Q}) .
  \end{eqnarray*}
  Combining this computations, we infer
  \[ \begin{array}{lll}
       &  & \int_{\mathbbm{R}^2} \mathfrak{R}\mathfrak{e} ((1 - \eta) L_Q
       (\varphi) \overline{(\varphi + i \gamma Q)})\\
       & = & \int_{\mathbbm{R}^2} (1 - \eta) (| \nabla \varphi |^2
       +\mathfrak{R}\mathfrak{e} (i \vec{c} . \nabla \varphi \bar{\varphi}) -
       (1 - | Q |^2) | \varphi |^2 + 2\mathfrak{R}\mathfrak{e}^2 (\bar{Q}
       \varphi))\\
       & - & \int_{\mathbbm{R}^2} \nabla \eta .\mathfrak{R}\mathfrak{e}
       (\nabla \varphi \bar{\varphi}) \gamma \vec{c} . \int_{\mathbbm{R}^2}
       \nabla \eta \mathfrak{R}\mathfrak{e} (\varphi \bar{Q})\\
       & - & \gamma \left( \int_{\mathbbm{R}^2} \nabla \eta .
       (\mathfrak{R}\mathfrak{e} (i \varphi \nabla \bar{Q})
       -\mathfrak{R}\mathfrak{e} (i \nabla \varphi \bar{Q})) \right)\\
       & + & \gamma \int_{\mathbbm{R}^2} (1 - \eta) \mathfrak{R}\mathfrak{e}
       (\varphi (- \vec{c} . \nabla \bar{Q} + i (1 - | Q |^2) \bar{Q} + i
       \Delta \bar{Q})) .
     \end{array} \]
  Since $- \Delta Q + i \vec{c} . \nabla Q - (1 - | Q |^2) Q = 0$, we have $-
  \vec{c} . \nabla \bar{Q} + i (1 - | Q |^2) \bar{Q} + i \Delta \bar{Q} = 0$,
  therefore
  \[ \begin{array}{lll}
       &  & \int_{\mathbbm{R}^2} \mathfrak{R}\mathfrak{e} ((1 - \eta) L_Q
       (\varphi) \overline{(\varphi + i \gamma Q)})\\
       & = & \int_{\mathbbm{R}^2} (1 - \eta) (| \nabla \varphi |^2
       +\mathfrak{R}\mathfrak{e} (i \vec{c} . \nabla \varphi \bar{\varphi}) -
       (1 - | Q |^2) | \varphi |^2 + 2\mathfrak{R}\mathfrak{e}^2 (\bar{Q}
       \varphi))\\
       & - & \int_{\mathbbm{R}^2} \nabla \eta .\mathfrak{R}\mathfrak{e}
       (\nabla \varphi \bar{\varphi})\\
       & - & \gamma \left( - \vec{c} . \int_{\mathbbm{R}^2} \nabla \eta
       \mathfrak{R}\mathfrak{e} (\varphi \bar{Q}) + \int_{\mathbbm{R}^2}
       \nabla \eta . (\mathfrak{R}\mathfrak{e} (i \varphi \nabla \bar{Q})
       -\mathfrak{R}\mathfrak{e} (i \nabla \varphi \bar{Q})) \right) .
     \end{array} \]
  Until now, all the integrals were on bounded domain (since $1 - \eta$ is
  compactly supported).
  
  Now, by integration by parts, (that can be done thanks to Lemma
  \ref{CP2L531806} and Theorem \ref{CP2Qcbehav})
  \begin{eqnarray*}
    \int_{\mathbbm{R}^2} \eta | Q |^2 \mathfrak{R}\mathfrak{e} (- \Delta \psi
    (\overline{\psi + i \gamma})) & = & \int_{\mathbbm{R}^2} \nabla \eta . | Q
    |^2 \mathfrak{R}\mathfrak{e} (\nabla \psi (\overline{\psi + i \gamma}))\\
    & + & \int_{\mathbbm{R}^2} \eta \nabla (| Q |^2)
    .\mathfrak{R}\mathfrak{e} (\nabla \psi (\overline{\psi + i \gamma}))\\
    & + & \int_{\mathbbm{R}^2} \eta | Q |^2 | \nabla \psi |^2 .
  \end{eqnarray*}
  Now, we decompose (and we check that each term is well defined at each step
  with Lemma \ref{CP2L531806} and Theorem \ref{CP2Qcbehav})
  \begin{eqnarray*}
    &  & \int_{\mathbbm{R}^2} \eta | Q |^2 \mathfrak{R}\mathfrak{e} \left(
    \left. - 2 \frac{\nabla Q}{Q} . \nabla \psi \right) (\overline{\psi + i
    \gamma}) \right)\\
    & = & - 2 \int_{\mathbbm{R}^2} \eta \mathfrak{R}\mathfrak{e} (\nabla Q
    \bar{Q} . \nabla \psi \bar{\psi}) - 2 \int_{\mathbbm{R}^2} \eta
    \mathfrak{R}\mathfrak{e} (\nabla Q \bar{Q} . \nabla \psi \overline{(i
    \gamma)}),
  \end{eqnarray*}
  with
  \begin{eqnarray*}
    - 2 \int_{\mathbbm{R}^2} \eta \mathfrak{R}\mathfrak{e} (\nabla Q \bar{Q} .
    \nabla \psi \bar{\psi}) & = & - 2 \int_{\mathbbm{R}^2} \eta
    \mathfrak{R}\mathfrak{e} (\nabla Q \bar{Q}) .\mathfrak{R}\mathfrak{e}
    (\nabla \psi \bar{\psi})\\
    & + & 2 \int_{\mathbbm{R}^2} \eta \mathfrak{I}\mathfrak{m} (\nabla Q
    \bar{Q}) .\mathfrak{I}\mathfrak{m} (\nabla \psi \bar{\psi}),
  \end{eqnarray*}
  and since $\nabla (| Q |^2) = 2\mathfrak{R}\mathfrak{e} (\nabla Q \bar{Q})$,
  we have
  \begin{eqnarray*}
    &  & \int_{\mathbbm{R}^2} \eta | Q |^2 \mathfrak{R}\mathfrak{e} \left(
    \left( - \Delta \psi - 2 \frac{\nabla Q}{Q} . \nabla \psi \right)
    (\overline{\psi + i \gamma}) \right)\\
    & = & \int_{\mathbbm{R}^2} \eta | Q |^2 | \nabla \psi |^2 + 2
    \int_{\mathbbm{R}^2} (1 - \eta) \mathfrak{I}\mathfrak{m} (\nabla Q
    \bar{Q}) .\mathfrak{I}\mathfrak{m} (\nabla \psi \bar{\psi})\\
    & + & \int_{\mathbbm{R}^2} \nabla \eta . | Q |^2 \mathfrak{R}\mathfrak{e}
    (\nabla \psi (\overline{\psi + i \gamma})) + 2 \int_{\mathbbm{R}^2} \eta
    \mathfrak{I}\mathfrak{m} (\nabla Q \bar{Q}) .\mathfrak{I}\mathfrak{m}
    (\nabla \psi \nobracket \overline{(i \gamma)})) .
  \end{eqnarray*}
  We continue. We have
  \begin{eqnarray*}
    2 \int_{\mathbbm{R}^2} \eta \mathfrak{I}\mathfrak{m} (\nabla Q \bar{Q})
    .\mathfrak{I}\mathfrak{m} (\nabla \psi \bar{\psi}) & = & 2
    \int_{\mathbbm{R}^2} \eta \mathfrak{I}\mathfrak{m} (\nabla Q \bar{Q})
    .\mathfrak{R}\mathfrak{e} (\psi) \mathfrak{I}\mathfrak{m} (\nabla \psi)\\
    & - & 2 \int_{\mathbbm{R}^2} \eta \mathfrak{I}\mathfrak{m} (\nabla Q
    \bar{Q}) .\mathfrak{R}\mathfrak{e} (\nabla \psi) \mathfrak{I}\mathfrak{m}
    (\psi),
  \end{eqnarray*}
  and by integration by parts (still using Lemma \ref{CP2L531806} and Theorem
  \ref{CP2Qcbehav}),
  \begin{eqnarray*}
    &  & - 2 \int_{\mathbbm{R}^2} \eta \mathfrak{I}\mathfrak{m} (\nabla Q
    \bar{Q}) .\mathfrak{R}\mathfrak{e} (\nabla \psi) \mathfrak{I}\mathfrak{m}
    (\psi)\\
    & = & 2 \int_{\mathbbm{R}^2} \eta \mathfrak{I}\mathfrak{m} (\nabla Q
    \bar{Q}) .\mathfrak{R}\mathfrak{e} (\psi) \mathfrak{I}\mathfrak{m} (\nabla
    \psi)\\
    & + & 2 \int_{\mathbbm{R}^2} \eta \mathfrak{I}\mathfrak{m} (\Delta Q
    \bar{Q}) \mathfrak{R}\mathfrak{e} (\psi) \mathfrak{I}\mathfrak{m} (\psi)\\
    & + & 2 \int_{\mathbbm{R}^2} \nabla \eta .\mathfrak{I}\mathfrak{m}
    (\nabla Q \bar{Q}) \mathfrak{R}\mathfrak{e} (\psi)
    \mathfrak{I}\mathfrak{m} (\psi) .
  \end{eqnarray*}
  We have $\mathfrak{I}\mathfrak{m} (\Delta Q \bar{Q})
  =\mathfrak{I}\mathfrak{m} (i \vec{c} . \nabla Q - (1 - | Q |^2 Q) \bar{Q})
  =\mathfrak{R}\mathfrak{e} (\vec{c} . \nabla Q \bar{Q})$, therefore
  \begin{eqnarray*}
    &  & \int_{\mathbbm{R}^2} \eta | Q |^2 \mathfrak{R}\mathfrak{e} \left(
    \left( - \Delta \psi - 2 \frac{\nabla Q}{Q} . \nabla \psi \right)
    (\overline{\psi + i \gamma}) \right)\\
    & = & \int_{\mathbbm{R}^2} \eta | Q |^2 | \nabla \psi |^2 + 4
    \int_{\mathbbm{R}^2} \eta \mathfrak{I}\mathfrak{m} (\nabla Q \bar{Q})
    .\mathfrak{R}\mathfrak{e} (\psi) \mathfrak{I}\mathfrak{m} (\nabla \psi)\\
    & + & 2 \int_{\mathbbm{R}^2} \eta \mathfrak{I}\mathfrak{m} (\nabla Q
    \bar{Q}) .\mathfrak{I}\mathfrak{m} (\nabla \psi \nobracket \overline{(i
    \gamma)}))\\
    & + & 2 \int_{\mathbbm{R}^2} \eta \mathfrak{R}\mathfrak{e} (\vec{c} .
    \nabla Q \bar{Q}) \mathfrak{R}\mathfrak{e} (\psi) \mathfrak{I}\mathfrak{m}
    (\psi)\\
    & + & \int_{\mathbbm{R}^2} \nabla \eta (| Q |^2 \mathfrak{R}\mathfrak{e}
    (\nabla \psi (\overline{\psi + i \gamma})) + 2\mathfrak{I}\mathfrak{m}
    (\nabla Q \bar{Q}) \mathfrak{R}\mathfrak{e} (\psi)
    \mathfrak{I}\mathfrak{m} (\psi)) .
  \end{eqnarray*}
  Now, we compute
  \begin{eqnarray*}
    \vec{c} . \int_{\mathbbm{R}^2} \eta | Q |^2 \mathfrak{R}\mathfrak{e} (i
    \nabla \psi (\overline{\psi + i \gamma})) & = & \vec{c} .
    \int_{\mathbbm{R}^2} \eta | Q |^2 \mathfrak{R}\mathfrak{e} (\nabla \psi)
    \mathfrak{I}\mathfrak{m} (\psi + i \gamma)\\
    & - & \vec{c} . \int_{\mathbbm{R}^2} \eta | Q |^2
    \mathfrak{I}\mathfrak{m} (\nabla \psi) \mathfrak{R}\mathfrak{e} (\psi),
  \end{eqnarray*}
  and by integration by parts (still using Lemma \ref{CP2L531806} and Theorem
  \ref{CP2Qcbehav}),
  \begin{eqnarray*}
    \vec{c} . \int_{\mathbbm{R}^2} \eta | Q |^2 \mathfrak{R}\mathfrak{e}
    (\nabla \psi) \mathfrak{I}\mathfrak{m} (\psi + i \gamma) & = & - \vec{c} .
    \int_{\mathbbm{R}^2} \nabla \eta | Q |^2 \mathfrak{R}\mathfrak{e} (\psi)
    \mathfrak{I}\mathfrak{m} (\psi + i \gamma)\\
    & - & \vec{c} . \int_{\mathbbm{R}^2} \eta \nabla (| Q |^2)
    \mathfrak{R}\mathfrak{e} (\psi) \mathfrak{I}\mathfrak{m} (\psi + i
    \gamma)\\
    & - & \vec{c} . \int_{\mathbbm{R}^2} \eta | Q |^2
    \mathfrak{R}\mathfrak{e} (\psi) \mathfrak{I}\mathfrak{m} (\nabla \psi) .
  \end{eqnarray*}
  Since $\nabla (| Q |^2) = 2\mathfrak{R}\mathfrak{e} (\nabla Q \bar{Q})$, we
  infer
  \begin{eqnarray*}
    &  & \int_{\mathbbm{R}^2} \eta | Q |^2 \mathfrak{R}\mathfrak{e} \left(
    \left( - \Delta \psi - 2 \frac{\nabla Q}{Q} . \nabla \psi - i \vec{c} .
    \nabla \psi \right) (\overline{\psi + i \gamma}) \right)\\
    & = & \int_{\mathbbm{R}^2} \eta (| Q |^2 | \nabla \psi |^2 +
    4\mathfrak{I}\mathfrak{m} (\nabla Q \bar{Q}) .\mathfrak{R}\mathfrak{e}
    (\psi) \mathfrak{I}\mathfrak{m} (\nabla \psi) - 2 \vec{c}
    .\mathfrak{I}\mathfrak{m} (\nabla \psi) \mathfrak{R}\mathfrak{e} (\psi))\\
    & + & 2 \int_{\mathbbm{R}^2} \eta \mathfrak{I}\mathfrak{m} (\nabla Q
    \bar{Q}) .\mathfrak{I}\mathfrak{m} (\nabla \psi \nobracket \overline{(i
    \gamma)}))\\
    & - & 2 \gamma \int_{\mathbbm{R}^2} \eta \mathfrak{R}\mathfrak{e}
    (\vec{c} . \nabla Q \bar{Q}) \mathfrak{R}\mathfrak{e} (\psi)\\
    & + & \int_{\mathbbm{R}^2} \nabla \eta . (| Q |^2
    \mathfrak{R}\mathfrak{e} (\nabla \psi (\overline{\psi + i \gamma})) +
    2\mathfrak{I}\mathfrak{m} (\nabla Q \bar{Q}) \mathfrak{R}\mathfrak{e}
    (\psi) \mathfrak{I}\mathfrak{m} (\psi))\\
    & + & \vec{c} . \int_{\mathbbm{R}^2} \nabla \eta | Q |^2
    \mathfrak{R}\mathfrak{e} (\psi) \mathfrak{I}\mathfrak{m} (\psi + i \gamma)
    .
  \end{eqnarray*}
  Combining these computation yields
  \begin{eqnarray*}
    \int_{\mathbbm{R}^2} \mathfrak{R}\mathfrak{e} (L^{\exp}_Q (\varphi)
    \overline{(\varphi + i \gamma Q)}) & = & B^{\exp}_Q (\varphi)\\
    & - & \gamma \left( - \vec{c} . \int_{\mathbbm{R}^2} \nabla \eta
    \mathfrak{R}\mathfrak{e} (\varphi \bar{Q}) + \int_{\mathbbm{R}^2} \nabla
    \eta . (\mathfrak{R}\mathfrak{e} (i \varphi \nabla \bar{Q})
    -\mathfrak{R}\mathfrak{e} (i \nabla \varphi \bar{Q})) \right)\\
    & + & 2 \int_{\mathbbm{R}^2} \eta \mathfrak{I}\mathfrak{m} (\nabla Q
    \bar{Q}) .\mathfrak{I}\mathfrak{m} (\nabla \psi \nobracket \overline{(i
    \gamma)}))\\
    & - & 2 \gamma \int_{\mathbbm{R}^2} \eta \mathfrak{R}\mathfrak{e}
    (\vec{c} . \nabla Q \bar{Q}) \mathfrak{R}\mathfrak{e} (\psi)\\
    & + & \int_{\mathbbm{R}^2} \nabla \eta . | Q |^2 \mathfrak{R}\mathfrak{e}
    (\nabla \psi \overline{(i \gamma)})\\
    & - & \vec{c} . \gamma \int_{\mathbbm{R}^2} \nabla \eta | Q |^2
    \mathfrak{R}\mathfrak{e} (\psi) .
  \end{eqnarray*}
  We compute, by integration by parts (still using Lemma \ref{CP2L531806} and
  Theorem \ref{CP2Qcbehav}), that
  \begin{eqnarray*}
    2 \int_{\mathbbm{R}^2} \eta \mathfrak{I}\mathfrak{m} (\nabla Q \bar{Q})
    .\mathfrak{I}\mathfrak{m} (\nabla \psi \nobracket \overline{(i \gamma)}))
    & = & - 2 \gamma \int_{\mathbbm{R}^2} \eta \mathfrak{I}\mathfrak{m}
    (\nabla Q \bar{Q}) .\mathfrak{R}\mathfrak{e} (\nabla \psi)\\
    & = & 2 \gamma \int_{\mathbbm{R}^2} \nabla \eta .\mathfrak{I}\mathfrak{m}
    (\nabla Q \bar{Q}) \mathfrak{R}\mathfrak{e} (\psi)\\
    & + & 2 \gamma \int_{\mathbbm{R}^2} \eta \mathfrak{I}\mathfrak{m} (\Delta
    Q \bar{Q}) \mathfrak{R}\mathfrak{e} (\psi),
  \end{eqnarray*}
  and since $\mathfrak{I}\mathfrak{m} (\Delta Q \bar{Q})
  =\mathfrak{R}\mathfrak{e} (\vec{c} . \nabla Q \bar{Q})$ and
  $\mathfrak{R}\mathfrak{e} (\nabla \psi \overline{(i \gamma)}) = \gamma
  \mathfrak{I}\mathfrak{m} (\nabla \psi)$, we have
  \begin{eqnarray*}
    \int_{\mathbbm{R}^2} \mathfrak{R}\mathfrak{e} (L^{\exp}_Q (\varphi)
    \overline{(\varphi + i \gamma Q)}) & = & B^{\exp}_Q (\varphi)\\
    & - & \gamma \left( - \vec{c} . \int_{\mathbbm{R}^2} \nabla \eta
    \mathfrak{R}\mathfrak{e} (\varphi \bar{Q}) + \int_{\mathbbm{R}^2} \nabla
    \eta . (\mathfrak{R}\mathfrak{e} (i \varphi \nabla \bar{Q})
    -\mathfrak{R}\mathfrak{e} (i \nabla \varphi \bar{Q})) \right)\\
    & + & 2 \gamma \int_{\mathbbm{R}^2} \nabla \eta .\mathfrak{I}\mathfrak{m}
    (\nabla Q \bar{Q}) \mathfrak{R}\mathfrak{e} (\psi)\\
    & + & \gamma \int_{\mathbbm{R}^2} \nabla \eta . | Q |^2
    \mathfrak{I}\mathfrak{m} (\nabla \psi)\\
    & - & \vec{c} . \gamma \int_{\mathbbm{R}^2} \nabla \eta | Q |^2
    \mathfrak{R}\mathfrak{e} (\psi) .
  \end{eqnarray*}
  we check that $\mathfrak{R}\mathfrak{e} (\varphi \bar{Q}) = | Q |^2
  \mathfrak{R}\mathfrak{e} (\psi)$, $\mathfrak{R}\mathfrak{e} (i \varphi
  \nabla \bar{Q}) = -\mathfrak{R}\mathfrak{e} (\nabla Q \bar{Q})
  \mathfrak{I}\mathfrak{m} (\psi) +\mathfrak{I}\mathfrak{m} (\nabla Q \bar{Q})
  \mathfrak{R}\mathfrak{e} (\psi)$ and that
  \begin{eqnarray*}
    -\mathfrak{R}\mathfrak{e} (i \nabla \varphi \bar{Q}) & = &
    -\mathfrak{R}\mathfrak{e} (i \nabla Q_c \bar{Q} \psi)
    -\mathfrak{R}\mathfrak{e} (i \nabla \psi) | Q |^2\\
    & = & \mathfrak{I}\mathfrak{m} (\nabla Q \bar{Q})
    \mathfrak{R}\mathfrak{e} (\psi) +\mathfrak{R}\mathfrak{e} (\nabla Q
    \bar{Q}) \mathfrak{I}\mathfrak{m} (\psi) +\mathfrak{I}\mathfrak{m} (\nabla
    \psi) | Q |^2,
  \end{eqnarray*}
  thus concluding the proof of
  \[ \int_{\mathbbm{R}^2} \mathfrak{R}\mathfrak{e} (L^{\exp}_Q (\varphi)
     \overline{(\varphi + i \gamma Q)}) = B^{\exp}_Q (\varphi) . \]
\end{proof}

\subsection{Properties of the perturbation}

We look for the equation satisfied by $\varphi = Q \psi$ in the next lemma.

\begin{lemma}
  \label{CP2L442905}The functions Q and $\psi$, defined respectively in
  (\ref{CP2280542}) and Lemma \ref{CP2L432905}, with $\varphi = Q \psi$,
  satisfy the equation
  \[ L_Q (Q \psi) - i (c \overrightarrow{e_2} - \vec{c}') .H (\psi) +
     \tmop{NL}_{\tmop{loc}} (\psi) + F (\psi) = 0, \]
  with $L_Q$ the linearized operator around $Q$: $L_Q (\varphi) \assign -
  \Delta \varphi - i \vec{c} . \nabla \varphi - (1 - | Q |^2) \varphi +
  2\mathfrak{R}\mathfrak{e} (\bar{Q} \varphi) Q$,
  \[ S (\psi) \assign e^{2\mathfrak{R}\mathfrak{e} (\psi)} - 1 -
     2\mathfrak{R}\mathfrak{e} (\psi), \]
  \[ F (\psi) \assign Q \eta (- \nabla \psi . \nabla \psi + | Q |^2 S (\psi)),
  \]
  \[ H (\psi) \assign \nabla Q + \frac{\nabla (Q \psi) (1 - \eta) + Q \nabla
     \psi \eta e^{\psi}}{(1 - \eta) + \eta e^{\psi}} \]
  and $\tmop{NL}_{\tmop{loc}} (\psi)$ is a sum of terms at least quadratic in
  $\psi$, localized in the area where $\eta \neq 1$. Furthermore,
  \[ | \langle \tmop{NL}_{\tmop{loc}} (\psi), Q (\psi + i \gamma) \rangle |
     \leqslant K (\| Q \psi \|_{C^1 (\{ \eta \neq 1 \})} + | \gamma |) \| Q
     \psi \|^2_{H^1 (\{ \eta \neq 1 \})} . \]
\end{lemma}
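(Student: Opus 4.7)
The plan is to substitute the defining identity $(1-\eta)Q\psi + \eta Q(e^\psi - 1) = Z - Q$ into $(TW_c)(Z) = 0$ and subtract $(TW_{\vec{c}'})(Q) = 0$, splitting the computation across the three regions determined by $\eta$. The key auxiliary identity is $(TW_c)(Q) = [(TW_c) - (TW_{\vec{c}'})](Q) = -i(c\vec{e}_2 - \vec{c}').\nabla Q$, which is the source of the ``speed discrepancy'' term in the final equation.

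In the additive region $\{\eta = 0\}$, $Z = Q + \varphi$ with $\varphi = Q\psi$, and a direct expansion gives $(TW_c)(Q + \varphi) = (TW_c)(Q) + L_Q(\varphi) + \tilde N(\varphi)$ with $\tilde N(\varphi) := 2\mathfrak{R}\mathfrak{e}(\bar Q\varphi)\varphi + |\varphi|^2(Q+\varphi)$ purely quadratic and cubic. Since $H(\psi)$ reduces to $\nabla Q + \nabla(Q\psi) = \nabla Z$ here, the equation takes the claimed form with $\tilde N(\varphi)$ contributing to $NL_{\tmop{loc}}(\psi)$. In the multiplicative region $\{\eta = 1\}$, $Z = Qe^\psi$, and the chain rule yields $\Delta(Qe^\psi) = e^\psi[\Delta Q + 2\nabla Q.\nabla\psi + Q(\nabla\psi.\nabla\psi + \Delta\psi)]$. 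Substituting into $(TW_c)(Z) = 0$, using $(TW_{\vec{c}'})(Q) = 0$ to eliminate $\Delta Q$ in favour of $i\vec{c}'.\nabla Q$ and $(1-|Q|^2)Q$, and dividing by the nonvanishing factor $e^\psi$, the outcome reorganises into three blocks: a linear Schr\"odinger-transport block equal to $QL'_Q(\psi) = L_Q(\varphi)$ via the multiplicative identity of Lemma \ref{CP2L3162011}; the remaining linear transport pieces $-i(c\vec{e}_2 - \vec{c}').(\nabla Q + Q\nabla\psi) = -i(c\vec{e}_2 - \vec{c}').H(\psi)$ (since $H(\psi) = \nabla Q + Q\nabla\psi$ in this region); and the cubic/exponential residue $Q[-\nabla\psi.\nabla\psi + |Q|^2 S(\psi)]$ coming from $|Qe^\psi|^2 - |Q|^2 = |Q|^2(e^{2\mathfrak{R}\mathfrak{e}\psi} - 1)$ together with the quadratic derivative term, which is precisely $F(\psi)$ where $\eta = 1$.

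The transition annulus $\{0 < \eta < 1\}$ is handled by the same chain-rule computation starting from the implicit relation $\psi + \eta(e^\psi - 1 - \psi) = (Z - Q)/Q$; the cutoff-derivative cross-terms carrying $\nabla\eta$ or $\Delta\eta$ are all at least quadratic in $\psi$ because $e^\psi - 1 - \psi$ vanishes to second order at the origin, so they all feed into $NL_{\tmop{loc}}(\psi)$. The specific quotient form in the definition of $H(\psi)$, with denominator $(1-\eta) + \eta e^\psi$, is precisely what is needed for the transport term to interpolate continuously between its two limiting expressions $\nabla(Q+Q\psi)$ and $\nabla Q + Q\nabla\psi$. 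For the pairing estimate, $NL_{\tmop{loc}}(\psi)$ is supported in the compact set $\{\eta \neq 1\}$ where $Q$ and $\nabla Q$ are uniformly bounded and each term is a product of at least two factors from $\{\psi, \nabla\psi\}$; extracting one via $\|Q\psi\|_{C^1(\{\eta \neq 1\})}$, pairing the remainder against $Q(\psi + i\gamma)$ by Cauchy-Schwarz, and treating the $i\gamma Q$ factor separately via the $|\gamma|$ contribution yields the claimed bound. The main obstacle is the bookkeeping in the transition annulus — verifying the quadratic vanishing of every cross-term produced by $\nabla\eta$ and $\Delta\eta$, and confirming that the interpolating quotient in $H(\psi)$ is exactly the one consistent with both limiting regions — all while tracking the sign and speed conventions of $\vec{c}$, $\vec{c}'$, $L_Q$, and $L'_Q$ coherently.
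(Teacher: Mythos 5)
Your proposal is correct and follows essentially the same route as the paper: both substitute the mixed ansatz $(1-\eta)Q\psi+\eta Q(e^{\psi}-1)=Z-Q$ into $(\tmop{TW}_c)(Z)=0$, use $(\tmop{TW}_{\vec{c}'})(Q)=0$ to isolate the speed-discrepancy term, and observe that every cutoff-derivative contribution carries a factor of $\zeta=1+\psi-e^{\psi}$ (or its gradient), hence is at least quadratic and localized, so it lands in $\tmop{NL}_{\tmop{loc}}$. The only difference is organizational: the paper performs the substitution globally (delegating the algebra to Lemma 2.7 of {\cite{CP1}}) and then divides by the nonvanishing factor $(1-\eta)+\eta e^{\psi}$, which is exactly how the quotient defining $H(\psi)$ arises from $\nabla Z$, whereas you carry out the same computation region by region.
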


Remark that here, the equation satisfied by $\varphi$ has a ``source'' term,
$i (c \overrightarrow{e_2} - \vec{c}') .H (\psi)$, coming from the fact that
$Z$ and $Q_c$ might not have the same speed at this point. We will estimate it
later on.

\begin{proof}
  The function $Z$ solves $(\tmop{TW}_c)$, hence,
  \[ i (c \overrightarrow{e_2} - \vec{c}') . \nabla Z = - i \vec{c}' . \nabla
     Z - \Delta Z - (1 - | Z |^2) Z. \]
  From (\ref{CP2422805}), we have
  \[ Z = Q + (1 - \eta) Q \psi + \eta Q (e^{\psi} - 1) . \]
  We define
  \[ \zeta \assign 1 + \psi - e^{\psi} . \]
  We replace $Z = Q + (1 - \eta) Q \psi + \eta Q (e^{\psi} - 1)$ in $- i
  \vec{c}' . \nabla Z - \Delta Z - (1 - | Z |^2) Z$ exactly as in the proof of
  Lemma 2.7 of {\cite{CP1}}, by simply changing $V, \Psi, c \vec{e}_2, \eta$
  respectively to $Q, \psi, \vec{c}', 1 - \eta$. In particular, $E - i c
  \partial_{x_2} V$ becomes $0$ (since $\tmop{TW}_{\vec{c}'} (Q) = 0$). This
  computation yields
  \[ i (c \overrightarrow{e_2} - \vec{c}') . \nabla Z = ((1 - \eta) + \eta
     e^{\psi}) (L_Q (Q \psi) + \widetilde{\tmop{NL}}_{\tmop{loc}} (\psi) + F
     (\psi)) . \]
  Furthermore, we have that $((1 - \eta) + \eta e^{\psi}) \neq 0$ by Lemma
  \ref{CP2L432905} and equation (\ref{CP2psiinfini}) (for the same reason as
  in the proof of Lemma 2.7 of {\cite{CP1}}), and we compute (as in Lemma 2.7
  of {\cite{CP1}}) that
  \begin{equation}
    \frac{\eta e^{\psi}}{(1 - \eta) + \eta e^{\psi}} = \eta + \eta (1 - \eta)
    \left( \frac{e^{\psi} - 1}{(1 - \eta) + \eta e^{\psi}} \right) .
    \label{CP24133005}
  \end{equation}
  Furthermore, we have
  \begin{eqnarray*}
    \nabla Z & = & \nabla Q - Q \nabla \eta \zeta + \nabla Q ((1 - \eta) \psi
    + \eta (e^{\psi} - 1)) + Q \nabla \psi ((1 - \eta) + \eta e^{\psi})\\
    & = & \nabla Q (1 - \eta + \eta e^{\psi}) - Q \nabla \eta \zeta + \nabla
    (Q \psi) (1 - \eta) + Q \nabla \psi \eta e^{\psi},
  \end{eqnarray*}
  hence
  \[ \frac{\nabla Z}{(1 - \eta) + \eta e^{\psi}} = \nabla Q - \frac{Q \nabla
     \eta \zeta}{(1 - \eta) + \eta e^{\psi}} + \frac{\nabla (Q \psi) (1 -
     \eta) + Q \nabla \psi \eta e^{\psi}}{(1 - \eta) + \eta e^{\psi}}, \]
  therefore, with \ $\tmop{NL}_{\tmop{loc}} (\psi) =
  \widetilde{\tmop{NL}}_{\tmop{loc}} (\psi) + i (c \overrightarrow{e_2} -
  \vec{c}') . \frac{- Q \nabla \eta \zeta}{(1 - \eta) + \eta e^{\psi}}$, we
  have
  \[ L_Q (Q \psi) - i (c \overrightarrow{e_2} - \vec{c}') .H (\psi) +
     \tmop{NL}_{\tmop{loc}} (\psi) + F (\psi) = 0. \]
  Finally, we check, similarly as in the proof of Lemma 2.7 of {\cite{CP1}},
  that
  \[ | \langle \tmop{NL}_{\tmop{loc}} (\psi), Q (\psi + i \gamma) \rangle |
     \leqslant K (\| Q \psi \|_{C^1 (\{ \eta \neq 1 \})} + | \gamma |)
     \int_{\mathbbm{R}^2} | \tmop{NL}_{\tmop{loc}} (\psi) |, \]
  hence
  \[ | \langle \tmop{NL}_{\tmop{loc}} (\psi), Q (\psi + i \gamma) \rangle |
     \leqslant K (\| Q \psi \|_{C^1 (\{ \eta \neq 1 \})} + | \gamma |) \| Q
     \psi \|^2_{H^1 (\{ \eta \neq 1 \})} . \]
\end{proof}

Now, we want to choose the right parameters $\gamma, \vec{c}', X$ so that
$\varphi$ satisfies the orthogonality conditions of Proposition
\ref{CP2prop16} and \ref{CP2prop17} (with remark (\ref{CP2corto2}).

\begin{lemma}
  \label{CP2L452905}For the functions Q and $\psi$, defined respectively in
  (\ref{CP2280542}) and Lemma \ref{CP2L432905}, there exist $X, \vec{c}' \in
  \mathbbm{R}^2, \gamma \in \mathbbm{R}$ such that
  \[ | X | + \frac{\delta^{| . |} (c \overrightarrow{e_2}, \vec{c}')}{c^2} +
     \frac{\delta^{\bot} (\nobracket c \overrightarrow{e_2}, \vec{c}')
     \nobracket}{c} + | \gamma | \leqslant o^{\lambda, c}_{\| Z - Q_c
     \|_{H^{\exp}_{Q_c}} \rightarrow 0} (1), \]
  and
  \[ \mathfrak{R}\mathfrak{e} \int_{B (\tmmathbf{d}_{\vec{c}', 1}, R) \cup B
     (\tmmathbf{d}_{\vec{c}', 2}, R)} \partial_{x_1} Q \overline{Q \psi^{\neq
     0}} =\mathfrak{R}\mathfrak{e} \int_{B (\tmmathbf{d}_{\vec{c}', 1}, R)
     \cup B (\tmmathbf{d}_{\vec{c}', 2}, R)} \partial_{x_2} Q \overline{Q
     \psi^{\neq 0}} = 0, \]
  \[ \mathfrak{R}\mathfrak{e} \int_{B (\tmmathbf{d}_{\vec{c}', 1}, R) \cup B
     (\tmmathbf{d}_{\vec{c}', 2}, R)} \partial_{c^{\bot}} Q \overline{Q
     \psi^{\neq 0}} = 0, \]
  \[ \mathfrak{R}\mathfrak{e} \int_{B (\tmmathbf{d}_{\vec{c}', 1}, R) \cup B
     (\tmmathbf{d}_{\vec{c}', 2}, R)} \partial_{\tmmathbf{d}} \tmmathbf{V}
     \overline{Q \psi^{\neq 0}} = 0 \]
  \[ \mathfrak{R}\mathfrak{e} \int_{B ((\tmmathbf{d}_{\vec{c}', 1}
     +\tmmathbf{d}_{\vec{c}', 2}) / 2, R)} i \psi = 0, \]
  where $\tmmathbf{d}_{\vec{c}', 1}$ and $\tmmathbf{d}_{\vec{c}', 2}$ are the
  zeros of $Q$, $\tmmathbf{d} _{\vec{c}', 1}$ being the closest one of
  $\tilde{d}_c \overrightarrow{e_1}$, and $\partial_{\tmmathbf{d}}
  \tmmathbf{V}$ is the first order of $Q$ by Theorem \ref{th1} and
  (\ref{CP2rotQc}).
\end{lemma}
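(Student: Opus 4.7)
The plan is a modulation argument via the implicit function theorem applied to the five-parameter family of modulation directions. For $\theta = (X, \vec{c}', \gamma)$ near $\theta_0 = (0, c\overrightarrow{e_2}, 0)$ and $Z$ satisfying the hypothesis of Theorem \ref{CP2th16}, denote by $Q_\theta = Q_{\vec{c}'}(\cdot - X) e^{i\gamma}$ the modulated travelling wave and by $\psi = \psi(\theta, Z)$ the function produced by Lemma \ref{CP2L432905}. Define
\[
\Phi(\theta, Z) = (O_1, O_2, O_3, O_4, O_5)(\theta, Z) \in \mathbbm{R}^5,
\]
where $O_1, \ldots, O_5$ denote the five orthogonality quantities in the statement (the four vortex conditions and the phase condition). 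By the uniqueness in Lemma \ref{CP2L432905}, $\psi(\theta_0, Q_c) = 0$, so $\Phi(\theta_0, Q_c) = 0$; the goal is to solve $\Phi(\theta, Z) = 0$ for $\theta$ near $\theta_0$ and show that $\theta \to \theta_0$ as $Z \to Q_c$ in $H^{\exp}_{Q_c}$.

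To compute $D_\theta \Phi$ at $(\theta_0, Q_c)$, I differentiate $(1-\eta) Q\psi + \eta Q(e^\psi - 1) = Z - Q$ with respect to $\theta_i$ at $\psi = 0$ (with $Z$ fixed), obtaining $\partial_{\theta_i}(Q\psi)|_{\theta_0} = -\partial_{\theta_i} Q|_{\theta_0}$. Using Lemma \ref{CP2a} for the angular parameter $\alpha$ of $\vec{c}'$, the five parameter derivatives of $Q$ at $\theta_0$ are
\[
\partial_{X_1} Q = -\partial_{x_1} Q_c, \quad \partial_{X_2} Q = -\partial_{x_2} Q_c, \quad \partial_{|\vec{c}'|} Q = \partial_c Q_c,
\]
\[
\partial_\alpha Q = \partial_{c^{\bot}} Q_c, \quad \partial_\gamma Q = i Q_c.
\]
Since $\psi = 0$ at the base point, variations of the balls of integration and of the test functions give no contribution, and $D_\theta \Phi(\theta_0, Q_c)$ is, up to sign, the $5 \times 5$ matrix of pairings of the five test directions $\partial_{x_1} Q_c$, $\partial_{x_2} Q_c$, $\partial_{c^{\bot}} Q_c$, $\partial_d V_{|d = d_c}$, and the phase test $i \cdot / Q_c$ on $B(0,R)$, against these five parameter derivatives.

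By Lemma \ref{CP2dcQcsigma}, $c^2 \partial_c Q_c = -\partial_d V_{|d = d_c} + o_{c \to 0}(1)$ in the local $\|\cdot\|_{\sigma, d_c}$ norm, so after rescaling the $|\vec{c}'|$-column of $D_\theta \Phi$ by $c^2$ and the $\alpha$-column by $c$, Lemma \ref{CP2133L35} shows that the resulting matrix is block-diagonal up to $o_{c \to 0}(c^{\beta_0})$ corrections: the $(X_1, X_2)$ translation block reduces to the pairings of $\partial_{x_1} Q_c$, $\partial_{x_2} Q_c$ with themselves, with nonzero determinant by Lemma \ref{CP2133L35}; the $(|\vec{c}'|, \alpha)$ block is diagonal with entries of universal size bounded away from zero; and the $\gamma$ row and column decouple because $\mathfrak{R}\mathfrak{e} \int_{B(0, R)} i \cdot \overline{i Q_c / Q_c} = -\pi R^2 \neq 0$, while the remaining cross pairings with $i Q_c$ vanish by the symmetries \eqref{CP2sym} of $Q_c$ and the structure of the harmonic decomposition near each vortex (see \eqref{CP2073228}). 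Hence the rescaled Jacobian is invertible with operator norm of its inverse bounded uniformly in $c$.

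The implicit function theorem applied to $\theta \mapsto \Phi(\theta, Z)$ near $(\theta_0, Q_c)$ then produces the desired $\theta(Z) = (X, \vec{c}', \gamma)(Z)$ with $\Phi(\theta, Z) = 0$ and $|X| + \delta^{|\cdot|}(c \overrightarrow{e_2}, \vec{c}')/c^2 + \delta^{\bot}(c \overrightarrow{e_2}, \vec{c}')/c + |\gamma| \to 0$ as $\|Z - Q_c\|_{H^{\exp}_{Q_c}} \to 0$ at fixed $\lambda$ and $c$, as claimed. The required $C^1$ dependence of $\psi$ on $\theta$ is obtained by differentiating the fixed-point equation of Lemma \ref{CP2L432905}, whose contraction constant is uniform in $\theta$ near $\theta_0$. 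The main obstacle is the precise block-decoupling of $D_\theta \Phi$: the natural scales of the five parameters differ by powers of $c$, and it is essential to use the modified fourth orthogonality against $\partial_d V_{|d = d_c}$ rather than $\partial_c Q_c$ (which is permitted by \eqref{CP2corto2}), so that the $|\vec{c}'|$-direction pairs with a matching local vortex-distance direction and produces a diagonal entry of universal size rather than one degenerating with $c$.
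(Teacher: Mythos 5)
Your proposal follows essentially the same route as the paper: a five-parameter modulation map whose Jacobian at the base point is computed from the pairings of the local directions via Lemma \ref{CP2133L35}, is shown to be invertible after rescaling the speed columns by $c^2$ and $c$, with the phase row decoupling through $\mathfrak{R}\mathfrak{e}\int_{B(0,R)} i\,\overline{i} = -\pi R^2$, and with the fourth orthogonality deliberately taken against $\partial_{\tmmathbf{d}}\tmmathbf{V}$ rather than $c^2\partial_c Q$ precisely because the former is differentiable in the speed parameter; the conclusion is then the implicit function theorem. The only point you pass over too quickly is the claim that, since $\psi=0$ at the base point, ``variations of the test functions give no contribution'': the projection $\psi\mapsto\psi^{\neq 0}$ is itself parameter-dependent (the harmonics are taken around the moving vortex centers), and even at $\psi=0$ its $\theta$-derivative contributes terms of the form $Q\int_0^{2\pi}\frac{\partial_\theta Q}{Q}\,d\tmmathbf{\theta}_1$ to $\partial_\theta(Q\psi^{\neq 0})$; these do not vanish identically and must be shown to be $o_{c\rightarrow 0}(1)$ using the vortex structure, which is exactly the content of the computation around equation (\ref{CP2shogun}) in the paper. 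This is a fixable omission rather than a flaw in the approach, but it is one of the genuinely delicate steps and deserves an explicit argument.
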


Here, the notations for the harmonics are done for $Q$, and are therefore
centered around $\tmmathbf{d}_{\vec{c}', 1}$ or $\tmmathbf{d}_{\vec{c}', 2}$.
This means that $\psi^{\neq 0} (x) = \psi (x) - \psi^{\tmmathbf{0}, 1}
(\tmmathbf{r}_1)$ with $\tmmathbf{r}_1 \assign | x -\tmmathbf{d}_{\vec{c}', 1}
|$, $x -\tmmathbf{d}_{\vec{c}', 1} =\tmmathbf{r}_1 e^{i\tmmathbf{\theta}_1}
\in \mathbbm{R}^2$ and $\psi^{\tmmathbf{0}, 1}$ being the $0$-harmonic of
$\psi$ around $\tmmathbf{d}_{\vec{c}', 1}$ in $B (\tmmathbf{d}_{\vec{c}', 1},
R)$, and $\psi^{\neq 0} (x) = \psi (x) - \psi^{\tmmathbf{0}, 2}
(\tmmathbf{r}_2)$ with $\tmmathbf{r}_2 \assign | x -\tmmathbf{d}_{\vec{c}', 2}
|$ in $B (\tmmathbf{d}_{\vec{c}', 2}, R)$ and $\psi^{\tmmathbf{0}, 1}$ being
the $0$-harmonic of $\psi$ around $\tmmathbf{d}_{\vec{c}', 2}$. We will denote
$\psi^{\tmmathbf{0}} (x)$ the quantity equal to $\psi^{\tmmathbf{0}, 1}
(\tmmathbf{r}_1)$ in the right half-plane and to $\psi^{\tmmathbf{0}, 2}
(\tmmathbf{r}_2)$ in the left half-plane. Remark that $\tmmathbf{d}_{\vec{c}',
1} \in \mathbbm{R}^2$, whereas $\tilde{d}_c \in \mathbbm{R}$. We recall that,
taking $\| Z - Q_c \|_{H^{\exp}_{Q_c}}$ small enough, we have $\frac{\delta^{|
. |} (c \overrightarrow{e_2}, \vec{c}')}{c^2} \leqslant 1$, and in particular,
for $c$ small enough, it implies that $\frac{c}{2} \leqslant | \vec{c}' |
\leqslant 2 c$. We recall that $o^{\lambda, c}_{\| Z - Q_c \|_{H^{\exp}_{Q_c}}
\rightarrow 0} (1)$ is a quantity going to $0$ when $\| Z - Q_c
\|_{H^{\exp}_{Q_c}} \rightarrow 0$ at fixed $\lambda$ and $c$.

\begin{proof}
  For $X = (X_1, X_2), \vec{c}' \in \mathbbm{R}^2$, we define, as previously,
  the function
  \[ Q = Q_{\vec{c}'} (. - X) e^{i \gamma} . \]
  We define, to simplify the notations,
  \[ \Omega \assign B (\tmmathbf{d}_{\vec{c}', 1}, R) \cup B
     (\tmmathbf{d}_{\vec{c}', 2}, R) \]
  and
  \[ \Omega' \assign B \left( \frac{(\tmmathbf{d}_{\vec{c}', 1}
     +\tmmathbf{d}_{\vec{c}', 2})}{2}, R \right), \]
  which is between the two vortices. We define
  \[ G \left(\begin{array}{c}
       X_1\\
       X_2\\
       \delta_1\\
       \delta_2\\
       \gamma
     \end{array}\right) \assign \left(\begin{array}{c}
       \mathfrak{R}\mathfrak{e} \int_{\Omega} \partial_{x_1} Q \overline{Q
       \psi^{\neq 0}}\\
       \mathfrak{R}\mathfrak{e} \int_{\Omega} \partial_{x_2} Q \overline{Q
       \psi^{\neq 0}}\\
       c^2 \mathfrak{R}\mathfrak{e} \int_{\Omega} \partial_{\tmmathbf{d}}
       \tmmathbf{V} \overline{Q \psi^{\neq 0}}\\
       c\mathfrak{R}\mathfrak{e} \int_{\Omega} \partial_{c^{\bot}} Q
       \overline{Q \psi^{\neq 0}}\\
       \mathfrak{R}\mathfrak{e} \int_{\Omega'} i \psi
     \end{array}\right), \]
  where $\vec{c}'$ (used to defined $Q = Q_{\vec{c}'} (. - X) e^{i \gamma}$)
  is given by $\delta_1 = \delta^{| . |} (c \overrightarrow{e_2}, \vec{c}')$
  and $\delta_2 = \delta^{\bot} (\nobracket c \overrightarrow{e_2}, \vec{c}')
  \nobracket$.
  
  Here, we use the notation $\partial_c Q_{}$ for $\partial_c Q_{c | c = c'
  \nobracket}$. We remark from (\ref{CP24112905}) and the definition of
  $\eta$, that in $\Omega$, we have
  \[ Q \psi = Z - Q. \]
  First, we have
  \begin{equation}
    \| Q \psi \|_{C^1 (\Omega)} \leqslant o^{\lambda, c}_{\| Z - Q_c
    \|_{H^{\exp}_{Q_c}} \rightarrow 0} (1) + K \left( | X | + \frac{\delta^{|
    . |} (c \overrightarrow{e_2}, \vec{c}')}{c^2} + \frac{\delta^{\bot}
    (\nobracket c \overrightarrow{e_2}, \vec{c}') \nobracket}{c} + | \gamma |
    \right), \label{CP244124}
  \end{equation}
  which is a consequence of Lemma \ref{CP2L422905}. By Lemma \ref{CP2133L35},
  we compute that
  \[ \left| G \left(\begin{array}{c}
       0\\
       0\\
       0\\
       0\\
       0
     \end{array}\right) \right| \leqslant o^{\lambda, c}_{\| Z - Q_c
     \|_{H^{\exp}_{Q_c}} \rightarrow 0} (1) . \]
  Let us compute $\partial_{X_2} G$. We recall that $Q \psi \in C^1
  (\mathbbm{R}^2, \mathbbm{C})$. Since $\Omega$ depends on $X$, we have
  \begin{eqnarray*}
    \partial_{X_2} \mathfrak{R}\mathfrak{e} \int_{\Omega} \partial_{x_2} Q
    \overline{Q \psi^{\neq 0}} & = & \int_{\partial \Omega}
    \mathfrak{R}\mathfrak{e} (\partial_{x_2} Q \overline{Q \psi^{\neq 0}})\\
    & - & \int_{\Omega} \mathfrak{R}\mathfrak{e} (\partial^2_{x_2 x_2} Q
    \overline{Q \psi^{\neq 0}})\\
    & + & \int_{\Omega} \mathfrak{R}\mathfrak{e} \left( \partial_{x_2} Q
    \overline{\partial_{X_2} (Q \psi^{\neq 0})} \right) .
  \end{eqnarray*}
  By estimate (\ref{CP244124}), we have
  \[ \left| \int_{\partial \Omega} \mathfrak{R}\mathfrak{e} (\partial_{x_2} Q
     \overline{Q \psi^{\neq 0}}) \right| + \left| \int_{\Omega}
     \mathfrak{R}\mathfrak{e} (\partial^2_{x_2 x_2} Q \overline{Q \psi^{\neq
     0}}) \right| \leqslant \]
  \[ o^{\lambda, c}_{\| Z - Q_c \|_{H^{\exp}_{Q_c}} \rightarrow 0} (1) + K
     \left( | X | + \frac{\delta^{| . |} (c \overrightarrow{e_2},
     \vec{c}')}{c^2} + \frac{\delta^{\bot} (\nobracket c \overrightarrow{e_2},
     \vec{c}') \nobracket}{c} \right), \]
  and since $Q \psi = Z - Q$ and $\psi^{\neq 0} = \psi - \psi^{\tmmathbf{0}}$
  in $\Omega$, we check that,
  \[ \int_{\Omega} \mathfrak{R}\mathfrak{e} \left( \partial_{x_2} Q
     \overline{\partial_{X_2} (Q \psi^{\neq 0})} \right) = - \int_{\Omega} |
     \partial_{x_2} Q |^2 + \int_{\Omega} \mathfrak{R}\mathfrak{e}
     (\partial_{x_2} Q \overline{\partial_X (Q \psi^{\tmmathbf{0}})}) . \]
  Now, using $Q \psi = Z - Q$, we check that, in $B (\tmmathbf{d}_{\vec{c}',
  1}, R)$, where $x =\tmmathbf{r}_1 e^{i\tmmathbf{\theta}_1}$,
  \begin{eqnarray*}
    2 \pi \partial_{X_2} (Q \psi^0) & = & \partial_{X_2} \left( Q \int_0^{2
    \pi} \frac{Z - Q}{Q} d\tmmathbf{\theta}_1 \right)\\
    & = & \partial_{x_2} Q \int_0^{2 \pi} \frac{Z - Q}{Q}
    d\tmmathbf{\theta}_1\\
    & + & Q \int_0^{2 \pi} \frac{- \partial_{x_2} Q}{Q} d\tmmathbf{\theta}_1
    + Q \int_0^{2 \pi} \frac{- (Z - Q) \partial_{x_2} Q}{Q^2}
    d\tmmathbf{\theta}_1\\
    & + & Q \int_0^{2 \pi} \partial_{x_2} \left( \frac{Z - Q}{Q} \right)
    d\tmmathbf{\theta}_1
  \end{eqnarray*}
  Therefore, we estimate (since $R$ is a universal constant)
  \[ \left| \int_{B (\tmmathbf{d}_{\vec{c}', 1}, R)} \mathfrak{R}\mathfrak{e}
     (\partial_{x_2} Q \overline{\partial_X (Q \psi^0)}) \right| \leqslant \]
  \[ \left| \int_{B (\tmmathbf{d}_{\vec{c}', 1}, R)} \mathfrak{R}\mathfrak{e}
     \left( \overline{\partial_{x_2} Q} Q \int_0^{2 \pi} \frac{-
     \partial_{x_2} Q}{Q} d\tmmathbf{\theta}_1 \right) \right| + K \| Z - Q
     \|_{C^1 (\Omega)} . \]
  Let us show that, in $B (\tmmathbf{d}_{\vec{c}', 1}, R)$,
  \begin{equation}
    Q \int_0^{2 \pi} \frac{- \partial_{x_2} Q}{Q} d\tmmathbf{\theta}_1 = o_{c
    \rightarrow 0} (1) . \label{CP24121110}
  \end{equation}
  We have in this domain that $\frac{Q}{\tmmathbf{V}_1} = 1 + o_{c \rightarrow
  0} (1)$ and $| \nabla Q_c - \nabla \tilde{V}_1 | = o_{c \rightarrow 0} (1)$
  by Lemmas \ref{CP20524} and \ref{CP2closecall}, where $\mathbf{V}_1$ is the
  vortex centered at $\tmmathbf{d}_{\vec{c}', 1}$. We deduce that, in $B
  (\tmmathbf{d}_{\vec{c}', 1}, R)$,
  \[ Q \int_0^{2 \pi} \frac{- \partial_{x_2} Q}{Q} d\tmmathbf{\theta}_1
     =\mathbf{V}_1 \int_0^{2 \pi} \frac{- \partial_{x_2}
     \mathbf{V}_1}{\mathbf{V}_1} d\tmmathbf{\theta}_1 + o_{c \rightarrow 0}
     (1) . \]
  Finally, by Lemma \ref{lemme3new}, we check that $\frac{\partial_{x_2}
  \mathbf{V}_1}{\mathbf{V}_1}$ has no $0$-harmonic around
  $\tmmathbf{d}_{\vec{c}', 1}$, therefore
  \begin{equation}
    \mathbf{V}_1 \int_0^{2 \pi} \frac{- \partial_{x_2}
    \mathbf{V}_1}{\mathbf{V}_1} d\tmmathbf{\theta}_1 = 0. \label{CP2shogun}
  \end{equation}
  By symmetry, the same proof holds in $B (\tmmathbf{d}_{\vec{c}', 2}, R)$.
  
  Adding up these estimates, we get
  \[ \left| \partial_{X_2} \mathfrak{R}\mathfrak{e} \int_{\Omega}
     \partial_{x_2} Q \overline{Q \psi^{\neq 0}} + \int_{\Omega} |
     \partial_{x_2} Q |^2 \right| \leqslant \]
  \[ o^{\lambda, c}_{\| Z - Q_c \|_{H^{\exp}_{Q_c}} \rightarrow 0} (1) + o_{c
     \rightarrow 0} (1) + K \left( | X | + \frac{\delta^{| . |} (c
     \overrightarrow{e_2}, \vec{c}')}{c^2} + \frac{\delta^{\bot} (\nobracket c
     \overrightarrow{e_2}, \vec{c}') \nobracket}{c} + | \gamma | \right) . \]
  By a similar computation, we have
  \[ \left| \partial_{X_2} \mathfrak{R}\mathfrak{e} \int_{\Omega}
     \partial_{\tmmathbf{d}} \tmmathbf{V} \overline{Q \psi^{\neq 0}} \right. -
     \left. \int_{\Omega} \mathfrak{R}\mathfrak{e} \left(
     \partial_{\tmmathbf{d}} \tmmathbf{V} \overline{\partial_{x_2} Q} \right)
     \right| \leqslant \]
  \[ o^{\lambda, c}_{\| Z - Q_c \|_{H^{\exp}_{Q_c}} \rightarrow 0} (1) + o_{c
     \rightarrow 0} (1) + K \left( | X | + \frac{\delta^{| . |} (c
     \overrightarrow{e_2}, \vec{c}')}{c^2} + \frac{\delta^{\bot} (\nobracket c
     \overrightarrow{e_2}, \vec{c}') \nobracket}{c} + | \gamma | \right) . \]
  By Lemma \ref{CP2133L35} and Theorem \ref{th1} (for $p = + \infty$), we have
  \[ \left| \int_{\Omega} \mathfrak{R}\mathfrak{e} \left(
     \partial_{\tmmathbf{d}} \tmmathbf{V} \overline{\partial_{x_2} Q} \right)
     \right| \leqslant \left| \int_{\Omega} \mathfrak{R}\mathfrak{e} \left(
     c^2 \partial_c Q \overline{\partial_{x_2} Q} \right) \right| + \left|
     \int_{\Omega} \mathfrak{R}\mathfrak{e} \left( (\partial_{\tmmathbf{d}}
     \tmmathbf{V}- c^2 \partial_c Q) \overline{\partial_{x_2} Q} \right)
     \right| = o_{c \rightarrow 0} (1) . \]
  Similarly, we check
  \[ \left| \partial_{X_2} \int_{\Omega} \partial_{x_1} Q \overline{Q
     \psi^{\neq 0}} \right| - \left| \int_{\Omega} \mathfrak{R}\mathfrak{e}
     \left( \partial_{x_1} Q \overline{\partial_{x_2} Q} \right) \right|
     \leqslant \]
  \[ o^{\lambda, c}_{\| Z - Q_c \|_{H^{\exp}_{Q_c}} \rightarrow 0} (1) + o_{c
     \rightarrow 0} (1) + K \left( | X | + \frac{\delta^{| . |} (c
     \overrightarrow{e_2}, \vec{c}')}{c^2} + \frac{\delta^{\bot} (\nobracket c
     \overrightarrow{e_2}, \vec{c}') \nobracket}{c} + | \gamma | \right) . \]
  Still by Lemma \ref{CP2133L35}, we have
  \[ \left| \int_{\Omega} \mathfrak{R}\mathfrak{e} \left( \partial_{x_1} Q
     \overline{\partial_{x_2} Q} \right) \right| = o_{c \rightarrow 0} (1) .
  \]
  With the same arguments, we check that
  \[ \left| \partial_{X_2} \int_{\Omega} c \partial_{c^{\bot}} Q \overline{Q
     \psi^{\neq 0}} \right| \leqslant \]
  \[ o^{\lambda, c}_{\| Z - Q_c \|_{H^{\exp}_{Q_c}} \rightarrow 0} (1) + o_{c
     \rightarrow 0} (1) + K \left( | X | + \frac{\delta^{| . |} (c
     \overrightarrow{e_2}, \vec{c}')}{c^2} + \frac{\delta^{\bot} (\nobracket c
     \overrightarrow{e_2}, \vec{c}') \nobracket}{c} + | \gamma | \right) . \]
  Finally, with equations (\ref{CP2217}) to (\ref{CP2221}) and
  (\ref{CP244124}), we check easily that
  \[ \partial_{X_2} \left( \mathfrak{R}\mathfrak{e} \int_{\Omega'} i \psi
     \right) \leqslant o^{\lambda, c}_{\| Z - Q_c \|_{H^{\exp}_{Q_c}}
     \rightarrow 0} (1) + o_{c \rightarrow 0} (1) + K \left( | X | +
     \frac{\delta^{| . |} (c \overrightarrow{e_2}, \vec{c}')}{c^2} +
     \frac{\delta^{\bot} (\nobracket c \overrightarrow{e_2}, \vec{c}')
     \nobracket}{c} + | \gamma | \right) . \]
  We deduce that
  \[ \left| \partial_{X_2} G \left(\begin{array}{c}
       X_1\\
       X_2\\
       \delta_1\\
       \delta_2\\
       \gamma
     \end{array}\right) + \left(\begin{array}{c}
       0\\
       \int_{\Omega} | \partial_{x_2} Q |^2\\
       0\\
       0\\
       0
     \end{array}\right) \right| \leqslant \]
  \[ o^{\lambda, c}_{\| Z - Q_c \|_{H^{\exp}_{Q_c}} \rightarrow 0} (1) + o_{c
     \rightarrow 0} (1) + K \left( | X | + \frac{\delta^{| . |} (c
     \overrightarrow{e_2}, \vec{c}')}{c^2} + \frac{\delta^{\bot} (\nobracket c
     \overrightarrow{e_2}, \vec{c}') \nobracket}{c} + | \gamma | \right) . \]
  We can also check, with similar computations, that
  \[ \left| \partial_{X_1} G \left(\begin{array}{c}
       X_1\\
       X_2\\
       \delta_1\\
       \delta_2\\
       \gamma
     \end{array}\right) + \left(\begin{array}{c}
       \int_{\Omega} | \partial_{x_1} Q |^2\\
       0\\
       0\\
       0\\
       0
     \end{array}\right) \right| \leqslant \]
  \[ o^{\lambda, c}_{\| Z - Q_c \|_{H^{\exp}_{Q_c}} \rightarrow 0} (1) + o_{c
     \rightarrow 0} (1) + K \left( | X | + \frac{\delta^{| . |} (c
     \overrightarrow{e_2}, \vec{c}')}{c^2} + \frac{\delta^{\bot} (\nobracket c
     \overrightarrow{e_2}, \vec{c}') \nobracket}{c} + | \gamma | \right) . \]

  We infer that this also holds with a similar proof for the last two
  directions, namely

  \[ \left| c^2 \partial_{\delta_1} G \left(\begin{array}{c}
       X_1\\
       X_2\\
       \delta_1\\
       \delta_2\\
       \gamma
     \end{array}\right) + \left(\begin{array}{c}
       0\\
       0\\
       \int_{\Omega} | c^2 \partial_c Q |^2\\
       0\\
       0
     \end{array}\right) \right| \leqslant \]
  \[ o^{\lambda, c}_{\| Z - Q_c \|_{H^{\exp}_{Q_c}} \rightarrow 0} (1) + o_{c
     \rightarrow 0} (1) + K \left( | X | + \frac{\delta^{| . |} (c
     \overrightarrow{e_2}, \vec{c}')}{c^2} + \frac{\delta^{\bot} (\nobracket c
     \overrightarrow{e_2}, \vec{c}') \nobracket}{c} + | \gamma | \right) \]
  (using the fact that $\partial_{\tmmathbf{d}} \tmmathbf{V}$ is
  differentiable with respect to $\delta_1$, which is not obvious for $c^2
  \partial_c Q$ and is the reason we have to use this orthogonality) and
  \[ \left| c^{} \partial_{\delta_2} G \left(\begin{array}{c}
       X_1\\
       X_2\\
       \delta_1\\
       \delta_2\\
       \gamma
     \end{array}\right) + \left(\begin{array}{c}
       0\\
       0\\
       0\\
       \int_{\Omega} | c \partial_{c^{\bot}} Q |^2\\
       0
     \end{array}\right) \right| \leqslant \]
  \[ o^{\lambda, c}_{\| Z - Q_c \|_{H^{\exp}_{Q_c}} \rightarrow 0} (1) + o_{c
     \rightarrow 0} (1) + K \left( | X | + \frac{\delta^{| . |} (c
     \overrightarrow{e_2}, \vec{c}')}{c^2} + \frac{\delta^{\bot} (\nobracket c
     \overrightarrow{e_2}, \vec{c}') \nobracket}{c} + | \gamma | \right) . \]
  We will only show for these directions that, in $B (\tmmathbf{d}_{\vec{c}',
  1}, R)$,
  \[ \left| Q \int_0^{2 \pi} \frac{c^2 \partial_c Q}{Q} d\tmmathbf{\theta}_1
     \right| + \left| Q \int_0^{2 \pi} \frac{c \partial_{c^{\bot}} Q}{Q}
     d\tmmathbf{\theta}_1 \right| = o_{c \rightarrow 0} (1), \]
  the other computations are similar to the ones done for $\partial_{X_2} F$
  (using Lemma \ref{CP2133L35}).
  
  We recall from Lemma \ref{CP2dcQcsigma} that, in $B (\tmmathbf{d}_{\vec{c}',
  1}, R)$,
  \[ \| c^2 \partial_c Q - \partial_{\tmmathbf{d}} \tmmathbf{V} \|_{C^1 (B
     (\tmmathbf{d}_{\vec{c}', 1}, R))} = o_{c \rightarrow 0} (1), \]
  where $\| \partial_{\tmmathbf{d}} \tmmathbf{V}+ \partial_{x_1} V_1 \|_{C^1
  (B (\tmmathbf{d}_{\vec{c}', 1}, R))} = o_{c \rightarrow 0} (1)$, $V_1$ being
  centered around a point $d_{\vec{c}'} \in \mathbbm{R}^2$ such that
  \[ | d_{\vec{c}'} -\tmmathbf{d}_{\vec{c}', 1} | = o_{c \rightarrow 0} (1) .
  \]
  Therefore, we check that
  \begin{eqnarray*}
    \left| Q \int_0^{2 \pi} \frac{c^2 \partial_c Q}{Q} d\tmmathbf{\theta}_1
    \right| & \leqslant & \left| \mathbf{V}_1 \int_0^{2 \pi}
    \frac{\partial_{x_1} \mathbf{V}_1}{\mathbf{V}_1} d\tmmathbf{\theta}_1
    \right| + o_{c \rightarrow 0} (1)\\
    & = & o_{c \rightarrow 0} (1)
  \end{eqnarray*}
  from (\ref{CP2shogun}). Finally, we have, from Lemma \ref{CP2a} that
  $\partial_{c^{\bot}} Q = - x^{\bot, \delta^{\bot} (\nobracket c
  \overrightarrow{e_2}, \vec{c}') \nobracket} . \nabla Q$, where $x^{\bot,
  \delta^{\bot} (\nobracket c \overrightarrow{e_2}, \vec{c}') \nobracket}$ is
  $x^{\bot}$ rotated by an angle $\delta^{\bot} (\nobracket c
  \overrightarrow{e_2}, \vec{c}') \nobracket$. We remark that, in $B
  (\tmmathbf{d}_{\vec{c}', 1}, R)$,
  \[ \left| Q \int_0^{2 \pi} \frac{c\tmmathbf{d}_{\vec{c}', 1} . \nabla Q}{Q}
     d\tmmathbf{\theta}_1 \right| \leqslant \left| \mathbf{V}_1 \int_0^{2 \pi}
     \frac{c\tmmathbf{d}_{\vec{c}', 1} . \nabla \mathbf{V}_1}{\mathbf{V}_1}
     d\tmmathbf{\theta}_1 \right| + o_{c \rightarrow 0} (1) \]
  and
  \[ \left| \mathbf{V}_1 \int_0^{2 \pi} \frac{c\tmmathbf{d}_{\vec{c}', 1} .
     \nabla \mathbf{V}_1}{\mathbf{V}_1} d\tmmathbf{\theta}_1 \right| = 0 \]
  by (\ref{CP2shogun}) and the same result for $\partial_{x_1}$ instead of
  $\partial_{x_2}$. Therefore, since $\left| x^{\bot, \delta^{\bot}
  (\nobracket c \overrightarrow{e_2}, \vec{c}') \nobracket}
  -\tmmathbf{d}_{\vec{c}', 1} \right| \leqslant K$ in $B
  (\tmmathbf{d}_{\vec{c}', 1}, R)$,
  \begin{eqnarray*}
    \left| Q \int_0^{2 \pi} \frac{c \partial_{c^{\bot}} Q}{Q}
    d\tmmathbf{\theta}_1 \right| & \leqslant & \left| Q \int_0^{2 \pi} \frac{c
    \left( x^{\bot, \delta^{\bot} (\nobracket c \overrightarrow{e_2},
    \vec{c}') \nobracket} -\tmmathbf{d}_{\vec{c}', 1} \right) . \nabla Q}{Q}
    d\tmmathbf{\theta}_1 \right| + o_{c \rightarrow 0} (1)\\
    & \leqslant & K c + o_{c \rightarrow 0} (1)\\
    & = & o_{c \rightarrow 0} (1) .
  \end{eqnarray*}
  Finally, we infer that
  \[ \left| \partial_{\gamma} G \left(\begin{array}{c}
       X_1\\
       X_2\\
       \delta_1\\
       \delta_2\\
       \gamma
     \end{array}\right) + \left(\begin{array}{c}
       0\\
       0\\
       0\\
       0\\
       \mathfrak{R}\mathfrak{e} \int_{\Omega'} Q
     \end{array}\right) \right| \leqslant \]
  \[ o^{\lambda, c}_{\| Z - Q_c \|_{H^{\exp}_{Q_c}} \rightarrow 0} (1) + o_{c
     \rightarrow 0} (1) + K \left( | X | + \frac{\delta^{| . |} (c
     \overrightarrow{e_2}, \vec{c}')}{c^2} + \frac{\delta^{\bot} (\nobracket c
     \overrightarrow{e_2}, \vec{c}') \nobracket}{c} + | \gamma | \right) . \]
  The proof is similar of the previous computations, and we will only show
  that, in $\Omega$,
  \[ | \partial_{\gamma} (Q \psi^{\neq 0}) | \leqslant o^{\lambda, c}_{\| Z -
     Q_c \|_{H^{\exp}_{Q_c}} \rightarrow 0} (1) . \]
  We have
  \begin{eqnarray*}
    | \partial_{\gamma} (Q \psi^{\neq 0}) | & = & | \partial_{\gamma} (Q \psi)
    - \partial_{\gamma} (Q \psi^{\tmmathbf{0}}) |\\
    & \leqslant & \left| - i Q - \frac{Q}{2 \pi} \int_0^{2 \pi} \frac{- i
    Q}{Q} d \theta \right| + o^{\lambda, c}_{\| Z - Q_c \|_{H^{\exp}_{Q_c}}
    \rightarrow 0} (1)\\
    & \leqslant & o^{\lambda, c}_{\| Z - Q_c \|_{H^{\exp}_{Q_c}} \rightarrow
    0} (1) .
  \end{eqnarray*}
  From Theorem \ref{th1}, $\mathfrak{R}\mathfrak{e} \int_{\Omega'} Q
  =\mathfrak{R}\mathfrak{e} \int_{\Omega'} - 1 + o_{c \rightarrow 0} (1)
  \leqslant - K < 0$. We conclude, by Lemma \ref{CP2133L35}, that, for $c$ and
  $\| Z - Q_c \|_{H^{\exp}_{Q_c}}$ small enough, $d G$ is invertible in a
  vicinity of $(0, 0, 0, 0, 0)$ of size independent of $\| Z - Q_c
  \|_{H^{\exp}_{Q_c}}$. Therefore, by the implicit function theorem, taking
  $c$ small enough and $\varepsilon (c, \lambda)$ small enough, we can find
  $X, \vec{c}' \in \mathbbm{R}^2, \gamma \in \mathbbm{R}$ such that
  \[ | X | + \frac{\delta^{| . |} (c \overrightarrow{e_2}, \vec{c}')}{c^2} +
     \frac{\delta^{\bot} (\nobracket c \overrightarrow{e_2}, \vec{c}')
     \nobracket}{c} + | \gamma | \leqslant o^{\lambda, c}_{\| Z - Q_c
     \|_{H^{\exp}_{Q_c}} \rightarrow 0} (1), \]
  and satisfying
  \[ \mathfrak{R}\mathfrak{e} \int_{B (\tmmathbf{d}_{\vec{c}', 1}, R) \cup B
     (\tmmathbf{d}_{\vec{c}', 2}, R)} \partial_{x_1} Q \overline{Q \psi^{\neq
     0}} =\mathfrak{R}\mathfrak{e} \int_{B (\tmmathbf{d}_{\vec{c}', 1}, R)
     \cup B (\tmmathbf{d}_{\vec{c}', 2}, R)} \partial_{x_2} Q \overline{Q
     \psi^{\neq 0}} = 0, \]
  \[ \mathfrak{R}\mathfrak{e} \int_{B (\tmmathbf{d}_{\vec{c}', 1}, R) \cup B
     (\tmmathbf{d}_{\vec{c}', 2}, R)} \partial_{\tmmathbf{d}} \tmmathbf{V}
     \overline{Q \psi^{\neq 0}} =\mathfrak{R}\mathfrak{e} \int_{B
     (\tmmathbf{d}_{\vec{c}', 1}, R) \cup B (\tmmathbf{d}_{\vec{c}', 2}, R)}
     \partial_{c^{\bot}} Q \overline{Q \psi^{\neq 0}} = 0, \]
  \[ \mathfrak{R}\mathfrak{e} \int_{B ((\tmmathbf{d}_{\vec{c}', 1}
     +\tmmathbf{d}_{\vec{c}', 2}) / 2, R)} i \psi = 0. \]
\end{proof}

\subsection{End of the proof of Theorem \ref{CP2th16}}

From Lemmas \ref{CP2L531806} and \ref{CP2L452905}, we can find $\varphi = Q
\psi \in H^{\exp}_Q$ such that
\begin{equation}
  | X | + \frac{\delta^{| . |} (c \overrightarrow{e_2}, \vec{c}')}{c^2} +
  \frac{\delta^{\bot} (\nobracket c \overrightarrow{e_2}, \vec{c}')
  \nobracket}{c} + | \gamma | \leqslant o^{\lambda, c}_{\| Z - Q_c
  \|_{H^{\exp}_{Q_c}} \rightarrow 0} (1), \label{CP24143005}
\end{equation}
and
\[ \mathfrak{R}\mathfrak{e} \int_{B (\tmmathbf{d}_{\vec{c}', 1}, R) \cup B
   (\tmmathbf{d}_{\vec{c}', 2}, R)} \partial_{x_1} Q \overline{Q \psi^{\neq
   0}} =\mathfrak{R}\mathfrak{e} \int_{B (\tmmathbf{d}_{\vec{c}', 1}, R) \cup
   B (\tmmathbf{d}_{\vec{c}', 2}, R)} \partial_{x_2} Q \overline{Q \psi^{\neq
   0}} = 0, \]
\[ \mathfrak{R}\mathfrak{e} \int_{B (\tmmathbf{d}_{\vec{c}', 1}, R) \cup B
   (\tmmathbf{d}_{\vec{c}', 2}, R)} \partial_{\tmmathbf{d}} \tmmathbf{V}
   \overline{Q \psi^{\neq 0}} =\mathfrak{R}\mathfrak{e} \int_{B
   (\tmmathbf{d}_{\vec{c}', 1}, R) \cup B (\tmmathbf{d}_{\vec{c}', 2}, R)}
   \partial_{c^{\bot}} Q \overline{Q \psi^{\neq 0}} = 0, \]
\[ \mathfrak{R}\mathfrak{e} \int_{B ((\tmmathbf{d}_{\vec{c}', 1}
   +\tmmathbf{d}_{\vec{c}', 2}) / 2, R)} i \psi = 0. \]
Now, from Lemma \ref{CP2L442905}, $\psi$ satisfies the equation
\begin{equation}
  L_Q (Q \psi) - i (\vec{c}' - c \overrightarrow{e_2}) .H (\psi) +
  \tmop{NL}_{\tmop{loc}} (\psi) + F (\psi) = 0. \label{CP25152610}
\end{equation}
We remark that
\[ L_Q (Q \psi) = (1 - \eta) L_Q (Q \psi) + \eta Q L'_Q (\psi), \]
and by Lemmas \ref{CP2L531806} and \ref{CP2L3162011},
\[ \langle (1 - \eta) L_Q (Q \psi) + \eta Q L'_Q (\psi), Q (\psi + i \gamma)
   \rangle = B^{\exp}_Q (\varphi) . \]
We deduce that
\begin{equation}
  B^{\exp}_Q (\varphi) - \langle i (\vec{c}' - c \overrightarrow{e_2}) . H
  (\psi), Q (\psi + i \gamma) \rangle + \langle \tmop{NL}_{\tmop{loc}} (\psi),
  Q (\psi + i \gamma) \rangle + \langle F (\psi), Q (\psi + i \gamma) \rangle
  = 0. \label{CP22905}
\end{equation}
Since $Q \psi \in H^{\exp}_Q$ by Lemma \ref{CP2L531806}, with the
orthogonality conditions satisfied (see Lemma \ref{CP2L452905}), we can apply
Propositions \ref{CP2prop16} and \ref{CP2prop17} with remark
(\ref{CP2corto2}). We have
\begin{equation}
  B^{\exp}_Q (\varphi) \geqslant K \| \varphi \|_{\mathcal{C}}^2 + K (c) \|
  \varphi \|^2_{H^{\exp}_Q} . \label{CP229052}
\end{equation}

\subsubsection{Better estimates on $\vec{c}' - c \protect\overrightarrow{e_2}$}

The term $i (\vec{c}' - c \overrightarrow{e_2}) .H (\psi)$ contains a
``source'' term, because $Z$ and $Q$ do not satisfy the same equation (since
the travelling waves $Z$ and $Q$ may not have the same speed at this point).
We want to show the following estimates:
\begin{equation}
  \delta^{| . |} (c \overrightarrow{e_2}, \vec{c}') \leqslant \left( K c^2 \ln
  \left( \frac{1}{c} \right) + o^{\lambda, c}_{\| Z - Q_c \|_{H^{\exp}_{Q_c}}
  \rightarrow 0} (1) \right) \| \varphi \|_{\mathcal{C}} + o^{\lambda, c}_{\|
  Z - Q_c \|_{H^{\exp}_{Q_c}} \rightarrow 0} (1) \| \varphi
  \|_{H^{\exp}_{Q_c}} \label{CP24182910}
\end{equation}
and
\begin{equation}
  \delta^{\bot} (c \overrightarrow{e_2}, \vec{c}') \leqslant \left( K c^2 \ln
  \left( \frac{1}{c} \right) + o^{\lambda, c}_{\| Z - Q_c \|_{H^{\exp}_{Q_c}}
  \rightarrow 0} (1) \right) \| \varphi \|_{\mathcal{C}} + o^{\lambda, c}_{\|
  Z - Q_c \|_{H^{\exp}_{Q_c}} \rightarrow 0} (1) \| \varphi
  \|_{H^{\exp}_{Q_c}}, \label{CP24192910}
\end{equation}
where $\delta^{| . |} (c \overrightarrow{e_2}, \vec{c}') = | c
\overrightarrow{e_2} . \nobracket \frac{\vec{c}'}{| \vec{c}' |} - \nobracket
\vec{c}' |$ and $\delta^{\bot} (c \overrightarrow{e_2}, \vec{c}') = | c
\overrightarrow{e_2} . \nobracket \frac{\vec{c}^{\prime \bot}}{| \vec{c}' |} -
\nobracket \vec{c}' |$.

\

This subsection is devoted to the proof of (\ref{CP24182910}) and
(\ref{CP24192910}).

\begin{tmindent}
  Step 1.  We have the estimate (\ref{CP24182910}).
\end{tmindent}

We take the scalar product of (\ref{CP25152610}) with $c^2 \partial_c Q$,
which yields
\[ \langle i (\vec{c}' - c \overrightarrow{e_2}) . H (\psi), c^2 \partial_c Q
   \rangle = \langle Q \psi, c^2 L_Q (\partial_c Q) \rangle + \langle
   \tmop{NL}_{\tmop{loc}} (\psi) + F (\psi), c^2 \partial_c Q \rangle . \]
We check here, with the $L^{\infty}$ estimates on $\psi$ and its derivatives,
as well as on $\partial_c Q$ (see Lemma \ref{CP2dcQcsigma} and
\ref{CP2L531806}), that $\langle L_Q (Q \psi), c^2 \partial_c Q \rangle$ is
well defined and that all the integrations by parts can be done.

We recall that $H (\psi) = \nabla Q + \frac{\nabla (Q \psi) (1 - \eta) + Q
\nabla \psi \eta e^{\psi}}{(1 - \eta) + \eta e^{\psi}}$, and we check that,
since $1 - \eta$ is compactly supported (in a domain with size independent of
$c, \vec{c}'$), with equation (\ref{CP24133005})
\begin{eqnarray*}
  \left| \left\langle i (\vec{c}' - c \overrightarrow{e_2}) . \frac{\nabla (Q
  \psi) (1 - \eta) + Q \nabla \psi \eta e^{\psi}}{(1 - \eta) + \eta e^{\psi}},
  c^2 \partial_c Q \right\rangle \right| & \leqslant & K | (\vec{c}' - c
  \overrightarrow{e_2}) . \langle \eta i Q \nabla \psi, c^2 \partial_c Q
  \rangle |\\
  & + & K | \vec{c}' - c \overrightarrow{e_2} | \| \varphi
  \|_{H^{\exp}_{Q_c}} .
\end{eqnarray*}
We compute with Lemma \ref{CP2dcQcsigma} that
\begin{eqnarray*}
  | \langle \eta i Q \nabla \psi, c^2 \partial_c Q \rangle | & = & \left|
  \int_{\mathbbm{R}^2} \eta \mathfrak{R}\mathfrak{e} (\nabla \psi i Q c^2
  \overline{\partial_c Q}) \right|\\
  & \leqslant & \left| \int_{\mathbbm{R}^2} \eta \mathfrak{R}\mathfrak{e}
  (\nabla \psi) \mathfrak{I}\mathfrak{m} (Q c^2 \overline{\partial_c Q})
  \right| + \left| \int_{\mathbbm{R}^2} \eta \mathfrak{I}\mathfrak{m} (\nabla
  \psi) \mathfrak{R}\mathfrak{e} (Q c^2 \overline{\partial_c Q}) \right|\\
  & \leqslant & \left| \int_{\mathbbm{R}^2} \eta \mathfrak{R}\mathfrak{e}
  (\psi) \nabla (\mathfrak{I}\mathfrak{m} (Q c^2 \overline{\partial_c Q}))
  \right| + K \| \varphi \|_{H^{\exp}_{Q_c}}\\
  & + & \| \varphi \|_{\mathcal{C}} \sqrt{\int_{\mathbbm{R}^2} \eta
  \mathfrak{R}\mathfrak{e}^2 (Q c^2 \overline{\partial_c Q})} .
\end{eqnarray*}
From Lemmas \ref{CP283L33} and \ref{CP2dcQcsigma}, we check that
$\int_{\mathbbm{R}^2} \eta \mathfrak{R}\mathfrak{e}^2 (Q c^2
\overline{\partial_c Q}) \leqslant K$, and furthermore,
\[ | \nabla (\mathfrak{I}\mathfrak{m} (Q c^2 \overline{\partial_c Q})) |
   \leqslant c^2 | \partial_c Q | | \nabla Q | + K c^2 | \nabla \partial_c Q |
\]
and with Lemma \ref{CP2dcQcsigma} (with $\sigma = 1 / 2$), we check that
\[ | \nabla (\mathfrak{I}\mathfrak{m} (Q c^2 \overline{\partial_c Q})) |
   \leqslant \frac{K}{(1 + \tilde{r})^{3 / 2}}, \]
thus, by Cauchy-Schwarz,
\[ \left| \int_{\mathbbm{R}^2} \eta \mathfrak{R}\mathfrak{e} (\psi) \nabla
   (\mathfrak{I}\mathfrak{m} (Q c^2 \overline{\partial_c Q})) \right|
   \leqslant K \| \varphi \|_{\mathcal{C}} . \]
Using $| \vec{c}' - c \overrightarrow{e_2} | \leqslant K (c) (\delta^{| . |}
(c \overrightarrow{e_2}, \vec{c}') + \delta^{\bot} (c \overrightarrow{e_2},
\vec{c}')) \leqslant o^{\lambda, c}_{\| Z - Q_c \|_{H^{\exp}_{Q_c}}
\rightarrow 0} (1)$ and $\| \varphi \|_{\mathcal{C}} \leqslant K \| \varphi
\|_{H^{\exp}_{Q_c}}$, we deduce that
\[ \left| \left\langle i (\vec{c}' - c \overrightarrow{e_2}) . \frac{(1 -
   \eta) \nabla (Q \psi) + \eta e^{\psi} Q \nabla \psi}{(1 - \eta) + \eta
   e^{\psi}}, c^2 \partial_c Q \right\rangle \right| \leqslant o^{\lambda,
   c}_{\| Z - Q_c \|_{H^{\exp}_{Q_c}} \rightarrow 0} (1) . \]

Furthermore, we check that, by symmetry (see (\ref{CP2sym})),
\[ \langle i (\vec{c}' - c \overrightarrow{e_2}) . \nabla_x Q, c^2 \partial_c
   Q \rangle = \delta^{| . |} (c \overrightarrow{e_2}, \vec{c}') \left\langle
   i \frac{\vec{c}'}{| \vec{c}' |} . \nabla Q, c^2 \partial_c Q \right\rangle
   . \]
Furthermore, from Lemma \ref{CP20703L222}, we have $L_Q (\partial_c Q) = i
\nabla_{\vec{c}'} Q$, therefore, from Proposition \ref{CP2prop5},
\[ \left\langle i \frac{\vec{c}'}{| \vec{c}' |} . \nabla Q, c^2 \partial_c Q
   \right\rangle = c^2 B_Q (\partial_c Q) = - 2 \pi + o_{c \rightarrow 0} (1)
   . \]
We deduce that
\[ \delta^{| . |} (c \overrightarrow{e_2}, \vec{c}') \leqslant K | \langle Q
   \psi, c^2 L_Q (\partial_c Q) \rangle + \langle \tmop{NL}_{\tmop{loc}}
   (\psi) + F (\psi), c^2 \partial_c Q \rangle | + o^{\lambda, c}_{\| Z - Q_c
   \|_{H^{\exp}_{Q_c}} \rightarrow 0} (1) \| \varphi \|_{H^{\exp}_Q} . \]
Now, since $L_Q (\partial_c Q) = i \frac{\vec{c}'}{| \vec{c}' |} . \nabla Q$,
we check that
\[ \langle Q \psi, c^2 L_Q (\partial_c Q) \rangle = c^2 \left\langle Q \psi, i
   \frac{\vec{c}'}{| \vec{c}' |} . \nabla Q \right\rangle, \]
and
\[ \left| \left\langle Q \psi, i \frac{\vec{c}'}{| \vec{c}' |} . \nabla Q
   \right\rangle \right| \leqslant \left| \int_{\mathbbm{R}^2}
   \mathfrak{R}\mathfrak{e} (\psi) \mathfrak{I}\mathfrak{m} \left(
   \frac{\vec{c}'}{| \vec{c}' |} . \nabla Q \bar{Q} \right) \right| + \left|
   \int_{\mathbbm{R}^2} \mathfrak{I}\mathfrak{m} (\psi)
   \mathfrak{R}\mathfrak{e} \left( \frac{\vec{c}'}{| \vec{c}' |} . \nabla Q
   \bar{Q} \right) \right| . \]
From Lemma \ref{CP2L390911}, we deduce that
\[ | \langle Q \psi, c^2 L_Q (\partial_c Q) \rangle | \leqslant K c^2 \ln
   \left( \frac{1}{c} \right) \| \varphi \|_{\mathcal{C}} . \]
Now, we check easily that, with Lemmas \ref{CP2L422905} and \ref{CP2L442905},
\[ | \langle \tmop{NL}_{\tmop{loc}} (\psi), c^2 \partial_c Q \rangle |
   \leqslant K (c) \| \varphi \|_{H^{\exp}_Q} \| \varphi \|_{C^1 (B (0,
   \lambda))} \leqslant o^{\lambda, c}_{\| Z - Q_c \|_{H^{\exp}_{Q_c}}
   \rightarrow 0} (1) \| \varphi \|_{H^{\exp}_Q} . \]
To conclude the proof of estimate (\ref{CP24182910}), we shall estimate
\[ | \langle F (\psi), c^2 \partial_c Q \rangle | \leqslant o^{\lambda, c}_{\|
   Z - Q_c \|_{H^{\exp}_{Q_c}} \rightarrow 0} (1) \| \varphi \|_{H^{\exp}_Q} +
   \left( K \lambda_0 + o^{\lambda, c}_{\| Z - Q_c \|_{H^{\exp}_{Q_c}}
   \rightarrow 0} (1) \right) \| \varphi \|_C, \]
with $F (\psi) = Q \eta (- \nabla \psi . \nabla \psi + | Q |^2 S (\psi))$.
First, we estimate, for $\Lambda > \lambda > \frac{10}{c}$, with Lemma
\ref{CP2L432905},
\begin{eqnarray*}
  | \langle - Q \eta \nabla \psi . \nabla \psi, c^2 \partial_c Q \rangle | & =
  & \left| \int_{\mathbbm{R}^2} \eta \mathfrak{R}\mathfrak{e} (\nabla \psi .
  \nabla \psi c^2 \bar{Q} \partial_c Q) \right|\\
  & \leqslant & \int_{\mathbbm{R}^2} \eta | \nabla \psi |^2 | c^2 \bar{Q}
  \partial_c Q |\\
  & \leqslant & K \| \nabla \psi \|_{L^{\infty} (B (0, \lambda) \cap \{ \eta
  \neq 0 \})} \sqrt{\int_{B (0, \lambda)} \eta | \nabla \psi |^2}
  \sqrt{\int_{B (0, \lambda)} \eta | c^2 \bar{Q} \partial_c Q |^2}\\
  & + & \| c^2 \bar{Q} \partial_c Q \|_{L^{\infty} (\mathbbm{R}^2 \backslash
  B (0, \Lambda))} \int_{\mathbbm{R}^2 \backslash B (0, \lambda)} \eta |
  \nabla \psi |^2\\
  & \leqslant & o^{\Lambda, c}_{\| Z - Q_c \|_{H^{\exp}_{Q_c}} \rightarrow 0}
  (1) \| \varphi \|_{\mathcal{C}} + o_{\Lambda \rightarrow \infty} (1) \|
  \varphi \|_{\mathcal{C}},
\end{eqnarray*}
since, by Lemma \ref{CP2dcQcsigma}, $| c^2 \bar{Q} \partial_c Q | \leqslant
\frac{K}{(1 + \tilde{r})^{1 / 2}}$. We deduce that
\[ | \langle - Q \eta \nabla \psi . \nabla \psi, c^2 \partial_c Q \rangle |
   \leqslant o^{\lambda, c}_{\| Z - Q_c \|_{H^{\exp}_{Q_c}} \rightarrow 0} (1)
   \| \varphi \|_{\mathcal{C}} . \]
Now, in $\{ \eta = 1 \}$, since $e^{\psi} = \frac{Z}{Q}$ and $1 - K \lambda_0
\leqslant \frac{| Z |}{| Q |} \leqslant 1 + K \mu_0$ (by our assumptions on
$Z$), we have $| \mathfrak{R}\mathfrak{e} (\psi) | \leqslant K \mu_0$. We
deduce, with Lemma \ref{CP2L422905}, that in $\{ \eta \neq 0 \}$,
\[ | \mathfrak{R}\mathfrak{e} (\psi) | \leqslant K \mu_0 + o^{\lambda, c}_{\|
   Z - Q_c \|_{H^{\exp}_{Q_c}} \rightarrow 0} (1) . \]
With $S (\psi) = e^{2\mathfrak{R}\mathfrak{e} (\psi)} - 1 -
2\mathfrak{R}\mathfrak{e} (\psi)$, we check that, in $\eta \neq 0$, $| S
(\psi) | \leqslant K | \mathfrak{R}\mathfrak{e} (\psi) |^2$ (given that
$\mu_0$ and $\| Z - Q_c \|_{H^{\exp}_{Q_c}}$ are small enough), and with
similar computations as for $| \langle - Q \eta \nabla \psi . \nabla \psi, c^2
\partial_c Q \rangle |$, we conclude that
\[ | \langle F (\psi), c^2 \partial_c Q \rangle | \leqslant o^{\lambda, c}_{\|
   Z - Q_c \|_{H^{\exp}_{Q_c}} \rightarrow 0} (1) \| \varphi \|_{\mathcal{C}}
   . \]
This concludes the proof of
\[ \delta^{| . |} (c \overrightarrow{e_2}, \vec{c}') \leqslant o^{\lambda,
   c}_{\| Z - Q_c \|_{H^{\exp}_{Q_c}} \rightarrow 0} (1) \| \varphi
   \|_{H^{\exp}_{Q_{}}} + \left( K c^2 \ln \left( \frac{1}{c} \right) +
   o^{\lambda, c}_{\| Z - Q_c \|_{H^{\exp}_{Q_c}} \rightarrow 0} (1) \right)
   \| \varphi \|_{\mathcal{C}} . \]

\begin{tmindent}
  Step 2.  We have the estimate (\ref{CP24192910}).
\end{tmindent}

Now, we take the scalar product of (\ref{CP25152610}) with $c
\partial_{c^{\bot}} Q$:
\[ \langle i (\vec{c}' - c \overrightarrow{e_2}) . H (\psi), c
   \partial_{c^{\bot}} Q \rangle = \langle Q \psi, c L_Q (\partial_{c^{\bot}}
   Q) \rangle + \langle \tmop{NL}_{\tmop{loc}} (\psi) + F (\psi), c
   \partial_{c^{\bot}} Q \rangle . \]
We check that, since
\[ \begin{array}{lll}
     \left\langle i (\vec{c}' - c \overrightarrow{e_2}) . \frac{\nabla (Q
     \psi) (1 - \eta) + Q \nabla \psi \eta e^{\psi}}{(1 - \eta) + \eta
     e^{\psi}}, c \partial_{c^{\bot}} Q \right\rangle & \leqslant & K |
     (\vec{c}' - c \overrightarrow{e_2}) . \langle (1 - \eta) i Q \nabla \psi,
     c \partial_{c^{\bot}} Q \rangle |\\
     & + & K | \vec{c}' - c \overrightarrow{e_2} | \| \varphi
     \|_{H^{\exp}_{Q_c}},
   \end{array} \]
and
\[ \begin{array}{lll}
     | \langle \eta i Q \nabla \psi, c \partial_{c^{\bot}} Q \rangle | & = &
     \left| \int_{\mathbbm{R}^2} \eta \mathfrak{R}\mathfrak{e} \left( \nabla
     \psi i Q c \overline{\partial_{c^{\bot}} Q} \right) \right|\\
     & \leqslant & \left| \int_{\mathbbm{R}^2} \eta \mathfrak{R}\mathfrak{e}
     (\nabla \psi) \mathfrak{I}\mathfrak{m} \left( Q c
     \overline{\partial_{c^{\bot}} Q} \right) \right| + \left|
     \int_{\mathbbm{R}^2} \eta \mathfrak{I}\mathfrak{m} (\nabla \psi)
     \mathfrak{R}\mathfrak{e} \left( Q c \overline{\partial_{c^{\bot}} Q}
     \right) \right|\\
     & \leqslant & \left| \int_{\mathbbm{R}^2} \eta \mathfrak{R}\mathfrak{e}
     (\psi) \nabla \left( \mathfrak{I}\mathfrak{m} \left( Q c
     \overline{\partial_{c^{\bot}} Q} \right) \right) \right| + K \| \varphi
     \|_{H^{\exp}_{Q_c}}\\
     & + & \| \varphi \|_{\mathcal{C}} \int_{\mathbbm{R}^2} \eta
     \mathfrak{R}\mathfrak{e}^2 \left( Q c \overline{\partial_{c^{\bot}} Q}
     \right) .
   \end{array} \]
We check, with Lemmas \ref{CP283L33} and \ref{CP2dcQcsigma}, that
\[ \int_{\mathbbm{R}^2} \eta \mathfrak{R}\mathfrak{e}^2 \left( Q c
   \overline{\partial_{c^{\bot}} Q} \right) \leqslant K \]
and
\[ \left| \nabla \left( \mathfrak{I}\mathfrak{m} \left( Q
   \overline{\partial_{c^{\bot}} Q} \right) \right) \right| \leqslant | \nabla
   Q | | \partial_{c^{\bot}} Q | + | \nabla \partial_{c^{\bot}} Q | \leqslant
   \frac{K (c)}{(1 + r)^2}, \]
therefore, as for the previous estimation,
\[ \left| \left\langle i (\vec{c}' - c \overrightarrow{e_2}) . \frac{(1 -
   \eta) \nabla (Q \psi) + \eta e^{\psi} Q \nabla \psi}{(1 - \eta) + \eta
   e^{\psi}}, c \partial_{c^{\bot}} Q \right\rangle \right| \leqslant
   o^{\lambda, c}_{\| Z - Q_c \|_{H^{\exp}_{Q_c}} \rightarrow 0} (1) \|
   \varphi \|_{H^{\exp}_Q} . \]
We check that, by symmetry (see equation (\ref{CP2sym}))
\[ \langle i (\vec{c}' - c \overrightarrow{e_2}) . \nabla Q, c
   \partial_{c^{\bot}} Q \rangle = \delta^{\bot} (c \overrightarrow{e_2},
   \vec{c}') \left\langle i \frac{\vec{c}'}{| \vec{c}' |} . \nabla Q, c
   \partial_{c^{\bot}} Q \right\rangle \]
Furthermore, from Lemma \ref{CP20703L222}, we have $L_Q (\partial_{c^{\bot}}
Q) = - i c \frac{\vec{c}^{\prime \bot}}{| \vec{c}' |} . \nabla Q$, therefore,
from Proposition \ref{CP2prop5},
\[ c \left\langle i \frac{\vec{c}^{\prime \bot}}{| \vec{c}' |} . \nabla Q,
   \partial_{c^{\bot}} Q \right\rangle = - B_Q (\partial_{c^{\bot}} Q) = - 2
   \pi + o_{c \rightarrow 0} (1) . \]
We deduce that
\[ \delta^{\bot} (c \overrightarrow{e_2}, \vec{c}') \leqslant \]
\[ K | \langle Q \psi, c L_Q (\partial_{c^{\bot}} Q) \rangle + \langle
   \tmop{NL}_{\tmop{loc}} (\psi) + F (\psi), c \partial_{c^{\bot}} Q \rangle |
   + o^{\lambda, c}_{\| Z - Q_c \|_{H^{\exp}_{Q_c}} \rightarrow 0} (1) \|
   \varphi \|_{H^{\exp}_Q} . \]
As previously, we check that
\[ | \langle \tmop{NL}_{\tmop{loc}} (\psi) + F (\psi), c \partial_{c^{\bot}} Q
   \rangle | \leqslant o^{\lambda, c}_{\| Z - Q_c \|_{H^{\exp}_{Q_c}}
   \rightarrow 0} (1) \| \varphi \|_{H^{\exp}_{Q_{}}} + o^{\lambda, c}_{\| Z -
   Q_c \|_{H^{\exp}_{Q_c}} \rightarrow 0} (1) \| \varphi \|_{\mathcal{C}} \]
and from Lemma \ref{CP20703L222}, we have
\begin{eqnarray*}
  | \langle Q \psi, L_Q (\partial_{c^{\bot}} Q) \rangle | & = & \left|
  \left\langle Q \psi, i \frac{\vec{c}^{\prime \bot}}{| \vec{c}' |} . \nabla Q
  \right\rangle \right|\\
  & \leqslant & \left| \int_{\mathbbm{R}^2} \mathfrak{R}\mathfrak{e} (\psi)
  \mathfrak{I}\mathfrak{m} \left(  \frac{\vec{c}^{\prime \bot}}{| \vec{c}' |}
  . \nabla Q \bar{Q} \right) \right| + \left| \int_{\mathbbm{R}^2}
  \mathfrak{I}\mathfrak{m} (\psi) \mathfrak{R}\mathfrak{e} \left( 
  \frac{\vec{c}^{\prime \bot}}{| \vec{c}' |} . \nabla Q \bar{Q} \right)
  \right|,
\end{eqnarray*}
and with Lemma \ref{CP2L390911}, we deduce that
\[ c | \langle Q \psi, L_Q (\partial_{c^{\bot}} Q) \rangle | \leqslant K c \ln
   \left( \frac{1}{c} \right) \| \varphi \|_{\mathcal{C}} . \]
We conclude that
\[ \delta^{\bot} (c \overrightarrow{e_2}, \vec{c}') \leqslant \left( K c^2 \ln
   \left( \frac{1}{c} \right) + o^{\lambda, c}_{\| Z - Q_c \|_{H^{\exp}_{Q_c}}
   \rightarrow 0} (1) \right) \| \varphi \|_{\mathcal{C}} + o^{\lambda, c}_{\|
   Z - Q_c \|_{H^{\exp}_{Q_c}} \rightarrow 0} (1) \| \varphi
   \|_{H^{\exp}_{Q_c}} . \]

\subsubsection{Estimations on the remaining terms}

Let us show in this subsection that
\begin{eqnarray}
  &  & | \langle i (\vec{c}' - c \overrightarrow{e_2}) . H (\psi), Q (\psi +
  i \gamma) \rangle | + | \langle \tmop{NL}_{\tmop{loc}} (\psi), Q (\psi + i
  \gamma) \rangle | + | \langle F (\psi), Q (\psi + i \gamma) \rangle |
  \nonumber\\
  & \leqslant & \left( o_{c \rightarrow 0} (1) + o^{\lambda, c}_{\| Z - Q_c
  \|_{H^{\exp}_{Q_c}} \rightarrow 0} (1) + K \lambda_0 \right) \| \varphi
  \|_{\mathcal{C}}^2 + o^{\lambda, c}_{\| Z - Q_c \|_{H^{\exp}_{Q_c}}
  \rightarrow 0} (1) \| \varphi \|^2_{H^{\exp}_Q} .  \label{CP24191910}
\end{eqnarray}

\begin{tmindent}
  Step 1.  Proof of $| \langle \tmop{NL}_{\tmop{loc}} (\psi), Q (\psi + i
  \gamma) \rangle | \leqslant o^{\lambda, c}_{\| Z - Q_c \|_{H^{\exp}_{Q_c}}
  \rightarrow 0} (1) \| \varphi \|^2_{H^{\exp}_Q}$.
\end{tmindent}

From Lemma \ref{CP2L442905}, we have
\[ | \langle \tmop{NL}_{\tmop{loc}} (\psi), Q (\psi + i \gamma) \rangle |
   \leqslant K (\| Q \psi \|_{C^1 (\{ \eta \neq 1 \})} + | \gamma |) \|
   \varphi \|^2_{H^1 (\{ \eta \neq 1 \})}, \]
therefore, from Lemmas \ref{CP2L432905}, \ref{CP2L452905} and equation
(\ref{CP24143005}), we deduce
\[ | \langle \tmop{NL}_{\tmop{loc}} (\psi), Q \psi \rangle | \leqslant
   o^{\lambda, c}_{\| Z - Q_c \|_{H^{\exp}_{Q_c}} \rightarrow 0} (1) \|
   \varphi \|^2_{H^{\exp}_Q} . \]

\begin{tmindent}
  Step 2.  Proof of
  \[ | \langle i (\vec{c}' - c \overrightarrow{e_2}) . H (\psi), Q (\psi + i
     \gamma) \rangle | \]
  \[ \leqslant \left( o_{c \rightarrow 0} (1) + o^{\lambda, c}_{\| Z - Q_c
     \|_{H^{\exp}_{Q_c}} \rightarrow 0} (1) \right) \| \varphi
     \|_{\mathcal{C}}^2 + o^{\lambda, c}_{\| Z - Q_c \|_{H^{\exp}_{Q_c}}
     \rightarrow 0} (1) \| \varphi \|^2_{H^{\exp}_Q} . \]
\end{tmindent}

We separate the estimation in two parts. First, we look at $\langle i
(\vec{c}' - c \overrightarrow{e_2}) . H (\psi), Q \psi \rangle$. We recall
that $H (\psi) = \nabla Q + \frac{(1 - \eta) \nabla (Q \psi) + \eta e^{\psi} Q
\nabla \psi}{(1 - \eta) + \eta e^{\psi}}$, and, since $| \vec{c}' - c
\overrightarrow{e_2} | \leqslant o^{\lambda, c}_{\| Z - Q_c
\|_{H^{\exp}_{Q_c}} \rightarrow 0} (1)$ and $1 - \eta$ is compactly supported,
we check easily that
\[ \left| \left\langle i (\vec{c}' - c \overrightarrow{e_2}) . \frac{(1 -
   \eta) \nabla (Q \psi) + \eta e^{\psi} Q \nabla \psi}{(1 - \eta) + \eta
   e^{\psi}}, Q \psi \right\rangle \right| \leqslant \]
\[ o^{\lambda, c}_{\| Z - Q_c \|_{H^{\exp}_{Q_c}} \rightarrow 0} (1) (|
   \langle \eta i Q \nabla \psi, Q \psi \rangle | + K (c) \| \varphi
   \|^2_{H^{\exp}_Q}) . \]
Furthermore, we check that
\[ | \langle \eta i Q \nabla \psi, Q \psi \rangle | \leqslant \left|
   \int_{\mathbbm{R}^2} \mathfrak{R}\mathfrak{e} (\psi)
   \mathfrak{I}\mathfrak{m} (\nabla \psi) | Q |^2 \eta \right| + \left|
   \int_{\mathbbm{R}^2} \mathfrak{I}\mathfrak{m} (\psi)
   \mathfrak{R}\mathfrak{e} (\nabla \psi) | Q |^2 \eta \right|, \]
and by Cauchy-Scwharz, $\left| \int_{\mathbbm{R}^2} \mathfrak{R}\mathfrak{e}
(\psi) \mathfrak{I}\mathfrak{m} (\nabla \psi) | Q |^2 \eta \right| \leqslant K
\| \varphi \|_{\mathcal{C}}^2$. Now, by integration by parts (using Lemma
\ref{CP2L531806}), we have
\begin{eqnarray*}
  \left| \int_{\mathbbm{R}^2} \mathfrak{I}\mathfrak{m} (\psi)
  \mathfrak{R}\mathfrak{e} (\nabla \psi) | Q |^2 \eta \right| & \leqslant &
  \left| \int_{\mathbbm{R}^2} \mathfrak{R}\mathfrak{e} (\psi)
  \mathfrak{I}\mathfrak{m} (\nabla \psi) | Q |^2 \eta \right|\\
  & + & \left| \int_{\mathbbm{R}^2} \mathfrak{I}\mathfrak{m} (\psi)
  \mathfrak{R}\mathfrak{e} (\psi) \nabla (| Q |^2) \eta \right|\\
  & + & \left| \int_{\mathbbm{R}^2} \mathfrak{I}\mathfrak{m} (\psi)
  \mathfrak{R}\mathfrak{e} (\psi) | Q |^2 \nabla \eta \right|,
\end{eqnarray*}
and by Cauchy-Schwarz, we check that
\[ \left| \int_{\mathbbm{R}^2} \mathfrak{I}\mathfrak{m} (\psi)
   \mathfrak{R}\mathfrak{e} (\nabla \psi) | Q |^2 \eta \right| \leqslant K \|
   \varphi \|^2_{H^{\exp}_Q} . \]
We deduce that
\[ \left| \left\langle i (\vec{c}' - c \overrightarrow{e_2}) . \frac{(1 -
   \eta) \nabla (Q \psi) + \eta e^{\psi} Q \nabla \psi}{(1 - \eta) + \eta
   e^{\psi}}, Q \psi \right\rangle \right| \leqslant o^{\lambda, c}_{\| Z -
   Q_c \|_{H^{\exp}_{Q_c}} \rightarrow 0} (1) \| \varphi \|^2_{H^{\exp}_Q} .
\]
Finally, we write
\[ | \langle i (\vec{c}' - c \overrightarrow{e_2}) . \nabla Q, Q \psi \rangle
   | \leqslant \delta^{| . |} (c \overrightarrow{e_2}, \vec{c}') \left|
   \left\langle i \frac{\vec{c}'}{| \vec{c}' |} . \nabla Q, Q \psi
   \right\rangle \right| + \delta^{\bot} (c \overrightarrow{e_2}, \vec{c}')
   \left| \left\langle i \frac{\vec{c}^{\prime \bot}}{| \vec{c}' |} . \nabla
   Q, Q \psi \right\rangle \right| . \]
With Lemma \ref{CP2L390911}, we check that
\[ \left| \left\langle i \frac{\vec{c}'}{| \vec{c}' |} . \nabla Q, Q \psi
   \right\rangle \right| + \left| \left\langle i \frac{\vec{c}^{\prime
   \bot}}{| \vec{c}' |} . \nabla Q, Q \psi \right\rangle \right| \leqslant K
   \ln \left( \frac{1}{c} \right) \| \varphi \|_{\mathcal{C}} . \]
With (\ref{CP24182910}) and (\ref{CP24192910}), we deduce that
\begin{eqnarray*}
  | \langle i (\vec{c}' - c \overrightarrow{e_2}) . \nabla Q, Q \psi \rangle |
  & \leqslant & \left( K c \ln^2 \left( \frac{1}{c} \right) + o^{\lambda,
  c}_{\| Z - Q_c \|_{H^{\exp}_{Q_c}} \rightarrow 0} (1) \right) \| \varphi
  \|_{\mathcal{C}}^2 + o^{\lambda, c}_{\| Z - Q_c \|_{H^{\exp}_{Q_c}}
  \rightarrow 0} (1) \| \varphi \|^2_{H^{\exp}_Q}\\
  & \leqslant & \left( o_{c \rightarrow 0} (1) + o^{\lambda, c}_{\| Z - Q_c
  \|_{H^{\exp}_{Q_c}} \rightarrow 0} (1) \right) \| \varphi \|_{\mathcal{C}}^2
  + o^{\lambda, c}_{\| Z - Q_c \|_{H^{\exp}_{Q_c}} \rightarrow 0} (1) \|
  \varphi \|^2_{H^{\exp}_Q} .
\end{eqnarray*}
Now, we look at $\langle i (\vec{c}' - c \overrightarrow{e_2}) . H (\psi), Q i
\gamma \rangle$. We check that
\[ \langle i \nabla Q, Q i \gamma \rangle = \gamma \int_{\mathbbm{R}^2}
   \mathfrak{R}\mathfrak{e} (\nabla Q \bar{Q}) = \frac{\gamma}{2}
   \int_{\mathbbm{R}^2} \nabla (| Q |^2 - 1) = 0, \]
thus
\[ \langle i (\vec{c}' - c \overrightarrow{e_2}) . H (\psi), Q i \gamma
   \rangle = \left\langle i (\vec{c}' - c \overrightarrow{e_2}) . \frac{(1 -
   \eta) \nabla (Q \psi) + \eta e^{\psi} Q \nabla \psi}{(1 - \eta) + \eta
   e^{\psi}}, Q i \gamma \right\rangle . \]
In the area $\{ \eta \neq 0 \}$, since $| \gamma | = o^{\lambda, c}_{\| Z -
Q_c \|_{H^{\exp}_{Q_c}} \rightarrow 0} (1)$ by Lemma \ref{CP2L452905}, since
\[ | \vec{c}' - c \overrightarrow{e_2} | \leqslant K \left( c \ln \left(
   \frac{1}{c} \right) + o^{\lambda, c}_{\| Z - Q_c \|_{H^{\exp}_{Q_c}}
   \rightarrow 0} (1) \right) \| \varphi \|_{\mathcal{C}} + o^{\lambda, c}_{\|
   Z - Q_c \|_{H^{\exp}_{Q_c}} \rightarrow 0} (1) \| \varphi \|_{H^{\exp}_Q}
\]
by estimates (\ref{CP24182910}) and (\ref{CP24192910}), we check that
\[ \int_{\{ \eta \neq 0 \}} \mathfrak{R}\mathfrak{e} \left( i (\vec{c}' - c
   \overrightarrow{e_2}) . \frac{(1 - \eta) \nabla (Q \psi) + \eta e^{\psi} Q
   \nabla \psi}{(1 - \eta) + \eta e^{\psi}} \overline{Q i \gamma} \right)
   \leqslant o^{\lambda, c}_{\| Z - Q_c \|_{H^{\exp}_{Q_c}} \rightarrow 0} (1)
   \| \varphi \|^2_{H^{\exp}_Q}, \]
and therefore (with Lemma \ref{CP2L531806} that justifies the integrability)
\[ | \langle i (\vec{c}' - c \overrightarrow{e_2}) . H (\psi), Q i \gamma
   \rangle | \leqslant \left| \gamma (\vec{c}' - c \overrightarrow{e_2}) .
   \int_{\mathbbm{R}^2} \eta | Q |^2 \mathfrak{R}\mathfrak{e} (\nabla \psi)
   \right| + o^{\lambda, c}_{\| Z - Q_c \|_{H^{\exp}_{Q_c}} \rightarrow 0} (1)
   \| \varphi \|^2_{H^{\exp}_Q} . \]
By integration by parts (since $| \mathfrak{R}\mathfrak{e} (\psi) | \leqslant
\frac{K \left( \lambda, c, \| Z - Q_c \|_{H^{\exp}_{Q_c}}, \varepsilon_0, Z
\right)}{(1 + r)^2}$ and $| \mathfrak{R}\mathfrak{e} (\nabla \psi) | \leqslant
\frac{K \left( \lambda, c, \| Z - Q_c \|_{H^{\exp}_{Q_c}}, \varepsilon_0, Z
\right)}{(1 + r)^3}$ by Lemma \ref{CP2L531806}) and Cauchy-Schwarz,
\begin{eqnarray*}
  \left| \int_{\mathbbm{R}^2} \eta | Q |^2 \mathfrak{R}\mathfrak{e} (\nabla
  \psi) \right| & \leqslant & \left| \int_{\mathbbm{R}^2} \nabla \eta | Q |^2
  \mathfrak{R}\mathfrak{e} (\psi) \right| + \left| \int_{\mathbbm{R}^2} \eta
  \nabla (| Q |^2) \mathfrak{R}\mathfrak{e} (\psi) \right|\\
  & \leqslant & K (c) \| \varphi \|_{H^{\exp}_Q} .
\end{eqnarray*}
Since $| \gamma | = o^{\lambda, c}_{\| Z - Q_c \|_{H^{\exp}_{Q_c}} \rightarrow
0} (1)$ by Lemma \ref{CP2L452905} and $| \vec{c}' - c \overrightarrow{e_2} |
\leqslant \left( K (c) + o^{\lambda, c}_{\| Z - Q_c \|_{H^{\exp}_{Q_c}}
\rightarrow 0} (1) \right) \| \varphi \|_{H^{\exp}_Q}$ by (\ref{CP24182910}),
(\ref{CP24192910}) and Lemma \ref{CP2L3133105}, we conclude that
\[ | \langle i (\vec{c}' - c \overrightarrow{e_2}) . H (\psi), Q i \gamma
   \rangle | \leqslant o^{\lambda, c}_{\| Z - Q_c \|_{H^{\exp}_{Q_c}}
   \rightarrow 0} (1) \| \varphi \|^2_{H^{\exp}_Q} . \]

\begin{tmindent}
  Step 3.  Proof of $| \langle F (\psi), Q (\psi + i \gamma) \rangle |
  \leqslant \left( o^{\lambda, c}_{\| Z - Q_c \|_{H^{\exp}_{Q_c}} \rightarrow
  0} (1) + K \lambda_0 \right) \| \varphi \|^2_{\mathcal{C}}$.
\end{tmindent}

We recall
\[ F (\psi) = Q \eta (- \nabla \psi . \nabla \psi + | Q |^2 S (\psi)), \]
\[ S (\psi) = e^{2\mathfrak{R}\mathfrak{e} (\psi)} - 1 -
   2\mathfrak{R}\mathfrak{e} (\psi) . \]
First, we look at $\langle F (\psi), Q \psi \rangle$. We have
\[ | \langle F (\psi), Q \psi \rangle | \leqslant | \langle Q (1 - \eta)
   \nabla \psi . \nabla \psi, Q \psi \rangle | + | \langle Q (1 - \eta) | Q
   |^2 S (\psi), Q \psi \rangle | . \]
We check that $\| \varphi \|_{L^{\infty} (\mathbbm{R}^2)} \leqslant K \| \psi
\|_{L^{\infty} (\mathbbm{R}^2 \backslash B (0, \lambda))} + K \| \varphi
\|_{L^{\infty} (B (0, \lambda))} \leqslant K \lambda_0 + o^{\lambda, c}_{\| Z
- Q_c \|_{H^{\exp}_{Q_c}} \rightarrow 0} (1)$
\[ | \langle Q \eta \nabla \psi . \nabla \psi, Q \psi \rangle | \leqslant \|
   \varphi \|_{L^{\infty} (\mathbbm{R}^2)} \int_{\mathbbm{R}^2} \eta | \nabla
   \psi |^2 \leqslant \left( K \lambda_0 +_{} o^{\lambda, c}_{\| Z - Q_c
   \|_{H^{\exp}_{Q_c}} \rightarrow 0} (1) \right) \| \varphi
   \|^2_{\mathcal{C}} . \]
Finally, since $\| \varphi \|_{L^{\infty} (\mathbbm{R}^2)} \leqslant K$ a
uniform constant for $c$ and $\| Z - Q_c \|_{H^{\exp}_{Q_c}}$ small enough,
\[ | \langle Q \eta | Q |^2 S (\psi), Q \psi \rangle | \leqslant \| \varphi
   \|_{L^{\infty} (\mathbbm{R}^2)} \int_{\mathbbm{R}^2} \eta
   \mathfrak{R}\mathfrak{e}^2 (\psi) \leqslant \left( K \lambda_0 +_{}
   o^{\lambda, c}_{\| Z - Q_c \|_{H^{\exp}_{Q_c}} \rightarrow 0} (1) \right)
   \| \varphi \|^2_{\mathcal{C}} . \]
Now, we compute
\begin{eqnarray*}
  | \langle F (\psi), Q i \gamma \rangle | & \leqslant & | \gamma | \left|
  \int_{\mathbbm{R}^2} -\mathfrak{R}\mathfrak{e} (\eta i \nabla \psi . \nabla
  \psi) | Q |^2 + \eta | Q |^4 \mathfrak{R}\mathfrak{e} (S (\psi) i) \right|,
\end{eqnarray*}
and since $S (\psi)$ is real-valued, we check that, since $| \gamma | =
o^{\lambda, c}_{\| Z - Q_c \|_{H^{\exp}_{Q_c}} \rightarrow 0} (1)$ by Lemma
\ref{CP2L452905},
\[ | \langle F (\psi), Q i \gamma \rangle | \leqslant | \gamma |
   \int_{\mathbbm{R}^2} \eta | \nabla \psi |^2 | Q |^2 \leqslant o^{\lambda,
   c}_{\| Z - Q_c \|_{H^{\exp}_{Q_c}} \rightarrow 0} (1) \| \varphi
   \|^2_{\mathcal{C}} . \]

\subsubsection{Conclusion}

Combining the steps 1 to 3 and (\ref{CP229052}) in (\ref{CP22905}), we deduce
that, taking $c$ small enough, and then $\| Z - Q_c \|_{H^{\exp}_{Q_c}}$ small
enough (depending on $c$ and $\lambda$), we have
\begin{eqnarray*}
  0 & \geqslant & K \| \varphi \|_{\mathcal{C}}^2 + K (c) \| \varphi
  \|^2_{H^{\exp}_Q}\\
  & - & \left( o_{c \rightarrow 0} (1) + K \mu_0 +_{} o^{\lambda, c}_{\| Z -
  Q_c \|_{H^{\exp}_{Q_c}} \rightarrow 0} (1) \right) \| \varphi
  \|_{\mathcal{C}}^2 - o^{\lambda, c}_{\| Z - Q_c \|_{H^{\exp}_{Q_c}}
  \rightarrow 0} (1) \| \varphi \|^2_{H^{\exp}_Q},
\end{eqnarray*}
hence, if $\mu_0$ is taken small enough (independently of any other
parameters) then $c$ small enough and $\| Z - Q_c \|_{H_{Q_c}^{\exp}}$ small
enough (depending on $\lambda$ and $c$),
\[ K (c) \| \varphi \|_{H^{\exp}_{Q_c}}^2 + K \| \varphi \|^2_{\mathcal{C}}
   \leqslant 0. \]
We deduce that $\varphi = 0$, thus $Z = Q$. Furthermore, from
(\ref{CP24182910}) and (\ref{CP24192910}) we deduce that $\vec{c}' = c
\overrightarrow{e_2}$, and since $Z \rightarrow 1$ at infinity, we also have
$\gamma = 0$ (or else $\| Z - Q_c \|_{H^{\exp}_{Q_c}} = + \infty$). This
concludes the proof of Theorem \ref{CP2th16}.


\begin{thebibliography}{10}
  \bibitem[1]{MR1669387}Fabrice Bethuel  and  Jean-Claude Saut.{\newblock}
  Travelling waves for the Gross-Pitaevskii equation. I.{\newblock}
  \tmtextit{Ann. Inst. H. Poincar{\'e} Phys. Th{\'e}or.}, 70(2):147--238,
  1999.{\newblock}
  
  \bibitem[2]{CX}Xinfu Chen, Charles~M.~Elliott , and  Tang Qi.{\newblock}
  Shooting method for vortex solutions of a complex-valued Ginzburg-Landau
  equation.{\newblock} \tmtextit{Proc. Roy. Soc. Edinburgh Sect. A},
  124(6):1075--1088, 1994.{\newblock}
  
  \bibitem[3]{MR3686002}David Chiron  and  Mihai Mari{\c s}.{\newblock}
  Traveling waves for nonlinear Schr{\"o}dinger equations with nonzero
  conditions at infinity.{\newblock} \tmtextit{Arch. Ration. Mech. Anal.},
  226(1):143--242, 2017.{\newblock}
  
  \bibitem[4]{CP1}David Chiron  and  Eliot Pacherie.{\newblock} Smooth branch
  of travelling waves for the gross-pitaevskii equation in R\tmrsup{2} for
  small speed, preprint.{\newblock}
  
  \bibitem[5]{DP}Manuel del~Pino, Patricio Felmer , and  Micha{\l}
  Kowalczyk.{\newblock} Minimality and nondegeneracy of degree-one
  Ginzburg-Landau vortex as a Hardy's type inequality.{\newblock}
  \tmtextit{Int. Math. Res. Not.}, (30):1511--1527, 2004.{\newblock}
  
  \bibitem[6]{MR2001707}Alberto Farina.{\newblock} From Ginzburg-Landau to
  Gross-Pitaevskii.{\newblock} \tmtextit{Monatsh. Math.}, 139(4):265--269,
  2003.{\newblock}
  
  \bibitem[7]{MR1814364}David Gilbarg  and  Neil~S.~Trudinger.{\newblock}
  \tmtextit{Elliptic partial differential equations of second
  order}.{\newblock} Classics in Mathematics. Springer-Verlag, Berlin,
  2001.{\newblock} Reprint of the 1998 edition.{\newblock}
  
  \bibitem[8]{Ginz-Pit_58a}V.~L.~Ginzburg  and  L.~P.~Pitaevskii.{\newblock}
  On the theory of superfluidity.{\newblock} \tmtextit{Sov. Phys. JETP},
  7(5):858--861, 1958.{\newblock}
  
  \bibitem[9]{MR2086751}Philippe Gravejat.{\newblock} Decay for travelling
  waves in the Gross-Pitaevskii equation.{\newblock} \tmtextit{Ann. Inst. H.
  Poincar{\'e} Anal. Non Lin{\'e}aire}, 21(5):591--637, 2004.{\newblock}
  
  \bibitem[10]{MR2191764}Philippe Gravejat.{\newblock} Asymptotics for the
  travelling waves in the Gross-Pitaevskii equation.{\newblock}
  \tmtextit{Asymptot. Anal.}, 45(3-4):227--299, 2005.{\newblock}
  
  \bibitem[11]{G1}Philippe Gravejat.{\newblock} First order asymptotics for
  the travelling waves in the Gross-Pitaevskii equation.{\newblock}
  \tmtextit{Adv. Differential Equations}, 11(3):259--280, 2006.{\newblock}
  
  \bibitem[12]{MR901236}Manoussos Grillakis, Jalal Shatah , and  Walter
  Strauss.{\newblock} Stability theory of solitary waves in the presence of
  symmetry. I.{\newblock} \tmtextit{J. Funct. Anal.}, 74(1):160--197,
  1987.{\newblock}
  
  \bibitem[13]{HH}Rose-Marie Herv{\'e}  and  Michel Herv{\'e}.{\newblock}
  {\'E}tude qualitative des solutions r{\'e}elles d'une {\'e}quation
  diff{\'e}rentielle li{\'e}e {\`a} l'{\'e}quation de
  Ginzburg-Landau.{\newblock} \tmtextit{Ann. Inst. H. Poincar{\'e} Anal. Non
  Lin{\'e}aire}, 11(4):427--440, 1994.{\newblock}
  
  \bibitem[14]{0305-4470-15-8-036}C~A Jones  and  P~H Roberts.{\newblock}
  Motions in a bose condensate. iv. axisymmetric solitary waves.{\newblock}
  \tmtextit{Journal of Physics A: Mathematical and General}, 15(8):2599,
  1982.{\newblock}
  
  \bibitem[15]{LZ}Zhiwu Lin  and  Chongchun Zeng.{\newblock} Instability,
  index theorem, and exponential trichotomy for linear hamiltonian pdes,
  arxiv:1703.04016.{\newblock}
  
  \bibitem[16]{MR3043579}Mihai Mari{\c s}.{\newblock} Traveling waves for
  nonlinear Schr{\"o}dinger equations with nonzero conditions at
  infinity.{\newblock} \tmtextit{Ann. of Math. (2)}, 178(1):107--182,
  2013.{\newblock}
  
  \bibitem[17]{Neu_90}J.~C.~Neu.{\newblock} Vortices in complex scalar
  fields.{\newblock} \tmtextit{Phys. D}, 43:385--406, 1990.{\newblock}
  
  \bibitem[18]{MR1831928}Jie Qing.{\newblock} Zeros of wave functions in
  Ginzburg-Landau model for small $\epsilon$.{\newblock} \tmtextit{Commun.
  Contemp. Math.}, 3(2):187--199, 2001.{\newblock}
  
  \bibitem[19]{MR820338}Michael~I.~Weinstein.{\newblock} Lyapunov stability of
  ground states of nonlinear dispersive evolution equations.{\newblock}
  \tmtextit{Comm. Pure Appl. Math.}, 39(1):51--67, 1986.{\newblock}
\end{thebibliography}
\end{document}